\documentclass[12pt]{amsart}
\usepackage{amsmath,amsfonts,amssymb,amsthm,amsxtra,amscd}
\usepackage{mathrsfs}
\usepackage{graphicx}
\topmargin0mm
\oddsidemargin0mm
\evensidemargin0mm
\textheight21cm
\textwidth16cm
\numberwithin{equation}{section}

\theoremstyle{plain} 
\newtheorem{thm}{Theorem}[section] 
\newtheorem{prop}[thm]{Proposition} 
\newtheorem{lemma}[thm]{Lemma} 
\newtheorem{cor}[thm]{Corollary}

\theoremstyle{definition}

\newtheorem{remark}[thm]{Remark}

\newcommand{\C}{{\mathbb C}}
 
\newcommand{\p}{{\mathbb P}}

\newcommand{\z}{{\mathbb Z}} 
\newcommand{\pj}{{{\mathbb P}^1}}
\newcommand{\pii}{{{\mathbb P}^2}}
\newcommand{\piii}{{{\mathbb P}^3}}
\newcommand{\piv}{{{\mathbb P}^4}}
\newcommand{\pv}{{{\mathbb P}^5}}

\newcommand{\sce}{\mathscr{E}}
\newcommand{\scf}{\mathscr{F}} 

\newcommand{\sco}{\mathscr{O}} 
\newcommand{\sch}{\mathscr{H}}
\newcommand{\sci}{\mathscr{I}}

\newcommand{\scl}{\mathscr{L}}
\newcommand{\scp}{\mathscr{P}}

\newcommand{\fm}{{\mathfrak m}}

\newcommand{\h}{\text{h}}
\newcommand{\tH}{\text{H}} 

\newcommand{\izo}{\overset{\sim}{\rightarrow}} 
\newcommand{\Izo}{\overset{\sim}{\longrightarrow}} 
\newcommand{\ra}{\rightarrow} 
\newcommand{\lra}{\longrightarrow} 
\newcommand{\xra}{\xrightarrow}
\newcommand{\vb}{\, \vert \, } 
\newcommand{\prim}{{\prime}} 
\newcommand{\secund}{{\prime \prime}}
\newcommand{\Ker}{\text{Ker}\, }
\newcommand{\Cok}{\text{Coker}\, }

\newcommand{\e}{\varepsilon}


\begin{document}

\title[Space curves and vector bundles]{Locally Cohen-Macaulay space curves 
defined by cubic equations and globally generated vector bundles}

\author[Anghel,~Coand\u{a}~and~Manolache]{Cristian~Anghel,~Iustin~Coand\u{a}~
and~Nicolae~Manolache}
\address{Institute of Mathematics of the Romanian Academy, P.O. Box 1--764, 
RO--014700, Bucharest, Romania}
\email{Iustin.Coanda@imar.ro~Cristian.Anghel@imar.ro~Nicolae.Manolache@imar.ro}

\subjclass[2010]{Primary:14J60; Secondary: 14H50, 14M06}

\keywords{projective space, vector bundle, globally generated sheaf, space 
curve}


\begin{abstract}
Let $E$ be a globally generated vector bundle on the projective 3-space, with 
$c_1$ at least 4, which is not the quotient of a larger vector bundle by a 
trivial subbundle. We show that if $E(-c_1 + 3)$ has a non-zero global 
section then, except for some cases that can be listed, $E$ admits as a direct
summand a line bundle of degree at least $c_1 - 3$.   
This reduces the problem of the classification of globally generated 
vector bundles with $c_1$ at most 7 on the projective 3-space to the case 
where the associated rank 2 reflexive sheaf is stable, a case which requires 
quite different methods. The proof uses the description of the monads of the 
ideal sheaves of all locally Cohen-Macaulay space curves defined by cubic 
equations. We then decide which of the bundles with the above mentioned 
properties extend to higher dimensional projective spaces. This part uses, 
among other things, a result of Barth and Ellencwajg [Lecture Notes Math. 
683 (1978), Springer Verlag, 1--24] asserting the non-existence of stable 
rank 2 vector bundles on the projective 4-space with $c_1 = 0$, $c_2 = 3$. We 
provide a different proof of this fact, based on a theorem of Mohan Kumar, 
Peterson and Rao [Manuscripta Math. 112 (2003), 183--189]. Finally, we recover 
quickly, using the previously mentioned results, the classification of 
globally generated vector bundles with $c_1 = 4$ on the projective $n$-space 
for $n$ at least 4, which is part of the main result of our previous paper 
[arXiv:1305.3464]. We provide, in the appendices to the paper, graded free 
resolutions for the homogeneous ideals and for the graded structural algebras 
of all nonreduced locally Cohen-Macaulay space curves of degree at most 4. We 
use only a small part of the content of the appendices but we decided to 
give the complete list because it might be useful in some other contexts, too 
(compare with the papers of Nollet [Ann. Sci. \'{E}cole Norm. Sup. (4) 30 
(1997), 367--384] and of Nollet and Schlesinger [Compos. Math. 139 (2003), 
169--196]).           
\end{abstract}

\maketitle
\tableofcontents

\section*{Introduction}

Motivated, in part, by some geometric applications (cf., for example, the 
papers of Manivel and Mezzetti \cite{mm}, Sierra and Ugaglia \cite{su0},  
Fania and Mezzetti \cite{fm}, Huh \cite{hu}) the systematic study of globally 
generated vector bundles on projective spaces was initiated by Sierra and 
Ugaglia \cite{su} who classified the bundles of this kind with the first Chern 
class $c_1 \leq 2$. Then Ellia \cite{e} determined the Chern classes of 
globally generated vector bundles on $\pii$, Chiodera and Ellia \cite{ce} 
determined the Chern classes of the globally generated vector bundles of rank 
2 on $\piii$ with $c_1 \leq 5$, while Anghel and Manolache \cite{am} and, 
independently, Sierra and Ugaglia \cite{su2}, classified the globally 
generated vector bundles on $\p^n$ with $c_1 = 3$. 

The present paper is a sequel to our work \cite{acm}, where we classified 
the globally generated vector bundles on $\p^n$ with $c_1 = 4$. We are here 
(mainly) concerned with the classification of globally generated vector 
bundles on $\piii$. (From the point of view of the methods used in \cite{acm}, 
this is a critical case: on one hand, the study of globally generated 
vector bundles on $\pii$ presents some special features and may be treated 
separatedly. On the other hand, once one has a classification of globally 
generated vector bundles on $\piii$ one can try to decide which of these 
bundles extend, as a globally generated vector bundle, to higher dimensional 
projective spaces. This approach has limited efficiency, but we are unaware  
of a direct method for studying globally generated bundles on 
$\p^n$, $n \geq 4$.) One natural way to investigate this kind of 
bundles is to relate them to rank 2 reflexive sheaves, which were intensively 
studied. If $E$ is a globally generated vector bundle on $\piii$, of rank 
$r$ and Chern classes $c_i = c_i(E)$, $i = 1,\, 2,\, 3$, then $r-2$ general 
global sections of $E$ define an exact sequence$\, :$ 
\[
0 \lra (r-2)\sco_\p \lra E \lra \sce^\prim \lra 0
\]  
where $\sce^\prim$ is a globally generated rank 2 reflexive sheaf with 
$c_i(\sce^\prim) = c_i$, $i = 1,\, 2,\, 3$. The classification problem splits, 
now, into two cases : (1) $\sce^\prim$ stable; (2) $\sce^\prim$ non-stable. 
We are concerned, in this paper, with the latter case. Let 
$\sce^\prim_{\text{norm}}$ denote the reflexive sheaf 
$\sce^\prim\left(-\left[\frac{c_1+1}{2}\right]\right)$, which has the first 
Chern class either $0$ or $-1$. Then $\sce^\prim$ is non-stable if and only if 
$\tH^0(\sce^\prim_{\text{norm}}) \neq 0$ and this happens, of course, if and only 
if $\tH^0\left(E\left(-\left[\frac{c_1+1}{2}\right]\right)\right) \neq 0$. 

Now, it is easy to show that if $\tH^0(E(-c_1)) \neq 0$ then $E \simeq 
\sco_\p(c_1) \oplus (r-1)\sco_\p$ (this is a particular case of a result of 
Sierra \cite{s}). Sierra and Ugaglia \cite[Prop.~2.2]{su2} (for $c_1 = 3$) 
and Anghel et al. \cite[Prop.~2.4]{acm} (for $c_1 \geq 2$) described the 
globally generated vector bundles $E$ with $\tH^0(E(-c_1)) = 0$ and 
$\tH^0(E(-c_1+1)) \neq 0$. Moreover, we described in \cite[Prop.~2.9]{acm} the 
globally generated vector bundles with $c_1 \geq 3$, $\tH^0(E(-c_1+1)) = 0$ 
and $\tH^0(E(-c_1+2)) \neq 0$. These results suffice, already, to classify 
globally generated vector bundles on $\piii$ with $c_1 \leq 3$ (see 
\cite[Remark~2.12]{acm}). 

The main objective of the present paper is to classify globally generated 
vector bundles $E$ on $\piii$ with $c_1 \geq 4$, $\tH^0(E(-c_1+2)) = 0$ and 
$\tH^0(E(-c_1+3)) \neq 0$ (the similar problem on $\pii$ has been settled in 
\cite[Prop.~3.2]{acm}). The associated rank 2 reflexive sheaf $\sce^\prim$ 
introduced above satisfies, also, the conditions $\tH^0(\sce^\prim(-c_1+2)) 
= 0$, $\tH^0(\sce^\prim(-c_1+3)) \neq 0$, hence any non-zero global section of 
$\sce^\prim(-c_1+3)$ defines an exact sequence$\, :$ 
\[
0 \lra \sco_\p(c_1-3) \lra \sce^\prim \lra \sci_Z(3) \lra 0
\]    
with $Z$ a locally Cohen-Macaulay (CM, for short) closed subscheme of $\piii$, 
of pure codimension 2, locally complete intersection (l.c.i., for short) 
except at finitely many points, with $\sci_Z(3)$ globally generated. 
One deduces an exact sequence$\, :$ 
\begin{equation}\label{E:o(c1-3)oeiz(3)}
0 \lra \sco_\p(c_1-3) \oplus (r-2)\sco_\p \lra E \lra \sci_Z(3) \lra 0\, .
\end{equation}

Now, assume we know a \emph{Horrocks monad} $B^\bullet$ of $\sci_Z(3)$, i.e., 
a three terms complex of sheaves$\, :$ 
\begin{equation}\label{E:monadiz(3)}
0 \lra B^{-1} \overset{\displaystyle d^{-1}}{\lra} B^0 
\overset{\displaystyle d^0}{\lra} B^1 \lra 0
\end{equation}
with the $B^i$s direct sums of invertible sheaves $\sco_\p(j)$, $j \in \z$, 
such that $\sch^0(B^\bullet) \simeq \sci_Z(3)$ and $\sch^i(B^\bullet) = 0$ for 
$i \neq 0$. Then there exists a morphism 
$(\phi ,\, \psi) : B^{-1} \ra \sco_\p(c_1-3) \oplus (r-2)\sco_\p$ 
such that$\, :$ 
\begin{equation}\label{E:monade}
0 \lra B^{-1} \xra{\begin{pmatrix} \phi\\ d^{-1}\\ \psi \end{pmatrix}} 
\sco_\p(c_1-3) \oplus B^0 \oplus (r-2)\sco_\p 
\xra{\displaystyle (0\, ,\, d^0,\, 0)} B^1 \lra 0
\end{equation}
is a monad for $E$. 

Thus, we are left with the problem of determining the monads of the ideal 
sheaves of locally CM space curves $Z$ with $\sci_Z(3)$ globally generated. 
There are two ways to get such a monad. The first one is based on the 
observation that if$\, :$ 
\[
0 \lra L_2 \overset{\displaystyle d_2}{\lra} L_1 
\overset{\displaystyle d_1}{\lra} L_0 \lra \tH^0_\ast(\sco_Z) \lra 0
\]
is a free resolution of the graded $S$-module $\tH^0_\ast(\sco_Z) := 
\bigoplus_{i \in \z}\tH^0(\sco_Z(i))$ (here $S$ is the 
projective coordinate ring $k[x_0,x_1,x_2,x_3]$ of $\piii$) then, 
removing the direct summand 
$S$ of $L_0$ corresponding to the generator $1 \in \tH^0(\sco_Z)$ of 
$\tH^0_\ast(\sco_Z)$ and sheafifying, one gets a monad for $\sci_Z$$\, :$ 
\[
0 \lra {\widetilde L}_2 \overset{\displaystyle {\widetilde d}_2}{\lra} 
{\widetilde L}_1 \overset{\displaystyle {\widetilde d}_1^{\, \prim}}{\lra} 
{\widetilde L}_0^\prim \lra 0\, .
\]  
The second way is based on a result, 
attributed by Peskine and Szpiro \cite[Prop.~2.5]{ps} to D. Ferrand,  
asserting that if $Z$ is linked to a curve $Z^\prim$ by a complete intersection 
of type $(a,b)$ , if 
\[
0 \lra A_2 \overset{\displaystyle d_2}{\lra} A_1 
\overset{\displaystyle d_1}{\lra} A_0 \lra \sci_{Z^\prim} \lra 0
\]
is a resolution of $\sci_{Z^\prim}$ with direct sums of invertible sheaves and 
if $\psi : \sco_\p(-a) \oplus \sco_\p(-b) \ra A_0$ is a morphism lifting 
$\sco_\p(-a) \oplus \sco_\p(-b) \ra \sci_{Z^\prim}$ then$\, :$ 
\[
0 \lra A_0^\vee \xra{\begin{pmatrix} \psi^\vee\\ d_1^\vee \end{pmatrix}} 
\sco_\p(a) \oplus \sco_\p(b) \oplus A_1^\vee 
\xra{\displaystyle (0\, ,\, d_2^\vee)} A_2^\vee \lra 0
\] 
is a monad for $\sci_Z(a+b)$. One can also see that, conversely, if 
\[
0 \lra B^{-1} \overset{\displaystyle d^{-1}}{\lra} B^0 
\overset{\displaystyle d^0}{\lra} B^1 \lra 0
\]
is a monad for $\sci_Z$ and if $\rho : \sco_\p(-a)\oplus \sco_\p(-b) \ra B^0$ 
is a morphism lifting $\sco_\p(-a)\oplus \sco_\p(-b) \ra \sci_Z$ such that 
$d^0\circ \rho = 0$ then there exists an exact sequence$\, :$ 
\[
0 \lra B^{1\vee} \xra{\displaystyle d^{0\vee}} B^{0\vee} 
\xra{\begin{pmatrix} \rho^\vee\\ d^{-1\vee} \end{pmatrix}} 
\sco_\p(a)\oplus \sco_\p(b) \oplus B^{-1\vee} \lra \sci_{Z^\prim}(a+b) \lra 0\, .
\] 
Notice that if one applies the functor $\tH^0_\ast(-)$ to this exact sequence 
(twisted by $-a-b$) one gets a graded free resolution of the homogeneous 
ideal $I(Z^\prim)\subset S$ of $Z^\prim$.

Now, let $d$ be the degree of such a curve $Z$. Of course, $d \leq 9$. 
The description of the possible monads for $\sci_Z(3)$ when $d \geq 5$, which 
we accomplish in Section~\ref{S:deg5} using the concrete descrition of the 
stable rank 2 reflexive sheaves on $\piii$ with $c_1 = -1$ and $c_2 \leq 2$ 
(due to Hartshorne \cite{ha}, Hartshorne and Sols \cite{hs}, Manolache 
\cite{m1}, and Chang \cite{ch}), turns out to be easy. We determine, in 
Section \ref{S:deg4}, the monads of $\sci_Z(3)$ for $d \leq 4$, using a case 
by case analysis. We rely heavily, in this analysis, on the results from 
Appendix~\ref{A:multilines} where we describe, using the theory of 
B\u{a}nic\u{a} and Forster \cite{bf}, the generators of the homogeneous ideals 
and the monads of the ideal sheaves of the multiple lines in $\piii$ of degree 
$\leq 4$, and on the results from Appendix~\ref{A:multicomponent} where we do 
the same thing for the reducible curves of degree $\leq 4$ having a multiple 
line as a component.  

Finally, we use, in Section~\ref{S:ggvb}, the classification of the monads 
of the locally CM curves $Z$ with $\sci_Z(3)$ globally generated to prove the 
main result of this paper which is the following$\, :$   

\begin{thm}\label{T:main}
Let $E$ be a globally vector bundle on $\piii$ with $c_1 \geq 4$ and such that 
${\fam0 H}^i(E^\vee) = 0$, $i = 0,\, 1$. If ${\fam0 H}^0(E(-c_1+2)) = 0$ and 
${\fam0 H}^0(E(-c_1+3)) \neq 0$ then one of the following holds$\, :$ 
\begin{enumerate}
\item[(i)] $E \simeq \sco_\p(c_1-3) \oplus F$, where $F$ is a globally 
generated vector bundle with $c_1(F) = 3$$\, ;$ 
\item[(ii)] $c_1 = 4$ and $E \simeq F(2)$, where $F$ is a nullcorrelation 
bundle or a $2$-instanton$\, ;$ 
\item[(iii)] $c_1 = 4$ and, up to a linear change of coordinates, E is the 
kernel of the epimorphism  
\[
(x_0\, ,\, x_1\, ,\, x_2\, ,\, x_3^2) : 3\sco_\p(2)\oplus \sco_\p(1) \lra 
\sco_\p(3)\, ; 
\]
\item[(iv)] $c_1 = 4$ and, up to a linear change of coordinates, $E$ is the 
kernel of the epimorphism 
\[
(x_0\, ,\, x_1\, ,\, x_2^2\, ,\, x_2x_3\, ,\, x_3^2) : 2\sco_\p(2)\oplus 
3\sco_\p(1) \lra \sco_\p(3)\, ;
\]
\item[(v)] $c_1 = 4$ and, up to a linear change of coordinates, $E$ is the 
cohomology of the monad  
\[
\sco_\p(-1) \xra{\begin{pmatrix} s\\ u \end{pmatrix}} 
2\sco_\p(2) \oplus 2\sco_\p(1) \oplus 4\sco_\p 
\xra{\displaystyle (p\, ,\, 0)} \sco_\p(3)
\]
where $\sco_\p(-1) \xra{s} 2\sco_\p(2) \oplus 2\sco_\p(1) \xra{p} \sco_\p(3)$ 
is a subcomplex of the Koszul complex defined by $x_0,\, x_1,\, x_2^2,\, 
x_3^2$ and $u : \sco_\p(-1) \ra 4\sco_\p$ is defined by $x_0,\, x_1,\, x_2,\, 
x_3$$\, ;$ 
\item[(vi)] $c_1 = 5$ and $E \simeq F(3)$, where $F$ is a stable rank $2$  
vector bundle with $c_1(F) = -1$ and $c_2(F) = 2$$\, ;$ 
\item[(vii)] $c_1 = 6$ and $E \simeq F(3)$, where $F$ is a strictly   
semistable rank $2$ vector bundle with $c_1(F) = 0$ and $c_2(F) = 3$$\, .$  
\end{enumerate} 
\end{thm}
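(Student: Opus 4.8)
The plan is to exploit the sequence \eqref{E:o(c1-3)oeiz(3)} together with the monad \eqref{E:monade}, reducing everything to the classification of the monads of $\sci_Z(3)$ for locally CM space curves $Z$ with $\sci_Z(3)$ globally generated, which is the content of Sections~\ref{S:deg5} and \ref{S:deg4}. First I would record that the hypothesis $\tH^i(E^\vee)=0$, $i=0,1$, is precisely the condition that $E$ is \emph{not} a quotient of a larger bundle by a trivial subbundle and that it fixes the number $r-2$ of trivial summands appearing in the middle term of \eqref{E:o(c1-3)oeiz(3)}; one checks, dualizing that sequence and using $\tH^0(\sci_Z(3-c_1))=0$ (which holds since $c_1\geq 4$ forces a negative twist of an ideal sheaf), that $\tH^0(E^\vee)=0$ is automatic and that $\tH^1(E^\vee)=0$ translates into the vanishing of $\tH^1$ of the relevant twists of $\sco_Z$, i.e.\ into a cohomological constraint on $Z$.

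Next I would organize the proof around the degree $d=\deg Z\leq 9$ and the classification of monads already obtained. For $d\geq 5$ the input from Section~\ref{S:deg5} (built on the known stable rank~2 reflexive sheaves with $c_1=-1$, $c_2\leq 2$) pins down $B^\bullet$ very rigidly, and I expect these cases to land in (vi) and (vii): there the curve $Z$ is linked to a low-degree curve and $\sce^\prim$ is forced, after the normalizing twist, to be a stable or strictly semistable rank~2 bundle with the stated Chern classes, whence $E\simeq F(3)$ once one checks $E$ has no trivial summand. For the small-degree cases $d\leq 4$ I would feed in the explicit monads from Section~\ref{S:deg4} and the appendices, and for each monad assemble \eqref{E:monade}, then simplify: whenever the resulting monad for $E$ has a trivial or split-off piece one extracts a line bundle summand $\sco_\p(c_1-3)$ and lands in (i), while the genuinely indecomposable low-degree configurations with $c_1=4$ produce the sporadic bundles (ii)--(v). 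The nullcorrelation/$2$-instanton description in (ii) and the explicit Koszul-type monads in (iii)--(v) should correspond to the most degenerate curves $Z$ (multiple lines and their unions), read off directly from the appendix tables.

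The real bookkeeping, and the step I expect to be the main obstacle, is the \textbf{reduction lemma} that separates the split case (i) from the exceptional list. Concretely: given the monad \eqref{E:monade} for $E$, I must decide exactly when the summand $\sco_\p(c_1-3)$ (or more of $B^0$) splits off as a direct summand of the cohomology $E$, versus when it is genuinely entangled by the maps $\phi,\psi$. This requires analyzing, for each monad type, the morphism $(\phi,\psi):B^{-1}\to\sco_\p(c_1-3)\oplus(r-2)\sco_\p$ up to the automorphisms of the monad, and showing that after normalization it is either zero on the $\sco_\p(c_1-3)$ factor (giving (i) via \eqref{E:o(c1-3)oeiz(3)} splitting) or reducible to one of finitely many normal forms. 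The hypothesis $\tH^0(E(-c_1+2))=0$ is crucial here, since it forbids extra sections that would otherwise create more splittings, and I would use it repeatedly to kill unwanted components of $\phi$. Establishing that the normal forms are exhausted by (ii)--(vii), and that the apparent freedom in $u$ and in the Koszul subcomplex of case (v) collapses to a single isomorphism class up to $\mathrm{GL}$, is where the case-by-case analysis is genuinely delicate rather than routine.
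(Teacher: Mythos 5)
Your overall architecture coincides with the paper's: the proof is indeed a case analysis over the monads of $\sci_Z(3)$ supplied by Propositions~\ref{P:zinci23}, \ref{P:zprimlci}, \ref{P:zprimcm}, \ref{P:z}, \ref{P:y}, \ref{P:x}, organized around a reduction lemma that decides when $\sco_\p(c_1-3)$ splits off. But the step you flag as ``the main obstacle'' is left entirely open, and it is precisely the content of Lemma~\ref{L:monad}. The mechanism there is not a normal-form analysis of $(\phi,\psi)$ up to monad automorphisms in each case separately; it is a single uniform argument: writing $B^{-1}=B^{-1}_+\oplus m_{-1}\sco_\p\oplus B^{-1}_-$ and $B^0=B^0_+\oplus m_0\sco_\p$, the hypotheses $\tH^i(E^\vee)=0$ are equivalent to $\tH^0(\gamma^\vee)$ being an isomorphism, which lets one normalize $\gamma$ to the dual of an evaluation morphism, factor $\alpha$ and $\beta$ through $\gamma$, change coordinates so that the components of $B^{-1}_{\leq 0}\to\sco_\p(c_1-3)\oplus B^0_+$ vanish, and cancel $m_{-1}\sco_\p$ against part of $m'\sco_\p$. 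The upshot is the exact sequence $0\to B^{-1}_+\to\sco_\p(c_1-3)\oplus F\to E\to 0$; whenever $B^{-1}_+=0$ one is in case (i), and when $B^{-1}_+=\sco_\p(1)$ the fact that a global section of $F(-1)$ vanishes along a curve forces $\phi_+$ to be an isomorphism, hence $c_1=4$ and $E\simeq F$. Without this lemma (or an equivalent), your plan does not yet separate (i) from (ii)--(vii).

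There is also a concrete misassignment in your bookkeeping: you expect the curves of degree $d\geq 5$ to produce cases (vi) and (vii). In fact all the monads arising for $d\geq 5$ (Prop.~\ref{P:zinci23}(ii), Prop.~\ref{P:zprimlci}, Prop.~\ref{P:zprimcm}) have $B^{-1}$ with no positive summand, so they all land in case (i). Cases (vi) and (vii) come from the \emph{low}-degree multiple structures: (vi) from a double line with $l=1$ ($\deg Z=2$, Prop.~\ref{P:x} with $i=1$) and (vii) from a triple line with $l=1$, $m=0$ ($\deg Z=3$, Prop.~\ref{P:y}(v)). They are identified not by analyzing the linked curve and $\sce^\prim$, but by observing that in these cases Lemma~\ref{L:monad} forces $\operatorname{rk}E=2$, realizing $E$ as an extension $0\to\sco_\p(c_1-3)\to E\to\sci_Z(3)\to 0$, and exploiting $\omega_Z\simeq\sco_Z(2-c_1)$ together with the numerical relation $(c_1-2)\deg Z=2\chi(\sco_Z)$ to pin down $c_1$ (which also eliminates the subcase $i=1$ of Prop.~\ref{P:y}(iv), where $3(c_1-2)=8$ has no integer solution). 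If you pursued your plan as stated you would look for (vi) and (vii) in the wrong stratum and miss the arithmetic constraint that rules configurations in or out.
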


Some comments are in order. As originally noticed by Sierra and Ugaglia, the 
condition $\tH^i(E^\vee) = 0$, $i = 0,\, 1$, is not too restrictive : any other 
globally generated vector bundle can be obtained from a bundle satisfying this 
additional condition by quotienting a trivial subbundle and, then, by adding a 
trivial direct summand. As we said at the beginning of the introduction, the 
bundles $F$ appearing in item (i) were classified in \cite{am} and \cite{su2}, 
independently.   

Finally, we extend, in Prop.~\ref{P:mainn4}, the classification from  
Theorem~\ref{T:main} to higher dimensional projective spaces. 
In order to show that the bundle appearing in item (vii) of 
Thm.~\ref{T:main} cannot be extended to $\piv$ we need a result of 
Barth and Ellencwajg \cite[Thm.~4.2]{be} asserting that there is no stable 
rank 2 vector bundle on $\piv$ with Chern classes $c_1 = 0$, $c_2 = 3$. 
We provide, in Thm.~\ref{T:be}, a different proof of this fact, 
based on the results of Mohan Kumar, Peterson and Rao \cite{kpr}.  

We use Prop.~\ref{P:mainn4} to recover, in a different manner, in 
Thm.~\ref{T:c1=4n4}, the classification of globally generated vector 
bundles with $c_1 = 4$ on $\p^n$, $n \geq 4$, which is part of the main 
result of our previous paper \cite{acm}. It is very likely that 
Prop.~\ref{P:mainn4} can be used to get a classification of globally 
generated vector bundles with $c_1 = 5$ on $\p^n$, $n \geq 5$. We intend to 
come back to this subject in another paper.      

\vskip2mm

\noindent
{\bf Note.}\quad We provide, in the appendices to the paper, graded free 
resolutions for the homogeneous ideals and for the graded structural algebras 
of all nonreduced locally Cohen-Macaulay curves in $\piii$ of degree at 
most 4. We need only a small part of these results for the proof of 
Thm.~\ref{T:main} but we decided to give (at least in e-print form) the 
complete (and very long) list because it might be useful in some other 
contexts, too.  
Our approach is almost algorithmic and, at least, some aspects of the 
description of quadruple structures on a line should be new. There are 
overlaps with the papers of Nollet \cite{n} and Nollet and Schlessinger 
\cite{ns} but our emphasis is on the description of individual curves and not 
on their deformations. 

\vskip2mm

\noindent
{\bf Acknowledgements.}\quad N. Manolache expresses his thanks to the 
Institute of Mathematics, Oldenburg University, especially to Udo Vetter, 
for warm hospitality during the preparation of this paper. 

\vskip2mm

\noindent
{\bf Notation.}\quad (i) We denote by $S = k[x_0, \ldots ,x_n]$ the projective 
coordinate ring of the projective $n$-space $\p^n$ over an algebraically 
closed field $k$ of characteristic 0. If $\scf$ is a coherent sheaf on 
$\p^n$, we denote by $\tH^i_\ast(\scf)$ the graded $S$-module 
$\bigoplus_{l \in \z}\tH^i(\scf(l))$. 

(ii) If $X$ is a closed subscheme of $\p^n$, we denote by 
$\sci_X \subset \sco_\p$ its ideal sheaf. If $Y$ is a closed subscheme of 
$X$, we denote by $\sci_{Y,X} \subset \sco_X$ the ideal sheaf defining $Y$ as 
a closed subscheme of $X$. In other words, $\sci_{Y,X} = \sci_Y/\sci_X$. If 
$\scf$ is a coherent sheaf on $\p^n$, we put $\scf_X := \scf 
\otimes_{\sco_\p} \sco_X$ and $\scf \vb X := i^\ast\scf$, where $i : X \ra \p^n$ 
is the inclusion. 

(iii) By a point of a quasi-projective scheme $X$ we always mean a 
\emph{closed point}. If $\scf$ is a coherent sheaf on $X$ and $x \in X$, we 
denote by $\scf(x)$ the \emph{reduced stalk} $\scf_x/\fm_x\scf_x$ of $\scf$ 
at $x$, where $\fm_x$ is the maximal ideal of $\sco_{X,x}$. 

(iv) We frequently write ``CM'' for ``Cohen-Macaulay'' and ``l.c.i.'' for 
``locally complete intersection''.

\section{Curves of degree at least 5}\label{S:deg5}  

\begin{prop}\label{P:zinci23} 
Let $Z$ be a locally CM space curve of degree $d \geq 4$. If $\sci_Z(3)$ is 
globally generated and ${\fam0 H}^0(\sci_Z(2)) \neq 0$ then one of the 
following holds$\, :$ 
\begin{enumerate}
\item[(i)] $Z$ is a complete intersection of type $(2,3)$$\, ;$ 
\item[(ii)] $d = 5$ and $\sci_Z(3)$ admits a resolution of the form$\, :$ 
\[
0 \lra 2\sco_\p(-1) \lra \sco_\p(1) \oplus 2\sco_\p \lra \sci_Z(3) \lra 0\, ;
\]
\item[(iii)] $d = 4$ and $\sci_Z(3)$ admits a resolution of the form$\, :$ 
\[
0 \lra \sco_\p \oplus \sco_\p(-1) \lra 2\sco_\p(1) \oplus \sco_\p \lra 
\sci_Z(3) \lra 0\, ;
\]
\item[(iv)] $d = 4$ and $\sci_Z(3)$ admits a resolution of the form$\, :$ 
\[
0 \lra 3\sco_\p \lra \sco_\p(1)\oplus \Omega_\p(2) \lra \sci_Z(3) \lra 0\, .
\]
\end{enumerate}
\end{prop}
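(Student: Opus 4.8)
The hypothesis $\tH^0(\sci_Z(2)) \neq 0$ means that $Z$ lies on a quadric surface $Q = V(q)$. The plan is to link $Z$, through a complete intersection of type $(2,3)$, to a locally CM curve $Z^\prim$ of degree $\leq 2$, and then to read off the resolution of $\sci_Z(3)$ from that of $\sci_{Z^\prim}$ by the construction recalled in the introduction (attributed to Ferrand, cf. \cite{ps}). First I would produce a cubic surface $F = V(f)$ containing $Z$ and having no common component with $Q$. This is the one place where global generation is essential: if some irreducible component $P$ of the support of $Q$ were contained in \emph{every} cubic through $Z$, then every global section of $\sci_Z(3)$ would vanish at each point of $P\smallsetminus Z \neq \emptyset$, contradicting the global generation of $\sci_Z(3)$, whose fibre at a point outside $Z$ is nonzero. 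As there are finitely many components and $k$ is infinite, a general $f \in \tH^0(\sci_Z(3))$ cuts out such an $F$. Then $X := Q \cap F$ is a complete intersection curve of degree $6$ containing $Z$; in particular $d \leq 6$, so $d \in \{4,5,6\}$.

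Since $Z$ is locally CM of pure dimension $1$ and $X$ is a complete intersection containing it, $Z$ is linked through $X$ to a locally CM curve $Z^\prim$ with $\deg Z^\prim = 6 - d \in \{0,1,2\}$. If $\deg Z^\prim = 0$ then $Z = X$ and we are in case (i). Otherwise $Z^\prim$ is, respectively, a line (degree $1$), or a curve of degree $2$. By the classification of locally CM curves of degree $\leq 2$ the latter is either arithmetically CM (a plane conic, a pair of coplanar lines, or a double line in a plane, all of which are complete intersections of type $(1,2)$) or a pair of skew lines, which is the unique non-arithmetically-CM curve of degree $2$ (a connected degree-$2$ locally CM curve is planar, hence ACM). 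Each of these residuals has a standard resolution by sums of invertible sheaves.

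For the ACM residuals I would write the (length-two) resolution $0 \lra A_1 \lra A_0 \lra \sci_{Z^\prim} \lra 0$, apply the liaison construction from the introduction with $(a,b) = (2,3)$, and twist by $-2$. Since $A_2 = 0$ here, the resulting monad is already a two-term resolution. The Koszul resolution of a line gives, after twisting, exactly the resolution in (ii) (for $d = 5$), and the Koszul resolution $0 \lra \sco_\p(-3) \lra \sco_\p(-1)\oplus\sco_\p(-2) \lra \sci_{Z^\prim}\lra 0$ of a $(1,2)$-complete intersection gives exactly the form in (iii) (for $d = 4$); one should note that (iii) is allowed to be non-minimal, so it covers the genuine $(2,2)$-complete intersections as well. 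These two steps are pure Chern-class bookkeeping, cross-checkable against $\deg Z = 6 - \deg Z^\prim$.

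The one case requiring more than bookkeeping, and the step I expect to be the real obstacle, is $Z^\prim = $ two skew lines. Here $\sci_{Z^\prim}$ is \emph{not} ACM and has the length-three resolution
\[
0 \lra \sco_\p(-4) \overset{d_2}{\lra} 4\sco_\p(-3) \lra 4\sco_\p(-2) \lra \sci_{Z^\prim} \lra 0
\]
(the tensor product of the two Koszul complexes on the linear forms defining each line), so liaison produces a genuine three-term monad, which after twisting by $-2$ reads
\[
0 \lra 4\sco_\p \lra \sco_\p \oplus \sco_\p(1) \oplus 4\sco_\p(1) \xra{(0,\, d_2^\vee)} \sco_\p(2) \lra 0\, .
\]
The key observation is that $d_2^\vee : 4\sco_\p(1) \ra \sco_\p(2)$ is the transpose of $d_2$, whose four entries are, up to sign, the four coordinates $x_0,\ldots,x_3$; hence, after a linear change of coordinates, the kernel of $d_2^\vee$ is $\Omega_\p(2)$ by the twisted Euler sequence $0 \to \Omega_\p(2) \to 4\sco_\p(1) \to \sco_\p(2) \to 0$. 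Taking the kernel of the whole right-hand map thus presents $\sci_Z(3)$ as the cokernel
\[
0 \lra 4\sco_\p \lra \sco_\p \oplus \sco_\p(1) \oplus \Omega_\p(2) \lra \sci_Z(3) \lra 0\, .
\]
Finally, checking that the component $4\sco_\p \ra \sco_\p$ is nonzero lets me split off a trivial summand and reach the form in (iv). I expect this identification of $\Omega_\p(2)$ via the Euler sequence, together with the enumeration of the degree-$2$ residuals, to be the only non-formal ingredients of the proof.
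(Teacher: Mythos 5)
Your overall strategy (link $Z$ through a complete intersection of type $(2,3)$ to a residual curve $Z^\prim$ of degree $6-d$ and transport the resolution by Ferrand's liaison result) is exactly the paper's, and your treatment of the cases $Z^\prim = \emptyset$, $Z^\prim$ a line, $Z^\prim$ a planar degree-$2$ curve, and $Z^\prim$ two skew lines (including the identification of $\Ker(4\sco_\p(1) \to \sco_\p(2))$ with $\Omega_\p(2)$ via the Euler sequence and the cancellation of a trivial summand) matches the paper's Cases 1--3.2. The gap is in your enumeration of the degree-$2$ residuals: the assertion that ``a connected degree-$2$ locally CM curve is planar, hence ACM'' is false. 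A double structure $X$ on a line $L$ is classified by an epimorphism $\sci_L/\sci_L^2 \simeq 2\sco_L(-1) \to \sco_L(l)$ with $l \geq -1$ (Subsection~\ref{SS:p2}); it is planar only for $l = -1$, while for every $l \geq 0$ it is connected, locally CM of degree $2$, not contained in a plane, and not arithmetically CM (its ideal sheaf has a length-three resolution $0 \to \sco_\p(-l-4) \to 2\sco_\p(-l-3)\oplus 2\sco_\p(-3) \to \sco_\p(-l-2)\oplus 3\sco_\p(-2) \to \sci_X \to 0$). So your case analysis silently omits an infinite family of possible residuals.

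This omission is not harmless bookkeeping. For $l \geq 1$ the residual's resolution has a different numerical shape, and running liaison on it would \emph{not} produce any of the resolutions (i)--(iv); the proposition only holds because such residuals cannot occur, and ruling them out requires invoking the global generation of $\sci_Z(3)$ a second time. The paper does this in Subcase~3.3: the fundamental exact sequence of liaison shows that $\sci_Z(3)$ globally generated is equivalent to $\omega_X(2)$ globally generated, and since $\omega_X \simeq \sco_X(-l-2)$ this forces $l \leq 0$. The case $l = -1$ then reduces to your planar subcase, and the case $l = 0$ goes through only because, by a small miracle, $\sci_X$ then has a resolution of the same numerical shape as that of two disjoint lines, so the skew-lines computation applies verbatim and one still lands in item (iv). You use global generation once (to find a cubic with no component in common with the quadric); you need it again here, and without it the argument does not close.
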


\begin{proof}
$Z$ is contained in a complete intersection of type $(2,3)$ hence $d \leq 6$. 

\vskip2mm
 
\noindent
{\bf Case 1.}\quad $d = 6$. 

\vskip2mm

\noindent
In this case $Z$ is a complete intersection of type $(2,3)$. 

\vskip2mm

\noindent
{\bf Case 2.}\quad $d = 5$. 

\vskip2mm

\noindent
In this case, $Z$ is linked to a line by a complete intersection of type 
$(2,3)$ hence, by Ferrand's result about liaison, 
$\sci_Z(3)$ admits a resolution as in item (ii) of the statement. 

\vskip2mm
 
\noindent
{\bf Case 3.}\quad $d = 4$. 

\vskip2mm

\noindent
In this case, $Z$ is linked to a curve $X$ of degree 2. One has to divide 
this case into subcases. 

\vskip2mm

\noindent
{\bf Subcase 3.1.}\quad $X$ \emph{is a complete intersection of type} $(1,2)$. 

\vskip2mm

\noindent
In this subscase, by Ferrand's result about liaison, 
$\sci_Z(3)$ admits a resolution as in item (iii) of the statement. 

\vskip2mm

\noindent
{\bf Subcase 3.2.}\quad $X$ \emph{is the disjoint union of two lines}. 

\vskip2mm

\noindent
In this subcase, $\sci_X$ admits a resolution of the form 
(see Lemma~\ref{L:zcupw})$\, :$ 
\[
0 \lra \sco_\p(-4) \lra 4\sco_\p(-3) \lra 4\sco_\p(-2) \lra \sci_X \lra 0
\]  
hence, by Ferrand's result, $\sci_Z(3)$ is the cohomology of a monad of the 
form$\, :$ 
\[
0 \lra 3\sco_\p \lra 5\sco_\p(1) \lra \sco_\p(2) \lra 0
\]
Since the kernel of any epimorphism $5\sco_\p \ra \sco_\p(1)$ is isomorphic 
to $\sco_\p \oplus \Omega_\p(1)$, it follows that $\sci_Z(3)$ admits a 
resolution as in item (iv) of the statement. 

\vskip2mm

\noindent
{\bf Subcase 3.3}\quad $X$ \emph{is a double structure on a line} $L$.  

\vskip2mm

\noindent
$X$ is defined by an epimorphism $\sci_L/\sci_L^2 \ra \sco_L(l)$, for some 
$l \geq -1$ (see Subsection~\ref{SS:p2}).  
From the \emph{fundamental exact sequence of liaison} (recalled in 
\cite[Remark~2.6]{acm})$\, :$ 
\[
0 \lra \sco_\p(-5) \lra \sco_\p(-2)\oplus \sco_\p(-3) \lra \sci_Z \lra 
\omega_X(-1) \lra 0
\]
it follows that $\sci_Z(3)$ is globally generated iff $\omega_X(2)$ is 
globally generated. But, as we recalled in Subsection~\ref{SS:p2} of 
Appendix~\ref{A:multilines}, $\omega_X \simeq \sco_X(-l-2)$. One deduces that 
$l \leq 0$. If $l = -1$ then $X$ is the divisor $2L$ on a plane $H \supset L$ 
and this situation has been treated in Subcase 3.1, and if $l = 0$ then, as 
we recalled in Subsection~\ref{SS:p2}, $\sci_X$ admits a resolution having 
the same (numerical) shape as that of the disjoint union of two lines hence, 
by the proof of Subcase 3.2, $\sci_Z(3)$ admits a resolution as in item (iv) 
of the statement.   
\end{proof} 

We assume, from now and until the end of this section, that $Z$ is a locally 
CM space curve of degree $d \geq 5$, such that $\sci_Z(3)$ is globally 
generated and $\tH^0(\sci_Z(2)) = 0$. $Z$ is linked by a complete intersection 
of type $(3,3)$ to a locally CM curve $Z^\prim$ of degree $d^\prim = 9-d$. The 
fundamental exact sequence of liaison$\, :$ 
\[
0 \lra \sco_\p(-6) \lra 2\sco_\p(-3) \lra \sci_Z \lra \omega_{Z^\prim}(-2) 
\lra 0
\] 
shows that the condition $\sci_Z(3)$ globally generated is equivalent to 
$\omega_{Z^\prim}(1)$ being globally generated, and the condition 
$\tH^0(\sci_Z(2)) = 0$ is equivalent to $\tH^0(\omega_{Z^\prim}) = 0$, which by 
Serre duality is equivalent to $\tH^1(\sco_{Z^\prim}) = 0$ hence to 
$\tH^2(\sci_{Z^\prim}) = 0$. 

\begin{prop}\label{P:zprimlci}
Using the above hypotheses and notation, assume that $Z^\prim$ is l.c.i. except 
at finitely many points. Then one of the following holds$\, :$ 
\begin{enumerate} 
\item[(i)] $d = 7$ and $\sci_Z(3)$ admits a resolution of the form$\, :$ 
\[
0 \lra \sco_\p(-1)\oplus \sco_\p(-2) \lra 3\sco_\p \lra \sci_Z(3) \lra 0\, ;
\]
\item[(ii)] $d = 6$ and $\sci_Z(3)$ admits a resolution of the form$\, :$ 
\[
0 \lra 3\sco_\p(-1) \lra 4\sco_\p \lra \sci_Z(3) \lra 0\, ;
\]
\item[(iii)] $d = 5$ and $\sci_Z(3)$ admits a resolution of the form$\, :$ 
\[
0 \lra 2\sco_\p \oplus \sco_\p(-1) \lra \Omega_\p(2) \oplus \sco_\p \lra 
\sci_Z(3) \lra 0\, ;
\]
\item[(iv)] $d = 5$ and $\sci_Z(3)$ admits a resolution of the form$\, :$ 
\[
0 \lra \sco_\p(1) \oplus \sco_\p \oplus \sco_\p(-1) \lra K \oplus \sco_\p  
\lra \sci_Z(3) \lra 0
\]
where $K$ is the kernel of an epimorphism $2\sco_\p(2)\oplus 2\sco_\p(1) \ra 
\sco_\p(3)$. 
\end{enumerate}
\end{prop}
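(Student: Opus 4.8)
The plan is to exploit the liaison already set up: since $Z$ is linked to $Z^\prim$ by a complete intersection of type $(3,3)$, I would translate all the hypotheses to the curve $Z^\prim$ and then read off the resolutions of $\sci_Z(3)$ from those of $\sci_{Z^\prim}$ by means of Ferrand's result about liaison recalled in the Introduction. By the fundamental exact sequence of liaison displayed above the statement, the assumptions on $Z$ are equivalent to the following conditions on $Z^\prim$: it is a locally CM curve of degree $d^\prim = 9-d \leq 4$, l.c.i.\ except at finitely many points, with $\tH^1(\sco_{Z^\prim}) = 0$ and with $\omega_{Z^\prim}(1)$ globally generated. The whole problem thus becomes the classification of such low degree curves together with their minimal graded free resolutions.

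First I would dispose of the small degrees. If $d^\prim = 1$ then $Z^\prim$ is a line and $\omega_{Z^\prim}(1) \simeq \sco_{Z^\prim}(-1)$ is not globally generated; hence $d \leq 7$. If $d^\prim = 2$, the global generation of $\omega_{Z^\prim}(1)$ together with $\tH^1(\sco_{Z^\prim})=0$ forces $Z^\prim$ to be a plane conic (the disjoint union of two lines and the non-planar double line are ruled out because for them $\omega_{Z^\prim}(1)$ fails to be globally generated); feeding the Koszul resolution
\[
0 \lra \sco_\p(-3) \lra \sco_\p(-1) \oplus \sco_\p(-2) \lra \sci_{Z^\prim} \lra 0
\]
into Ferrand's formula yields item (i). If $d^\prim = 3$, the global generation of $\omega_{Z^\prim}(1)$ excludes every configuration containing a line as a component, leaving the connected curves of arithmetic genus $0$; these are arithmetically CM with resolution
\[
0 \lra 2\sco_\p(-3) \lra 3\sco_\p(-2) \lra \sci_{Z^\prim} \lra 0\, ,
\]
and Ferrand's formula gives item (ii).

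The substance of the proof is the case $d^\prim = 4$, i.e.\ $d = 5$. Here $\tH^1(\sco_{Z^\prim}) = 0$ leaves exactly two numerical possibilities, according to the value of $\tH^0(\sco_{Z^\prim})$: either $Z^\prim$ is connected of arithmetic genus $0$, or it has two connected components and arithmetic genus $-1$. The requirement that $\omega_{Z^\prim}(1)$ be globally generated then cuts the admissible curves down drastically. In the connected case one is led to the rational quartic on a (unique) quadric, with minimal resolution
\[
0 \lra \sco_\p(-5) \lra 4\sco_\p(-4) \lra 3\sco_\p(-3) \oplus \sco_\p(-2) \lra \sci_{Z^\prim} \lra 0\, ,
\]
while in the disconnected case one is led to a disjoint union of two conics, whose ideal is minimally generated by one quadric and two cubics, with resolution
\[
0 \lra \sco_\p(-6) \lra 2\sco_\p(-5) \oplus \sco_\p(-4) \lra 2\sco_\p(-3) \oplus \sco_\p(-2) \lra \sci_{Z^\prim} \lra 0\, .
\]
Applying Ferrand's formula to each, and then simplifying the resulting monad for $\sci_Z(3)$ (whose rightmost map is a surjection of sheaves, so that its kernel is a vector bundle), one identifies that kernel, up to a trivial summand, with $\Omega_\p(2)$ in the first case, via the Euler sequence, and with the kernel $K$ of an epimorphism $2\sco_\p(2) \oplus 2\sco_\p(1) \ra \sco_\p(3)$ in the second; this produces items (iii) and (iv) respectively.

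The main obstacle is the exhaustive analysis of the degree $4$ curves. One has to show that, under the two constraints, \emph{all} the admissible connected curves of genus $0$ (including the reducible configurations, such as a twisted cubic meeting a line in one point or a conic meeting a conic, together with the non-reduced, l.c.i.-except-finite multiplicity structures) fall under the first resolution type, and that all the admissible disconnected curves fall under the second; simultaneously one must exclude every configuration having a line as a component, since there $\omega_{Z^\prim}(1)$ restricts to $\sco(-1)$ on the line and is not globally generated (for instance a twisted cubic disjoint from a line, or a conic together with two skew lines). This is precisely where the detailed structural results on multiple lines and on multi-component curves of degree $\leq 4$ from Appendices~\ref{A:multilines} and \ref{A:multicomponent} become indispensable, and where essentially all of the case-checking is concentrated.
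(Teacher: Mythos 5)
Your strategy---translate everything to $Z^\prim$ and then classify the admissible curves of degree $d^\prim = 9-d \leq 4$ directly---is genuinely different from the paper's, and that is where the trouble lies. The paper never classifies the curves $Z^\prim$: it uses the hypothesis that $Z^\prim$ is l.c.i.\ except at finitely many points (a hypothesis your argument never invokes) to say that a \emph{general} section of the globally generated sheaf $\omega_{Z^\prim}(1)$ generates it outside a finite set, whence an extension $0 \ra \sco_\p \ra \scf^\prim(2) \ra \sci_{Z^\prim}(3) \ra 0$ with $\scf^\prim$ a stable rank $2$ reflexive sheaf with $c_1 = -1$ and $c_2 = d^\prim - 2 \leq 2$; the known classification of these sheaves (Hartshorne, Chang, Hartshorne--Sols, Manolache) then yields the resolutions uniformly, with no inspection of the individual curves. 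Your route shifts all of that weight onto an exhaustive enumeration of locally CM curves of degree $\leq 4$ satisfying $\tH^1(\sco_{Z^\prim}) = 0$ and $\omega_{Z^\prim}(1)$ globally generated, which you do not carry out (and which is not what Appendices~\ref{A:multilines}--\ref{A:multicomponent} supply), and the parts you do sketch contain errors.

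Concretely: for $d^\prim = 3$ it is not true that global generation of $\omega_{Z^\prim}(1)$ ``excludes every configuration containing a line as a component''---a conic plus an incident line is admissible (its $\omega(1)$ is a quotient of $2\sco_\p$ by the dualized $2\times 3$ resolution), and your sentence contradicts itself, since chains of three lines are connected of genus $0$; only \emph{disjoint} line components are excluded. For $d^\prim = 4$ the dichotomy ``connected $\Leftrightarrow$ genus $0$'' versus ``two components $\Leftrightarrow$ genus $-1$'' fails for non-reduced curves, because $h^0(\sco_{Z^\prim})$ is not the number of connected components: a connected l.c.i.\ double conic with $\sci_C/\sci_{Z^\prim} \simeq \sco_C$ has $\chi(\sco_{Z^\prim}) = 2$ and $\omega_{Z^\prim} \simeq \sco_{Z^\prim}(-1)$ and is admissible (it is a section of one of the Hartshorne--Sols/Manolache bundles occurring in the paper's Case 4), so the $\chi = 2$ branch is not exhausted by two disjoint conics. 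Finally, the resolution you display for two disjoint conics is wrong: by Lemma~\ref{L:zcupw} the ideal is the product of the two $(1,2)$ complete intersection ideals, minimally generated by one quadric, two cubics \emph{and one quartic}, with minimal resolution $0 \ra \sco_\p(-6) \ra 2\sco_\p(-4)\oplus 2\sco_\p(-5) \ra \sco_\p(-2)\oplus 2\sco_\p(-3)\oplus \sco_\p(-4) \ra \sci_{Z^\prim} \ra 0$. (Ferrand's formula applied to this correct resolution does still land in item (iv) after cancellation, but not applied to the one you wrote.) The approach could in principle be completed, but only by supplying the full classification you defer to; the reflexive-sheaf argument is precisely the device that makes that unnecessary.
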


\begin{proof}
Assume, firstly, that $\tH^0(\sci_{Z^\prim}(1)) \neq 0$. Then $Z^\prim$ is a 
complete intersection of type $(1,d^\prim)$. In this case $\omega_{Z^\prim} 
\simeq \sco_{Z^\prim}(d^\prim - 3)$ hence the two conditions $\omega_{Z^\prim}(1)$ 
globally generated and $\tH^0(\omega_{Z^\prim}) = 0$ imply that $d^\prim = 2$. 
In this case, by Ferrand's result about liaison, $\sci_Z(3)$ admits a 
resolution as in item (i) of the statement. 

\vskip2mm 

Assume, from now on, that $\tH^0(\sci_{Z^\prim}(1)) = 0$. Since $Z^\prim$ is 
l.c.i. except at finitely many points and since $\omega_{Z^\prim}(1)$ is 
globally generated, a general global section of this sheaf generates it 
except at finitely many points, hence it defines an extension$\, :$ 
\[
0 \lra \sco_\p \lra \scf^\prim(2) \lra \sci_{Z^\prim}(3) \lra 0
\]
where $\scf^\prim$ is a rank 2 \emph{reflexive sheaf} with Chern classes 
$c_1(\scf^\prim) = -1$ and $c_2(\scf^\prim) = d^\prim - 2$. Our assumption 
$\tH^0(\sci_{Z^\prim}(1)) = 0$ implies that $\tH^0(\scf^\prim) = 0$, hence 
$\scf^\prim$ is stable. Now, Hartshorne's results 
\cite[Cor.~3.3]{ha} and \cite[Thm.~8.2(d)]{ha} imply that 
$c_2(\scf^\prim) \geq 1$ hence $c_2(\scf^\prim) \in \{1,\, 2\}$ because 
$d^\prim \leq 4$ and that, moreover, if $c_2(\scf^\prim) = 1$ then 
$c_3(\scf^\prim) = 1$ and if $c_2(\scf^\prim) = 2$ then $c_3(\scf^\prim) \in 
\{0,\, 2,\, 4\}$. 

\vskip2mm

\noindent
{\bf Case 1.}\quad $c_2(\scf^\prim) = 1$. 

\vskip2mm

\noindent
In this case, by Hartshorne \cite[Lemma~9.4]{ha}, $\scf^\prim$ can be realised 
as an extension$\, :$ 
\[
0 \lra \sco_\p(-1) \lra \scf^\prim \lra \sci_L \lra 0
\] 
where $L$ is a line. One deduces that $\scf^\prim$ and, then, $\sci_{Z^\prim}$ 
have resolutions of the form$\ :$ 
\begin{gather*}
0 \lra \sco_\p(-2) \lra 3\sco_\p(-1) \lra \scf^\prim \lra 0\\
0 \lra 2\sco_\p(-3) \lra 3\sco_\p(-2) \lra \sci_{Z^\prim} \lra 0\, . 
\end{gather*}
Using Ferrand's result about liaison, one gets that $\sci_Z(3)$ admits a 
resolution as in item (ii) of the statement. 

\vskip2mm

\noindent
{\bf Case 2.}\quad $c_2(\scf^\prim) = 2$, $c_3(\scf^\prim) = 4$. 

\vskip2mm

\noindent
In this case, by Hartshorne \cite[Lemma~9.6]{ha}, $\scf^\prim$ can be realised 
as an extension$\, :$ 
\[
0 \lra \sco_\p(-1) \lra \scf^\prim \lra \sci_Y \lra 0
\]
where $Y$ is a plane curve of degree 2. But in this case $\tH^2(\sci_Y(-1))  
\neq 0$, which implies $\tH^2(\scf^\prim(-1)) \neq 0$, 
hence $\tH^2(\sci_{Z^\prim}) 
\neq 0$ and this contradicts our hypothesis that $\tH^0(\sci_Z(2)) = 0$ (see 
the discussion preceeding the statement of the proposition). 

\vskip2mm

\noindent
{\bf Case 3.}\quad $c_2(\scf^\prim) = 2$, $c_3(\scf^\prim) = 2$. 

\vskip2mm

\noindent
In this case, a result of Chang \cite[Lemma~2.4]{ch} implies that $\scf^\prim$ 
can be realized as an extension$\, :$ 
\[
0 \lra \sco_\p(-1) \lra \scf^\prim \lra \sci_Y \lra 0 
\]
where either $Y$ is the union of two disjoint lines or it is a double structure 
on a line $L$ defined by an epimorphism $\sci_L/\sci_L^2 \ra \sco_L$. In 
both cases, $Y$ admits a resolution of the form (see Subsection~\ref{SS:p2} 
and Lemma~\ref{L:zcupw})$\, :$ 
\[
0 \lra \sco_\p(-4) \overset{\displaystyle d_2}{\lra}  
4\sco_\p(-3) \overset{\displaystyle d_1}{\lra}  
4\sco_\p(-2) \lra \sci_Y \lra 0
\] 
One gets resolutions for $\scf^\prim$ and $\sci_{Z^\prim}$ 
of the form$\, :$ 
\begin{gather*}
0 \lra \sco_\p(-4) \overset{\displaystyle d_2}{\lra}  
4\sco_\p(-3) \xra{\begin{pmatrix} \ast\\ d_1 \end{pmatrix}} 
\sco_\p(-1)\oplus 4\sco_\p(-2) \lra \scf^\prim \lra 0\\
0 \lra \sco_\p(-5) \xra{\begin{pmatrix} 0\\ d_2(-1)\end{pmatrix}} 
\sco_\p(-3) \oplus 4\sco_\p(-4) \lra \sco_\p(-2) 
\oplus 4\sco_\p(-3) \lra \sci_{Z^\prim} \lra 0
\end{gather*} 
Using Ferrand's result about liaison, one deduces that $\sci_Z(3)$ has a 
monad of the form$\, :$ 
\[
0 \lra 4\sco_\p \oplus \sco_\p(-1) \lra 4\sco_\p(1) \oplus 3\sco_\p 
\xra{\displaystyle (d_2^\vee(-2)\, ,\, 0)} \sco_\p(2) \lra 0
\]  
In this way, one gets an exact sequence$\, :$ 
\[
0 \lra 4\sco_\p \oplus \sco_\p(-1) \overset{\displaystyle \alpha}{\lra}  
\Omega_\p(2) \oplus 3\sco_\p \lra \sci_Z(3) \lra 0\, .
\] 
The component $4\sco_\p \ra 3\sco_\p$ of $\alpha$ must have rank at least 2 
because any monomorphism $3\sco_\p \ra \Omega_\p(2)$ degenerates along a 
surface in $\piii$. It follows that at least 2 direct summands $\sco_\p$ 
from the above resolution of $\sci_Z(3)$ must cancel (see 
Remark~\ref{R:cancellation} from Appendix~\ref{A:multilines}), hence 
$\sci_Z(3)$ has a resolution as in item (iii) of the statement. 

\vskip2mm

\noindent
{\bf Case 4.}\quad $c_2(\scf^\prim) = 2$, $c_3(\scf^\prim) = 0$. 

\vskip2mm

\noindent
In this case, according to the results of Hartshorne and Sols 
\cite[Prop.~2.1]{hs} or of Manolache \cite[Cor.~2]{m1}, $\scf^\prim$ can be 
realised as an extension$\, :$ 
\[
0 \lra \sco_\p(-1) \lra \scf^\prim \lra \sci_Y \lra 0
\] 
where $Y$ is a double structure on a line $L$ defined by an epimorphism 
$\sci_L/\sci_L^2 \ra \sco_L(1)$. According to the results recalled in 
Subsection~\ref{SS:p2}, $\sci_Y$ has a resolution of the form$\, :$ 
\[
0 \lra \sco_\p(-5) \overset{\displaystyle d_2}{\lra} 
2\sco_\p(-3) \oplus 2\sco_\p(-4) \overset{\displaystyle d_1}{\lra} 
3\sco_\p(-2) \oplus \sco_\p(-3) \lra \sci_Y \lra 0\, .
\] 
One deduces that $\scf^\prim$ and, then, $\sci_{Z^\prim}$ have resolutions of 
the form$\, :$ 
\begin{gather*}
0 \ra \sco_\p(-5) \overset{\displaystyle d_2}{\lra} 
2\sco_\p(-3) \oplus 2\sco_\p(-4) 
\xra{\begin{pmatrix} \ast\\ d_1 \end{pmatrix}} 
\sco_\p(-1) \oplus 3\sco_\p(-2) \oplus \sco_\p(-3) \lra \scf^\prim \ra 0\\
0 \ra \sco_\p(-6) \xra{\begin{pmatrix} 0\\ d_2(-1) \end{pmatrix}} 
\sco_\p(-3) \oplus 2\sco_\p(-4) \oplus 2\sco_\p(-5) 
\ra 
\sco_\p(-2) \oplus 3\sco_\p(-3) \oplus \sco_\p(-4)\\ 
\lra \sci_{Z^\prim} \lra 0\, . 
\end{gather*} 
Now, the result of Ferrand about liaison implies that $\sci_Z(3)$ has a monad 
of the form$\, :$ 
\[  
0 \lra \sco_\p(1) \oplus 3\sco_\p \oplus \sco_\p(-1) \lra 
2\sco_\p(2) \oplus 2\sco_\p(1) \oplus 3\sco_\p 
\xra{\displaystyle (d_2^\vee(-2)\, ,\, 0)} \sco_\p(3) \lra 0\, . 
\]
Denoting by $K$ the kernel of the epimorphism $d_2^\vee(-2) : 2\sco_\p(2) 
\oplus 2\sco_\p(1) \ra \sco_\p(3)$, one gets an exact sequence$\, :$ 
\[
0 \lra \sco_\p(1) \oplus 3\sco_\p \oplus \sco_\p(-1) 
\overset{\displaystyle \alpha}{\lra} K \oplus 3\sco_\p \lra \sci_Z(3) 
\lra 0\, .
\]
The component $3\sco_\p \ra 3\sco_\p$ of $\alpha$ must have rank at least 2, 
because any monomorphism $\sco_\p(1) \oplus 2\sco_\p \ra K$ degenerates along 
a surface in $\piii$. One deduces that at least two direct summands $\sco_\p$ 
from the above resolution of $\sci_Z$ cancel, 
hence $\sci_Z(3)$ admits a resolution as in item (iv) of the statement. 
\end{proof} 

\begin{prop}\label{P:zprimcm}
Using the hypotheses and notation established before Prop.~\ref{P:zprimlci}, 
assume that $Z^\prim$ is not l.c.i. except finitely many points. Then 
$d = 6$ and $\sci_Z(3)$ admits a resolution of the form$\, :$ 
\[
0 \lra 3\sco_\p(-1) \lra 4\sco_\p \lra \sci_Z(3) \lra 0\, .
\]
\end{prop}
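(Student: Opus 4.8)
The plan is to pin down $Z^\prim$ completely. First I would observe that the hypothesis forces $Z^\prim$ to be non-degenerate. Indeed, if $\tH^0(\sci_{Z^\prim}(1)) \neq 0$ then $Z^\prim$ lies in a plane $H$, and a pure one-dimensional locally CM subscheme of the smooth surface $H$ is a Cartier divisor on $H$: its local rings have projective dimension $1$ over $\sco_H$, so by Auslander-Buchsbaum and a square presentation matrix the defining ideal is locally principal. A Cartier divisor on $H$ is then cut out in $\piii$ by the equation of $H$ together with one more equation, hence is l.c.i.\ in $\piii$, contradicting our assumption. So from the outset $\tH^0(\sci_{Z^\prim}(1)) = 0$, and in particular we are never in the complete intersection situation that opened the proof of Prop.~\ref{P:zprimlci}.

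Next I would locate the non-l.c.i.\ locus. By hypothesis it is not finite, hence one-dimensional, and it is contained in the pure one-dimensional $Z^\prim$. Since $Z^\prim$ is generically l.c.i.\ along every reduced component (a reduced curve is generically smooth in characteristic $0$) and, by the computation recalled in Subsection~\ref{SS:p2}, along every multiplicity $\leq 2$ structure on a line, there must be a line $L$ with $Z^\prim$ of multiplicity at least $3$ along $L$. Because $d^\prim = 9 - d \leq 4$, this line is unique and carries multiplicity $3$ or $4$, leaving exactly three possibilities: (a) $d^\prim = 3$ and $Z^\prim$ is a triple line on $L$; (b) $d^\prim = 4$ and $Z^\prim$ is a quadruple line on $L$; (c) $d^\prim = 4$ and $Z^\prim$ is the union of a non-l.c.i.\ triple line on $L$ and a residual line.

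I would then eliminate (b) and (c) and identify the curve in (a), using the cohomological constraints together with the classification in the appendices. Recall that $\tH^2(\sci_{Z^\prim}) \cong \tH^1(\sco_{Z^\prim})$ vanishes and that $\omega_{Z^\prim}(1)$ is globally generated. Running through the resolutions of the multiple lines of degree $\leq 4$ from Appendix~\ref{A:multilines} and of the reducible curves from Appendix~\ref{A:multicomponent}, I would check that the quadruple lines in (b) and the reducible curves in (c) either violate $\tH^1(\sco_{Z^\prim}) = 0$ or have $\omega_{Z^\prim}(1)$ not globally generated, and are therefore excluded. The surviving case is a non-l.c.i.\ triple line with $\tH^1(\sco_{Z^\prim}) = 0$, which is arithmetically Cohen-Macaulay with Hilbert-Burch resolution
\[
0 \lra 2\sco_\p(-3) \lra 3\sco_\p(-2) \lra \sci_{Z^\prim} \lra 0\, ,
\]
the model being the first infinitesimal neighbourhood $V(\sci_L^2)$ of $L$, where $\sci_{Z^\prim} = (x_2, x_3)^2$ has precisely the two Koszul-type syzygies giving the $3 \times 2$ matrix above. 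In particular $d^\prim = 3$, so $d = 6$.

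Finally I would feed this resolution into Ferrand's linkage recipe for the complete intersection of type $(3,3)$ linking $Z^\prim$ to $Z$. Since the resolution has length two (the term $A_2$ of the recipe is zero), the output is not merely a monad but a genuine resolution: dualizing and inserting the two copies of $\sco_\p(3)$ coming from the linking cubics yields
\[
0 \lra 3\sco_\p(2) \lra 4\sco_\p(3) \lra \sci_Z(6) \lra 0\, ,
\]
and twisting by $-3$ gives the asserted resolution of $\sci_Z(3)$. I expect the main obstacle to be the third step: systematically discarding the degree-$4$ configurations (b) and (c) and confirming that every admissible non-l.c.i.\ triple line carries the stated Hilbert-Burch resolution. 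This is exactly the bookkeeping for which the tables in Appendices~\ref{A:multilines} and \ref{A:multicomponent} are designed, and translating global generation of $\omega_{Z^\prim}(1)$ into a condition read off those resolutions is the most delicate point.
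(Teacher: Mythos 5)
Your route is essentially the paper's. The reduction to three candidates for $Z^\prim$ (your (a), (b), (c) are exactly the paper's Cases 1, 3, 2: the first infinitesimal neighbourhood $L^{(1)}$ of a line, a thick degree-$4$ structure on a line, and $L^{(1)}\cup L^\prim$ -- note that the only generically non-l.c.i.\ structures on a line are $L^{(1)}$ and the thick quadruple structures, so your ``triple line'' in (a) and (c) is forced to be $L^{(1)}$), the identification of the surviving curve as $L^{(1)}$ with the Hilbert--Burch resolution $0 \to 2\sco_\p(-3) \to 3\sco_\p(-2) \to \sci_{Z^\prim} \to 0$, and the final Ferrand computation (with $A_2 = 0$, so the monad degenerates to the stated resolution of $\sci_Z(3)$) all match the paper.

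The one place your deferred ``bookkeeping'' would stall is the elimination of the thick quadruple structures. For such a $Z^\prim$ one has $0 \to \sco_L(l) \to \sco_{Z^\prim} \to \sco_{L^{(1)}} \to 0$ with $l \geq -2$, and the two tests you propose give, respectively, $\tH^1(\sco_{Z^\prim}) \cong \tH^1(\sco_L(l))$, hence $l \geq -1$, and (from the dual sequence $0 \to \omega_{L^{(1)}} \to \omega_{Z^\prim} \to \sco_L(-l-2) \to 0$) $l \leq -1$ for $\omega_{Z^\prim}(1)$ to be globally generated. So the purely cohomological sieve leaves $l = -1$ standing, with neither condition violated. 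The paper closes this with a structural fact, Prop.~\ref{P:geniwcm}: a thick structure that is nowhere l.c.i.\ has $l$ even, whence $l = -2$ and $\tH^1(\sco_{Z^\prim}) \neq 0$, a contradiction. (Equivalently, the $l = -1$ thick structure is l.c.i.\ off a finite set and never satisfies the hypothesis, but seeing that requires the same parity analysis.) Reading conditions off the appendix resolutions alone will not supply this input. The rest of your elimination is fine as sketched: for $L^{(1)}\cup L^\prim$ with disjoint lines one restricts $\omega_{Z^\prim}(1)$ to $L^\prim$, and with meeting lines the resolution of Lemma~\ref{L:l1(1)cupl} (or the paper's liaison argument) gives $\tH^2(\sci_{Z^\prim}) \neq 0$ directly.
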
 

\begin{proof} 
There are only three possibilities for $Z^\prim$.  

\vskip2mm

\noindent
{\bf Case 1.}\quad $Z^\prim = L^{(1)} =$ \emph{first infinitesimal neighbourhood 
of a line} $L \subset \piii$. 

\vskip2mm

\noindent
$Z^\prim$ is the subscheme of $\piii$ defined by $\sci_L^2$, hence 
$\sci_{Z^\prim}$ admits a resolution of the form$\, :$ 
\[
0 \lra 2\sco_\p(-3) \lra 3\sco_\p(-2) \lra \sci_{Z^\prim} \lra 0
\]
which, by dualization, provides a resolution of $\omega_{Z^\prim}$ 
\[
0 \lra \sco_\p(-4) \lra 3\sco_\p(-2) \lra 2\sco_\p(-1) \lra \omega_{Z^\prim} 
\lra 0\, .
\]
One sees that, indeed, $\omega_{Z^\prim}(1)$ is globally generated and that 
$\tH^0(\omega_{Z^\prim}) = 0$. Applying Ferrand result about liaison to the 
above resolution of $\sci_{Z^\prim}$ one gets that $\sci_Z(3)$ admits a 
resolution as in the statement. 

\vskip2mm

\noindent
{\bf Case 2.}\quad $Z^\prim = L^{(1)} \cup L^\prim$, \emph{with} $L$ 
\emph{and} $L^\prim$ \emph{distinct lines}. 

\vskip2mm

\noindent
If $L\cap L^\prim = \emptyset$ then $\omega_{Z^\prim}(1)$ is not globally 
generated because $\omega_{L^\prim}(1) \simeq \sco_{L^\prim}(-1)$. 
If $L \cap L^\prim 
\neq \emptyset$ then, choosing conveniently the homogeneous coordinates on 
$\piii$, one may assume that $L$ has equations $x_2 = x_3 = 0$ and 
$L^\prim$ has equations $x_1 = x_3 = 0$. Then $I(Z^\prim) = (x_2x_3,\, x_3^2,\, 
x_1x_2^2)$ (see Lemma~\ref{L:l1(1)cupl} with $c = 0$). 
$Z^\prim$ is linked, via the complete intersection defined 
by $x_3^2$ and $x_1x_2^2$, to $L\cup L^\prim$. The fundamental exact sequence 
of liaison$\, :$ 
\[
0 \lra \sco_\p(-5) \lra \sco_\p(-2) \oplus \sco_\p(-3) \lra 
\sci_{L\cup L^\prim} \lra \omega_{Z^\prim}(-1) \lra 0
\] 
and the fact that $\tH^0(\sci_{L\cup L^\prim}(1)) \neq 0$ implies that 
$\tH^0(\omega_{Z^\prim}) \neq 0$, which \emph{contradicts} our hypothesis (see 
the discussion before Prop.~\ref{P:zprimlci}). 

\vskip2mm

\noindent
{\bf Case 3.}\quad $Z^\prim$ \emph{is a thick structure of degree} 4 
\emph{on a line} $L$. 

\vskip2mm

\noindent
In this case, by the results recalled in Subsection~\ref{SS:thick} of 
Appendix~\ref{A:multilines}, one has an exact sequence$\, :$ 
\[
0 \lra \sco_L(l) \lra \sco_{Z^\prim} \lra \sco_{L^{(1)}} \lra 0
\]
for some $l \geq -2$. Applying $\sce xt^2_{\sco_\p}(-,\omega_\p)$, one gets an 
exact sequence$\, :$ 
\[
0 \lra \omega_{L^{(1)}} \lra \omega_{Z^\prim} \lra \omega_L(-l) \lra 0\, .
\]
But $\omega_L(-l) \simeq \sco_L(-l-2)$ hence $\omega_{Z^\prim}(1)$ globally 
generated implies $l \leq -1$. Since we assumed that $Z^\prim$ is not 
l.c.i. except at finitely many points, Prop.~\ref{P:geniwcm} implies that 
$l$ must be even, hence $l = -2$. Since $\text{h}^0(\sco_{L^{(1)}}) = 1$ and 
$\tH^1(\sco_L(-2)) \neq 0$, the above exact sequence involving 
$\sco_{Z^\prim}$ shows that $\tH^1(\sco_{Z^\prim}) \neq 0$, which 
\emph{contradicts} our hypothesis (see the discussion before 
Prop.~\ref{P:zprimlci}).     
\end{proof}

\section{Curves of degree at most 4}\label{S:deg4} 

We want to classify, in this section, the locally CM curves $Z$ in $\piii$, 
l.c.i. except at finitely many points, of degree $d \leq 4$, with $\sci_Z(3)$ 
globally generated. More precisely, we want to list the monads of the ideal 
sheaves of these curves. According to Prop.~\ref{P:zinci23}, we can assume,  
in the case $d = 4$, that $\tH^0(\sci_Z(2)) = 0$.  

\begin{lemma}\label{L:h0iz(3)geq5}
If $d = 4$ and $\sci_Z(3)$ is globally generated then ${\fam0 h}^0(\sci_Z(3)) 
\geq 5$. 
\end{lemma}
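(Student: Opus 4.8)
The plan is to bound $\h^0(\sci_Z(3))$ from below by comparing it with $\h^0(\sco_Z(3))$ and controlling the latter through repeated general hyperplane sections. From the structure sequence $0 \to \sci_Z(3) \to \sco_\p(3) \to \sco_Z(3) \to 0$ and $\h^0(\sco_\p(3)) = 20$ one reads off at once
\[
\h^0(\sci_Z(3)) \ge 20 - \h^0(\sco_Z(3))\, ,
\]
so it suffices to prove $\h^0(\sco_Z(3)) \le 15$.

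To bound $\h^0(\sco_Z(3))$ I would cut $Z$ by a general plane $H$ with linear equation $h$. Since $Z$ is locally CM, it is of pure dimension $1$ without embedded or isolated points, so a general $h$ is a nonzerodivisor on $\sco_Z$ and $\Gamma := Z \cap H$ is a length-$4$ subscheme of $H \simeq \pii$. The exact sequences $0 \to \sco_Z(n-1) \xrightarrow{h} \sco_Z(n) \to \sco_\Gamma(n) \to 0$, together with $\h^0(\sco_\Gamma(n)) = \deg \Gamma = 4$ for all $n$ (a zero-dimensional twist is trivial), give $\h^0(\sco_Z(n)) \le \h^0(\sco_Z(n-1)) + 4$; iterating for $n = 3,2,1$ yields $\h^0(\sco_Z(3)) \le \h^0(\sco_Z) + 12$. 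Combined with the previous inequality this gives
\[
\h^0(\sci_Z(3)) \ge 8 - \h^0(\sco_Z)\, ,
\]
so the entire statement reduces to the claim $\h^0(\sco_Z) \le 3$.

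It remains to prove this claim, and this is where the two hypotheses not yet used come in, namely $\tH^0(\sci_Z(2)) = 0$ and the global generation of $\sci_Z(3)$. The number $\h^0(\sco_Z)$ equals the sum of the dimensions of the algebras of global functions over the connected components; since $\deg Z = 4$ and each component contributes at least $1$, the inequality $\h^0(\sco_Z) \ge 4$ forces $Z$ into a short list of very degenerate configurations, the principal one being four pairwise disjoint lines. I would exclude each of these. For four disjoint lines: if they lie on a quadric then $\tH^0(\sci_Z(2)) \neq 0$, excluded; otherwise the four lines admit only finitely many common transversals, and any transversal meets the four lines in four points, hence meets every cubic of the system $\tH^0(\sci_Z(3))$ in at least $4 > 3$ points and is therefore contained in it. Thus the base locus of $\tH^0(\sci_Z(3))$ strictly contains $Z$, contradicting the global generation of $\sci_Z(3)$. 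The remaining borderline configurations with $\h^0(\sco_Z) \ge 4$ (a double line together with disjoint lines, pairs of double lines, and the like) are ruled out in the same spirit, a forced common transversal or a forced quadric colliding with one of the two hypotheses.

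The main obstacle is precisely this last step: carefully enumerating the degree-$4$ locally CM curves (l.c.i.\ except at finitely many points) with $\h^0(\sco_Z) \ge 4$ and checking that each of them either lies on a quadric or fails to have $\sci_Z(3)$ globally generated. I expect the four-skew-lines case to be the representative difficulty, disposed of by the transversal argument above, while the non-reduced borderline cases will lean on the structural description of multiple lines developed in the appendices.
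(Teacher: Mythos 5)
Your numerical frame is sound: $\h^0(\sci_Z(3))\ge 20-\h^0(\sco_Z(3))$, and cutting three times by a general plane gives $\h^0(\sco_Z(3))\le\h^0(\sco_Z)+12$, so the lemma reduces to the claim $\h^0(\sco_Z)\le 3$. The gap is in your assessment of what that claim costs. You are implicitly treating $\h^0(\sco_Z)$ as the number of connected components, which is valid only for reduced curves; for non-reduced locally CM curves it can be large even when $Z$ is connected. A quasiprimitive quadruple structure on a line has $\sco_Z\simeq\sco_L\oplus\sco_L(l)\oplus\sco_L(2l+m)\oplus\sco_L(3l+m+n)$ as an $\sco_L$-module, so $l=1$ already gives $\h^0(\sco_Z)\ge 10$, while the generators of $I(Z)$ listed in Prop.~\ref{P:geniz} all have degree $\ge 3$, i.e.\ $\tH^0(\sci_Z(2))=0$; similarly the union of a quasiprimitive triple line (with $l=0$, $m=1$) and a line meeting it can have $\h^0(\sco_Z)=4$ (compute degree $0$ of the resolution in Prop.~\ref{P:reshoycupl2}(d)). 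None of these lies on a quadric, none is a disjoint union of two degree-$2$ curves, and neither a common transversal nor a ``forced quadric'' excludes them: what excludes them is the structure theory of multiple lines from the appendices, via quotients of $\sci_Z$ of the form $\sco_{L}(-l-m-n-2)$ and the like. Worse, the places where the paper actually carries out these exclusions (Lemma~\ref{L:d4zquasiprimline}, Case~1 of Lemma~\ref{L:ycuplprim}, Case~2 of the proof of Prop.~\ref{P:z}) all cite the present lemma, so importing them would be circular. A smaller point: $\tH^0(\sci_Z(2))=0$ is not a hypothesis of the lemma, so you must first dispose of the case $\tH^0(\sci_Z(2))\ne 0$.

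For contrast, the paper's proof never looks at the geometry of $Z$ at all. It rules out $\h^0(\sci_Z(3))\le 2$ because $Z$ is not a complete intersection of two cubics, rules out $\h^0(\sci_Z(3))=3$ by a parity argument (the kernel of $3\sco_\p\to\sci_Z(3)$ would be a rank-$2$ bundle with $c_1=-3$, $c_2=5$, contradicting $c_1c_2\equiv 0\pmod 2$), and spends all its effort on the case $\h^0(\sci_Z(3))=4$, via the rank-$3$ syzygy bundle, liaison to a degree-$5$ curve, and the spectrum of a stable rank-$2$ reflexive sheaf with $c_1=0$, $c_2=2$. To rescue your route you would have to supply, for every connected non-reduced degree-$4$ curve with $\h^0(\sco_Z)\ge 4$ and $\tH^0(\sci_Z(2))=0$, an independent proof that $\sci_Z(3)$ is not globally generated; that is essentially the content of Section~\ref{S:deg4}, not a routine verification.
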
 

\begin{proof} The assertion is clear if $\tH^0(\sci_Z(2)) \neq 0$. Assume, 
from now on, that $\tH^0(\sci_Z(2)) = 0$. Since $Z$ is not a complete 
intersection of type (3,3) it follows that $\text{h}^0(\sci_Z(3)) \geq 3$. 

If $\text{h}^0(\sci_Z(3)) = 3$ then one has an exact sequence$\, :$ 
\[
0 \lra E \lra 3\sco_\p \lra \sci_Z(3) \lra 0
\] 
with $E$ a rank 2 vector bundle. Let us recall the following fact$\, :$ if 
$W$ is a closed subscheme of $\piii$ of dimension $\leq 1$ and if 
$\text{deg}\, W$ is defined by $\chi(\sco_W(t)) = t\text{deg}\, W + 
\chi(\sco_W)$, $\forall \, t\in \z$, (such that $\text{deg}\, W = 0$ if 
$\dim W \leq 0$) then$\, :$ 
\[
c_1(\sci_W(t)) = t\  \text{and}\  c_2(\sci_W(t)) = \text{deg}\, W\, .
\]
(\emph{Indeed}, it suffices to compute $c_1(\sco_W(t))$ and $c_2(\sco_W(t))$. 
If $H \subset \piii$ is a general plane then the latter Chern classes are 
(numerically) equal to the corresponding Chern classes of $\sco_{H\cap W}(t)$ 
on $H \simeq \pii$ and these can be computed as in \cite[Lemma~2.7]{ha}.) 

One gets that $c_1(E) = -3$ and $c_2(E) = 5$ but this is not possible because 
the Chern classes of a rank 2 vector bundle on $\piii$ must satisfy the 
relation $c_1c_2 \equiv 0\, (\text{mod}\  2)$ (by the Riemann-Roch formula). 
It thus remains that $\text{h}^0(\sci_Z(3)) \geq 4$. 

Assume, finally, that $\text{h}^0(\sci_Z(3)) = 4$. Eliminating this case turns 
out to be more complicated. One has an exact sequence$\, :$ 
\[
0 \lra F \lra 4\sco_\p \lra \sci_Z(3) \lra 0
\]    
where $F$ is a rank 3 vector bundle with Chern classes $c_1(F) = -3$ and 
$c_2(F) = 5$. By our assumption, $\tH^i(F) = 0$, $i = 0\, ,1$.  
Now, $Z$ is directly linked by a complete intersection of type $(3,3)$ to a 
curve $Z^\prim$ of degree 5. By Ferrand's result about liaison, one gets an 
exact sequence$\, :$ 
\begin{equation}\label{E:fdual} 
0 \lra 2\sco_\p \lra F^\vee \lra \sci_{Z^\prim}(3) \lra 0\, .
\end{equation}
One cannot have $\tH^0(\sci_{Z^\prim}(2)) \neq 0$ because this would imply that 
$Z^\prim$ is directly linked to a line by a complete intersection of type 
(2,3) which would imply in turn, by applying twice Ferrand's result about 
liaison, that $\tH^0(\sci_Z(2)) \neq 0$. It follows that 
$\tH^0(F^\vee(-1)) = 0$. 

On the other hand, $\tH^0(F(1)) \neq 0$. \emph{Indeed}, 
let $H \subset \piii$ be a plane cutting $Z$ properly. $H \cap Z$ is a 
0-dimensional subscheme of length 4 of $H$ hence $\h^1(\sci_{H \cap Z,H}) = 3$. 
One deduces, from the Lemma of Le Potier (recalled in 
\cite[Lemma~1.22]{acm}), that $\tH^1(\sci_{H \cap Z,H}(l)) = 0$ for $l \geq 3$. 
One gets, in particular, that $\h^0(\sci_{H \cap Z,H}(4)) = 15 - 4 = 11$ hence, 
using the exact sequence $\, :$ 
\[
0 \lra \sci_Z(3) \lra \sci_Z(4) \lra \sci_{H \cap Z,H}(4) \lra 0\, ,
\]  
$\h^0(\sci_Z(4)) \leq 11 + 4 = 15$. One deduces, from the exact sequence 
defining $F$, that $\h^0(F(1)) \geq 4 \times 4 - 15 = 1$.  

Consider, now, the normalized rank 3 vector bundle $G := F(1)$. It has $c_1(G) 
= 0$ and $c_2(G) = 2$. Moreover, $\tH^i(G(-1)) = 0$, $i = 0\, ,1$, 
$\tH^0(G^\vee) = 0$ and $\tH^0(G) \neq 0$. 
Since $\tH^0(G(-1)) = 0$, the scheme of zeroes $W$ of a non-zero global 
section of $G$ must have dimension $\leq 1$. One has an exact sequence$\, :$ 
\begin{equation}\label{E:fgdualiw}
0 \lra \scf \lra G^\vee \lra \sci_W \lra 0
\end{equation}
where $\scf$ is a rank 2 reflexive sheaf with $c_1(\scf) = 0$. Since 
$\tH^0(\scf) = 0$, $\scf$ is stable. It follows that $c_2(\scf) \geq 1$ (for 
results about stable rank 2 reflexive sheaves one may consult Hartshorne 
\cite{ha}, especially Sect. 7). On the other hand, one gets, using the exact 
sequence \eqref{E:fgdualiw}, that$\, :$ 
\[
c_2(\scf) + \text{deg}\, W = c_2(G^\vee) = 2\, .
\]  
If $c_2(\scf) = 1$ then $\scf$ is a nullcorrelation bundle. In particular, it 
is locally free. One deduces, from the exact sequence \eqref{E:fgdualiw}, 
that $W$ is locally CM of pure codimension 2 in $\piii$. Since 
$\text{deg}\, W = 1$, $W$ is a line. $\scf$ being a nullcorrelation bundle 
one has $\text{h}^2(\scf(-3)) = 1$. It follows that $\tH^2(G^\vee(-3)) \neq 0$ 
hence, by Serre duality, $\tH^1(G(-1)) \neq 0$, a \emph{contradiction}. 

It remains that $c_2(\scf) = 2$. In this case $\text{deg}\, W = 0$, i.e., 
$\dim W \leq 0$. Dualizing the exact sequence \eqref{E:fgdualiw} and taking 
into account that $\scf^\vee \simeq \scf$ (because $c_1(\scf) = 0$), one gets 
an exact sequence$\, :$ 
\begin{equation}\label{E:opgf}
0 \lra \sco_\p \lra G \lra \scf \lra 0\, .
\end{equation}
The possible \emph{spectra} of $\scf$ are $(0,0)$, $(-1,0)$ 
and $(-1,-1)$. For the 
first two spectra one has $\tH^1(\scf(-1)) \neq 0$ which is not possible 
because $\tH^1(G(-1)) = 0$. It remains that the spectrum of $\scf$ is 
$(-1,-1)$. In particular, $c_3(\scf) = 4$, hence, by \cite[Prop.~2.6]{ha}, 
$\text{h}^0(\sce xt^1_{\sco_\p}(\scf,\sco_\p)) = 4$. Dualizing the exact sequence 
\eqref{E:opgf}, one gets that $\sce xt^1_{\sco_\p}(\scf,\sco_\p) \simeq \sco_W$. 
Now, by Chang \cite[Lemma~2.9]{ch}, $\scf$ admits a resolution of the 
form$\, :$ 
\[
0 \lra 2\sco_\p(-2) \overset{\displaystyle \phi}{\lra} 4\sco_\p(-1) \lra 
\scf \lra 0\, .
\]
Dualizing this exact sequence one gets the following presentation$\, :$ 
\[
4\sco_\p(1) \xra{\displaystyle \phi^\vee} 2\sco_\p(2) \lra 
\sce xt^1_{\sco_\p}(\scf,\sco_\p) \lra 0\, .
\]
Since $\sce xt^1_{\sco_\p}(\scf,\sco_\p) \simeq \sco_W$, the properties of 
Fitting ideals show that $W$ is defined, as a closed subscheme of $\piii$, 
by the $2\times 2$ minors of the $2\times 4$ matrix of linear forms 
defining $\phi^\vee$. Considering the Eagon-Northcott complex associated 
to $\phi^\vee(-2) : 4\sco_\p(-1) \ra 2\sco_\p$, one gets the following 
resolution of $\sco_W$$\, :$ 
\[
0 \lra 3\sco_\p(-4) \lra 8\sco_\p(-3) \lra 6\sco_\p(-2) \lra \sco_\p \lra 
\sco_W \lra 0\, .
\] 
One deduces that $\tH^0(\sci_W(1)) = 0$. One gets, now, from the exact 
sequence \eqref{E:fgdualiw}, that $\tH^0(\scf(1)) \izo \tH^0(G^\vee(1))$. It 
follows that the dependence locus of any two global sections of $G^\vee(1)$ 
contains a surface in $\piii$. Since $G^\vee(1) \simeq F^\vee$, this 
\emph{contradicts} the existence of the exact sequence \eqref{E:fdual}.  
\end{proof}    

\begin{lemma}\label{L:yconnected} 
Let $Y$ be a reduced, connected curve of degree $3$ in $\piii$. Then either 
$Y$ is a complete intersection of type $(1,3)$ or $Y$ is directly linked to 
a line by a complete intersection of type $(2,2)$.  
\end{lemma}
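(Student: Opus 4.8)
The plan is to distinguish two cases according to whether $Y$ is degenerate. First I would dispose of the degenerate case: if $\tH^0(\sci_Y(1)) \neq 0$ then $Y$ lies in a plane $H = V(\ell)$ and, being reduced of degree $3$ in $H \simeq \pii$, it is the zero divisor of a cubic form $f$ with $\ell \nmid f$; thus $\sci_Y = (\ell, f)$ with $\ell,\, f$ a regular sequence and $Y$ is a complete intersection of type $(1,3)$, the first alternative of the statement.

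So assume $\tH^0(\sci_Y(1)) = 0$, i.e.\ $Y$ is nondegenerate. The goal is now to realise $Y$ inside a complete intersection of two quadrics whose residual is a line. The first step is to count the quadrics through $Y$. A reduced, connected, nondegenerate space curve of degree $3$ has all components rational and meeting along a tree of points (it is a twisted cubic, or an irreducible conic together with a transversal line, or a connected non-coplanar configuration of three lines), so $\chi(\sco_Y) = 1$ and $\h^1(\sco_Y(2)) = 0$. Hence $\chi(\sco_Y(2)) = 2\cdot 3 + 1 = 7$, and from $0 \lra \sci_Y(2) \lra \sco_\p(2) \lra \sco_Y(2) \lra 0$ together with $\h^2(\sci_Y(2)) = \h^1(\sco_Y(2)) = 0$ one gets $\h^0(\sci_Y(2)) = 3 + \h^1(\sci_Y(2)) \geq 3$. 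In particular $Y$ lies on at least three linearly independent quadrics.

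The crux is to select two quadrics $Q_1,\, Q_2 \in \tH^0(\sci_Y(2))$ that meet properly, i.e.\ that have no common plane as a factor. Suppose, on the contrary, that every two members of $\tH^0(\sci_Y(2))$ share a plane; then every member is reducible. I would argue that they all share one fixed plane $H$: if three independent members had no common linear factor, a generic combination of them would be an irreducible (rank $3$) quadric, which would then meet the remaining members properly, against the assumption. Writing each quadric through $Y$ as $H\cdot\ell$, the components of $Y$ not contained in $H$ would lie in the intersection of the planes $V(\ell)$; since the assignment $H\cdot\ell \mapsto \ell$ is injective and $\h^0(\sci_Y(2)) \geq 3$, that intersection is at most a point, forcing $Y \subset H$ and contradicting nondegeneracy. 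Therefore a properly intersecting pair $Q_1,\, Q_2$ exists.

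Finally, $X := Q_1 \cap Q_2$ is a complete intersection of type $(2,2)$ containing $Y$ as a subscheme. Since $Y$ is reduced, it is locally CM of pure dimension $1$, so Ferrand's result about liaison (recalled in the introduction) exhibits the residual curve $Y^{\prime}$, defined by the ideal $\sci_X : \sci_Y$, as a locally CM curve of pure dimension $1$ meeting $Y$ in dimension $0$ and with $\text{deg}\, Y^{\prime} = 4 - \text{deg}\, Y = 1$. A locally CM curve of degree $1$ is a line $L$, and $Y$ is directly linked to $L$ by $X$, which is the second alternative. I expect the main obstacle to be the proper-intersection step of the third paragraph; the genus and degree computations and the concluding liaison argument are routine given the machinery already in the paper.
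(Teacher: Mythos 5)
Your argument is correct in outline but follows a genuinely different route from the paper's. The paper never classifies the nondegenerate reduced connected cubics: it picks three noncollinear points of $Y$ spanning a plane $H$ that meets $Y$ properly, deduces that $\tH^0(\sco_Y(1)) \ra \tH^0(\sco_{H\cap Y}(1))$ is surjective, and from the restriction sequences concludes $\tH^1(\sco_Y) = 0$ and $\tH^1(\sci_Y(1)) = 0$; hence $\sci_Y$ is $2$-regular and $\sci_Y(2)$ is globally generated, after which two general quadrics through $Y$ link it to a degree-one locally CM curve, i.e.\ a line. You instead get $\h^0(\sci_Y(2)) \geq 3$ from the classification of such curves ($\chi(\sco_Y) = 1$, $\h^1(\sco_Y(2)) = 0$) and then analyse the linear system of quadrics directly to extract a properly intersecting pair. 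Both are legitimate; the regularity argument buys global generation with no case analysis and no appeal to the classification, while yours is more elementary and self-contained. Both proofs leave the final liaison step equally implicit, so no complaint there.

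The one step that does not stand as written is the proper-intersection argument. The implication ``three independent quadrics with no common linear factor have an irreducible generic combination'' is false: $\ell^2$, $\ell m$, $m^2$ are independent, have no common factor, and every combination $q(\ell,m)$ has rank at most $2$. This configuration happens to be excluded by your standing hypothesis (since $\ell^2$ and $m^2$ already meet properly), so your claim is only vacuously saved, not proved. The clean repair is elementary: note first that if some member of $V := \tH^0(\sci_Y(2))$ is irreducible, it meets any independent member properly and you are done. Otherwise every member is reducible; fix one, say $\ell m$. Every other member shares a plane with it, hence is divisible by $\ell$ or by $m$, so $V$ is the union of the two linear subspaces $V \cap \ell S_1$ and $V \cap m S_1$ and therefore equals one of them. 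Thus all quadrics through $Y$ share a fixed plane, and since $\dim V \geq 3$ the residual planes meet in at most a point, forcing $Y$ into that fixed plane --- the contradiction with nondegeneracy you wanted. With that substitution your proof is complete.
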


\begin{proof} 
Assume that $\tH^0(\sci_Y(1)) = 0$. Then one can choose three noncolinear 
points $P_1$, $P_2$, $P_3$ of $Y$ such that the plane $H$ determined by them 
intersects $Y$ properly. In this case the map $\tH^0(\sco_\p(1)) \ra 
\tH^0(\sco_{\{P_1,P_2,P_3\}}(1))$ is surjective, hence the map 
$\tH^0(\sco_Y(1)) \ra \tH^0(\sco_{H\cap Y}(1))$ is surjective, too. Using the 
exact sequences$\, :$ 
\[
0 \lra \sco_Y(i) \lra \sco_Y(i+1) \lra \sco_{H\cap Y}(i+1) \lra 0
\] 
one derives (when $i = 0$) that $\text{h}^0(\sco_Y(1)) = 4$ and that 
$\tH^1(\sco_Y(i)) \ra \tH^1(\sco_Y(i+1))$ is injective for $i \geq 0$, hence 
$\tH^1(\sco_Y) = 0$. Using the exact sequence$\, :$ 
\[
0 \lra \sci_Y(1) \lra \sco_\p(1) \lra \sco_Y(1) \lra 0
\]
one deduces that $\tH^1(\sci_Y(1)) = 0$. Since $\tH^2(\sci_Y) \simeq 
\tH^1(\sco_Y) = 0$ it follows that $\sci_Y$ is 2-regular, hence $\sci_Y(2)$ 
is globally generated. 
\end{proof}

\begin{lemma}\label{L:h0iz(2)neq0} 
Let $Z$ be a reduced, connected curve of degree $4$ in $\piii$. Then 
${\fam0 H}^0(\sci_Z(2)) \neq 0$. 
\end{lemma}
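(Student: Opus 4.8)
The plan is to produce a quadric through $Z$ by a cohomological dimension count, bounding $\h^0(\sco_Z(2))$ from above by $9$. From the exact sequence
\[
0 \lra \sci_Z(2) \lra \sco_\p(2) \lra \sco_Z(2) \lra 0
\]
and $\h^0(\sco_\p(2)) = 10$ one gets $\h^0(\sci_Z(2)) \geq 10 - \h^0(\sco_Z(2))$, so it suffices to show that $\h^0(\sco_Z(2)) \leq 9$.

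To this end I would compute the Euler characteristic. Since $Z$ has dimension $1$ and degree $4$, the Hilbert polynomial gives $\chi(\sco_Z(2)) = 2\cdot 4 + \chi(\sco_Z) = 8 + \chi(\sco_Z)$. As $Z$ is reduced and connected (and proper over the algebraically closed field $k$), the finite reduced $k$-algebra $\tH^0(\sco_Z)$ has no nontrivial idempotents, so $\tH^0(\sco_Z) = k$; hence $\chi(\sco_Z) = 1 - \h^1(\sco_Z)$ and $\chi(\sco_Z(2)) = 9 - \h^1(\sco_Z)$. Therefore $\h^0(\sco_Z(2)) = 9 - \h^1(\sco_Z) + \h^1(\sco_Z(2))$, and the desired bound $\h^0(\sco_Z(2)) \leq 9$ is equivalent to the inequality $\h^1(\sco_Z(2)) \leq \h^1(\sco_Z)$.

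The main point is thus to prove this last inequality, i.e. that $\h^1(\sco_Z(l))$ is nonincreasing in $l$ for $l \geq 0$. A reduced curve of pure dimension $1$ is Cohen--Macaulay, so a general plane $H \subset \piii$ cuts $Z$ in a $0$-dimensional scheme $Z \cap H$ and the corresponding linear form is a nonzerodivisor on $\sco_Z$; this yields exact sequences $0 \lra \sco_Z(l) \lra \sco_Z(l+1) \lra \sco_{Z\cap H}(l+1) \lra 0$. Since $Z \cap H$ is $0$-dimensional, $\tH^1(\sco_{Z\cap H}(l+1)) = 0$, so the maps $\tH^1(\sco_Z(l)) \ra \tH^1(\sco_Z(l+1))$ are surjective and $\h^1(\sco_Z(l+1)) \leq \h^1(\sco_Z(l))$. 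Applying this for $l = 0$ and $l = 1$ gives $\h^1(\sco_Z(2)) \leq \h^1(\sco_Z)$, whence $\h^0(\sco_Z(2)) \leq 9$ and finally $\h^0(\sci_Z(2)) \geq 1$.

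I expect the only delicate points to be the two inputs used above: that $Z$ connected and reduced forces $\tH^0(\sco_Z) = k$, and that a general plane section corresponds to a nonzerodivisor, for which one uses that a reduced connected curve of positive degree has no isolated points and is therefore Cohen--Macaulay of pure dimension $1$. Neither step is serious, so the whole proof should reduce to a short Riemann--Roch estimate rather than a case analysis over the possible degenerations of $Z$ (plane quartics, elliptic quartics, rational quartics, and the various reducible connected configurations), each of which one can check individually as a sanity test.
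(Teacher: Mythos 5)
Your proof is correct and is essentially the paper's argument: both rest on the plane-section sequences $0 \to \sco_Z(i) \to \sco_Z(i+1) \to \sco_{H\cap Z}(i+1) \to 0$ for $i=0,1$, the fact that $\tH^0(\sco_Z)=k$ for a reduced connected curve, the length $4$ of $H\cap Z$, and $\h^0(\sco_\p(2))=10$. The only cosmetic difference is that the paper bounds $\h^0(\sco_Z(2))\leq \h^0(\sco_Z)+\h^0(\sco_{H\cap Z}(1))+\h^0(\sco_{H\cap Z}(2))=9$ directly from left-exactness of $\tH^0$, whereas you route the same estimate through the surjectivity of $\tH^1(\sco_Z(l))\to\tH^1(\sco_Z(l+1))$ and Riemann--Roch.
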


\begin{proof}
Let $H\subset \piii$ be a plane intersecting $Z$ properly. Using the 
exact sequences$\, :$ 
\[
0 \lra \sco_Z(i) \lra \sco_Z(i+1) \lra \sco_{H\cap Z}(i+1) \lra 0\, ,
\] 
for $i = 0$ and 1, one deduces that$\, :$ 
\[
\text{h}^0(\sco_Z(2)) \leq \text{h}^0(\sco_Z(1)) + \text{h}^0(\sco_{H\cap Z}(2)) 
\leq \text{h}^0(\sco_Z) + \text{h}^0(\sco_{H\cap Z}(1)) + 
\text{h}^0(\sco_{H\cap Z}(2)) = 9\, .
\]
Using, now, the exact sequence$\, :$ 
\[
0 \lra \sci_Z(2) \lra \sco_\p(2) \lra \sco_Z(2) \lra 0
\]
and the fact that $\text{h}^0(\sco_\p(2)) = 10$ one derives that 
$\text{h}^0(\sci_Z(2)) \geq 1$. 
\end{proof}

\begin{lemma}\label{L:xcupxprim} 
Let $X$ and $X^\prim$ be two disjoint locally CM curves of degree $2$ in 
$\piii$ and let $Z = X\cup X^\prim$. Then $Z$ admits a $4$-secant hence   
$\sci_Z(3)$ is not globally generated. 
\end{lemma}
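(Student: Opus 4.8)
The plan is to exhibit a line $\ell$ which is not contained in $Z$ but meets $Z$ in a subscheme of length $\geq 4$ (a \emph{$4$-secant}); this immediately gives the conclusion. Indeed, the restriction to $\ell$ of any cubic form vanishing on $Z$ is a form of degree $3$ on $\ell \cong \pj$ vanishing on a divisor of degree $\geq 4$, hence it vanishes identically. Thus every global section of $\sci_Z(3)$ would vanish along $\ell$, and at a point $p \in \ell \setminus Z$ (nonempty since $\ell \not\subseteq Z$) such sections cannot generate the stalk of $\sci_Z(3)$, which equals $\sco_{\piii, p}(3) \cong \sco_{\piii,p}$ because $p \notin Z$; so $\sci_Z(3)$ would fail to be globally generated.

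To produce $\ell$, I would work in the Grassmannian $\mathbb{G} := \mathbb{G}(1,3)$ of lines in $\piii$, a smooth projective fourfold. For a locally CM curve $Y$ of degree $2$ let $B(Y) \subseteq \mathbb{G}$ be the closure of the set of lines $m$ with $\text{length}(m \cap Y) \geq 2$ (the \emph{bisecants}). Since the length-$2$ subschemes of the pure $1$-dimensional scheme $Y$ form a family of dimension $2$, and a general bisecant carries a unique such subscheme, $B(Y)$ is purely $2$-dimensional, so $[B(Y)] \in \tH^4(\mathbb{G})$. In the Schubert basis $\sigma_2$ (lines through a fixed point) and $\sigma_{1,1}$ (lines in a fixed plane) of $\tH^4(\mathbb{G})$, which is orthonormal for the intersection pairing $\tH^4(\mathbb{G})\times \tH^4(\mathbb{G}) \ra \z$, one has $[B(Y)] = a\,\sigma_2 + b\,\sigma_{1,1}$ with $a = [B(Y)]\cdot \sigma_2 \geq 0$ and $b = [B(Y)]\cdot \sigma_{1,1}$. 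The number $b$ is computed by a general plane $H$: since $\deg Y = 2$, the intersection $H \cap Y$ is a proper length-$2$ subscheme $W$ of $H$, which spans a unique line $\lambda_H \subseteq H$, and $\lambda_H$ is the \emph{unique} bisecant of $Y$ contained in $H$; hence $b = 1$. The virtue of this computation is that it is uniform, requiring no case analysis over conics, pairs of skew lines and the various planar and non-planar double lines.

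Applying this to $X$ and $X'$ I get
\[
[B(X)]\cdot [B(X')] = a a^\prime + b b^\prime = a a^\prime + 1 \geq 1 > 0 .
\]
On the other hand, $B(X)$ and $B(X^\prime)$ are closed subvarieties of the smooth variety $\mathbb{G}$ of complementary dimensions $2 + 2 = 4 = \dim \mathbb{G}$; were their supports disjoint, their cohomological intersection product would vanish. The positivity above therefore forces $B(X) \cap B(X^\prime) \neq \emptyset$: there is a line $\ell$ lying in both families. Finally I would check that $\ell$ is a genuine $4$-secant. By upper semicontinuity of the length, $\ell$ either meets $X$ in length $\geq 2$ or is a component of $X$, and likewise for $X^\prime$. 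If $\ell \subseteq X$ then $\ell \cap X^\prime \subseteq X \cap X^\prime = \emptyset$, contradicting $\ell \in B(X^\prime)$; so $\ell \not\subseteq X$ and, symmetrically, $\ell \not\subseteq X^\prime$, whence $\ell \not\subseteq Z$. Since $X \cap X^\prime = \emptyset$, the schemes $\ell \cap X$ and $\ell \cap X^\prime$ are disjoint, so $\text{length}(\ell \cap Z) \geq \text{length}(\ell \cap X) + \text{length}(\ell \cap X^\prime) \geq 4$, as wanted.

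The step I expect to be the main obstacle is showing that the two bisecant surfaces actually meet, and not merely that their intersection number is positive. I overcome this by the standard fact that on a smooth variety two closed subvarieties of complementary dimension with disjoint supports have vanishing cohomological product, combined with the clean identity $[B(Y)]\cdot \sigma_{1,1} = 1$ valid for every degree-$2$ curve. The remaining delicate points — the purity of $B(Y)$ and the fact that a general plane cuts $Y$ in a proper subscheme of length exactly $2$ — both follow from $Y$ being locally CM of pure dimension $1$ and degree $2$.
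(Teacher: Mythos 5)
Your proof is correct, and it takes a genuinely different route from the paper's. The paper argues by cases according to the nature of $X$ and $X^\prim$ (each is a plane curve of degree $2$, a pair of disjoint lines, or a double line) and constructs a common $2$-secant explicitly in each of six configurations --- via a fixed point of a composite self-map of $\pj$, via the intersection of a tangent plane of the surface supporting a double line with the plane of a conic, via the line $H\cap H^\prim$, or via the classical quadric through three of four pairwise skew lines. You replace this enumeration by a single intersection-theoretic computation in $\mathbb{G}(1,3)$: the bisecant surface $B(Y)$ of a degree-$2$ locally CM curve has class $a\sigma_2+b\sigma_{1,1}$ with $a\geq 0$ and $b\geq 1$, so $[B(X)]\cdot[B(X^\prim)]\geq 1$, and the two surfaces must meet because disjoint closed cycles of complementary dimension in a smooth projective variety have zero product; your final semicontinuity step correctly upgrades a point of $B(X)\cap B(X^\prim)$ to an honest $4$-secant of $Z$. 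What your route buys is uniformity and conceptual clarity (the paper's fixed-point tricks are precisely hands-on proofs that two effective $2$-cycles in $\mathbb{G}(1,3)$ with positive product intersect); what it costs is constructiveness and a few standard inputs (purity of $B(Y)$, generic transversality behind the exact value $b=1$). Note that you can lighten the argument further: only $b\geq 1$ is needed, and this follows from the existence of a bisecant in every plane together with the non-negativity of intersection numbers of effective cycles in the homogeneous space $\mathbb{G}(1,3)$, so the careful uniqueness-of-the-bisecant-in-a-general-plane discussion, and the transversality it implicitly requires, can be dispensed with.
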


\begin{proof}
$X$ (and, similarly, $X^\prim$) can be a complete intersection of type (1,2), 
the union of two disjoint lines $L$ and $L^\prim$, or a double structure on a 
line $L$. In the first case, $X$ is contained in a plane $H$ and every line 
in $H$ intersecting $X$ properly is a 2-secant to $X$. In the second case, 
every line joining a point of $L$ and a point of $L^\prim$ is a 2-secant to 
$X$. In the third case, assume that $L$ is the line of equations $x_2 = x_3 
= 0$. As it is well known, there exist an integer $l \geq -1$ and two coprime 
polynomials $a,\, b \in k[x_0,x_1]_{l+1}$ such that the holomogeneous ideal 
$I(X) \subset S$ of $X$ is generated by$\, :$ 
\[
F_2 = -bx_2 + ax_3,\, x_2^2,\, x_2x_3,\, x_3^2
\]
(this result is recalled in Subsection~\ref{SS:p2} from 
Appendix~\ref{A:multilines}). 
The surface $\Sigma \subset \piii$ of equation $F_2 = 0$ 
is nonsingular along $L$. If $L_1$ is another line meeting $L$ in a point 
$z$ then $L_1$ is a 2-secant to $X$, i.e., $\text{deg}(L_1\cap X) = 2$ if and 
only if $L_1$ is contained in the projective tangent plane $\text{T}_z\Sigma$ 
to $\Sigma$ at $z$. This tangent plane has equation$\, :$ 
\[
-b(z)x_2 + a(z)x_3 = 0\, .
\]  

We will show that $X$ and $X^\prim$ have a common 2-secant which is, 
consequently, a 4-secant to $Z$. We split the proof of this fact into 
several cases according to the nature of $X$ and $X^\prim$. 

\vskip2mm 

\noindent
{\bf Case 1.}\quad $X$ \emph{is a double structure on the line} $L$ 
\emph{of equations} $x_2 = x_3 = 0$ \emph{and}  
$X^\prim$ \emph{is a double structure on the line} $L^\prim$ \emph{of equations} 
$x_0 = x_1 = 0$. 

\vskip2mm

\noindent
Consider the morphism $\phi : L \ra L^\prim$ defined by $\{\phi(z)\} = 
\text{T}_z\Sigma \cap L^\prim$. One has$\, :$ 
\[
\phi((z_0:z_1:0:0)) = (0:0:a(z_0,z_1):b(z_0,z_1))\, .
\] 
For $z\in L$, the line joining $z$ to $\phi(z) \in L^\prim$ is a 2-secant to 
$X$. Consider the similar morphism $\phi^\prim : L^\prim \ra L$. The composite 
morphism $\phi^\prim \circ \phi : L \ra L$ has a fixed point $z$, hence, if 
$z^\prim = \phi(z)$ then $\phi^\prim(z^\prim) = z$. The line joining $z$ to 
$z^\prim$ is a 2-secant to both $X$ and $X^\prim$.  

\vskip2mm 

\noindent
{\bf Case 2.}\quad $X$ \emph{is a double structure on the line} $L$ 
\emph{of equations} $x_2 = x_3 = 0$ \emph{and} $X^\prim$ \emph{is a complete 
intersection of type} (1,2). 

\vskip2mm

\noindent
Let $H \subset \piii$ be the plane containing $X^\prim$, let $\{z\} = H\cap L$ 
and let $L_1 = \text{T}_z\Sigma \cap H$. Then $L_1$ is a 2-secant to both 
$X$ and $X^\prim$.  

\vskip2mm

\noindent
{\bf Case 3.}\quad $X$ \emph{is a double structure on the line} $L$ 
\emph{of equations} $x_2 = x_3 = 0$ \emph{and} 
$X^\prim = L^\prim \cup L^\secund$, \emph{where} $L^\prim$ 
\emph{and} $L^\secund$ \emph{are disjoint lines not intersecting} $L$. 

\vskip2mm

\noindent 
Consider the morphism $\phi : L \ra L^\prim$ defined in Case 1. Consider 
also the morphism $\psi : L \ra L^\prim$ defined by$\, :$ 
\[
\{\psi(z)\} = \text{span}(\{z\} \cup L^\secund) \cap L^\prim\, .
\]
$\psi$ is, actually, an isomorphism. Then $\psi^{-1}\circ \phi : L \ra L$ has 
a fixed point $z$. Let $w = \phi(z) = \psi(z)$. The line joining $z$ and $w$ 
is a 2-secant to $X$ and meets both $L^\prim$ and $L^\secund$, hence it is a 
2-secant to $X^\prim$, too. 

\vskip2mm 

\noindent 
{\bf Case 4.}\quad $X$ \emph{and} $X^\prim$ \emph{are both complete 
intersections of type} (1,2). 

\vskip2mm 

\noindent 
$X$ is contained in a plane $H$ and $X^\prim$ is contained in a plane $H^\prim$. 
The line $H\cap H^\prim$ is a 2-secant to both $X$ and $X^\prim$. 

\vskip2mm

\noindent 
{\bf Case 5.}\quad $X$ \emph{is a complete intersection of type} (1,2) 
\emph{and} $X^\prim$ \emph{is the union of two disjoint lines} $L^\prim$ 
\emph{and} $L^\secund$ \emph{not intersecting} $X$. 

\vskip2mm 

\noindent 
$X$ is contained in a plane $H$ and $L^\prim$ (resp., $L^\secund$) intersects 
$H$ in a point $z^\prim$ (resp., $z^\secund$). The line joining $z^\prim$ and 
$z^\secund$ is a 2-secant to both $X$ and $X^\prim$. 

\vskip2mm

\noindent 
{\bf Case 6.}\quad $Z$ \emph{is the union of four mutually disjoint lines}. 

\vskip2mm

\noindent 
This case is well-known$\, :$ three of the lines are contained in a unique 
nonsingular quadric surface $Q\subset \piii$. Let $z$ be a point from the 
intersection of $Q$ with the fourth line. Then the line contained in $Q$, 
passing through $z$, and not belonging to the ruling of $Q$ to which the first 
three lines belong intersects all the four lines.  
\end{proof}

\begin{lemma}\label{L:ycuplprim} 
Let $Y$ be a locally CM curve of degree $3$ in $\piii$, 
$L^\prim\subset \piii$ a line not intersecting $Y$, and $Z = Y\cup L^\prim$. 
If $\sci_Z(3)$ is globally generated then $Y$ is directly linked to a line 
by a complete intersection of type $(2,2)$. 
\end{lemma}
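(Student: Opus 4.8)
The plan is to argue by contradiction using the \emph{four-secant criterion}: since $\sci_Z(3)$ is globally generated, $Z$ cannot have a four-secant line, i.e. there is no line $M \not\subset Z$ with $\deg(M\cap Z)\geq 4$ (a cubic surface through $Z$ would, by B\'ezout, contain any such $M$, hence vanish at a point of $M\setminus Z$, contradicting global generation there; this is the mechanism already used in Lemma~\ref{L:xcupxprim}). First I would dispose of the case where $Y$ is contained in a plane $H$. Then $Y$ is a degree-$3$ effective divisor on $H\cong\pii$, hence a complete intersection of type $(1,3)$, and since $L^\prim$ is disjoint from $Y$ it cannot lie in $H$, so $L^\prim\cap H=\{p\}$ is a single point with $p\notin Y$. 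A general line $M\subset H$ through $p$ satisfies $M\not\subset Y$, meets $Y$ in a length-$3$ scheme (B\'ezout in $H$) and meets $L^\prim$ only at $p$, so $\deg(M\cap Z)=4$; this four-secant contradicts the hypothesis. Hence $\tH^0(\sci_Y(1))=0$, i.e. $Y$ is non-degenerate.

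Next I would show that $Y$ is connected. If $Y$ were disconnected, its parts would have degrees $1$ and $2$, say $Y=X\sqcup\ell$ with $X$ a locally CM curve of degree $2$ and $\ell$ a line, $X\cap\ell=\emptyset$ (for three mutually skew lines one groups any two of them as $X$). Since $L^\prim$ is disjoint from $Y$, the curve $\ell\cup L^\prim$ is a disjoint union of two lines, a locally CM curve of degree $2$ disjoint from $X$; thus $Z=X\sqcup(\ell\cup L^\prim)$ is a union of two disjoint degree-$2$ curves and Lemma~\ref{L:xcupxprim} yields a four-secant of $Z$, again a contradiction. So $Y$ is a connected, non-degenerate, locally CM curve of degree $3$. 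If, moreover, $Y$ is reduced, Lemma~\ref{L:yconnected} applies: as $Y$ is not a complete intersection of type $(1,3)$, it is directly linked to a line by a complete intersection of type $(2,2)$, which is the assertion.

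It remains to treat the connected, \emph{non-reduced} curves, where I would appeal to the explicit descriptions in Appendix~\ref{A:multilines} (triple structures on a line, cf. Subsection~\ref{SS:p2}) and in Appendix~\ref{A:multicomponent} (a double line together with a line meeting it). For each such $Y$ I would examine the quadrics through it. If $Y$ lies on two quadrics meeting properly, their complete intersection is a locally CM curve of degree $4$ containing $Y$, whose residual is a locally CM curve of degree $1$, i.e. a line, so $Y$ is $(2,2)$-linked to a line. The remaining possibility is that $Y$ lies on a \emph{unique} quadric $Q$; the relevant example is the triple line $3L$ cut on a smooth quadric by one of its rulings, which (like three skew lines) has $\h^0(\sci_Y(2))=1$ and is not $(2,2)$-linked to a line. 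In that situation $Q$ is smooth and the ruling of $Q$ transverse to $L$ consists of trisecants of $Y$ (each meets the divisor $3L$ in length $3$); since $L^\prim$ meets $Q$ in a non-empty scheme disjoint from $L$, a line of this ruling through a point of $L^\prim\cap Q$ meets $Y$ in length $3$ and $L^\prim$ in length $1$, hence is a four-secant of $Z$, contradicting the hypothesis.

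The main obstacle is precisely this last, non-reduced, step. One must run through the appendix classification of degree-$3$ multiple-line structures and check, in each case, that local Cohen--Macaulayness forces either a proper pencil of quadrics through $Y$ (giving the $(2,2)$-linkage, after verifying that the residual is genuinely a line) or the single-quadric situation, in which the ruling four-secant of $Z$ is then produced. Confirming that no exotic non-reduced configuration evades both horns of this dichotomy, and handling the possibility that the unique quadric degenerates to a cone, is where the bulk of the bookkeeping lies.
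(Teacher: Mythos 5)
Your skeleton coincides with the paper's up to the non-reduced case: the four-secant criterion, the exclusion of planar $Y$ via a line of $H$ through $H\cap L^\prim$, connectedness via Lemma~\ref{L:xcupxprim}, and the reduced case via Lemma~\ref{L:yconnected} are all exactly the paper's steps. The gap is in the dichotomy you propose for connected non-reduced $Y$ (``two quadrics meeting properly'' versus ``a unique quadric''): it is not exhaustive, and the cases it misses are precisely the ones requiring most of the work. First, there are connected non-reduced degree-$3$ curves with $\tH^0(\sci_Y(2))=0$, i.e.\ lying on \emph{no} quadric at all: the quasiprimitive triple lines with $(l,m)=(1,0)$ (the divisor $3L$ on a cubic surface) and $(l,m)=(0,1)$ in the notation of Subsection~\ref{SS:q3}. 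For these the paper does not produce a four-secant; it shows instead, via Prop.~\ref{P:geniycupl1prim}(b), that $\h^0(\sci_Z(3))\leq 3$, and invokes Lemma~\ref{L:h0iz(3)geq5} (a degree-$4$ curve with $\sci_Z(3)$ globally generated has $\h^0(\sci_Z(3))\geq 5$). Your plan has no mechanism for these curves; a four-secant argument could perhaps be made (trisecants of $3L$ on a cubic surface sweep out a surface which $L^\prim$ must meet), but that is a genuinely new argument you would have to supply.

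Second, in most of the remaining non-reduced cases $\tH^0(\sci_Y(2))$ is $2$-dimensional but the pencil of quadrics has a \emph{fixed plane component}, so there is neither a proper complete intersection of type $(2,2)$ through $Y$ nor a unique quadric. This happens for triple lines with $l=-1$, $m\geq 2$ (where $I(Y)_2=\langle x_2F_2,\,x_3F_2\rangle$ with $F_2$ linear) and for the configurations $Y=X\cup L$ of Prop.~\ref{P:genixcuplcupl1prim}(c),(e) (where $I(Y)_2=\langle x_2x_3,\,x_3^2\rangle$). In all of these the paper's four-secant is a specific line \emph{inside the fixed plane}, located by factoring the data defining the multiple structure (a linear divisor $\lambda$ of $v_2$, of $b_1$, or of $b$), not a ruling of a smooth quadric. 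So while your last paragraph correctly identifies where the difficulty lies, the dichotomy you would ``run through the appendix'' with is the wrong one: the correct trichotomy is (i) a proper $(2,2)$-linkage exists, (ii) a four-secant exists (either a ruling of the unique smooth quadric, or a line in the fixed plane of a degenerate pencil), or (iii) $Y$ lies on no quadric and one must count global sections of $\sci_Z(3)$ as in Lemma~\ref{L:h0iz(3)geq5}.
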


\begin{proof}
Lemma~\ref{L:xcupxprim} implies that $Y$ is connected. $Y$ cannot 
be a complete intersection of type (1,3) because, in that case, any line 
contained in the plane $H\supset Y$ and passing through the intersection 
point of $H$ and $L^\prim$ is a 4-secant to $Z$. If $Y$ is reduced then, by 
Lemma~\ref{L:yconnected}, $Y$ is directly linked to a line by a complete 
intersection of type (2,2). 

Consequently, we can assume that $Y$ is connected and nonreduced. There are 
two cases to be considered. 

\vskip2mm

\noindent 
{\bf Case 1.}\quad $Y$ \emph{is a triple structure on a line}. 

\vskip2mm

\noindent 
In this case, since the first infinitesimal neighbourhood of a line in 
$\piii$ is directly linked to a line by a complete intersection of type (2,2), 
we can assume that $Y$ is a quasiprimitive triple structure on the line 
$L_1$ of equations $x_2 = x_3 = 0$ and that $L^\prim$ is the line of 
equations $x_0 = x_1 = 0$. We use the notation and results from 
Subsection~\ref{SS:triplecupaline} of Appendix~\ref{A:multicomponent}. 

If $l = -1$ and $m = 1$ then, by Lemma~\ref{L:l=-1m=1deg3}, $Y$ is directly 
linked to $L_1$ by a complete intersection of type $(2,2)$. If $l = -1$ and 
$m \neq 1$ then, using the notation from Lemma~\ref{L:l=-1deg3}, consider a 
divisor $\lambda \in k[x_0, x_1]_1$ of $v_2$. The line of equations $F_2 = 
\lambda = 0$ is a 3-secant to $Y$ hence a 4-secant to $Z = Y \cup L^\prim$. 

If $l = m = 0$ then $Y$ is the divisor $3L_1$ on 
a nonsingular quadric surface $Q\supset L_1$. If $P\in Q\cap L^\prim$ then the 
line contained in $Q$, passing through $P$ and intersecting $L_1$ is a 
4-secant to $Z$. 

Finally, if $l = 0$ and $m\geq 1$ or if $l\geq 1$ then, by 
Prop.~\ref{P:geniycupl1prim}(b) and by Lemma~\ref{L:h0iz(3)geq5}, $\sci_Z(3)$ 
is not globally generated. 

\vskip2mm

\noindent 
{\bf Case 2.}\quad $Y$ \emph{is the union of a double structure on a line  
and of another line intersecting it}. 

\vskip2mm 

\noindent 
In this case, we can assume that $Y = X\cup L$, where $X$ is a double 
structure on the line $L_1$ of equations $x_2 = x_3 = 0$ and $L$ is the line 
of equations $\ell = x_3 = 0$, $\ell = x_1 + cx_2$, $c\neq 0$, and that 
$L^\prim$ is the line of equations $x_0 = x_1 = 0$. We use the results 
stated in Prop.~\ref{P:genixcuplcupl1prim} and the notation appearing there. 

If $x_1 \mid b$ and $l = -1$ then, by the proof of 
Prop.~\ref{P:genixcuplcupl1prim}(a), $Y$ is a complete intersection of type 
(1,3) and this case was excluded above. 

If $x_1 \mid b$ and $l = 0$ then, by the proof of 
Prop.~\ref{P:genixcuplcupl1prim}(b), $Y$ is directly linked to a line by a 
complete intersection of type (2,2). 

If $x_1 \mid b$ and $l \geq 1$ then, by Prop.~\ref{P:genixcuplcupl1prim}(c), 
choosing a divisor $\lambda \in k[x_0,x_1]_1$ of $b_1$, the line of equations 
$\lambda = x_3 = 0$ is a 4-secant to $Z$. 

If $x_1 \nmid b$ and $l = -1$ then, by the proof of 
Prop.~\ref{P:genixcuplcupl1prim}(d), $Y$ is directly linked to a line by a 
complete intersection of type (2,2). 

Finally, if $x_1 \nmid b$ and $l \geq 0$ then, by 
Prop.~\ref{P:genixcuplcupl1prim}(e), choosing a divisor $\lambda \in 
k[x_0,x_1]_1$ of $b$, the line of equations $\lambda = x_3 = 0$ is a 4-secant 
to $Z$.         
\end{proof}

\begin{lemma}\label{L:d4zquasiprimline} 
Assume that $Z$ is a quasiprimitive structure of degree $4$ on the line 
$L \subset \piii$ of equations $x_2 = x_3 = 0$. If ${\fam0 H}^0(\sci_Z(2)) = 0$ 
then $\sci_Z(3)$ is not globally generated.  
\end{lemma}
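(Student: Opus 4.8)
The plan is to rely on the explicit description of quasiprimitive quadruple structures on a line given, via the theory of B\u{a}nic\u{a} and Forster, in Appendix~\ref{A:multilines}. That description provides generators of the homogeneous ideal $I(Z)\subset S$ together with the Cohen--Macaulay filtration $L = Z_1 \subset Z_2 \subset Z_3 \subset Z_4 = Z$, in which each quotient $\sci_{Z_i}/\sci_{Z_{i+1}}$ is an invertible $\sco_L$-module $\sco_L(a_i)$ depending on a short list of numerical invariants. My first step would be to translate the hypothesis $\tH^0(\sci_Z(2)) = 0$ into inequalities on these invariants: since a quasiprimitive $Z$ is generically the divisor $4L$ on a smooth surface $\Sigma \supset L$, the condition that $Z$ lie on no quadric forces $\deg \Sigma \geq 3$ and eliminates all but a short list of admissible numerical types.

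For the surviving types I would produce an explicit $4$-secant line, which suffices to conclude. Indeed, if $M$ is a line with $M \not\subset Z$ and $\deg(M \cap Z) \geq 4$, then every cubic surface containing $Z$ must contain $M$ (a cubic meets a line properly in length at most $3$), so the sections of $\sci_Z(3)$ all vanish at every point of $M \setminus L$; hence $\sci_Z(3)$ is not globally generated. To locate such an $M$ I would argue as in Lemma~\ref{L:xcupxprim} and Lemma~\ref{L:ycuplprim}: any line $M \neq L$ lying on the smooth surface $\Sigma$ and meeting $L$ transversally at a point where $Z = 4L$ on $\Sigma$ satisfies $\deg(M \cap Z) = 4$. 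Concretely, using the generators of $I(Z)$ from the appendix, such an $M$ can be written in the form $\{\lambda = \ell = 0\}$ with $\lambda \in k[x_0,x_1]_1$ a suitable factor of one of the generators and $\ell$ a linear form, exactly the type of $4$-secant exhibited in Lemma~\ref{L:ycuplprim}.

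For the few residual numerical types where no such line is forced, I would instead compute $\h^0(\sci_Z(3))$ directly. From the filtration one gets the exact sequences $0 \lra \sci_{Z_{i+1}}(3) \lra \sci_{Z_i}(3) \lra \sco_L(a_i+3) \lra 0$, which express $\h^0(\sci_Z(3))$ through the groups $\h^0(\sco_L(a_i+3))$ on $L \simeq \pj$ together with Riemann--Roch for $\sci_Z(3)$; the constraints already imposed by $\tH^0(\sci_Z(2)) = 0$ keep the $a_i$ small enough that $\h^0(\sci_Z(3)) \leq 4$, contradicting Lemma~\ref{L:h0iz(3)geq5}. Either way $\sci_Z(3)$ fails to be globally generated.

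The main obstacle is the case analysis itself: one must enumerate the admissible numerical types of a quasiprimitive quadruple line under the constraint $\tH^0(\sci_Z(2)) = 0$ and, for each, either verify the existence of the $4$-secant or carry out the cohomological count. The delicate point is the quasiprimitive, as opposed to merely primitive, behaviour, which allows $Z$ to leave the surface $\Sigma$ at finitely many points; one has to ensure that the $4$-secant $M$ is chosen through a point of $L$ where $Z$ genuinely equals $4L$ on $\Sigma$, and to treat separately the degenerate points where the filtration quotients $\sco_L(a_i)$ jump.
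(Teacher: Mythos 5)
Your proposal has the right ingredients, but it inverts their roles, and the route you designate as primary has a gap. The paper's entire proof of this lemma is what you relegate to ``the few residual numerical types'': the hypothesis $\tH^0(\sci_Z(2))=0$ forces $l\geq 0$ (for $l=-1$ the square of the linear form $F_2$ is a quadric in $I(Z)$), and then the explicit generators of $I(Z)$ from Prop.~\ref{P:geniz} and Prop.~\ref{P:genizprim} show that at most four of them have degree $3$ while all the rest have degree $\geq 4$; hence $\h^0(\sci_Z(3))\leq 4$, which contradicts Lemma~\ref{L:h0iz(3)geq5}. No case analysis beyond $l=-1$ versus $l\geq 0$ and no $4$-secant is needed. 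Your primary route, by contrast, rests on producing a $4$-secant for every admissible quasiprimitive type, and you yourself flag the genuinely quasiprimitive case ($m>0$ or $n>0$, where $Z$ is \emph{not} a divisor $4L$ on any surface) as unresolved; the intersection-theoretic computation $M\cdot 4L=4$ on $\Sigma$ simply does not apply there, and even in the primitive case with $l=0$ exhibiting the $4$-secants requires the explicit relation \eqref{E:wijgamma} (this is exactly the content of the Remark following the lemma, which the paper includes as an aside, not as the proof). So as written, the argument you lean on does not close.

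A second, smaller issue: your fallback computation of $\h^0(\sci_Z(3))$ via the Cohen--Macaulay filtration of $\sco_Z$ only yields inequalities in the wrong direction. The sequences $0\to\sci_{Z_{i+1}}(3)\to\sci_{Z_i}(3)\to\sco_L(a_i+3)$ bound $\h^0(\sci_Z(3))$ from \emph{below} in terms of the larger ideals, whereas you need an upper bound. The correct device is the exact sequence \eqref{E:il4izhkere4}: since $\tH^0(\sci_L^4(3))=0$, one has $\tH^0(\sci_Z(3))\simeq\tH^0((\Ker\e_4)(3))$, and the Grothendieck decomposition of $\Ker\e_4$ (Lemma~\ref{L:kere4} and its analogue in the proof of Prop.~\ref{P:genizprim}) gives the bound $\h^0(\sci_Z(3))\leq 4$ directly once $l\geq 0$ and $\tH^0(\sci_Z(2))=0$ are imposed. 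With that substitution your ``fallback'' becomes a complete proof covering all cases, and the $4$-secant machinery can be discarded.
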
 

\begin{proof} 
We use Prop.~\ref{P:geniz} and Prop.~\ref{P:genizprim} from 
Appendix~\ref{A:multilines} and the notation from that appendix. One has  
$l \geq 0$ because, in the case $l = -1$, $a$ and $b$ are 
constants, not simultaneously 0, $F_2 = -bx+ay$, and $G_1 = F_2^2$, 
hence $\tH^0(\sci_Z(2)) \neq 0$. The conclusion follows, now, from 
Lemma~\ref{L:h0iz(3)geq5} using Prop.~\ref{P:geniz} and 
Prop.~\ref{P:genizprim}.   
\end{proof}

\begin{remark}
We take this opportunity to complete the argument for assertion 1.2(iii) from 
the paper of Gruson and Skiti \cite{gs} by showing that 
if $Z$ is a primitive structure of degree 4 on a line $L\subset \piii$ 
(let's say, of equations $x_2 = x_3 = 0$), with $l = 0$ and such that 
$\tH^0(\sci_Z(2)) = 0$, then $Z$ admits two 4-secants. 

\emph{Indeed}, the conditions $l = 0$ and $\tH^0(\sci_Z(2)) = 0$ imply that  
$Z$ doesn't satisfy the hypothesis of Prop.~\ref{P:geniz} but it satisfies 
the hypothesis of Prop.~\ref{P:genizprim}.    
One may assume, now,  that $a = x_0$ and $b = x_1$ hence$\, :$ 
\begin{gather*}
F_2 = -x_1x_2 + x_0x_3\, ,\  F_3 = F_2 + v_0x_2^2 + v_1x_2x_3 + v_2x_3^2\, ,\\ 
F_{40} = x_0F_3 + w_{00}x_2^3 + w_{01}x_2^2x_3 + w_{02}x_2x_3^2 + w_{03}x_3^3\, ,\\  
F_{41} = x_1F_3 + w_{10}x_2^3 + w_{11}x_2^2x_3 + w_{12}x_2x_3^2 + w_{13}x_3^3\, ,\\ 
G_1 = x_2F_3\, ,\  G_2 = x_3F_3\, .    
\end{gather*}  
The surface $Q \subset \piii$ of equation $F_3 = 0$ is a smooth quadric.
Relation \eqref{E:wijgamma} becomes$\, :$ 
\begin{gather*}
-x_1(w_{00}x_2^3 + w_{01}x_2^2x_3 + w_{02}x_2x_3^2 + w_{03}x_3^3) + 
x_0(w_{10}x_2^3 + w_{11}x_2^2x_3 + w_{12}x_2x_3^2 + w_{13}x_3^3)\\  
= (\gamma_0x_2^2 + \gamma_1x_2x_3 + \gamma_2x_3^2)(-x_1x_2 + x_0x_3) 
\end{gather*}
Since $w_{ij}$ and $\gamma_i$ are constants, one gets that$\, :$ 
\[
F_{40} = x_0F_3 + x_3(\gamma_0x_2^2 + \gamma_1x_2x_3 + \gamma_2x_3^2)\, ,\  
F_{41} = x_1F_3 + x_2(\gamma_0x_2^2 + \gamma_1x_2x_3 + \gamma_2x_3^2)\, .  
\]
The divisor $\{\gamma_0x_2^2 + \gamma_1x_2x_3 + \gamma_2x_3^2 = 0\} \cap Q$ on 
$Q$ contains $2L$ hence it is of the form $2L + L_1^\prim + L_2^\prim$, where 
$L_1^\prim,\, L_2^\prim$ are lines from the other ruling of $Q$. Both of these 
lines are 4-secants to $Z$. 
\end{remark}

\begin{prop}\label{P:z} 
Let $Z$ be a locally CM curve in $\piii$ of degree $4$. 
If ${\fam0 H}^0(\sci_Z(2)) = 0$ and $\sci_Z(3)$ is 
globally generated then $\sci_Z(3)$ admits a monad of the form$\, :$ 
\[
0 \lra \sco_\p(1) \oplus 2\sco_\p \lra 2\sco_\p(2) \oplus 3\sco_\p(1)   
\lra \sco_\p(3) \lra 0\, .   
\]  
\end{prop}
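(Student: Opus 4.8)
The plan is to classify $Z$ by its components and scheme structure, using the preceding lemmas to discard all configurations except one, and then to read off the monad from the resolutions of the surviving pieces. For the classification I would proceed as follows. If $Z$ were reduced and connected then $\tH^0(\sci_Z(2)) \neq 0$ by Lemma~\ref{L:h0iz(2)neq0}, contrary to hypothesis. If $Z$ is connected and nonreduced, then it is either a quasiprimitive structure of degree $4$ on a line --- excluded by Lemma~\ref{L:d4zquasiprimline} --- or one of the remaining nonreduced connected types (a thick structure on a line, a double structure on a conic, or a multiplicity structure on a reducible connected support), for each of which I would fall back on the explicit homogeneous ideals listed in the appendices to verify that $Z$ lies on a quadric (so $\tH^0(\sci_Z(2)) \neq 0$) or admits a $4$-secant (so $\sci_Z(3)$ is not globally generated). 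Hence $Z$ must be disconnected.

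Partitioning the degree over the connected components of $Z$, if none has degree $3$ then the degrees are $(2,2)$, $(2,1,1)$ or $(1,1,1,1)$; in every case the components regroup into two disjoint locally CM curves of degree $2$, whence Lemma~\ref{L:xcupxprim} furnishes a $4$-secant and contradicts global generation. Therefore one component is a locally CM curve $Y$ of degree $3$ and the complementary component is a line $L^\prim$ disjoint from $Y$, and Lemma~\ref{L:ycuplprim} guarantees that $Y$ is directly linked to a line by a complete intersection of type $(2,2)$.

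By Ferrand's result this linkage gives $\sci_Y$ the resolution $0 \lra 2\sco_\p(-3) \lra 3\sco_\p(-2) \lra \sci_Y \lra 0$, while $L^\prim$ has the Koszul resolution $0 \lra \sco_\p(-2) \lra 2\sco_\p(-1) \lra \sci_{L^\prim} \lra 0$. Since $Y \cap L^\prim = \emptyset$ one has $\sci_Y + \sci_{L^\prim} = \sco_\p$ and hence an exact sequence $0 \lra \sci_Z \lra \sci_Y \oplus \sci_{L^\prim} \lra \sco_\p \lra 0$. I would lift the surjection onto $\sco_\p$ to the direct sum of the two resolutions and form the mapping cone; the resulting three-term complex $0 \lra 2\sco_\p(-3)\oplus \sco_\p(-2) \lra 3\sco_\p(-2)\oplus 2\sco_\p(-1) \lra \sco_\p \lra 0$ has $\sci_Z$ as its middle cohomology and vanishing cohomology elsewhere, i.e. it is the $(-3)$-twist of the asserted monad. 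Its first map is the block-diagonal sum of the two syzygy matrices and so has only linear entries, while its second map is given by the quadric and linear generators of the two ideals; neither carries a unit entry, so the complex is already minimal and no cancellation is required. Twisting by $3$ yields the stated monad.

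The homological part is thus essentially automatic once the geometry is pinned down; the real work --- and the main obstacle --- is the reduction, specifically the elimination of the connected nonreduced curves, where a clean uniform argument seems unavailable and I expect to rely on the case-by-case ideal computations of the appendices to exhibit, in each instance, either a quadric through $Z$ or a $4$-secant line.
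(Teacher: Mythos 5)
Your reduction to the disconnected case is where the argument breaks down. You assert that every connected nonreduced $Z$ of degree $4$ either lies on a quadric or has a $4$-secant, and conclude ``hence $Z$ must be disconnected''; this is false, and the verification you defer to the appendices would not go through. A concrete counterexample is a thick quadruple structure $W$ on a line $L$ with $\sci_W/\sci_L^3 \simeq 2\sco_L(-3)$ (the case $m = n = 1$, $l = 0$ of Subsection~\ref{SS:thick}): its homogeneous ideal is generated by two cubics $F,G$ together with $(x_2,x_3)^3$, so $\tH^0(\sci_W(2)) = 0$, and from the exact sequence $0 \to \sci_L^3(3) \to \sci_W(3) \to 2\sco_L \to 0$ one sees that $\sci_W(3)$ is globally generated. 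The same happens for the union of a quasiprimitive triple line and a transversal line in cases (a),(b) of Prop.~\ref{P:geniycupl2}, for a double line union two further lines as in Prop.~\ref{P:genixcupl2cupl1prim}, and for a double line union a conic meeting it. These connected configurations genuinely satisfy the hypotheses of the proposition, and for them the monad cannot be obtained from your Mayer--Vietoris construction (there is no disjoint decomposition); it has to be read off from the graded free resolutions of $\tH^0_\ast(\sco_Z)$ (Props.~\ref{P:reshow}, \ref{P:reshoycupl2}, \ref{P:reshoxcupl2cupl1prim}, \ref{P:reshoxcupc2}), which is precisely where the bulk of the paper's proof lies.

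Your treatment of the disconnected case is essentially correct and is a legitimate variant of the paper's: the partition of degrees over components forces $Z = Y \cup L^\prim$ with $Y$ of degree $3$ linked to a line by a $(2,2)$ (Lemmas~\ref{L:xcupxprim} and~\ref{L:ycuplprim}), and your mapping cone on $0 \to \sci_Z \to \sci_Y \oplus \sci_{L^\prim} \to \sco_\p \to 0$ does produce the stated monad. But as it stands the proof covers only a proper subset of the curves the proposition is about, so the case analysis of the connected nonreduced curves cannot be dispensed with.
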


\begin{proof}
If $Z$ is not connected then the conclusion of the proposition follows, even 
without assuming that $\tH^0(\sci_Z(2)) = 0$, from Lemma~\ref{L:ycuplprim}, 
taking into account Lemma~\ref{L:xcupxprim} (see the first method for getting 
monads stated in the Introduction).  

Assume, from now on, that $Z$ is, moreover, connected.  
By Lemma~\ref{L:d4zquasiprimline}, $Z$ cannot be a quasiprimitive structure 
of degree 4 on a line. The hypothesis $\tH^0(\sci_Z(2)) = 0$ 
eliminates, also, many other cases as, for example, 
the case where $Z$ is reduced (by Lemma~\ref{L:h0iz(2)neq0}) or the case 
where $Z$ is the union of a double line and of two other (simple) lines, 
both of them intersecting the double line. It remains, actually, to analyse 
the following four cases$\, :$ 

\vskip2mm 

\noindent 
{\bf Case 1.}\quad $Z$ \emph{is a thick structure of degree $4$ on a line} 
$L$. 

\vskip2mm 

\noindent 
We may assume that $L$ is the line of equations $x_2 = x_3 = 0$.  
We use, in this case, the results and notation from Subsection~\ref{SS:thick} 
of Appendix~\ref{A:multilines} (with $W = Z$). Since 
\[
\sci_Z/\sci_L^3 \simeq \sco_L(-m-2) \oplus \sco_L(-n-2) 
\] 
it follows that if $\sci_Z(3)$ is globally generated then $m \leq 1$ and 
$n \leq 1$. On the other hand, since the generators $F$ and $G$ of the 
homogeneous ideal $I(Z) \subset S$ from Prop.~\ref{P:geniw} have degrees 
$m+2$ and $n+2$, respectively, and since $\tH^0(\sci_Z(2)) = 0$ it follows 
that, actually, $m = n = 1$. Since $m + n = l + 2$, one gets that $l = 0$.  
Prop.~\ref{P:reshow} implies, now, that $\sci_Z(3)$ satisfies the conclusion 
of the proposition.  

\vskip2mm 

\noindent 
{\bf Case 2.}\quad $Z$ \emph{is the union of a triple structure $Y$ on a line 
and of another line intersecting it}. 

\vskip2mm 

\noindent 
We may assume that $Y$ is a triple structure on the line $L_1$ of equations 
$x_2 = x_3 = 0$ and that the second line is the line $L_2$ of equations 
$x_1 = x_3 = 0$. We use, in this case, the results and notation from 
Subsection~\ref{SS:triplecupaline} of Appendix~\ref{A:multicomponent}. 
The hypothesis $\tH^0(\sci_Z(2)) = 0$ implies that $Y$ cannot be the first 
infinitesimal neighbourhood of $L_1$ in $\piii$ hence it is a 
quasiprimitive triple structure on $L_1$. 

Now, Lemma~\ref{L:h0iz(3)geq5} and Prop.~\ref{P:geniycupl2} imply that 
$l \leq 0$. Since $x_3F_2 \in I(Z)$ and $\tH^0(\sci_Z(2)) = 0$ it follows that 
$l \geq 0$. Consequently, $l = 0$. 

In the cases (a) and (c) of Prop.~\ref{P:geniycupl2} one has, on one hand, 
that $l + m + 2 = \text{deg}\, F_3 \geq 3$ (because $\tH^0(\sci_Z(2)) = 0$) 
and, on the other hand, since $\sco_{L_1}(-l-m-2)$ is a quotient of $\sci_Z$, 
that $-l-m-2+3 \geq 0$. It follows that, in this cases, $l = 0$ and $m = 1$. 

In the cases (b) and (d) of Prop.~\ref{P:geniycupl2} one has, on one hand, 
that $l + m + 3 = \text{deg}\, (x_1F_3) \geq 3$ (because 
$\tH^0(\sci_Z(2)) = 0$) and, on the other hand, since $\sco_{L_1}(-l-m-3)$ is a 
quotient of $\sci_Z$, that $-l-m-3+3 \geq 0$. It follows that, in this cases, 
$l = 0$ and $m = 0$. We analyse, now, each one of the cases occuring in 
Prop.~\ref{P:geniycupl2}. 

\vskip2mm 

\noindent
$\bullet$\quad 
In case (a), one must have $b = x_1$ hence $b_1 = 1$. It follows that$\, :$ 
\[
x_2F_2 = -x_1x_2^2 + ax_2x_3\  \text{and} \  x_1x_2^3 = -x_2\cdot x_2F_2 + 
a\cdot x_2^2x_3\, , 
\]          
hence $I(Z)$ is generated by polynomials of degree 3. Using 
Prop.~\ref{P:reshoycupl2}(a) one sees that $\sci_Z(3)$ satisfies the conlusion 
of the proposition (since $b_1 = 1$, one can cancel a direct summand $S(-1)$ 
from each of the first two terms of the resolution of 
$\tH^0_\ast(\sco_{Y \cup L_2})$).  

\vskip2mm 

\noindent 
$\bullet$\quad 
In case (b), essentially the same argument shows that $I(Z)$ is generated by 
polynomials of degree 3 and that $\sci_Z(3)$ satisfies the conclusion of the 
proposition. 

\vskip2mm 

\noindent 
$\bullet$\quad 
In case (c), one must have $p = x_1$. By Lemma~\ref{L:bbprimv0}(c), one can 
assume that $v_0 =c_0x_1$, for some $c_0 \in k$, hence$\, :$ 
\[
F_3 = x_1(-bx_2+ax_3) + c_0x_1x_2^2 + v_1x_2x_3 + v_2x_3^2\, .
\]   
If $\sci_Z(3)$ is globally generated then$\, :$ 
\[
x_1x_2^3 \in (F_3,\, x_3F_2,\, x_2^2x_3,\, x_2x_3^2,\, x_3^3)^{\text{sat}}\, . 
\]
Let $S^\prim = S/Sx_3 = k[x_0,x_1,x_2]$. It follows that, working in 
$S^\prim$$\, :$ 
\[
x_1x_2^3 \in (S^\prim x_1x_2(-b+c_0x_2))^{\text{sat}}\, . 
\]
But $S^\prim x_1x_2(-b+c_0x_2)$ is already saturated in $S^\prim$ and 
$x_1x_2^3 \notin S^\prim x_1x_2(-b+c_0x_2)$. It follows that $\sci_Z(3)$ is not 
globally generated in case (c). 

\vskip2mm 

\noindent 
$\bullet$\quad 
In case (d), $p = 1$, $F_3 = F_2 + v_0x_2^2 +v_1x_2x_3 + v_2x_3^2$, and the same 
kind of argument as that used in case (c) shows that $\sci_Z(3)$ cannot be 
globally generated (because $x_1x_2^3 \notin S^\prim x_1x_2(-b+v_0x_2)$).  

\vskip2mm

\noindent 
{\bf Case 3.}\quad $Z$ \emph{is the union of a double structure $X$ on a line, 
of another line intersecting it, and of a third line intersecting the second  
line but not the double line}. 

\vskip2mm

\noindent 
We may assume that $X$ is a double structure on the line $L_1$ of equations 
$x_2 = x_3 = 0$, that the second line is the line $L_2$ of equations 
$x_1 = x_3 = 0$, and that the third line is the line $L_1^\prim$ of equations 
$x_0 = x_1 = 0$. We use, in this case, the results and notation from 
Subsection~\ref{SS:doublecuptwolines}. Under the hypothesis of 
Prop.~\ref{P:genixcupl2cupl1prim}(b) one must have $l = 1$ (because $F_2 \in 
I(Z)$ and $\text{deg}\, F_2 = l + 2$ and because $\sco_{L_1}(-l-2)$ is a 
quotient of $\sci_Z$). Similarly, under the hypothesis of 
Prop.~\ref{P:genixcupl2cupl1prim}(c), one must have $l = 0$. 
Prop.~\ref{P:reshoxcupl2cupl1prim} shows, now, that $\sci_Z(3)$ satisfies the 
conclusion of the proposition. 

\vskip2mm 

\noindent 
{\bf Case 4.}\quad $Z$ \emph{is the union of a double structure $X$ on a line 
and of a (nonsingular) conic $C$ intersecting it}. 

\vskip2mm

\noindent 
Since $\tH^0(\sci_Z(2)) = 0$, the plane containing the conic must not contain 
the support of the double line. One may assume, in this case, that $X$ is a 
double structure on the line $L_1$ of equations $x_2 = x_3 = 0$ and that 
$C$ is the conic of equations $x_1 = x_0x_3 - x_2^2 = 0$. One can use, now, 
an argument similar to that used in Case 3, based on  
Prop.~\ref{P:genixcupc2} and Prop.~\ref{P:reshoxcupc2} from 
Subsection~\ref{SS:doublecupconic}.     
\end{proof} 

\begin{prop}\label{P:y}
Let $Y$ be a locally CM curve in $\piii$ of degree $3$. If $\sci_Y(3)$ is 
globally generated then one of the following holds$\, :$ 
\begin{enumerate}
\item[(i)] $Y$ is a complete intersection of type $(1,3)$$\, ;$ 
\item[(ii)] $Y$ is directly linked to a line by a complete intersection of 
type $(2,2)$$\, ;$ 
\item[(iii)] $\sci_Y(3)$ admits a monad of the form$\, :$ 
\[
0 \lra \sco_\p(1)\oplus \sco_\p \lra 3\sco_\p(2)\oplus \sco_\p(1) \lra 
\sco_\p(3) \lra 0\, ;
\]
\item[(iv)] $\sci_Y(3)$ admits a monad of the form$\, :$ 
\[
0 \lra \sco_\p(i+1)\oplus 2\sco_\p(1) \lra 2\sco_\p(i+2)\oplus 4\sco_\p(2) 
\lra \sco_\p(i+3) \oplus \sco_\p(3) \lra 0\, , 
\]
with $i = 0$ or $1$$\, ;$
\item[(v)] $\sci_Y(3)$ admits a monad of the form$\, :$ 
\[
0 \lra \sco_\p(3)\oplus\sco_\p(2)\oplus \sco_\p(1) \lra 
2\sco_\p(4)\oplus 2\sco_\p(3)\oplus 2\sco_\p(2) \lra 
\sco_\p(5)\oplus \sco_\p(4) \lra 0\, . 
\]
\end{enumerate} 
\end{prop}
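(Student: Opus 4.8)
The plan is to classify $Y$ by type --- reduced versus nonreduced, connected versus disconnected --- handling the reduced connected curves at once with Lemma~\ref{L:yconnected} and reducing every other family to an explicit free resolution built from the generators and resolutions collected in Appendices~\ref{A:multilines} and~\ref{A:multicomponent}.

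First I would dispose of the reduced connected case. There Lemma~\ref{L:yconnected} applies verbatim and yields the dichotomy: $Y$ is a complete intersection of type $(1,3)$, i.e. item~(i), or $Y$ is directly linked to a line by a complete intersection of type $(2,2)$, i.e. item~(ii); no monad need be produced, since (i) and (ii) are phrased purely as curve descriptions. I would append here the first infinitesimal neighbourhood $L^{(1)}$ of a line, which, although nonreduced, is directly linked to a line by a complete intersection of type $(2,2)$ (as recorded in the proof of Lemma~\ref{L:ycuplprim}) and so also lands in item~(ii).

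Next come the disconnected curves. A disconnected locally CM curve of degree $3$ is the disjoint union of a line $L$ and a connected locally CM curve $C$ of degree $2$ --- a smooth conic, two lines meeting in a point, or a double line --- or else a triple of mutually skew lines. In sharp contrast with the degree-$4$ phenomenon of Lemma~\ref{L:xcupxprim}, a trisecant does \emph{not} obstruct global generation here: a line that is not a component meets a degree-$3$ curve in a scheme of degree at most $3$, whereas a cubic is forced to vanish along a line only when that line is a $4$-secant. Hence these configurations genuinely occur. For each of them I would write $\sci_Y = \sci_C \cap \sci_L$ (using the resolution of a disjoint union of lines from Lemma~\ref{L:zcupw} and the degree-$2$ resolutions of the appendix), check global generation of $\sci_Y(3)$ straight from the generators, and then obtain the monad either by Ferrand's liaison recipe or by deleting the unit summand from a free resolution of $\tH^0_\ast(\sco_Y)$ and sheafifying, as explained in the Introduction; the outcomes fill out items~(iii)--(v).

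The remaining curves --- the nonreduced connected ones --- are where the real difficulty lies, and this is the main obstacle. They are the triple structures on a line other than $L^{(1)}$ and the unions $X \cup L_2$ of a double structure $X$ on a line $L_1$ with a second line $L_2$ meeting $L_1$. There is no uniform normal form, so one must run a parameter-by-parameter analysis over the structure invariants $l, m, \dots$ of Appendices~\ref{A:multilines} and~\ref{A:multicomponent}, reading off the homogeneous ideal $I(Y)$ and its graded resolution in each range and deciding, case by case, whether $\sci_Y(3)$ is globally generated. The invariants surviving the test form a short list: the others are eliminated either by a degree count --- a quotient $\sco_{L_1}(-k)$ of $\sci_Y$ forces $-k+3 \geq 0$ once $\sci_Y(3)$ is globally generated --- or by an explicit $4$-secant or ideal-membership argument in the spirit of Lemma~\ref{L:ycuplprim} and Prop.~\ref{P:z}. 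For each surviving value one then cancels the forced redundant $\sco_\p$ summands --- by the rank argument already used in the proof of Prop.~\ref{P:zprimlci}, where a suitable component of the resolution map must have high rank because a monomorphism into a twist of $\Omega_\p$ (or into a kernel $K$) would otherwise degenerate along a surface --- so that the resolution collapses onto exactly one of the monads (iii), (iv), (v).
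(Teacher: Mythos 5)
Your proposal follows essentially the same route as the paper's proof: the same trichotomy (reduced connected curves via Lemma~\ref{L:yconnected}, disconnected curves via the explicit resolutions of Appendices~\ref{A:multilines} and~\ref{A:multicomponent}, nonreduced connected curves --- triple lines and a double line union a line meeting it --- via a parameter-by-parameter analysis driven by the quotient-degree bound and cancellation of redundant summands), together with the same two mechanisms for extracting the monads. The only harmless imprecision is that the disconnected configurations in fact produce only items (iii) and (iv), item (v) arising solely from the primitive triple line $3L$ on a cubic surface, but this would fall out of the computation you describe.
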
 

\begin{proof}
We split the proof into several cases. 

\vskip2mm 

\noindent 
{\bf Case 1.}\quad $Y$ \emph{is not connected}. 

\vskip2mm 

\noindent 
In this case $Y$ is the union of a curve $X$ of degree 2 and of a line 
$L^\prim$ not intersecting $X$. If $X$ is a complete intersection of type 
$(1,2)$ then $\sci_Y(3)$ satisfies the condition (iii) from the statement. 
If $X$ is the union of two disjoint lines then $\sci_Y(3)$ satisfies 
condition (iv) from the statement with $i = 0$. 

It remains to consider the situation where $X$ is a double structure on a 
line $L$ defined by an epimorphism $\sci_L/\sci_L^2 \simeq 2\sco_L(-1) \ra 
\sco_L(l)$ with $l \geq 0$ (for $l = -1$, $X$ is a complete intersection of 
type $(1,2)$). We may assume that $L$ has equations $x_2 = x_3 = 0$ and that 
$L^\prim$ has equations $x_0 = x_1 = 0$. In this case, using 
Prop.~\ref{P:genixcupl1prim}, one sees easily that one must have $l \leq 1$. 
It follows, from the results stated in Subsection~\ref{SS:p2}, that 
$\sci_Y(3)$ satisfies condition (iv) from the statement with $i = l$.  

\vskip2mm

\noindent 
{\bf Case 2.}\quad $Y$ \emph{is reduced and connected}. 

\vskip2mm 

\noindent 
In this case, one applies Lemma~\ref{L:yconnected}. 

\vskip2mm 

\noindent 
{\bf Case 3.}\quad $Y$ \emph{is the union of a double structure $X$ on a line 
and of another line intersecting it}. 

\vskip2mm

\noindent 
We may assume that $X$ is a double structure on the line $L_1$ of equations 
$x_2 = x_3 = 0$ and that the other line is the line $L_2$ of equations 
$x_1 = x_3 = 0$. We use, in this case, the results and notation from 
Subsection~\ref{SS:doublecupaline}. 

Under the hypothesis of Prop.~\ref{P:genixcupl}(a) (with $c = 0$), $\sci_Y(3)$ 
is globally generated if and only if $l \in \{-1,\, 0,\, 1\}$. If $l = -1$, 
then $\text{deg}\, F_2 = 1$ hence $Y$ is a complete intersection of type 
$(1,3)$. If $l = 0$ then $b = x_1$ and $I(Y) = (-x_1x_2 + ax_3,\, x_2x_2,\, 
x_3^2)$ hence $Y$ is directly linked to $L_2$ by the complete intersection 
defined by $-x_1x_2 + ax_3$ and $x_3^2$.   
If $l = 1$ then, by Prop.~\ref{P:reshoxcupl}(a), 
$\sci_Y(3)$ satisfies condition (iii) from the statement. 

On the other hand, under the hypothesis of Prop.~\ref{P:genixcupl}(b) 
(with $c = 0$), $\sci_Y(3)$ is globally generated if and only if 
$l \in \{-1,\, 0\}$. If $l = -1$ then $b = 1$ and $I(Y) = (-x_1x_2 + ax_1x_3,
\, x_2x_2,\, x_3^2)$ and $Y$ is directly linked to $L_2$ by the complete 
intersection defined by $-x_1x_2 + ax_1x_3$ and $x_3^2$.  
If $l = 0$ then, by Prop.~\ref{P:reshoxcupl}(b), 
$\sci_Y(3)$ satisfies condition (iii) from the statement. 

\vskip2mm 

\noindent 
{\bf Case 4.}\quad $Y$ \emph{is a triple structure on a line} $L$. 

\vskip2mm

\noindent 
We may assume that $L$ has equations $x_2 = x_3 = 0$. If $Y$ is 
the first infinitesimal neighbourhood of $L$ in $\piii$ then $Y$ is directly 
linked to $L$ by a complete intersection of type $(2,2)$. Consequently, we 
may assume that $Y$ is a \emph{quasiprimitive} triple structure on $L$. We 
use, in this case, the results and notation from Subsection~\ref{SS:q3}. 
Since $\sco_L(-l-m-2)$ is a quotient of $\sci_Y$ (see Remark~\ref{R:rhoeta2})  
it follows that if $\sci_Y(3)$ is globally generated then $-l-m-2+3 \geq 0$, 
i.e., $l+m \leq 1$. Conversely, if $l+m \leq 1$ then $I(Y)$ is generated by 
polynomials of degree 3 (for $l \leq 0$ this is clear, while for $l = 1$ and 
$m = 0$ $Y$ is a \emph{primitive} triple structure on $L$ hence $I(Y)$ is 
generated by $F_3$, $x_2^3$, $x_2^2x_3$, $x_2x_3^2$, $x_3^3$). 

If $l = 1$ and $m = 0$ (i.e., if $Y$ is the divisor $3L$ on a cubic surface 
$\Sigma \supset L$ nonsingular along $L$) then $\sci_Y(3)$ satisfies condition 
(v) from the statement. 

If $l = 0$ and $m = 1$ then $\sci_Y(3)$ satisfies condition (iv) from the 
statement with $i = 1$. 

If $l = 0$ and $m = 0$ then $\sci_Y(3)$ satisfies condition (iv) from the 
statement with $i = 0$. 

If $l = -1$ and $m = 2$ then $a$, $b$ are constants, at least one non-zero, 
hence $\sci_Y(3)$ satisfies condition (iii) from the statement (one can cancel 
some direct summands in the resolution of $\tH^0_\ast(\sco_Y)$).  

If $l = -1$ and $m = 1$ then, by Lemma~\ref{L:l=-1m=1deg3}, $Y$ is directly 
linked to $L$ by a complete intersection of type $(2,2)$. 

Finally, if $l = -1$ and $m = 0$ then $Y$ is a complete intersection of type 
$(1,3)$.        
\end{proof}

\begin{prop}\label{P:x} 
Let $X$ be a locally CM curve in $\piii$ of degree $2$. If $\sci_X(3)$ is 
globally generated then either $X$ is a complete intersection of type $(1,2)$ 
or $\sci_X(3)$ admits a monad of the form$\, :$ 
\[
0 \lra \sco_\p(i+1)\oplus \sco_\p(1) \lra 2\sco_\p(i+2)\oplus 2\sco_\p(2) 
\lra \sco_\p(i+3) \lra 0
\]
with $i = 0$ or $1$.  
\end{prop}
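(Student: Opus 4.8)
The plan is to run through the possible types of a locally CM curve $X$ of degree $2$ in $\piii$. Such an $X$ is either a complete intersection of type $(1,2)$ (a plane conic, reduced or not, a pair of coplanar lines, or the divisor $2L$ on a plane), or the union of two skew lines, or a noncoplanar double structure on a line. The first type is precisely the alternative singled out in the statement, so it remains to produce the stated monad in the other two cases and to show that global generation confines us to $i\in\{0,1\}$. In both remaining cases I would obtain the monad by the first method described in the Introduction: resolve the graded module $\tH^0_\ast(\sco_X)$, delete the free summand $S$ corresponding to the generator $1\in\tH^0(\sco_X)$, sheafify, and twist by $3$.

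Consider first $X=L\cup L^\prim$ with $L,L^\prim$ skew. Being disconnected, $\sco_X\simeq\sco_L\oplus\sco_{L^\prim}$, hence $\tH^0_\ast(\sco_X)\simeq\tH^0_\ast(\sco_L)\oplus\tH^0_\ast(\sco_{L^\prim})$, each summand a polynomial ring in two variables resolved by a Koszul complex. Thus the minimal free resolution of $\tH^0_\ast(\sco_X)$ is the direct sum of two Koszul complexes, namely $0\lra 2S(-2)\lra 4S(-1)\lra 2S\lra\tH^0_\ast(\sco_X)\lra 0$. Since $1=1_L+1_{L^\prim}$ is part of a basis of the two-dimensional degree-zero piece, deleting the summand $S$ it carries and sheafifying gives the monad $0\lra 2\sco_\p(-2)\lra 4\sco_\p(-1)\lra\sco_\p\lra 0$ for $\sci_X$; twisting by $3$ yields the monad of the statement with $i=0$.

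Now suppose $X$ is a noncoplanar double structure on a line $L$, which (taking $L:\ x_2=x_3=0$) is defined by an epimorphism $\sci_L/\sci_L^2\lra\sco_L(l)$ with $l\geq -1$; the value $l=-1$ gives the complete intersection $2L$ on a plane, already covered, so $l\geq 0$. The kernel is $\sci_X/\sci_L^2\simeq\sco_L(-l-2)$, so $\sco_L(-l-2)$ is a quotient of $\sci_X$; as $\sci_X(3)$ is globally generated, so is $\sco_L(1-l)$ on $L\simeq\pj$, forcing $l\leq 1$. For $l\in\{0,1\}$ I would invoke the graded free resolution of $\tH^0_\ast(\sco_X)$ recorded in Subsection~\ref{SS:p2} of Appendix~\ref{A:multilines}, which has the shape $0\lra S(l-2)\oplus S(-2)\lra 2S(l-1)\oplus 2S(-1)\lra S(l)\oplus S\lra\tH^0_\ast(\sco_X)\lra 0$, the summand $S$ in the last free module again being the one carried by $1\in\tH^0(\sco_X)$. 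Deleting it, sheafifying, and twisting by $3$ produces exactly the monad of the statement with $i=l$. For $l=0$ this has the same numerical shape as for two skew lines (compare Subcase~3.3 of Prop.~\ref{P:zinci23}), consistently giving $i=0$.

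The genuine content, and the only real obstacle, is the determination of the resolution of $\tH^0_\ast(\sco_X)$ for the noncoplanar double line, where the nonreducedness enters; this is carried out in Appendix~\ref{A:multilines}. The remaining steps—the Koszul computation for two skew lines, the bound $l\leq 1$ coming from the quotient $\sco_L(-l-2)$, and the passage from each resolution to the monad by deleting the $S$-summand and applying a twist—are routine once those resolutions are available.
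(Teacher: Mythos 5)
Your proof is correct and follows essentially the same route as the paper: the same trichotomy (complete intersection of type $(1,2)$, two skew lines, non-planar double structure on a line), the monad obtained by deleting the free summand carrying $1$ from the graded free resolution of $\tH^0_\ast(\sco_X)$ recorded in Subsection~\ref{SS:p2}, and the restriction to $l\in\{0,1\}$ coming from the quotient $\sco_L(-l-2)$ of $\sci_X$. The only cosmetic difference is that for the two skew lines the paper invokes Lemma~\ref{L:zcupw}, whereas you use the direct sum of the two Koszul resolutions of $\tH^0_\ast(\sco_L)$ and $\tH^0_\ast(\sco_{L^\prim})$, which amounts to the same computation.
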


\begin{proof}
If $X$ is not a complete intersection of type $(1,2)$ then either $X$ is the 
union of two disjoint lines (in which case $\sci_X(3)$ admits, 
by Lemma~\ref{L:zcupw}, 
a monad as in the statement with $i = 0$) or it is a double 
structure on a line $L$. We may assume that $L$ has equations $x_2 = x_3 = 0$. 
We use, in this case, the results and notation from Subsection~\ref{SS:p2}. 
$\sci_X(3)$ globally generated and $X$ not a complete intersection of type 
$(1,2)$ turn out to be equivalent to $l \in \{0,\, 1\}$. 
It follows that $\sci_X(3)$ admits a monad of the form from the statement 
with $i = l$.   
\end{proof}

\section{Globally generated vector bundles}\label{S:ggvb}

In this section we prove Theorem~\ref{T:main} using the results from the 
Propositions \ref{P:zinci23}, \ref{P:zprimlci}, \ref{P:zprimcm}, 
\ref{P:z}, \ref{P:y} and \ref{P:x}. So, let $E$ be a globally generated 
vector bundle on $\piii$ with $c_1 \geq 4$ such that $\tH^i(E^\vee) = 0$, 
$i = 0,\, 1$. Assume that $\tH^0(E(-c_1+2)) = 0$ and $\tH^0(E(-c_1+3)) \neq 
0$. Recall, from the Introduction, the exact sequence 
\eqref{E:o(c1-3)oeiz(3)} and the monad \eqref{E:monade} of $E$ deduced from 
the monad \eqref{E:monadiz(3)} of $\sci_Z(3)$. Let us assume that, 
moreover$\, :$ 
\[
B^{-1} = B^{-1}_+ \oplus m_{-1}\sco_\p \oplus B^{-1}_- \  \text{and} \  
B^0 = B^0_+ \oplus m_0\sco_\p \, ,
\] 
with $\tH^0(B_+^{-1\vee}) = 0$, $\tH^0(B_-^{-1}) = 0$, $\tH^0(B_+^{0\vee}) = 0$, 
and that $\tH^0(B^{1\vee}) = 0$. Let $d_+^{-1} : B_+^{-1} \ra B_+^0$ be the 
restriction of $d^{-1} : B^{-1} \ra B^0$ and $d_+^0 : B_+^0 \ra B^1$ the 
restriction of $d^0$. 
The next lemma will allow us to handle all the cases occuring in the proof 
of Theorem~\ref{T:main}. 

\begin{lemma}\label{L:monad} 
Under the above hypotheses and notation, $r = 2 - m_0 + m_{-1} + 
{\fam0 h}^0(B_-^{-1\vee})$ and one has an exact sequence$\, :$ 
\[
0 \lra B_+^{-1} \xra{\begin{pmatrix} \phi_+\\ d_+^{-1} \end{pmatrix}} 
\sco_\p(c_1-3)\oplus F \lra E \lra 0
\]
where $F$ is the cohomology of a monad$\, :$ 
\[
0 \lra B_-^{-1} \xra{\begin{pmatrix} 0\\ u \end{pmatrix}} 
B_+^0 \oplus m\sco_\p \xra{\displaystyle (d_+^0\, ,\, \ast)} B^1 \lra 0 
\]
with $u : B_-^{-1} \ra m\sco_\p$ the dual of the evaluation morphism  
${\fam0 H}^0(B_-^{-1\vee})\otimes_k\sco_\p \ra B_-^{-1\vee}$. 
\end{lemma}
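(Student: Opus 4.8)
The plan is to transform the monad \eqref{E:monade} of $E$, with left map $\iota=\begin{pmatrix}\phi\\ d^{-1}\\ \psi\end{pmatrix}$ and right map $\pi=(0,d^0,0)$, by monad isomorphisms and cancellations of trivial subcomplexes until $B_+^{-1}$ and $\sco_\p(c_1-3)$ are separated from the rest. First I would record the vanishings forced by the sign decomposition: a homomorphism from a line bundle of positive degree to $\sco_\p$ is zero, so $\psi_+=0$ and the component $B_+^{-1}\ra m_0\sco_\p$ of $d^{-1}$ vanishes; hence $B_+^{-1}$ maps into $\sco_\p(c_1-3)\oplus B_+^0$ by $\begin{pmatrix}\phi_+\\ d_+^{-1}\end{pmatrix}$ only.

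The cohomological heart is the rank count together with the combinatorics of the trivial summands. Dualizing the two displays of \eqref{E:monade} gives short exact sequences whose long cohomology sequences, together with $\tH^0(E^\vee)=\tH^1(E^\vee)=0$ (hypothesis), $\tH^0(B_+^{0\vee})=\tH^0(B^{1\vee})=0$, $\tH^1(B^{1\vee})=0$, and $\tH^0(\sco_\p(-c_1+3))=0$ (as $c_1\geq 4$), force
\[
\tH^0(\iota^\vee)\colon \tH^0\big(\sco_\p(-c_1+3)\oplus B^{0\vee}\oplus(r-2)\sco_\p\big)\lra \tH^0(B^{-1\vee})
\]
to be an \emph{isomorphism}. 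Computing the two sides identifies the source with $k^{m_0+r-2}$ and the target with $k^{m_{-1}}\oplus\tH^0(B_-^{-1\vee})$, so equality of dimensions is exactly $r=2-m_0+m_{-1}+\h^0(B_-^{-1\vee})$. Writing this isomorphism in the block form $(c^\vee,\,\tH^0(s^\vee))$, where $c\colon m_{-1}\sco_\p\ra m_0\sco_\p\oplus(r-2)\sco_\p$ is the constant block of $\iota$ and $s$ its lower block on $B_-^{-1}$, surjectivity onto the first factor shows $c$ is a split monomorphism, while injectivity on $\ker(c^\vee)$ shows that the induced map of $B_-^{-1}$ into the complementary $m:=\h^0(B_-^{-1\vee})$ trivial summands realizes a basis of $\tH^0(B_-^{-1\vee})$, i.e.\ equals the universal $u$ up to an automorphism of $m\sco_\p$.

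Next I would perform the monad reduction. After adapting the basis of the middle so that the cancelled summand is $\iota$ of the source pivot, it lies in $\mathrm{im}(\iota)\subset\ker(\pi)$, so the trivial subcomplex $m_{-1}\sco_\p\izo m_{-1}\sco_\p$ splits off (Remark~\ref{R:cancellation}); the Schur complement leaves the map out of $B_+^{-1}$ untouched, since its block into the trivial summands is zero, and keeps the $m\sco_\p$-component of $B_-^{-1}$ equal to $u$. The middle becomes $\sco_\p(c_1-3)\oplus B_+^0\oplus m\sco_\p$ and $\pi$ takes the form $(0,d_+^0,\ast)$. The remaining, and I expect \emph{most delicate}, step is to kill the leftover components $B_-^{-1}\ra\sco_\p(c_1-3)$ and $B_-^{-1}\ra B_+^0$ by the middle automorphism $\begin{pmatrix}1 & 0 & \rho_1\\ 0 & 1 & \rho_2\\ 0 & 0 & 1\end{pmatrix}$: this amounts to solving $\rho_i\circ u=-(\text{leftover component})$, which is possible because $f\mapsto f\circ u$ is surjective on $\mathrm{Hom}(B_-^{-1},\sco_\p(c_1-3))$ and on $\mathrm{Hom}(B_-^{-1},B_+^0)$. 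Via $0\ra\ker(u^\vee)\ra m\sco_\p\ra B_-^{-1\vee}\ra 0$ this surjectivity reduces to the vanishing of $\tH^1(\ker(u^\vee)(c_1-3))$ and $\tH^1(\ker(u^\vee)\otimes B_+^0)$, which follows from the surjectivity of the multiplication maps $\tH^0(\sco_\p(a))\otimes\tH^0(\sco_\p(b))\ra\tH^0(\sco_\p(a+b))$ for $a,b\geq 0$ (here $a>0$ from $B_-^{-1\vee}$, $b>0$ from $B_+^0$ or from $c_1-3\geq 1$). This conjugation only modifies $\ast$, and afterwards $B_-^{-1}\xra{(0,0,u)}$ and $B_+^{-1}\xra{(\phi_+,d_+^{-1},0)}$, with cohomology still $E$.

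Finally I would read off the statement. The complex condition $\pi\iota=0$ restricted to $B_-^{-1}$ gives $\ast\circ u=0$, and $(d_+^0,\ast)$ is onto because $\pi$ is and the $\sco_\p(c_1-3)$-column of $\pi$ is zero; hence
\[
0 \lra B_-^{-1} \xra{\begin{pmatrix}0\\ u\end{pmatrix}} B_+^0\oplus m\sco_\p \xra{(d_+^0,\, \ast)} B^1 \lra 0
\]
is a monad, whose cohomology is $F$. In the display of the reduced monad one has $\ker\pi=\sco_\p(c_1-3)\oplus\ker(d_+^0,\ast)$; quotienting first by $\mathrm{im}(B_-^{-1})\subset\{0\}\oplus\ker(d_+^0,\ast)$ gives $\sco_\p(c_1-3)\oplus F$, and then by $\mathrm{im}(B_+^{-1})$ gives $E$, so that
\[
0 \lra B_+^{-1} \xra{\begin{pmatrix}\phi_+\\ d_+^{-1}\end{pmatrix}} \sco_\p(c_1-3)\oplus F \lra E \lra 0 ,
\]
the injectivity of the left map following from that of $\iota$ together with $\ker u=0$. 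The main obstacle is thus the normalization in the third paragraph: converting the abstract isomorphism $\tH^0(\iota^\vee)$ into explicit monad automorphisms and legitimizing the cancellations inside the monad, the clearing of the $B_-^{-1}$-components being controlled by the multiplication/syzygy vanishing above.
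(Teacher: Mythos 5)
Your proof is correct and follows essentially the same route as the paper: the hypothesis $\tH^i(E^\vee)=0$, $i=0,1$, combined with the sign decomposition of the $B^i$, forces the relevant $\tH^0$ of the dualized left monad map to be an isomorphism, which simultaneously yields the rank formula, normalizes the map on the trivial summands to the dual of the evaluation morphism, and legitimizes the cancellation and the clearing of the remaining components. The only differences are organizational (you cancel $m_{-1}\sco_\p$ before clearing the $B_-^{-1}$-components, whereas the paper clears first by factoring $\alpha,\beta$ through $\gamma$ and then cancels), and your explicit justification of the clearing step via the surjectivity of the multiplication maps $\tH^0(\sco_\p(a))\otimes\tH^0(\sco_\p(b))\ra\tH^0(\sco_\p(a+b))$ spells out what the paper's phrase ``it follows that there exist $\alpha^\prim$, $\beta^\prim$'' leaves implicit.
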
  

\begin{proof}
Put $B_{\leq 0}^{-1} = m_{-1}\sco_\p\oplus B_-^{-1}$. The monad \eqref{E:monade} 
of $E$ from the Introduction can be written in the following form$\, :$ 
\[
0 \lra B_+^{-1}\oplus B_{\leq 0}^{-1} 
\xra{\begin{pmatrix} \phi_+ & \alpha\\ d_+^{-1} & \beta\\ 0 & \gamma 
\end{pmatrix}} \sco_\p(c_1-3) \oplus B_+^0 \oplus m^\prim \sco_\p 
\xra{\displaystyle (0\, ,\, d_+^0\, ,\, \rho)} B^1 \lra 0
\]
where $m^\prim = m_0 + r - 2$. One deduces an exact sequence$\, :$ 
\[
0 \lra B_+^{-1} \lra G \lra E \lra 0 
\] 
where $G$ is the cohomology of the monad$\, :$ 
\begin{equation}\label{E:monadg} 
0 \lra B_{\leq 0}^{-1} \xra{\begin{pmatrix} \alpha\\ \beta\\ \gamma 
\end{pmatrix}} 
\sco_\p(c_1-3) \oplus B_+^0 \oplus m^\prim \sco_\p  
\xra{\displaystyle (0\, ,\, d_+^0\, ,\, \rho)} B^1 \lra 0\, . 
\end{equation}
Since $\tH^i(E^\vee) = 0$, $i = 0,\, 1$, it follows that $\tH^i(G^\vee) = 0$, 
$i = 0,\, 1$, and this is equivalent to the fact that$\, :$ 
\[
\tH^0(\gamma^\vee) : \tH^0(m^\prim \sco_\p) \lra \tH^0(B_{\leq 0}^{-1\vee}) 
\] 
is an isomorphism. Up to an automorphism of $m^\prim \sco_\p$, one can assume 
that $\gamma$ is the dual of the evaluation morphism$\, :$ 
\[
\tH^0(B_{\leq 0}^{-1\vee})\otimes_k\sco_\p \lra B_{\leq 0}^{-1\vee}\, .
\]
It follows that there exist $\alpha^\prim : m^\prim \sco_\p \ra \sco_\p(c_1 - 3)$ 
and $\beta^\prim : m^\prim \sco_\p \ra B_+^0$ such that $\alpha = \alpha^\prim 
\circ \gamma$ and $\beta = \beta^\prim \circ \gamma$.  
One deduces that the monad \eqref{E:monadg} is isomorphic to$\, :$ 
\[
0 \lra B_{\leq 0}^{-1} 
\xra{\begin{pmatrix} 0\\ 0\\ \gamma \end{pmatrix}} 
\sco_\p(c_1-3) \oplus B_+^0 \oplus m^\prim \sco_\p 
\xra{\displaystyle (0\, ,\, d_+^0\, ,\, \rho^\prim)} B^1 \lra 0 
\]
with $\rho^\prim = \rho + d_+^0\circ \beta^\prim$. It suffices, now, to cancel 
the direct summand $m_{-1}\sco_\p$ of $B_{\leq 0}^{-1}$ and the corresponding 
direct summand of $m^\prim \sco_\p$. One gets, in particular, that the integer 
$m$ from the statement is equal to $m^\prim - m_{-1} = m_0 - m_{-1} + r - 2$. 
On the other hand, $m = \h^0(B_-^{-1\vee})$ whence the formula for $r$. 
\end{proof}

\begin{lemma}\label{L:koszul} 
Let $F$ be the kernel of an epimorphism $\e : \bigoplus_{i=0}^3\sco_\piii(-d_i) 
\ra \sco_\piii$, with $1 \leq d_0 \leq \cdots \leq d_3$. Then $F(t)$ is globally 
generated if and only if $t \geq d_2 + d_3$.   
\end{lemma}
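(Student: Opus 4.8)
The plan is to read off both implications from the Koszul complex of the four forms defining $\e$. Writing $\e = (f_0,\ldots,f_3)$ with $f_i \in \tH^0(\sco_\piii(d_i))$, surjectivity of $\e$ means that $f_0,\ldots,f_3$ have no common zero on $\piii$; since $d_i \geq 1$, the ideal $(f_0,\ldots,f_3)\subset S$ is $\fm$-primary, so $f_0,\ldots,f_3$ is a regular sequence and its Koszul complex is a resolution of $\sco_\piii$. In particular $F = \Ker \e$ is locally free of rank $3$, and $F$ is the image of the Koszul differential $K_2 \to K_1$, where $K_1 = \bigoplus_i \sco_\piii(-d_i)$ and $K_2 = \bigoplus_{i<j}\sco_\piii(-d_i-d_j)$.

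For the ``if'' part I would argue directly from the surjection $K_2 \twoheadrightarrow F$ furnished by the Koszul complex. Twisting by $t$ gives an epimorphism $\bigoplus_{i<j}\sco_\piii(t-d_i-d_j) \to F(t)$; when $t \geq d_2 + d_3$ every summand has nonnegative twist (the largest value of $d_i+d_j$ over $i<j$ being $d_2+d_3$, as $d_0\leq d_1\leq d_2\leq d_3$), so the source is globally generated and hence so is its quotient $F(t)$.

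For the converse I would produce, for every $t < d_2+d_3$, a point at which the evaluation map fails to be surjective, using the complete intersection curve $C = V(f_0,f_1)$, which is nonempty of pure dimension $1$ since $f_0,f_1$ is part of a regular sequence. Restricting $\e$ to $C$, the two remaining forms give a morphism $(f_2,f_3) : \sco_C(-d_2)\oplus\sco_C(-d_3) \to \sco_C$ that is surjective because $f_0,\ldots,f_3$ have no common zero; its kernel $\sck$ is a line bundle, isomorphic to $\sco_C(-d_2-d_3)$ by a determinant computation. A global section $(g_0,g_1,g_2,g_3)$ of $F(t)$ is a homogeneous syzygy of degree $t$, so restricting the relation $\sum f_i g_i = 0$ to $C$ (where $f_0,f_1$ vanish) shows that $(g_2,g_3)|_C$ lies in $\tH^0(\sck(t)) = \tH^0(\sco_C(t-d_2-d_3))$. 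The key vanishing is that $\tH^0(\sco_C(m)) = 0$ for $m < 0$: this follows from the Koszul resolution of $\sci_C$ together with $\tH^1(\sco_\piii(j)) = \tH^2(\sco_\piii(j)) = 0$, which force $\tH^1(\sci_C(m)) = 0$ and hence $\tH^0(\sco_C(m)) = 0$ when $m < 0$. Consequently $g_2$ and $g_3$ vanish identically on $C$ for every global section of $F(t)$.

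Finally I would convert this into the failure of global generation. At each point $p \in C$ one has $f_0(p) = f_1(p) = 0$ while $(f_2(p),f_3(p)) \neq 0$, so the fibre $F(p) = \Ker(\e(p))$ is $3$-dimensional and contains the $2$-plane spanned by the first two basis directions. Since every global section of $F(t)$ has vanishing third and fourth components at $p$, the image of $\tH^0(F(t)) \to F(t)(p)$ lies in that $2$-plane and cannot fill the $3$-dimensional fibre; thus $F(t)$ is not globally generated. The main point requiring care is that $\e$ is an \emph{arbitrary} epimorphism, so $C$ may be singular or nonreduced and one cannot take it smooth; the argument is arranged to use only that $C$ is a complete intersection, for which the cohomology vanishing $\tH^0(\sco_C(m)) = 0$ ($m<0$) holds unconditionally.
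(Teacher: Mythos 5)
Your proof is correct and follows essentially the same route as the paper: the Koszul complex surjection $\bigoplus_{i<j}\sco_\piii(t-d_i-d_j)\twoheadrightarrow F(t)$ for sufficiency, and restriction to the complete intersection $C=V(f_0,f_1)$ for necessity. The paper simply states the resulting splitting $F\vert_C\simeq\sco_C(-d_0)\oplus\sco_C(-d_1)\oplus\sco_C(-d_2-d_3)$ and observes that the last summand twisted by $t<d_2+d_3$ has no sections; your syzygy-and-fibre argument is an unpacked version of the same computation.
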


\begin{proof}
Using the Koszul complex associated to $\e$ one sees easily that 
$F(d_2 + d_3)$ is globally generated. 

On the other hand, let $f_i \in \tH^0(\sco_\piii(d_i))$, $i = 0, \ldots ,3$, be 
the polynomials defining $\e$. Restricting $F$ to the complete intersection 
$C$ defined by $f_0$ and $f_1$, one gets$\, :$ 
\[
F \vb C \simeq \sco_C(-d_0) \oplus \sco_C(-d_1) \oplus \sco_C(-d_2-d_3) 
\]
from which one deduces that $F(d_2 + d_3 - 1)$ is not globally generated. 
\end{proof}

\begin{proof}[Proof of Theorem~\ref{T:main}] 
We split the proof into several cases according to the form of the monad of 
$\sci_Z(3)$, where $Z$ is the curve occuring in the exact sequence 
\eqref{E:o(c1-3)oeiz(3)} from the Introduction. 

\vskip2mm 

\noindent 
{\bf Case 0.}\quad $Z = \emptyset$.  

\vskip2mm 

\noindent 
In this case $E\simeq \sco_\p(c_1-3)\oplus \sco_\p(3)$. 

\vskip2mm

\noindent 
{\bf Case 1.}\quad $Z$ \emph{is a line}.  

\vskip2mm

\noindent 
In this case, by Lemma~\ref{L:monad}, one has an exact sequence$\, :$ 
\[
0 \lra \sco_\p(1) \lra \sco_\p(c_1-3)\oplus 2\sco_\p(2) \lra E \lra 0\, .
\] 
Since we are on $\piii$, this is possible only if $c_1 = 4$ and 
$E \simeq 2\sco_\p(2)$. But, then, $\tH^0(E(-c_1+2)) \neq 0$, which contradicts 
our hypothesis, hence this case \emph{cannot occur}. 

\vskip2mm

\noindent 
{\bf Case 2.}\quad $Z$ \emph{is a complete intersection of type $(a,b)$, with 
$1\leq a \leq b \leq 3$ and $a+b \geq 3$}. 

\vskip2mm

\noindent 
In this case, by Lemma~\ref{L:monad}, $\sco_\p(c_1-3)$ is a direct summand 
of $E$. 

\vskip2mm
 
\noindent 
{\bf Case 3.}\quad $Z$ \emph{is directly linked to a line by a complete 
intersection of type} $(2,2)$.  

\vskip2mm

\noindent 
In this case, $\sci_Z(3)$ admits a resolution of the form$\, :$ 
\[
0 \lra 2\sco_\p \lra 3\sco_\p(1) \lra \sci_Z(3) \lra 0
\]    
hence, by Lemma~\ref{L:monad}, $\sco_\p(c_1-3)$ is a direct summand of $E$. 

\vskip2mm

\noindent 
{\bf Case 4.}\quad $Z$ \emph{is as in Prop.}~\ref{P:zinci23}(ii)-(iv), 
\emph{or as in Prop.}~\ref{P:zprimlci}(i)-(iii), 
\emph{or as in Prop.}~\ref{P:zprimcm}.  

\vskip2mm

\noindent 
In this case, by Lemma~\ref{L:monad}, $\sco_\p(c_1-3)$ is a direct summand 
of $E$. \emph{Indeed}, it suffices to notice that if $Z$ is as in 
Prop.~\ref{P:zinci23}(iv) then $\sci_Z(3)$ admits a monad of the form$\, :$ 
\[
0 \lra 3\sco_\p \lra 5\sco_\p(1) \lra \sco_\p(2) \lra 0
\]
and if $Z$ is as in Prop.~\ref{P:zprimlci}(iii) then $\sci_Z(3)$ admits a 
monad of the form$\, :$ 
\[
0 \lra 2\sco_\p\oplus \sco_\p(-1) \lra 4\sco_\p(1)\oplus \sco_\p \lra 
\sco_\p(2) \lra 0\, .
\] 

\noindent 
{\bf Case 5.}\quad $Z$ \emph{is as $Y$ in Prop.}~\ref{P:y}(iv)-(v) 
\emph{or as $X$ in the second part of the conclusion of Prop.}~\ref{P:x}. 

\vskip2mm 

\noindent 
In all these cases it follows, from Lemma~\ref{L:monad}, that $E$ has rank 2 
hence it can be realized as an extension$\, :$ 
\[
0 \lra \sco_\p(c_1-3) \lra E \lra \sci_Z(3) \lra 0\, .
\]  
One deduces that $Z$ is l.c.i. and that $\omega_Z \simeq \sco_Z(2-c_1)$. Since 
$\chi(\omega_Z) = -\chi(\sco_Z)$ one deduces, from Riemann-Roch on $Z$, that 
\[
(c_1-2)\text{deg}\, Z = 2\chi(\sco_Z)\, . 
\]
Notice, also, that $c_2 = \text{deg}\, Z + 3(c_1-3)$ and that 
$\chi(\sco_Z)$ depends only on the numerical shape of the 
monad of $\sci_Z(3)$.  

\vskip2mm

\noindent 
{\bf Subcase 5.1.}\quad $Z$ \emph{as $Y$ in Prop.}~\ref{P:y}(iv) \emph{with} 
$i = 0$. 

\vskip2mm

\noindent 
In this subcase, $\text{deg}\, Z = 3$ and, in order to compute 
$\chi(\sco_Z)$, one may assume that $Z$ is a triple structure on a line $L$ 
with $l = 0$ and $m = 0$ (notation as in Subsection~\ref{SS:q3}), hence 
with $\sco_Z \simeq 3\sco_L$ as an $\sco_L$-module. One gets $\chi(\sco_Z) = 
3$, hence $c_1 = 4$ and $c_2 = 6$. It follows that $c_1(E(-2)) = 0$,  
$c_2(E(-2)) = c_2 -2c_1 +4 = 2$ and, since $\tH^0(E(-2)) = 0$, $E$ is stable. 

\vskip2mm 

\noindent 
{\bf Subcase 5.2.}\quad $Z$ \emph{as $Y$ in Prop.}~\ref{P:y}(iv) \emph{with} 
$i = 1$. 

\vskip2mm 

\noindent 
In this subcase, $\text{deg}\, Z = 3$ and, in order to compute 
$\chi(\sco_Z)$, one may assume that $Z$ is a triple structure on a line $L$ 
with $l = 0$ and $m = 1$, hence with $\sco_Z \simeq 2\sco_L\oplus \sco_L(1)$  
as an $\sco_L$-module. It follows that $\chi(\sco_Z) = 4$ hence $3\cdot 
(c_1-2) = 2\cdot 4$, hence this subcase \emph{cannot occur}. 

\vskip2mm 

\noindent 
{\bf Subcase 5.3.}\quad $Z$ \emph{as $Y$ in Prop.}~\ref{P:y}(v). 

\vskip2mm

\noindent 
In this subcase, $\text{deg}\, Z = 3$ and, in order to compute 
$\chi(\sco_Z)$, one may assume that $Z$ is a triple structure on a line $L$ 
with $l = 1$ and $m = 0$, hence with $\sco_Z \simeq \sco_L\oplus \sco_L(1) 
\oplus \sco_L(2)$  as an $\sco_L$-module. It follows that $\chi(\sco_Z) = 6$ 
hence $c_1 = 6$ and $c_2 = 12$ hence $c_1(E(-3)) = 0$ and $c_2(E(-3)) = 3$. 
The hypothesis $\tH^0(E(-4)) = 0$ and $\tH^0(E(-3)) \neq 0$ shows that 
$E(-3)$ is properly semistable. 

We notice, at this point, that if $Z$ is a triple structure on a line $L$ 
with $l = 1$ and $m = 0$ then $\omega_Z \simeq \sco_Z(-4)$ (by 
\cite[Prop.~2.3~and~\S~3.2]{bf}) hence a global section of $\omega_Z(4)$ 
generating everywhere this sheaf defines an extension$\, :$ 
\[
0 \lra \sco_\p \lra F \lra \sci_Z \lra 0
\] 
with $F$ a properly semistable rank 2 vector bundle with $c_1(F) = 0$ and 
$c_2(F) = 3$. Notice also that, conversely, if $F$ is a properly semistable 
rank 2 vector bundle with these Chern classes then the zero locus $Z$ of the 
unique nonzero global section of $F$ is a triple structure on a line $L$ with 
$l = 1$ and $m = 0$. \emph{Indeed}, $\text{deg}\, Z = 3$ and $\omega_Z \simeq 
\sco_Z(-4)$. $Z$ cannot be reducible because, in that case, one would have 
$Z = X \cup L$, with $L$ a line and $X$ a curve of degree 2 such that the 
scheme $D = X\cap L$ is 0-dimensional (or empty) and in this case it is 
well-known that $\omega_Z \vb L \simeq \omega_L \otimes \sco_L(D)$ and this 
would contradict the fact that $\omega_Z \simeq \sco_Z(-4)$. It remains that 
$Z$ is irreducible. $Z$ cannot be reduced hence $Z$ is a triple structure on 
a line $L$. Since $Z$ is l.c.i. it follows that $m = 0$ (by 
\cite[\S~3.3]{bf}) and the condition $\omega_Z \simeq \sco_Z(-4)$ implies, 
now, that $l = 1$ (by \cite[Prop.~2.3]{bf}). 

\vskip2mm

\noindent 
{\bf Subcase 5.4.}\quad $Z$ \emph{as $X$ in Prop.}~\ref{P:x} \emph{with} 
$i = 0$.   

\vskip2mm

\noindent 
In this subcase $\text{deg}\, Z = 2$ and $Z$ is either the union of two 
disjoint lines or a double structure on a line $L$ with $l = 0$ (notation as 
in Subsection~\ref{SS:p2}). It follows that $\chi(\sco_Z) = 2$ hence $c_1 = 4$ 
and $c_2 = 5$ hence $c_1(E(-2)) = 0$ and $c_2(E(-2)) = 1$. Since, by 
hypothesis, $\tH^0(E(-2)) = 0$, $E(-2)$ is stable. It is, actually, a 
nullcorrelation bundle. 

\vskip2mm

\noindent 
{\bf Subcase 5.5.}\quad $Z$ \emph{as $X$ in Prop.}~\ref{P:x} \emph{with} 
$i = 1$. 

\vskip2mm 

\noindent 
In this subcase, $\text{deg}\, Z = 2$ and $Z$ is a double structure on a line 
$L$ with $l = 1$. It follows that $\chi(\sco_Z) = 3$ hence $c_1 = 5$ and 
$c_2 = 8$. One gets that $c_1(E(-3)) = -1$ and $c_2(E(-3)) = 2$. The 
hypothesis $\tH^0(E(-3)) = 0$ implies that $E(-3)$ is stable. These 
bundles were studied by Hartshorne and Sols \cite{hs} and, independently, 
by Manolache \cite{m1}. 

\vskip2mm 

\noindent 
{\bf Case 6.}\quad $Z$ \emph{is as $Y$ in Prop.}~\ref{P:y}(iii). 

\vskip2mm 

\noindent 
In this case, by Lemma~\ref{L:monad}, one has an exact sequence$\, :$ 
\[
0 \lra \sco_\p(1) \xra{\begin{pmatrix} \phi_+\\ d_+^{-1} \end{pmatrix}} 
\sco_\p(c_1-3) \oplus F \lra E \lra 0
\]        
where $F$ is defined by an exact sequence$\, :$ 
\[
0 \lra F \lra 3\sco_\p(2) \oplus \sco_\p(1) 
\overset{\displaystyle d_+^0}{\lra} \sco_\p(3) \lra 0\, .
\]
Since any global section of $F(-1)$ vanishes along a line in $\piii$, it 
follows that $c_1 = 4$ and that $\phi_+$ is an isomorphism hence that 
$E\simeq F$. Now, up to a linear change of coordinates, one can assume that 
the first three components of $d_+^0$ are $x_0,\, x_1,\, x_2$ and then, 
modulo an automorphism of $3\sco_\p(2)\oplus \sco_\p(1)$ invariating 
$3\sco_\p(2)$, one can assume that the fourth component of $d_+^0$ is $x_3^2$. 

\vskip2mm

\noindent 
{\bf Case 7.}\quad $Z$ \emph{is as in Prop.}~\ref{P:z}. 

\vskip2mm 

\noindent 
In this case, an argument similar to that used in Case 6, shows that $c_1 = 4$ 
and that one has an exact sequence$\, :$ 
\[
0 \lra E \lra 2\sco_\p(2) \oplus 3\sco_\p(1) 
\overset{\displaystyle d_+^0}{\lra} \sco_\p(3) \lra 0\, .
\]
Since, by Lemma~\ref{L:koszul}, the kernel of an epimorphism 
$\sco_\p(2) \oplus 3\sco_\p(1) \ra \sco_\p(3)$ is not globally generated, 
the first two components of $d_+^0$ must be linearly independent. Up to a 
linear change of coordinates, one can assume that they are $x_0$ and $x_1$. 
Let $L\subset \piii$ be the line of equations $x_0 = x_1 = 0$. Since 
$E \vb L$ is globally generated, it follows that $\tH^0((d_+^0 \vb L)(-1))$ 
induces an isomorphism $\tH^0(3\sco_L) \izo \tH^0(\sco_L(2))$ (because, 
otherwise, $E \vb L \simeq 2\sco_L(2) \oplus \sco_L(1) \oplus \sco_L(-1)$). 
Now, modulo an automorphism of $2\sco_\p(2) \oplus 3\sco_\p(1)$ invariating 
$2\sco_\p(2)$, one can assume that the last three components of $d_+^0$ are 
$x_2^2$, $x_2x_3$ and $x_3^2$. 

\vskip2mm

\noindent 
{\bf Case 8.}\quad $Z$ \emph{is as in Prop.}~\ref{P:zprimlci}(iv). 

\vskip2mm

\noindent 
By Lemma~\ref{L:monad}, 
one has an exact sequence $0 \ra \sco_\p(1) \ra \sco_\p(c_1 - 3) \oplus F 
\ra E \ra 0$ with $F$ the cohomology of a monad$\, :$ 
\[
0 \lra \sco_\p(-1) \xra{\begin{pmatrix} 0\\ u \end{pmatrix}} 
2\sco_\p(2) \oplus 2\sco_\p(1) \oplus 4\sco_\p 
\xra{\displaystyle (d_+^0\, ,\, v)} \sco_\p(3) \lra 0 
\]    
where $u : \sco_\p(-1) \ra 4\sco_\p$ is defined by $x_0, \ldots ,x_3$ and 
$d_+^0 : 2\sco_\p(2) \oplus 2\sco_\p(1) \ra \sco_\p(3)$ is an epimorphism (by 
the proof of Prop.~\ref{P:zprimlci}(iv)).  
Up to a linear change of coordinates and an automorphism of $2\sco_\p(2) \oplus 
2\sco_\p(1)$, one can assume that $d_+^0$ is defined by $x_0$, $x_1$, $x_2^2$, 
$x_3^2$.  Modulo an automorphism of $4\sco_\p$, one can continue to assume 
that $u$ is defined by $x_0, \ldots ,x_3$. Since $\tH^0(d_+^0)$ is obviously 
surjective, $v : 4\sco_\p \ra \sco_\p(3)$ factorizes through $d_+^0$. 
One deduces that, modulo an automorphism of $2\sco_\p(2) \oplus 2\sco_\p(1) 
\oplus 4\sco_\p$ invariating $2\sco_\p(2) \oplus 2\sco_\p(1)$ and whose 
component $4\sco_\p \ra 4\sco_\p$ is $\text{id}_{4\sco}$, one can assume 
that $v = 0$. $F$ occurs, now, as the cohomology of a monad$\, :$ 
\[
0 \lra \sco_\p(-1) \xra{\begin{pmatrix} s^\prim\\ u \end{pmatrix}} 
2\sco_\p(2) \oplus 2\sco_\p(1) \oplus 4\sco_\p 
\xra{\displaystyle (p\, ,\, 0)} \sco_\p(3) \lra 0
\] 
where $u$ is defined by $x_0, \ldots ,x_3$ and $p = d_+^0$ by $x_0$, $x_1$, 
$x_2^2$, $x_3^2$. Since the unique global section of $F(-1)$ vanishes along the 
line $x_0 = x_1 = 0$ it follows that $c_1 = 4$ and $E \simeq F$. 
Let $K = \Ker p$ and let $s : \sco_\p(-1) \ra K$ be the 
morphism induced by $(0,\, 0,\, -x_3^2,\, x_2^2) : \sco_\p(-1) \ra 2\sco_\p(2) 
\oplus 2\sco_\p(1)$. Using the Koszul complex associated to $x_0$, $x_1$, 
$x_2^2$, $x_3^2$, one sees that there exist a constant $c\in k$ and a morphism 
$\sigma : 4\sco_\p \ra K$ such that $s^\prim = cs + \sigma \circ u$. It follows 
that the above monad is isomorphic to the monad$\, :$
\[
0 \lra \sco_\p(-1) \xra{\begin{pmatrix} cs\\ u \end{pmatrix}} 
2\sco_\p(2) \oplus 2\sco_\p(1) \oplus 4\sco_\p 
\xra{\displaystyle (p\, ,\, 0)} \sco_\p(3) \lra 0\, . 
\] 
If $c = 0$ then one would get $E \simeq K \oplus \text{T}_\p(-1)$ which is 
not possible because, by Lemma~\ref{L:koszul}, $K$ is not globally generated. 
It remains that $c \neq 0$ hence one can assume, actually, that $c = 1$.  
\end{proof}

We extend, next, Theorem~\ref{T:main} to higher dimensional projective 
spaces. At a certain point of this extension we shall need the following 
result of Barth and Ellencwajg \cite[Thm.~4.2]{be}, for which we provide a 
different proof, based on the results of Mohan Kumar, Peterson and Rao 
\cite{kpr}. 

\begin{thm}[Barth,~Elencwajg]\label{T:be} 
There exists no stable rank $2$ vector bundle $E$ on $\piv$ with Chern 
classes $c_1 = 0$ and $c_2 = 3$. 
\end{thm}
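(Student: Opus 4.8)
The plan is to argue by contradiction: suppose a stable rank $2$ bundle $E$ on $\piv$ with $c_1 = 0$, $c_2 = 3$ exists. Since $E$ has rank $2$ with $c_1 = 0$ it is self-dual, $E \simeq E^\vee$, and stability forces $\tH^0(E(n)) = 0$ for $n \leq 0$; applying Serre duality on $\piv$ (where $\omega \simeq \sco(-5)$) then gives $\tH^4(E(n)) \simeq \tH^0(E(-n-5))^\vee = 0$ for $n \geq -5$. For a bundle the intermediate cohomology modules $M_i := \tH^i_\ast(E)$, $i = 1,2,3$, have finite length, and the combination of self-duality and Serre duality yields the identifications $(M_3)_n \simeq (M_1)_{-n-5}^\vee$ together with a self-duality $(M_2)_n \simeq (M_2)_{-n-5}^\vee$. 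Thus $E$ fails to split precisely because at least one of these finite-length modules is non-zero, and the whole problem is to rule out all admissible shapes for them.

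First I would pin down the numerics by Riemann--Roch on $\piv$. With $c_1 = 0$ and $c_2 = 3$ one finds
\[
\chi(E(n)) = \tfrac{1}{12}n^4 + \tfrac{5}{6}n^3 + \tfrac{17}{12}n^2 - \tfrac{10}{3}n - 6\, .
\]
This serves two purposes. On one hand, the integrality of $\chi$ forces only the congruence $c_2(c_2+1) \equiv 0 \pmod{12}$, which \emph{holds} for $c_2 = 3$ (while it fails for $c_2 = 1,2$); so Riemann--Roch alone cannot exclude this case and a finer input is required. On the other hand, evaluating at the central value gives $\chi(E(-2)) = 1$, and since $\tH^0(E(-2)) = \tH^4(E(-2)) = 0$ we obtain $-\h^1(E(-2)) + \h^2(E(-2)) - \h^3(E(-2)) = 1$, whence $\tH^2(E(-2)) \neq 0$ and, by the self-duality of $M_2$, also $\tH^2(E(-3)) \neq 0$. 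So the self-dual middle module $M_2 = \tH^2_\ast(E)$ is non-zero and symmetric about degree $-5/2$.

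Next I would restrict to a general hyperplane. By the restriction theorem for semistable sheaves, $E \vb \piii$ is a semistable rank $2$ bundle on $\piii$ with $c_1 = 0$, $c_2 = 3$: it is either strictly semistable, hence the triple-line bundle underlying Theorem~\ref{T:main}(vii), or stable; in both cases its intermediate cohomology is completely known. Feeding this into the restriction sequence $0 \lra E(n-1) \lra E(n) \lra (E\vb\piii)(n) \lra 0$ lets one compute the graded pieces of $M_1$ and $M_2$ inductively and match them against the duality relations above, thereby bounding the total length of $M_2$ in terms of the (small) value $c_2 = 3$.

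The decisive step, which I expect to be the main obstacle, is to show that no rank $2$ bundle on $\piv$ can actually carry the self-dual finite-length cohomology forced by $c_2 = 3$. Here I would invoke the results of Mohan Kumar, Peterson and Rao \cite{kpr}: their structural description of the cohomology modules (equivalently, of the Horrocks monad) of a rank $2$ bundle on $\piv$ constrains the minimal generators and syzygies of $M_2$ so tightly that the small self-dual module produced above cannot be realized. Making this precise is the crux: one must carry the self-duality of $M_2$, the duality $M_3 \simeq M_1^\vee$, and the compatibility with the Koszul relations of $S = k[x_0,\ldots,x_4]$ simultaneously, and verify that the resulting numerical data are internally inconsistent. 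That contradiction excludes the existence of $E$ and establishes the theorem.
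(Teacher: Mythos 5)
Your setup is sound and your strategy --- restrict to a hyperplane, exploit self-duality of $E$ and Serre duality on the intermediate cohomology, and bring in Mohan Kumar--Peterson--Rao --- is exactly the route the paper takes; the numerics you extract ($\chi(E(-2))=1$, hence $\tH^2(E(-2))\neq 0$ and by duality $\tH^2(E(-3))\neq 0$, and the Schwarzenberger congruence $c_2(c_2+1)\equiv 0 \pmod{12}$ holding for $c_2=3$) all agree with the paper. But the proposal stops precisely where the proof begins, and the mechanism you sketch for the decisive step will not work as stated. The module $\tH^2_\ast(E)$ that the constraints actually force is $k(3)\oplus k(2)$: a two-dimensional, perfectly self-dual module of minimal size. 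No bookkeeping of lengths of $M_1$, $M_2$ against the duality relations and the restriction sequence produces an ``internal numerical inconsistency''; the data are numerically consistent, and the contradiction has to come from a structural, not numerical, argument.

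Three concrete ingredients are missing. First, the vanishing $\tH^1(E(l))=0$ for $l\leq -2$ is where \cite{kpr} first enters, and in contrapositive form: if $\tH^1(E(-2))\neq 0$ then, because $\tH^1_\ast(E_H)$ is generated by $\tH^1(E_H(-2))$ for every admissible restriction (this requires the case analysis of spectra of semistable rank $2$ bundles on $\piii$ with $c_1=0$, $c_2=3$, including the strictly semistable triple-line case), the maps $\tH^1(E(l))\to\tH^1(E_H(l))$ are all surjective, which forces $\tH^2_\ast(E)=0$ and contradicts \cite[Thm.~1]{kpr} since $E$ is indecomposable. Second, to pin $\tH^2_\ast(E)$ down to $k(3)\oplus k(2)$ one must show that multiplication by every linear form $h:\tH^2(E(-3))\to\tH^2(E(-2))$ is zero; the paper does this via the skew-symmetry of the cup product $\tH^2(E(-3))\times\tH^2(E(-3))\to\tH^4\bigl((\bigwedge^2 E)(-6)\bigr)\simeq\tH^4(\sco_\piv(-6))$ together with the perfectness of the Serre pairing --- an argument of a completely different nature from length-counting. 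Third, the final contradiction is either a citation of \cite[Thm.~2]{kpr} applied to the module $k(3)\oplus k(2)$, or (as the paper also carries out) the construction of the Horrocks monad $3\sco_\piv\oplus 2\sco_\piv(-1)\to\Omega^2_\piv(3)\oplus\Omega^2_\piv(2)\to 2\sco_\piv(1)\oplus 3\sco_\piv$ and the proof that it cannot exist because the Tango bundle $F=\Cok\alpha^\prim(-1)$ satisfies $\tH^0(F^\vee)=0$. None of these appears in your proposal, so as written it is a plan rather than a proof.
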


\begin{proof}
Assume that such a vector bundle $E$ exists. According to Barth \cite{b}, 
the restriction $E_H$ of $E$ to a general hyperplane $H\subset \piv$ is 
stable. If $F$ is a stable rank 2 vector bundle on $\piii$ with $c_1(F) = 0$ 
and $c_2(F) = 3$ then the possible 
spectra (see \cite[Sect.~7]{ha}) of $F$ are $(0,0,0)$ and $(-1,0,1)$. In the 
former case $\tH^1(F(l)) = 0$ for $l\leq -2$ and $\h^1(F(-1)) = 3$, while in 
the latter case $\tH^1(F(l)) = 0$ for $l \leq -3$, $\h^1(F(-2)) = 1$ and 
$\h^1(F(-1)) = 3$. Moreover, in the latter case, $F$ is the cohomology of a 
monad of the form$\, :$ 
\[
0 \lra \sco_\piii(-2) \lra \sco_\piii(-1) \oplus 2\sco_\piii \oplus \sco_\piii(1) 
\lra \sco_\piii(2) \lra 0
\]  
(see Ein \cite[Thm.~3.3(a)]{ein}) hence, in particular, the graded module 
$\tH^1_\ast(F)$ is generated by $\tH^1(F(-2))$. If $F$ is properly semistable 
then, as we saw in Subcase 5.3 of the proof of Thm.~\ref{T:main}, $F$ can be 
realized as an extension$\, :$ 
\[
0 \lra \sco_\piii \lra F \lra \sci_Z \lra 0\, ,
\]
where $Z$ is a triple structure on a line $L \subset \piii$ such that 
$\sco_Z \simeq \sco_L \oplus \sco_L(1) \oplus \sco_L(2)$ as an 
$\sco_L$-algebra. One deduces that in this case $\tH^1(F(l)) = 0$ for 
$l \leq -3$, $\h^1(F(-2)) = 1$ and $\h^1(F(-1)) = 3$. 

\vskip2mm

\noindent
{\bf Claim 1.}\quad $\tH^1(E(l)) = 0$ \emph{for} $l \leq -2$. 

\vskip2mm

\noindent
\emph{Indeed}, consider a hyperplane $H \subset \piv$ such that $E_H$ is 
stable. By \cite[Lemma~1.16(a)]{acm}, $\tH^1(E(l)) = 0$ for $l \leq -3$ and 
$\tH^1(E(-2))$ injects into $\tH^1(E_H(-2))$. If $E_H$ has spectrum 
$(0,0,0)$ then $\tH^1(E_H(-2)) = 0$ hence $\tH^1(E(-2)) = 0$. If the spectrum 
of $E_H$ is $(-1,0,1)$ and $\tH^1(E(-2)) \neq 0$ then $\tH^1(E(-2)) \izo 
\tH^1(E_H(-2))$. Since, as we recalled above, the module $\tH^1_\ast(E_H)$ is 
generated by $\tH^1(E_H(-2))$ it follows that the map $\tH^1(E(l)) \ra 
\tH^1(E_H(l))$ is surjective for any $l \in \z$ hence, by 
\cite[Lemma~1.16(b)]{acm}, $\tH^2_\ast(E) = 0$. But this \emph{contradicts} 
Mohan Kumar et al. \cite[Thm.~1]{kpr} because $E$ is not decomposable. 

\vskip2mm

\noindent
{\bf Claim 2.}\quad $\tH^2(E(l)) = 0$ \emph{for} $l \geq -1$. 

\vskip2mm

\noindent
\emph{Indeed}, since $\tH^0(E(-1)) = 0$ and $\tH^1(E(-2)) = 0$, it follows that 
for every hyperplane $H \subset \piv$ one has $\tH^0(E_H(-1)) = 0$, i.e., 
$E_H$ is semistable. It follows, by what has been said at the beginning of the 
proof, that $\h^1(E_H(-1)) = 3$ and $\tH^1(E_H(l)) = 0$ for $l \leq -3$, 
$\forall \, H \subset \piii$ hyperplane. Since $E_H^\vee \simeq E_H$ one 
deduces, by Serre duality, that $\tH^2(E_H(l)) = 0$ for $l \geq -1$, 
$\forall \, H \subset \piii$ hyperplane.   
It follows that for any 
$0 \neq h \in \tH^0(\sco_\piv(1))$, the multiplication by $h : \tH^2(E(-2)) \ra 
\tH^2(E(-1))$ is surjective and that $\h^2(E(-2)) \leq \h^2(E(-1)) + 3$. 
Assume that $\tH^2(E(-1)) \neq 0$. Applying the Bilinear map lemma 
\cite[Lemma~5.1]{ha} to $\tH^2(E(-1))^\vee \times \tH^0(\sco_\piv(1)) \ra 
\tH^2(E(-2))^\vee$ one deduces that $\h^2(E(-2)) \geq \h^2(E(-1)) + 4$, which 
\emph{contradicts} a previously established inequality. It thus remains that 
$\tH^2(E(-1)) = 0$. Since for any hyperplane $H \subset \piv$ one has 
$\tH^2(E_H(l)) = 0$ for $l \geq -1$ one gets the claim. 

\vskip2mm

\noindent
{\bf Claim 3.}\quad $\tH^2_\ast(E) \simeq k(3) \oplus k(2)$. 

\vskip2mm

\noindent
\emph{Indeed}, by Serre duality and the fact that $E\simeq E^\vee$, 
$\tH^2(E(l)) = 0$ for $l \leq -4$. Using Riemann-Roch (see, for example, 
\cite[Thm.~7.3]{acm}), one gets $\h^2(E(-2)) = \chi(E(-2)) = 1$. By 
Serre duality again, $\h^2(E(-3)) = \h^2(E(-2)) = 1$. It remains to show that,  
for every $h \in \tH^0(\sco_\piv(1))$, the multipliction by $h : \tH^2(E(-3)) 
\ra \tH^2(E(-2))$ is the zero map. Recall that the cup product$\, :$ 
\[
-\cup - : \tH^2(E(-3)) \times \tH^2(E(-3)) \lra 
\tH^4((\overset{2}{\textstyle \bigwedge}E)(-6)) \simeq \tH^4(\sco_\piv(-6))
\]     
is \emph{skew-symmetric} (because $-\wedge - : E\times E \ra 
\overset{2}{\bigwedge}E$ is). If $0 \neq \xi \in \tH^2(E(-3))$ then$\, :$ 
\[
h\xi \cup \xi = h(\xi \cup \xi) = 0\, .
\] 
Since, on the other hand, the cup product $\tH^2(E(-2)) \times \tH^2(E(-3)) 
\ra \tH^4((\overset{2}{\bigwedge}E)(-5)) \simeq \tH^4(\sco_\piv(-5)) \simeq k$ 
is a perfect pairing (by Serre duality), it follows that $h\xi = 0$. 
Claim 3 is proven. 

\vskip2mm 

\noindent
At this point one can invoke Mohan Kumar et al. \cite[Thm.~2]{kpr} and deduce 
that the bundle $E$ \emph{cannot exist}. However, since our situation is very 
concrete, we shall also provide a slightly different argument. One has, by 
Riemann-Roch, $\h^1(E(-1)) = -\chi(E(-1)) = 2$ and $\h^1(E) = - \chi(E) = 6$. 
Since $\tH^2(E(-1)) = 0$, $\tH^3(E(-2)) \simeq \tH^1(E(-3))^\vee = 0$ and 
$\tH^4(E(-3)) \simeq \tH^0(E(-2))^\vee = 0$ it follows from the 
Castelnuovo-Mumford lemma (see, for example, \cite[Lemma~1.21]{acm}) 
that the graded module $\tH^1_\ast(E)$ is generated in degrees $\leq 0$.  
By Barth's restriction theorem, 
there exists a 2-plane $\Pi \subset \piv$ such that $E_\Pi$ is stable, 
i.e., such that $\tH^0(E_\Pi) = 0$. It follows that, for every hyperplane 
$H \supset \Pi$, $\tH^0(E_H) = 0$. If $h = 0$ is the equation of such a 
hyperplane, then the multiplication by $h : \tH^1(E(-1)) \ra \tH^1(E)$ is 
injective. Applying the Bilinear map lemma \cite[Lemma~5.1]{ha} to 
$\mu : \tH^1(E(-1))\otimes \tH^0(\sci_\Pi(1)) \ra \tH^1(E)$ one gets that 
$\dim \text{Im}\, \mu \geq 3$. One deduces that the graded module 
$\tH^1_\ast(E)$ has two minimal generators in degree $-1$ and at most three 
minimal generators in degree $0$, whence a surjection $2S(1)\oplus 3S \ra 
\tH^1_\ast(E)$. Since $E^\vee \simeq E$, one deduces a surjection 
$2S(1) \oplus 3S \ra \tH^1_\ast(E^\vee)$. Using Horrocks' method of 
``killing cohomology'', one gets that $E$ is the cohomology of a monad of the 
form$\, :$ 
\[
0 \lra 3\sco_\piv \oplus 2\sco_\piv(-1) \overset{\displaystyle \alpha}{\lra} 
\scp \overset{\displaystyle \beta}{\lra} 2\sco_\piv(1) \oplus 3\sco_\piv 
\lra 0\, , 
\]   
where $\scp$ is a vector bundle of rank 12, with $\tH^1_\ast(\scp) = 0$, 
$\tH^3_\ast(\scp) = 0$ and $\tH^2_\ast(\scp) \simeq \tH^2_\ast(E)$. It follows 
that $\scp \simeq \Omega_\piv^2(3) \oplus \Omega_\piv^2(2)$, hence $E$ is 
the cohomology of a monad of the form$\, :$ 
\[
0 \lra 3\sco_\piv \oplus 2\sco_\piv(-1) \overset{\displaystyle \alpha}{\lra} 
\Omega_\piv^2(3) \oplus \Omega_\piv^2(2)  
\overset{\displaystyle \beta}{\lra} 2\sco_\piv(1) \oplus 3\sco_\piv 
\lra 0\, . 
\]

\noindent
{\bf Claim 4.}\quad \emph{Such a monad cannot exist}. 

\vskip2mm

\noindent
\emph{Indeed}, $\alpha$ maps $3\sco$ into $\Omega^2(3)$ and $\beta$ maps 
$\Omega^2(3)$ into $2\sco(1)$. Consequently, the monad has a subcomplex of 
the form$\, :$ 
\[
0 \lra 3\sco \overset{\displaystyle \alpha^\prim}{\lra} 
\Omega^2(3) 
\overset{\displaystyle \beta^\prim}{\lra} 2\sco(1) \lra 0\, . 
\]
Let $F := \Cok \alpha^\prim (-1)$. $F$ is a \emph{Tango bundle} on $\piv$. It 
is well-known that $\tH^0(F^\vee) = 0$. The usual argument is the following 
one$\, :$ since $\text{rk}\, F = 3$ and $c_1(F) = 0$ one has $F^\vee \simeq 
\overset{2}{\bigwedge}F$. The second exterior power of the resolution$\, :$ 
\[
0 \lra \sco(-3) \lra 5\sco(-2) \lra 7\sco(-1) \lra F \lra 0
\] 
of $F$ is a resolution of $\overset{2}{\bigwedge}F$ of the form$\, :$ 
\[
0 \lra 5\sco(-5) \lra 
\begin{matrix} S^2(5\sco(-2))\\ \oplus\\ 7\sco(-4) \end{matrix} \lra 
35\sco(-3) \lra \overset{2}{\textstyle \bigwedge}(7\sco(-1)) \lra 
\overset{2}{\textstyle \bigwedge}F \lra 0 
\]
from which one gets that $\tH^0(\overset{2}{\bigwedge}F) = 0$. Since 
$\tH^0(F^\vee) = 0$ it follows that there is no non-zero morphism 
$\beta^\prim : \Omega^2(3) \ra 2\sco(1)$ such that $\beta^\prim \circ 
\alpha^\prim = 0$. Since there is no epimorphism $\Omega^2(2) \ra 2\sco(1) 
\oplus 3\sco$, a monad of the above form \emph{cannot exist}. 
\end{proof}

Ballico and Chiantini \cite[Prop.~3]{bc} showed, moreover, that there is no 
strictly semistable rank 2 vector bundle on $\piv$ with Chern classes 
$c_1 = 0$ and $c_2 = 3$. The key point of their proof is the following lemma, 
for which we provide a different argument.  

\begin{lemma}[Ballico,~Chiantini]\label{L:bc}
Let $Y$ be a locally complete intersection subscheme of $\piv$, of degree 
$3$, supported on a plane $\Pi \subset \piv$. Then $Y$ is a complete 
intersection of type $(1,3)$. 
\end{lemma}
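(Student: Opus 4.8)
The plan is to extract from the local-complete-intersection hypothesis a \emph{constant} normal direction along $\Pi$, which will force $Y$ into a hyperplane and then let me identify it as a divisor there. Fix coordinates so that $\Pi = \{x_3 = x_4 = 0\}$; then the conormal sheaf $N := \sci_\Pi/\sci_\Pi^2$ equals $\sco_\Pi(-1)\oplus \sco_\Pi(-1)$ on $\Pi \simeq \pii$. Since $Y$ is l.c.i. of codimension $2$ it is CM, hence pure of dimension $2$ with support exactly $\Pi$, so $Y$ is a multiplicity-$3$ structure on $\Pi$. First I would slice transversally: for a point $p \in \Pi$ choose a $2$-plane $S \subset \piv$ meeting $\Pi$ transversally at $p$. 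Then $Y \cap S$ is a zero-dimensional l.c.i. of length $3$ in the smooth surface $S$, i.e.\ a complete intersection of two elements of $\sco_{S,p}$. The only length-$3$ Artinian quotients of a $2$-dimensional regular local ring are $k[t]/(t^3)$ and $\sco_{S,p}/\fm_{S,p}^2$; the latter needs three generators, so it is not a complete intersection. Hence every transverse slice of $Y$ is \emph{curvilinear}, $\simeq k[t]/(t^3)$.

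Curvilinearity means that at each point the linear part of $\sci_Y$ spans a line in the normal space, equivalently that the quotient $\scq := \sci_\Pi/(\sci_Y + \sci_\Pi^2)$ has one-dimensional fibre at \emph{every} point of $\Pi$. A coherent sheaf with locally constant fibre dimension on the smooth surface $\Pi$ is locally free, so $\scq$ is a line bundle; consequently the image $\scj$ of $\sci_Y$ in $N$, which is $\ker(N \to \scq)$, is a sub-line-bundle of $N = \sco_\Pi(-1)^2$. Here is the decisive rigidity: writing $\scj \simeq \sco_\Pi(e)$, the inclusion $\scj \hookrightarrow \sco_\Pi(-1)^2$ is a pair of forms of degree $-1-e$ on $\pii$ with no common zero; but any two forms of positive degree on $\pii$ do have a common zero, so necessarily $e = -1$ and the inclusion is given by a \emph{constant} vector. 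Thus, after a linear change of the coordinates $x_3,x_4$, the sub-line-bundle $\scj \simeq \sco_\Pi(-1)$ is generated in degree $1$ by the class of $x_3$.

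Because $\scj \simeq \sco_\Pi(-1)$ is generated in the lowest possible degree and $(\sci_\Pi^2)_1 = 0$, the degree-$1$ part of $\sci_Y$ is non-zero and surjects onto this generator; a generator lifts to $x_3 + c\,x_4$, so after adjusting $x_4$ I may assume $x_3 \in \sci_Y$, whence $Y \subset H := \{x_3 = 0\} \simeq \piii$. Finally I would work inside $H$: locally $\sci_Y$ is generated by a regular sequence of length $2$, and $x_3 \in \sci_Y$ is a minimal generator (its linear part is non-zero, so $x_3 \notin \fm_p\sci_{Y,p}$); hence $\sci_{Y,p} = (x_3, h)$ and $Y$ is a Cartier divisor on the smooth threefold $H$, supported on the plane $\Pi$ — which is a hyperplane of $H$ — with multiplicity $3$. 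Therefore $Y = 3\Pi$ on $H$, i.e.\ $\sci_{Y,H} = (x_4^3)$, and $\sci_Y = (x_3, x_4^3)$, a complete intersection of type $(1,3)$.

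The main obstacle is the middle step, where the l.c.i.\ hypothesis is really spent: turning the \emph{pointwise} curvilinearity into a genuine sub-line-\emph{bundle} $\scj \subset N$ (handled by the constant-fibre-dimension criterion for local freeness), and then guaranteeing that the constant direction it produces yields an honest linear form in $\sci_Y$ and not merely in the conormal quotient (handled by the fact that $\scj \simeq \sco_\Pi(-1)$ is generated in degree $1$, so the hyperplane is forced by homogeneity). The rigidity ``two curves in $\pii$ always meet'' is exactly what makes this planar case strictly simpler than the analogous multiple-structures-on-a-line analysis of the appendices, where over $\pj$ coprime binary forms allow genuinely non-constant directions.
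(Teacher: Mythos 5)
Your argument up to and including the identification of the constant conormal direction is sound, and it essentially reproduces the paper's own computation in different clothing: your transverse-slice/curvilinearity step is the paper's observation that $\Pi^\prim \cap Y$ is a complete intersection of type $(1,3)$, and your ``two forms of positive degree on $\pii$ always have a common zero'' rigidity is exactly how the paper shows $\sci_\Pi/(\sci_\Pi^2+\sci_Y) \simeq \sco_\Pi(-1)$. The gap is the sentence ``the degree-$1$ part of $\sci_Y$ is non-zero and surjects onto this generator.'' What you have actually proved is that $\scj := (\sci_Y+\sci_\Pi^2)/\sci_\Pi^2$ is the constant line subbundle $\sco_\Pi(-1)\cdot \bar{x}_3$ of $N$, i.e.\ that $x_3$ is a global section of $(\sci_Y+\sci_\Pi^2)(1)$. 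Locally this only says $\sci_{Y,z} = (u + q_z,\, g_z)$ with $u$ a local equation of the hyperplane $\{x_3=0\}$ and $q_z \in \sci_{\Pi,z}^2$; it does not say $x_3 \in \sci_{Y,z}$, and the local quadratic corrections $q_z$ need not be removable (already $\sci_Y=(u+v^2,v^3)$ locally contains $u$ only modulo $\sci_\Pi^2$, not on the nose). Lifting the generator of $\tH^0(\scj(1)) = k$ along the surjection $\sci_Y \twoheadrightarrow \scj$ is obstructed by $\tH^1\bigl((\sci_Y\cap\sci_\Pi^2)(1)\bigr)$, which, using $\tH^0(\sci_\Pi^2(1)) = \tH^1(\sci_\Pi^2(1)) = 0$ in the exact sequence $0 \to \sci_Y\cap\sci_\Pi^2 \to \sci_\Pi^2 \to (\sci_\Pi^2+\sci_Y)/\sci_Y \to 0$, is isomorphic to $\tH^0\bigl(((\sci_\Pi^2+\sci_Y)/\sci_Y)(1)\bigr)$. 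Your argument therefore needs, and does not supply, the degree of the \emph{other} graded piece $(\sci_\Pi^2+\sci_Y)/\sci_Y$ of $\sci_\Pi/\sci_Y$.

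That is precisely the extra step in the paper's proof: the fibrewise computation shows $(\sci_\Pi^2+\sci_Y)/\sci_Y$ is also a line bundle $\sco_\Pi(a)$, and multiplication by $x_3, x_4$ gives an epimorphism $2\,\sci_\Pi/(\sci_\Pi^2+\sci_Y) \to \bigl((\sci_\Pi^2+\sci_Y)/\sci_Y\bigr)(1)$, i.e.\ a surjection $2\sco_\Pi(-1) \to \sco_\Pi(a+1)$; your own rigidity lemma, applied a second time, forces $a = -2$. This kills the obstruction group above (equivalently, it gives $\h^0((\sci_\Pi/\sci_Y)(1)) = 1 < 2 = \h^0(\sci_\Pi(1))$, hence $\tH^0(\sci_Y(1)) \neq 0$ by the paper's dimension count), after which your concluding step --- $x_3$ is a minimal local generator of $\sci_Y$, so $Y$ is the Cartier divisor $3\Pi$ on the hyperplane $\{x_3=0\}$ and $\sci_Y=(x_3,x_4^3)$ --- is correct. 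So the architecture of your proof is right, but as written it proves only that the normal direction of $Y$ is constant, not that $Y$ lies in a hyperplane.
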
 

\begin{proof}
Let $S = k[x_0,x_1,x_2,x,y]$ be the projective coordinate ring of $\piv$ and 
assume that $\Pi$ is the plane of equations $x = y = 0$. Let $L \subset \piv$ 
be the (complementary) line of equations $x_0 = x_1 = x_2 = 0$ and let 
$\pi : \piv \setminus L \ra \Pi$ be the linear projection. If $\scf$ is a 
locally CM coherent $\sco_\p$-module supported on $\Pi$ then $F := 
\pi_\ast \scf$ is a locally free $\sco_\Pi$-module. The reduced stalk $F(z)$ 
of $F$ at a point $z \in \Pi$ can be described geometrically as follows$\, :$ 
let $\Pi^\prim$ be the plane spanned by $z$ and $L$. If $\Pi^\prim$ has 
equations $\ell^\prim = \ell^\secund = 0$ with $\ell^\prim ,\, \ell^\secund \in 
k[x_0,x_1,x_2]_1$ then $\ell^\prim ,\, \ell^\secund$ generate the maximal ideal 
$\fm_{\Pi,z}$ of $\sco_{\Pi,z}$ hence$\, :$ 
\[
F(z) := F_z/\fm_{\Pi,z}F_z \simeq \scf_z/(\ell^\prim ,\ell^\secund)\scf_z \simeq 
\scf \otimes_{\sco_\p} \sco_{\Pi^\prim}\, . 
\]
Notice also that if $\ell \in k[x_0,x_1,x_2]_1$ has the property that 
$\ell(z) \neq 0$ then $\ell ,\, \ell^\prim,\, \ell^\secund$ is a $k$-basis of 
$k[x_0,x_1,x_2]_1$, $\ell ,\, x,\, y$ is a $k$-basis of 
$\tH^0(\sco_{\Pi^\prim}(1))$ and $z$ is the point of $\Pi^\prim$ of equations 
$x = y = 0$. 

Let us, now, prove the lemma. Consider the rank-2 locally free 
$\sco_\Pi$-module $E := \pi_\ast(\sci_\Pi/\sci_Y)$ and the two morphisms 
$\xi,\, \eta : E \ra E(1)$ obtained by applying $\pi_\ast$ to the morphisms 
$x\cdot - ,\, y\cdot - : \sci_\Pi/\sci_Y \ra (\sci_\Pi/\sci_Y)(1)$, 
respectively. Let $z$ be a point of $\Pi$ and let $\Pi^\prim$ be as above. 
Since any locally complete intersection subscheme of $\Pi^\prim$ of degree 3, 
supported on $z$, is a complete intersection of type $(1,3)$ in $\Pi^\prim$ 
it follows that there exist linearly independent linear forms 
$x^\prim ,\, y^\prim \in k[x,y]_1$ such that $\Pi^\prim \cap Y$ is the subscheme 
of $\Pi^\prim$ whose homogeneous ideal is generated by $x^{\prim 3}$ and 
$y^\prim$. Working with ideals of the ring $\sco_{\Pi^\prim ,z}$, the observation 
from the beginning of this proof shows that$\, :$ 
\[
E(z) \simeq \frac{(x,y)}{(x^{\prim 3},y^\prim)} \  \  \text{and}\  \     
\text{Im}\, \xi(z) + \text{Im}\, \eta(z) \simeq 
\frac{(x,y)^2 + (x^{\prim 3},y^\prim)}{(x^{\prim 3},y^\prim)} 
\simeq \frac{(x^{\prim 2},y^\prim)}{(x^{\prim 3},y^\prim)} \simeq k\, .
\]
It follows that $\text{Im}\, \xi + \text{Im}\, \eta \simeq 
((\sci_\Pi^2 + \sci_Y)/\sci_Y)(1)$ is a line subbundle of 
$E(1)$ hence$\, :$ 
\[
\frac{\sci_\Pi^2 + \sci_Y}{\sci_Y} \simeq \sco_\Pi(a),\  
\frac{\sci_\Pi}{\sci_\Pi^2 + \sci_Y} \simeq \sco_\Pi(b) \  \text{and} \  
E\simeq \sco_\Pi(a) \oplus \sco_\Pi(b) 
\] 
for some $a,\, b \in \z$. Since we have epimorphisms$\, :$ 
\[
2\sco_\Pi(-1) \simeq \frac{\sci_\Pi}{\sci_\Pi^2} \lra 
\frac{\sci_\Pi}{\sci_\Pi^2 + \sci_Y}\  \text{and} \  
(x,y) : 2\frac{\sci_\Pi}{\sci_\Pi^2 + \sci_Y} \lra 
\frac{\sci_\Pi^2 + \sci_Y}{\sci_Y}(1) 
\]
it follows that $b = -1$ and $a + 1 = b$, hence $a = -2$ and 
$E \simeq \sco_\Pi(-2) \oplus \sco_\Pi(-1)$. Using the exact sequence 
$0 \ra \sci_Y \ra \sci_\Pi \ra \sci_\Pi/\sci_Y \ra 0$ one deduces, now, that 
$\tH^0(\sci_Y(1)) \neq 0$, whence the conclusion of the lemma. 
\end{proof} 

\begin{prop}\label{P:mainn4}
Let $E$ be a globally generated vector bundle on $\p^n$, $n \geq 4$, with 
$c_1 \geq 4$ and such that ${\fam0 H}^i(E^\vee) = 0$, $i = 0,\, 1$. Let 
$\Pi \subset \p^n$ be a fixed $3$-plane. Assume that 
${\fam0 H}^0(E(-c_1 + 2)) = 0$ and that ${\fam0 H}^0(E_\Pi(-c_1 + 3)) \neq 0$. 
Then one of the following holds$\, :$ 
\begin{enumerate}
\item[(i)] $E \simeq \sco_\p(c_1 - 3) \oplus E^\prim$ where $E^\prim$ is a 
globally generated vector bundle with $c_1(E^\prim) = 3$$\, ;$ 
\item[(ii)] $n = 4$, $c_1 = 4$ and $E \simeq {\fam0 T}_\piv(-1) \oplus 
\Omega_\piv(2)$$\, ;$ 
\item[(iii)] $n = 5$, $c_1 = 4$ and $E \simeq \Omega_\pv(2)$$\, :$ 
\item[(iv)] $n = 4$, $c_1 = 4$ and, up to a linear change of coordinates, 
denoting by $(C_p,\, \delta_p)_{p\geq 0}$ the Koszul complex associated to the 
epimorphism $\delta_1 : 4\sco_\piv(-1) \oplus \sco_\piv(-2) \ra \sco_\piv$ 
defined by $x_0, \ldots , x_3, x_4^2$, one has exact sequences$\, :$ 
\begin{gather*}
0 \lra \sco_\piv(-2) \xra{\delta_5(4)} \sco_\piv \oplus 4\sco_\piv(-1) 
\xra{\delta_4(4)} 4\sco_\piv(1) \oplus 6\sco_\piv \lra E^\prim \lra 0\\
0 \lra E \lra E^\prim \xra{\phi} \sco_\piv(2) \lra 0
\end{gather*} 
where $\phi : E^\prim \ra \sco_\piv(2)$ is any morphism with the property that 
${\fam0 H}^0(\phi(-1)) : {\fam0 H}^0(E^\prim(-1)) \ra {\fam0 H}^0(\sco_\piv(1))$ 
is injective $($such morphisms exist and are automatically epimorphisms$)$. 
\end{enumerate}
\end{prop}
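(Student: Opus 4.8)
The plan is to reduce everything to the three–dimensional classification by restricting $E$ to the fixed $3$-plane $\Pi\simeq\piii$, and then to reconstruct $E$ on $\p^n$ from its restriction. The bundle $E_\Pi$ is globally generated with $c_1(E_\Pi)=c_1\geq4$ and $\tH^0(E_\Pi(-c_1+3))\neq0$ by hypothesis. Since restriction may create trivial quotients, the condition $\tH^0(E^\vee)=0$ need not descend to $E_\Pi$; I would therefore first replace $E_\Pi$ by its reduction $\widetilde E_\Pi$ (quotient out the trivial subbundle and drop trivial summands, as recalled in the Introduction), which satisfies $\tH^i(\widetilde E_\Pi^\vee)=0$. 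If $\tH^0(E_\Pi(-c_1+2))\neq0$, then $\widetilde E_\Pi$ is already described by the easier results \cite[Prop.~2.4,~2.9]{acm} (together with \cite{am},\cite{su2}), which exhibit a line-bundle summand of degree $\geq c_1-2$; I expect these to lead only to conclusion (i). Otherwise $\widetilde E_\Pi$ satisfies the hypotheses of Theorem~\ref{T:main} and is one of the bundles (i)--(vii) listed there, which fixes the numerical and cohomological shape of $E_\Pi$.

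The decisive point is whether the section persists on all of $\p^n$. If $\tH^0(E(-c_1+3))\neq0$, then, because $\tH^0(E(-c_1+2))=0$, a section of minimal twist produces $\sco_\p(c_1-3)\hookrightarrow E$; I would show that it generates a line subbundle with globally generated quotient $E^\prim$ of $c_1(E^\prim)=3$ and that the sequence splits, giving conclusion (i)---the splitting being obtained by lifting the one available on $\Pi$ via Theorem~\ref{T:main}(i) and using $\tH^0(E(-c_1+2))=0$ to annihilate the extension class. If instead $\tH^0(E(-c_1+3))=0$ while $\tH^0(E_\Pi(-c_1+3))\neq0$, the section on $\Pi$ is the image of a nonzero connecting homomorphism; restricting through the Koszul complex of the $n-3$ linear forms defining $\Pi$ shows that this forces nonzero intermediate cohomology of $E$ (for $n=4$, concretely $\tH^1(E(-c_1+2))\neq0$), and this is precisely the cohomology carried by an $\Omega_{\p^n}$-factor.

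The rank-$2$ items (ii), (vi), (vii) of Theorem~\ref{T:main} make $E$ itself a rank-$2$ bundle; a suitable twist is then a rank-$2$ bundle on $\p^n$, $n\geq4$, restricting to the corresponding reflexive or vector bundle on $\piii$. These I would eliminate for $n\geq4$: item (vii) yields a rank-$2$ bundle on $\piv$ with $c_1=0$, $c_2=3$, excluded in the stable case by Theorem~\ref{T:be} and in the strictly semistable case by the Ballico--Chiantini argument based on Lemma~\ref{L:bc}, while items (vi) and (ii) are removed by the analogous nonexistence of rank-$2$ bundles on $\piv$ with $(c_1,c_2)=(-1,2)$, $(0,1)$, $(0,2)$; the only split rank-$2$ possibilities fall under (i). For the remaining (higher-rank) shapes, Horrocks' method of killing cohomology reconstructs $E$ on $\p^n$ from the intermediate cohomology determined above, the defining linear and quadratic forms on $\piii$ being lifted to $\p^n$; global generation, tested by Lemma~\ref{L:koszul}, then forces $c_1=4$ and $n\leq5$ and singles out $\text{T}_\piv(-1)\oplus\Omega_\piv(2)$ and the Koszul-type bundle for $n=4$ and $\Omega_\pv(2)$ for $n=5$, i.e. conclusions (ii)--(iv).

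The hard part will be this last reconstruction. A splitting valid on $\Pi$ need not lift to $\p^n$---indeed $\text{T}_\piv(-1)\oplus\Omega_\piv(2)$ restricts to a bundle that splits off $\sco_\piii(c_1-3)$, yet it does not split off $\sco_\piv(c_1-3)$---so one cannot argue from a section and must instead recover $E$ from its intermediate cohomology modules. The delicate points are to verify that the candidate Horrocks monads, built from the forms lifted from $\piii$, have globally generated cohomology (this is exactly where Lemma~\ref{L:koszul} enters, and where the specific morphism $\phi$ in (iv) is pinned down by the requirement that $\tH^0(\phi(-1))$ be injective), and to show that for $n\geq6$ the cohomological constraints cannot be met by a globally generated bundle, so that only conclusion (i) survives in high dimension.
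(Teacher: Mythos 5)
Your skeleton coincides with the paper's: restrict to the fixed $3$-plane $\Pi$, reduce $E_\Pi$ to a bundle $F$ with $\tH^i(F^\vee)=0$, run $F$ through the list of Theorem~\ref{T:main}, and kill the rank-$2$ cases (ii), (vi), (vii) of that list by Schwarzenberger's congruence on $\piv$, Theorem~\ref{T:be} and the Ballico--Chiantini Lemma~\ref{L:bc} --- that part of your plan is exactly what the paper does. The genuine gap is in the lifting step. Your dichotomy ``if $\tH^0(E(-c_1+3))\neq 0$ then $\sco_\p(c_1-3)$ splits off'' is not an argument but a restatement of the hardest part of the claim: a nonzero section of $E(-c_1+3)$ with $\tH^0(E(-c_1+2))=0$ is only guaranteed not to vanish on a divisor; on $\piii$ it vanishes along the curve $Z$ that drives Sections~\ref{S:deg5} and~\ref{S:deg4}, and for $n\geq 4$ nothing a priori excludes a codimension-$2$ zero locus, so the quotient need not even be locally free --- proving it is empty is essentially equivalent to the proposition. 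Moreover the extension class of $0\to\sco_\p(c_1-3)\to E\to E^\prim\to 0$ lives in ${\rm Ext}^1(E^\prim,\sco_\p(c_1-3))$ and is not ``annihilated by $\tH^0(E(-c_1+2))=0$''. The paper never takes a section of $E$ on $\p^n$ at all: it lifts the direct-sum decomposition of $F$ from $\Pi$ to $\p^n$ by \cite[Lemmas~1.18 and~1.19]{acm}, the precise obstruction being the presence of $\Omega_\Pi(2)$ as a direct summand of $F^\prim$ --- which is exactly how items (ii) and (iii) arise, and why your own counterexample $\text{T}_\piv(-1)\oplus\Omega_\piv(2)$ occurs.

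Two further soft spots. First, you hedge on whether $\tH^0(E_\Pi(-c_1+2))=0$; this is not optional, since otherwise Theorem~\ref{T:main} does not apply to $F$, and the paper settles it by quoting \cite[Prop.~2.11]{acm} rather than by the unverified claim that the alternative ``leads only to (i)''. Second, for the surviving higher-rank shapes your Horrocks reconstruction is a programme, not a proof: you would need to compute $\tH^i_\ast(E)$ on $\p^n$ from $\tH^i_\ast(E_\Pi)$ across $n-3$ hyperplane sections and then verify global generation of the resulting monad cohomology, whereas the paper disposes of Theorem~\ref{T:main}(iii) and (iv) by elementary obstructions (there is no epimorphism $3\sco_\piv(2)\oplus\sco_\piv(1)\to\sco_\piv(3)$; restriction to a complete intersection curve; Schwarzenberger again) and obtains item (iv) of the proposition by citing the explicit construction of \cite[Prop.~7.11]{acm}. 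Note also that Lemma~\ref{L:koszul} as stated lives on $\piii$, so you would need its (straightforward) analogue on $\p^n$ before invoking it there.
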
 

\noindent 
The bundle from item (iv) of Prop.~\ref{P:mainn4} appeared for the first 
time, constructed differently, in the paper of Abo, Decker and Sasakura 
\cite{ads}.  

\begin{proof}
It follows from \cite[Prop.~2.11]{acm} that $\tH^0(E_\Pi(-c_1 + 2)) = 0$. One 
has $E_\Pi \simeq G \oplus t\sco_\Pi$, with $G$ defined by an exact 
sequence $0 \ra s\sco_\Pi \ra F \ra G \ra 0$ where $F$ is a globally generated 
vector bundle on $\Pi$ such that $\tH^i(F^\vee) = 0$, $i = 0,\, 1$ (see, for 
example, \cite[Sect.~1]{acm}). It follows that $F$ is one of the bundles 
described in the statement of Theorem~\ref{T:main}. 

\vskip2mm 

\noindent
{\bf Case 1.}\quad $F \simeq \sco_\Pi(c_1-3) \oplus F^\prim$ \emph{with} 
$c_1(F^\prim) = 3$. 

\vskip2mm

\noindent
As we said in the Introduction, the bundles $F^\prim$ were classified by 
Anghel and Manolache \cite{am} and, independently, by Sierra and Ugaglia 
\cite{su2}. A concise description of this classification can be found in 
\cite[Thm.~0.1]{acm}. Except when $F^\prim$ contains $\Omega_\Pi(2)$ as a direct 
summand, \cite[Lemma~1.18]{acm} implies that $E$ is as in item (i) of the 
statement of the proposition. 

If $F^\prim \simeq \sco_\Pi(1) \oplus \Omega_\Pi(2)$ then 
\cite[Lemma~1.19]{acm} (and \cite[Remark~1.20(c)]{acm}) implies that 
$n = 4$ and $E \simeq \sco_\piv(c_1 - 3) \oplus \Omega_\piv(2)$ or $c_1 = 4$, 
$n = 5$ and $E \simeq \Omega_\pv(2)$. 

If $F^\prim \simeq \text{T}_\Pi(-1) \oplus \Omega_\Pi(2)$ then 
\cite[Lemma~1.19]{acm} (and \cite[Remark~1.20(c)]{acm}) implies that 
$n = 4$ and $E \simeq \sco_\piv(c_1 - 3) \oplus \Omega_\piv^2(3)$ or $c_1 = 4$, 
$n = 4$ and $E \simeq \text{T}_\piv(-1) \oplus \Omega_\piv(2)$. 

\vskip2mm

\noindent
{\bf Case 2.}\quad $F$ \emph{as $E$ in Theorem}~\ref{T:main}(ii). 

\vskip2mm

\noindent
This case \emph{cannot occur}. Indeed, assume the contrary. Then 
\cite[Cor.~1.5]{acm} would imply that there exists a rank 2 vector bundle 
on $\piv$ with Chern classes $c_1 = 0$ and $c_2 = 1$ or with Chern classes 
$c_1 = 0$ and $c_2 = 2$. But this would \emph{contradict} 
Schwarzenberger's congruence asserting that the Chern classes of a coherent 
sheaf on $\piv$ satisfy the relation$\, :$ 
\[
(2c_1+3)(c_3 - c_1c_2) + c_2^2 + c_2 \equiv 2c_4 \pmod{12} 
\]    
(see, for example, \cite[Cor.~7.4]{acm}). 

\vskip2mm

\noindent
{\bf Case 3.}\quad $F$ \emph{as $E$ in Theorem}~\ref{T:main}(iii). 

\vskip2mm

\noindent
This case \emph{cannot occur}. Indeed, in order to show this, one can assume 
that $n = 4$. Since $\tH^1_\ast(F^\vee) = 0$, \cite[Lemma~1.17(a)]{acm} implies 
that $t = 0$. Moreover, $s = 0$ because $c_3(F) = 2 \neq 0$. Now, applying 
\cite[Lemma~1.16(a)]{acm} and \cite[Lemma~1.14(b)]{acm} to $F^\vee$ one 
derives the existence of an exact sequence 
\[
0 \lra E \lra 3\sco_\piv(2) \oplus \sco_\piv(1) \lra \sco_\piv(3) \lra 0\, .
\] 
But \emph{this is not possible} because there is no epimorphism 
$3\sco_\piv(2) \oplus \sco_\piv(1) \ra \sco_\piv(3)$. 

\vskip2mm

\noindent
{\bf Case 4.}\quad $F$ \emph{as $E$ in Theorem}~\ref{T:main}(iv). 

\vskip2mm

\noindent
This case \emph{cannot occur}. Indeed, in order to prove this, one can assume 
that $n = 4$. It follows, as in Case 3, that $t = 0$. Moreover, $F$ has rank 
4 and $c_3(F) = 4 \neq 0$ hence $s \leq 1$. 

If $s = 0$ then, as in Case 3, there exists an exact sequence$\, :$ 
\[
0 \lra E \lra 2\sco_\piv(2) \oplus 3\sco_\piv(1) 
\overset{\displaystyle \e}{\lra} \sco_\piv(3) \lra 0\, .
\]
Let $C \subset \piv$ be the complete intersection 1-dimensional subscheme of 
$\piv$ with the property that $\sci_C(3)$ is the image of the restriction  
$2\sco_\piv(2) \oplus \sco_\piv(1) \ra \sco_\piv(3)$ of $\e$. It follows that 
$E \vb C \simeq 2\sco_C(2) \oplus \sco_C(1) \oplus \sco_C(-1)$ hence 
$E \vb C$ is not globally generated, a \emph{contradiction}. 

If $s = 1$ then $E$ is a rank 3 vector bundle on $\piv$ with Chern classes 
$c_1 = 4$, $c_2 = 7$, $c_3 = 4$. But this would \emph{contradict} 
Schwarzenberger's congruence. 

\vskip2mm

\noindent
{\bf Case 5.}\quad $F$ \emph{as $E$ in Theorem}~\ref{T:main}(v). 

\noindent
In this case, the proof of \cite[Prop.~7.11]{acm} (more precisely, the Cases 
7 and 8 of that proof) shows that $n = 4$ and that $E$ is as in item (iv) of 
the statement of our proposition. 

\vskip2mm

\noindent
{\bf Case 6.}\quad $F$ \emph{as $E$ in Theorem}~\ref{T:main}(vi). 

\vskip2mm 

\noindent
This case \emph{cannot occur} : one uses the same kind of argument as in 
Case 2. 

\vskip2mm

\noindent
{\bf Case 7.} $F$ \emph{as $E$ in Theorem}~\ref{T:main}(vii). 

\vskip2mm

\noindent
This case \emph{cannot occur}. Indeed, assume the contrary. Choose a 4-plane 
$\piv \subseteq \p^n$ containing $\Pi$. 
The last part of \cite[Cor.~1.5]{acm} would imply that 
there exists a rank 2 vector bundle $E^\prim$ on $\piv$ with Chern classes 
$c_1(E^\prim) = 0$ and $c_2(E^\prim) = 3$ such that $E^\prim \vb \Pi$ is 
strictly semistable. The result of Barth and Ellencwajg recalled above 
(Theorem~\ref{T:be}) implies that $E^\prim$ must be strictly semistable. 
But, according to Ballico and Chiantini \cite[Prop.~3]{bc}, \emph{this is not 
possible}, either. Their argument runs as follows : one must have an exact 
sequence$\, :$ 
\[
0 \lra \sco_\piv \lra E^\prim \lra \sci_Y \lra 0
\] 
where $Y$ is a locally complete intersection closed subscheme of $\piv$, of 
codimension 2 and degree 3. As we noticed in Subcase 5.3 of the above proof of 
Theorem~\ref{T:main}, if $H \subset \piv$ is a hyperplane cutting $Y$ properly 
then the subscheme $H \cap Y$ of $H \simeq \piii$ must be supported on a line. 
One deduces that $Y$ is supported on a plane in $\piv$. Lemma~\ref{L:bc} 
implies, now, that $Y$ is a complete intersection of type $(1,3)$, hence 
$E^\prim$ is a direct sum of two line bundles. But \emph{this is not possible}  
because the system of equations $a + b = 0$ and $ab = 3$ has no integer 
solutions.   
\end{proof} 

\begin{remark}\label{R:c1=0c2=3n4}
As we recalled in Case 7 of the proof of Prop.~\ref{P:mainn4}, Ballico and 
Chiantini used their Lemma~\ref{L:bc} to show, in connection with 
Thm.~\ref{T:be} of Barth and Ellencwajg, that there exists no rank 2 
vector bundle $E$ on $\piv$, with Chern classes $c_1(E) = 0$, $c_2(E) = 3$ 
such that $\tH^0(E) \neq 0$ and $\tH^0(E(-1)) = 0$. One can push this kind of 
results one step further by showing that there exists no rank 2 vector 
bundle $E$ on $\piv$ with those Chern classes such that $\tH^0(E(-1)) \neq 0$ 
and $\tH^0(E(-2)) = 0$. 

\vskip2mm

\noindent
\emph{Indeed}, assume that such a bundle exists. Then it can be realized as 
an extension$\, :$ 
\[
0 \lra \sco_\piv(1) \lra E \lra \sci_Y(-1) \lra 0
\]   
where $Y$ is a l.c.i. closed subscheme of $\piv$, of pure codimension 2, 
of degree 4, with $\omega_Y \simeq \sco_Y(-7)$. 
Let $H \subset \piv$ be a hyperplane cutting properly $Y_{\text{red}}$ 
and $X := H \cap Y$. One has an exact sequence$\, :$ 
\[
0 \lra \sco_H(1) \lra E_H \lra \sci_{X,H}(-1) \lra 0\, .  
\]
$X$ is a l.c.i. curve in $H \simeq \piii$, of degree 4, with $\omega_X 
\simeq \sco_X(-6)$. The last condition implies that $X$ cannot be reduced and 
irreducible. Moreover, $X$ cannot have a reduced component. Indeed, if this 
would be the case then $X$ would have as a component a line or a 
(nonsingular) conic and the argument used in Subcase 5.3 of the proof of 
Thm.~\ref{T:main} would show that this contradicts the condition $\omega_X 
\simeq \sco_X(-6)$. It thus remains that $X$ is either the union of two 
double lines or a double conic or a quadruple line. It follows that 
$Y_{\text{red}}$ is either$\, :$ (i) the union of two planes or (ii) a 
nonsingular quadric surface or (iii) a quadric cone or (iv) a plane.  

\vskip2mm 
 
\noindent 
In case (i) the two planes should intersect along a line (see, for example, 
\cite[Thm.~18.12]{eis}). Consequently, in the first three cases, 
$Y_{\text{red}}$ is a reduced complete intersection of type $(1,2)$ in $\piv$ 
and $Y$ is a double structure on it. Let $Y^\prim$ be the residual of 
$Y_{\text{rsd}}$ with respect to the l.c.i. scheme $Y$. One has, 
by definition$\, :$ 
\[
\sci_{Y^\prim}/\sci_Y \simeq \sch om_{\sco_Y}(\sco_{Y_{\text{red}}},\, \sco_Y) 
\simeq \omega_{Y_{\text{red}}} \otimes \omega_Y^{-1} \simeq 
\sco_{Y_{\text{red}}}(5)\, .
\] 
Using the exact sequence $0 \ra \sco_{Y_{\text{red}}}(5) \ra \sco_Y \ra 
\sco_{Y^\prim} \ra 0$ and Hilbert polynomials one deduces that 
$\text{deg}\, Y^\prim = \text{deg}\, Y - \text{deg}\, Y_{\text{red}} = 2$. 
Since $Y_{\text{red}} \subseteq Y^\prim$ and since $Y^\prim$ is locally CM it 
follows that $Y^\prim = Y_{\text{red}}$. Since there is no epimorphism$\, :$ 
\[
\sci_{Y_{\text{red}}}/\sci^2_{Y_{\text{red}}} \simeq \sco_{Y_{\text{red}}}(-1) \oplus 
\sco_{Y_{\text{red}}}(-2) \lra \sco_{Y_{\text{red}}}(5) 
\] 
none of the first three cases can occur. 

In case (iv) the results of Manolache \cite{m3} show that $Y$ is a complete 
intersection which implies that $E$ is a direct sum of line bundles. But 
this is not possible because $E$ has Chern classes $c_1 = 0$ and $c_2 = 3$. 
\end{remark}

\begin{lemma}\label{L:h0eh-1neq0} 
Let $E$ be a globally generated vector bundle on $\piv$ with Chern classes 
$c_1 = 4$ and $c_2 \leq 8$. Then there exists a hyperplane $H \subset \piv$ 
such that ${\fam0 H}^0(E_H(-1)) \neq 0$. 
\end{lemma}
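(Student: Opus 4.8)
The plan is to reduce the statement to a question about a plane curve of arithmetic genus $1$, using the freedom in the choice of $H$ to boost cohomology via the deficiency module of a surface in $\piv$. Since $\rk E = 1$ is trivial (then $E \simeq \sco_\p(4)$ and $E_H(-1)\simeq \sco_H(3)$), I assume $\rk E = r\ge 2$ and let $\sce^\prim$ be the rank $2$ reflexive quotient cut out by $r-2$ general global sections, so that $0\to (r-2)\sco_\p\to E\to\sce^\prim\to 0$ with $c_1(\sce^\prim)=4$, $c_2(\sce^\prim)=c_2\le 8$, and $\sce^\prim$ globally generated. Restricting to a hyperplane $H$ meeting the (codimension $\ge 3$) non--locally--free locus of $\sce^\prim$ properly and twisting by $-1$, the vanishing $\tH^0((r-2)\sco_H(-1))=\tH^1((r-2)\sco_H(-1))=0$ gives $\tH^0(E_H(-1))\simeq \tH^0(\sce^\prim_H(-1))$; this holds on a dense open set of hyperplanes, so it suffices to find one such $H$ with $\tH^0(\sce^\prim_H(-1))\neq 0$.

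Next I would take a general section of $\sce^\prim$, with zero locus a locally CM surface $Y\subset\piv$ of degree $d=c_2\le 8$, fitting in $0\to\sco_\p\to\sce^\prim\to\sci_Y(4)\to 0$. By adjunction $\omega_Y\simeq \omega_\piv\otimes(\det\sce^\prim)|_Y\simeq\sco_Y(-1)$, so for every hyperplane section $C:=Y\cap H$ one has $\omega_C\simeq(\omega_Y\otimes\sco_Y(1))|_C\simeq\sco_C$, i.e. $p_a(C)=1$. Restricting the displayed extension to $H$ and using $\tH^0(\sco_H(-1))=\tH^1(\sco_H(-1))=0$ identifies $\tH^0(\sce^\prim_H(-1))\simeq\tH^0(\sci_{C,H}(3))$. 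Thus the lemma becomes: \emph{for some $H$, the genus--$1$ curve $C=Y\cap H$ of degree $d\le 8$ lies on a cubic surface of $H$}.

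For the easy range this is immediate. From $\omega_C\simeq\sco_C$ one gets $\chi(\sco_C)=0$, hence $\chi(\sco_C(3))=3d$, while $\tH^1(\sco_C(3))\simeq\tH^0(\sco_C(-3))^\vee=0$; therefore $\h^0(\sco_C(3))=3d$ and $\h^0(\sci_{C,H}(3))\ge 20-3d>0$ as soon as $d\le 6$, where a general $H$ works. The interesting cases are $d=7$ and $d=8$, and for these I would exploit the choice of $H$ through the graded module $\tH^1_\ast(\sci_Y)$. From $0\to\sci_Y(2)\xra{h}\sci_Y(3)\to\sci_{C,H}(3)\to 0$ one obtains $\h^0(\sci_{C,H}(3))=\big(\h^0(\sci_Y(3))-\h^0(\sci_Y(2))\big)+\dim\Ker\big(\tH^1(\sci_Y(2))\xra{h}\tH^1(\sci_Y(3))\big)$, both terms being nonnegative (the first because $h$ is a non--zero--divisor on the torsion--free $\sci_Y$). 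If $\h^0(\sci_Y(3))>\h^0(\sci_Y(2))$ a general $H$ already works, so I may assume equality and seek $0\neq h\in\tH^0(\sco_\p(1))$ killing a nonzero class in $\tH^1(\sci_Y(2))$. Applying the Bilinear Map Lemma \cite[Lemma~5.1]{ha} to $\tH^0(\sco_\p(1))\otimes\tH^1(\sci_Y(2))\to\tH^1(\sci_Y(3))$, such an $h$ exists provided $\h^1(\sci_Y(3))<\h^1(\sci_Y(2))+4$.

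I expect this last growth estimate to be the main obstacle. Here $\tH^1(\sci_Y(t))\simeq\Cok\big(\tH^0(\sco_\p(t))\to\tH^0(\sco_Y(t))\big)$, and the Gorenstein condition $\omega_Y\simeq\sco_Y(-1)$ (via Serre duality on $Y$) controls these cokernels; for $d=7$ the genus--$1$ numerics should yield $\h^1(\sci_Y(3))-\h^1(\sci_Y(2))\le 3$ and hence the desired hyperplane. The borderline case $d=8$ is precisely where the Chern--class inequalities alone fail (the extremal del Pezzo--type surfaces of degree $8$ have general hyperplane section an elliptic octic lying on no cubic, and give difference exactly $4$), so the hypothesis that $\sci_Y(4)$ — equivalently $\sce^\prim$, equivalently $E$ — is \emph{globally generated} must be invoked to close the gap, either to sharpen the bound on $\h^1(\sci_Y(3))$ or to exclude these extremal surfaces outright. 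Discharging $d=8$, presumably by a restriction--to--$\pii$ argument combined with the numerical constraints already used in the paper (Schwarzenberger's congruence and the classification of globally generated rank $2$ bundles on $\pii$), is the crux I would have to work out in full.
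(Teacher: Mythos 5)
Your strategy is genuinely different from the paper's, but as written it has a real gap precisely where the lemma is hard, and you acknowledge as much: the cases $d=c_2=7$ and $d=c_2=8$ are not actually discharged. For $d=7$ you assert that ``the genus-$1$ numerics should yield $\h^1(\sci_Y(3))-\h^1(\sci_Y(2))\le 3$'' without a proof, and for $d=8$ you explicitly leave the crux open, observing (correctly) that there exist degree-$8$ surfaces with $\omega_Y\simeq\sco_Y(-1)$ whose general elliptic hyperplane-section octic lies on no cubic, so that pure numerics cannot close the argument and global generation must enter in an essential, unspecified way. There is also a smaller unaddressed point: your Bilinear Map Lemma step only produces the desired $h$ when $\tH^1(\sci_Y(2))\neq 0$; if that group vanishes you would instead need $\h^0(\sci_Y(3))>\h^0(\sci_Y(2))$, which is not established. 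Finally, the adjunction $\omega_Y\simeq\sco_Y(-1)$ and the identification of the quotient of $\sce^\prim$ by a general section with $\sci_Y(4)$ require some care along the (codimension $\geq 3$) non-locally-free locus of $\sce^\prim$, though this is routine compared with the main gap.

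For comparison, the paper argues by contradiction in the opposite direction: assuming $\tH^0(E_H(-1))=0$ for \emph{every} hyperplane, it produces for each $H$ a stable rank $2$ reflexive sheaf $\scf$ on $H\simeq\piii$ with $c_1(\scf)=0$, $c_2(\scf)=c_2-4\le 4$, and $\tH^0(\scf(1))=0$; Riemann--Roch then gives $\tfrac12 c_3\le 3c_2-20$, which already kills $c_2\le 6$ and pins down a short list of possible spectra for $c_2\in\{7,8\}$. These are eliminated case by case (Schwarzenberger's congruence, a direct argument for the spectrum $(0,-1,-1,-2)$), and in the surviving cases the Bilinear Map Lemma is used on $\tH^1_\ast(E)$ and $\tH^2_\ast(E)$ to show $E$ is $(-1)$-regular, contradicting $\tH^0(E(-1))=0$. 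So the two approaches diverge at the first step (restriction to hyperplanes and spectra of reflexive sheaves on $\piii$, versus a section of the rank-$2$ quotient on $\piv$ and the deficiency module of the resulting surface), and the hard borderline cases that you leave open are exactly the ones the paper spends most of its proof eliminating; to complete your route you would need a substitute for that case analysis.
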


\begin{proof} 
Assume the contrary, namely that $\tH^0(E_H(-1)) = 0$ for any hyperplane 
$H \subset \piv$. For each hyperplane $H \subset \piv$ 
one has an exact sequence$\, :$ 
\[
0 \lra (r-2)\sco_H \lra E_H \lra \scf(2) \lra 0
\] 
where $r = \text{rk}\, E$ and with $\scf$ a rank 2 reflexive sheaf on $H$ 
with $c_1(\scf) = 0$, $c_2(\scf) = c_2 - 4 \leq 4$ and $c_3(\scf) = c_3(E) 
=: c_3$. The condition $\tH^0(E_H(-1)) = 0$ is equivalent to 
$\tH^0(\scf(1)) = 0$. In particular, $\scf$ is \emph{stable} (that is, 
$\tH^0(\scf) = 0$). 

By \cite[Thm.~8.2(b)]{ha}, $\tH^2(\scf(l)) = 0$ for 
$l \geq c_2(\scf) - 3$. In particular, $\tH^2(\scf(1)) = 0$. Since, by 
Serre duality and the fact that $\scf^\vee \simeq \scf$, 
$\tH^3(\scf(1)) = 0$, the Riemann-Roch theorem (recalled in 
\cite[Thm.~4.5]{acm}) implies that$\, :$ 
\[
2\chi(\sco_H(1)) - 3c_2(\scf) + \frac{1}{2}c_3(\scf) = \chi(\scf(1)) = 
-\h^1(\scf(1)) \leq 0\, ,\  \text{hence}\  \frac{1}{2}c_3 \leq 3c_2 - 20\, . 
\] 
Recall that $c_3 \geq 0$ and that $c_3 \equiv 0 \pmod{2}$. 
It follows that either $c_2 = 7$ and $c_3 \leq 2$ or $c_2 = 8$ and 
$c_3 \leq 8$. Looking at the beginning of the proof of \cite[Prop.~5.1]{acm} 
and of \cite[Prop.~6.3]{acm} (where some spectra are eliminated)  
one sees that one of the following holds$\, :$ 
\begin{enumerate}
\item[(i)] $c_2 = 7$, $c_3 = 0$ and $\scf$ has spectrum $(0,0,0)$$\, ;$
\item[(ii)] $c_2 = 7$, $c_3 = 2$ and $\scf$ has spectrum $(0,0,-1)$$\, ;$ 
\item[(iii)] $c_2 = 8$, $c_3 = 0$ and $\scf$ has spectrum $(0,0,0,0)$$\, ;$ 
\item[(iv)] $c_2 = 8$, $c_3 = 0$ and $\scf$ has spectrum $(1,0,0,-1)$$\, ;$ 
\item[(v)] $c_2 = 8$, $c_3 = 2$ and $\scf$ has spectrum $(0,0,0,-1)$$\, ;$ 
\item[(vi)] $c_2 = 8$, $c_3 = 4$ and $\scf$ has spectrum $(0,0,-1,-1)$$\, ;$ 
\item[(vii)] $c_2 = 8$, $c_3 = 6$ and $\scf$ has spectrum 
$(0,-1,-1,-1)$$\, ;$ 
\item[(viii)] $c_2 = 8$, $c_3 = 8$ and $\scf$ has spectrum 
$(-1,-1,-1,-1)$$\, ;$ 
\item[(ix)] $c_2 = 8$, $c_3 = 8$ and $\scf$ has spectrum $(0,-1,-1,-2)$.       
\end{enumerate}
The cases (iii) and (iv) can be eliminated 
using \cite[Cor.~1.5]{acm} and Schwarzenberger's congruence (recalled above). 
Case (ix) cannot occur, either. \emph{Indeed}, 
one has, in this case, $\h^1(\scf(-1)) = 1$, $\h^2(\scf) = 0$ (by the 
definition of the spectrum) and $\h^1(\scf) = 2$ (by Riemann-Roch). One 
deduces, using the exact sequence of the hyperplane section, that there exist 
planes $P \subset H \simeq \piii$ such that $\tH^0(\scf_P) \neq 0$. For such a 
plane one has $\h^0(\scf_P(1)) \geq 3$. Since $\h^1(\scf) = 2$ it follows that 
$\h^0(\scf(1)) \geq 1$, a \emph{contradiction}.  

Let us assume, from now on, that one of the remaining cases holds. 

\vskip2mm

\noindent
{\bf Claim 1.}\quad $\tH^1(E(l)) = 0$ \emph{for} $l \leq -2$.  

\vskip2mm

\noindent
\emph{Indeed}, if $h$ is a non-zero linear form on $\piv$ and $H \subset 
\piv$ is the hyperplane of equation $h = 0$ then one has an exact 
sequence$\, :$ 
\[
0=\tH^0(E_H(-1)) \lra \tH^1(E(-2)) \overset{\displaystyle h}{\lra} 
\tH^1(E(-1)) \lra \tH^1(E_H(-1))\, . 
\]   
The Bilinear map lemma \cite[Lemma~5.1]{ha} implies that if $\tH^1(E(-2)) 
\neq 0$ then $\h^1(E(-1)) \geq \h^1(E(-2)) + 4$. On the other hand$\, :$ 
\[
\h^1(E(-1)) - \h^1(E(-2)) \leq \h^1(E_H(-1)) = \h^1(\scf(1)) = 
3c_2 - 20 - \frac{1}{2}c_3 \leq 3 
\]
and this is a \emph{contradiction}. It thus remains that $\tH^1(E(-2)) = 0$. 

Using the exact sequence $\tH^0(E_H(l+1)) \ra \tH^1(E(l)) \ra \tH^1(E(l+1))$ 
one shows now, by descending induction, that $\tH^1(E(l)) = 0$ for 
$l \leq -2$. 

\vskip2mm

\noindent 
{\bf Claim 2.}\quad $\tH^2(E(-3)) = 0$. 

\vskip2mm

\noindent
\emph{Indeed}, using the notation from the proof of Claim 1, one has an exact 
sequence$\, :$ 
\[
0 \lra \tH^1(E_H(-3)) \lra \tH^2(E(-4)) \overset{\displaystyle h}{\lra} 
\tH^2(E(-3)) \lra \tH^2(E_H(-3))\, .
\]
But $\tH^2(E_H(-3)) \simeq \tH^2(\scf(-1)) = 0$ (by the definition of the 
spectrum). The Bilinear map lemma \cite[Lemma~5.1]{ha} implies, now, that 
if $\tH^2(E(-3)) \neq 0$ then $\h^2(E(-4)) \geq \h^2(E(-3)) + 4$. 
On the other hand 
\[
\h^2(E(-4)) -\h^2(E(-3)) = \h^1(E_H(-3)) = \h^1(\scf(-1)) \leq 3
\] 
and this is a \emph{contradiction}. It remains that $\tH^2(E(-3)) = 0$. 

\vskip2mm

\noindent
{\bf Claim 3.}\quad $\tH^3(E(l)) = 0$ \emph{for} $l \geq -4$. 

\vskip2mm

\noindent
\emph{Indeed}, $\tH^2(E_H(l)) \simeq \tH^2(\scf(l+2)) = 0$ for $l \geq -3$ 
hence, by \cite[Lemma~1.16(b)]{acm}, $\tH^3(E(l)) = 0$ for $l \geq -4$. 

\vskip2mm

\noindent
Finally, $\tH^4(E(-5)) \simeq \tH^0(E^\vee)^\vee = 0$. Taking into account the 
above claims it follows that $E$ is $(-1)$-\emph{regular}. But this 
\emph{contradicts} the fact that $\tH^0(E(-1)) = 0$. 
\end{proof}

\begin{thm}\label{T:c1=4n4}
Let $E$ be an indecomposable globally generated vector bundle on $\p^n$, 
$n \geq 4$, with $c_1 = 4$ and such that ${\fam0 H}^i(E^\vee) = 0$, 
$i = 0,\, 1$. Then one of the following holds$\, :$ 
\begin{enumerate}
\item[(i)] $E \simeq \sco_\p(4)$$\, ;$ 
\item[(ii)] $E \simeq P(\sco_\p(4))$$\, ;$ 
\item[(iii)] $n = 4$ and $E$ as in item \emph{(iv)} of 
Prop.~\ref{P:mainn4}$\, ;$ 
\item[(iv)] $n = 5$ and $E \simeq \Omega_\pv(2)$$\, ;$ 
\item[(v)] $n = 5$ and $E \simeq \Omega_\pv^3(4)$. 
\end{enumerate}
\end{thm}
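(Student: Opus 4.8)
The plan is to classify $E$ according to the largest twist admitting global sections, reducing every case to the $\piii$-classification of Theorem~\ref{T:main}, to its extension Prop.~\ref{P:mainn4}, and to the cascade of \cite{acm}; the standing assumptions $\tH^i(E^\vee) = 0$ ($i = 0,1$) and indecomposability will be invoked at each step to discard the direct-sum alternatives that these results produce. Since $c_1 = 4$ we have $c_1 - 2 = 2$ and $c_1 - 3 = 1$, so the natural dividing line is whether $\tH^0(E(-2)) \neq 0$ or $\tH^0(E(-2)) = 0$.

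\emph{The case $\tH^0(E(-2)) \neq 0$.} First I would peel off the top levels. If $\tH^0(E(-4)) \neq 0$, the result of Sierra \cite{s} gives $E \simeq \sco_\p(4) \oplus (\text{trivial})$, and indecomposability yields (i). If $\tH^0(E(-4)) = 0$ but $\tH^0(E(-3)) \neq 0$, then \cite[Prop.~2.4]{acm} describes $E$ completely, and the only indecomposable instance compatible with $\tH^i(E^\vee) = 0$ is $P(\sco_\p(4))$, which is (ii). If $\tH^0(E(-3)) = 0$ but $\tH^0(E(-2)) \neq 0$, then \cite[Prop.~2.9]{acm} applies; a short inspection shows that no indecomposable bundle satisfying $\tH^i(E^\vee) = 0$ survives at this intermediate level, since all the remaining bundles of the theorem turn out to have no section even in degree $c_1 - 2$.

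\emph{The case $\tH^0(E(-2)) = 0$.} Here I want to invoke Prop.~\ref{P:mainn4}, whose only extra hypothesis is that some $3$-plane $\Pi$ satisfies $\tH^0(E_\Pi(-1)) \neq 0$ (note that $\tH^0(E_\Pi(-2)) = 0$ is automatic by \cite[Prop.~2.11]{acm}). If a general $3$-plane already carries such a section, I take $\Pi$ general; otherwise I restrict $E$ to a $4$-plane $\piv \subset \p^n$ — which leaves $c_1$ and $c_2$ unchanged — and apply Lemma~\ref{L:h0eh-1neq0} to produce a hyperplane $H \cong \piii$ of $\piv$ with $\tH^0(E_H(-1)) \neq 0$, using this $H$ as $\Pi$. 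With $\Pi$ in hand, Prop.~\ref{P:mainn4} leaves only its four alternatives: its (i) and (ii) are decomposable and discarded, so what survives is the Abo--Decker--Sasakura bundle on $\piv$ (item (iii) of the theorem, cf.\ \cite{ads}) and, for $n = 5$, the bundles $\Omega_\pv(2)$ and $\Omega_\pv^3(4)$ (items (iv) and (v)); the latter two are separated by running the extension analysis of \cite[Lemma~1.19]{acm} on the reduced form of $E_\Pi$, which on $\piii$ is built out of $\text{T}_\piii(-1)$ and $\Omega_\piii(2)$. The same analysis pins the admissible $n$: (iii) lives only on $\piv$, (iv) and (v) only on $\pv$, and for $n \geq 6$ only (i) and (ii) remain.

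\emph{The main obstacle.} The delicate point is producing the required $3$-plane section when it is invisible on a general plane, i.e.\ verifying the hypothesis $c_2 \leq 8$ of Lemma~\ref{L:h0eh-1neq0} in that situation. I would obtain it by forming, on a general $\piii$, the rank-$2$ reflexive sheaf $\sce'$ cut out by $r-2$ general sections and its normalization $\scf := \sce'(-2)$, which has $c_1(\scf) = 0$ and $c_2(\scf) = c_2 - 4$; the absence of a general section of $E_\Pi(-1)$ is precisely the vanishing $\tH^0(\scf(1)) = 0$, and the spectrum and Riemann--Roch estimates for stable rank-$2$ reflexive sheaves on $\piii$ then force $c_2(\scf) \leq 4$, hence $c_2 \leq 8$. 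One subtlety to bear in mind is that bundles with $c_2 > 8$, such as $\Omega_\pv^3(4)$ (which has $c_2 = 9$), fall outside Lemma~\ref{L:h0eh-1neq0}; for these the section is already present on a general $3$-plane, so they are reached directly, but confirming that $\Omega_\pv^3(4)$ — rather than a decomposable or non-extendable companion — is the unique indecomposable completion on $\pv$ is exactly the book-keeping in the extension step, and is where most of the care will be needed.
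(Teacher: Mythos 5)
The gap is in your treatment of the case $\tH^0(E(-2)) = 0$ when no $3$-plane carries a section of $E_\Pi(-1)$, and it is fatal: item (v) of the theorem is never reached by your argument. You claim that the absence of a section on a general $3$-plane lets the ``spectrum and Riemann--Roch estimates'' force $c_2(\scf) \leq 4$, hence $c_2 \leq 8$, after which Lemma~\ref{L:h0eh-1neq0} supplies a special $3$-plane. This is backwards. In the proof of that lemma the hypothesis $c_2 \leq 8$ is an \emph{input}: it is exactly what guarantees $\tH^2(\scf(1)) = 0$ (via Hartshorne's vanishing $\tH^2(\scf(l)) = 0$ for $l \geq c_2(\scf) - 3$), and only then does $\chi(\scf(1)) = -\h^1(\scf(1)) \leq 0$ give $\tfrac{1}{2}c_3 \leq 3c_2 - 20$, which together with $c_3 \geq 0$ yields the \emph{lower} bound $c_2 \geq 7$. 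Nothing there bounds $c_2$ from above, and no such bound holds.

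Your fallback --- that bundles with $c_2 > 8$ such as $\Omega_\pv^3(4)$ ``are reached directly'' because ``the section is already present on a general $3$-plane'' --- is false. One has $\Omega_\pv^3(4)(-1) \simeq (\overset{2}{\textstyle\bigwedge}\text{T}_\pv)(-3)$, and the second exterior power of the Euler sequence gives, for any $3$-plane $\Pi \subset \pv$, an exact sequence $0 \ra \text{T}_\pv(-3)\vb \Pi \ra 15\sco_\Pi(-1) \ra (\overset{2}{\textstyle\bigwedge}\text{T}_\pv)(-3)\vb \Pi \ra 0$ with $\tH^0(15\sco_\Pi(-1)) = 0$ and $\tH^1(\text{T}_\pv(-3)\vb \Pi) = 0$; hence $\tH^0(\Omega_\pv^3(4)\vb\Pi(-1)) = 0$ for \emph{every} $\Pi$. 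So $\Omega_\pv^3(4)$ has $\tH^0(E(-2)) = 0$, $c_2 = 9$, and admits no $3$-plane satisfying the hypothesis of Prop.~\ref{P:mainn4} --- consistently, it is not among the outcomes of that proposition --- and it falls outside both branches of your dichotomy. The missing ingredient, which the paper's closing remark ``$\Omega_\pv^3(4) \simeq P(\Omega_\pv(2))$ and $c_2(\Omega_\pv^3(4)) = 9 > 8$'' is signalling, is the involution $E \mapsto P(E)$: since $c_2(P(E)) = c_1^2 - c_2(E) = 16 - c_2(E)$, in the residual case $c_2 \geq 9$ (equivalently, by Lemma~\ref{L:h0eh-1neq0}, the case with no $3$-plane section) one classifies $P(E)$, which has $c_2 \leq 7$, and recovers $E \simeq P(P(E))$; this is how $P(\sco_\p(4))$ and $\Omega_\pv^3(4) = P(\Omega_\pv(2))$ enter the list. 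You need to add this step (or an equivalent) to close the argument.
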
 

\begin{proof}
Taking into account Lemma~\ref{L:h0eh-1neq0}, this follows from 
Prop.~\ref{P:mainn4} above and from \cite[Prop.~2.4~and~Cor.~2.5]{acm} and 
\cite[Prop.~2.11]{acm}. Notice that $c_2(\Omega_\pv(2)) = 7 < 8$, that 
$\Omega_\pv^3(4) \simeq P(\Omega_\pv(2))$ and that $c_2(\Omega_\pv^3(4)) = 
9 > 8$.   
\end{proof}

\newpage

\appendix 
\section{Multiple structures on a line}\label{A:multilines}  

We denote, in this appendix, the homogeneous coordinates on $\piii$ by 
$x_0, x_1, x, y$ hence the projective coordinate ring of $\piii$ is 
$S = k[x_0, x_1, x, y]$. Let $L \subset \piii$ be the line of equations 
$x = y = 0$. Our aim is to provide, for a locally CM space curve $Z$ of 
degree at most 4, supported on $L$, graded free resolutions for the 
homogeneous ideal $I(Z) \subset S$ and for the graded $S$-module 
$\tH^0_\ast(\sco_Z)$. The multiple structures on a smooth curve embedded in a 
threefold, up to (local) multiplicity 4, were described by B\u{a}nic\u{a} and 
Forster \cite{bf0}. Their results were published much later in a simplified, 
more conceptual, form in \cite{bf}.    

We begin by adapting, to our particular context, the results from \cite{bf}. 
Let $\pi : \piii \setminus L^\prim \ra L$ be the linear projection, where 
$L^\prim$ is the (complementary) line of equations $x_0 = x_1 = 0$. 
The functor $\pi_\ast$ induces an equivalence of categories 
between the category of coherent $\sco_\p$-modules supported on $L$ and the 
category of coherent $\sco_L$-modules $\scf$ endowed with two commuting 
twisted endomorphisms $\xi,\, \eta  : \scf \ra \scf(1)$ (corresponding to the  
$\sco_\p$-module multiplication by $x$ and $y$). Under this equivalence, 
locally CM $\sco_\p$-modules correspond to locally free $\sco_L$-modules. 
Recall that if $\scf$ is a locally free $\sco_L$-module and $\scf^\prim$ is 
an $\sco_L$-submodule then the \emph{saturation} $\scf^{\prim \, \text{sat}}$ of 
$\scf^\prim$ is defined by $\scf^{\prim \, \text{sat}}/\scf^\prim = 
(\scf/\scf^\prim)_{\text{tors}}$. If $\scf$ has, moreover, an $\sco_\p$-module 
structure as above and if  
$\scf^\prim$ an $\sco_\p$-submodule then $\scf^{\prim \, \text{sat}}$ is an 
$\sco_\p$-submodule of $\scf$. 

According to \cite{bf}, a locally CM multiple structure $Z$ on $L$ is called  
\emph{quasi-primitive} if the morphism $\sci_Z \ra \sci_L/\sci_L^2$ is 
non-zero, and it is called \emph{thick} if $\sci_Z \subseteq \sci_L^2$. We 
consider, firstly, the \emph{quasi-primitive case}. In this case, there exists 
a nonempty open subset $U$ of $L$ such that, $\forall \, z \in U$, there 
exists a system of parameters $(t,u,v)$ of $\sco_{\p,z}$ such that 
$\sci_{L,z} = (u,v)$ and $\sci_{Z,z} = (u,v^d)$, where $d$ is the degree of $Z$. 
B\u{a}nic\u{a} and Forster define the \emph{Cohen-Macaulay filtration} 
$\sco_Z \supset \sci_1 \supset \sci_2 \supset \cdots \supset \sci_d = (0)$ of 
$\sco_Z$ by $\sci_i := (\sci_{L,Z}^i)^{\text{sat}}$. This is a 
\emph{multiplicative} filtration in the sense that $\sci_i$ are ideals of 
$\sco_Z$ and $\sci_i\sci_j \subseteq \sci_{i+j}$. $\sci_i/\sci_{i+1}$ is 
annihilated by $\sci_L$ hence it is already an $\sco_L$-module and, in the 
quasi-primitive case under consideration, $\sci_i/\sci_{i+1}$ is an 
invertible $\sco_L$-module for $i =0, \ldots , d-1$.         

\subsection{Quasiprimitive structures of degree 4}\label{SS:q4} 
We describe, firstly, following \cite{bf}, the $\sco_L$-module structure of 
$\sco_Z$. 
The CM filtration has, in this case, four steps$\, :$  
\[
\sco_Z \supset \sci_1 
\supset \sci_2 \supset \sci_3 \supset (0)\, . 
\]
Moreover, there exists an epimorphism of filtered $\sco_L$-algebras $\e : 
\sco_\p/\sci_L^4 \twoheadrightarrow \sco_Z$. Recall that, as an 
$\sco_L$-module,  
\[
\sco_\p/\sci_L^4 \simeq \sco_L \oplus 2\sco_L(-1) \oplus 3\sco_L(-2) 
\oplus 4\sco_L(-3)\, . 
\] 
Now, $\sci_1/\sci_2 \simeq \sco_L(l)$, 
for some $l \in \z$. Since there exists an epimorphism $2\sco_L(-1) \simeq 
\sci_L/\sci_L^2 \ra \sci_1/\sci_2$ it follows that $l \geq -1$. Since the 
multiplication morphisms$\, :$ 
\[
\sci_1/\sci_2\otimes_{\sco_L}\sci_1/\sci_2 \lra \sci_2/\sci_3\, , \  \  
\sci_1/\sci_2\otimes_{\sco_L}\sci_2/\sci_3 \lra \sci_3 
\] 
are nonzero (due to the local structure of the $\sco_L$-algebra $\sco_Z$) it 
follows that $\sci_2/\sci_3 \simeq \sco_L(2l+m)$ with $m \geq 0$ and 
$\sci_3 \simeq \sco_L(3l+m+n)$ with $n \geq 0$. The exact sequence$\, :$ 
\[
0 \lra \sci_3 \lra \sci_2 \lra \sci_2/\sci_3 \lra 0 
\] 
splits in the category of $\sco_L$-modules, hence $\sci_2 \simeq 
\sco_L(2l+m) \oplus \sco_L(3l+m+n)$ as $\sco_L$-modules. 
The exact sequence$\, :$ 
\[
0 \lra \sci_2 \lra \sci_1 \lra \sci_1/\sci_2 \lra 0 
\] 
splits in the category of $\sco_L$-modules if $l \geq 0$. It splits, also, 
for $l = -1$ because, in this case, the epimorphism $\e$ considered 
above induces a composite epimorphism$\, :$ 
\[
2\sco_L(-1) \lra \sci_1 \lra \sci_1/\sci_2 \simeq \sco_L(-1)\, .
\]
The same kind of argument shows that $\sco_Z/\sci_1 = \sco_L$ is a direct 
summand of the $\sco_L$-module $\sco_Z$ hence$\, :$ 
\[
\sco_Z \simeq \sco_L \oplus \sco_L(l) \oplus \sco_L(2l+m) \oplus 
\sco_L(3l+m+n)\, .
\] 
The \emph{multiplicative structure} of $\sco_Z$ is defined by two 
morphisms of $\sco_L$-modules$\, :$ 
\[
\mu_{11} : \sco_L(l) \otimes \sco_L(l) \xra{\begin{pmatrix} p\\ p^\prim 
\end{pmatrix}} \begin{matrix} \sco_L(2l+m)\\ \oplus\\ \sco_L(3l+m+n) 
\end{matrix}\, ,\  \  
\mu_{12} : \sco_L(l) \otimes \sco_L(2l+m) \overset{\displaystyle q}{\lra}  
\sco_L(3l+m+n)\, ,    
\] 
with $0 \neq p \in \tH^0(\sco_L(m))$, $p^\prim \in \tH^0(\sco_L(l+m+n))$ and 
$0 \neq q \in \tH^0(\sco_L(n))$. 

\vskip2mm 

As a graded 
$\tH^0_\ast(\sco_L) = k[x_0,x_1]$-module, $\tH^0_\ast(\sco_Z)$ has a minimal set 
of generators $1 \in \tH^0(\sco_Z)$, $e_1 \in \tH^0(\sco_Z(-l))$, 
$e_2 \in \tH^0(\sco_Z(-2l-m))$, $e_3 \in \tH^0(\sco_Z(-3l-m-n))$. The 
multiplicative structure of $\tH^0_\ast(\sco_Z)$ is defined by the 
relations$\, :$ 
\begin{equation}\label{E:multiplicativestructure} 
e_1^2 = pe_2 + p^\prim e_3\, ,\   e_1e_2 = qe_3\, ,\  e_1e_3 = 0\, ,\  
e_2^2 = 0\, ,\  e_2e_3 = 0\, ,\  e_3^2 = 0\, .     
\end{equation}
The epimorphism $\sco_\piii \ra \sco_Z$ induces a morphism of graded 
$k[x_0,x_1]$-algebras $S = \tH^0_\ast(\sco_\piii) \ra 
\tH^0_\ast(\sco_Z)$ which is completely determined by the images of 
$x,\, y \in S$. Let's say that$\, :$ 
\begin{equation}\label{E:multiplicationxy} 
x \mapsto ae_1 + a^\prim e_2 + a^\secund e_3\, ,\  \  
y \mapsto be_1 + b^\prim e_2 + b^\secund e_3\, ,
\end{equation}
with $a,\, b \in \tH^0(\sco_L(l+1))$, $a^\prim,\, b^\prim \in 
\tH^0(\sco_L(2l+m+1))$ and $a^\secund,\, b^\secund \in \tH^0(\sco_L(3l+m+n+1))$. 
The images of the other monomials in the indeterminates 
$x$ and $y$ can be deduced from the multiplicative structure of 
$\tH^0_\ast(\sco_Z)$. For example, 
\[
xy \mapsto (ae_1 + a^\prim e_2 + a^\secund e_3)(be_1 + b^\prim e_2 + b^\secund e_3) 
= pabe_2 + (p^\prim ab + q(ab^\prim + a^\prim b))e_3\, . 
\] 
One deduces that the epimorphism $\e_4 : \sci_L/\sci_L^4 \ra 
\sci_1$ induced by $\e : \sco_\p/\sci_L^4 \ra \sco_Z$ is defined by the 
matrix$\, :$  
\begin{equation}\label{E:matrixe4} 
\begin{pmatrix} 
a & b & 0 & 0 & 0 & 0 & 0 & 0 & 0\\
a^\prim & b^\prim & pa^2 & pab & pb^2 & 0 & 0 & 0 & 0\\
a^\secund & b^\secund & p^\prim a^2 + 2qaa^\prim & 
p^\prim ab + q(ab^\prim + a^\prim b) & 
p^\prim b^2 + 2qbb^\prim & pqa^3 & pqa^2b & pqab^2 & pqb^3 
\end{pmatrix}
\end{equation}
Consider the following two determinants$\, :$ 
\begin{equation}\label{E:Delta12} 
\Delta_1 := 
\begin{vmatrix} 
a & b\\
a^\prim & b^\prim
\end{vmatrix} 
\, ,\  \Delta_2 := 
\begin{vmatrix}
a & b & 0\\
a^\prim & b^\prim & p\\
a^\secund & b^\secund & p^\prim
\end{vmatrix}
\end{equation}
One has $\Delta_1 \in \tH^0(\sco_L(3l+m+2))$ and $\Delta_2 \in 
\tH^0(\sco_L(4l+2m+n+2))$. One can easily prove the following$\, :$  

\begin{lemma}\label{L:epicondition} 
The $3 \times 9$ matrix \eqref{E:matrixe4} considered above defines an 
epimorphism$\, :$  
\[
\e_4 : 
2\sco_L(-1) \oplus 3\sco_L(-2) \oplus 4\sco_L(-3) \lra \sco_L(l) \oplus 
\sco_L(2l+m) \oplus \sco_L(3l+m+n) 
\]
if and only if the following three conditions are satisfied$\, :$ 
\begin{enumerate} 
\item[(i)] $a$ and $b$ have no common zero on $L$$\, ;$ 
\item[(ii)] $\Delta_1$ and $p$ have no common zero on $L$$\, ;$ 
\item[(iii)] $\Delta_2$ and $q$ have no common zero on $L$. 
\end{enumerate}
\end{lemma}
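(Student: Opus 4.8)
The plan is to reduce everything to the pointwise surjectivity of the morphism $\e_4$ of locally free $\sco_L$-modules and then to analyse the rank of the matrix \eqref{E:matrixe4} fiber by fiber. Since $L \simeq \pj$ is a smooth curve, $\e_4$ is an epimorphism if and only if, for every closed point $z \in L$, the induced $k$-linear map on fibers is surjective, i.e. the numerical matrix obtained by evaluating \eqref{E:matrixe4} at $z$ has rank $3$ (the rank of the target). So the whole statement amounts to: the three rows of \eqref{E:matrixe4} are linearly independent at every $z \in L$ precisely when (i), (ii), (iii) hold.

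First I would record the block shape of \eqref{E:matrixe4}: the first row is supported on the first two columns $(a,b)$, the second row adds the three ``quadratic'' columns $(pa^2,pab,pb^2)$, and only the third row meets the four ``cubic'' columns $pq(a^3,a^2b,ab^2,b^3)$. Independence of the first row already forces $a,b$ to have no common zero, which is (i); conversely I assume (i) from now on, so that at each $z$ the tuples $(a,b)$, $(a^2,ab,b^2)$, $(a^3,a^2b,ab^2,b^3)$ are all nonzero. Using $(a,b)(z) \neq 0$ I would clear the first column from the second and third rows by elementary row operations; the surviving entry of the reduced second row in the second column is exactly $\Delta_1/a$ (or the symmetric expression through $b$), and the reduced problem becomes: when are the two reduced rows $R_2^\prime, R_3^\prime$ linearly independent.

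Next comes the heart of the argument, a short case analysis according to the vanishing of $p$ and $q$ at $z$. Where $pq(z)\neq 0$ the cubic block of $R_3^\prime$ and the quadratic block of $R_2^\prime$ are nonzero and the two rows are automatically independent. At a zero of $p$ the row $R_2^\prime$ collapses to its column-two entry $\Delta_1/a$, so independence forces $\Delta_1(z)\neq 0$, which is exactly (ii); I would then check that, conversely, $\Delta_1(z)\neq 0$ together with (iii) (needed only when $q(z)=0$ as well) does give rank $3$, using that the vanishing of the whole quadratic block of $R_3^\prime$ would force $\Delta_1(z)=0$. At a zero of $q$ with $p(z)\neq0$ the quadratic blocks of $R_2^\prime$ and $R_3^\prime$ are proportional to $(a^2,ab,b^2)$, and the single relevant $2\times 2$ minor of the reduced pair works out to $a\Delta_2$ (using $\Delta_2 = p^\prime\Delta_1 - p(ab^{\prime\prime}-a^{\prime\prime}b)$); hence independence is equivalent to $\Delta_2(z)\neq0$, which is (iii). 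The common zeros $p(z)=q(z)=0$ are the delicate overlap: there $\Delta_2 = p^\prime\Delta_1$, so (iii) alone returns both $\Delta_1(z)\neq 0$ and $p^\prime(z)\neq 0$, precisely what is needed to keep the two reduced rows independent.

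The main obstacle, and the reason a naive criterion fails, is that $\e_4$ is only a \emph{filtered} morphism whose associated-graded maps being surjective is sufficient but far from necessary: the off-diagonal entries $a^\prime,b^\prime,a^{\prime\prime},b^{\prime\prime}$ let the lower rows compensate for degeneracies of $p$ and $q$, and this compensation is measured exactly by $\Delta_1$ and $\Delta_2$ rather than by $p$ and $q$ themselves. Thus the real work is the bookkeeping at the zeros of $p$ and $q$ (especially their common zeros), verifying that $\Delta_1,\Delta_2$ are the correct quantities; once the minor $a\Delta_2$ and the identity $\Delta_2=p^\prime\Delta_1-p(ab^{\prime\prime}-a^{\prime\prime}b)$ are in hand, each direction of the equivalence, including the necessity of (i)--(iii) obtained by exhibiting a rank drop at a common zero, follows by inspection.
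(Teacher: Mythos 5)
Your argument is correct and is essentially the proof the paper intends: the paper's one-line proof consists precisely of the two identities $-b(p^\prim a^2+2qaa^\prim)+a(p^\prim ab+q(ab^\prim+a^\prim b))=aq\Delta_1$ and its companion, which are exactly the relations your row reduction produces and which drive your case analysis at the zeros of $p$ and $q$ (together with the cofactor expansion $\Delta_2=p^\prim\Delta_1-p(ab^\secund-a^\secund b)$ that you invoke at the zeros of $q$). You have simply written out the fiberwise rank computation that the paper leaves to the reader, including the correct treatment of the common zeros of $p$ and $q$.
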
 

\begin{proof} 
It is helpful to notice the following relations$\, :$  
\begin{gather*}
-b(p^\prim a^2 + 2qaa^\prim) + a(p^\prim ab + q(ab^\prim + a^\prim b)) = 
aq\Delta_1\, ,\\  
-b(p^\prim ab + q(ab^\prim + a^\prim b)) + a(p^\prim b^2 + 2qbb^\prim) = 
bq\Delta_1\, . 
\qedhere 
\end{gather*} 
\end{proof}

Our method of finding a system of generators (and, actually, even a free 
resolution) for the homogeneous ideal 
$I(Z) \subset S$ is based on the following observation : one has an exact 
sequence $0 \ra \sci_L^4 \ra \sci_Z \ra \sci_Z/\sci_L^4 \ra 0$, 
$\sci_Z/\sci_L^4$ is the kernel of the epimorphism $\e_4 : 
\sci_L/\sci_L^4 \ra \sci_1$ and $\tH^1_\ast(\sci_L^4) = 0$, whence an exact 
sequence of graded $S$-modules$\, :$ 
\begin{equation}\label{E:il4izhkere4} 
0 \lra I(L)^4 \lra I(Z) \lra \tH^0_\ast(\Ker \e_4) \lra 0\, .
\end{equation} 
It follows that if one knows the structure of $\Ker \e_4$ as an 
$\sco_L$-module (i.e., its Grothendieck decomposition as a direct sum of 
invertible sheaves $\sco_L(i)$) and one can lift the generators of the 
graded $\tH^0_\ast(\sco_L) = k[x_0,x_1]$-module  
$\tH^0_\ast(\Ker \e_4)$ to elements of $I(Z)$ then one can complete 
the system of generators of $I(L)^4$ to a system of generators of $I(Z)$. 

In order to describe the kernel of $\e_4$, we take advantage of the fact that 
$\e_4$ is a morphism of filtered $\sco_L$-modules. More precisely, we 
describe, firstly, the kernels of the epimorphisms $\e_2 : \sci_L/\sci_L^2 \ra 
\sci_1/\sci_2$ and $\e_3 : \sci_L/\sci_L^3 \ra \sci_1/\sci_3$ induced by $\e_4$. 
We use the algorithmic procedure explained in the next remark. 

\begin{remark}\label{R:kerphi} 
(a) Assume that one wants to describe the kernel of an epimorphism$\, :$ 
\[
\phi = \begin{pmatrix} \phi_1 & 0\\ \phi_{21} & \phi_2 \end{pmatrix} : 
\begin{matrix} A_1\\ \oplus\\ A_2 \end{matrix} \xra{\  \  \  \  } 
\begin{matrix} B_1\\ \oplus\\ B_2 \end{matrix}
\]
(in an abelian category). $\phi_1$ is an epimorphism and $\phi_2$ factorizes 
as$\, :$ 
\[
A_2 \xra{\displaystyle \  \pi_2 \  } A_2^\secund 
\xra{\displaystyle \  \phi_2^\secund \  } B_2 
\] 
with $\pi_2$ an epimorphism and $\phi_2^\secund$ a monomorphism. Consider the 
kernels of $\phi_1$ and $\phi_2$, that is, consider exact sequences$\, :$ 
\begin{gather*}
0 \lra A_1^\prim \xra{\displaystyle \mu_1} A_1 \xra{\displaystyle \phi_1} 
B_1 \lra 0\, ,\\
0 \lra A_2^\prim \xra{\displaystyle \mu_2} A_2 \xra{\displaystyle \pi_2} 
A_2^\secund \lra 0\, . 
\end{gather*}
Assume that one can describe the kernel of the epimorphism$\, :$ 
\[
\phi^\secund = (\phi_{21} \circ \mu_1\, ,\, \phi_2^\secund) : A_1^\prim \oplus 
A_2^\secund \lra B_2\, , 
\] 
that is, assume that one has an exact sequence$\, :$ 
\[
0 \lra K^\secund \xra{\begin{pmatrix} \psi_1^\secund\\ \psi_2^\secund 
\end{pmatrix}} \begin{matrix} A_1^\prim\\ \oplus\\ A_2^\secund \end{matrix} 
\xra{\displaystyle \  \phi^\secund \  } B_2 \lra 0\, . 
\]
and that we have at our disposal a commutative diagram with exact rows$\, :$ 
\[
\begin{CD}
0 @>>> K^\prim @>{\displaystyle \sigma}>> K @>{\displaystyle \rho}>> K^\secund 
@>>> 0\\
@. @VV{\displaystyle \psi_2^\prim}V @VV{\displaystyle \psi_2}V 
@VV{\displaystyle \psi_2^\secund}V\\ 
0 @>>> A_2^\prim @>{\displaystyle \mu_2}>> A_2 @>{\displaystyle \pi_2}>> 
A_2^\secund @>>> 0 
\end{CD}
\] 
Then one gets an exact sequence$\, :$ 
\[
0 \lra K^\prim \xra{\begin{pmatrix} \sigma\\ -\psi_2^\prim \end{pmatrix}} 
\begin{matrix} K\\ \oplus\\ A_2^\prim \end{matrix} 
\xra{\begin{pmatrix} \mu_1 \circ \psi_1^\secund \circ \rho & 0\\ 
\psi_2 & \mu_2 \end{pmatrix}} \begin{matrix} A_1\\ \oplus\\ A_2 \end{matrix} 
\xra{\displaystyle \  \phi \  } \begin{matrix} B_1\\ \oplus\\ B_2 \end{matrix} 
\lra 0 
\]
which \emph{resolves the kernel of} $\phi$. 

\vskip2mm 

\noindent
\emph{Indeed}, it suffices to notice that $\mu_1 \oplus \text{id}_{A_2}$ 
maps the kernel of 
\[
\phi^\prim = (\phi_{21} \circ \mu_1 \, ,\, \phi_2) : A_1^\prim \oplus A_2 
\lra B_2 
\]
isomorphically onto the kernel of $\phi$ and to recall the fact that if one 
has a commutative diagram with exact rows$\, :$ 
\[
\begin{CD}
0 @>>> X^\prim @>{\displaystyle u}>> X @>{\displaystyle p}>> X^\secund 
@>>> 0\\ 
@. @VV{\displaystyle f^\prim}V @VV{\displaystyle f}V 
@VV{\displaystyle f^\secund}V\\ 
0 @>>> Y^\prim @>{\displaystyle v}>> Y @>{\displaystyle q}>> Y^\secund 
@>>> 0
\end{CD}
\] 
then the sequence$\, :$ 
\[
0 \lra X^\prim \xra{\begin{pmatrix} u\\ -f^\prim \end{pmatrix}} 
\begin{matrix} X\\ \oplus\\ Y^\prim \end{matrix} 
\xra{\begin{pmatrix} p & 0\\ f & v \end{pmatrix}} 
\begin{matrix} X^\secund\\ \oplus\\ Y \end{matrix} 
\xra{\displaystyle (-f^\secund ,\, q)} Y^\secund \lra 0  
\]
is exact. 

\vskip2mm 

(b) It may happen that the morphism $\psi_2^\secund : K^\secund \ra A_2^\secund$ 
defined above lifts itself to a morphism $\psi_2 : K^\secund \ra A_2$ (that is, 
$\pi_2 \circ \psi_2 = \psi_2^\secund$). Then the above exact sequence resolving 
$\Ker \phi$ becomes$\, :$ 
\[
0 \lra  \begin{matrix} K^\secund\\ \oplus\\ A_2^\prim \end{matrix} 
\xra{\begin{pmatrix} \mu_1 \circ \psi_1^\secund & 0\\ 
\psi_2 & \mu_2 \end{pmatrix}} \begin{matrix} A_1\\ \oplus\\ A_2 \end{matrix} 
\xra{\displaystyle \  \phi \  } \begin{matrix} B_1\\ \oplus\\ B_2 \end{matrix} 
\lra 0\, . 
\]  
\end{remark}   

\noindent 
{\bf Description of $\Ker \e_2$.}\quad The morphism  
$\e_2 : 2\sco_L(-1) \ra \sco_L(l)$ is defined by the matrix $(a\, ,\, b)$ and 
one has an exact sequence$\, :$ 
\begin{equation}\label{E:ab} 
0 \lra \sco_L(-l-2) \xra{\begin{pmatrix} -b\\ a \end{pmatrix}} 
2\sco_L(-1) \xra{\displaystyle (a\, ,\, b)} \sco_L(l) \lra 0 
\end{equation} 
hence $\Ker \e_2 \simeq \sco_L(-l-2)$. Let us denote by $\nu_2$ the morphism 
$(-b\, ,\, a)^{\text{t}} : \sco_L(-l-2) \ra 2\sco_L(-1)$. 

\vskip2mm 

\noindent
{\bf Description of $\Ker \e_3$.}\quad The morphism $\e_3 : 2\sco_L(-1) 
\oplus 3\sco_L(-2) \ra \sco_L(l) \oplus \sco_L(2l+m)$ is defined by the 
matrix$\, :$ 
\[
\begin{pmatrix}
a & b & 0 & 0 & 0\\ 
a^\prim & b^\prim & pa^2 & pab & pb^2 
\end{pmatrix}\, . 
\]   
We apply Remark~\ref{R:kerphi} with $A_1 = 2\sco_L(-1)$, $A_2 = 3\sco_L(-2)$, 
$B_1 = \sco_L(l)$, $B_2 = \sco_L(2l+m)$ and with $\phi = \e_3$. The kernel of 
the component $\phi_1 : 2\sco_L(-1) \ra \sco_L(l)$ of $\e_3$ is described by 
the exact sequence \eqref{E:ab} (because $\phi_1 = \e_2$), the component 
$\phi_2 : 3\sco_L(-2) \ra \sco_L(2l+m)$ of $\e_3$ decomposes as$\, :$ 
\[
3\sco_L(-2) \xra{\displaystyle (a^2 ,\, ab\, ,\, b^2)} \sco_L(2l) 
\overset{\displaystyle p}{\lra} \sco_L(2l+m) 
\]
and its kernel is described by the exact sequence$\, :$ 
\begin{equation}\label{E:a2abb2} 
0 \ra 2\sco_L(-l-3) 
\xra{\begin{pmatrix} 
-b & 0\\
a & -b\\
0 & a 
\end{pmatrix}} 
3\sco_L(-2) 
\xra{\displaystyle (a^2 ,\, ab\, ,\, b^2)} 
\sco_L(2l) \ra 0\, . 
\end{equation}
The morphism $\e_3^\secund : \sco_L(-l-2) \oplus \sco_L(2l) \ra \sco_L(2l+m)$ 
associated to $\e_3$ as in Remark~\ref{R:kerphi} (where it is denoted by 
$\phi^\secund$) is defined by the matrix $(\Delta_1\, ,\, p)$ and its kernel is 
described by the exact sequence$\, :$ 
\[
0 \ra \sco_L(-l-m-2) \xra{\begin{pmatrix} p\\ -\Delta_1 \end{pmatrix}} 
\sco_L(-l-2) \oplus \sco_L(2l) \xra{\displaystyle (\Delta_1\, ,\, p)} 
\sco_L(2l+m) \ra 0\, . 
\]
It follows, from the exact sequence \eqref{E:a2abb2}, 
that, for $t \geq 3l + 2$, any element of $\tH^0(\sco_L(t))$ can be expressed 
as a combination of $a^2,\, ab,\, b^2$.  
Since $\Delta_1 \in \tH^0(\sco_L(3l+m+2))$ one deduces that there exist 
polynomials $v_0,\, v_1,\, v_2 \in \tH^0(\sco_L(l+m))$ such that$\, :$  
\begin{equation}\label{E:v012} 
-\Delta_1 = v_0a^2 + v_1ab + v_2b^2 \, .
\end{equation}
It follows that the morphism $\psi_2^\secund : \sco_L(-l-m-2) \ra \sco_L(2l)$ 
defined by $-\Delta_1$ lifts to the morphism $\psi_2 : \sco_L(-l-m-2) \ra 
3\sco_L(-2)$ defined by $(v_0\, ,\, v_1\, ,\, v_2)^{\text{t}}$. Applying, now, 
Remark~\ref{R:kerphi}(b) one gets an exact sequence$\, :$ 
\begin{equation}\label{E:e3nu3}
0 \lra \begin{matrix} \sco_L(-l-m-2)\\ \oplus\\ 2\sco_L(-l-3) \end{matrix}  
\overset{\displaystyle \nu_3}{\lra}  
\begin{matrix} 2\sco_L(-1)\\ \oplus\\ 3\sco_L(-2) \end{matrix}  
\overset{\displaystyle \e_3}{\lra}  
\begin{matrix} \sco_L(l)\\ \oplus\\ \sco_L(2l+m) \end{matrix} \lra 0 
\end{equation}
with $\nu_3$ defined by the matrix$\, :$ 
\[
\begin{pmatrix} 
-pb & 0 & 0\\
pa & 0 & 0\\ 
v_0 & -b & 0\\
v_1 & a & -b\\
v_2 & 0 & a
\end{pmatrix}\, . 
\]

\noindent 
{\bf Description of $\Ker \e_4$.}\quad 
Recall the definition of $\e_4$ from Lemma~\ref{L:epicondition}. We apply 
Remark~\ref{R:kerphi} with $A_1 = 2\sco_L(-1) \oplus 3\sco_L(-2)$, $A_2 = 
4\sco_L(-3)$, $B_1 = \sco_L(l) \oplus \sco_L(2l+m)$, $B_2 = \sco_L(3l+m+n)$ 
and with $\phi = \e_4$. The kernel of the component $\phi_1 : \sco_L(-2) 
\oplus 3\sco_L(-3) \ra \sco_L(l) \oplus \sco_L(2l+m)$ of $\e_4$ is described 
by the exact sequence \eqref{E:e3nu3} (because $\phi_1 = \e_3$), the component 
$\phi_2 : 4\sco_L(-3) \ra \sco_L(3l+m+n)$ of $\e_4$ decomposes as$\, :$ 
\[
4\sco_L(-3) \xra{\displaystyle (a^3,\, a^2b\, ,\, ab^2,\, b^3)} 
\sco_L(3l) \overset{\displaystyle pq}{\lra} \sco_L(3l+m+n) 
\]
and its kernel is described by the exact sequence$\, :$ 
\begin{equation}\label{E:a3a2bab2b3} 
0 \ra 3\sco_L(-l-4) 
\xra{\begin{pmatrix} 
-b & 0 & 0\\
a & -b & 0\\
0 & a & -b\\
0 & 0 & a
\end{pmatrix}} 
4\sco_L(-3) 
\xra{\displaystyle (a^3,\,  a^2b\, ,\, ab^2,\, b^3)} 
\sco_L(3l) \ra 0\, .
\end{equation}
Now, the epimorphism $\e_4^\secund : \sco_L(-l-m-2) \oplus 2\sco_L(-l-3) \oplus 
\sco_L(3l) \ra \sco_L(3l+m+n)$ associated to $\e_4$ as in 
Remark~\ref{R:kerphi} (where it is denoted by $\phi^\secund$) is defined by the 
matrix $(-\Delta_2 + qv\, ,\, aq\Delta_1\, ,\, bq\Delta_1\, ,\, pq)$, 
where$\, :$ 
\begin{equation}\label{E:defv} 
v := 2v_0aa^\prim + v_1(ab^\prim + a^\prim b) + 2v_2bb^\prim \, . 
\end{equation} 

At this point, we have to provide the 

\vskip2mm 

\noindent 
$\bullet$\quad \emph{Description of} $\Ker \e_4^\secund$.\quad 
We apply Remark~\ref{R:kerphi} with $A_1 = \sco_L(-l-m-2)$, $A_2 = 
2\sco_L(-l-3) \oplus \sco_L(3l)$, $B_1 = 0$ and $B_2 = \sco_L(3l+m+n)$. 
The component $2\sco_L(-l-3) \oplus \sco_L(3l) \ra \sco_L(3l+m+n)$ of 
$\e_4^\secund$ factorizes as$\, :$ 
\[
2\sco_L(-l-3) \oplus \sco_L(3l) \xra{\displaystyle \pi_2^\secund} \sco_L(3l+m) 
\overset{\displaystyle q}{\lra} \sco_L(3l+m+n) 
\]  
where $\pi_2^\secund$ is defined by the matrix $(a\Delta_1\, ,\, b\Delta_1\, ,\, 
p)$. One has a commutative diagram$\, :$ 
\[
\begin{CD} 
2\sco_L(-l-3) \oplus \sco_L(-m-2) @>{\displaystyle (a,b,p)}>> \sco_L(-2)\\
@V{\displaystyle \text{id} \oplus \text{id} \oplus \Delta_1}VV 
@VV{\displaystyle \Delta_1}V\\
2\sco_L(-l-3) \oplus \sco_L(3l) @>{\displaystyle \pi_2^\secund}>> \sco_L(3l+m) 
\end{CD}
\]
Since $p$ and $\Delta_1$ are coprime, it follows that $\text{id} \oplus 
\text{id} \oplus \Delta_1$ maps isomorphically the kernel of 
$(a, b, p) : 2\sco_L(-l-3) \oplus \sco_L(-m-2) \ra \sco_L(-2)$ onto 
$\Ker \pi_2^\secund$. One needs, now, the following easy$\, :$ 

\begin{lemma}\label{L:a1a2a3}
Let $K$ be the kernel of an epimorphism: 
\[
(a_1\, ,\, a_2\, ,\, a_3) : \sco_L(-i_1) \oplus \sco_L(-i_2) \oplus \sco_L(-i_3) 
\lra \sco_L\, .
\]
Assume that $i_1 \leq i_2$ and that $a_3 \neq 0$. Then $K \simeq \sco_L(-j_1) 
\oplus \sco_L(-j_2)$ with $i_1 \leq j_1 \leq j_2 \leq i_2 + i_3$. 
In particular, ${\fam0 H}^1(K(t)) = 0$ for $t \geq i_2 + i_3 - 1$.  
\end{lemma}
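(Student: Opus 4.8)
The plan is to exploit the fact that $L \simeq \pj$, so every locally free sheaf on $L$ splits into line bundles. First I would observe that $K$, being the kernel of a surjection of locally free $\sco_L$-modules, is itself locally free of rank $2$; by Grothendieck's splitting theorem it decomposes as $K \simeq \sco_L(-j_1) \oplus \sco_L(-j_2)$ with $j_1 \leq j_2$. Comparing first Chern classes (degrees) in the defining sequence $0 \to K \to \sco_L(-i_1) \oplus \sco_L(-i_2) \oplus \sco_L(-i_3) \to \sco_L \to 0$ gives at once $j_1 + j_2 = i_1 + i_2 + i_3$. Because of this relation the two asserted inequalities $j_1 \geq i_1$ and $j_2 \leq i_2 + i_3$ are equivalent, so it suffices to establish the lower bound $j_1 \geq i_1$.

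To get $j_1 \geq i_1$ I would characterize $j_1$ cohomologically as $j_1 = \min\{t : \tH^0(K(t)) \neq 0\}$, which is clear from the splitting since $\tH^0(\sco_L(t-j_1))$ first becomes nonzero at $t = j_1$ and $j_2 \geq j_1$. Twisting the defining sequence by $\sco_L(t)$ and passing to cohomology identifies $\tH^0(K(t))$ with the kernel of the map $\tH^0(E(t)) \to \tH^0(\sco_L(t))$, where $E$ denotes the middle term. The decisive point — and the only place where the hypothesis $a_3 \neq 0$ is used — is that for $t < i_1$ one has $\tH^0(\sco_L(t-i_1)) = \tH^0(\sco_L(t-i_2)) = 0$ (here $i_1 \leq i_2$ is needed), so that $\tH^0(E(t))$ reduces to the contribution $\tH^0(\sco_L(t-i_3))$ of the third summand, and the map into $\tH^0(\sco_L(t))$ is simply multiplication by $a_3$. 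Since $k[x_0,x_1]$ is an integral domain, multiplication by the nonzero form $a_3$ is injective, whence $\tH^0(K(t)) = 0$ for every $t < i_1$, and therefore $j_1 \geq i_1$. I would stress that this argument uses no hypothesis on the position of $i_3$ relative to $i_1, i_2$: even when $i_3 < i_1$ the third summand can contribute sections in low degrees, but they enter $\tH^0(E(t))$ only through multiplication by $a_3$, which removes them from the kernel.

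With $j_1 \geq i_1$ in hand, the relation $j_1 + j_2 = i_1 + i_2 + i_3$ yields $j_2 = i_1 + i_2 + i_3 - j_1 \leq i_2 + i_3$, finishing the structural claim. For the final cohomological consequence I would use that on $\pj$ one has $\tH^1(K(t)) = \tH^1(\sco_L(t-j_1)) \oplus \tH^1(\sco_L(t-j_2))$, and $\tH^1(\sco_L(d)) = 0$ exactly when $d \geq -1$; the binding constraint is the larger twist $j_2$, so the group vanishes as soon as $t \geq j_2 - 1$, and in particular for $t \geq i_2 + i_3 - 1$ since $j_2 \leq i_2 + i_3$. The whole argument is routine apart from the injectivity step, which is the genuine obstacle: without the assumption $a_3 \neq 0$ the third summand could enlarge $\tH^0(K(t))$ in degrees below $i_1$ and the bound $j_1 \geq i_1$ would fail.
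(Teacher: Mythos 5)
Your proof is correct and follows essentially the same route as the paper's: split $K$ by Grothendieck's theorem, show $\tH^0(K(t))=0$ for $t<i_1$ to get $j_1\geq i_1$, and then use $c_1(K)=-i_1-i_2-i_3$ to bound $j_2$. The only difference is that you spell out the vanishing step in detail (isolating the third summand and using injectivity of multiplication by $a_3$), which the paper leaves implicit.
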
 

\begin{proof}
$K$ is locally free of rank 2 hence one can write $K \simeq \sco_L(-j_1) 
\oplus \sco_L(-j_2)$ with $j_1 \leq j_2$. It follows from our hypothesis that 
$\tH^0(K(-i_1+1)) = 0$, hence $j_1 \geq i_1$. On the other hand, $c_1(K) = 
-i_1-i_2-i_3$ hence $j_2 = i_1+i_2+i_3-j_1 \leq i_2+i_3$. 
\end{proof}

According to this lemma, one has an exact sequence$\, :$ 
\begin{equation}\label{E:syzabp}
0 \ra \sco_L(-l_1) \oplus \sco_L(-l_2) 
\xra{\begin{pmatrix}
f_1 & f_2\\
g_1 & g_2\\
u_1 & u_2
\end{pmatrix}} 
2\sco_L(-l-1) \oplus \sco_L(-m) 
\xra{\displaystyle (a\, ,\, b\, ,\, p)} 
\sco_L \ra 0\, ,
\end{equation}
with $l+1 \leq l_1 \leq l_2 \leq l+m+1$ and $l_1 + l_2 = 2l + m +2$. One 
derives an exact sequence$\, :$ 
\begin{equation}\label{E:kerpi2secund}
0 \lra 
\begin{matrix}
\sco_L(-l_1-2)\\ \oplus\\ \sco_L(-l_2-2 ) 
\end{matrix}
\xra{\begin{pmatrix}
f_1 & f_2\\
g_1 & g_2\\
u_1\Delta_1 & u_2\Delta_1
\end{pmatrix}} 
\begin{matrix}
2\sco_L(-l-3)\\ \oplus\\ \sco_L(3l) 
\end{matrix}
\overset{\displaystyle \pi_2^\secund}{\lra}   
\sco_L(3l+m) \lra 0\, ,
\end{equation} 
which describes the kernel of the component $2\sco_L(-l-3) \oplus \sco_L(3l) 
\ra \sco_L(3l+m+n)$ of $\e_4^\secund$. 

Now, the kernel of the epimorphism $(-\Delta_2 + qv\, ,\, q) : 
\sco_L(-l-m-2) \oplus \sco_L(3l+m) \ra \sco_L(3l+m+n)$ associated to 
$\e_4^\secund$ as in Remark~\ref{R:kerphi} (where it is denoted by 
$\phi^\secund$) is described by the exact sequence$\, :$ 
\[
0 \ra \sco_L(-l-m-n-2) 
\xra{\begin{pmatrix} q\\ \Delta_2 - qv \end{pmatrix}} 
\begin{matrix}
\sco_L(-l-m-2)\\ \oplus\\ \sco_L(3l+m) 
\end{matrix}
\xra{\displaystyle (-\Delta_2 + qv\, ,\, q)} 
\sco_L(3l+m+n) \ra 0\, .  
\]
Since $\Delta_2 -qv \in \tH^0(\sco_L(4l+2m+n+2))$ and since 
\[
- l_i -2 + (l + m + n + 2) \geq -(l + m + 1) - 2 + (l + m + n + 2) = 
n - 1 \geq -1\, ,\  i = 1,\, 2\, ,    
\]
the exact sequence \eqref{E:kerpi2secund} shows that there exist polynomials 
$f,\, g \in \tH^0(\sco_L(m+n-1))$ and $w \in \tH^0(\sco_L(4l+m+n+2))$ such 
that$\, :$ 
\begin{equation}\label{E:fgw} 
\Delta_2 - qv = fa\Delta_1 + gb\Delta_1 + wp 
\end{equation}  
($v$ has been defined in \eqref{E:defv}). One gets that the morphism 
$\sco_L(-l-m-n-2) \ra \sco_L(3l+m)$ defined by $\Delta_2 - qv$ (corresponding 
to $\psi_2^\secund$ in Remark~\ref{R:kerphi}) lifts to the 
morphism $\sco_L(-l-m-n-2) \ra 2\sco_L(-l-3) \oplus \sco_L(3l)$ defined by the 
matrix $(f\, ,\, g\, ,\, w)^{\text{t}}$ (corresponding to $\psi_2$ in 
Remark~\ref{R:kerphi}(b)). Remark~\ref{R:kerphi}(b) implies, now, that one has 
an exact sequence$\, :$ 
\begin{equation}\label{E:e4secundnu4secund}
0 \lra 
\begin{matrix}
\sco_L(-l-m-n-2)\\ \oplus\\ \sco_L(-l_1-2)\\ \oplus\\ \sco_L(-l_2-2) 
\end{matrix}
\overset{\displaystyle \nu_4^\secund}{\lra}  
\begin{matrix}
\sco_L(-l-m-2)\\ \oplus\\ 2\sco_L(-l-3)\\ \oplus\\ \sco_L(3l) 
\end{matrix}
\overset{\displaystyle \e_4^\secund}{\lra}  
\sco_L(3l+m+n) \lra 0 
\end{equation}
with $\nu_4^\secund$ defined by the matrix$\, :$ 
\[
\begin{pmatrix} 
q & 0 & 0\\
f & f_1 & f_2\\
g & g_1 & g_2\\
w & u_1\Delta_1 & u_2\Delta_1
\end{pmatrix}\, . 
\] 
\emph{The description of} $\Ker \e_4^\secund$ \emph{is complete}. 

\vskip2mm 

Now, coming back to the description of $\Ker \e_4$, the component$\, :$
\[
\psi_2^\secund : \sco_L(-l-m-n-2) \oplus \sco_L(-l_1 - 2) \oplus 
\sco_L(-l_2 - 2) \lra \sco_L(3l) 
\]
of the morphism $\nu_4^\secund$ from \eqref{E:e4secundnu4secund} 
is defined by the matrix $(w\, ,\, u_1\Delta_1\, ,\, u_2\Delta_1)$. 
We would like, in order to apply the most favourable case (b) of 
Remark~\ref{R:kerphi}, to lift $\psi_2^\secund$ to a morphism$\, :$ 
\[
\psi_2 : \sco_L(-l-m-n-2) \oplus \sco_L(-l_1 - 2) \oplus 
\sco_L(-l_2 - 2) \lra 4\sco_L(-3)\, ,
\] 
that is, to express $w$, $u_1\Delta_1$, $u_2\Delta_1$ as combinations of 
$a^3, \ldots , b^3$. It follows, from the exact sequence \eqref{E:a3a2bab2b3}, 
that, for $t \geq 4l + 3$, any element of $\tH^0(\sco_L(t))$ can be expressed 
as a combination of $a^3, \ldots , b^3$. 

Since $u_i \in \tH^0(\sco_L(l_i - m))$, $\Delta_1 \in 
\tH^0(\sco_L(3l + m + 2))$ and $(l_i - m) + (3l + m + 2) \geq 
(l + 1 - m) + (3l +m + 2) = 4l + 3$, $i = 1,\, 2$, one derives the existence 
of polynomials $v_{ij} \in \tH^0(\sco_L(l_i - 1))$, $j = 0, \ldots , 3$, such 
that$\, :$  
\begin{equation}\label{E:vij} 
u_i\Delta_1 = v_{i0}a^3 + v_{i1}a^2b + v_{i2}ab^2 + v_{i3}b^3\, ,\  
i = 1,\, 2\, .
\end{equation}
On the other hand, it might happen that $w$, which belongs to 
$\tH^0(\sco_L(4l + m + n + 2))$, cannot be expressed as a combination of 
$a^3, \ldots , b^3$. Consequently, one has to consider two cases. The case 
where $w$ can be expressed as such a combination is dealt with in 
Lemma~\ref{L:kere4} below. For the opposite case, see 
Remark~\ref{R:wcombination} and the proof of Prop.~\ref{P:genizprim} below. 

\begin{lemma}\label{L:kere4} 
If the polynomial $w \in {\fam0 H}^0(\sco_L(4l+m+n+2))$ $($defined in 
\eqref{E:fgw}$)$ can be written as a combination$\, :$ 
\[
w = w_0a^3 + w_1a^2b + w_2ab^2 + w_3b^3 
\]
with $w_0, \ldots ,w_3 \in {\fam0 H}^0(\sco_L(l+m+n-1))$ then one has an exact 
sequence$\, :$ 
\[
0 \lra 
\begin{matrix}
\sco_L(-l-m-n-2)\\ \oplus\\ \sco_L(-l_1-2)\\ \oplus\\ \sco_L(-l_2-2)\\ 
\oplus\\ 3\sco_L(-l-4) 
\end{matrix}
\overset{\displaystyle \nu_4}{\lra}   
\begin{matrix}
2\sco_L(-1)\\ \oplus\\ 3\sco_L(-2)\\ \oplus\\ 4\sco_L(-3) 
\end{matrix}
\overset{\displaystyle \e_4}{\lra}   
\begin{matrix} 
\sco_L(l)\\ \oplus\\ \sco_L(2l+m)\\ \oplus\\ \sco_L(3l+m) 
\end{matrix}
\lra 0 
\] 
with $\nu_4$ defined by the matrix$\, :$ 
\[
\begin{pmatrix}
-pbq & 0 & 0 & 0 & 0 & 0\\
paq & 0 & 0 & 0 & 0 & 0\\
v_0q - bf & -bf_1 & -bf_2 & 0 & 0 & 0\\
v_1q + af - bg & af_1 - bg_1 & af_2 - bg_2 & 0 & 0 & 0\\
v_2q + ag & ag_1 & ag_2 & 0 & 0 & 0\\ 
w_0 & v_{10} & v_{20} & -b & 0 & 0\\
w_1 & v_{11} & v_{21} & a & -b & 0\\
w_2 & v_{12} & v_{22} & 0 & a & -b\\
w_3 & v_{13} & v_{23} & 0 & 0 & a  
\end{pmatrix}\, . 
\]
\end{lemma}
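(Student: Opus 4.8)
The plan is to derive the resolution of $\Ker \e_4$ by a single application of the favourable case (b) of Remark~\ref{R:kerphi}, to the epimorphism $\phi = \e_4$ decomposed with $A_1 = 2\sco_L(-1) \oplus 3\sco_L(-2)$, $A_2 = 4\sco_L(-3)$, $B_1 = \sco_L(l) \oplus \sco_L(2l+m)$ and $B_2 = \sco_L(3l+m+n)$. With this decomposition the component $\phi_1$ is precisely $\e_3$ (it is the upper-left $2\times 5$ block of the matrix \eqref{E:matrixe4}), whose kernel is already resolved by the exact sequence \eqref{E:e3nu3}; thus $A_1^\prim = \Ker \e_3 = \sco_L(-l-m-2) \oplus 2\sco_L(-l-3)$ and $\mu_1 = \nu_3$. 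The component $\phi_2 : 4\sco_L(-3) \ra \sco_L(3l+m+n)$ factorizes through $\pi_2 : 4\sco_L(-3) \ra \sco_L(3l)$ given by $(a^3,\, a^2b,\, ab^2,\, b^3)$, whose kernel $A_2^\prim = 3\sco_L(-l-4)$ and inclusion $\mu_2$ are read off directly from \eqref{E:a3a2bab2b3}.

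First I would record the auxiliary data already assembled above the statement. The morphism $\phi^\secund$ of Remark~\ref{R:kerphi} is exactly $\e_4^\secund$, and its kernel is resolved by \eqref{E:e4secundnu4secund}; splitting the map $\nu_4^\secund$ according to the target $A_1^\prim \oplus A_2^\secund$ with $A_2^\secund = \sco_L(3l)$, its first three rows furnish $\psi_1^\secund$ and its last row furnishes $\psi_2^\secund = (w\, ,\, u_1\Delta_1\, ,\, u_2\Delta_1)$. All the ingredients of the block formula of Remark~\ref{R:kerphi}(b) are then in place, save for the lift of $\psi_2^\secund$ along $\pi_2$.

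The crux is that the hypothesis of the lemma is precisely the condition under which case (b), rather than the opposite case, applies. Indeed, the entries $u_1\Delta_1$ and $u_2\Delta_1$ of $\psi_2^\secund$ were already expressed as combinations of $a^3, \ldots , b^3$ in \eqref{E:vij}, and the assumed expansion $w = w_0a^3 + w_1a^2b + w_2ab^2 + w_3b^3$ supplies the remaining column; together these assemble into a morphism $\psi_2 : K^\secund \ra 4\sco_L(-3)$ with $\pi_2 \circ \psi_2 = \psi_2^\secund$. With this lift in hand, Remark~\ref{R:kerphi}(b) yields at once the exact sequence of the statement, the source being $K^\secund \oplus A_2^\prim = \sco_L(-l-m-n-2) \oplus \sco_L(-l_1-2) \oplus \sco_L(-l_2-2) \oplus 3\sco_L(-l-4)$ and the map being the block matrix $\nu_4 = \left(\begin{smallmatrix} \mu_1\circ\psi_1^\secund & 0 \\ \psi_2 & \mu_2 \end{smallmatrix}\right)$.

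It then remains only to check that this block matrix coincides with the displayed $\nu_4$. The lower blocks are immediate: the bottom-left $4\times 3$ block is $\psi_2$, whose three columns are $(w_j)$, $(v_{1j})$, $(v_{2j})$, and the bottom-right $4\times 3$ block is $\mu_2$ from \eqref{E:a3a2bab2b3}. The only genuine computation is the $5\times 3$ product $\mu_1\circ\psi_1^\secund = \nu_3 \cdot \psi_1^\secund$; performing this multiplication row by row reproduces exactly the top-left block of the displayed matrix, its first column being $(-pbq,\, paq,\, v_0q - bf,\, v_1q + af - bg,\, v_2q + ag)^{\text{t}}$. This bookkeeping is routine, but it is the one step demanding care, since a sign or index slip would be invisible without fully expanding the product.
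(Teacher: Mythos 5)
Your proposal is correct and follows exactly the paper's own route: the decomposition of $\e_4$, the identification of $\phi_1=\e_3$ and of $\phi^\secund=\e_4^\secund$ with their resolutions \eqref{E:e3nu3} and \eqref{E:e4secundnu4secund}, the observation that the hypothesis on $w$ together with \eqref{E:vij} supplies the lift $\psi_2$ of $\psi_2^\secund$ along $(a^3,\ldots,b^3)$, and the application of Remark~\ref{R:kerphi}(b). The concluding multiplication $\nu_3\circ\psi_1^\secund$ indeed reproduces the displayed top-left block, so nothing is missing.
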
 

\begin{proof} 
The component $\psi_2^\secund : \sco_L(-l-m-n-2) \oplus \sco_L(-l_1 - 2) \oplus 
\sco_L(-l_2 - 2) \ra \sco_L(3l)$ of the morphism $\nu_4^\secund$ from the exact 
sequence \eqref{E:e4secundnu4secund} lifts to the morphism$\, :$  
\[
\psi_2 : \sco_L(-l-m-n-2) \oplus \sco_L(-l_1 - 2) \oplus \sco_L(-l_2 - 2) \lra 
4\sco_L(-3)
\]
defined by the matrix$\, :$ 
\[
\begin{pmatrix} 
w_0 & v_{10} & v_{20}\\ 
w_1 & v_{11} & v_{21}\\ 
w_2 & v_{12} & v_{22}\\ 
w_3 & v_{13} & v_{23}
\end{pmatrix} 
\, . 
\] 
One can apply, now, Remark~\ref{R:kerphi}(b). 
\end{proof}

\begin{remark}\label{R:wcombination} 
If $m \neq 0$ or if $n \neq 0$ then $4l + m + n + 2 \geq 4l + 3$ hence, 
according to the discussion before Lemma~\ref{L:kere4}, $w$ can be written as 
a combination of $a^3,\, a^2b,\, ab^2, b^3$. This is also true when 
$m = n = 0$ and $l = -1$ because, in this case, $w \in \tH^0(\sco_L(-2)) 
= 0$.   
\end{remark} 

\begin{prop}\label{P:geniz}   
Assume that the polynomial $w$ $($defined in \eqref{E:fgw}$)$ can be written 
as a combination of $a^3, \ldots ,b^3$ as in Lemma~\ref{L:kere4} $($which 
happens automatically if $m \neq 0$ or if $n \neq 0$ or if $m = n = 0$ and 
$l = -1$ according to Remark~\ref{R:wcombination}$)$. 
Consider the polynomials $($in $S$$)$$\, :$ 
\[
F_2 = \begin{vmatrix} a & b\\ x & y\end{vmatrix}\, ,\  
F_3 = pF_2 + v_0x^2 + v_1xy + v_2y^2\, .
\]
with $v_0,\, v_1,\, v_2$ defined in \eqref{E:v012}.
Then the homogeneous ideal $I(Z) 
\subset S$ of $Z$ is generated by the following polynomials$\; :$ 
\begin{gather*} 
F_4 = qF_3 + (fx+gy)F_2 + w_0x^3 + w_1x^2y + w_2xy^2 + w_3y^3\, ,\\
G_1 = (f_1x+g_1y)F_2 + v_{10}x^3 + v_{11}x^2y + v_{12}xy^2 + v_{13}y^3\, ,\\
G_2 = (f_2x+g_2y)F_2 + v_{20}x^3 + v_{21}x^2y + v_{22}xy^2 + v_{23}y^3\, ,\\
x^2F_2,\, ,\  xyF_2\, ,\  y^2F_2\, ,\  x^4\, ,\  x^3y\, ,\  x^2y^2\, ,\  
xy^3\, ,\  y^4\, .
\end{gather*}
with $f,\, g$ defined in \eqref{E:fgw}, the $f_i$'s and the $g_i$'s defined in 
\eqref{E:syzabp} and the $v_{ij}$'s defined in \eqref{E:vij}.    
\end{prop}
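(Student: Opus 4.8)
The plan is to read off the generators of $I(Z)$ from the exact sequence \eqref{E:il4izhkere4},
\[
0 \lra I(L)^4 \lra I(Z) \lra \tH^0_\ast(\Ker \e_4) \lra 0,
\]
of graded $S$-modules: a generating set of $I(Z)$ is obtained by adjoining to a generating set of $I(L)^4 = (x,y)^4$ — namely the five monomials $x^4,\, x^3y,\, x^2y^2,\, xy^3,\, y^4$ — elements of $I(Z)$ lifting a generating set of $\tH^0_\ast(\Ker\e_4)$. By Lemma~\ref{L:kere4}, $\Ker\e_4$ is identified as an $\sco_L$-module with $\sco_L(-l-m-n-2)\oplus\sco_L(-l_1-2)\oplus\sco_L(-l_2-2)\oplus 3\sco_L(-l-4)$, embedded in $\sci_L/\sci_L^4 = 2\sco_L(-1)\oplus 3\sco_L(-2)\oplus 4\sco_L(-3)$ by the matrix $\nu_4$. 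Consequently $\tH^0_\ast(\Ker\e_4)$ is free over $k[x_0,x_1] = \tH^0_\ast(\sco_L)$ on the six classes given by the six columns of $\nu_4$, and these a fortiori generate it over $S$.

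First I would translate each column of $\nu_4$ into a polynomial in $\sci_L$, reading its nine entries against the monomial basis $x,\, y,\, x^2,\, xy,\, y^2,\, x^3,\, x^2y,\, xy^2,\, y^3$ of $\sci_L/\sci_L^4$. The first three columns then become $F_4$, $G_1$, $G_2$ and the last three become $x^2F_2$, $xyF_2$, $y^2F_2$. Here $F_2 = ay-bx$ represents the generator $\nu_2$ of $\Ker\e_2$ and $F_3 = pF_2 + v_0x^2 + v_1xy + v_2y^2$ represents the first column of $\nu_3$ in \eqref{E:e3nu3}; they are introduced only so as to write the degree-four generators compactly. The identification is a direct coefficient computation: expanding $qF_3 + (fx+gy)F_2$, using $pqF_2 = -pbq\,x + paq\,y$ together with the relations \eqref{E:v012} defining $v_0,v_1,v_2$ and the relation \eqref{E:fgw} defining $f,g,w$ and the chosen $w_0,\ldots,w_3$, reproduces the first column of $\nu_4$; the syzygy coefficients $f_i,g_i$ from \eqref{E:syzabp} and the $v_{ij}$ from \eqref{E:vij} likewise show that $G_1,G_2$ reproduce the second and third columns, while $x^2F_2,xyF_2,y^2F_2$ reproduce the last three by inspection.

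Next I would check that these six polynomials automatically lie in $I(Z)$. Each lies in $\sci_L$ and has degree at most $3$ in $x,y$, so it coincides with its own class modulo $\sci_L^4$, which by the previous step is the corresponding column of $\nu_4$ and hence lies in $\Ker\e_4 = \sci_Z/\sci_L^4$; since $\sci_L^4 \subseteq \sci_Z$, the polynomial itself lies in $\sci_Z$, i.e.\ in $I(Z)$. Thus $F_4,\, G_1,\, G_2,\, x^2F_2,\, xyF_2,\, y^2F_2$ are genuine elements of $I(Z)$ lifting a generating set of $\tH^0_\ast(\Ker\e_4)$, and together with $x^4,\ldots,y^4$ they generate $I(Z)$ by exactness of \eqref{E:il4izhkere4}.

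The substantive content has already been carried out in Lemma~\ref{L:kere4}; what remains is the bookkeeping above. The step I expect to require the most care is the coefficient matching in the second paragraph — verifying that the compact expressions for $F_4$, $G_1$, $G_2$, built out of $F_2$ and $F_3$ and of the auxiliary polynomials, expand to exactly the prescribed columns of $\nu_4$, with every twisted-multiplication contribution such as $(fx+gy)F_2$ and every syzygy coefficient $f_i,g_i,v_{ij}$ accounted for so that no extraneous term survives modulo $\sci_L^4$.
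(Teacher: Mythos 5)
Your proposal is correct and follows exactly the paper's own (very terse) proof: combine the exact sequence \eqref{E:il4izhkere4} with Lemma~\ref{L:kere4} and the decomposition $\sci_L/\sci_L^4 = \sco_L(-1)x \oplus \cdots \oplus \sco_L(-3)y^3$, reading the six columns of $\nu_4$ as the polynomials $F_4,\, G_1,\, G_2,\, x^2F_2,\, xyF_2,\, y^2F_2$ and adjoining the five generators of $I(L)^4$. The coefficient matching you flag as the delicate step does check out ($qF_3 + (fx+gy)F_2$ expands precisely to the first column of $\nu_4$, and likewise for the others), so nothing is missing.
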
 

\noindent 
Notice that ${\fam0 deg}\, F_2 = l+2$, ${\fam0 deg}\, F_3 = l + m + 2$, 
${\fam0 deg}\, F_4 = l + m + n + 2$, ${\fam0 deg}\, G_1 = l_1 + 2$ and 
${\fam0 deg}\, G_2 = l_2 + 2$, with $l + 1 \leq l_1 \leq l_2 \leq l + m + 1$ 
and $l_1 + l_2 = 2l + m + 2$. 

The case where $w$ cannot be written as a combination of $a^3,\ldots ,b^3$ 
will be analysed in Prop.~\ref{P:genizprim} below. 

\begin{proof} 
One uses Lemma~\ref{L:kere4}, the exact sequence \eqref{E:il4izhkere4} and the 
fact that$\, :$ 
\[
\sci_L/\sci_L^4 = \sco_L(-1)x \oplus \sco_L(-1)y \oplus \sco_L(-2)x^2 
\oplus \cdots \oplus \sco_L(-3)y^3\, . 
\qedhere
\]
\end{proof}

\begin{remark}\label{R:vivij} 
We want to emphasize two relations between the polynomials 
$v_0,\, v_1,\, v_2 \in \tH^0(\sco_L(l+m))$ defined by relation 
\eqref{E:v012} and the polynomials $v_{ij} \in \tH^0(\sco_L(l_i-1))$, 
$i = 1,\, 2$, $0\leq j\leq 3$, defined by relation \eqref{E:vij}. 
They are needed if one wants to write down a complete system of relations 
between the generators of $I(Z)$ from Prop.~\ref{P:geniz}.  

Firstly, we notice that, by considering the Eagon-Northcott complex associated 
to the matrix appearing on the left side of the exact sequence 
\eqref{E:syzabp} as in the proof of the Hilbert-Burch theorem, one may assume 
that$\, :$ 
\begin{equation}\label{E:abpasdet} 
a = \begin{vmatrix} g_1 & g_2\\ u_1 & u_2 \end{vmatrix}\, ,\  
b = -\begin{vmatrix} f_1 & f_2\\ u_1 & u_2 \end{vmatrix}\, ,\  
p = \begin{vmatrix} f_1 & f_2\\ g_1 & g_2 \end{vmatrix}\, .
\end{equation}
Now, since $a = -g_2u_1+g_1u_2$ then, multiplying relation \eqref{E:v012} 
by $a$ and the two relations in \eqref{E:vij} by $-g_2$ and $g_1$, 
respectively, one gets$\, :$ 
\[
a(v_0a^2+v_1ab+v_2b^2) - g_2(v_{10}a^3+\ldots +v_{13}b^3) + 
g_1(v_{20}a^3+\ldots +v_{23}b^3) = 0\, .
\]
Using the exact sequence \eqref{E:a3a2bab2b3}, one derives the 
existence of polynomials $\alpha_0,\, \alpha_1,\, \alpha_2 \in 
\tH^0(\sco_L(m-1))$ such that$\, :$ 
\begin{equation}\label{E:alpha} 
\begin{pmatrix} v_0\\ v_1\\ v_2\\ 0 \end{pmatrix} -g_2
\begin{pmatrix} v_{10}\\ v_{11}\\ v_{12}\\ v_{13} \end{pmatrix} +g_1
\begin{pmatrix} v_{20}\\ v_{21}\\ v_{22}\\ v_{23} \end{pmatrix} = 
\begin{pmatrix} 
-b & 0 & 0\\
a & -b & 0\\
0 & a & -b\\
0 & 0 & a
\end{pmatrix} 
\begin{pmatrix} \alpha_0\\ \alpha_1\\ \alpha_2 \end{pmatrix}\, .    
\end{equation} 
Similarly, using the relation $b = f_2u_1 - f_1u_2$, one derives the existence 
of polynomials $\beta_0,\, \beta_1,\, \beta_2 \in \tH^0(\sco_L(m-1))$ such 
that$\, :$ 
\begin{equation}\label{E:beta} 
\begin{pmatrix} 0\\ v_0\\ v_1\\ v_2 \end{pmatrix} +f_2
\begin{pmatrix} v_{10}\\ v_{11}\\ v_{12}\\ v_{13} \end{pmatrix} -f_1
\begin{pmatrix} v_{20}\\ v_{21}\\ v_{22}\\ v_{23} \end{pmatrix} = 
\begin{pmatrix} 
-b & 0 & 0\\
a & -b & 0\\
0 & a & -b\\
0 & 0 & a
\end{pmatrix} 
\begin{pmatrix} \beta_0\\ \beta_1\\ \beta_2 \end{pmatrix}\, .    
\end{equation} 

Now, using the determinantal expression of $p$ from \eqref{E:abpasdet}, one 
gets the relations$\, :$ 
\begin{gather*}
xp - g_2(f_1x+g_1y) + g_1(f_2x+g_2y) = 0\, ,\\
yp + f_2(f_1x+g_1y) - f_1(f_2x+g_2y) = 0\, .
\end{gather*} 
On the other hand, multiplying to the left the matrix relation 
\eqref{E:alpha} by $(x^3,\, x^2y,\, xy^2,\, y^3)$ one obtains$\, :$ 
\begin{gather*} 
x(v_0x^2 + v_1xy + v_2y^2) - g_2(v_{10}x^3 + \dots + v_{13}y^3) + 
g_1(v_{20}x^3 + \dots + v_{23}y^3) =\\
(\alpha_0x^2 + \alpha_1xy + \alpha_2y^2)(-bx+ay)\, .
\end{gather*} 
One deduces, similarly, from the matrix relation \eqref{E:beta}, the following 
polynomial relation$\, :$ 
\begin{gather*} 
y(v_0x^2 + v_1xy + v_2y^2) + f_2(v_{10}x^3 + \dots + v_{13}y^3) - 
f_1(v_{20}x^3 + \dots + v_{23}y^3) =\\
(\beta_0x^2 + \beta_1xy + \beta_2y^2)(-bx+ay)\, .
\end{gather*}
One thus obtains the following relations$\, :$ 
\begin{gather*} 
xF_3 - g_2G_1 + g_1G_2 - \alpha_0x^2F_2 -\alpha_1xyF_2 -\alpha_2y^2F_2 = 0\, ,\\
yF_3 + f_2G_1 - f_1G_2 - \beta_0x^2F_2 -\beta_1xyF_2 - \beta_2y^2F_2 = 0\, .  
\end{gather*}     
\end{remark} 

\noindent 
{\bf A graded free resolution of} $I(Z)$ {\bf under the hypothesis of 
Prop.~\ref{P:geniz}.}\quad  
One has a filtration by homogeneous ideals$\, :$ 
\[
I(Z) \supset J_2 \supset J_3 \supset I(L)^4 \supset (0) 
\]
where $J_2$ is the ideal generated by $G_1,\, G_2,\, x^2F_2,\, xyF_2,\, y^2F_2,
\, x^4, \ldots ,\, y^4$ and $J_3$ is the ideal generated by $x^2F_2,\, xyF_2,\, 
y^2F_2,\, x^4, \ldots ,\, y^4$. Notice that$\, :$ 
\[
(x,\, y)I(Z) \subseteq J_2,\  (x,\, y)J_2 \subseteq J_3,\  
(x,\, y)J_3 \subseteq I(L)^4 \, . 
\] 
If $\nu_4$ is the morphism from Lemma~\ref{L:kere4} then $\tH^0_\ast(\nu_4)$ 
induces isomorphisms$\, :$   
\begin{gather*}
\tH^0_\ast(\sco_L(-l_1-2) \oplus \sco_L(-l_2-2) \oplus 3\sco_L(-l-4)) \Izo 
J_2/I(L)^4\, ,\\
\tH^0_\ast(3\sco_L(-l-4)) \Izo J_3/I(L)^4\, . 
\end{gather*}
One deduces isomorphisms of $S$-modules$\, :$ 
\begin{gather*}
I(Z)/J_2 \simeq S(L)(-l-m-n-2),\  J_2/J_3 \simeq S(L)(-l_1-2) \oplus 
S(L)(-l_2-2),\\  
J_3/I(L)^4 \simeq 3S(L)(-l-4)\, , 
\end{gather*}
where $S(L) := S/I(L)$. 
Now, using the well-known resolutions$\, :$ 
\begin{gather*} 
0 \lra S(-2) \xra{\begin{pmatrix} -y\\ x\end{pmatrix}} 2S(-1) 
\xra{\displaystyle (x\, ,\, y)} S \lra S(L) \lra 0\\
0 \ra 4S(-5) \xra{\begin{pmatrix} 
                  -y & 0 & 0 & 0\\ 
                   x & -y & 0 & 0\\
                   0 & x & -y & 0\\
                   0 & 0 & x & -y\\
                   0 & 0 & 0 & x
                   \end{pmatrix}} 
5S(-4) \xra{\displaystyle (x^4,\, x^3y\, ,\, x^2y^2,\, xy^3,\, y^4)} 
I(L)^4 \ra 0
\end{gather*} 
one deduces that $I(Z)$ has a (not necessarily minimal) graded free resolution 
of the form$\, :$ 
\[
0 \lra \begin{matrix} S(\text{--}l\text{--}m\text{--}n\text{--}4)\\ 
\oplus\\ S(-l_1-4)\\ \oplus\\ 
S(-l_2-4)\\ \oplus\\ 3S(-l-6)\end{matrix} 
\overset{\displaystyle d_2}{\lra} 
\begin{matrix} 2S(\text{--}l\text{--}m\text{--}n\text{--}3)\\ 
\oplus\\ 2S(-l_1-3)\\ \oplus\\ 2S(-l_2-3)\\ 
\oplus\\ 6S(-l-5)\\ \oplus\\ 4S(-5)\end{matrix} 
\overset{\displaystyle d_1}{\lra}  
\begin{matrix} S(\text{--}l\text{--}m\text{--}n\text{--}2)\\ 
\oplus\\ S(-l_1-2)\\ \oplus\\ 
S(-l_2-2)\\ \oplus\\ 3S(-l-4)\\ \oplus\\ 5S(-4)\end{matrix} 
\overset{\displaystyle d_0}{\lra} I(Z) \lra 0  
\] 
with $d_0$ defined by the generators of $I(Z)$ enumerated in the statement 
of Prop.~\ref{P:geniz} and with the linear parts of $d_1$ and $d_2$ deduced 
from the above resolutions of $S/I(L)$ and $I(L)^4$. The rest of the matrices 
defining $d_1$ and $d_2$ can be easily guessed. For example, $d_1$ is defined 
by the matrix of relations between the generators of $I(Z)$.  
The module of these relations is generated by the  
relations of the following form: one multiplies each generator of $I(Z)$ 
by $x$ and by $y$ and one expresses the results as combinations of the next 
generators. One gets the following matrix for $d_1$$\, :$ 
\[
\setcounter{MaxMatrixCols}{16} 
\begin{pmatrix} 
x & y & 0 & 0 & 0 & 0 & 0 & 0 & 0 & 0 & 0 & 0 & 0 & 0 & 0 & 0\\ 
\text{--}qg_2 & qf_2 & x & y & 0 & 0 & 0 & 0 & 0 & 0 & 0 & 0 & 0 & 0 & 0 & 0\\ 
qg_1 & \text{--}qf_1 & 0 & 0 & x & y & 0 & 0 & 0 & 0 & 0 & 0 & 0 & 0 & 0 & 0\\ 
\text{--}f\text{--}q\alpha_0 & \text{--}q\beta_0 & \text{--}f_1 & 0 & 
\text{--}f_2 & 0 & x & y & 0 & 0 & 0 & 0 & 0 & 0 & 0 & 0\\ 
\text{--}g\text{--}q\alpha_1 & \text{--}f\text{--}q\beta_1 & \text{--}g_1 & 
\text{--}f_1 & \text{--}g_2 & \text{--}f_2 & 0 & 0 & x & y &  0 & 0 & 
0 & 0 & 0 & 0\\ 
\text{--}q\alpha_2 & \text{--}g\text{--}q\beta_2 & 0 & \text{--}g_1 & 0 & 
\text{--}g_2 & 0 & 0 & 0 & 0 & x & y & 0 & 0 & 0 & 0\\ 
\text{--}w_0 & 0 & \text{--}v_{10} & 0 & v_{20} & 0 & b & 0 & 0 & 0 & 0 & 0 & 
\text{--}y & 0 & 0 & 0\\ 
\text{--}w_1 & \text{--}w_0 & \text{--}v_{11} & \text{--}v_{10} & 
\text{--}v_{21} & \text{--}v_{20} & \text{--}a & b & b & 0 & 0 & 0 & x & 
\text{--}y & 0 & 0\\
\text{--}w_2 & \text{--}w_1 & \text{--}v_{12} & \text{--}v_{11} & 
\text{--}v_{22} & \text{--}v_{21} & 0 & \text{--}a & \text{--}a & b & b & 0 & 
0 & x & \text{--}y & 0\\
\text{--}w_3 & \text{--}w_2 & \text{--}v_{13} & \text{--}v_{12} & 
\text{--}v_{23} & \text{--}v_{22} & 0 & 0 & 0 & \text{--}a & \text{--}a & 
b & 0 & 0 & x & \text{--}y\\
0 & \text{--}w_3 & 0 & \text{--}v_{13} & 0 & \text{--}v_{23} & 0 & 0 & 0 & 0 & 
0 & \text{--}a & 0 & 0 & 0 & x
\end{pmatrix}
\] 
Notice that the first two columns of this matrix correspond to the relations 
between the generators of $I(Z)$ from Prop.~\ref{P:geniz} deduced after 
multiplying by $q$ the relations at the end of Remark~\ref{R:vivij}. 

The matrix of $d_2$ is the matrix of relations between the columns of the 
matrix of $d_1$. It looks like this$\, :$ 
\[
\begin{pmatrix}
-y & 0 & 0 & 0 & 0 & 0\\
x & 0 & 0 & 0 & 0 & 0\\
-qf_2 & -y & 0 & 0 & 0 & 0\\ 
-qg_2 & x & 0 & 0 & 0 & 0\\
qf_1 & 0 & -y & 0 & 0 & 0 \\
qg_1 & 0 & x & 0 & 0 & 0\\
q\beta_0 & 0 & 0 & -y & 0 & 0\\
-f-q\alpha_0 & -f_1 & -f_2 & x & 0 & 0\\
f+q\beta_1 & f_1 & f_2 & 0 & -y & 0\\
-g-q\alpha_1 & -g_1 & -g_2 & 0 & x & 0\\
g+q\beta_2 & g_1 & g_2 & 0 & 0 & -y\\
-q\alpha_2 & 0 & 0 & 0 & 0 & x\\
w_0 & v_{10} & v_{20} & -b & 0 & 0\\
w_1 & v_{11} & v_{21} & a & -b & 0\\
w_2 & v_{12} & v_{22} & 0 & a & -b\\
w_3 & v_{13} & v_{23} & 0 & 0 & a
\end{pmatrix}
\]  
Again, one needs to motivate only the fact that the first column of this 
matrix is a relation between the columns of the matrix of $d_1$. For example, 
if one multiplies the 9\emph{th} row of the matrix of $d_1$ against the 
first column of the matrix of $d_2$ one gets$\, :$ 
\[
qf_2v_{12} + qg_2v_{11} - qf_1v_{22} - qg_1v_{21} + qa\alpha_0 - qa\beta_1 
- qb\alpha_1 + qb\beta_2\, . 
\tag{$\ast$}
\] 
But identifying the coefficient of $x^2y$ in the polynomial relation deduced 
in Remark~\ref{R:vivij} from the matrix relation \eqref{E:alpha} and the 
coefficient of $xy^2$ in the polynomial relation deduced in Remark~\ref{R:vivij}
from \eqref{E:beta} one obtains$\, :$ 
\begin{gather*} 
v_1 - g_2v_{11} + g_1v_{21} = -b\alpha_1 + a\alpha_0\\ 
v_1 + f_2v_{12} - f_1v_{22} = a\beta_0 - b\beta_2 
\end{gather*} 
from which one deduces that $(\ast) = 0$. 

\begin{prop}\label{P:reshoz} 
No matter whether the polynomial $w$ $($defined in \eqref{E:fgw}$)$ is a 
combination of $a^3, \dots , b^3$ as in Lemma~\ref{L:kere4} or not, the 
graded $S$-module ${\fam0 H}^0_\ast(\sco_Z)$ has a graded free resolution of the 
form$\, :$ 
\[
0 \lra 
\begin{matrix} 
S(-2)\\ \oplus\\ S(l-2)\\ \oplus\\ S(2l+m-2)\\ \oplus\\ S(3l+m+n\text{--}2)  
\end{matrix}
\overset{\displaystyle \delta_2}{\lra} 
\begin{matrix} 
2S(-1)\\ \oplus\\ 2S(l-1)\\ \oplus\\ 2S(2l+m-1)\\ \oplus\\ 
2S(3l+m+n\text{--}1) 
\end{matrix}
\overset{\displaystyle \delta_1}{\lra} 
\begin{matrix} 
S\\ \oplus\\ S(l)\\ \oplus\\ S(2l+m)\\ \oplus\\ S(3l+m+n)
\end{matrix}
\overset{\displaystyle \delta_0}{\lra} 
{\fam0 H}^0_\ast(\sco_Z) \lra 0 
\] 
with $\delta_0$ defined by the generators $1,\, e_1,\, e_2,\, e_3$ of 
${\fam0 H}^0_\ast(\sco_Z)$ considered at the beginning of this subsection and 
with $\delta_1$, $\delta_2$ defined by the matrices$\, :$ 
\[
\begin{pmatrix} 
x & y & 0 & 0 & 0 & 0 & 0 & 0\\
\text{--}a & \text{--}b & x & y & 0 & 0 & 0 & 0\\
\text{--}a^\prim & \text{--}b^\prim & \text{--}pa & \text{--}pb & x & y & 0 
& 0\\
\text{--}a^\secund & \text{--}b^\secund & \text{--}p^\prim a\text{--}qa^\prim & 
\text{--}p^\prim b\text{--}qb^\prim & \text{--}qa & \text{--}qb & x & y 
\end{pmatrix}
\, , 
\begin{pmatrix} 
\text{--}y & 0 & 0 & 0\\
x & 0 & 0 & 0\\
b & \text{--}y & 0 & 0\\
\text{--}a & x & 0 & 0\\
b^\prim & pb & \text{--}y & 0\\
\text{--}a^\prim & \text{--}pa & x & 0\\
b^\secund & p^\prim b + qb^\prim & qb & \text{--}y\\
\text{--}a^\secund & \text{--}p^\prim a\text{--}qa^\prim & \text{--}qa & x
\end{pmatrix}
\] 
\end{prop}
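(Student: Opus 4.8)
The plan is to recognize the asserted complex as the Koszul complex of the pair of commuting multiplication operators by $x$ and $y$ on the free $k[x_0,x_1]$-module $\tH^0_\ast(\sco_Z)$, and to prove its exactness by an elementary ``regular sequence of endomorphisms'' argument that uses only the $\sco_L$-algebra structure \eqref{E:multiplicativestructure}--\eqref{E:multiplicationxy}, and none of the $w,\, v_{ij},\, f_i,\, g_i$ data entering the resolution of $I(Z)$. This is exactly why the statement holds no matter whether $w$ is a combination of $a^3,\ldots ,b^3$.

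First I would record that, by the beginning of this subsection, $\tH^0_\ast(\sco_Z)$ is a free $k[x_0,x_1]$-module on $1,\, e_1,\, e_2,\, e_3$, and that multiplication by $x$ and by $y$ are $k[x_0,x_1]$-linear endomorphisms $X,\, Y$ whose matrices in this basis are read off from \eqref{E:multiplicationxy} using the products \eqref{E:multiplicativestructure}; for instance $x\cdot e_1 = pa\, e_2 + (p^\prim a + qa^\prim)e_3$ and $x\cdot e_2 = qa\, e_3$. Since $x,\, y$ lie in $I(L)$ they raise the Cohen--Macaulay filtration index ($x\cdot \sci_i \subseteq \sci_1\sci_i \subseteq \sci_{i+1}$), so $X$ and $Y$ are \emph{strictly lower triangular} for the ordering $1,e_1,e_2,e_3$, whence $\det(xI - X) = x^4$ and $\det(yI - Y) = y^4$. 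A direct comparison then identifies $\delta_1$ with the $4\times 8$ matrix $(xI - X \mid yI - Y)$ (columns interleaved) and $\delta_2$ with $\binom{-(yI - Y)}{\, xI - X}$ (rows interleaved), up to the displayed twists. In particular the columns of $\delta_1$ are precisely the relations $x\, g_i = \sum_j X_{ji}g_j$ and $y\, g_i = \sum_j Y_{ji}g_j$, so $\delta_0\delta_1 = 0$, while $\delta_1\delta_2 = 0$ follows from $XY = YX$.

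The heart of the proof is exactness. Writing $N := S^4$, $\alpha := xI - X$, $\beta := yI - Y$, the complex is the Koszul complex $0 \to N \xrightarrow{\binom{-\beta}{\,\alpha}} N^2 \xrightarrow{(\alpha,\,\beta)} N \to 0$ of two commuting endomorphisms. I would prove the general lemma that such a complex is exact whenever $\beta$ is injective and the induced map $\bar\alpha$ on $N/\beta N$ is injective: if $\alpha u + \beta v = 0$ then $\bar\alpha\bar u = 0$, so $u = \beta w$, and $\beta(\alpha w + v) = 0$ forces $v = -\alpha w$ by injectivity of $\beta$, using $\alpha\beta = \beta\alpha$. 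Here $\beta$ is injective since $\det\beta = y^4 \neq 0$ and $N$ is free over the domain $S$ (apply $\mathrm{adj}(\beta)\beta = y^4 I$). For the second condition, $C := \Cok(yI - Y)$ has the length-one free resolution $0 \to S^4 \xrightarrow{yI-Y} S^4 \to C \to 0$, hence is perfect of codimension $1$, so CM and unmixed with $\mathrm{Ass}(C) = \{(y)\}$; as $x^4 \notin (y)$, $x^4$ is a nonzerodivisor on $C$, and $\mathrm{adj}(\alpha)\alpha = x^4 I$ makes $\bar\alpha$ injective on $C$. The lemma gives that $0 \to S^4 \xrightarrow{\delta_2} S^8 \xrightarrow{\delta_1} S^4 \to \Cok\delta_1 \to 0$ is exact.

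It then remains to identify $\Cok\delta_1$ with $\tH^0_\ast(\sco_Z)$ compatibly with $\delta_0$. Using the relations in $\delta_1$ to trade each factor of $x$ or $y$ against $X$ and $Y$, every element of $S^4$ is congruent modulo $\mathrm{im}\,\delta_1$ to a $k[x_0,x_1]$-combination of the four classes $\bar\epsilon_i$; thus $\Cok\delta_1$ is generated over $k[x_0,x_1]$ by $\bar\epsilon_0,\ldots,\bar\epsilon_3$, which $\delta_0$ sends to the free basis $1,e_1,e_2,e_3$. A $k[x_0,x_1]$-linear surjection from a module on four generators onto a free module of rank $4$ is an isomorphism, so $\delta_0$ induces $\Cok\delta_1 \Izo \tH^0_\ast(\sco_Z)$, finishing the resolution. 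I expect the one genuinely delicate point to be the injectivity of $\bar\alpha$ on $N/\beta N$, for which the Cohen--Macaulayness and unmixedness of $\Cok(yI-Y)$ is the clean route; the remaining work — matching the interleaved matrices $(xI-X\mid yI-Y)$ and $\binom{-(yI-Y)}{\,xI-X}$ to the two displayed matrices and tracking the twists — is routine bookkeeping.
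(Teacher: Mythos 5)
Your proof is correct, but it takes a genuinely different route from the paper's. The paper's own argument is a filtration/horseshoe argument: $\tH^0_\ast(\sco_Z)$ carries a filtration by graded $S$-submodules (coming from the Cohen--Macaulay filtration) whose successive quotients are $S(L)$, $S(L)(l)$, $S(L)(2l+m)$, $S(L)(3l+m+n)$; stacking the Koszul resolutions $0 \to S(-2) \to 2S(-1) \to S \to S(L) \to 0$ of these quotients yields a resolution of the stated numerical shape, and the differentials are then identified as the matrices of relations obtained by multiplying each generator by $x$ and $y$. You instead recognize the displayed complex globally as the Koszul complex of the commuting pair $\alpha = xI - X$, $\beta = yI - Y$ on $S^4$ and prove its exactness directly, via $\det\beta = y^4$, the fact that $\Cok(yI-Y)$ is perfect of codimension $1$ with $\mathrm{Ass} = \{(y)\}$, and the adjugate identity $\mathrm{adj}(\alpha)\alpha = x^4 I$ (noting, as you should make explicit, that $\mathrm{adj}(\alpha)$ is a polynomial in $\alpha$ and hence commutes with $\beta$, so it descends to $\Cok\beta$); the cokernel is then pinned down by the rank count over $k[x_0,x_1]$. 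The two arguments are close relatives --- your strict lower triangularity of $X$ and $Y$ is exactly the statement that multiplication by $x$ and $y$ raises the filtration index, which is what drives the paper's construction, and your unmixedness argument for $\Cok(yI-Y)$ could equally be replaced by the observation that this cokernel is filtered with quotients $S/(y)$ on which $x$ is regular. What your version buys is a self-contained verification that the \emph{specific} matrices $\delta_1,\delta_2$ form an exact complex (the paper only says the non-linear parts of the differentials ``can be easily guessed''), and it makes transparent why the result is independent of whether $w$ is a combination of $a^3,\dots,b^3$: only the structure constants $a,b,a^\prim,b^\prim,a^\secund,b^\secund,p,p^\prim,q$ of the $\sco_L$-algebra $\sco_Z$ enter. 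The paper's version is shorter given that the filtration is already in place. Both are sound.
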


\begin{proof} 
$\tH^0_\ast(\sco_Z)$ admits a filtration by graded $S$-submodules with the 
successive quotiens isomorphic to $S(L):= S/I(L)$, $S(L)(l)$, $S(L)(2l+m)$ and 
$S(L)(3l+m+n)$, respectively. Using the well-known graded free resolution of 
$S(L)$ over $S$ one deduces that $\tH^0_\ast(\sco_Z)$ has a graded free 
resolution of the numerical shape from the statement. Moreover, the linear 
parts of the differentials $\delta_1$ and $\delta_2$ can be deduced from the 
resolution of $S(L)$. The rest of the matrices defining $\delta_1$ and 
$\delta_2$ can be easily guessed. For example, $\delta_1$ is defined by the 
matrix of relations between the generators $1,\, e_1,\, e_2,\, e_3$ of 
$\tH^0_\ast(\sco_Z)$. The module of these relations is generated by the 
relations of the following form$\, :$ one multiplies each generator of 
$\tH^0_\ast(\sco_Z)$ by $x$ and by $y$ and one expresses the results as 
combinations of the next generators.       
\end{proof}

\begin{remark}\label{R:cancellation}
We recall, for completeness, an algorithmic procedure for getting a minimal 
free resolution of a graded $S$-module $M$ from a non-minimal one. Consider 
a complex$\, :$ 
$F_\bullet \, : \, \cdots F_{s+1} \xra{d_{s+1}} F_s 
\xra{d_s} F_{s-1} \xra{d_{s-1}} F_{s-2} \cdots $
of graded free $S$-modules of finite rank. Assume that the $(i,j)$ entry of 
the matrix of $d_s$ is a non-zero constant $c\in k$. Performing elementary 
operations on the columns of the matrix of $d_s$, one can turn into 0 all the 
other entries of its $i$th row. One gets, in this way, a new morphism 
$d_s^{\, \prim} : F_s \ra F_{s-1}$. Then$\, :$ 
\begin{enumerate} 
\item[(i)] deleting the $j$th rank 1 direct summand of $F_s$ and the $i$th 
rank 1 direct summand of $F_{s-1}$, 
\item[(ii)] deleting the $i$th row and the $j$th column of the matrix of 
$d_s^{\, \prim}$, 
\item[(iii)] deleting the $i$th column of the matrix of $d_{s-1}$, 
\item[(iv)] deleting the $j$th row of the matrix of $d_{s+1}$, 
\end{enumerate}
one obtains a complex $F_\bullet^{\,\prim}$ such that $F_\bullet$ is isomorphic to 
the direct sum of $F_\bullet^{\, \prim}$ and of a complex of the form$\, :$  
$\cdots 0 \lra S(-a) \overset{c}{\lra} S(-a) \lra 0 \cdots \, .$
\end{remark}

\subsection{Primitive structures of degree 4}\label{SS:p4} 
We assume, in this subsection, using the notation introduced in the previous 
one, that $m = n = 0$ hence that $p,\, q \in k$. Since $p \neq 0$ and 
$q \neq 0$ one can assume that $e_2 = e_1^2$, $e_3 = e_1^3$ (and $e_1^4 = 0$), 
i.e., that $p = 1$, $p^\prim = 0$ and $q = 1$. Since $\Delta_1 \in 
\tH^0(\sco_L(3l+2))$, the polynomials $v_0,\, v_1,\, v_2 \in \tH^0(\sco_L(l))$ 
defined by relation \eqref{E:v012}$\, :$ 
\[
-\Delta_1 = v_0a^2 + v_1ab + v_2b^2 
\] 
are \emph{uniquely determined} (taking into account the exact sequence 
\eqref{E:a2abb2}) and the relations \eqref{E:vij} become$\, :$ 
\begin{gather*} 
-a\Delta_1 = v_0a^3 + v_1 a^2b + v_2 ab^2 \, ,\\
-b\Delta_1 = v_0a^2b + v_1ab^2 + v_2b^3\, .
\end{gather*}
In relation \eqref{E:fgw} one must have $f = g = 0$, and this relation 
becomes$\, :$ 
\begin{equation}\label{E:fgwprimitive} 
\Delta_2 - 2v_0aa^\prim - v_1(ab^\prim + a^\prim b) - 2v_2bb^\prim = w \, ,
\end{equation}
with $w \in \tH^0(\sco_L(4l+2))$. The exact sequence \eqref{E:syzabp} 
becomes$\, :$ 
\[
0 \lra 2\sco_L(-l-1) \xra{\begin{pmatrix}1 & 0\\ 0 & 1\\ 
\text{--}a & \text{--}b\end{pmatrix}} 
2\sco_L(-l-1) \oplus \sco_L  
\xra{\displaystyle (a\, ,\, b\, ,\, 1)} \sco_L \lra 0 \, .
\] 
In particular, $l_1 = l_2 = l + 1$. The exact sequence 
\eqref{E:e4secundnu4secund} becomes$\, :$ 
\begin{equation}\label{E:e4secundnu4secundprimitive}
0 \lra 
\begin{matrix}
\sco_L(-l-2)\\ \oplus\\ 2\sco_L(-l-3) \end{matrix}
\overset{\displaystyle \nu_4^\secund}{\lra} 
\begin{matrix}
\sco_L(-l-2)\\ \oplus\\ 2\sco_L(-l-3)\\ \oplus\\ \sco_L(3l) 
\end{matrix}
\overset{\displaystyle \e_4^\secund}{\lra} 
\sco_L(3l) \lra 0 
\end{equation} 
with $\e_4^\secund$ and $\nu_4^\secund$ defined by the matrices$\, :$ 
\[
(-w\, ,\, a\Delta_1\, ,\, b\Delta_1\, ,\, 1)\  ,\  
\begin{pmatrix} 
1 & 0 & 0\\ 
0 & 1 & 0\\ 
0 & 0 & 1\\ 
w & -a\Delta_1 & -b\Delta_1 
\end{pmatrix}
\, , 
\]  
respectively. Prop.~\ref{P:geniz} becomes, in this special case$\, :$ 

\begin{cor}\label{C:geniz} 
Assume that $m = n = 0$ and that $w$ $($defined in 
\eqref{E:fgwprimitive}$)$ can be written as a combination $w = w_0a^3 + 
w_1a^2b + w_2ab^2 + w_3b^3$ with $w_0,\ldots ,w_3 \in {\fam0 H}^0(\sco_L(l-1))$ 
$($uniquely determined, taking into account the exact sequence 
\eqref{E:a3a2bab2b3}$)$. Recalling the notation$\, :$ 
\[
F_2 = \begin{vmatrix} a & b\\ x & y \end{vmatrix}\, ,\  
F_3 = F_2 + v_0x^2 + v_1xy + v_2y^2\, ,
\] 
the homogeneous ideal $I(Z)\subset S$ is generated by$\, :$ 
\[
F_4 = F_3 + w_0x^3 + w_1x^2y + w_2xy^2 + w_3y^3\, ,\  x^4\, ,\  x^3y\, ,\  
x^2y^2\, ,\  xy^3\, ,\  y^4  
\]
hence $Z$ is the divisor $4L$ on the surface $F_4 = 0$, which is nonsingular 
along $L$. 
\end{cor}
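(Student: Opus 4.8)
The plan is to deduce the statement from Proposition~\ref{P:geniz} by feeding the primitive normalizations into its list of generators and showing that five of them become redundant. Throughout I use the data already specialized in this subsection: $p = q = 1$, $p^\prim = 0$, $f = g = 0$, $l_1 = l_2 = l+1$, together with the explicit syzygy \eqref{E:syzabp}, whose two columns read $(f_1, g_1, u_1) = (1, 0, -a)$ and $(f_2, g_2, u_2) = (0, 1, -b)$. With these substitutions the generator $F_4$ of Proposition~\ref{P:geniz} collapses to $F_4 = F_3 + w_0x^3 + w_1x^2y + w_2xy^2 + w_3y^3$, where $F_3 = F_2 + v_0x^2 + v_1xy + v_2y^2$, exactly as in the statement.

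The one step that is not pure bookkeeping is the identification of the polynomials $v_{ij}$ of \eqref{E:vij}. Putting $u_1 = -a$ and $u_2 = -b$ into \eqref{E:vij} gives $-a\Delta_1 = v_{10}a^3 + v_{11}a^2b + v_{12}ab^2 + v_{13}b^3$ and $-b\Delta_1 = v_{20}a^3 + v_{21}a^2b + v_{22}ab^2 + v_{23}b^3$. I would compare these with the two relations $-a\Delta_1 = v_0a^3 + v_1a^2b + v_2ab^2$ and $-b\Delta_1 = v_0a^2b + v_1ab^2 + v_2b^3$ recorded at the beginning of the subsection and invoke the uniqueness of the coefficients guaranteed by the exactness of \eqref{E:a3a2bab2b3} to conclude $(v_{10}, v_{11}, v_{12}, v_{13}) = (v_0, v_1, v_2, 0)$ and $(v_{20}, v_{21}, v_{22}, v_{23}) = (0, v_0, v_1, v_2)$. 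Substituting these into the formulas for $G_1$ and $G_2$ then yields $G_1 = xF_2 + v_0x^3 + v_1x^2y + v_2xy^2 = xF_3$ and, likewise, $G_2 = yF_3$.

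It then remains to check that $G_1$, $G_2$, $x^2F_2$, $xyF_2$, $y^2F_2$ all lie in $(F_4) + I(L)^4$, so that by Proposition~\ref{P:geniz} the ideal $I(Z)$ is generated by $F_4$ together with the quartic monomials. Since $F_3 = F_4 - (w_0x^3 + w_1x^2y + w_2xy^2 + w_3y^3)$, multiplying by $x$ carries the parenthesized cubic into $I(L)^4 = (x,y)^4$, whence $G_1 = xF_3 \in (F_4) + I(L)^4$; similarly $G_2 = yF_3 \in (F_4) + I(L)^4$. For the remaining three, $xF_2 = G_1 - (v_0x^3 + v_1x^2y + v_2xy^2)$ gives $x^2F_2 = xG_1 - (v_0x^4 + v_1x^3y + v_2x^2y^2)$, and analogously $xyF_2 = yG_1 - (v_0x^3y + v_1x^2y^2 + v_2xy^3)$ and $y^2F_2 = yG_2 - (v_0x^2y^2 + v_1xy^3 + v_2y^4)$, each correction term lying in $I(L)^4$; thus all three belong to $(F_4) + I(L)^4$. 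Hence $I(Z) = (F_4, x^4, x^3y, x^2y^2, xy^3, y^4)$.

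Finally, I would read off the geometry. The part of $F_4$ that is linear in $x, y$ is $F_2 = ay - bx$, and since $a, b$ have no common zero on $L$ by Lemma~\ref{L:epicondition}(i), this linear form is nonzero at every point of $L$; therefore the surface $\Sigma : F_4 = 0$ is nonsingular along $L$. As $I(Z) = (F_4) + (x,y)^4$ is precisely the preimage in $S$ of $((x,y)\sco_\Sigma)^4 = \sci_{L,\Sigma}^4$ under $S \ra \sco_\Sigma = S/(F_4)$, the scheme $Z$ is the divisor $4L$ on $\Sigma$. The only genuinely delicate point in the whole argument is the uniqueness step identifying the $v_{ij}$, which is what forces $G_1 = xF_3$ and $G_2 = yF_3$; once these are in hand, the collapse of the generating set and the geometric reading are entirely routine.
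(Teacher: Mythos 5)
Your proof is correct and follows essentially the same route as the paper: specialize Proposition~\ref{P:geniz} to the case $m=n=0$, use the uniqueness of the $v_{ij}$'s (via the exactness of \eqref{E:a3a2bab2b3}) to identify $G_1 = xF_3$ and $G_2 = yF_3$, and then observe that $G_1$, $G_2$, $x^2F_2$, $xyF_2$, $y^2F_2$ all lie in $(F_4) + I(L)^4$. The paper states this more tersely, but the content — including the key identification of the $v_{ij}$ with $(v_0,v_1,v_2,0)$ and $(0,v_0,v_1,v_2)$, which the paper records in the preamble of the subsection — is the same.
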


\begin{proof} 
It suffices to notice that the polynomials $G_1$ and $G_2$ occuring in the 
statement of Prop.~\ref{P:geniz} are equal, in the particular case under 
consideration, to $xF_3$ and $yF_3$, respectively, and that these polynomials 
toghether with the polynomials $x^2F_2$, $xyF_2$, $y^2F_2$ belong to the ideal 
generated by the polynomials from the statement of the corollary. 
\end{proof}

\begin{remark}\label{R:m=0n=0l=-1} 
It follows, in particular, from Corollary~\ref{C:geniz} and 
Remark~\ref{R:wcombination}, that if $m = n = 0$ and $l = -1$ then $Z$ is the 
divisor $4L$ on the plane $H \supset L$ of equation $-bx + ay = 0$. 
\end{remark}

\noindent 
{\bf A graded free resolution of} $I(Z)$ {\bf under the hypothesis of 
Cor.~\ref{C:geniz}.}\quad   
Using Remark~\ref{R:cancellation} to cancel redundant direct 
summands of the terms of the resolution of $I(Z)$ described after 
Prop.~\ref{P:geniz} and taking into account that the polynomials 
$\alpha_i$, $\beta_i$, $i = 0,\, 1,\, 2$, introduced in Remark~\ref{R:vivij} 
are all zero, one gets that the ideal $I(Z)$ has, under the hypothesis of 
Cor.~\ref{C:geniz}, a free resolution of the form$\, :$ 
\[
0 \lra 3S(-l-6) \overset{\displaystyle d_2^{\, \prim}}{\lra}  
\begin{matrix} 4S(-l-5)\\ \oplus\\ 4S(-5) \end{matrix} 
\overset{\displaystyle d_1^{\, \prim}}{\lra}  
\begin{matrix} S(-l-2)\\ \oplus\\ 5S(-4) \end{matrix} 
\xra{\displaystyle d_0^{\, \prim}} I(Z) \lra 0
\] 
with $d_1^{\, \prim}$ and $d_2^{\, \prim}$ defined by the matrices$\, :$ 
\begin{gather*}
\begin{pmatrix}
x^3 & x^2y & xy^2 & y^3 & 0 & 0 & 0 & 0\\
b \text{--} v_0x \text{--} w_0x^2 & 0 & 0 & 0 & \text{--} y & 0 & 0 & 0\\
\text{--} a \text{--} v_1x \text{--} w_1x^2 & 
b \text{--} v_0x \text{--} w_0x^2 & 
\text{--} w_0xy & \text{--} w_0y^2 & x & \text{--} y & 0 & 0\\
\text{--} v_2x \text{--} w_2x^2 & \text{--} a \text{--} v_1x \text{--} w_1x^2 & 
b \text{--} v_0x \text{--} w_1xy & \text{--} v_0y \text{--} w_1y^2 & 
0 & x & \text{--} y & 0\\
\text{--} w_3x^2 & \text{--} v_2x \text{--} w_2x^2 & 
\text{--} a \text{--} v_1x \text{--} w_2xy & 
b \text{--} v_1y \text{--} w_2y^2 & 0 & 0 & x & \text{--} y\\
0 & \text{--} w_3x^2 & \text{--} v_2x \text{--} w_3xy & 
\text{--} a \text{--} v_2y \text{--} w_3y^2 & 0 & 0 & 0 & x 
\end{pmatrix}\, ,\\
\begin{pmatrix}
- y & 0 & 0\\
x & - y & 0\\
0 & x & - y\\
0 & 0 & x\\
- b + v_0x + w_0x^2 & 0 & 0\\
a + v_1x + w_1x^2 & - b + v_0x & 0\\
v_2x + w_2x^2 & a + v_1x & - b\\
w_3x^2 & v_2x & a
\end{pmatrix}\, .
\end{gather*}

\vskip2mm 

\noindent 
{\bf The case where $w$ cannot be written as a combination of $a^3, \dots , 
b^3$.}\quad  
Assume that $m = n = 0$, $l \geq 0$ and that the polynomial 
$w \in \tH^0(\sco_L(4l+2))$, defined in \eqref{E:fgwprimitive}, cannot be 
written as a combination of $a^3, \ldots , b^3$. Since $x_iw \in 
\tH^0(\sco_L(4l+3))$, $i = 0,\, 1$, it follows, using the exact sequence 
\eqref{E:a3a2bab2b3}, that there exist uniquely determined polynomials 
$w_{ij} \in \tH^0(\sco_L(l))$, $i = 0,\, 1$, $j = 0, \ldots ,3$, such that$\, :$ 
\begin{equation}\label{E:wij} 
x_iw = w_{i0}a^3 + w_{i1}a^2b + w_{i2}ab^2 + w_{i3}b^3\, ,\  i = 0,\, 1\, . 
\end{equation}

\begin{prop}\label{P:genizprim} 
Assume that $m = n = 0$, $l \geq 0$ and that the polynomial $w$ $($defined in  
\eqref{E:fgwprimitive}$)$ cannot be written as a combination of $a^3, \ldots , 
b^3$ as in Cor.~\ref{C:geniz}. Then the homogeneous ideal $I(Z) \subset S$ 
is generated by the polynomials$\, :$ 
\begin{gather*} 
F_{40} = x_0F_3 + w_{00}x^3 + w_{01}x^2y + w_{02}xy^2 + w_{03}y^3\, ,\\
F_{41} = x_1F_3 + w_{10}x^3 + w_{11}x^2y + w_{12}xy^2 + w_{13}y^3\, ,\\ 
G_1 = xF_3\, ,\  G_2 = yF_3\, ,\  x^4\, ,\  x^3y\, ,\  x^2y^2\, ,\  xy^3\, ,
\  y^4 
\end{gather*} 
with the $w_{ij}$'s as in \eqref{E:wij}.  
\end{prop}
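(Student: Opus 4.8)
The plan is to determine the generators of $I(Z)$ by exactly the method used for Prop.~\ref{P:geniz} and Cor.~\ref{C:geniz}, namely through the exact sequence \eqref{E:il4izhkere4}
\[
0 \lra I(L)^4 \lra I(Z) \lra \tH^0_\ast(\Ker \e_4) \lra 0\, ,
\]
so that $I(Z)$ is generated by the monomials $x^4,\ldots ,y^4$ spanning $I(L)^4$ together with lifts of a system of generators of $\tH^0_\ast(\Ker \e_4)$. The entire computation of $\Ker \e_4$ up to the description \eqref{E:e4secundnu4secundprimitive} of $\Ker \e_4^\secund$ is insensitive to whether $w$ is a combination of $a^3,\ldots ,b^3$; the single point of divergence is the last application of Remark~\ref{R:kerphi}, where the component $\psi_2^\secund = (w\, ,\, -a\Delta_1\, ,\, -b\Delta_1)$ of $\nu_4^\secund$ must be lifted through $(a^3,a^2b,ab^2,b^3) : 4\sco_L(-3) \ra \sco_L(3l)$. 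The entries $-a\Delta_1$ and $-b\Delta_1$ always lift (via the $v_{ij}$ of \eqref{E:vij}), but under the standing hypothesis $w$ does not; hence the favourable case (b) of Remark~\ref{R:kerphi} used in Lemma~\ref{L:kere4} is unavailable and I must invoke case (a).

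First I would check that the candidate generators genuinely lie in $I(Z)$. Under the morphism $\e_4 : \sci_L/\sci_L^4 \ra \sci_1$ one computes $F_3 \mapsto -w\, e_3$ (this is the content of \eqref{E:fgwprimitive} together with the primitive identity $\Delta_2 = -(ab^\secund - a^\secund b)$), while $x^{3-j}y^j \mapsto a^{3-j}b^j\, e_3$. Since $e_3$ is annihilated by $x$ and by $y$ (the relations $e_1e_3 = e_2e_3 = e_3^2 = 0$ of \eqref{E:multiplicativestructure}), one gets $G_1 = xF_3 \mapsto 0$ and $G_2 = yF_3 \mapsto 0$, so $G_1, G_2 \in \Ker \e_4$; and by \eqref{E:wij} the image of $F_{4i} = x_iF_3 + \sum_j w_{ij}x^{3-j}y^j$ equals $(-x_iw + \sum_j w_{ij}a^{3-j}b^j)e_3 = 0$, so $F_{40}, F_{41} \in \Ker \e_4$ as well. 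Thus all of them lift to $I(Z)$. The same computation shows $x^2F_2, xyF_2, y^2F_2 \in \Ker \e_4$, but these are redundant: expanding $xF_3$ gives $x^2F_2 = xG_1 - v_0x^4 - v_1x^3y - v_2x^2y^2$, and similarly for $xyF_2$ and $y^2F_2$, so they already lie in the ideal generated by $G_1, G_2$ and $I(L)^4$.

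It then remains to prove that $F_{40}, F_{41}, G_1, G_2$ together with $I(L)^4$ actually generate, i.e. that their images generate $\tH^0_\ast(\Ker \e_4)$. For this I would carry out case (a) of Remark~\ref{R:kerphi} by resolving the offending rank-one summand $\sco_L(-l-2)$ of $\Ker \e_4^\secund$ through its Koszul presentation $0 \ra \sco_L(-l-4) \xra{(-x_1,x_0)^{\text t}} 2\sco_L(-l-3) \xra{(x_0,x_1)} \sco_L(-l-2) \ra 0$: after this precomposition its two generators map to $x_0w$ and $x_1w$, which are liftable by \eqref{E:wij}, and the resulting two new generators in degree $l+3$ are precisely $F_{40}$ and $F_{41}$, subject to the single linear syzygy $x_1F_{40} - x_0F_{41}$, which lies in the submodule generated by $x^2F_2, xyF_2, y^2F_2$ (because $(x_1w_{0j} - x_0w_{1j})_j$ lies in the kernel described by \eqref{E:a3a2bab2b3}) and hence in $(G_1, G_2) + I(L)^4$. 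This yields an explicit (non-minimal) free resolution of $\Ker \e_4$ whose zeroth term reads off exactly the asserted generators. The main obstacle is this completeness check: one must confirm that replacing the degree-$(l+2)$ generator $F_4$ of the favourable case by the pair $F_{40}, F_{41}$ in degree $l+3$ exhausts $\Ker \e_4$, i.e. that no further generator in degree $\geq l+4$ is forced beyond those already supplied by $G_1, G_2$ and the $3\sco_L(-l-4)$ of \eqref{E:a3a2bab2b3}. I would settle this by comparing, in each degree $t$, the dimension $\dim_k \tH^0(\Ker \e_4(t))$ — read off from the Grothendieck splitting type of the rank-$6$ bundle $\Ker \e_4$ on $L \simeq \pj$, whose first Chern class $-6l-20$ is fixed — with the dimension of the $S$-submodule generated by $F_{40}, F_{41}, G_1, G_2$; the only delicate input is the boundary between degree $4l+2$ and $4l+3$ that controls whether $w$ itself, or merely $x_0w$ and $x_1w$, belongs to the span of $a^3,\ldots ,b^3$, which is exactly the dichotomy separating Cor.~\ref{C:geniz} from the present proposition.
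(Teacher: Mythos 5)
Your proposal is correct and follows essentially the same route as the paper: both replace the non-liftable rank-one summand $\sco_L(-l-2)$ of $\Ker \e_4^\secund$ by its Koszul presentation $2\sco_L(-l-3) \ra \sco_L(-l-2)$ so that the images $x_0w,\, x_1w$ lift through $(a^3,\ldots ,b^3)$ via \eqref{E:wij}, apply Remark~\ref{R:kerphi}(a), and then read off the generators $F_{40},\, F_{41},\, G_1,\, G_2$ from the resulting presentation of $\Ker \e_4$ combined with the exact sequence \eqref{E:il4izhkere4}, with the lone low-degree syzygy $x_1F_{40} - x_0F_{41}$ absorbed into $(G_1, G_2) + I(L)^4$ exactly as in the paper's relation \eqref{E:wijgamma}. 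The concluding dimension count is superfluous (the four-term exact sequence already furnishes a free presentation of $\tH^0_\ast(\Ker \e_4)$, since $\tH^1_\ast$ of the rank-one kernel $\sco_L(-l-4)$ vanishes in degrees $\geq l+3$ and the hypothesis on $w$ is precisely what kills $\tH^0(\Ker \e_4(l+2))$), but nothing in it is wrong, and your direct verification that $F_3 \mapsto -w\, e_3$ under $\e_4$ is a correct and welcome cross-check that the paper leaves implicit.
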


\begin{proof}
We describe, firstly, under the hypothesis of the proposition, the kernel of 
the epimorphism $\e_4 : 2\sco_L(-1) \oplus 3\sco_L(-2) \oplus 4\sco_L(-3) 
\ra \sco_L(l) \oplus \sco_L(2l) \oplus \sco_L(3l)$. We intend to apply, for 
this purpose, Remark~\ref{R:kerphi}. The component $\psi_2^\secund : 
\sco_L(-l-2) \oplus 2\sco_L(-l-3) \ra \sco_L(3l)$ of the morphism 
$\nu_4^\secund$ from the exact sequence \eqref{E:e4secundnu4secundprimitive} is 
defined by the matrix $(w\, ,\, -a\Delta_1\, , -b\Delta_1)$. One has a 
commutative diagram with exact rows$\, :$ 
\[
\begin{CD}
0 @>>> \sco_L(-l-4) @>{\displaystyle \sigma}>> \begin{matrix} 2\sco_L(-l-3)\\ 
\oplus\\ 2\sco_L(-l-3) \end{matrix} @>{\displaystyle \rho}>> 
\begin{matrix} \sco_L(-l-2)\\ \oplus\\ 2\sco_L(-l-3) \end{matrix} @>>> 0\\
@. @VV{\displaystyle \psi_2^\prim}V @VV{\displaystyle \psi_2}V 
@VV{\displaystyle \psi_2^\secund}V\\ 
0 @>>> 3\sco_L(-l-4) @>>> 4\sco_L(-3) @>>> \sco_L(3l) @>>> 0 
\end{CD}
\] 
with the exact sequence \eqref{E:a3a2bab2b3} as bottom row, with $\rho$, 
$\sigma$, $\psi_2$ defined, respectively, by the matrices$\, :$ 
\[
\begin{pmatrix} 
x_0 & x_1 & 0 & 0\\ 
0 & 0 & 1 & 0\\ 
0 & 0 & 0 & 1 
\end{pmatrix} \  ,\  
\begin{pmatrix} 
-x_1\\ x_0\\ 0\\ 0 
\end{pmatrix}\  ,\  
\begin{pmatrix}
w_{00} & w_{10} & v_0 & 0\\ 
w_{01} & w_{11} & v_1 & v_0\\ 
w_{02} & w_{12} & v_2 & v_1\\ 
w_{03} & w_{13} & 0 & v_2 
\end{pmatrix}
\]
and with $\psi_2^\prim$ defined by a matrix of the form $(\gamma_0\, ,\, 
\gamma_1\, ,\, \gamma_2)^{\text{t}}$, with $\gamma_0,\, \gamma_1,\, \gamma_2 \in 
k$. The commutativity of the left square of the above diagram is equivalent 
to the matrix relation$\, :$ 
\begin{equation}\label{E:wijgamma012}
-x_1\begin{pmatrix} w_{00}\\ w_{01}\\ w_{02}\\ w_{03} \end{pmatrix} 
+ x_0\begin{pmatrix} w_{10}\\ w_{11}\\ w_{12}\\ w_{13} \end{pmatrix} = 
\begin{pmatrix} 
-b & 0 & 0\\
a & -b & 0\\
0 & a & -b\\
0 & 0 & a
\end{pmatrix} 
\begin{pmatrix} \gamma_0\\ \gamma_1\\ \gamma_2 \end{pmatrix}\, . 
\end{equation} 
It follows that at least one of the constants $\gamma_0,\, \gamma_1,\, 
\gamma_2$ must be non-zero because, otherwise, $x_0$ would divide each of the 
polynomials $w_{00},\ldots ,w_{03}$ and this would contradict our assumption 
that $w$ cannot be written as a combination of $a^3,\ldots ,b^3$. 

Applying, now, Remark~\ref{R:kerphi}(a) one gets an exact sequence$\, :$ 
\begin{equation}\label{E:kere4primitive} 
0 \lra \sco_L(-l-4) \overset{\displaystyle \kappa_4}{\lra} 
\begin{matrix} 2\sco_L(-l-3)\\ \oplus\\ 2\sco_L(-l-3)\\ \oplus\\ 3\sco_L(-l-4) 
\end{matrix} \overset{\displaystyle \nu_4}{\lra} 
\begin{matrix} 2\sco_L(-1)\\ \oplus\\ 3\sco_L(-2)\\ \oplus\\ 4\sco_L(-3) 
\end{matrix} \overset{\displaystyle \e_4}{\lra} 
\begin{matrix} \sco_L(l)\\ \oplus\\  \sco_L(2l)\\ \oplus\\ \sco_L(3l) 
\end{matrix} \lra 0   
\end{equation} 
with $\nu_4$ and $\kappa_4$ defined, repectively, by the matrices$\, :$ 
\[
\begin{pmatrix} 
-bx_0 & -bx_1 & 0 & 0 & 0 & 0 & 0\\
ax_0 & ax_1 & 0 & 0 & 0 & 0 & 0\\
v_0x_0 & v_0x_1 & -b & 0 & 0 & 0 & 0\\
v_1x_0 & v_1x_1 & a & -b & 0 & 0 & 0\\
v_2x_0 & v_2x_1 & 0 & a & 0 & 0 & 0\\
w_{00} & w_{10} & v_0 & 0 & -b & 0 & 0\\
w_{01} & w_{11} & v_1 & v_0 & a & -b & 0\\
w_{02} & w_{12} & v_2 & v_1 & 0 & a & -b\\
w_{03} & w_{13} & 0 & v_2 & 0 & 0 & a
\end{pmatrix}\ ,\  
\begin{pmatrix} -x_1\\ x_0\\ 0\\ 0\\ -\gamma_0\\ -\gamma_1\\ -\gamma_2 
\end{pmatrix}\, .  
\]
It follows that the columns of the matrix defining $\nu_4$ generate the graded 
$\tH^0_\ast(\sco_L)$-module $\tH^0_\ast(\Ker \e_4)$. 

Now, using the exact sequence \eqref{E:il4izhkere4} and the fact that$\, :$ 
\[
\sci_L/\sci_L^4 = \sco_L(-1)x \oplus \sco_L(-1)y \oplus \sco_L(-2)x^2 
\oplus \cdots \oplus \sco_L(-3)y^3 
\] 
one deduces that $I(Z)$ is generated by the polynomials from the statement  
and by $x^2F_2$, $xyF_2$, $y^2F_2$. The latter ones, however, 
belong to the ideal generated by $G_1$, $G_2$, $x^4, \ldots ,\, y^4$. 
\end{proof} 

\noindent 
{\bf A graded free resolution of} $I(Z)$ {\bf under the hypothesis of 
Prop.~\ref{P:genizprim}.}\quad 
The morphism $\nu_4$ from the exact sequence \eqref{E:kere4primitive} maps 
$2\sco_L(-l-3) \oplus 3\sco_L(-l-4)$ into $3\sco_L(-2) \oplus 4\sco_L(-3)$ and 
the induced morphism ${\overline \nu}_4 : 2\sco_L(-l-3) \oplus 3\sco_L(-l-4) 
\ra 3\sco_L(-2) \oplus 4\sco_L(-3)$ is defined by the matrix$\, :$ 
\[
\begin{pmatrix} 
-b & 0 & 0 & 0 & 0\\
a & -b & 0 & 0 & 0\\
0 & a & 0 & 0 & 0\\
v_0 & 0 & -b & 0 & 0\\
v_1 & v_0 & a & -b & 0\\
v_2 & v_1 & 0 & a & -b\\
0 & v_2 & 0 & 0 & a 
\end{pmatrix}\, . 
\]
Applying the Snake Lemma to the diagram$\, :$ 
\[
\begin{CD}
0 @>>> 0 @>>> \begin{matrix} 2\sco_L(-l-3)\\ \oplus\\ 3\sco_L(-l-4) 
\end{matrix} 
@= \begin{matrix} 2\sco_L(-l-3)\\ \oplus\\ 3\sco_L(-l-4) \end{matrix} 
@>>> 0\\ 
@. @VVV @VV{\displaystyle \text{incl}}V @VV{\displaystyle {\overline \nu}_4}V\\ 
0 @>>> \sco_L(-l-4) @>{\displaystyle \kappa_4}>> 
\begin{matrix} 4\sco_L(-l-3)\\ \oplus\\ 3\sco_L(-l-4) \end{matrix} 
@>{\displaystyle \nu_4}>> \Ker \e_4 @>>> 0   
\end{CD}
\]
one gets an exact sequence$\, :$ 
\begin{equation}\label{E:etanu4bar} 
0 \lra 2\sco_L(-l-3) \oplus 3\sco_L(-l-4) 
\overset{\displaystyle {\overline \nu}_4}{\lra} 
\Ker \e_4 \overset{\displaystyle \eta}{\lra} \sco_L(-l-2) \lra 0 
\end{equation} 
where $\eta$ maps the elements$\, :$ 
\[
(-bx_i,\, ax_i,\, v_0x_i,\, v_1x_i,\, v_2x_i,\, w_{i0},\, w_{i1},\, w_{i2},\, 
w_{i3})^{\text{t}} ,\  i = 0,\, 1\, , 
\]
of $\tH^0((\Ker \e_4)(l+3))$ into the elements $x_0,\, x_1$ of 
$\tH^0(\sco_L(1))$. 

It is easy to check that the morphism ${\overline \nu}_4$ considered above can 
be identified with the morphism$\, :$ 
\[
F_3 \cdot - : (\sci_L/\sci_L^3)(-l-2) \lra \sci_L^2/\sci_L^4 \, . 
\]
Applying, now, $\tH^0_\ast(-)$ to the exact sequence \eqref{E:etanu4bar} 
one gets an exact sequence of graded $S$-modules$\, :$ 
\begin{equation}\label{E:h0kere4} 
0 \lra (I(L)/I(L)^3)(-l-2) \xra{\displaystyle F_3 \cdot -} 
I(Z)/I(L)^4 \lra S(L)_+(-l-2) \lra 0 
\end{equation}
where $S(L) = S/I(L)$ and $S(L)_+ = x_0S(L) + x_1S(L) = 
\bigoplus_{i\geq 1}S(L)_i$. 

We provide, now, explicit minimal free resolutions of the graded 
$S$-modules $S(L)_+$ and $I(L)/I(L)^3$. Using the exact sequence$\, :$ 
\[
0 \lra S(L)(-2) \xra{\begin{pmatrix} -x_1\\ x_0 \end{pmatrix}} 
2S(L)(-1) \xra{\displaystyle (x_0\, ,\, x_1)} S(L)_+ \lra 0\, .
\]
one sees that the tensor product of the complexes$\, :$
\[
S(-2) \xra{\begin{pmatrix} -x_1\\ x_0 \end{pmatrix}} 2S(-1)\, ,\  
S(-2) \xra{\begin{pmatrix} -y\\ x \end{pmatrix}} 2S(-1) 
\xra{\displaystyle (x\, ,\, y)} S 
\] 
is the following minimal free resolution of $S(L)_+$$\, :$ 
\[
0 \ra 
S(-4) \xra{\begin{pmatrix} \text{--}y\\ x\\ \text{--}x_1\\ x_0\end{pmatrix}} 
4S(-3) \xra{\begin{pmatrix} 
x & y & 0 & 0\\
x_1 & 0 & \text{--}y & 0\\
0 & x_1 & x & 0\\
\text{--}x_0 & 0 & 0 & \text{--}y\\
0 & \text{--}x_0 & 0 & x
\end{pmatrix}} 5S(-2) \xra{\begin{pmatrix} 
\text{--}x_1 & x & y & 0 & 0\\
x_0 & 0 & 0 & x & y
\end{pmatrix}} 2S(-1)  
\] 

As for $I(L)/I(L)^3$, one uses the standard free resolutions of $I(L)/I(L)^2 
\simeq 2S(L)(-1)$ and of $I(L)^2/I(L)^3 \simeq 3S(L)(-2)$ to get a 
non-minimal free resolution of $I(L)/I(L)^3$ over $S$ and then, cancelling 
redundant terms (see Remark~\ref{R:cancellation}), one gets the following 
minimal free resolution$\, :$ 
\[
0\ra 
3S(-4) \xra{\begin{pmatrix} 
0 & 0 & \text{--}y^2\\
\text{--}y & 0 & 0\\
x & \text{--}y & 0\\
0 & x & \text{--}y\\
0 & 0 & x
\end{pmatrix}} \begin{matrix} S(-2)\\ \oplus\\ 4S(-3) \end{matrix} 
\xra{\begin{pmatrix} 
\text{--}y & x^2 & xy & y^2 & 0\\
x & 0 & 0 & 0 & y^2
\end{pmatrix}} 2S(-1) \xra{\displaystyle (x\, ,\, y)}  
\frac{I(L)}{I(L)^3}
\]  
 
One deduces, now, from the exact sequences \eqref{E:il4izhkere4} and 
\eqref{E:h0kere4}, that $I(Z)$ admits a non-minimal free resolution of 
the form$\, :$ 
\[
0 \lra S(-l-6) \overset{\displaystyle d_3}{\lra} 
\begin{matrix} 4S(-l-5)\\ \oplus\\ 3S(-l-6) \end{matrix} 
\overset{\displaystyle d_2}{\lra}  
\begin{matrix} 6S(-l-4)\\ \oplus\\ 4S(-l-5)\\ \oplus\\ 4S(-5) \end{matrix} 
\overset{\displaystyle d_1}{\lra}  
\begin{matrix} 4S(-l-3)\\ \oplus\\ 5S(-4) \end{matrix} \lra I(Z) \lra 0\, .
\]  
This resolution has a filtration with successive quotients the above minimal 
free resolutions of $S(L)_+(-l-2)$, $(I(L)/I(L)^3)(-l-2)$ and $I(L)^4$, 
respectively. Consequently, large parts of the matrices of $d_1$, $d_2$ and 
$d_3$ are known. In order to get the entire matrix of $d_1$ one has to 
extend the known parts of its columns to relations between the generators 
of $I(Z)$ from Prop.~\ref{P:genizprim}. This presents some difficulty only 
for the first column. 
Multiplying to the left the matrix relation \eqref{E:wijgamma012} by 
$(x^3,\, x^2y,\, xy^2,\, y^3)$ one gets the following relation$\, :$ 
\begin{multline}\label{E:wijgamma}
-x_1(w_{00}x^3 + w_{01}x^2y + w_{02}xy^2 + w_{03}y^3) + 
x_0(w_{10}x^3 + w_{11}x^2y + w_{12}xy^2 + w_{13}y^3)\\  
= (\gamma_0x^2 + \gamma_1xy + \gamma_2y^2)(-bx+ay)\, ,\  \  \  \  \  \  \  \    
\  \  \  \  \  \  \  \  \  \  \  \  \  \  \  \  \  \  \  \  \  \   
\end{multline}
from which one deduces that$\, :$ 
\[
\text{--}x_1F_{40} + x_0F_{41} 
\text{--}\, (\gamma_0x^2 + \gamma_1xy + \gamma_2y^2)F_3 
+ (\gamma_0x^2 + \gamma_1xy + \gamma_2y^2)(v_0x^2 + v_1xy + v_2y^2) = 0 
\]  
Now, the matrices of $d_1$, $d_2$ and $d_3$ are the following ones$\, :$ 
\begin{equation*}
\setcounter{MaxMatrixCols}{14}
\begin{pmatrix} 
\text{-}x_1 & x & y & 0 & 0 & 0 & 0 & 0 & 0 & 0 & 0 & 0 & 0 & 0\\
x_0 & 0 & 0 & x & y & 0 & 0 & 0 & 0 & 0 & 0 & 0 & 0 & 0\\
\text{-}\gamma_0x \text{-}\gamma_1y & \text{-}x_0 & 0 & \text{-}x_1 & 0 & 
\text{-}y & x^2 & xy & y^2 & 0 & 0 & 0 & 0 & 0\\
\text{-}\gamma_2y & 0 & \text{-}x_0 & 0 & \text{-}x_1 & x & 0 & 0 & 0 & 
y^2 & 0 & 0 & 0 & 0\\
\theta_0 & \text{-}w_{00} & 0 & \text{-}w_{10} & 0 & 0 & b\, \text{-}v_0x & 
0 & 0 & 0 & \text{-}y & 0 & 0 & 0\\
\theta_1 & \text{-}w_{01} & \text{-}w_{00} & \text{-}w_{11} & 
\text{-}w_{10} & 0 & \text{-}a\text{-}v_1x & b\, \text{-}v_0x & 0 & 0 & x & 
\text{-}y & 0 & 0\\
\theta_2 & \text{-}w_{02} & \text{-}w_{01} & 
\text{-}w_{12} & \text{-}w_{11} & 0 & \text{-}v_2x & \text{-}a\text{-}v_1x 
& b\, \text{-}v_0x & \text{-}v_0y & 0 & x & \text{-}y & 0\\
\theta_3 & \text{-}w_{03} & \text{-}w_{02} & \text{-}w_{13} 
& \text{-}w_{12} & 0 & 0 & \text{-}v_2x & \text{-}a\text{-}v_1x & 
b\, \text{-}v_1y & 0 & 0 & x & \text{-}y\\
\theta_4 & 0 & \text{-}w_{03} & 0 & \text{-}w_{13} & 0 & 0 & 0 & 
\text{-}v_2x & \text{-}a\text{-}v_2y & 0 & 0 & 0 & x
\end{pmatrix}
\end{equation*}
\begin{gather*}
\begin{pmatrix} 
x & y & 0 & 0 & 0 & 0 & 0\\
x_1 & 0 & -y & 0 & 0 & 0 & 0\\
0 & x_1 & x & 0 & 0 & 0 & 0\\
-x_0 & 0 & 0 & -y & 0 & 0 & 0\\
0 & -x_0 & 0 & x & 0 & 0 & 0\\
\gamma_2y & 0 & x_0 & x_1 & 0 & 0 & -y^2\\
\gamma_0 & 0 & 0 & 0 & -y & 0 & 0\\
\gamma_1 & \gamma_0 & 0 & 0 & x & -y & 0\\
\gamma_2 & \gamma_1 & 0 & 0 & 0 & x & -y\\
0 & \gamma_2 & 0 & 0 & 0 & 0 & x\\
0 & \gamma_0v_0 & w_{00} & w_{10} & -b+v_0x & 0 & 0\\
0 & \gamma_0v_1 + \gamma_1v_0 & w_{01} & w_{11} & a+v_1x & -b+v_0x & 0\\
0 & \gamma_0v_2 + \gamma_1v_1 & w_{02} & w_{12} & v_2x & a+v_1x & -b\\
0 & \gamma_1v_2 & w_{03} & w_{13} & 0 & v_2x & a 
\end{pmatrix}\, ,\  
\begin{pmatrix} 
-y\\ x\\ -x_1\\ x_0\\ -\gamma_0\\ -\gamma_1\\ -\gamma_2
\end{pmatrix}
\end{gather*}
where $\theta_0,\ldots,\theta_4$ are defined by$\, :$
\[
(\gamma_0x^2 + \gamma_1xy + \gamma_2y^2)(v_0x^2 + v_1xy + v_2y^2) = 
\theta_0x^4 + \theta_1x^3y + \theta_2x^2y^2 + \theta_3xy^3 + \theta_4y^4 
\]
In order to check that $d_1d_2 = 0$ and $d_2d_3 = 0$ one has to use the above 
polynomial relation \eqref{E:wijgamma}. 
Using Remark~\ref{R:cancellation}, one can cancel the redundant terms of 
the above free resolution of $I(Z)$. One has to consider three cases$\, :$ 
(I) $\gamma_0 \neq 0$, (II) $\gamma_0 = 0$ and $\gamma_1 \neq 0$, (III) 
$\gamma_0 = \gamma_1 = 0$ and $\gamma_2 \neq 0$. In all the cases, one gets a 
free resolution of $I(Z)$ having the following shape$\, :$ 
\[
0 \lra  
\begin{matrix} 2S(-l-5)\\ \oplus\\ 2S(-l-6) \end{matrix} 
\overset{\displaystyle d_2^{\, \prim}}{\lra} 
\begin{matrix} 6S(-l-4)\\ \oplus\\ 2S(-l-5)\\ \oplus\\ 4S(-5) \end{matrix} 
\overset{\displaystyle d_1^{\, \prim}}{\lra}  
\begin{matrix} 4S(-l-3)\\ \oplus\\ 5S(-4) \end{matrix} \lra I(Z) \lra 0\, .
\]

\subsection{Thick structures of degree 4}\label{SS:thick} 
According to B\u{a}nic\u{a} and Forster \cite[\S~4]{bf}, if $W$ is such a 
structure supported on the line $L$ then $\sci_L^3 \subset \sci_W \subset 
\sci_L^2$ and one has an exact sequence of the form$\, :$ 
\[
0 \lra \sci_W/\sci_L^3 \lra \sci_L^2/\sci_L^3 
\overset{\displaystyle \e}{\lra} \sco_L(l) \lra 0 
\]
for some integer $l$. Since $\sci_L^2/\sci_L^3 \simeq 3\sco_L(-2)$, one 
must have $l \geq -2$. The epimorphism $\e$ is defined by three polynomials 
$p_0,\, p_1,\, p_2 \in \tH^0(\sco_L(l+2))$ having no common zero on $L$. One 
deduces an exact sequence$\, :$ 
\begin{equation}\label{E:ollowol1}
0 \lra \sco_L(l) \lra \sco_W \lra \sco_{L^{(1)}} \lra 0 
\end{equation}
where $L^{(1)}$ is the first infinitesimal neighbourhood of $L$ in $\piii$, 
defined by the ideal $\sci_L^2$. It follows that, as an $\sco_L$-module$\, :$ 
\[
\sco_W \simeq \sco_L \oplus 2\sco_L(-1) \oplus \sco_L(l)\, .
\]
The graded $\tH^0_\ast(\sco_L) = k[x_0,x_1]$-module $\tH^0_\ast(\sco_W)$ 
has a system of generators consisting of $1\in \tH^0(\sco_W)$, 
$e_0,\, e_1 \in \tH^0(\sco_W(1))$ and $e_2 \in \tH^0(\sco_W(-l))$. The 
multiplicative structure of $\tH^0_\ast(\sco_W)$ is defined by$\, :$ 
\[
e_0^2 = p_0e_2\, ,\  e_0e_1 = p_1e_2\, ,\  e_1^2 = p_2e_2\, ,\  e_0e_2 = 0\, ,\  
e_1e_2 = 0\, ,\  e_2^2 = 0\, . 
\] 
The canonical epimorphism $\sco_\piii \ra \sco_W$ induces a morphism of graded 
$k[x_0,x_1]$-algebras $S = \tH^0_\ast(\sco_\piii) \ra \tH^0(\sco_W)$ which is 
completely determined by the fact that $x \mapsto e_0$ and $y \mapsto e_1$.  
One has an exact sequence$\, :$ 
\begin{equation}\label{E:kerp0p1p2}
0 \lra \sco_L(-m)\oplus \sco_L(-n) 
\xra{\begin{pmatrix} 
f_0 & g_0\\
f_1 & g_1\\
f_2 & g_2
\end{pmatrix}} 3\sco_L 
\xra{\displaystyle (p_0\, ,\, p_1\, ,\, p_2)} \sco_L(l+2) \lra 0
\end{equation} 
with $0 \leq m \leq n$ and $m+n = l+2$. Since $\sci_W/\sci_L^3$ is annihilated 
by $\sci_L$ it is already an $\sco_L$-module. Using the above exact sequence, 
it follows that, actually$\, :$ 
\[
\sci_W/\sci_L^3 \simeq \sco_L(-m-2) \oplus \sco_L(-n-2)\, .
\]
Since $\tH^1_\ast(\sci_L^3) = 0$, one deduces an exact sequence of graded 
$S$-modules$\, :$ 
\[
0 \lra I(L)^3 \lra I(W) \lra S(L)(-m-2)\oplus S(L)(-n-2) \lra 0  
\]
with $S(L) = S/I(L)$, from which one gets immediately$\, :$ 

\begin{prop}\label{P:geniw} 
The homogeneous ideal $I(W) \subset S$ of $W$ is generated by the 
following polynomials$\, :$ 
\[
F = f_0x^2 + f_1xy + f_2y^2\, ,\  G = g_0x^2 + g_1xy + g_2y^2\, ,\  
x^3\, ,\  x^2y\, ,\  xy^2\, ,\  y^3\, .
\]
\end{prop}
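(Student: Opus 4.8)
The plan is to read off the generators directly from the exact sequence of graded $S$-modules
\[
0 \lra I(L)^3 \lra I(W) \lra S(L)(-m-2)\oplus S(L)(-n-2) \lra 0
\]
displayed just before the statement, which reduces everything to generating the submodule $I(L)^3$ and lifting generators of the quotient.

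First I would note that $I(L) = (x,y)$, so $I(L)^3$ is generated by the four cubic monomials $x^3,\, x^2y,\, xy^2,\, y^3$; these are exactly the last four polynomials in the statement. It then remains to exhibit homogeneous elements of $I(W)$ whose classes generate $S(L)(-m-2)\oplus S(L)(-n-2)$ over $S = \tH^0_\ast(\sco_\piii)$. This is a free $S(L)$-module with one generator in degree $m+2$ and one in degree $n+2$, and these two generators are identified, under $\tH^0_\ast$ of the isomorphism $\sci_W/\sci_L^3 \simeq \sco_L(-m-2)\oplus \sco_L(-n-2)$, with the two basis sections of the source.

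The crux of the argument is to verify that $F$ and $G$ provide these lifts. Under the identification $\sci_L^2/\sci_L^3 \simeq 3\sco_L(-2)$ with $\sco_L$-basis the classes of $x^2,\, xy,\, y^2$, the epimorphism $\e$ carries these basis elements to $p_0,\, p_1,\, p_2$; hence a section $c_0x^2 + c_1xy + c_2y^2$ lies in $\Ker \e = \sci_W/\sci_L^3$ precisely when $(c_0,c_1,c_2)$ is a syzygy of $(p_0,p_1,p_2)$. By the syzygy sequence \eqref{E:kerp0p1p2}, the columns $(f_0,f_1,f_2)$ and $(g_0,g_1,g_2)$ generate the module of such syzygies, so the classes of $F = f_0x^2+f_1xy+f_2y^2$ and $G = g_0x^2+g_1xy+g_2y^2$ generate $\sci_W/\sci_L^3$ as an $\sco_L$-module, in the respective degrees $m+2$ and $n+2$. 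Since $F$ and $G$ are genuine homogeneous polynomials lying in $\sci_L^2$ whose classes fall into $\Ker \e$, they already belong to $I(W)$, and they lift the two free generators of the quotient.

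The final step is purely formal: for a short exact sequence of graded modules, generators of the submodule together with lifts of generators of the quotient generate the middle term. Combining $x^3,\, x^2y,\, xy^2,\, y^3$ with $F$ and $G$ therefore yields that these six polynomials generate $I(W)$. I expect no genuine difficulty beyond the syzygy bookkeeping of the preceding paragraph; the whole content is the translation of \eqref{E:kerp0p1p2} into a generation statement through the identification $\sci_W/\sci_L^3 \simeq \sco_L(-m-2)\oplus \sco_L(-n-2)$.
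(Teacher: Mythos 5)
Your proof is correct and follows exactly the route the paper intends: the paper derives the same exact sequence $0 \to I(L)^3 \to I(W) \to S(L)(-m-2)\oplus S(L)(-n-2) \to 0$ and states that the proposition follows "immediately," and your syzygy bookkeeping via \eqref{E:kerp0p1p2} is precisely the omitted verification that $F$ and $G$ lift the two free generators of the quotient. No gaps.
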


Moreover, one also gets that $I(W)$ has a graded free resolution of the 
form$\, :$ 
\[
0 \lra \begin{matrix} S(-m-4)\\ \oplus\\ S(-n-4) \end{matrix} 
\overset{\displaystyle d_2}{\lra} 
\begin{matrix} 2S(-m-3)\\ \oplus\\ 2S(-n-3)\\ \oplus\\ 3S(-4)\end{matrix} 
\overset{\displaystyle d_1}{\lra} 
\begin{matrix} S(-m-2)\\ \oplus\\ S(-n-2)\\ \oplus\\ 4S(-3)\end{matrix} 
\lra I(W) \lra 0
\]
with $d_1$ and $d_2$ defined by the matrices$\, :$ 
\[
\begin{pmatrix} 
x & y & 0 & 0 & 0 & 0 & 0\\
0 & 0 & x & y & 0 & 0 & 0\\
-f_0 & 0 & -g_0 &  0 & -y & 0 & 0\\
-f_1 & -f_0 & -g_1 & -g_0 & x & -y & 0\\
-f_2 & -f_1 & -g_2 & -g_1 & 0 & x & -y\\
0 & -f_2 & 0 & -g_2 & 0 & 0 & x 
\end{pmatrix}\, ,\  
\begin{pmatrix} 
-y & 0\\
x & 0\\
0 & -y\\
0 & x\\
f_0 & g_0\\
f_1 & g_1\\
f_2 & g_2
\end{pmatrix}
\]

\begin{prop}\label{P:geniwcm} 
Assume that $W$ is properly locally CM everywhere, i.e., it is not l.c.i. 
except at finitely many points. Then $l$ is even, $l = 2l^{\, \prim}$ with 
$l^{\, \prim} \geq -1$, and there exist two polynomials $q_0,\, q_1 \in 
{\fam0 H}^0(\sco_L(l^{\, \prim}+1))$ having no common zero on $L$  
such that $I(W)$ is generated by$\, :$ 
\[
x\begin{vmatrix} q_0 & q_1\\ x & y\end{vmatrix}\, ,\  
y\begin{vmatrix} q_0 & q_1\\ x & y\end{vmatrix}\, ,\  
x^3\, ,\  x^2y\, ,\  xy^2\, ,\  y^3\, .  
\]
Notice that if $X$ is the double structure on $L$ defined by the 
epimorphism $(q_0,\, q_1) : \sci_L/\sci_L^2 \ra \sco_L(l^{\, \prim})$ 
$($see Subsection~\ref{SS:p2} below$)$ then $I(W) = I(L)I(X)$. 
\end{prop}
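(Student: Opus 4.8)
The plan is to convert the hypothesis into a divisibility condition on the forms $p_0,p_1,p_2\in\tH^0(\sco_L(l+2))$ defining the epimorphism $\e$, and then to extract a square root. The starting point is a local analysis of $\sci_W$ at a point $z\in L$. Since $\ker\e=\sci_W/\sci_L^3$ is locally free of rank $2$, at $z$ one has $\sci_{W,z}=(F,G)+\sci_{L,z}^3$, where $F,G$ generate $\ker\e$ near $z$. Because $W$ is locally CM of pure codimension $2$, it is l.c.i. at $z$ if and only if $\sci_{W,z}$ needs only two generators, i.e. if and only if $\sci_{L,z}^3\subseteq(F,G)$. So the whole problem is to decide, in terms of the $p_i$, when this inclusion fails along a positive-dimensional set of points.

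First I would carry out the key computation at a point $z$ with $p_0(z)\neq0$. The two Koszul syzygies $(-p_1,p_0,0)$ and $(-p_2,0,p_0)$ of $(p_0,p_1,p_2)$ give the local generators $F=x(p_0y-p_1x)$ and $G=p_0y^2-p_2x^2$. Putting $w=p_0y-p_1x$ (so that $(x,w)=\sci_{L,z}$, since $p_0$ is a unit at $z$) one finds $(F,G)=(xw,\,w^2+\beta x^2)$ up to units, where $\beta:=p_1^2-p_0p_2$. From $x(w^2+\beta x^2)=xw^2+\beta x^3$ together with $xw^2\in(xw)$ one reads off $\beta x^3\in(F,G)$, and when $\beta(z)\neq0$ this forces $\sci_{L,z}^3=(x,w)^3\subseteq(F,G)$, so $W$ is l.c.i. at $z$. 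I expect this normal-form computation to be the main obstacle, since the discriminant $\beta$ only surfaces after the right change of generators.

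With this in hand the contrapositive is immediate: if $\beta\not\equiv0$ on $L$, then (assuming $p_0\not\equiv0$) both $\beta$ and $p_0$ vanish at only finitely many points of $L$, so $W$ is l.c.i. outside a finite set, contradicting the hypothesis; the degenerate cases $p_0\equiv0$ or $p_2\equiv0$ force $l=-2$ and are disposed of by the symmetric computation (interchanging $x$ and $y$) or by direct inspection. Hence the hypothesis forces $p_1^2=p_0p_2$ in $k[x_0,x_1]$.

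It remains to factor and read off $I(W)$. Since $k[x_0,x_1]$ is a UFD and $p_0,p_1,p_2$ have no common zero, the relation $p_1^2=p_0p_2$ forces, for each linear form $\ell$, the orders of vanishing of $p_0$ and $p_2$ along $\ell$ to be even and that of $p_1$ to be their average; taking $q_0$ and $q_1$ to be the corresponding half-powers (normalized by a common scalar, with the zero-form cases treated directly) gives $p_0=q_0^2$, $p_1=q_0q_1$, $p_2=q_1^2$ with $q_0,q_1$ coprime of degree $(l+2)/2$. In particular $l=2l'$ is even with $l'\geq-1$ and $q_0,q_1\in\tH^0(\sco_L(l'+1))$. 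Now the syzygy module of $(p_0,p_1,p_2)$ is $2\sco_L(-(l'+1))$, so $m=n=l'+1$, and the two degree-$(l'+1)$ syzygies $(q_1,-q_0,0)$ and $(0,q_1,-q_0)$ are linearly independent over $k$, hence form a basis of $\tH^0((\ker\e)(l'+1))$; by Prop.~\ref{P:geniw} the ideal $I(W)$ is therefore generated, up to sign, by $x(q_0y-q_1x)$, $y(q_0y-q_1x)$ and $x^3,x^2y,xy^2,y^3$. Since the double structure $X$ of Subsection~\ref{SS:p2} attached to $(q_0,q_1)$ has $I(X)=(q_0y-q_1x,\,x^2,\,xy,\,y^2)$, this generating set is exactly $I(L)\,I(X)$, which completes the proof.
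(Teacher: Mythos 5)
Your proof is correct. It has the same overall architecture as the paper's: reduce the hypothesis to the identity $p_1^2 = p_0p_2$, use the absence of common zeros to write $p_0 = q_0^2$, $p_1 = q_0q_1$, $p_2 = q_1^2$ with $q_0,\, q_1$ coprime, and feed the resulting syzygy matrix into Prop.~\ref{P:geniw}. The genuine difference is in the first step: the paper obtains $p_1^2 = p_0p_2$ by simply citing B\u{a}nic\u{a} and Forster \cite[Prop.~4.3]{bf}, whereas you prove it directly, by putting the two local generators of $\sci_W/\sci_L^3$ at a point $z$ with $p_0(z) \neq 0$ into the normal form $(xw,\, w^2 + \beta x^2)$ with $w = p_0y - p_1x$ and $\beta = p_1^2 - p_0p_2$, and then reading off $\beta x^3 = x(w^2+\beta x^2) - w\cdot(xw) \in (F,G)$, which yields $\sci_{L,z}^3 \subseteq (F,G)$, i.e.\ the l.c.i.\ property at $z$, as soon as $\beta(z) \neq 0$. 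This makes the appendix self-contained and shows where the discriminant comes from; you also spell out the valuation argument for the square-root extraction, which the paper only asserts. One small inaccuracy: $p_0 \equiv 0$ does not by itself force $l = -2$ (take $p_0 = 0$, $p_1 = x_0^2$, $p_2 = x_1^2$), but this is harmless, since the symmetric computation at points where $p_2 \neq 0$ covers that case, and only $p_0 \equiv p_2 \equiv 0$ --- which does force $l = -2$ and gives a complete intersection --- needs separate inspection.
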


\begin{proof} 
By \cite[Prop.~4.3]{bf}, the hypothesis of the proposition implies that 
$p_1^2 = p_0p_2$. Since $p_0,\, p_1,\, p_2$ have no common zero on $L$, it 
follows that $p_0$ and $p_2$ have no common zero on $L$. One deduces, now, 
that $p_0$ and $p_2$ must be ``perfect squares'', i.e., that there exist 
polynomials $q_0$ and $q_1$ such that $p_0 = q_0^2$ and $p_2 = q_1^2$. Then 
$p_1 = q_0q_1$ and the exact sequence \eqref{E:kerp0p1p2} becomes$\, :$ 
\[
0 \lra 2\sco_L(-l^{\, \prim}-1) 
\xra{\begin{pmatrix} -q_1 & 0\\ q_0 & -q_1\\ 0 & q_0 \end{pmatrix}} 
3\sco_L \xra{\displaystyle (q_0^2\, ,\, q_0q_1\, ,\, q_1^2)} 
\sco_L(l+2) \lra 0\, .
\] 
One can apply, now, Prop.~\ref{P:geniw}. 
\end{proof}

Applying $\tH^0_\ast(-)$ to the exact sequence \eqref{E:ollowol1} one gets 
an exact sequence$\, :$ 
\[
0 \lra S(L)(l) \lra \tH^0_\ast(\sco_W) \lra S/I(L)^2 \lra 0 
\]
from which one gets readily the following$\, :$ 

\begin{prop}\label{P:reshow} 
The graded $S$-module ${\fam0 H}^0_\ast(\sco_W)$ admits the following free 
resolution$\, :$ 
\begin{gather*}
0 \ra 
\begin{matrix} 2S(-3)\\ \oplus\\ S(-l-2) \end{matrix} 
\xra{\begin{pmatrix}
\text{--}y & 0 & 0\\
x & \text{--}y & 0\\
0 & x & 0\\
p_1 & p_2 & \text{--}y\\
\text{--}p_0 & \text{--}p_1 & x
\end{pmatrix}} 
\begin{matrix} 3S(-2)\\ \oplus\\ 2S(-l-1) \end{matrix} 
\xra{\begin{pmatrix}
x^2 & xy & y^2 & 0 & 0\\
\text{--}p_0 & \text{--}p_1 & \text{--}p_2 & x & y
\end{pmatrix}} 
\begin{matrix} S\\ \oplus\\ S(-l)\end{matrix}   
\ra {\fam0 H}^0_\ast(\sco_W)    
\end{gather*}
\qed 
\end{prop}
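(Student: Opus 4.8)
The starting point is the short exact sequence of graded $S$-modules
\[
0 \lra S(L)(l) \lra \tH^0_\ast(\sco_W) \lra S/I(L)^2 \lra 0
\]
obtained by applying $\tH^0_\ast(-)$ to the exact sequence \eqref{E:ollowol1} (here $S(L) = S/I(L)$ and $I(L) = (x,y)$). Over $S = k[x_0,x_1,x,y]$ the module $\tH^0_\ast(\sco_W)$ is generated by $1$ and $e_2$ alone, since $e_0 = x\cdot 1$ and $e_1 = y\cdot 1$; the submodule $S(L)(l)$ is the one generated by $e_2$, whose degree is dictated by $e_2 \in \tH^0(\sco_W(-l))$. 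The plan is to resolve the two outer terms and assemble the answer by the Horseshoe lemma.

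For the right-hand term one has the standard resolution of $S/I(L)^2 = S/(x^2,xy,y^2)$,
\[
0 \lra 2S(-3) \lra 3S(-2) \xra{\displaystyle (x^2,\,xy,\,y^2)} S \lra S/I(L)^2 \lra 0\, ,
\]
whose first-syzygy matrix has columns $(-y,x,0)^{\text{t}}$ and $(0,-y,x)^{\text{t}}$ and which has no higher syzygies. For the left-hand term one twists by $l$ the Koszul resolution of $S(L) = S/(x,y)$, getting $0 \to S(l-2) \xra{(-y,x)^{\text{t}}} 2S(l-1) \xra{(x,y)} S(l) \to S(L)(l) \to 0$. Feeding these two resolutions and the short exact sequence above into the Horseshoe lemma produces a free resolution of $\tH^0_\ast(\sco_W)$ whose $i$-th term is the direct sum of the $i$-th terms of the two resolutions; this is exactly the numerical shape displayed in the statement, and the two differentials come out block lower-triangular, the diagonal blocks being the differentials of the two resolutions above.

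The real content is to pin down the two off-diagonal connecting blocks. For $\delta_1$, lifting the generators $x^2,\,xy,\,y^2$ of $I(L)^2$ to relations among the generators $1$ and $e_2$ forces, via the multiplicative relations $e_0^2 = p_0e_2$, $e_0e_1 = p_1e_2$, $e_1^2 = p_2e_2$, the bottom row $(-p_0,\,-p_1,\,-p_2)$; together with $xe_2 = ye_2 = 0$ this yields precisely the displayed matrix for $\delta_1$ and makes $\delta_0\delta_1 = 0$. For $\delta_2$, the connecting block $s_2$ is determined by the Horseshoe compatibility $d^P_1\, s_2 = -s_1\, d^Q_2$, where $d^P_1 = (x,y)$, $s_1 = (-p_0,-p_1,-p_2)$ and $d^Q_2$ is the Koszul syzygy matrix. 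Computing $-s_1 d^Q_2 = (p_1x - p_0y,\ p_2x - p_1y)$ and writing each entry as $(x,y)$ applied to a column gives $s_2 = \begin{pmatrix} p_1 & p_2\\ -p_0 & -p_1\end{pmatrix}$, the displayed block.

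Finally I would check that the displayed matrices genuinely form a complex: $\delta_0\delta_1 = 0$ is the list of algebra relations just used, and $\delta_1\delta_2 = 0$ is a short matrix computation in which the $p_i$-terms cancel in pairs (for instance one lower entry reads $p_0y - p_1x + p_1x - p_0y = 0$). Because both differentials are block lower-triangular with the resolutions of $S/I(L)^2$ and $S(L)(l)$ occurring as a quotient- and a sub-complex, the whole thing sits in a short exact sequence of complexes $0 \to P_\bullet \to B_\bullet \to Q_\bullet \to 0$ with acyclic ends and augmentations compatible with the original sequence; the long exact homology sequence then forces $B_\bullet$ to be acyclic, i.e. a resolution of $\tH^0_\ast(\sco_W)$. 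The one delicate point is the correct determination of the connecting block $s_2$; once it is in hand, exactness is automatic from the Horseshoe mechanism.
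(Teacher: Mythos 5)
Your proof is correct and follows essentially the same route as the paper, which simply derives the resolution ``readily'' from the exact sequence $0 \to S(L)(l) \to \tH^0_\ast(\sco_W) \to S/I(L)^2 \to 0$ and leaves the horseshoe-lemma assembly — in particular the connecting blocks $s_1 = (-p_0,-p_1,-p_2)$ and $s_2$ that you computed — to the reader. Note only that your twists $S(l)$, $2S(l-1)$, $S(l-2)$ on the subcomplex are the correct ones: the $S(-l)$, $2S(-l-1)$, $S(-l-2)$ printed in the statement appear to be a sign slip, since $e_2$ sits in degree $-l$ (compare the analogous twists in Prop.~\ref{P:reshoz}); this is harmless in the paper's application, where $l=0$.
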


\emph{We record, finally, for reference, the similar but easier results 
concerning multiple structures of degree} 2 \emph{or} 3 \emph{on the line}  
$L \subset \piii$.

\subsection{Primitive structures of degree 2}\label{SS:p2}  
Let $X$ be a locally CM curve of degree 2 supported by the line $L$. 
The CM filtration of $\sco_X$ has two steps $\sco_X \supset \sci_L/\sci_X 
\supset (0)$ with $\sci_L/\sci_X \simeq \sco_L(l)$ for some $l \geq -1$. 
One has an isomorphism of $\sco_L$-modules $\sco_X \simeq \sco_L \oplus 
\sco_L(l)$ and if $1 \in \tH^0(\sco_X)$ and $e_1 \in \tH^0(\sco_X(-l))$ are the 
corresponding generators of the graded $\tH^0_\ast(\sco_L) = 
k[x_0,x_1]$-module $\tH^0_\ast(\sco_X)$ then the multiplicative structure of 
$\tH^0_\ast(\sco_X)$ is defined by $e_1^2 = 0$. The epimorphism $\sco_\piii \ra 
\sco_X$ induces a morphism of graded $k[x_0,x_1]$-algebras $S = 
\tH^0_\ast(\sco_\piii) \ra \tH^0_\ast(\sco_X)$ which is uniquely determined by 
the images of $x,\, y \in S$. Let's say that$\, :$ 
\[
x \mapsto ae_1\, ,\  y \mapsto be_1\, ,
\]   
with $a,\, b \in \tH^0(\sco_L(l+1))$. The epimorphism $\sci_L/\sci_L^2 \ra 
\sci_L/\sci_X$ can be identified with the morphism $\e_2 : 2\sco_L(-1) \ra 
\sco_L(l)$ defined by the matrix $(a\, ,\, b)$ and the surjectivity of 
$\e_2$ is equivalent to $a$ and $b$ being coprime. 

\vskip2mm 

Now, under these hypotheses, the homogeneous ideal $I(X) \subset S$ is 
generated by $F_2 := -bx+ay,\, x^2,\, xy,\, y^2$ (i.e., $X$ is the divisor $2L$ 
on the surface $F_2 = 0$, which is nonsingular along $L$) and one has graded 
free resolutions$\, :$ 
\begin{equation*}
0 \lra S(\text{--}l\text{--}4) 
\xra{\begin{pmatrix} \text{--}y\\ x\\ \text{--}b\\ a \end{pmatrix}} 
\begin{matrix} 2S(\text{--}l\text{--}3)\\ \oplus\\ 2S(-3) \end{matrix} 
\xra{\begin{pmatrix} 
x & y & 0 & 0\\
b & 0 &\text{--}y & 0\\
\text{--}a & b & x & \text{--}y\\
0 & \text{--}a & 0 & x
\end{pmatrix}} 
\begin{matrix} S(\text{--}l\text{--}2)\\ \oplus\\ 3S(-2) \end{matrix} 
\lra I(X) \lra 0\, , 
\end{equation*}
\begin{equation*}
0 \lra \begin{matrix} S(-2)\\ \oplus\\ S(l-2) \end{matrix} 
\xra{\begin{pmatrix} 
-y & 0\\
x & 0\\
b & -y\\
-a & x
\end{pmatrix}} 
\begin{matrix} 2S(-1)\\ \oplus\\ 2S(l-1) \end{matrix} 
\xra{\begin{pmatrix}
x & y & 0 & 0\\
-a & -b & x & y
\end{pmatrix}}
\begin{matrix} S\\ \oplus\\ S(l) \end{matrix} 
\lra \tH^0_\ast(\sco_X) \lra 0\, . 
\end{equation*}

Moreover, by a general result of Ferrand~\cite{f}, one has $\omega_X \simeq 
\sco_X(-l-2)$. 

\begin{remark}\label{R:etaeta1} 
We want to describe, for later use, the successive quotients of the descending 
chain of ideal sheaves $\sci_L \supset \sci_X \supset \sci_L^2 \supset 
\sci_L\sci_X \supset \sci_L^3$. 

(i) Of course, $\sci_L/\sci_X \simeq \sco_L(l)$ whence an exact sequence$\, :$ 
\[
0 \lra \sci_X \lra \sci_L \overset{\displaystyle \pi}{\lra} \sco_L(l) 
\lra 0 
\] 
where $\pi$ maps $x$ (resp., $y$) $\in \tH^0(\sci_L(1))$ to $a$ (resp., $b$) 
$\in \tH^0(\sco_L(l+1))$. Moreover, by B\u{a}nic\u{a} and Forster 
\cite[Prop.~2.2]{bf}, the canonical morphism $(\sci_L/\sci_X)^{\otimes \, j} 
\ra \sci_L^j/\sci_L^{j-1}\sci_X$ is an isomorphism, $\forall \, j \geq 1$, hence 
$\sci_L^2/\sci_L\sci_X \simeq \sco_L(2l)$. 

\vskip2mm 

(ii) The exact sequence$\, :$ 
\[
0 \lra \sci_X/\sci_L^2 \lra \sci_L/\sci_L^2 \lra \sci_L/\sci_X \lra 0  
\] 
can be identified with the exact sequence$\, :$ 
\[
0 \lra \sco_L(-l-2) \xra{\begin{pmatrix} -b\\ a \end{pmatrix}} 2\sco_L(-1) 
\xra{\displaystyle (a\, ,\, b)} \sco_L(l) \lra 0  
\]
whence an isomorphism $\sci_X/\sci_L^2 \simeq \sco_L(-l-2)$. One deduces an 
exact sequence$\, :$ 
\begin{equation}\label{E:il2ixeta} 
0 \lra \sci_L^2 \lra \sci_X \overset{\displaystyle \eta}{\lra} \sco_L(-l-2) 
\lra 0 
\end{equation} 
where $\eta$ maps $F_2 \in \tH^0(\sci_X(l+2))$ to $1 \in \tH^0(\sco_L)$. 

\vskip2mm 

(iii) The exact sequence$\, :$ 
\[
0 \lra \sci_L\sci_X/\sci_L^3 \lra \sci_L^2/\sci_L^3 \lra \sci_L^2/\sci_L\sci_X 
\lra 0 
\] 
can be identified with the sequence$\, :$ 
\[
0 \lra 2\sco_L(-l-3) \xra{\begin{pmatrix} -b & 0\\ a & -b\\ 0 & a 
\end{pmatrix}} 3\sco_L(-2) \xra{\displaystyle (a^2,\, ab\, ,\, b^2)} 
\sco_L(2l) \lra 0\, . 
\]
One deduces an isomorphism $\sci_L\sci_X/\sci_L^3 \simeq 2\sco_L(-l-3)$, 
whence an exact sequence$\, :$ 
\begin{equation}\label{E:il3ilixeta1} 
0 \lra \sci_L^3 \lra \sci_L\sci_X \overset{\displaystyle \eta_1}{\lra} 
2\sco_L(-l-3) \lra 0 
\end{equation}
where $\eta_1$ maps $xF_2$ (resp., $yF_2$) $\in \tH^0((\sci_L\sci_X)(l+3))$ to 
$(1,0)$ (resp., $(0,1)$) $\in \tH^0(2\sco_L)$. 

Notice that the ideal sheaf $\sci_L\sci_X$ defines a thick structure $W$ of 
degree 4 on $L$ which is l.c.i. at no point of $L$ (see Prop.~\ref{P:geniwcm}). 

\vskip2mm 

(iv) In the exact sequence of $\sco_L$-modules$\, :$ 
\[
0 \lra \sci_L^2/\sci_L\sci_X \lra \sci_X/\sci_L\sci_X \lra \sci_X/\sci_L^2 
\lra 0 
\] 
one has $\sci_X/\sci_L^2 \simeq \sco_L(-l-2)$ and $\sci_L^2/\sci_L\sci_X \simeq 
\sco_L(2l)$. Since $l \geq -1$, it follows that this sequence splits. One 
gets an isomorphism$\, :$ 
\[
\sci_X/\sci_L\sci_X \simeq \sco_L(-l-2) \oplus \sco_L(2l)\, . 
\]
Under this isomorphism, the image of $F_2 \in \tH^0(\sci_X(l+2))$ into 
$\tH^0((\sci_X/\sci_L\sci_X)(l+2))$ is identified with $(1,0)$ and the image 
of $x^2$ (resp., $xy$, resp., $y^2$) $\in \tH^0(\sci_X(2))$ is identified 
with $(0,a^2)$ (resp., $(0,ab)$, resp., $(0,b^2)$). 
\end{remark}

\subsection{Quasiprimitive structures of degree 3}\label{SS:q3} 
Let $Y$ be a locally CM curve of degree 3 supported on the line $L$. $Y$ is 
a quasiprimitive multiple structure on $L$ unless it is the first 
infinitesimal neighbourhood $L^{(1)}$ of $L$ (defined by the ideal sheaf  
$\sci_L^2$). Assume, from now on, that $Y$ is quasiprimitive.    
The CM filtration of $\sco_Y$ has three steps $\sco_Y \supset \sci_L/\sci_Y 
\supset \sci_X/\sci_Y \supset (0)$, where $X$ is a double structure on $L$ 
with $\sci_L/\sci_X \simeq \sco_L(l)$ for some $l \geq -1$, and where 
$\sci_X/\sci_Y \simeq \sco_L(2l+m)$ for some $m \geq 0$. 
One has an isomorphism of $\sco_L$-modules$\, :$ 
\[
\sco_Y \simeq \sco_L \oplus \sco_L(l) \oplus \sco_L(2l+m) 
\] 
and if $1 \in \tH^0(\sco_Y)$, $e_1 \in \tH^0(\sco_Y(-l))$, $e_2 \in 
\tH^0(\sco_Y(-2l-m))$ are the corresponding generators of the graded 
$\tH^0_\ast(\sco_L) = k[x_0,x_1]$-module $\tH^0_\ast(\sco_Y)$ then the 
multiplicative structure of $\tH^0_\ast(\sco_Y)$ is defined by$\, :$ 
\[
e_1^2 = pe_2\, ,\  e_1e_2 = 0\, ,\  e_2^2 = 0 
\]
with $0 \neq p \in \tH^0(\sco_L(m))$. The canonical epimorphism $\sco_\piii \ra 
\sco_Y$ induces a morphism of graded $k[x_0,x_1]$-algebras $S = 
\tH^0_\ast(\sco_\piii) \ra \tH^0_\ast(\sco_Y)$ uniquely determined by the 
images of $x,\, y \in S$. Let's say that$\, :$ 
\[
x \mapsto ae_1 + a^\prim e_2\, ,\  y \mapsto be_1 + b^\prim e_2 
\] 
with $a,\, b \in \tH^0(\sco_L(l+1))$ and $a^\prim ,\, b^\prim \in 
\tH^0(\sco_L(2l+m+1))$. The canonical epimorhism $\sci_L/\sci_L^3 \ra 
\sci_L/\sci_Y$ can be identified, in the category of $\sco_L$-modules, with 
the morphism $\e_3 : 2\sco_L(-1) \oplus 3\sco_L(-2) \ra \sco_L(l) \oplus 
\sco_L(2l+m)$ defined by the matrix$\, :$ 
\begin{equation}\label{E:matrixe3} 
\begin{pmatrix} 
a & b & 0 & 0 & 0\\ 
a^\prim & b^\prim & pa^2 & pab & pb^2 
\end{pmatrix}
\, . 
\end{equation} 
The surjectivity of $\e_3$ is equivalent to the fact that $a$ and $b$, on one 
hand, and $\Delta_1 := ab^\prim - a^\prim b$ and $p$, on the other hand, are 
coprime. Moreover, there exist elements $v_0,\, v_1,\, v_2 \in 
\tH^0(\sco_L(l+m))$ such that$\, :$ 
\[
\Delta_1 + v_0a^2 + v_1ab + v_2b^2 = 0 
\]
(one uses the exact sequence \eqref{E:a2abb2}). 

\vskip2mm 

Now, recalling the notation $F_2 := -bx+ay$, 
the homogeneous ideal $I(Y) \subset S$ of $Y$ is generated by$\, :$ 
\[
F_3 = pF_2 + v_0x^2 + v_1xy + v_2y^2\, ,\  xF_2\, ,\  yF_2\, ,\    
x^3\, ,\  x^2y\, ,\  xy^2\, ,\  y^3\, . 
\] 
$I(Y)$ admits a graded free resolution over $S$ of the form$\, :$ 
\[
0 \lra \begin{matrix} S(\text{--}l\text{--}m\text{--}4)\\ \oplus\\ 
2S(-l-5) \end{matrix} \overset{\displaystyle d_2}{\lra} 
\begin{matrix} 2S(\text{--}l\text{--}m\text{--}3)\\ \oplus\\ 
4S(-l-4)\\ \oplus\\ 3S(-4) \end{matrix} 
\overset{\displaystyle d_1}{\lra} 
\begin{matrix} S(\text{--}l\text{--}m\text{--}2)\\ \oplus\\ 
2S(-l-3)\\ \oplus\\ 4S(-3) \end{matrix} \lra I(Y) \lra 0 
\]
with $d_1$ and $d_2$ defined by the matrices$\, :$ 
\[
\begin{pmatrix} 
x & y & 0 & 0 & 0 & 0 & 0 & 0 & 0\\
-p & 0 & x & y & 0 & 0 & 0 & 0 & 0\\
0 & -p & 0 & 0 & x & y & 0 & 0 & 0\\
-v_0 & 0 & b & 0 & 0 & 0 & -y & 0 & 0\\
-v_1 & -v_0 & -a & b & b & 0 & x & -y & 0\\
-v_2 & -v_1 & 0 & -a & -a & b & 0 & x & -y\\
0 & -v_2 & 0 & 0 & 0 & -a & 0 & 0 & x 
\end{pmatrix}\, ,\  
\begin{pmatrix} 
-y & 0 & 0\\
x & 0 & 0\\
0 & -y & 0\\
-p & x & 0\\
p & 0 & -y\\
0 & 0 & x\\
v_0 & -b & 0\\
v_1 & a & -b\\
v_2 & 0 & a
\end{pmatrix}\, .
\]

Notice that in the \emph{primitive case}, i.e., when $m = 0$ hence $p = 1$, 
$I(Y)$ is generated only by $F_3$, $x^3$, $x^2y$, $xy^2$, $y^3$ (i.e., $Y$ is 
the divisor $3L$ on the surface $F_3 = 0$, which is nonsingular along $L$) 
and one can cancel a number of direct summands of the terms of the above free 
resolution of $I(Y)$. 

The graded $S$-module $\tH^0_\ast(\sco_Y)$ admits the following free 
resolution$\, :$ 
\[
0 \lra \begin{matrix} S(-2)\\ \oplus\\ S(l-2)\\ \oplus\\ S(2l+m\text{--}2) 
\end{matrix} \overset{\displaystyle \delta_2}{\lra} 
\begin{matrix} 2S(-1)\\ \oplus\\ 2S(l-1)\\ \oplus\\ 2S(2l+m\text{--}1) 
\end{matrix} \overset{\displaystyle \delta_1}{\lra} 
\begin{matrix} S\\ \oplus\\ S(l)\\ \oplus\\ S(2l+m) \end{matrix} 
\lra \tH^0_\ast(\sco_Y) \lra 0\, , 
\]
with $\delta_1$ and $\delta_2$ defined by the matrices$\, :$ 
\[
\begin{pmatrix}
x & y & 0 & 0 & 0 & 0\\
-a & -b & x & y & 0 & 0\\
-a^\prim & -b^\prim & -pa & -pb & x & y
\end{pmatrix}\, ,\  
\begin{pmatrix} 
-y & 0 & 0\\
x & 0 & 0\\
b & -y & 0\\
-a & x & 0\\
b^\prim & pb & -y\\
-a^\prim & -pa & x
\end{pmatrix}\, .
\]

\begin{remark}\label{R:rhoeta2} 
Keeping the above hypotheses and notation, we want to describe, for later use, 
the successive quotients of the descending chain of ideal sheaves $\sci_X 
\supset \sci_Y \supset \sci_L\sci_X$.  

The right square of the commutative diagram of $\sco_\piii$-modules$\, :$ 
\[
\begin{CD}
0 @>>> \sci_Y/\sci_L^3 @>>> \sci_L/\sci_L^3 @>>> \sci_L/\sci_Y @>>> 0\\
@. @VVV @\vert @VVV\\
0 @>>> \sci_X/\sci_L^3 @>>> \sci_L/\sci_L^3 @>>> \sci_L/\sci_X @>>> 0  
\end{CD}
\]   
can be identified, in the category of $\sco_L$-modules, with$\, :$ 
\[
\begin{CD}
2\sco_L(-1) \oplus 3\sco_L(-2) @>{\displaystyle \e_3}>> 
\sco_L(l) \oplus \sco_L(2l+m)\\
@\vert @VV{\displaystyle \text{pr}_1}V\\
2\sco_L(-1) \oplus 3\sco_L(-2) @>{\displaystyle {\widetilde \e}_2}>> 
\sco_L(l) 
\end{CD}
\]
where ${\widetilde \e}_2$ is defined by the matrix $(a\, ,\, b\, ,\, 0\, ,\, 
0\, ,\, 0)$ and $\e_3$ by the matrix \eqref{E:matrixe3}. One deduces an 
explicit isomorphism $\sco_L(2l+m) \izo \sci_X/\sci_Y$, whence an exact 
sequence$\, :$ 
\[
0 \lra \sci_Y \lra \sci_X \overset{\displaystyle \rho}{\lra} 
\sco_L(2l+m) \lra 0  
\] 
where $\rho$ maps $F_2 \in \tH^0(\sci_X(l+2))$ to $\Delta_1 \in 
\tH^0(\sco_L(3l + m + 2))$ and $x^2$ (resp., $xy$, resp., $y^2$) 
$\in \tH^0(\sci_X(2))$ to $pa^2$ (resp., $pab$, resp., $pb^2$) $\in 
\tH^0(\sco_L(2l+m+2))$. 

Now, the last exact sequence induces an exact sequence$\, :$ 
\[
0 \lra \sci_Y/\sci_L\sci_X \lra \sci_X/\sci_L\sci_X 
\overset{\displaystyle {\overline \rho}}{\lra} \sco_L(2l+m) \lra 0\, .  
\]
Taking into account Remark~\ref{R:etaeta1}(iv), this exact sequence can be 
identified with the sequence$\, :$ 
\[
0 \lra \sco_L(-l-m-2) \xra{\begin{pmatrix} p\\ -\Delta_1 \end{pmatrix}} 
\sco_L(-l-2) \oplus \sco_L(2l) \xra{\displaystyle (\Delta_1\, ,\, p)} 
\sco_L(2l+m) \lra 0\, . 
\] 
One deduces an isomorphism $\sci_Y/\sci_L\sci_X \simeq \sco_L(-l-m-2)$, 
whence an exact sequence$\, :$ 
\begin{equation}\label{E:ilixiyeta2} 
0 \lra \sci_L\sci_X \lra \sci_Y \overset{\displaystyle \eta_2}{\lra} 
\sco_L(-l-m-2) \lra 0 
\end{equation}
where $\eta_2$ maps $F_3 : = pF_2 + v_0x^2 + v_1xy + v_2y^2 \in 
\tH^0(\sci_Y(l+m+2))$ to $1 \in \tH^0(\sco_L)$ (recall that $v_0a^2 + v_1ab + 
v_2b^2 = -\Delta_1$). 
\end{remark}

\section{Curves with a multiple line as a component}
\label{A:multicomponent} 

We shall denote, in this appendix, the projective coordinates on $\piii$ by 
$x_0,\, x_1,\, x_2,\, x_3$, hence the projective coordinate ring of $\piii$ is 
$S = k[x_0,x_1,x_2,x_3]$. We consider the following points and lines in 
$\piii$$\, :$ 
\begin{gather*}
P_0 = (1:0:0:0)\, ,\  P_1 = (0:1:0:0)\, ,\  P_2 = (0:0:1:0)\, ,\  
P_3 = (0:0:0:1)\\
L_1 = \overline{P_0P_1} = \{x_2 = x_3 = 0\}\, ,\  
L_1^\prim = \overline{P_2P_3} = \{x_0 = x_1 = 0\}\\
L_2 = \overline{P_0P_2} = \{x_1 = x_3 = 0\}\, ,\  
L_2^\prim = \overline{P_1P_3} = \{x_0 = x_2 = 0\}\\
L_3 = \overline{P_0P_3} = \{x_1 = x_2 = 0\}\, ,\  
L_3^\prim = \overline{P_1P_2} = \{x_0 = x_3 = 0\}\, .
\end{gather*} 

We shall need the following easy results$\, :$ 

\begin{lemma}\label{L:zcupw} 
Let $Z$ and $W$ be closed subschemes of $\piii$ such that $Z \cap W = 
\emptyset$. Assume that $Z$ is arithmetically CM of pure codimension $2$.  
Consider minimal graded free resolutions$\, :$ 
\begin{gather*} 
0 \lra A_1 \lra A_0 \lra I(Z) \lra 0\\
0 \lra B_2 \lra B_1 \lra B_0 \lra I(W) \lra 0\, .
\end{gather*}    
Then$\, :$\quad \emph{(a)} $A_\bullet \otimes_S B_\bullet$ is a minimal graded 
free resolution of the ideal $I(Z)I(W)$. 

\emph{(b)} $I(Z \cup W) = I(Z)I(W)$ if, moreover, $W$ is arithmetically CM 
of pure codimension $2$.  
\end{lemma}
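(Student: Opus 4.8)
\emph{The plan.} Both assertions reduce to a single $\text{Tor}$-vanishing. Since $Z$ is arithmetically Cohen--Macaulay of pure codimension $2$, the ring $S/I(Z)$ is Cohen--Macaulay of dimension $2$, so $\text{pd}_S(S/I(Z)) = 2$; this is exactly why $I(Z)$ is resolved by the two-term complex $0 \to A_1 \to A_0 \to I(Z) \to 0$. The total complex $A_\bullet \otimes_S B_\bullet$ computes $\text{Tor}_\bullet^S(I(Z), I(W))$, and two dimension shifts along $0 \to I(Z) \to S \to S/I(Z) \to 0$ and $0 \to I(W) \to S \to S/I(W) \to 0$ give $\text{Tor}_i^S(I(Z), I(W)) \simeq \text{Tor}_{i+2}^S(S/I(Z), S/I(W))$ for $i \geq 1$. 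As $\text{pd}_S(S/I(Z)) = 2$, the right-hand side vanishes for all $i \geq 1$, so $A_\bullet \otimes_S B_\bullet$ is already a resolution of $H_0 = I(Z) \otimes_S I(W)$. To finish (a) I would identify this tensor product with the product ideal: tensoring $0 \to I(W) \to S \to S/I(W) \to 0$ with $I(Z)$ exhibits the kernel of the multiplication map $I(Z) \otimes_S I(W) \to I(Z)I(W)$ as $\text{Tor}_1^S(I(Z), S/I(W)) \simeq \text{Tor}_2^S(S/I(Z), S/I(W))$. Thus everything comes down to proving
\[
\text{Tor}_2^S(S/I(Z), S/I(W)) = 0 .
\]

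\emph{The key vanishing.} Here the ACM hypothesis on $Z$ enters essentially. Because $\text{pd}_S(S/I(Z)) = 2$, the group $\text{Tor}_2^S(S/I(Z), S/I(W))$ is the \emph{kernel} of $\phi \otimes \text{id} : A_1 \otimes_S (S/I(W)) \to A_0 \otimes_S (S/I(W))$, where $\phi : A_1 \to A_0$ is the differential; in particular it is a submodule of the free $S/I(W)$-module $A_1 \otimes_S (S/I(W))$. On the other hand this $\text{Tor}$ is annihilated by $I(Z) + I(W)$, and since $Z \cap W = \emptyset$ the ideal $I(Z) + I(W)$ contains a power of the irrelevant ideal $\fm$, so the module has finite length. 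Finally, $I(W)$ is the saturated homogeneous ideal of a nonempty subscheme, hence $S/I(W)$ has positive depth and contains no nonzero finite-length submodule. A finite-length submodule of $A_1 \otimes_S (S/I(W))$ is therefore zero, giving the vanishing. Minimality of $A_\bullet \otimes_S B_\bullet$ is automatic: the entries of its differential are, up to sign, entries of the differentials of $A_\bullet$ and $B_\bullet$, all lying in $\fm$. Note that this argument only used the saturatedness of $I(W)$, not the precise length of $B_\bullet$; this proves (a).

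\emph{Part (b).} Assume now that $W$ too is ACM of pure codimension $2$. Scheme-theoretically $\sci_{Z \cup W} = \sci_Z \cap \sci_W$, and the disjointness $Z \cap W = \emptyset$ makes $\sci_Z$ and $\sci_W$ comaximal ($\sci_Z + \sci_W = \sco_\p$), whence $\sci_Z \cap \sci_W = \sci_Z \cdot \sci_W$ stalkwise. Applying $\tH^0_\ast$ gives $I(Z \cup W) = \tH^0_\ast(\sci_Z\sci_W) \supseteq I(Z)I(W)$, with equality precisely when $I(Z)I(W)$ is saturated. But by part (a) (now with $A_\bullet$ and $B_\bullet$ both of length one, as $W$ is ACM) the product $I(Z)I(W)$ admits a free resolution of length $2$, so $\text{pd}_S\big(S/I(Z)I(W)\big) \leq 3$ and hence, by the Auslander--Buchsbaum formula, $S/I(Z)I(W)$ has depth $\geq 1$; that is, $I(Z)I(W)$ is saturated. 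Therefore $I(Z \cup W) = I(Z)I(W)$.

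\emph{Main obstacle.} The delicate point is precisely the passage from the (geometrically obvious) disjointness of $Z$ and $W$ to the module-level statement $\text{Tor}_2^S(S/I(Z), S/I(W)) = 0$: the affine cones over $Z$ and $W$ still meet at their common vertex, so a priori this $\text{Tor}$ could be a nonzero finite-length module concentrated at $\fm$. What rescues the argument is that the ACM hypothesis on $Z$ turns this particular $\text{Tor}$ into a genuine submodule of a module of positive depth, where finite-length submodules must vanish. I expect this interplay, finite length forced by disjointness versus no finite-length submodule forced by saturation, to be the crux; observe that for $\text{Tor}_1$ (which governs the equality $I(Z)\cap I(W) = I(Z)I(W)$ in part (b)) the same submodule description is \emph{not} available, which is why I would instead extract the needed saturation of $I(Z)I(W)$ directly from the short resolution produced in part (a).
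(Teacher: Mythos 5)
Your proof is correct, but it takes a genuinely different route from the paper's. The paper argues sheaf-theoretically: it tensors the sheafified resolution $0 \to {\widetilde A}_1 \to {\widetilde A}_0 \to \sco_\p \to \sco_Z \to 0$ with $\sci_W$, observes that exactness is preserved because the $\sct or$ sheaves are supported on $Z \cap W = \emptyset$, identifies $\Ker(\sci_W \to \sco_Z)$ with $\sci_{Z \cup W}$, and only then applies $\tH^0_\ast$; part (a) needs only the left-exactness of $\tH^0_\ast$ (the image of $A_0 \otimes_S I(W) \to I(Z\cup W)$ being $I(Z)I(W)$), while part (b) follows from $\tH^1_\ast(\sci_W) = 0$, i.e.\ from $W$ being ACM. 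You instead work entirely with graded modules over $S$ and reduce everything to the single vanishing $\text{Tor}_2^S(S/I(Z),\, S/I(W)) = 0$; your diagnosis of the crux is exactly right — at the sheaf level disjointness kills the Tor's outright, whereas at the module level the cones still meet at the vertex and one must rule out a finite-length Tor concentrated at $\fm$, which you do by exhibiting $\text{Tor}_2$ as a submodule of the free $S/I(W)$-module $A_1 \otimes_S (S/I(W))$ and invoking the saturatedness of $I(W)$ (depth $\geq 1$). Your treatment of (b) is also different: where the paper gets surjectivity directly from $\tH^1_\ast(\sci_W) = 0$, you deduce that $I(Z)I(W)$ is saturated from the length-two resolution produced in (a) via Auslander--Buchsbaum. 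What your approach buys is a self-contained algebraic argument that makes explicit where each hypothesis enters (ACM of $Z$ for the submodule description of $\text{Tor}_2$, saturation of $I(W)$ for the depth, ACM of $W$ only for the projective-dimension count in (b)); what the paper's approach buys is brevity, since the geometric disjointness does all the work at the sheaf level and no depth or Auslander--Buchsbaum input is needed.
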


\begin{proof} 
(a) Tensorizing by $\sci_W$ the exact sequence$\, :$ 
\[
0 \lra {\widetilde A}_1 \lra {\widetilde A}_0 \lra \sco_\p \lra \sco_Z \lra 0
\]
one gets an exact sequence$\, :$ 
\[
0 \lra {\widetilde A}_1\otimes\sci_W \lra {\widetilde A}_0 \otimes \sci_W 
\lra \sci_W \lra \sco_Z \lra 0\, .
\] 
But $\Ker(\sci_W \ra \sco_Z) = \sci_{Z\cup W}$, whence an exact sequence$\, :$ 
\[
0 \lra {\widetilde A}_1\otimes\sci_W \lra {\widetilde A}_0 \otimes \sci_W 
\lra \sci_{Z\cup W} \lra 0\, . 
\]
Applying $\tH^0_\ast(-)$ to this exact sequence one gets an exact 
sequence$\, :$ 
\[
0 \lra A_1\otimes_SI(W) \lra A_0\otimes_SI(W) \lra I(Z\cup W)\, .
\]   
The image of $A_0\otimes_SI(W) \ra I(Z\cup W)$ is $I(Z)I(W)$. 

(b) Since $\tH^1_\ast(\sci_W) = 0$ it follows that $A_0\otimes_SI(W) \ra 
I(Z\cup W)$ is surjective. Notice that, under the hypothesis of (b), one has 
$B_2 = 0$.  
\end{proof}

\begin{remark}\label{R:js} 
Let $R = k[x_0,x_1]$ and let $J \subset R$ be a homogeneous, $R_+$-primary 
ideal. Then $JS$ is the homogeneous ideal of an arithmetically CM subscheme 
of $\piii$ supported on the line of equations $x_0 = x_1 = 0$. 

\emph{Indeed}, $x_2$, $x_3$ is an $S/JS$-regular sequence because $S/JS \simeq 
(R/J)[x_2, x_3]$.  
\end{remark}

\begin{remark}\label{R:monomial} 
(a) If $I$ (resp., $J$) is the ideal of $S$ generated by the monomials 
$m_1, \ldots ,m_s$ (resp., $n_1, \ldots , n_t$) then $I\cap J$ is generated 
by the monomials $[m_i,n_j]$, $1\leq i \leq s$, $1\leq j \leq t$, where 
$[f,g]$ denotes the least common multiple of $f$ and $g$. 

(b) If $I$ is the ideal of $S$ generated by the monomials 
$m_1, \ldots , m_s$ then$\, :$ 
\[
((m_1, m_2):I) = \bigcap_{i=3}^s\left(S\frac{[m_i,m_1]}{m_i} + 
S\frac{[m_i,m_2]}{m_i}\right)\, . 
\]  
\end{remark} 

\begin{remark}\label{R:basicdoublelinkage} 
We recall, from Lazarsfeld and Rao \cite{lr}, the notion of \emph{basic 
double linkage} (which is a particular case of the notion of \emph{liaison 
addition} introduced by Schwartau \cite{sw}). 

Let $Y$ be a locally CM subscheme of $\piii$, of pure codimension 2, and let 
$f,\, h \in S_+$ be coprime homogeneous polynomials, of degree $a$ and $c$, 
respectively, such that $f \in I(Y)$. 
Let $Z$ be the closed subscheme of $\piii$ defined by the homogeneous 
ideal $J := Sf + I(Y)h$. Set theoretically, one has $Z = Y \cup \{f = h = 
0\}$. Consider a graded free resolution of the homogeneous ideal $I(Y) 
\subset S$$\, :$ 
\[
0 \lra K_2 \overset{\displaystyle d_2}{\lra} K_1 
\overset{\displaystyle d_1}{\lra} K_0 
\overset{\displaystyle d_0}{\lra} I(Y) \lra 0\, .
\]   
One has an exact sequence$\, :$ 
\[
0 \lra S(-a-c) \xra{\begin{pmatrix} -h\\ f \end{pmatrix}}  
S(-a) \oplus I(Y)(-c) \xra{\displaystyle (f\, ,\, h)} 
J \lra 0\, .
\]
The morphism $S(-a) \ra I(Y)$ defined by $f$ lifts to a morphism $\phi : 
S(-a) \ra K_0$. One deduces that the sequence$\, :$ 
\[
0 \ra K_2(-c) \xra{\begin{pmatrix} 0\\ d_2(-c) \end{pmatrix}} 
\begin{matrix} S(-a-c)\\ \oplus\\ K_1(-c) \end{matrix}  
\xra{\begin{pmatrix} -h & 0\\ \phi(-c) & d_1(-c) \end{pmatrix}} 
\begin{matrix} S(-a)\\ \oplus\\ K_0(-c) \end{matrix} 
\xra{\displaystyle (f\, ,\, hd_0(-c))} J \ra 0
\]
is exact hence $Z$ is locally CM and $J = I(Z)$. 
\end{remark}

\begin{lemma}\label{L:ycupt}
Let $X \subseteq Y \subseteq Z$ be closed subschemes of a scheme $P$ and let 
$T$ be another closed subscheme of $P$. Let us denote by $\phi$ the composite 
morphism $\sci_{Y \cup T} := \sci_Y \cap \sci_T \hookrightarrow \sci_Y 
\twoheadrightarrow \sci_Y/\sci_Z$. Recall that $\sco_{Z \cup T}$ embeds into 
$\sco_Z \oplus \sco_T$. Then$\, :$ 

\emph{(a)} One has exact sequences$\, :$  
\begin{gather*}
0 \lra \sci_{Z \cup T} \lra \sci_{Y \cup T} \overset{\displaystyle \phi}{\lra}  
\sci_Y/\sci_Z\, ,\\
0 \lra {\fam0 Im}\, \phi \times \{0\} \lra \sco_{Z \cup T} \lra 
\sco_{Y \cup T} \lra 0\, . 
\end{gather*} 

\emph{(b)} If $Z \cap T = \emptyset$ then $\phi$ is an epimorphism. 

\emph{(c)} If $\sci_X\sci_Y \subseteq \sci_Z$ then ${\fam0 Coker}\,  \phi$ 
is an $\sco_{X \cap T}$-module. 
\end{lemma}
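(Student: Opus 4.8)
The plan is to deduce everything from elementary identities among the ideal sheaves $\sci_Y$, $\sci_Z$, $\sci_T$, keeping in mind that the inclusions $X \subseteq Y \subseteq Z$ translate into $\sci_Z \subseteq \sci_Y \subseteq \sci_X$ and that, by definition, $\sci_{Y\cup T} = \sci_Y \cap \sci_T$ and $\sci_{Z\cup T} = \sci_Z \cap \sci_T$. First I would establish the first exact sequence in (a), which amounts to computing $\Ker \phi$. A local section $s$ of $\sci_{Y\cup T} = \sci_Y \cap \sci_T$ is killed by $\phi$ if and only if its class in $\sci_Y/\sci_Z$ vanishes, i.e. if and only if $s \in \sci_Z$; hence $\Ker \phi = \sci_Z \cap \sci_Y \cap \sci_T = \sci_Z \cap \sci_T = \sci_{Z\cup T}$, using $\sci_Z \subseteq \sci_Y$. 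This gives the first sequence and, at the same time, a canonical isomorphism $\sci_{Y\cup T}/\sci_{Z\cup T} \izo \text{Im}\,\phi$.

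For the second sequence in (a) the starting point is the tautological surjection $\sco_{Z\cup T} \lra \sco_{Y\cup T}$ coming from $Y\cup T \subseteq Z\cup T$ (which is legitimate since $\sci_{Z\cup T} \subseteq \sci_{Y\cup T}$), whose kernel is $\sci_{Y\cup T}/\sci_{Z\cup T}$. By the previous step this kernel is $\text{Im}\,\phi$. The only point that requires a little care — and which I expect to be the main, though still routine, obstacle — is the factor ``$\times\{0\}$''. Under the embedding $\sco_{Z\cup T} \hookrightarrow \sco_Z\oplus\sco_T$, $s \mapsto (s\vb Z, s\vb T)$, a representative $s \in \sci_Y\cap\sci_T$ of a class in the kernel satisfies $s\vb T = 0$ because $s\in\sci_T$; so the kernel lands in $\sco_Z \oplus \{0\}$ and, since $\text{Im}\,\phi \subseteq \sci_Y/\sci_Z \subseteq \sco_Z$, it is precisely $\text{Im}\,\phi \times \{0\}$. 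I would double-check the compatibility with the fibre-product description $\sco_{Z\cup T} = \sco_Z \times_{\sco_{Z\cap T}} \sco_T$: the class of $s$ maps to $0$ in $\sco_{Z\cap T}$ because $s\in\sci_T$, which confirms that $(s \bmod \sci_Z,\, 0)$ really is a section of $\sco_{Z\cup T}$.

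Part (b) follows from comaximality: $Z\cap T = \emptyset$ means $\sci_Z + \sci_T = \sco_P$ stalkwise. Given a local lift $\tilde t \in \sci_Y$ of a section $t$ of $\sci_Y/\sci_Z$, I would write $1 = a + b$ with $a\in\sci_Z$, $b\in\sci_T$; then $b\tilde t \in \sci_Y\cap\sci_T = \sci_{Y\cup T}$ (as $\sci_Y$ is an ideal and $b\in\sci_T$) and $\phi(b\tilde t) = b\tilde t \equiv \tilde t \equiv t \pmod{\sci_Z}$, since $a\tilde t \in \sci_Z$. As surjectivity of a sheaf map is a stalk-local condition, this shows $\phi$ is an epimorphism.

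Finally, for (c) I would show that $\sci_X + \sci_T$ annihilates $\Cok \phi$, so that the latter is a module over $\sco_P/(\sci_X+\sci_T) = \sco_{X\cap T}$. That $\sci_X$ annihilates is immediate: for $a\in\sci_X$ and $s\in\sci_Y$ one has $as\in\sci_X\sci_Y\subseteq\sci_Z$, so $\sci_X$ already annihilates all of $\sci_Y/\sci_Z$, hence its quotient $\Cok\phi$. That $\sci_T$ annihilates $\Cok\phi$ is the statement $\sci_T\cdot(\sci_Y/\sci_Z)\subseteq\text{Im}\,\phi$: for $b\in\sci_T$ and $s\in\sci_Y$, the product $bs$ lies in $\sci_Y\cap\sci_T = \sci_{Y\cup T}$ and maps under $\phi$ to $b\cdot(s \bmod\sci_Z)$, exhibiting the latter as an element of $\text{Im}\,\phi$. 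Combining the two annihilation statements yields the claim.
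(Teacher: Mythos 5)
Your proof is correct and follows essentially the same route as the paper's: the same kernel computation $\Ker \phi = \sci_Z \cap \sci_T$ for the first sequence in (a), the same identification of $\sci_{Y \cup T}/\sci_{Z \cup T}$ with $\text{Im}\, \phi \times \{0\}$ inside $\sco_Z \oplus \sco_T$, and the same observation for (c) that $\Cok \phi$ is killed by $\sci_X + \sci_T$. The only cosmetic difference is in (b), where you argue stalkwise from $\sci_Z + \sci_T = \sco_P$, while the paper notes that $\phi$ is already surjective on the open neighbourhood $P \setminus T$ of $\text{Supp}(\sci_Y/\sci_Z)$; these are two dressings of the same elementary fact.
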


\begin{proof}
(a) One has $\Ker \phi = \sci_Z \cap (\sci_Y \cap \sci_T) = \sci_Z \cap 
\sci_T =: \sci_{Z \cup T}$ whence the first exact sequence. For the second exact 
sequence, notice that the image of the composite morphism $\sci_{Y \cup T} 
\hookrightarrow \sco_P \ra \sco_Z$ coincides with $\text{Im}\, \phi$, and that 
the composite morphism $\sci_{Y \cup T} \hookrightarrow \sco_P \ra \sco_T$ is 0 
(because $\sci_{Y \cup T} \subseteq \sci_T$).   

(b) $\text{Supp}(\sci_Y/\sci_Z) \subseteq Z$ and $\phi$ is surjective   
on the open neighbourhood $P \setminus T$ of $Z$. 

(c) $\Cok \phi \simeq \sci_Y/(\sci_Z + (\sci_Y \cap \sci_T))$ is annihilated 
by $\sci_X + \sci_T =: \sci_{X \cap T}$. 
\end{proof}

\subsection{A double line union a line}\label{SS:doublecupaline} 
Let $X$ be a (primitive) double structure on the line $L_1 \subset \piii$. 
We use the results and notation from Subsection~\ref{SS:p2} (with $x$ 
replaced by $x_2$, $y$ by $x_3$ and $L$ by $L_1$).  

Recall that $\sco_X \simeq \sco_{L_1} \oplus \sco_{L_1}(l)$ as an 
$\sco_{L_1}$-module, for some $l \geq -1$, and that if $1 \in \tH^0(\sco_X)$ 
and $e_1 \in \tH^0(\sco_X(-l))$ are the corresponding generators of the graded 
$\tH^0_\ast(\sco_{L_1}) = k[x_0,x_1]$-module $\tH^0_\ast(\sco_X)$ then the 
multiplicative structure of $\tH^0_\ast(\sco_X)$ is defined by $e_1^2 = 0$ 
and the morphism of $k[x_0,x_1]$-algebras $S=\tH^0_\ast(\sco_\piii) \ra 
\tH^0_\ast(\sco_X)$ maps $x_2$ to $ae_1$ and $x_3$ to $be_1$, for some coprime 
$a,\, b \in \tH^0(\sco_{L_1}(l+1))$.

Recall also, from Remark~\ref{R:etaeta1}, that one has exact sequences$\, :$ 
\begin{gather*}
0 \lra \sci_X \lra \sci_{L_1} \overset{\displaystyle \pi}{\lra} \sco_{L_1}(l) 
\lra 0\, ,\\
0 \lra \sci_{L_1}^2 \lra \sci_X \overset{\displaystyle \eta}{\lra} 
\sco_{L_1}(-l-2) \lra 0\, ,   
\end{gather*}   
where $\pi$ maps $x_2,\, x_3 \in \tH^0(\sci_{L_1}(1))$ to 
$a,\, b \in \tH^0(\sco_{L_1}(l+1))$, respectively, and where 
$\eta$ maps $F_2 := -bx_2 + ax_3 \in \tH^0(\sci_X(l+2))$ to $1 \in 
\tH^0(\sco_{L_1})$. It follows that $I(X) = SF_2 + I(L_1)^2$. 

\vskip2mm 

Consider, now, another line $L \subset \piii$ and let $\phi$ (resp., $\psi$) 
denote the composite morphism$\, :$ 
\begin{gather*}
\sci_{L_1 \cup L} \ra \sci_{L_1} \overset{\displaystyle \pi}{\lra} 
\sco_{L_1}(l)\\  
(\text{resp.,}\  \  \sci_{X \cup L} \ra \sci_X \overset{\displaystyle \eta}{\lra} 
\sco_{L_1}(-l-2))\, . 
\end{gather*}
It follows, from Lemma~\ref{L:ycupt}, that one has exact sequences$\, :$ 
\begin{gather*}
0 \lra \sci_{L_1^{(1)} \cup L} \lra \sci_{X \cup L} 
\overset{\displaystyle \psi}{\lra} \sco_{L_1}(-l-2)\, ,\\   
0 \lra \text{Im}\, \phi \times \{0\} \lra \sco_{X \cup L} \lra \sco_{L_1 \cup L} 
\lra 0\, . 
\end{gather*}
Moreover, $\Cok \psi$ is an $\sco_{L_1 \cap L}$-module. Lemma~\ref{L:zcupw} 
implies that$\, :$  

\begin{lemma}\label{L:l1(1)cupl1prim} 
The homogeneous ideal $I(L_1^{(1)} \cup L_1^\prim)$ of $L_1^{(1)} \cup L_1^\prim$ 
admits the following graded free resolution$\, :$ 
\[
0 \lra 2S(-5) \overset{\displaystyle d_2}{\lra} 7S(-4) 
\overset{\displaystyle d_1}{\lra} 6S(-3) 
\overset{\displaystyle d_0}{\lra}  
I(L_1^{(1)} \cup L_1^\prim) \lra 0 
\]
with the differentials $d_0,\, d_1,\, d_2$ defined by the matrices$\, :$ 
\begin{gather*} 
(x_0x_2^2\, ,\,  x_0x_2x_3\, ,\,  x_0x_3^2\, ,\,  x_1x_2^2\, ,\,  x_1x_2x_3\, ,
\,  x_1x_3^2)\, ,\\
\begin{pmatrix}
-x_3 & 0    & 0    & 0    & -x_1 & 0    & 0\\ 
 x_2 & -x_3 & 0    & 0    & 0    & -x_1 & 0\\ 
 0   &  x_2 & 0    & 0    & 0    & 0    & -x_1\\ 
 0   &  0   & -x_3 & 0    & x_0  & 0    & 0\\
 0   &  0   &  x_2 & -x_3 & 0    & x_0  & 0\\
 0   &  0   & 0    &  x_2 & 0    & 0    & x_0 
\end{pmatrix}
\, ,\  
\begin{pmatrix} 
-x_1 & 0\\
 0   & -x_1\\
 x_0 &  0\\
 0   & x_0\\
 x_3 & 0\\
-x_2 & x_3\\
 0   & -x_2
\end{pmatrix}
\end{gather*}
\end{lemma}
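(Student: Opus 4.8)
The plan is to realize $I(L_1^{(1)}\cup L_1^\prime)$ via Lemma~\ref{L:zcupw}(a) as a tensor product of two free resolutions. First I would observe that $L_1^{(1)}\cup L_1^\prime$ is the union of two disjoint subschemes: $Z:=L_1^{(1)}$ is the first infinitesimal neighbourhood of the line $L_1$ of equations $x_2=x_3=0$, while $W:=L_1^\prime$ is the line of equations $x_0=x_1=0$. These are disjoint since a common point would need $x_0=x_1=x_2=x_3=0$. Now $Z$ is arithmetically CM of pure codimension $2$ (it is the structure defined by $\sci_{L_1}^2$, with $\tH^1_\ast(\sci_Z)=0$), and $W$ is likewise arithmetically CM of pure codimension $2$. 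Hence Lemma~\ref{L:zcupw}(b) applies and $I(L_1^{(1)}\cup L_1^\prime)=I(Z)I(W)$, and part (a) tells me that a minimal free resolution is obtained by tensoring minimal resolutions of $I(Z)$ and $I(W)$.

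Next I would write down the two ingredient resolutions explicitly. The ideal $I(L_1^\prime)=(x_0,x_1)$ has the Koszul resolution
\[
0\lra S(-2)\xra{\begin{pmatrix}-x_1\\ x_0\end{pmatrix}}2S(-1)\xra{(x_0,\,x_1)}I(L_1^\prime)\lra 0\, .
\]
For $I(L_1^{(1)})=\sci_{L_1}^2=(x_2^2,x_2x_3,x_3^2)$, the Eagon--Northcott (Hilbert--Burch) resolution gives
\[
0\lra 2S(-3)\xra{\begin{pmatrix}-x_3&0\\ x_2&-x_3\\ 0&x_2\end{pmatrix}}3S(-2)\xra{(x_2^2,\,x_2x_3,\,x_3^2)}I(L_1^{(1)})\lra 0\, .
\]
Both are minimal. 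Tensoring these complexes over $S$ produces a complex with terms $3S(-3)$, $(2S(-3))\oplus(6S(-4))\cong\dots$, and so on; after assembling, the total complex has length $2$ with free modules of the ranks $6$, $7$, $2$ in degrees $3$, $4$, $5$ respectively. I would then verify that the generators $x_ix_2^2,\,x_ix_2x_3,\,x_ix_3^2$ ($i=0,1$) of the product ideal $I(Z)I(W)$ match the listed $d_0$, which is immediate, and that the two differentials obtained from the tensor product agree, up to reordering of basis elements, with the displayed matrices for $d_1$ and $d_2$.

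The only real work is bookkeeping: matching the basis ordering of the tensor product $A_\bullet\otimes_S B_\bullet$ (where $A_\bullet$ resolves $I(L_1^{(1)})$ and $B_\bullet$ resolves $I(L_1^\prime)$) to the particular column and row conventions used in the displayed matrices, including the signs dictated by the Koszul sign rule for the total complex of a double complex. Concretely, the middle term $7S(-4)$ splits as the degree-$4$ part of the tensor product, namely $(3S(-2)\otimes S(-2))\oplus(2S(-3)\otimes 2S(-1))$, of ranks $3+4=7$; the differentials then restrict to the linear maps built from $\pm x_i$ and the Koszul/Hilbert--Burch blocks, and I would confirm that the signs in $d_1$ and $d_2$ are exactly those forced by $d_1 d_2=0$. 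The main obstacle, such as it is, is purely clerical sign- and index-tracking; the structural content is entirely supplied by Lemma~\ref{L:zcupw}, and minimality of the tensor product is automatic because each factor resolution is minimal and the variables involved ($x_0,x_1$ versus $x_2,x_3$) are disjoint, so no constant entries can appear in the tensored differentials.
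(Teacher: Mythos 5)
Your proposal is correct and is exactly the paper's argument: the paper derives Lemma~\ref{L:l1(1)cupl1prim} directly from Lemma~\ref{L:zcupw}, taking $Z = L_1^{(1)}$ and $W = L_1^\prim$ (both arithmetically CM of pure codimension $2$ and disjoint) and tensoring the Hilbert--Burch resolution of $I(L_1^{(1)}) = (x_2,x_3)^2$ with the Koszul resolution of $I(L_1^\prim) = (x_0,x_1)$. Your rank count $6$, $3+4=7$, $2$ and the identification of the displayed matrices with the tensor-product differentials are all accurate.
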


\begin{lemma}\label{L:l=-1deg2} 
If $l = -1$ $($hence $a,\, b \in k$ and $F_2 = -bx_2 + ax_3$ is a linear 
form$)$ then $X$ is the divisor $2L_1$ on the plane $H\supset L_1$ of equation 
$F_2 = 0$. Actually, if $b \neq 0$ then $kF_2 + kx_3 = kx_2 + kx_3$ and 
$I(X) = (F_2,\, x_3^2)$. 
\qed  
\end{lemma}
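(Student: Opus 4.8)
The plan is to read everything off the description of $I(X)$ recalled at the beginning of Subsection~\ref{SS:p2} and then to make one short substitution. Recall that there $I(X) = SF_2 + I(L_1)^2$, so with $l = -1$ one has $I(X) = (F_2,\, x_2^2,\, x_2x_3,\, x_3^2)$, where $F_2 = -bx_2 + ax_3$ and $a,\, b \in \tH^0(\sco_{L_1}(l+1)) = \tH^0(\sco_{L_1}) = k$. Note that the coprimality of $a,\, b$ (equivalent, in Subsection~\ref{SS:p2}, to the surjectivity of $\e_2$) means here precisely that the two constants $a,\, b$ are not both zero, so that $F_2$ is a genuine nonzero linear form.

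First I would settle the geometric assertion. Since $F_2$ is a nonzero linear form, $H := \{F_2 = 0\}$ is a plane, and it contains $L_1$ because $F_2$ vanishes on $\{x_2 = x_3 = 0\}$. The general statement of Subsection~\ref{SS:p2} already identifies $X$ with the divisor $2L_1$ on the surface $\{F_2 = 0\}$, which is smooth along $L_1$; for $l = -1$ this surface is exactly the plane $H$. Hence $X$ is the divisor $2L_1$ on $H$ with no extra work.

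The only computation is the refinement for $b \neq 0$. The equality of linear subspaces $kF_2 + kx_3 = kx_2 + kx_3$ of $S_1$ is immediate: $F_2 \in kx_2 + kx_3$ by definition, and conversely $x_2 = -b^{-1}F_2 + ab^{-1}x_3 \in kF_2 + kx_3$ since $b \neq 0$. For the ideal identity, $(F_2,\, x_3^2) \subseteq I(X)$ is clear, so it suffices to show $x_2x_3,\, x_2^2 \in (F_2,\, x_3^2)$. Writing $bx_2 = ax_3 - F_2$, I would compute $bx_2x_3 = ax_3^2 - F_2x_3$ and $b^2x_2^2 = (ax_3 - F_2)^2 = a^2x_3^2 - 2ax_3F_2 + F_2^2$; both right-hand sides lie in $(F_2,\, x_3^2)$, and dividing by the nonzero constants $b$ and $b^2$ gives $x_2x_3,\, x_2^2 \in (F_2,\, x_3^2)$. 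Therefore $I(X) = (F_2,\, x_2^2,\, x_2x_3,\, x_3^2) = (F_2,\, x_3^2)$, which also exhibits $X$ as a complete intersection of type $(1,2)$, cut out by the plane $H$ and the quadric $x_3^2$.

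There is essentially no obstacle: the statement is a degenerate specialization of the description of double lines already established in Subsection~\ref{SS:p2}, and the auxiliary claim reduces to a one-line substitution using the invertibility of the constant $b$. The only point demanding care is the bookkeeping of notation, since Subsection~\ref{SS:p2} is written with variables $x,\, y$ and line $L$ that correspond here to $x_2,\, x_3$ and $L_1$, together with the observation that for $l = -1$ coprimality of $a,\, b$ degenerates to being not both zero.
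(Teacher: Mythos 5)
Your proof is correct and follows exactly the route the paper intends: the lemma is stated with no written proof because it is a direct specialization of the description of $I(X) = (F_2,\, x_2^2,\, x_2x_3,\, x_3^2)$ from Subsection~\ref{SS:p2}, and your substitution $bx_2 = ax_3 - F_2$ correctly reduces $x_2x_3$ and $x_2^2$ modulo $(F_2,\, x_3^2)$ when $b \neq 0$. Nothing is missing.
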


\begin{prop}\label{P:genixcupl1prim} 
If $l \geq 0$ then $I(X\cup L_1^\prim) = SF_2 + (x_0,\, x_1)(x_2,\, x_3)^2$. 
\end{prop}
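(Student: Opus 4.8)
The plan is to reduce the assertion about the scheme-theoretic union to a purely algebraic identity of homogeneous ideals and then to dispose of that identity by a short computation with monomial ideals. First I would record that $X$ and $L_1^\prim$ are \emph{disjoint} (their supports are the complementary lines $x_2 = x_3 = 0$ and $x_0 = x_1 = 0$, which cannot meet), so that $\sci_{X \cup L_1^\prim} = \sci_X \cap \sci_{L_1^\prim}$ as subsheaves of $\sco_\piii$. Since $\tH^0_\ast(-)$ is left exact and carries the intersection of two subsheaves of $\sco_\piii$ to the intersection of their saturated ideals, this gives $I(X \cup L_1^\prim) = I(X) \cap I(L_1^\prim)$. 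Recalling from Subsection~\ref{SS:p2} that $I(X) = SF_2 + (x_2, x_3)^2$ with $F_2 = -bx_2 + ax_3$, and that $I(L_1^\prim) = (x_0, x_1)$, the goal becomes the ideal identity
\[
I(X) \cap (x_0, x_1) = SF_2 + (x_0, x_1)(x_2, x_3)^2\, .
\]

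The hypothesis $l \geq 0$ enters decisively at this point: the coefficients $a,\, b$ lie in $k[x_0, x_1]_{l+1}$ with $l + 1 \geq 1$, hence $a,\, b \in (x_0, x_1)$ and therefore $F_2 = -bx_2 + ax_3 \in (x_0, x_1)$. (For $l = -1$ this fails, $F_2$ being then a linear form with constant coefficients, which is exactly why the formula must change in that case.) The inclusion $\supseteq$ is now immediate: $SF_2 \subseteq I(X)$ and $SF_2 \subseteq (x_0,x_1)$, while $(x_0,x_1)(x_2,x_3)^2 \subseteq (x_2,x_3)^2 \subseteq I(X)$ and visibly lies in $(x_0,x_1)$.

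For the reverse inclusion I would take $g \in I(X) \cap (x_0, x_1)$ and write $g = \lambda F_2 + q$ with $\lambda \in S$ and $q \in (x_2, x_3)^2$, using $I(X) = SF_2 + (x_2,x_3)^2$. Since both $g$ and $\lambda F_2$ lie in $(x_0, x_1)$, so does $q$, whence $q \in (x_0, x_1) \cap (x_2, x_3)^2$. The last ingredient is that for monomial ideals in disjoint sets of variables the intersection equals the product: by Remark~\ref{R:monomial}(a) the generators of $(x_0, x_1) \cap (x_2, x_3)^2$ are the least common multiples of $\{x_0, x_1\}$ with $\{x_2^2, x_2x_3, x_3^2\}$, which are simply the products, so $(x_0, x_1) \cap (x_2, x_3)^2 = (x_0, x_1)(x_2, x_3)^2$. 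Thus $q \in (x_0,x_1)(x_2,x_3)^2$ and $g \in SF_2 + (x_0,x_1)(x_2,x_3)^2$, finishing the argument.

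I do not anticipate a genuine obstacle; the only steps needing care are the sheaf-to-ideal translation $I(X \cup L_1^\prim) = I(X) \cap I(L_1^\prim)$ (resting on disjointness and saturatedness) and the decomposition $g = \lambda F_2 + q$ with $q$ landing in the product ideal. An alternative route, more in the spirit of the surrounding subsection, would be to feed $F_2$ — now a section of $\sci_{X \cup L_1^\prim}$ precisely because $l \geq 0$ — into the exact sequence $0 \to \sci_{L_1^{(1)} \cup L_1^\prim} \to \sci_{X \cup L_1^\prim} \xra{\psi} \sco_{L_1}(-l-2) \to 0$ (here $\psi$ is onto since $L_1 \cap L_1^\prim = \emptyset$), note that $\psi(F_2) = 1$ generates $\tH^0_\ast(\sco_{L_1}(-l-2))$ as an $S$-module, and combine this with the resolution of $I(L_1^{(1)} \cup L_1^\prim) = (x_0,x_1)(x_2,x_3)^2$ from Lemma~\ref{L:l1(1)cupl1prim}; both routes yield the same conclusion.
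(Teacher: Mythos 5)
Your argument is correct, but your primary route is genuinely different from the one the paper takes. The paper proves the proposition via the exact sequence $0 \to \sci_{L_1^{(1)} \cup L_1^\prim} \to \sci_{X\cup L_1^\prim} \xra{\psi} \sco_{L_1}(-l-2) \to 0$: the hypothesis $l \geq 0$ puts $F_2$ in $I(X\cup L_1^\prim)$ with $\psi(F_2) = 1$, so $\tH^0_\ast(\psi)$ is surjective and the ideal is $SF_2$ plus $I(L_1^{(1)}\cup L_1^\prim)$, the latter identified as $(x_0,x_1)(x_2,x_3)^2$ via Lemma~\ref{L:l1(1)cupl1prim} — this is precisely the ``alternative route'' you sketch in your last paragraph. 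Your main argument instead reduces everything to the ideal-theoretic identity $I(X)\cap (x_0,x_1) = SF_2 + (x_0,x_1)(x_2,x_3)^2$, proved by splitting $g = \lambda F_2 + q$ with $q \in (x_2,x_3)^2$, observing that $F_2 \in (x_0,x_1)$ forces $q \in (x_0,x_1)\cap(x_2,x_3)^2$, and invoking the coprime-monomial computation of Remark~\ref{R:monomial}(a). Each step checks out, including the translation $I(X\cup L_1^\prim) = I(X)\cap I(L_1^\prim)$, which is legitimate because both ideals are saturated and $\tH^0$ commutes with intersections of subsheaves of $\sco_\p$. Your approach is more elementary and self-contained — it needs only the generators of $I(X)$ from Subsection~\ref{SS:p2} and a monomial calculation, bypassing Lemma~\ref{L:l1(1)cupl1prim} entirely. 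What the paper's route buys, and what yours does not directly deliver, is the short exact sequence of graded modules $0 \to I(L_1^{(1)}\cup L_1^\prim) \to I(X\cup L_1^\prim) \to S(L_1)(-l-2) \to 0$, which the authors explicitly flag as the tool for producing a graded free resolution of $I(X\cup L_1^\prim)$ — a byproduct they rely on elsewhere; your intersection argument yields the generators but not the resolution without further work.
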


\begin{proof} 
By what has been said at the beginning of the subsection, one has an exact 
sequence$\, :$ 
\[
0 \lra \sci_{L_1^{(1)} \cup L_1^\prim} \lra \sci_{X\cup L_1^\prim} 
\overset{\displaystyle \psi}{\lra} \sco_{L_1}(-l-2) \lra 0\, .
\]
Our hypothesis implies that $F_2 \in I(X\cup L_1^\prim)$ from which one deduces 
that the sequence$\, :$ 
\[
0 \lra I(L_1^{(1)} \cup L_1^\prim) \lra I(X\cup L_1^\prim) 
\xra{\displaystyle \tH^0_\ast(\psi)} S(L_1)(-l-2) \lra 0
\]
is exact. Lemma~\ref{L:l1(1)cupl1prim} implies that 
$I(L_1^{(1)} \cup L_1^\prim) = (x_0,\, x_1)(x_2,\, x_3)^2$. 
Notice that the last exact sequence allows one to get a graded free 
resolution of $I(X \cup L_1^\prim)$.  
\end{proof}

\begin{lemma}\label{L:l1(1)cupl} 
Let $L$ be a line contained in the plane $x_3 = 0$ and different from $L_1$. 
It is given by equations of the form $\ell = x_3 = 0$, where $\ell = c_0x_0 
+ c_1x_1 + c_2 x_2$ and at least one of the coefficients $c_0$, $c_1$ is 
non-zero. Using a linear change of coordinates invariating $x_2$, $x_3$ and the 
vector space $kx_0 + kx_1$, one can assume that $\ell = x_1 + cx_2$, $c \in k$. 
Then the homogeneous ideal $I(L_1^{(1)} \cup L)$ of $L_1^{(1)} \cup L$ admits 
the following graded free resolution$\, :$ 
\[
0 \lra \begin{matrix} S(-3)\\ \oplus\\ S(-4) \end{matrix} 
\xra{\begin{pmatrix} 
-x_3 & -\ell x_2\\
x_2 & 0\\
0 & x_3
\end{pmatrix}} 
\begin{matrix} 2S(-2)\\ \oplus\\ S(-3) \end{matrix} 
\xra{\displaystyle (x_2x_3\, ,\, x_3^2\, ,\, \ell x_2^2)} 
I(L_1^{(1)} \cup L) \lra 0\, .
\]
\end{lemma}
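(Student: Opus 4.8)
The plan is to realize $L_1^{(1)} \cup L$ as a basic double linkage and then read off a resolution, or alternatively to compute $I(L_1^{(1)} \cup L)$ directly as an intersection of monomial-type ideals and verify the resolution by hand. The cleanest route uses the first exact sequence of Lemma~\ref{L:ycupt}. First I would recall that $L_1^{(1)}$ is defined by $\sci_{L_1}^2 = (x_2,x_3)^2$, so $I(L_1^{(1)}) = (x_2^2, x_2x_3, x_3^2)$, and that $I(L) = (\ell, x_3)$ where $\ell = x_1 + cx_2$. Since $L \subseteq \{x_3 = 0\}$ and $L_1^{(1)}$ meets $\{x_3=0\}$ in the divisor $2L_1$ cut out on that plane, the two schemes share the point $L_1 \cap L$ but $L$ is not contained in $L_1^{(1)}$. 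The homogeneous ideal of the union is the intersection $I(L_1^{(1)} \cup L) = I(L_1^{(1)}) \cap I(L)$.

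Next I would apply the basic double linkage recipe of Remark~\ref{R:basicdoublelinkage} in reverse, or rather directly: observe that $L_1^{(1)} \cup L$ can be obtained from the line $L_1$ (resolved by the Koszul complex on $x_2, x_3$) by the construction $J = S\cdot x_3^2 + I(L_1)\cdot \ell$, taking $f = x_3^2$ of degree $a=2$ (which lies in $I(L_1^{(1)}) \subseteq I(L_1)^{?}$—here I must be careful) and $h = \ell$ of degree $c = 1$. The more robust approach is to write down the three claimed generators $x_2x_3,\, x_3^2,\, \ell x_2^2$ and check two things: that they indeed generate $I(L_1^{(1)}) \cap (\ell, x_3)$, and that the displayed $3\times 2$ matrix gives all the syzygies. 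The generator $\ell x_2^2$ is the key one: it is the unique degree-3 element needed because $x_2^2 \in I(L_1^{(1)})$ but $x_2^2 \notin I(L)$, whereas $\ell x_2^2$ lies in both ideals. I would verify the containment $(x_2x_3, x_3^2, \ell x_2^2) \subseteq I(L_1^{(1)}) \cap I(L)$ term by term, then prove the reverse by a saturation argument: working modulo $x_3$ reduces the problem to $S' = S/(x_3)$, where the intersection becomes $(x_2^2) \cap (\ell) = (\ell x_2^2)$ since $\ell$ and $x_2$ are coprime in $S'$.

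Having fixed the three generators, I would confirm the syzygy matrix by direct multiplication: the first column encodes $-x_3 \cdot (x_2 x_3) + x_2 \cdot (x_3^2) = 0$ and the second encodes $-\ell x_2 \cdot (x_2 x_3) + x_3 \cdot (\ell x_2^2) = 0$, after reordering the generators to match $(x_2 x_3,\, x_3^2,\, \ell x_2^2)$. Exactness on the left (that the two syzygies are independent and generate the full syzygy module, with the final map injective) follows because the resolution has the expected Hilbert-function numerics for a locally CM curve of degree $2+1 = 3$; I would check this by computing that the alternating sum of the graded Betti numbers reproduces the Hilbert series of $S/I(L_1^{(1)} \cup L)$, or invoke that a $2$-generated syzygy module of a codimension-$2$ ideal with $3$ generators and the Hilbert-Burch shape forces the matrix to be the one displayed (up to the obvious column operations).

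The main obstacle I anticipate is proving that the three listed elements \emph{generate} the intersection ideal, rather than merely lie in it; the degree bookkeeping must rule out extra minimal generators in degrees $3$ and $4$. I expect the cleanest way past this is the reduction modulo $x_3$ described above combined with the observation that $x_3^2$ and $x_2 x_3$ already account for everything divisible by $x_3$, so any element of the intersection can be written, after subtracting multiples of $x_2 x_3$ and $x_3^2$, as an element of $S'$ lying in $(x_2^2) \cap (\ell) = (\ell x_2^2)$. Once generation is settled, the Hilbert-Burch theorem guarantees that the resolution is as claimed, since the $2 \times 2$ minors of the displayed matrix must recover the three generators up to scalar, which is a short determinant check: the minors are $\pm x_3 \cdot x_3$, $\pm x_3 \cdot x_2$, and $\pm x_2 \cdot \ell x_2$ type expressions matching $x_3^2$, $x_2 x_3$, $\ell x_2^2$.
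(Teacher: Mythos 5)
Your proposal is correct, and the first half coincides in substance with the paper's: both compute $I(L_1^{(1)}\cup L)=(x_2^2,x_2x_3,x_3^2)\cap(\ell,x_3)=(x_2x_3,\,x_3^2,\,\ell x_2^2)$, though you supply the justification (reduction modulo $x_3$ and coprimality of $\ell$ and $x_2$ in $S/(x_3)$) that the paper only asserts. Where you genuinely diverge is in establishing the resolution itself. The paper links $L_1^{(1)}\cup L$ by the complete intersection $(x_3^2,\,\ell x_2^2)$ to the curve $Z'$ with $I(Z')=(x_3,\,\ell x_2)=I(L_1\cup L)$, and then invokes Ferrand's liaison result, so the resolution is produced by the mapping-cone formalism from the Koszul resolution of the linked (complete intersection) curve; this is uniform with the rest of the paper, where most resolutions are obtained by linkage, and it identifies the residual curve $L_1\cup L$ as a byproduct. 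You instead verify the two displayed syzygies directly and appeal to Hilbert--Burch: since the signed $2\times 2$ minors of the matrix are exactly $x_2x_3$, $-x_3^2$, $\ell x_2^2$ and these generate an ideal of height $2$, the Buchsbaum--Eisenbud/Hilbert--Burch criterion forces the complex $0\to S^2\to S^3\to S$ to be exact, so it resolves $S/I(L_1^{(1)}\cup L)$ once generation is known. Your argument is more elementary and self-contained (no liaison machinery), at the cost of having to prove generation of the intersection by hand, which liaison would also have delivered. One stylistic caution: your phrasing of the Hilbert--Burch step (``forces the matrix to be the one displayed'') should be tightened to the precise statement that exactness of the complex follows from the grade-$2$ condition on the ideal of maximal minors together with the determinant check you describe; the exploratory detour through basic double linkage at the start can simply be dropped.
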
 

\begin{proof}
One has $I(L_1^{(1)} \cup L) = (x_2^2,\, x_2x_3,\, x_3^2) \cap (\ell,\, x_3) = 
(x_2x_3,\, x_3^2,\, \ell x_2^2)$. If $Z^\prim$ is the curve directly linked to 
$L_1^{(1)} \cup L$ by the complete intersection defined by $x_3^2$ and 
$\ell x_2^2$ then $I(Z^\prim) = (x_3,\, \ell x_2)$ hence $Z^\prim = L_1 \cup L$. 
One can apply, now, Ferrand's result about liaison. 
\end{proof}

\begin{prop}\label{P:genixcupl} 
Let $X$ be the double structure on the line $L_1$ considered at the 
beginning of this subsection and let $L$ be the line of equations $\ell = 
x_3 =0$, $\ell := x_1+cx_2$, considered in Lemma~\ref{L:l1(1)cupl}. 

\emph{(a)} If $x_1 \mid b$, i.e., if $b = x_1b_1$ then 
$I(X\cup L) = (F_2-cb_1x_2^2,\, x_2x_3,\, x_3^2,\, \ell x_2^2)$. 

\emph{(b)} If $x_1 \nmid b$ then $I(X\cup L) = (\ell F_2,\, x_2x_3,\, 
x_3^2,\, \ell x_2^2)$.   
\end{prop}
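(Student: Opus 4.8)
The plan is to describe $X\cup L$ as a basic double linkage (Remark~\ref{R:basicdoublelinkage}) built on the double line $X$, and to identify the resulting ideal by a careful intersection-of-ideals computation, splitting according to whether $x_1 \mid b$. Recall that $I(X) = SF_2 + I(L_1)^2 = (F_2,\, x_2^2,\, x_2x_3,\, x_3^2)$ with $F_2 = -bx_2 + ax_3$, and that the line $L$ has equations $\ell = x_3 = 0$. Since $L \not\subseteq \mathrm{Supp}\,X = L_1$, the union $X\cup L$ has $\sci_{X\cup L} = \sci_X \cap \sci_L$, and I would compute this intersection directly. The key structural fact is that $L$ and $X$ meet only along $x_3 = 0$, so the homogeneous ideal should be expressible using $F_2$ (suitably corrected by a multiple of $x_2^2$), together with the generators $x_2x_3,\, x_3^2,\, \ell x_2^2$ of $I(L_1^{(1)} \cup L)$ furnished by Lemma~\ref{L:l1(1)cupl}.

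The first step is to set up the exact sequences from Lemma~\ref{L:ycupt} as at the beginning of this subsection: with $\phi$ the composite $\sci_{L_1\cup L} \to \sci_{L_1} \xrightarrow{\pi} \sco_{L_1}(l)$ and $\psi$ the composite $\sci_{X\cup L} \to \sci_X \xrightarrow{\eta} \sco_{L_1}(-l-2)$, I get an exact sequence
\[
0 \lra \sci_{L_1^{(1)}\cup L} \lra \sci_{X\cup L}
\overset{\displaystyle \psi}{\lra} \sco_{L_1}(-l-2)
\]
with $\Cok\psi$ an $\sco_{L_1\cap L}$-module. Applying $\tH^0_\ast(-)$ reduces the problem to deciding whether $\tH^0_\ast(\psi)$ is surjective, i.e. whether $F_2$ (or a corrected version of it) actually lies in $I(X\cup L)$. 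This is exactly where the dichotomy enters. In case (a), $b = x_1 b_1$; since $\ell = x_1 + cx_2$, one has $x_1 = \ell - cx_2$, so $F_2 = -x_1 b_1 x_2 + ax_3 = -b_1 x_2(\ell - cx_2) + ax_3 = -b_1 x_2 \ell + cb_1 x_2^2 + ax_3$. Hence $F_2 - cb_1 x_2^2 = -b_1\ell x_2 + ax_3$ vanishes on $L$ (where $\ell = x_3 = 0$) and lies in $\sci_X$ (it differs from $F_2$ by an element of $I(L_1)^2$), so $F_2 - cb_1 x_2^2 \in I(X\cup L)$. In case (b), $x_1 \nmid b$, the correction trick fails and one must multiply $F_2$ itself by $\ell$ to land inside $\sci_L$; the natural generator is then $\ell F_2 \in I(X\cup L)$.

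The second step is to verify the two claimed generating sets actually generate, not merely that the listed elements belong to the ideal. I would do this by showing the listed generators cut out a locally CM curve whose ideal sheaf agrees with $\sci_{X\cup L}$, most cleanly via liaison: in each case the quadrics $x_3^2$ and $\ell x_2^2$ (which lie in the proposed ideal) define a complete intersection of type $(2,3)$, and I would compute the residual and check, as in the proof of Lemma~\ref{L:l1(1)cupl}, that it links back to $X\cup L$. Concretely, in case (a) I expect $(F_2 - cb_1 x_2^2,\, x_2x_3,\, x_3^2,\, \ell x_2^2)$ to be saturated with the right Hilbert polynomial (degree $3$, arithmetic genus matching $X\cup L$); in case (b) the same for $(\ell F_2,\, x_2x_3,\, x_3^2,\, \ell x_2^2)$. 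Alternatively, one checks directly that $I(L_1^{(1)}\cup L) = (x_2x_3,\, x_3^2,\, \ell x_2^2)$ (Lemma~\ref{L:l1(1)cupl}) together with the single extra generator coming from $\psi$ exhausts $\tH^0_\ast(\sci_{X\cup L})$, using the exact sequence above and the fact that $\Cok\psi$ is supported on the point $L_1\cap L$.

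The main obstacle I anticipate is case (b): here $\ell F_2$ has degree $3$ and is not a minimal generator coming from $F_2$ alone, so one must confirm that no generator of lower degree (in particular $F_2$ itself, or $x_2^2$) can be added, i.e. that $F_2 \notin I(X\cup L)$ and $x_2^2 \notin I(X\cup L)$ when $x_1 \nmid b$. The cleanest way to settle this is to test membership modulo $x_3$: passing to $S^\prime = S/Sx_3 = k[x_0,x_1,x_2]$, the image of $I(X\cup L)$ should be the saturated ideal $(\ell F_2 \bmod x_3,\, \ell x_2^2) = (\ell)\cdot(b x_2,\, x_2^2)$-type ideal, and one checks $x_2^2 \notin$ this image precisely because $x_1 \nmid b$ — an argument entirely parallel to the saturation computation used in Case 2, subcase (c) of the proof of Prop.~\ref{P:z}. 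This reduction to the plane $x_3 = 0$ is what makes both the membership and the minimality assertions verifiable without writing out a full free resolution.
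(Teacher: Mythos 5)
Your proposal follows essentially the same route as the paper: the exact sequence $0 \to \sci_{L_1^{(1)}\cup L} \to \sci_{X\cup L} \xrightarrow{\psi} \sco_{L_1}(-l-2)$ with $\Cok\psi$ supported at $L_1\cap L$, the observation that $F_2-cb_1x_2^2$ (resp.\ $\ell F_2$) lies in $I(X\cup L)$ and maps to $1$ (resp.\ $x_1$) under $\eta$, and the identification of $I(X\cup L)$ as $I(L_1^{(1)}\cup L)$ plus one extra generator via $\tH^0_\ast$ of the resulting short exact sequence; the liaison/Hilbert-polynomial alternative you mention is not needed.

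One point in case (b) is stated imprecisely. What must be ruled out is not merely $F_2 \notin I(X\cup L)$ (and certainly not $x_2^2 \notin I(X\cup L)$, which is trivial and irrelevant since $\eta(x_2^2)=0$): you must show that \emph{no} element of $F_2 + I(L_1^{(1)})_{l+2}$ lies in $I(X\cup L)$, equivalently that $1 \notin \operatorname{Im}\tH^0(\psi(l+2))$. The logic is: if $\operatorname{Im}\psi$ were all of $\sco_{L_1}(-l-2)$, then $\tH^1_\ast(\sci_{L_1^{(1)}\cup L})=0$ (Lemma~\ref{L:l1(1)cupl}) would force the existence of some $f = F_2 + f_0x_2^2 + f_1x_2x_3 + f_2x_3^2$ in $I(X\cup L)$; restricting such an $f$ to $L$ (not just to the plane $x_3=0$) gives $-(b\vb L)x_2 + (f_0\vb L)x_2^2 = 0$ in $S(L)\simeq k[x_0,x_2]$, i.e.\ $x_2 \mid b(x_0,-cx_2)$, which fails exactly when $x_1\nmid b$. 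Your restriction-mod-$x_3$ test works only if you apply it to the general corrected element in this way; with that adjustment the argument closes and coincides with the paper's.
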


\begin{proof} 
By what has been said at the beginning of the subsection, one has an exact 
sequence$\, :$ 
\[
0 \lra \sci_{L_1^{(1)}\cup L} \lra \sci_{X\cup L} \overset{\displaystyle \psi}{\lra} 
\sco_{L_1}(-l-2)\, .
\]
Moreover, $\Cok \psi$ is an $\sco_{L_1 \cap L} = \sco_{\{P_0\}}$-module. 
Actually, since $\ell F_2 \in I(X\cup L)$ and $\psi(\ell F_2) = 
\eta(\ell F_2) = x_1 \in \tH^0(\sco_{L_1}(1))$, it follows that 
$x_1\sco_{L_1}(-l-3) \subseteq \text{Im}\, \psi \subseteq \sco_{L_1}(-l-2)$. 

\vskip2mm 

(a) Since $b = x_1b_1$, $F_2 - cb_1x_2^2$ vanishes on $L$ hence belongs to 
$I(X \cup L)$. Since $\psi(F_2 - cb_1x_2^2) = \eta(F_2 - cb_1x_2^2) = 1 \in 
\tH^0(\sco_{L_1})$, it follows that $\psi$ is an epimorphism and that one has 
an exact sequence$\, :$ 
\begin{equation}\label{E:ixcupla}
0 \lra I(L_1^{(1)} \cup L) \lra I(X \cup L) 
\xra{\displaystyle \tH^0_\ast(\psi)} S(L_1)(-l-2) \lra 0\, .
\end{equation}  

(b) We prove, firstly, the following$\, :$ 

\vskip2mm

\noindent
{\bf Claim.}\quad $\text{Im}\, \psi = x_1\sco_{L_1}(-l-3)$. 

\vskip2mm

\noindent
\emph{Indeed}, assume that $\text{Im}\, \psi = \sco_{L_1}(-l-2)$. Since 
$\tH^1_\ast(\sci_{L_1^{(1)} \cup L}) = 0$ by Lemma~\ref{L:l1(1)cupl}, there exists 
an element $f \in \tH^0(\sci_{X \cup L}(l+2))$ such that $\psi(f) = 1 \in 
\tH^0(\sco_{L_1})$. But $\psi(f) = \eta(f)$ hence  
$f = F_2 + f_0x_2^2 + f_1x_2x_3 + f_2x_3^2$, 
with $f_0,\, f_1,\, f_2 \in S_l$. Since $f \vb L = 0$ it follows that$\, :$ 
\[
-(b \vb L)x_2 + (f_0 \vb L)x_2^2 = 0\, . 
\tag{$*$}
\]
The composite map $k[x_0,x_2] \hookrightarrow S \twoheadrightarrow S(L)$ is 
bijective and, with respect to this identification, $b \vb L = 
b(x_0,-cx_2)$. Since $x_1 \nmid b$ it follows that $x_2 \nmid b(x_0,-cx_2)$ 
which \emph{contradicts} relation $(*)$. The claim is proven.  

\vskip2mm

\noindent
Now, according to the Claim, $\psi$ can be written as a composite morphism 
$\sci_{X \cup L} \xra{\psi^\prim} \sco_{L_1}(-l-3) \overset{x_1}{\lra} 
\sco_{L_1}(-l-2)$ with $\psi^\prim(\ell F_2) = 1 \in \tH^0(\sco_{L_1})$. One 
deduces the existence of an exact sequence$\, :$ 
\begin{equation}\label{E:ixcuplb}
0 \lra I(L_1^{(1)} \cup L) \lra I(X \cup L) 
\xra{\displaystyle \tH^0_\ast(\psi^\prim)} S(L_1)(-l-3) \lra 0\, .
\end{equation} 

Notice that the exact sequences \eqref{E:ixcupla} and \eqref{E:ixcuplb} can 
be used not only to describe a system of generators of $I(X \cup L)$ but also 
to get a graded free resolution of this ideal. 
\end{proof}  

\begin{prop}\label{P:reshoxcupl} 
Under the hypothesis of Prop.~\ref{P:genixcupl}$\, :$ 

\emph{(a)} If $x_1 \mid b$ then ${\fam0 H}^0_\ast(\sco_{X\cup L})$ admits the 
following graded free resolution$\, :$ 
\[
0 \ra \begin{matrix} S(-3)\\ \oplus\\ S(l\text{--}3) \end{matrix} 
\xra{\begin{pmatrix} 
\text{--}\ell x_2 & 0\\
x_3 & 0\\
\text{--}\ell b_1 & \text{--}x_3\\
a & x_2
\end{pmatrix}} 
\begin{matrix} S(-1)\\ \oplus\\ S(-2)\\ \oplus\\ 2S(l\text{--}2) \end{matrix} 
\xra{\begin{pmatrix} 
x_3 & \ell x_2 & 0 & 0\\
\text{--}b_1 & \text{--}a & x_2 & x_3
\end{pmatrix}} 
\begin{matrix} S\\ \oplus\\ S(l\text{--}1) \end{matrix} 
\xra{\displaystyle (1\, ,\, x_1e_1)}  
{\fam0 H}^0_\ast(\sco_{X\cup L}) \ra 0  
\]

\emph{(b)} If $x_1 \nmid b$ then ${\fam0 H}^0_\ast(\sco_{X\cup L})$ admits the 
following graded free resolution$\, :$ 
\[
0 \ra \begin{matrix} S(-3)\\ \oplus\\ S(l\text{--}2) \end{matrix} 
\xra{\begin{pmatrix} 
\text{--}\ell x_2 & 0\\
x_3 & 0\\
\text{--}\ell b & \text{--}x_3\\
x_1a & x_2
\end{pmatrix}} 
\begin{matrix} S(-1)\\ \oplus\\ S(-2)\\ \oplus\\ 2S(l\text{--}1) \end{matrix} 
\xra{\begin{pmatrix} 
x_3 & \ell x_2 & 0 & 0\\
\text{--}b & \text{--}x_1a & x_2 & x_3
\end{pmatrix}} 
\begin{matrix} S\\ \oplus\\ S(l) \end{matrix} 
\xra{\displaystyle (1\, ,\, e_1)} 
{\fam0 H}^0_\ast(\sco_{X\cup L}) \ra 0\, . 
\]
\end{prop}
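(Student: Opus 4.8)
The plan is to derive both resolutions from the structural-sheaf exact sequence
\[
0 \lra \text{Im}\,\phi\times\{0\} \lra \sco_{X\cup L} \lra \sco_{L_1\cup L} \lra 0
\]
established earlier in this subsection via Lemma~\ref{L:ycupt}, by applying $\tH^0_\ast(-)$ and then splicing free resolutions of the two outer terms through the Horseshoe Lemma. First I would record the two ingredients. On one hand, $L_1\cup L$ is the complete intersection of type $(1,2)$ cut out by $x_3$ and $x_2\ell$ (both lines lie in the plane $x_3=0$ and meet at $P_0$), so it is arithmetically CM; hence $\tH^1_\ast(\sci_{L_1\cup L})=0$ and $\tH^0_\ast(\sco_{L_1\cup L})=S/(x_3,x_2\ell)$ is cyclic, with Koszul resolution $0\to S(-3)\to S(-1)\oplus S(-2)\to S$. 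On the other hand, $\text{Im}\,\phi$ is a subsheaf of $\sci_{L_1}/\sci_X\simeq\sco_{L_1}(l)$, and using $\pi\colon x_2\mapsto a,\ x_3\mapsto b$ one computes $\phi(x_3)=b$ and $\phi(x_2\ell)=x_1a$ (here $x_2\ell=x_1x_2+cx_2^2$ with $cx_2^2\in\sci_X$), so $\text{Im}\,\phi$ is the subsheaf of $\sco_{L_1}(l)$ generated by $b$ and $x_1a$; its cokernel is supported at $P_0=L_1\cap L$ by Lemma~\ref{L:ycupt}(c).

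The case split enters exactly here. When $x_1\mid b$, write $b=x_1b_1$; since $a,b_1$ have no common zero on $L_1$, the subsheaf generated by $x_1b_1$ and $x_1a$ equals $x_1\cdot\sco_{L_1}(l)\simeq\sco_{L_1}(l-1)$, so $\tH^0_\ast(\text{Im}\,\phi)\simeq S(L_1)(l-1)$, generated by $x_1e_1$. When $x_1\nmid b$, the form $b$ is nonzero at $P_0$, whence $\text{Im}\,\phi=\sco_{L_1}(l)$ and $\tH^0_\ast(\text{Im}\,\phi)\simeq S(L_1)(l)$, generated by $e_1$. In either case the subsheaf is a twist of $\sco_{L_1}=S/(x_2,x_3)$, whose shifted Koszul resolution is $0\to S(-2+\ast)\to 2S(-1+\ast)\to S(\ast)$.

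Next I would assemble the module sequence. Applying the left-exact functor $\tH^0_\ast(-)$ to the sheaf sequence gives an injection $\tH^0_\ast(\text{Im}\,\phi)\hookrightarrow\tH^0_\ast(\sco_{X\cup L})$, and the induced map to $S/(x_3,x_2\ell)$ is surjective because that module is cyclic and the image of $1\in\tH^0(\sco_{X\cup L})$ generates it; hence
\[
0 \lra \tH^0_\ast(\text{Im}\,\phi) \lra \tH^0_\ast(\sco_{X\cup L}) \lra S/(x_3,x_2\ell) \lra 0
\]
is exact. The Horseshoe Lemma applied to this sequence and the two resolutions above produces a graded free resolution of $\tH^0_\ast(\sco_{X\cup L})$ whose term ranks, by a direct bookkeeping check, coincide exactly with those in items (a) and (b); in particular no cancellation à la Remark~\ref{R:cancellation} is needed, and $\delta_0=(1,x_1e_1)$ resp. $(1,e_1)$ simply records the two generators.

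The substantive computation — and the only place I expect real difficulty — is the determination of the two off-diagonal connecting blocks of the differentials. The degree-one block is forced by expressing $x_3\cdot 1$ and $(x_2\ell)\cdot 1$, both of which land in the kernel $\tH^0_\ast(\text{Im}\,\phi)$, in terms of the chosen generator: using $x_2=ae_1$, $x_3=be_1$ on $X$ together with $\ell=x_1+cx_2$ and $e_1^2=0$, one finds $x_3\cdot 1=be_1$ and $(x_2\ell)\cdot 1=x_1ae_1$, yielding the entries $-b,\,-x_1a$ in case (b) and $-b_1,\,-a$ in case (a). The degree-two connecting block (the terms $-\ell b_1,\,a$, resp. $-\ell b,\,x_1a$, in the matrix of $\delta_2$) is then pinned down by the requirement $\delta_1\delta_2=0$; a short verification using $b=x_1b_1$ and $\ell=x_1+cx_2$ confirms the stated matrices and completes the identification.
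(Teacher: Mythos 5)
Your proposal is correct and follows essentially the same route as the paper: the paper likewise starts from the sequence $0 \to \text{Im}\,\phi \times \{0\} \to \sco_{X\cup L} \to \sco_{L_1\cup L} \to 0$, identifies $\text{Im}\,\phi$ as the image of $(b,\,x_1a)$ (hence $x_1\sco_{L_1}(l-1)$ when $x_1 \mid b$ and all of $\sco_{L_1}(l)$ otherwise), and splices the resulting short exact sequence of graded modules with the Koszul resolutions of $S/(x_3,\ell x_2)$ and of the shifted $S(L_1)$. The only difference is that the paper leaves the assembly of the differentials implicit, whereas you spell out the Horseshoe-Lemma bookkeeping and the verification of the connecting blocks.
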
 

\begin{proof}
By what has been said at the beginning of this subsection, one has an exact 
sequence$\, :$ 
\[
0 \lra \text{Im}\, \phi \times \{0\} \lra \sco_{X \cup L} \lra \sco_{L_1 \cup L} 
\lra 0\, .
\]
Since $I(L_1 \cup L) = (x_3,\, \ell x_2)$, the image of $\phi$ coincides with 
the image of the composite morphism$\, :$ 
\[
\sco_\p(-1) \oplus \sco_\p(-2) \xra{\displaystyle (x_3\, ,\, \ell x_2)} 
\sci_{L_1} \overset{\displaystyle \pi}{\lra} \sco_{L_1}(l) 
\]
which, in turn, coincides with the image of the composite morphism$\, :$ 
\[
\sco_{L_1}(-1) \oplus \sco_{L_1}(-2) 
\xra{\begin{pmatrix} 0 & x_1\\ 1 & 0 \end{pmatrix}} 2\sco_{L_1}(-1) 
\xra{\displaystyle (a\, ,\, b)} \sco_{L_1}(l) 
\]
i.e., with the image of the morphism $(b\, ,\, x_1a) : \sco_{L_1}(-1) \oplus 
\sco_{L_1}(-2) \ra \sco_{L_1}(l)$. 

(a) In this case $\text{Im}\, \phi = x_1\sco_{L_1}(l-1)$. Since the graded 
$S$-module $\tH^0_\ast(\sco_{L_1 \cup L})$ is generated by $1 \in 
\tH^0(\sco_{L_1 \cup L})$, the morphism $\tH^0_\ast(\sco_{X \cup L}) \ra 
\tH^0_\ast(\sco_{L_1 \cup L})$ is surjective. One deduces the existence of an 
exact sequence$\, :$ 
\[
0 \lra S(L_1)(l-1) \lra \tH^0_\ast(\sco_{X \cup L}) \lra 
\tH^0_\ast(\sco_{L_1 \cup L}) \lra 0
\]
where the left morphism maps $1 \in S(L_1)$ to the element of 
$\tH^0(\sco_{X \cup L}(-l+1))$ whose image into 
$\tH^0(\sco_X(-l+1)) \oplus \tH^0(\sco_L(-l+1))$ is $(x_1e_1,0)$. 

(b) In this case $\text{Im}\, \phi = \sco_{L_1}(l)$ and one deduces, as in 
(a), the existence of an exact sequence$\, :$ 
\[
0 \lra S(L_1)(l) \lra \tH^0_\ast(\sco_{X \cup L}) \lra 
\tH^0_\ast(\sco_{L_1 \cup L}) \lra 0
\]
where the left morphism maps $1 \in S(L_1)$ to the element of 
$\tH^0(\sco_{X \cup L}(-l))$ whose image into 
$\tH^0(\sco_X(-l)) \oplus \tH^0(\sco_L(-l))$ is $(e_1,0)$.  
\end{proof}

\subsection{A triple line union a line}\label{SS:triplecupaline} 
Let $Y$ be a quasiprimitive triple structure on the line $L_1 \subset \piii$. 
We use the results and notation from Subsection~\ref{SS:q3} (with $x$ replaced 
by $x_2$, $y$ by $x_3$ and $L$ by $L_1$).  

Recall that, as an $\sco_{L_1}$-module, 
\[
\sco_Y \simeq \sco_{L_1} \oplus \sco_{L_1}(l) \oplus \sco_{L_1}(2l+m) 
\] 
for some integers $l \geq -1$ and $m \geq 0$, and that if $1 \in 
\tH^0(\sco_Y)$, $e_1 \in \tH^0(\sco_Y(-l))$, $e_2 \in \tH^0(\sco_Y(-2l-m))$ 
are the corresponding generators of the 
$\tH^0_\ast(\sco_{L_1}) = k[x_0,x_1]$-module $\tH^0_\ast(\sco_Y)$ then the 
multiplicative structure of $\tH^0_\ast(\sco_Y)$ is defined by $e_1^2 = pe_2$, 
$e_1e_2 = 0$, $e_2^2 = 0$ for some $0 \neq p \in \tH^0(\sco_{L_1}(m))$ and that 
the morphism of graded $k[x_0,x_1]$-algebras $S=\tH^0_\ast(\sco_\piii) \ra 
\tH^0_\ast(\sco_Y)$ maps $x_2$ to $ae_1 + a^\prim e_2$ and $x_3$ to $be_1 + 
b^\prim e_2$, for some $a,\, b \in \tH^0(\sco_{L_1}(l+1))$, $a^\prim,\, b^\prim \in 
\tH^0(\sco_{L_1}(2l+m+1))$. $a$ and $b$, on one hand, and $\Delta_1 := ab^\prim 
- a^\prim b$ and $p$, on the other hand, must be coprime. Moreover, there exist 
$v_0,\, v_1,\, v_2 \in \tH^0(\sco_{L_1}(l+m))$ such that 
\[
\Delta_1 + v_0a^2 + v_1ab + v_2b^2 = 0\, . 
\]

$Y$ contains, as a subscheme, the double structure $X$ on $L_1$ defined by 
the exact sequence$\, :$ 
\[
0 \lra \sci_X \lra \sci_{L_1} \overset{\displaystyle \pi}{\lra} 
\sco_L(l) \lra 0 
\]
where $\pi$ maps $x_2,\, x_3 \in \tH^0(\sci_{L_1}(1))$ to $a,\, b \in 
\tH^0(\sco_{L_1}(l+1))$, respectively, and is contained in the quadruple thick 
structure $W$ on $L_1$ defined by the ideal sheaf $\sci_{L_1}\sci_X$ (see 
Prop.~\ref{P:geniwcm}). Recall that the polynomial $F_2 := -bx_2 + ax_3$ 
belongs to $\tH^0(\sci_X(l+2))$ and that the polynomial $F_3 := pF_2 + v_0x_2^2 
+ v_1x_2x_3 + v_2x_3^2$ belongs to $\tH^0(\sci_Y(l+m+2))$. Then, according to  
Remark~\ref{R:etaeta1}(iii) and to Remark~\ref{R:rhoeta2}, one has exact 
sequences$\, :$ 
\begin{gather*}
0 \lra \sci_{L_1}^3 \lra \sci_W \overset{\displaystyle \eta_1}{\lra} 
2\sco_{L_1}(-l-3) \lra 0\, ,\\ 
0 \lra \sci_W \lra \sci_Y \overset{\displaystyle \eta_2}{\lra} 
\sco_{L_1}(-l-m-2) \lra 0\, ,\\ 
0 \lra \sci_Y \lra \sci_X \overset{\displaystyle \rho}{\lra} 
\sco_{L_1}(2l+m) \lra 0   
\end{gather*}
where $\eta_1$ maps $x_2F_2$ (resp., $x_3F_2$) $\in \tH^0(\sci_W(l+3))$ 
to $(1,0)$ (resp., $(0,1)$) $\in \tH^0(2\sco_{L_1})$, where $\eta_2$ maps 
$F_3 \in \tH^0(\sci_Y(l+m+2))$ to $1 \in \tH^0(\sco_{L_1})$, and where $\rho$ 
maps $F_2 \in \tH^0(\sci_X(l+2))$ to $\Delta_1 \in \tH^0(\sco_{L_1}(3l +m + 2))$ 
and $x_2^2,\, x_2x_3,\, x_3^2 \in \tH^0(\sci_X(2))$ to $pa^2,\, pab,\, pb^2 \in 
\tH^0(\sco_{L_1}(2l+m+2))$. It follows, in particular, that $I(W) = I(L_1)I(X)$ 
and that $I(Y) = SF_3 + I(W)$. 

\vskip2mm  

Consider, now, another line $L \subset \piii$. Let us denote by $\psi_1$, 
$\psi_2$ and $\phi$ the following composite morphisms$\, :$ 
\begin{gather*}
\sci_{W\cup L} \lra \sci_W \overset{\displaystyle \eta_1}{\lra} 
2\sco_{L_1}(-l-3)\, ,\\   
\sci_{Y\cup L} \lra \sci_Y \overset{\displaystyle \eta_2}{\lra} 
\sco_{L_1}(-l-m-2)\, ,\\
\sci_{X\cup L} \lra \sci_X \overset{\displaystyle \rho}{\lra} 
\sco_{L_1}(2l+m)\, .
\end{gather*}
It follows from Lemma~\ref{L:ycupt} that one has exact sequences$\, :$ 
\begin{gather*}
0 \lra \sci_{L_1^{(2)}\cup L} \lra \sci_{W\cup L} 
\overset{\displaystyle \psi_1}{\lra} 2\sco_{L_1}(-l-3)\, ,\\ 
0 \lra \sci_{W\cup L} \lra \sci_{Y\cup L} 
\overset{\displaystyle \psi_2}{\lra} \sco_{L_1}(-l-m-2)\, ,\\
0 \lra \text{Im}\, \phi \times \{0\} \lra \sco_{Y\cup L} \lra \sco_{X\cup L} 
\lra 0\, .
\end{gather*}
Moreover, $\Cok \psi_1$, $\Cok \psi_2$ and $\Cok \phi$ are 
$\sco_{L_1\cap L}$-modules. 

\vskip2mm 

The following lemma follows easily from Lemma~\ref{L:zcupw}. 

\begin{lemma}\label{L:l1(2)cupl1prim} 
Let $L_1^{(2)}$ denote the second infinitesimal neighbourhood of $L_1$ in 
$\piii$ defined by the ideal sheaf $\sci_{L_1}^3$. Then the homogeneous ideal 
$I(L_1^{(2)} \cup L_1^\prim)$ of $L_1^{(2)}\cup L_1^\prim$
admits the following graded free resolution$\, :$ 
\[
0 \lra 3S(-6) \overset{\displaystyle d_2}{\lra} 10S(-5) 
\overset{\displaystyle d_1}{\lra} 8S(-4) 
\overset{\displaystyle d_0}{\lra} I(L_1^{(2)} \cup L_1^\prim) \lra 0 
\]
with $d_0,\, d_1,\, d_2$ defined by the matrices$\, :$
\begin{gather*}
(x_0x_2^3\, ,\,  x_0x_2^2x_3\, ,\,  x_0x_2x_3^2\, ,\,  x_0x_3^3\, ,\,   
x_1x_2^3\, ,\,  x_1x_2^2x_3\, ,\,  x_1x_2x_3^2\, ,\,  x_1x_3^3)\, ,\\ 
\begin{pmatrix} 
-x_3 & 0     & 0   & 0    & 0   & 0   &   -x_1 & 0    & 0   &  0\\  
x_2  & -x_3  & 0   & 0    & 0   & 0   &   0    & -x_1 & 0    & 0\\ 
0   &  x_2   & -x_3 & 0   & 0   & 0    &  0    &  0   & -x_1 & 0\\    
0   &  0     & x_2  & 0   & 0   & 0    &  0    &  0   &  0  &  -x_1\\ 
0   &  0     &  0  & -x_3 &  0  & 0    &  x_0  &  0   &  0   & 0\\ 
0   &  0     &  0  &  x_2 & -x_3 & 0   &  0    &  x_0 &  0   & 0\\ 
0   &  0     &  0  &  0   & x_2  & -x_3 &  0   &  0   &  x_0 & 0\\ 
0   &  0     &  0  &  0   &  0   &  x_2 &  0   &  0   &  0   & x_0 
\end{pmatrix}
\, ,\  
\begin{pmatrix} 
-x_1 & 0    & 0\\    
 0   & -x_1 & 0\\    
 0   & 0    & -x_1\\ 
 x_0 & 0    & 0\\
 0   & x_0  & 0\\
 0   &  0   & x_0\\
 x_3 &  0   & 0\\
-x_2 &  x_3 & 0\\
 0   & -x_2 & x_3\\
 0   &  0   & -x_2
\end{pmatrix}
\end{gather*}
\end{lemma}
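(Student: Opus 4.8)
The plan is to invoke Lemma~\ref{L:zcupw}(b) with $Z = L_1^{(2)}$ and $W = L_1^\prim$, since these two subschemes are disjoint and each is arithmetically CM of pure codimension $2$. Indeed, $L_1^{(2)}$ is defined by the ideal $\sci_{L_1}^3$, which is arithmetically CM (its saturated homogeneous ideal is $I(L_1)^3$, a power of a complete intersection of two linear forms, and a standard computation --- or Remark~\ref{R:js} applied to $J = (x_2,x_3)^3 \subset k[x_2,x_3]$ after swapping the roles of the variable pairs --- shows $x_0, x_1$ form a regular sequence modulo it), and $L_1^\prim$ is a line, hence also arithmetically CM. Therefore Lemma~\ref{L:zcupw}(b) gives $I(L_1^{(2)} \cup L_1^\prim) = I(L_1^{(2)})\,I(L_1^\prim)$, and part (a) of the same lemma asserts that the tensor product $A_\bullet \otimes_S B_\bullet$ of minimal graded free resolutions of the two factors is a minimal graded free resolution of this product ideal.

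First I would write down the two minimal resolutions being tensored. For $L_1^{(2)}$, the homogeneous ideal is $I(L_1)^3 = (x_2^3, x_2^2x_3, x_2x_3^2, x_3^3)$, whose minimal resolution is the (linear) Eagon--Northcott--type resolution
\[
0 \lra 3S(-5) \xra{\;\partial_1\;} 4S(-4) \xra{\;(x_2^3,\,x_2^2x_3,\,x_2x_3^2,\,x_3^3)\;} I(L_1)^3 \lra 0,
\]
with $\partial_1$ the $4\times 3$ matrix of the Koszul-type syzygies in $x_2, x_3$ (the block appearing in the lower-right of the stated $d_1$). For $L_1^\prim = \{x_0 = x_1 = 0\}$, the resolution is the Koszul complex
\[
0 \lra S(-2) \xra{\;(-x_1,\,x_0)^{\text{t}}\;} 2S(-1) \xra{\;(x_0,\,x_1)\;} I(L_1^\prim) \lra 0.
\]
Tensoring these gives a complex of the shape $0 \to 3S(-6) \to \big(3S(-5)\oplus 8S(-5)\big) \to \big(4S(-5)\oplus\cdots\big)$; after collecting terms one reads off exactly the free modules $3S(-6)$, $10S(-5)$, $8S(-4)$ in the statement, and the differentials $d_0$, $d_1$, $d_2$ are the tensor-product differentials assembled from the two factor maps in the usual sign-convention for a tensor product of complexes.

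The bulk of the work is then purely bookkeeping: I would verify that the three explicit matrices in the statement are precisely the entries of $d_0 = (\text{products of generators})$, of the tensor differential $d_1 = \begin{pmatrix} x_2\cdot\mathrm{id} \otimes \mathrm{Kos} \ \vert\ \partial_1 \otimes \mathrm{id}\end{pmatrix}$-type block matrix, and of $d_2 = \partial_1 \otimes (-x_1,x_0)^{\text{t}}$, matching the prescribed ordering of the basis elements. Since all the maps are explicit and linear, the relations $d_0 d_1 = 0$ and $d_1 d_2 = 0$ follow directly by substitution, and minimality is automatic because every entry of every differential lies in the irrelevant maximal ideal (there are no nonzero scalar entries). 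I expect the only mildly delicate point to be fixing the sign conventions and the basis ordering in the tensor product so that the displayed matrices come out verbatim; once the factor resolutions and the tensor rule are fixed, no genuine computation remains, and the exactness is guaranteed abstractly by Lemma~\ref{L:zcupw}(a) rather than needing to be checked by hand.
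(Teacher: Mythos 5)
Your proposal is correct and is exactly the paper's argument: the paper introduces this lemma with the remark that it ``follows easily from Lemma~\ref{L:zcupw}'', i.e., it likewise uses that $L_1^{(2)}$ and $L_1^\prim$ are disjoint and arithmetically CM of pure codimension $2$, so that $I(L_1^{(2)}\cup L_1^\prim)=I(L_1)^3\cdot I(L_1^\prim)$ and the tensor product of the two minimal resolutions resolves it minimally. The only slip is in the twists of your displayed resolution of $I(L_1)^3$, whose generators have degree $3$, so it should read $0\to 3S(-4)\to 4S(-3)\to I(L_1)^3\to 0$; with that correction the tensor product with the Koszul resolution of $I(L_1^\prim)$ yields precisely the terms $8S(-4)$, $10S(-5)$, $3S(-6)$ of the statement.
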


\begin{lemma}\label{L:l=-1deg3} 
Assume that $l = -1$ and $b \neq 0$. Recall Lemma~\ref{L:l=-1deg2}. Then$\, :$ 

\emph{(a)} One can assume that $v_0 = v_1 = 0$ hence that $F_3 = pF_2 + 
v_2x_3^2$ with $p$ and $v_2 := -\Delta_1/b^2$ coprime$\, ;$ 

\emph{(b)} $I(W) = (F_2^2,\, x_3F_2,\, x_3^3)$ and $W$ is directly linked to 
$X$ by the complete intersection of type $(2,3)$ defined by $F_2^2$ and 
$x_3^3$ hence $W$ is arithmetically CM and $I(W)$ admits the following graded 
free resolution$\, :$ 
\[
0 \lra \begin{matrix} S(-3)\\ \oplus\\ S(-4) \end{matrix} 
\xra{\begin{pmatrix} 
-x_3 & 0\\
F_2 & -x_3^2\\
0 & F_2
\end{pmatrix}} \begin{matrix} 2S(-2)\\ \oplus\\ S(-3)\end{matrix} 
\xra{\displaystyle (F_2^2\, ,\, x_3F_2\, ,\, x_3^3)} I(W) \lra 0\, .
\] 
\end{lemma}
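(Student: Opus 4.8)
The plan is to prove the two assertions separately, keeping the standing hypothesis $l=-1$, $b\neq0$, so that $a,b\in k$ are coprime constants with $b\neq0$.

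For part (a) I would start from the relation $\Delta_1+v_0a^2+v_1ab+v_2b^2=0$ (the analogue in Subsection~\ref{SS:q3} of \eqref{E:v012}) and recall that $(v_0,v_1,v_2)$ is determined only modulo the syzygies of $(a^2,ab,b^2)$, i.e.\ modulo the image of the matrix in \eqref{E:a2abb2}, whose columns are $(-b,a,0)^{\mathrm t}$ and $(0,-b,a)^{\mathrm t}$. Since $b\neq0$, adding $\lambda(-b,a,0)^{\mathrm t}+\mu(0,-b,a)^{\mathrm t}$ with $\lambda=v_0/b$ and $\mu$ then chosen to clear $v_1$ lets me arrange $v_0=v_1=0$; the relation collapses to $\Delta_1+v_2b^2=0$, giving $v_2=-\Delta_1/b^2$. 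The point that must be checked is that this normalization does not change $I(Y)$: replacing $(v_0,v_1,v_2)$ by such a combination alters $F_3=pF_2+v_0x_2^2+v_1x_2x_3+v_2x_3^2$ by exactly $(\lambda x_2+\mu x_3)F_2$, which lies in $I(L_1)(F_2)\subseteq I(L_1)I(X)=I(W)$; as $I(Y)=SF_3+I(W)$, the ideal is unaffected (and $\eta_2(F_3)$ stays equal to $1$). Finally $p$ and $v_2$ are coprime because $v_2$ is a nonzero scalar multiple of $\Delta_1$ and $\Delta_1,p$ are coprime by the surjectivity criterion for $\e_3$.

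For part (b) I would first pin down the generators. We have $I(W)=I(L_1)I(X)$, and Lemma~\ref{L:l=-1deg2} gives, for $l=-1$ and $b\neq0$, that $F_2$ is linear with $kF_2+kx_3=kx_2+kx_3$, whence $I(L_1)=(F_2,x_3)$ and $I(X)=(F_2,x_3^2)$. Multiplying out and using $F_2x_3^2=x_3\cdot x_3F_2$ yields $I(W)=(F_2^2,\,x_3F_2,\,x_3^3)$. Since $F_2$ and $x_3$ are linearly independent linear forms, I may take $u=F_2,\ v=x_3$ together with $x_0,x_1$ as a coordinate system, turning everything into a monomial computation in $u,v$. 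The forms $F_2^2$ and $x_3^3$ form a regular sequence, defining a complete intersection $V$ of type $(2,3)$ with $I(V)=(u^2,v^3)\subseteq I(W)$ and $I(V)\subseteq(u,v^2)=I(X)$. The residual computation is the colon $(u^2,v^3):(u,v^2)=\big[(u^2,v^3):u\big]\cap\big[(u^2,v^3):v^2\big]=(u,v^3)\cap(u^2,v)=(u^2,uv,v^3)=I(W)$, so $W$ is the curve directly linked to $X$ by $V$; as $X$ is a complete intersection, hence arithmetically CM, so is $W$. The stated resolution then follows by applying Ferrand's result (recalled in the Introduction) to the Koszul resolution $0\to S(-3)\to S(-1)\oplus S(-2)\to I(X)\to0$ of the $(1,2)$-complete intersection $X$, together with the lift $\psi=\mathrm{diag}(F_2,x_3)$ of $(F_2^2,x_3^3)$: because $X$ is ACM the last term $A_2^\vee$ of Ferrand's monad vanishes and the monad degenerates to the length-one resolution displayed, after twisting by $-5$ and reordering.

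As a quick self-contained confirmation of exactness I would observe that the two columns of the displayed $d_2$ are syzygies of $(F_2^2,x_3F_2,x_3^3)$ and that the signed $2\times2$ maximal minors of $d_2$ are exactly $F_2^2,\,x_3F_2,\,x_3^3$; since these generate $I(W)$, an ideal of grade $2$ (it contains the regular sequence $F_2^2,x_3^3$), the Hilbert--Burch theorem guarantees that the complex is a resolution and reproves that $W$ is arithmetically CM. The genuinely delicate steps are the invariance of $I(Y)$ under the normalization of (a) and the colon-ideal computation of (b) (and keeping track of the change of coordinates $x_2\mapsto(ax_3-F_2)/b$ that makes $u=F_2,v=x_3$ legitimate); the remaining manipulations are formal.
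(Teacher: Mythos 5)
Your proof is correct, and in fact the paper states this lemma without any proof, so you are supplying details the authors treat as routine; your route is exactly the one the surrounding text suggests. For (a) you correctly exploit that the $v_i$ are only determined modulo the syzygies of $(a^2,ab,b^2)$ from \eqref{E:a2abb2} (as the paper itself notes in the proof of Lemma~\ref{L:bbprimv0}(c)), and your observation that the normalization changes $F_3$ only by $(\lambda x_2+\mu x_3)F_2\in I(L_1)I(X)=I(W)$, hence leaves $I(Y)=SF_3+I(W)$ and $\eta_2(F_3)$ untouched, is the one point genuinely worth recording; for (b) the computation $I(W)=I(L_1)I(X)=(F_2,x_3)(F_2,x_3^2)=(F_2^2,x_3F_2,x_3^3)$, the colon-ideal identification of $W$ as the residual of $X$ in the $(2,3)$ complete intersection, and the resolution via Ferrand (or, equivalently, your Hilbert--Burch cross-check on the displayed matrix) are all accurate.
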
 

\begin{lemma}\label{L:l=-1m=1deg3} 
If $l = -1$ and $m = 1$ then $Y$ is directly linked to $L_1$ by a complete 
intersection of type $(2,2)$ hence it is arithmetically CM. Actually, $Y$ is 
the Weil divisor $3L_1$ on the quadric cone $F_3 = 0$. 
\end{lemma}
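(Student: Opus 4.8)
The plan is to prove Lemma~\ref{L:l=-1m=1deg3} by explicitly producing the complete intersection of type $(2,2)$ that links $Y$ to the line $L_1$, and then reading off the geometric description. Recall the setup from Subsection~\ref{SS:q3}: we have $l=-1$ and $m=1$, so $a,\, b \in \tH^0(\sco_{L_1}(0))$ are constants (not both zero) and $p \in \tH^0(\sco_{L_1}(1))$ is a nonconstant linear form in $x_0,\, x_1$. The polynomial $F_3 = pF_2 + v_0x_2^2 + v_1x_2x_3 + v_2x_3^2$ has degree $l+m+2 = 2$, hence $F_3$ is a quadric. Since $2l+m = -1 < 0$, the generator $e_2$ lives in positive degree and the description of $I(Y)$ from Subsection~\ref{SS:q3} collapses: the aim is to show $I(Y)$ is generated by just two quadrics.

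First I would exhibit the two quadrics. One of them is $F_3$ itself. For the second, I would use the fact that when $l=-1$ the linear form $F_2 = -bx_2+ax_3$ can (after the linear change of coordinates invariating $x_2,\, x_3$ described in Lemma~\ref{L:l=-1deg2}, say $b\neq 0$) be taken to be $x_3$, so that $X$ is the divisor $2L_1$ on the plane $x_3 = 0$ and $F_2$ is a coordinate. Then $F_3 = px_3 + v_2x_3^2 = x_3(p + v_2x_3)$ would be reducible unless $v_0, v_1$ contribute; more carefully, with $F_2 = x_3$ one has $F_3 = px_3 + v_0x_2^2+v_1x_2x_3+v_2x_3^2$, and the coprimality of $\Delta_1$ and $p$ forces $v_0 \neq 0$, so $F_3$ genuinely involves $x_2^2$. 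The quadric cone structure then becomes visible: $F_3 = 0$ defines a quadric whose singular locus is a point, and $Y = 3L_1$ is the Weil divisor cut on it. I would verify that the complete intersection $Q := \{F_3 = Q_2 = 0\}$, for a suitable second quadric $Q_2 \in I(Y)$, has degree $4$ and contains $Y$ (degree $3$), with residual a line.

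The cleanest route is liaison-theoretic and avoids coordinate grinding. I would argue that $I(Y)$ contains two quadrics $F_3$ and $G$ forming a regular sequence (a complete intersection of type $(2,2)$, total degree $4$), and that the linked curve has degree $4 - 3 = 1$, i.e.\ is a line. To identify that line as $L_1$, I would use the fundamental exact sequence of liaison together with the known $\sco_{L_1}$-module structure $\sco_Y \simeq \sco_{L_1}\oplus\sco_{L_1}(-1)\oplus\sco_{L_1}(-1)$ (from $l=-1,\, m=1$, so the summands are $\sco_{L_1}(l) = \sco_{L_1}(-1)$ and $\sco_{L_1}(2l+m) = \sco_{L_1}(-1)$), which gives $\chi(\sco_Y)$ and hence $\chi(\sco_{Y_{\mathrm{link}}})$, pinning the residual to a reduced line. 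Once $Y$ is linked to $L_1$ by a $(2,2)$ complete intersection, arithmetic Cohen--Macaulayness of $Y$ is automatic, since the line $L_1$ is arithmetically CM and liaison preserves this property.

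The main obstacle I anticipate is confirming that the two chosen quadrics actually generate $I(Y)$ and cut out exactly $Y$ scheme-theoretically (not merely set-theoretically), rather than some larger or smaller structure. Concretely, I must check that $F_3$ and a second quadric $G$ form a regular sequence whose complete intersection is reduced off $L_1$ and equals $Y\cup L_1$ as a divisor class argument, so that linkage gives precisely $Y$ and $L_1$ as linked curves. The identification of $Y$ as the divisor $3L_1$ on the quadric cone $F_3=0$ is then the geometric payoff: $F_3=0$ is a rank-$3$ quadric (a cone), $L_1$ is a ruling, and $3L_1$ is the relevant Weil but non-Cartier divisor, which explains why $Y$ is \emph{not} locally complete intersection at the cone vertex while still being arithmetically CM. I expect the coprimality condition between $\Delta_1$ and $p$ to be exactly what guarantees the quadric $F_3$ has rank $3$ (a genuine cone) rather than splitting as a product of two planes.
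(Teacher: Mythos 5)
Your overall strategy---exhibit a $(2,2)$ complete intersection inside $I(Y)$ and link---is the same as the paper's, and several of your observations are sound: with $l=-1$, $m=1$ the coprimality of $\Delta_1$ and $p$ is exactly the condition that the quadric $F_3$ have rank $3$ (an irreducible cone rather than two planes), and $\sco_Y \simeq \sco_{L_1}\oplus 2\sco_{L_1}(-1)$ as an $\sco_{L_1}$-module. But there are two real problems at the central step. First, a slip in the stated aim: $I(Y)$ is \emph{not} generated by two quadrics. Two quadrics generating $I(Y)$ would either share a component (giving a surface in $V(I(Y))$) or cut out a complete intersection of degree $4$, whereas $\deg Y = 3$; in fact, after the normalization of Lemma~\ref{L:l=-1deg3}(a) (so $F_3 = pF_2 + v_2x_3^2$ with $v_2\in k$, and $v_2\neq 0$ because the linear form $p$ and the constant $v_2$ are coprime), one has $I(Y) = (F_3,\, F_2^2,\, x_3F_2)$, minimally generated by three quadrics --- consistent with your own remark that $Y$ fails to be l.c.i.\ at the cone vertex. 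Your later, corrected formulation (two quadrics of $I(Y)$ forming a regular sequence, with residual of degree $1$) is the right aim.

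Second, and more seriously, you never produce the second quadric $G$, and the choice is not innocuous: the residual line depends on it. Both $F_2^2$ and $x_3F_2$ lie in $I(Y)$ and form a regular sequence with $F_3$. The complete intersection $(F_3,\, F_2^2)$ does link $Y$ to $L_1$: since $x_3\cdot(x_3F_2) = v_2^{-1}(F_2F_3 - pF_2^2)$ and $F_2\cdot(x_3F_2)\in (F_2^2)$, one gets $(F_2,\, x_3) = I(L_1) \subseteq (F_3,F_2^2):I(Y)$, with equality by degree. But the complete intersection $(F_3,\, x_3F_2)$ links $Y$ to the \emph{other} ruling $\{x_3 = p = 0\}$ of the cone through its vertex, which is a line different from $L_1$. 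Your proposed tool for identifying the residual---Euler characteristics via the fundamental exact sequence of liaison---can only show that the residual is \emph{some} line of degree $4-3=1$; it cannot distinguish $L_1$ from any other line, so it cannot close this gap. The missing idea is precisely the correct choice $G = F_2^2$, i.e.\ the square of the linear form cutting out the plane containing $X = 2L_1$; this is what the paper supplies through the normalization $v_0 = v_1 = 0$ of Lemma~\ref{L:l=-1deg3}, after which $I(Y) = (F_3, F_2^2, x_3F_2)$ and the linkage computation above is immediate.
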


\begin{proof}
Assume that $b \neq 0$. Using the notation from Lemma~\ref{L:l=-1deg3}, $p$ is 
a linear form and $v_2$ is a constant. Since $p$ and $v_2$ are coprime, one has 
$v_2 \neq 0$. It follows that $I(Y) = SF_3 + I(W) = (F_3,\, F_2^2, x_3F_2)$ 
hence $Y$ is directly linked to $L_1$ by the complete intersection defined by 
$F_3$ and $F_2^2$. One gets, consequently, the following 
graded free resolution$\, :$ 
\[
0 \lra 2S(-3) \xra{\begin{pmatrix} 
F_2 & 0\\
-p & -x_3\\
-v_2x_3 & F_2
\end{pmatrix}} 3S(-2) \lra I(Y) \lra 0\, .
\qedhere
\]  
\end{proof}

\begin{prop}\label{P:geniycupl1prim} 
\emph{(a)} If $l = -1$ and $m \geq 2$ then$\, :$ 
\[
I(Y\cup L_1^\prim) = SF_3 + (x_0,\, x_1)(x_2,\, x_3)I(X)\, .
\]

\emph{(b)} If $l \geq 0$ then$\, :$ 
\[
I(Y\cup L_1^\prim) = SF_3 + Sx_2F_2 + Sx_3F_2 + 
(x_0,\, x_1)(x_2,\, x_3)^3\, . 
\]
except when $l = m = 0$ and $L_1^\prim$ is not contained in the quadric 
surface $Q$ of equation $F_3 = 0$ $($which means that at least one of the 
constants $v_0$, $v_1$, $v_2$ is non-zero$)$ in which case$\, :$
\[
I(Y\cup L_1^\prim) = Sx_0F_3 + Sx_1F_3 + Sx_2F_2 + Sx_3F_2 + 
(x_0,\, x_1)(x_2,\, x_3)^3\, . 
\]
\end{prop}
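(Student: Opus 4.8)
The plan is to exploit the two short exact sequences of ideal sheaves
\[
0 \lra \sci_{L_1^{(2)}\cup L_1^\prim} \lra \sci_{W\cup L_1^\prim}
\overset{\displaystyle \psi_1}{\lra} 2\sco_{L_1}(-l-3)\, ,\qquad
0 \lra \sci_{W\cup L_1^\prim} \lra \sci_{Y\cup L_1^\prim}
\overset{\displaystyle \psi_2}{\lra} \sco_{L_1}(-l-m-2)
\]
set up just before the statement (with $L=L_1^\prim$). The crucial opening observation is that $L_1\cap L_1^\prim=\emptyset$: since $W$ and $L_1^{(2)}$ are supported on $L_1$, they are disjoint from $L_1^\prim$, so the cokernels of $\psi_1,\psi_2$, being $\sco_{L_1\cap L_1^\prim}$-modules, vanish and $\psi_1,\psi_2$ are epimorphisms by Lemma~\ref{L:ycupt}(b). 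Applying $\tH^0_\ast(-)$ then expresses $I(Y\cup L_1^\prim)$ through $I(W\cup L_1^\prim)$ plus lifts of the generators of $\sco_{L_1}(-l-m-2)$, and $I(W\cup L_1^\prim)$ through $I(L_1^{(2)}\cup L_1^\prim)$ plus lifts of the generators of $2\sco_{L_1}(-l-3)$.

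I would treat (b) first. For $l\geq 0$ the polynomials $a,b$ lie in $k[x_0,x_1]_{l+1}$, hence vanish on $L_1^\prim$, so $x_2F_2,x_3F_2$ vanish on $L_1^\prim$ and lie in $I(W\cup L_1^\prim)$. As $\psi_1$ (via $\eta_1$) sends $x_2F_2,x_3F_2$ to the two free generators of $2\sco_{L_1}(-l-3)$, these are explicit preimages of the module generators, so $\tH^0_\ast(\psi_1)$ is onto and $I(W\cup L_1^\prim)=I(L_1^{(2)}\cup L_1^\prim)+Sx_2F_2+Sx_3F_2$; from the generators listed in Lemma~\ref{L:l1(2)cupl1prim} one reads $I(L_1^{(2)}\cup L_1^\prim)=(x_0,x_1)(x_2,x_3)^3$. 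For the second step I examine $F_3=pF_2+v_0x_2^2+v_1x_2x_3+v_2x_3^2$: its coefficients lie in $k[x_0,x_1]$ of degrees $m+l+1$ (for $pF_2$) and $l+m$ (for the $v_i$), so $F_3$ vanishes on $L_1^\prim$ precisely when $l+m\geq 1$. When it does, $\eta_2(F_3)=1$ is a preimage of the generator of $\sco_{L_1}(-l-m-2)$ and $I(Y\cup L_1^\prim)=I(W\cup L_1^\prim)+SF_3$, the main assertion of (b). When $l=m=0$ and some $v_i\neq 0$ (i.e. $L_1^\prim\not\subset Q$), $F_3$ fails to vanish on $L_1^\prim$ but $x_0F_3,x_1F_3$ do, and their images generate $(x_0,x_1)S(L_1)(-2)$, the full image of $\tH^0_\ast(\psi_2)$, giving the exceptional formula.

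For (a), with $l=-1$, the route above breaks down because $F_2$ is a linear form not vanishing on $L_1^\prim$, so $x_2F_2,x_3F_2\notin I(W\cup L_1^\prim)$. Instead I would use that for $l=-1$ the thick structure $W$ is arithmetically Cohen--Macaulay: by Lemma~\ref{L:l=-1deg3}(b) it is directly linked to $X$ by a complete intersection of type $(2,3)$. Since $L_1^\prim$ is a line and $W\cap L_1^\prim=\emptyset$, Lemma~\ref{L:zcupw}(b) gives $I(W\cup L_1^\prim)=I(W)I(L_1^\prim)=(x_0,x_1)(x_2,x_3)I(X)$, using $I(W)=I(L_1)I(X)$. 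The hypothesis $m\geq 2$ forces $l+m\geq 1$, so $F_3$ again vanishes on $L_1^\prim$ with $\eta_2(F_3)=1$, whence $I(Y\cup L_1^\prim)=(x_0,x_1)(x_2,x_3)I(X)+SF_3$, as claimed; the borderline values $l=-1$, $m\in\{0,1\}$ are deliberately excluded because there $Y$ is a complete intersection or is covered by Lemma~\ref{L:l=-1m=1deg3}.

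The main obstacle I anticipate is not the manipulation of the sequences but the surjectivity of $\tH^0_\ast(\psi_1)$ and $\tH^0_\ast(\psi_2)$: for $l\geq 0$ neither $W\cup L_1^\prim$ nor $L_1^{(2)}\cup L_1^\prim$ is arithmetically Cohen--Macaulay, so surjectivity cannot be read off from a vanishing $\tH^1_\ast$ of the kernel and must be established by hand, by producing the explicit preimages $x_2F_2,x_3F_2$ and $F_3$ (or $x_0F_3,x_1F_3$) of the module generators. The accompanying delicate point is the degree bookkeeping in $k[x_0,x_1]$ that decides which of these polynomials actually vanish on $L_1^\prim$; this is exactly what isolates the single exceptional subcase $l=m=0$, $L_1^\prim\not\subset Q$, and what separates the arithmetically Cohen--Macaulay case $l=-1$ (handled by the product formula of Lemma~\ref{L:zcupw}(b)) from the non-Cohen--Macaulay case $l\geq 0$ (handled by the two-step reduction through $L_1^{(2)}\cup L_1^\prim$).
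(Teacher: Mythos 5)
Your proposal is correct and follows essentially the same route as the paper: the two filtration sequences through $\sci_{W\cup L_1^\prim}$ and $\sci_{L_1^{(2)}\cup L_1^\prim}$ with surjectivity of $\psi_1,\psi_2$ coming from $L_1\cap L_1^\prim=\emptyset$, explicit preimages $x_2F_2,x_3F_2$ and $F_3$ (or $x_0F_3,x_1F_3$) of the module generators, and, for $l=-1$, the reduction to $I(W\cup L_1^\prim)=I(W)I(L_1^\prim)$ via the arithmetic Cohen--Macaulayness of $W$ from Lemma~\ref{L:l=-1deg3}(b) and Lemma~\ref{L:zcupw}. The only blemish is the phrase ``$F_3$ vanishes on $L_1^\prim$ precisely when $l+m\geq 1$'': it also vanishes when $l=m=0$ and $v_0=v_1=v_2=0$, but since you then split off exactly the case $l=m=0$ with some $v_i\neq 0$, the case analysis you actually carry out is the correct one.
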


\begin{proof}
(a) One uses the exact sequence$\, :$ 
\[
0 \lra \sci_{W\cup L_1^\prim} \lra \sci_{Y\cup L_1^\prim} 
\overset{\displaystyle \psi_2}{\lra} \sco_{L_1}(-m-1) \lra 0 
\]
(look at the beginning of this subsection). $W$ is arithmetically CM  
(by Lemma~\ref{L:l=-1deg3}(b)) hence $I(W \cup L_1^\prim) = I(W)I(L_1^\prim)$ 
(by Lemma~\ref{L:zcupw}). 
Since $m \geq 2$ it follows that $F_3$ vanishes on $L_1^\prim$ hence 
$F_3 \in I(Y\cup L_1^\prim)$ hence $\psi_2(F_3) = \eta_2(F_3) = 1 \in 
\tH^0(\sco_{L_1})$. One deduces the exactness of the sequence$\, :$ 
\[
0 \lra I(W \cup L_1^\prim) \lra I(Y\cup L_1^\prim) 
\xra{\displaystyle \tH^0_\ast(\psi_2)} S(L_1)(-m-1) \lra 0\, .
\] 

(b) As we saw at the beginnig of this subsection, one has exact 
sequences$\, :$ 
\begin{gather*} 
0 \lra \sci_{L_1^{(2)}\cup L_1^\prim} \lra \sci_{W\cup L_1^\prim} 
\overset{\displaystyle \psi_1}{\lra} 2\sco_{L_1}(-l-3) \lra 0\\
0 \lra \sci_{W\cup L_1^\prim} \lra \sci_{Y\cup L_1^\prim} 
\overset{\displaystyle \psi_2}{\lra} \sco_{L_1}(-l-m-2) \lra 0\, .
\end{gather*} 
Since $l \geq 0$, $F_2$ vanishes on $L_1^\prim$ hence  
$x_2F_2,\, x_3F_2 \in I(W\cup L_1^\prim)$ hence the sequence$\, :$ 
\[
0 \lra I(L_1^{(2)} \cup L_1^\prim) \lra I(W\cup L_1^\prim) 
\xra{\displaystyle \tH^0_\ast(\psi_1)} 2S(L_1)(-l-3) \lra 0 
\]
is exact. 

On the other hand, if $l \geq 1$, or if $l = 0$ and $m \geq 1$ or 
if $l = m = 0$ and $v_0 = v_1 = v_2 = 0$ then $F_3$ vanishes on $L_1^\prim$  
hence $F_3 \in I(Y\cup L_1^\prim)$ hence the sequence$\, :$
\[
0 \lra I(W\cup L_1^\prim) \lra I(Y\cup L_1^\prim) 
\xra{\displaystyle \tH^0_\ast(\psi_2)} S(L_1)(-l-m-2) \lra 0
\]
is exact. 

Finally, if $l = m = 0$ and at least one of the constants $v_0$, $v_1$, $v_2$ 
is non-zero then $F_3$ doesn't vanish on $L_1^\prim$ hence 
$\tH^0(\sci_{Y\cup L_1^\prim}(2)) = 0$. On the other hand, $x_0F_3$ and $x_1F_3$ 
vanish on $L_1^\prim$ hence they belong to $\tH^0(\sci_{Y\cup L_1^\prim}(3))$ 
and $\psi_2(x_iF_3) = \eta_2(x_iF_3) = x_i \in \tH^0(\sco_{L_1}(1))$,  
$i = 0,\, 1$. One deduces the exactness of the sequence$\, :$ 
\[
0 \lra I(W\cup L_1^\prim) \lra I(Y\cup L_1^\prim) 
\xra{\displaystyle \tH^0_\ast(\psi_2)} S(L_1)_+(-2) \lra 0\, .
\] 

Notice that the exact sequences appearing in this proof can be used to get 
(easily) a concrete graded free resolution of $I(Y \cup L_1^\prim)$. (A 
minimal graded free resolution of $S(L_1)_+$ can be found in the discussion 
following the proof of Prop.~\ref{P:genizprim}.)  
\end{proof}

\begin{lemma}\label{L:l1(2)cupl2} 
The homogeneous ideal $I(L_1^{(2)} \cup L_2)$ of $L_1^{(2)} \cup L_2$ admits the 
following graded free resolution$\, :$ 
\[
0 \lra 
\begin{matrix} 2S(-4)\\ \oplus\\ S(-5) \end{matrix} 
\xra{\begin{pmatrix} 
\text{--}x_3 & 0 & \text{--}x_1x_2\\
x_2 & \text{--}x_3 & 0\\
0 & x_2 & 0\\
0 & 0 & x_3
\end{pmatrix}} 
\begin{matrix} 3S(-3)\\ \oplus\\ S(-4) \end{matrix} 
\xra{\displaystyle (x_2^2x_3\, ,\, x_2x_3^2\, ,\, x_3^3\, ,\, x_1x_2^3)} 
I(L_1^{(2)} \cup L_2) \lra 0\, .   
\] 
\end{lemma}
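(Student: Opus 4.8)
The plan is to realize $L_1^{(2)}\cup L_2$ as a \emph{basic double link} and then read off the resolution directly from Remark~\ref{R:basicdoublelinkage}.

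First I would pin down the homogeneous ideal. Since $L_1^{(2)}$ is defined by $\sci_{L_1}^3$, one has $I(L_1^{(2)})=(x_2,x_3)^3$, which is saturated (the sequence $x_0,x_1$ is regular on $S/(x_2,x_3)^3$, as in Remark~\ref{R:js}), while $I(L_2)=(x_1,x_3)$ is prime; hence $I(L_1^{(2)}\cup L_2)=(x_2,x_3)^3\cap(x_1,x_3)$. Forming the least common multiples of the monomial generators and discarding the redundant ones (Remark~\ref{R:monomial}(a)) leaves exactly the four minimal generators $x_2^2x_3,\ x_2x_3^2,\ x_3^3,\ x_1x_2^3$, i.e.\ those appearing in $d_0$.

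The crucial observation is that these generators organize as
\[
I(L_1^{(2)}\cup L_2)=S\cdot x_1x_2^3+(x_2,x_3)^2\cdot x_3=Sf+I(L_1^{(1)})\,h,
\]
with $f=x_1x_2^3$ (degree $a=4$), $h=x_3$ (degree $c=1$) and $I(L_1^{(1)})=(x_2,x_3)^2$. Here $f$ and $h$ are coprime and $f=x_1x_2\cdot x_2^2\in I(L_1^{(1)})$, so this presents $L_1^{(2)}\cup L_2$ as the basic double link of $L_1^{(1)}$. In particular Remark~\ref{R:basicdoublelinkage} already guarantees that the scheme is locally CM and that the displayed ideal is its full homogeneous ideal, so acyclicity of the complex in the statement will come for free.

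It then remains to feed into the mapping-cone formula of Remark~\ref{R:basicdoublelinkage} the standard resolution
\[
0\lra 2S(-3)\xra{\left(\begin{smallmatrix}-x_3&0\\ x_2&-x_3\\ 0&x_2\end{smallmatrix}\right)}3S(-2)\xra{(x_2^2,\,x_2x_3,\,x_3^2)}I(L_1^{(1)})\lra 0
\]
(so $K_2=0$), together with the lift $\phi=(x_1x_2,\,0,\,0)^{\text{t}}:S(-4)\ra 3S(-2)$ of $f$, which exists precisely because $x_1x_2^3=x_1x_2\cdot x_2^2$. Twisting by $-c=-1$, the formula yields a two-term resolution with free modules $S(-4)\oplus 3S(-3)$ and $S(-5)\oplus 2S(-4)$, augmentation $\bigl(f,\,h\cdot(x_2^2,x_2x_3,x_3^2)\bigr)=(x_1x_2^3,\,x_2^2x_3,\,x_2x_3^2,\,x_3^3)$, and differential $\left(\begin{smallmatrix}-h&0\\ \phi&d\end{smallmatrix}\right)$ with $d$ the syzygy matrix above. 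This is exactly the resolution in the statement after moving the generator $x_1x_2^3$ to the last position and negating the basis vector of $S(-5)$ (automorphisms of the free modules that absorb the sign in the last column of $d_1$). I expect no genuine obstacle: the only manual checks are the monomial intersection of paragraph two and the bookkeeping matching the reordered mapping-cone matrices to the displayed $d_0,d_1,d_2$.
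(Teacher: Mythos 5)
Your proof is correct, and it reaches the resolution by a different route than the paper. Both arguments start the same way, computing $I(L_1^{(2)}\cup L_2)=(x_2,x_3)^3\cap(x_1,x_3)=(x_2^2x_3,\,x_2x_3^2,\,x_3^3,\,x_1x_2^3)$ from Remark~\ref{R:monomial}(a); the divergence is in how the resolution is produced. The paper links $L_1^{(2)}\cup L_2$ by the complete intersection $(x_3^3,\,x_1x_2^3)$ to a curve $Z^\prim$ with $I(Z^\prim)=(x_3^2,\,x_1x_2^2,\,x_1x_2x_3)$, links $Z^\prim$ again down to $L_1$, and then applies Ferrand's liaison result twice; this determines only the \emph{numerical shape} of the resolution, and the differentials are left to be ``easy to guess.'' You instead observe the decomposition $I(L_1^{(2)}\cup L_2)=Sx_1x_2^3+I(L_1^{(1)})\,x_3$, which exhibits the curve as a basic double link of $L_1^{(1)}$ in the sense of Remark~\ref{R:basicdoublelinkage}; feeding the Koszul resolution of $(x_2,x_3)^2$ and the lift $\phi=(x_1x_2,\,0,\,0)^{\mathrm t}$ into the mapping-cone formula there produces the matrices $d_1,\,d_2$ explicitly (I checked that after permuting the generators to the order $(x_2^2x_3,\,x_2x_3^2,\,x_3^3,\,x_1x_2^3)$ and negating the basis vector of $S(-5)$ one gets exactly the displayed matrix), and it also certifies that the scheme is locally CM and the ideal saturated. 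The two routes are close cousins -- the paper itself notes, in the proof of Lemma~\ref{L:l1(1)cupl2cupl1prim}, that a basic double link can be unwound into two direct linkages -- but yours has the advantage of delivering the differentials mechanically rather than by inspection, at the modest cost of having to spot the decomposition $Sf+I(L_1^{(1)})h$ up front.
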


\begin{proof} 
Using Remark~\ref{R:monomial}, one gets$\, :$ 
\[
I(L_1^{(2)}\cup L_2) = (x_2^3,\, x_2^2x_3,\, x_2x_3^2,\, x_3^3) \cap (x_1,\, x_3) 
= (x_2^2x_3,\, x_2x_3^2,\, x_3^3,\, x_1x_2^3)\, .
\] 
If $Z^\prim$ is the curve directly linked to $L_1^{(2)} \cup L_2$ by the 
complete intersection defined by $x_3^3$ and $x_1x_2^3$ then, using 
Remark~\ref{R:monomial}, 
\[
I(Z^\prim) = (x_3^2,\, x_1x_2) \cap (x_3,\, x_1x_2^2) = (x_3^2,\, x_1x_2^2,\, 
x_1x_2x_3)\, .
\]
If $Z^\secund$ is the curve directly linked to $Z^\prim$ by the complete 
intersection defined by $x_3^2$ and $x_1x_2^2$ then 
$I(Z^\secund) = (x_2,\, x_3)$, i.e., $Z^\secund = L_1$. Using Ferrand's result 
about linkage one deduces that the free resolution of $I(L_1^{(2)} \cup L_2)$ 
has the numerical shape from the statement and its differentials are easy to 
guess.  
\end{proof}

\begin{lemma}\label{L:bbprimv0} 
Using the notation recalled at the beginning of this subsection$\, :$ 

\emph{(a)} If $x_1 \mid b$ then $x_1 \mid b^\prim$ if and only if 
$x_1 \mid v_0$. 

\emph{(b)} If $x_1 \nmid b$ then one can assume that $x_1 \mid b^\prim$. 

\emph{(c)} If $x_1 \nmid b$ and $m \geq 1$ then one can assume that 
$x_1 \mid v_0$. 
\end{lemma}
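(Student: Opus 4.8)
The plan is to derive all three assertions from the single polynomial identity
\[
\Delta_1 + v_0a^2 + v_1ab + v_2b^2 = 0,\qquad \Delta_1 = ab^\prim - a^\prim b,
\]
together with a precise inventory of the freedom in the data $(a,\, b,\, a^\prim,\, b^\prim,\, v_0,\, v_1,\, v_2)$ attached to $Y$. Throughout I use that, for a form $f \in k[x_0,x_1] = \tH^0_\ast(\sco_{L_1})$, the condition $x_1 \mid f$ is equivalent to the vanishing $f(P_0) = 0$ at the point $P_0 = (1:0) \in L_1$, and that $a$ and $b$ are coprime, so that $x_1 \mid b$ forces $x_1 \nmid a$. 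Two gauge moves leaving $Y$ unchanged will be needed. First, replacing $e_1$ by $\tilde{e}_1 = e_1 + \lambda e_2$ with $\lambda \in \tH^0(\sco_{L_1}(l+m))$ preserves $e_1^2 = pe_2$, $e_1e_2 = e_2^2 = 0$ (since $e_1e_2 = e_2^2 = 0$) and leaves $a,\, b,\, p,\, \Delta_1$ fixed, while sending $(a^\prim,\, b^\prim)$ to $(a^\prim - \lambda a,\, b^\prim - \lambda b)$; the twist of $\lambda$ is forced to be $l+m$. Second, $(v_0,\, v_1,\, v_2)$ is pinned down by the identity above only modulo the Koszul syzygies of $(a^2,\, ab,\, b^2)$; in particular one may replace $(v_0,\, v_1,\, v_2)$ by $(v_0 + sb,\, v_1 - sa,\, v_2)$ for any $s \in \tH^0(\sco_{L_1}(m-1))$, which merely alters the generator $F_3$ by the multiple $-sx_2F_2 \in I(Y)$ of $F_2$.

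For part (a), assume $x_1 \mid b$. Reducing the identity modulo $x_1$ kills the terms $a^\prim b$, $v_1ab$ and $v_2b^2$, leaving $x_1 \mid a(b^\prim + v_0a)$. As $x_1$ is prime and $x_1 \nmid a$, this gives $x_1 \mid (b^\prim + v_0a)$, whence $x_1 \mid b^\prim \iff x_1 \mid v_0a \iff x_1 \mid v_0$, the last equivalence again using $x_1 \nmid a$. The ambiguity in $v_0$ is harmless here, since when $x_1 \mid b$ the second gauge move changes $v_0$ only by a multiple of $b$, preserving the truth value of $x_1 \mid v_0$.

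For part (b), suppose $x_1 \nmid b$, i.e. $b(P_0) \neq 0$. If $l+m < 0$ then (as $l \geq -1$, $m \geq 0$) one has $(l,m) = (-1,0)$, so $\tH^0(\sco_{L_1}(2l+m+1)) = 0$, forcing $a^\prim = b^\prim = 0$ and leaving nothing to prove. Otherwise $l + m \geq 0$, the line bundle $\sco_{L_1}(l+m)$ is globally generated, and I may choose $\lambda$ with $\lambda(P_0) = b^\prim(P_0)/b(P_0)$. The first gauge move then replaces $b^\prim$ by $b^\prim - \lambda b$, which vanishes at $P_0$, so $x_1 \mid b^\prim$. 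For part (c), suppose $x_1 \nmid b$ and $m \geq 1$; then $s$ may be taken of degree $m-1 \geq 0$, so I choose $s$ with $s(P_0) = -v_0(P_0)/b(P_0)$, and the second gauge move replaces $v_0$ by $v_0 + sb$, which vanishes at $P_0$, giving $x_1 \mid v_0$.

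The only genuinely delicate point is the bookkeeping of the two gauge moves: one must verify that the first is compatible with the algebra structure (forcing the twist $l+m$ on $\lambda$) and that the second is exactly the Koszul ambiguity in the $v_i$, realized by a permissible change of the generator $F_3$. Once these are in place, each of (b) and (c) reduces to meeting a single linear condition at $P_0$, solvable precisely because $b(P_0) \neq 0$, so the apparent need for global divisibility by $b$ dissolves. The degenerate ranges — $l + m < 0$ in (b), and the exclusion $m = 0$ in (c), where no parameter $s$ exists — are exactly the cases already accounted for in the statement.
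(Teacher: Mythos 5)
Your proof is correct and follows essentially the same route as the paper's: part (a) by reducing the identity $\Delta_1 + v_0a^2 + v_1ab + v_2b^2 = 0$ modulo $x_1$ and using that $x_1\nmid a$, and parts (b), (c) by exploiting the two ambiguities in the data (the substitution $e_1 \mapsto e_1 + \lambda e_2$ with $\lambda$ of degree $l+m$, and the Koszul indeterminacy of $(v_0,v_1,v_2)$) together with the coprimality of $b$ and $x_1$, including the same treatment of the degenerate case $(l,m)=(-1,0)$ in (b) and the same degree count $m-1\geq 0$ in (c). The only cosmetic difference is in (c), where you gauge-fix an arbitrary solution by forcing $v_0$ to vanish at $(1:0)$, whereas the paper directly constructs a solution with $x_1 \mid v_0$ from decompositions $b^\prim = v_0^\prim x_1a + v_1^\prim b$ and $a^\prim = v_1^\secund a + v_2 b$; the two arguments rest on identical facts.
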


\begin{proof} 
(a) This follows from relation \eqref{E:v012}$\, :$ 
$ab^\prim - a^\prim b + v_0a^2 + v_1ab + v_2b^2 = 0$ (recalled at the beginning 
of this subsection).   

(b) Let $\alpha \in \tH^0(\sco_{L_1}(l+m))$. Replacing the generators 
$1,\, e_1,\, e_2$ of the $\tH^0_\ast(\sco_{L_1}) = k[x_0,x_1]$-module 
$\tH^0_\ast(\sco_Y)$ by the generators $1,\, e_1^\prim = e_1 + \alpha e_2,\, 
e_2^\prim = e_2$ one has $e_1^{\prim \, 2} = pe_2^\prim$ and the morphism of graded 
$k[x_0,x_1]$ -algebras $S = \tH^0_\ast(\sco_\piii) \ra \tH^0_\ast(\sco_Y)$ maps 
$x_2$ to $ae_1^\prim + (a^\prim - \alpha a)e_2^\prim$ and $x_3$ to 
$be_1^\prim + (b^\prim - \alpha b)e_2^\prim$.  
Consequently, $(a,\, b,\, a^\prim -\alpha a,\, b^\prim -\alpha b,\, p)$ 
and $(a,\, b,\, a^\prim ,\, b^\prim ,\, p)$ define the same triple structure on 
$L_1$.  

If $l = -1$ and $m = 0$ then $b^\prim = 0$ because it belongs to 
$\tH^0(\sco_{L_1}(-1)) = 0$. Assume, now, that $l = -1$ and $m \geq 1$ or 
that $l \geq 0$. If $x_1 \nmid b$ then, since $b^\prim$ has degree 
$2l+m+1 \geq l+1$, there exist $\alpha \in \tH^0(\sco_{L_1}(l+m))$ and 
$b_1^\prim \in \tH^0(\sco_{L_1}(2l+m))$ such that $b^\prim = \alpha b + 
b_1^\prim x_1$. 

(c) $v_0,\, v_1,\, v_2$ are (any) elements of $\tH^0(\sco_{L_1}(l+m))$ 
that satisfy relation \eqref{E:v012} recalled in the proof of (a).  
If $x_1 \nmid b$ then $x_1a$ and $b$ are coprime. Since $b^\prim$ has degree 
$2l+m+1 \geq 2l+2$ (we used, here, the hypothesis $m \geq 1$), one can find 
elements $v_0^\prim \in \tH^0(\sco_{L_1}(l+m-1))$ and $v_1^\prim \in 
\tH^0(\sco_{L_1}(l+m))$ such that$\, :$ 
\[
b^\prim = v_0^\prim x_1a + v_1^\prim b\, .
\]
One gets, similarly, elements $v_1^\secund , v_2 \in \tH^0(\sco_{L_1}(l+m))$ 
such that $a^\prim = v_1^\secund a + v_2b$. One may take $v_0 = - x_1v_0^\prim$ 
and $v_1 = -v_1^\prim + v_1^\secund$.  
\end{proof}

\begin{prop}\label{P:geniycupl2} 
Using the notation recalled at the beginning of this subsection and assuming 
that the conditions from the conclusion of Lemma~\ref{L:bbprimv0} are 
fulfilled, one has$\, :$ 

\emph{(a)} If $x_1 \mid b$ and $x_1 \mid b^\prim$ then  
$
I(Y\cup L_2) = (F_3,\, x_2F_2,\, x_3F_2,\, x_2^2x_3,\, x_2x_3^2,\, x_3^3,\, 
x_1x_2^3)\, .
$  

\emph{(b)} If $x_1 \mid b$ and $x_1 \nmid b^\prim$ then  
$
I(Y\cup L_2) = (x_1F_3,\, x_2F_2,\, x_3F_2,\, x_2^2x_3,\, x_2x_3^2,\, x_3^3,\, 
x_1x_2^3)\, .
$

\emph{(c)} If $x_1 \nmid b$ and $x_1 \mid p$ then 
$
I(Y\cup L_2) = (F_3,\, x_3F_2,\, x_1x_2F_2,\, x_2^2x_3,\, x_2x_3^2,\, x_3^3,\, 
x_1x_2^3)\, .
$

\emph{(d)} If $x_1 \nmid b$ and $x_1 \nmid p$ then  
$
I(Y\cup L_2) = (x_1F_3,\, x_3F_2,\, x_1x_2F_2,\, x_2^2x_3,\, x_2x_3^2,\, x_3^3,\, 
x_1x_2^3)\, .
$
\end{prop}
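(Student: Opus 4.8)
The plan is to build $I(Y\cup L_2)$ in two stages, exactly as in the proofs of Prop.~\ref{P:genixcupl} and Prop.~\ref{P:geniycupl1prim}, by exploiting the two exact sequences recorded at the beginning of this subsection, namely $0\to\sci_{L_1^{(2)}\cup L_2}\to\sci_{W\cup L_2}\xra{\psi_1}2\sco_{L_1}(-l-3)$ and $0\to\sci_{W\cup L_2}\to\sci_{Y\cup L_2}\xra{\psi_2}\sco_{L_1}(-l-m-2)$, together with the facts that $\Cok\psi_1$ and $\Cok\psi_2$ are $\sco_{L_1\cap L_2}$-modules and that $L_1\cap L_2=\{P_0\}$ is the point $x_1=0$ of $L_1$. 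The entire case distinction will be governed by deciding, for each candidate generator, whether it (or only its multiple by $x_1$) vanishes on $L_2=\{x_1=x_3=0\}$. Concretely, I will repeatedly use the identification $S/I(L_2)\cong k[x_0,x_2]$, under which restriction means setting $x_1=x_3=0$ and replacing $a,b,p,v_i\in k[x_0,x_1]$ by $a(x_0,0),b(x_0,0),p(x_0,0),v_i(x_0,0)$.

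First I would compute $I(W\cup L_2)$. Since $L_2\subset\{x_3=0\}$, the generator $x_3F_2$ (which $\eta_1$ sends to $(0,1)$) always lies in $I(W\cup L_2)$, whereas $x_2F_2$ restricts to $-b(x_0,0)x_2^2$ on $L_2$, so $x_2F_2\in I(W\cup L_2)$ iff $x_1\mid b$. When $x_1\mid b$, both $x_2F_2$ and $x_3F_2$ are available, $\psi_1$ is onto, and $\tH^0_\ast(\psi_1)$ yields $I(W\cup L_2)=I(L_1^{(2)}\cup L_2)+(x_2F_2,x_3F_2)$ with $I(L_1^{(2)}\cup L_2)=(x_2^2x_3,x_2x_3^2,x_3^3,x_1x_2^3)$ by Lemma~\ref{L:l1(2)cupl2}. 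When $x_1\nmid b$, only $x_3F_2$ and $x_1x_2F_2$ vanish on $L_2$; since $\eta_1(x_1x_2F_2)=(x_1,0)$, the image of $\psi_1$ is $x_1\sco_{L_1}(-l-3)\oplus\sco_{L_1}(-l-3)$ and $I(W\cup L_2)=(x_3F_2,x_1x_2F_2,x_2^2x_3,x_2x_3^2,x_3^3,x_1x_2^3)$. Surjectivity onto these images at the level of global sections is clean because $L_1^{(2)}\cup L_2$ is arithmetically Cohen--Macaulay (its ideal has a length-one resolution in Lemma~\ref{L:l1(2)cupl2}), whence $\tH^1_\ast(\sci_{L_1^{(2)}\cup L_2})=0$.

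The second stage passes from $W\cup L_2$ to $Y\cup L_2$ through $\psi_2$, which sends $F_3\mapsto 1$. I compute $F_3\vert_{L_2}=-p(x_0,0)b(x_0,0)\,x_2+v_0(x_0,0)\,x_2^2$. In case (a) ($x_1\mid b$ and $x_1\mid b^\prim$, equivalently $x_1\mid v_0$ by Lemma~\ref{L:bbprimv0}(a)) and in case (c) ($x_1\nmid b$, $x_1\mid p$, where Lemma~\ref{L:bbprimv0}(c) lets me arrange $x_1\mid v_0$), both coefficients vanish, so $F_3\in I(Y\cup L_2)$, the map $\psi_2$ is onto, and $I(Y\cup L_2)=I(W\cup L_2)+(F_3)$; substituting the two descriptions of $I(W\cup L_2)$ produces exactly the generating sets of (a) and (c). In cases (b) and (d) the restriction $F_3\vert_{L_2}$ is nonzero, so $F_3\notin I(Y\cup L_2)$; instead $x_1F_3$ vanishes on $L_2$ with $\eta_2(x_1F_3)=x_1$, the image of $\psi_2$ is $x_1\sco_{L_1}(-l-m-3)$, and $I(Y\cup L_2)=I(W\cup L_2)+(x_1F_3)$, giving the generating sets of (b) and (d).

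The main obstacle is the non-surjectivity claim in cases (b) and (d), i.e.\ that $\text{Im}\,\psi_2$ is \emph{exactly} $x_1\sco_{L_1}(-l-m-3)$ and no larger. I would argue as in the Claim inside the proof of Prop.~\ref{P:genixcupl}(b): if some $f\in\tH^0(\sci_{Y\cup L_2}(l+m+2))$ had $\eta_2(f)=1$, then $f-F_3\in\tH^0(\sci_W(l+m+2))$, and restricting to $L_2$ would force $F_3\vert_{L_2}$ to lie in $\sci_W\vert_{L_2}\subseteq k[x_0,x_2]$. A short computation gives $\sci_W\vert_{L_2}=(b(x_0,0)x_2^2,x_2^3)$; but in case (d) the element $F_3\vert_{L_2}$ has nonzero $x_2$-coefficient (since $p(x_0,0)b(x_0,0)\neq0$), while in case (b) it equals $v_0(x_0,0)x_2^2$ with $v_0(x_0,0)\neq0$ and $b(x_0,0)=0$ forces $\sci_W\vert_{L_2}=(x_2^3)$ — in both situations $F_3\vert_{L_2}$ has strictly smaller $x_2$-adic order than everything in $\sci_W\vert_{L_2}$, a contradiction. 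The only delicate point is producing the section $f$, which requires $\tH^1(\sci_{W\cup L_2}(l+m+2))=0$; this I can read off from the first-stage sequence for $\psi_1$, since the relevant cokernel contribution in this degree is controlled by $\tH^1(\sco_{L_1}(m-1))$ (and by $\tH^1(\sco_{L_1}(m-2))$ when $x_1\nmid b$), so the lift exists immediately for $m\geq 1$; the single residual subcase $m=0$ in (d) I would settle by running the same $x_2$-adic order comparison directly in the local ring at $P_0$, where $\Cok\psi_2$ is supported. Once the generators are pinned down, the exact sequences above also hand over graded free resolutions of $I(Y\cup L_2)$, as in the companion propositions.
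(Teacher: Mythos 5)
Your proposal is correct and follows essentially the same route as the paper: the filtration $I(L_1^{(2)}\cup L_2)\subset I(W\cup L_2)\subset I(Y\cup L_2)$ governed by $\psi_1$ and $\psi_2$, the identification of the images by restricting candidate generators to $L_2$, and (for cases (b) and (d)) the paper's Claim-2-style argument that no higher-order correction of $F_3$ by an element of $\sci_W$ can vanish on $L_2$. The only substantive divergences are that the paper settles case (d) by writing out a general element of $I(Y)$ and restricting it directly to $L_2$ (which sidesteps your residual $m=0$ lifting issue), whereas you push the $\psi_2$ machinery through with a local patch at $P_0$; and that your determination of $\mathrm{Im}\,\psi_1$ in the $x_1\nmid b$ case is asserted rather than proved --- it needs the same ``$x_2F_2$ cannot be corrected by elements of $\sci_{L_1^{(2)}}$ to vanish on $L_2$'' argument (the paper's Claim 1) that you already carry out for $\psi_2$, since merely observing that $x_2F_2$ itself does not vanish on $L_2$ only bounds the image from below.
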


\begin{proof}
We use the  exact sequences$\, :$ 
\begin{gather*}
0 \lra \sci_{L_1^{(2)}\cup L_2} \lra \sci_{W\cup L_2} 
\overset{\displaystyle \psi_1}{\lra} 2\sco_{L_1}(-l-3)\, ,\\
0 \lra \sci_{W \cup L_2} \lra \sci_{Y \cup L_2} 
\overset{\displaystyle \psi_2}{\lra} \sco_{L_1}(-l-m-2) 
\end{gather*}
introduced at the beginning of this subsection. Since $x_1x_2F_2$, $x_3F_2$ 
and $x_1F_3$ vanish on $L_2$, the first two of them belong to $I(W \cup L_2)$ 
and the third one to $I(Y \cup L_2)$ hence$\, :$ 
\begin{gather*}
x_1\sco_{L_1}(-l-4) \oplus \sco_{L_1}(-l-3) \subseteq \text{Im}\, \psi_1 
\subseteq 2\sco_{L_1}(-l-3)\, ,\\  
\text{and} \  x_1\sco_{L_1}(-l-m-3) \subseteq \text{Im}\, \psi_2 
\subseteq \sco_{L_1}(-l-m-2)\, .
\end{gather*} 

\noindent
{\bf Claim 1.}\quad $\text{Im}\, \psi_1 = 2\sco_{L_1}(-l-3)$ \emph{if and only 
if} $x_1 \mid b$. 

\vskip2mm

\noindent
\emph{Indeed}, the ``if'' part is clear (because, in this case, $x_2F_2$ 
vanishes on $L_2$). For the ``only if'' part assume that 
$\text{Im}\, \psi_1 = 2\sco_{L_1}(-l-3)$. Since, by Lemma~\ref{L:l1(2)cupl2}, 
$\tH^1_\ast(\sci_{L_1^{(2)} \cup L_2}) = 0$ it follows that there exists 
$f \in \tH^0(\sci_{W\cup L_2}(l+3))$ such that $\psi_1(f) = (1,0) \in 
\tH^0(2\sco_{L_1})$. Since $\psi_1(f) = \eta_1(f)$ it follows that $f$ 
must be of the form$\, :$ 
\[
f = x_2F_2 + f_0x_2^3 + f_1x_2^2x_3 + f_2x_2x_3^2 + f_3x_3^3 
\]
with $f_0, \ldots ,f_3 \in S_l$. One has$\, :$ 
\[
0 = f \vb L_2 = -b(x_0,0)x_2^2 + f_0(x_0,0,x_2,0)x_2^3 
\]
hence $b(x_0,0) = 0$ hence $x_1 \mid b$. Claim 1 is proven.  

\vskip2mm

\noindent
One deduces, from Claim 1, that if $x_1 \mid b$ then one has an exact 
sequence$\, :$ 
\[
0 \lra I(L_1^{(2)} \cup L_2) \lra I(W \cup L_2)  
\xra{\displaystyle \tH^0_\ast(\psi_1)} 2S(L_1)(-l-3) \lra 0\, ,
\]
and if $x_1 \nmid b$ then $\psi_1$ factorizes as$\, :$ 
\[
\sci_{W \cup L_2} \xra{\displaystyle \psi_1^\prim} 
\sco_{L_1}(-l-4) \oplus \sco_{L_1}(-l-3) 
\xra{\displaystyle x_1 \oplus \, \text{id}} 2\sco_{L_1}(-l-3)
\]
with $\psi^\prim(x_1x_2F_2) = (1,0) \in \tH^0(\sco_{L_1} \oplus \sco_{L_1}(1))$,  
$\psi^\prim(x_3F_2) = (0,1) \in \tH^0(\sco_{L_1}(-1) \oplus \sco_{L_1})$ and one 
has an exact sequence$\, :$ 
\[
0 \lra I(L_1^{(2)} \cup L_2) \lra I(W \cup L_2)  
\xra{\displaystyle \tH^0_\ast(\psi_1^\prim)} 
S(L_1)(-l-4) \oplus S(L_1)(-l-3) \lra 0\, .
\] 

\noindent
{\bf Claim 2.}\quad \emph{If} $x_1 \mid b$ \emph{then} $\text{Im}\, \psi_2 = 
\sco_{L_1}(-l-m-2)$ \emph{if and only if} $x_1 \mid b^\prim$. 

\vskip2mm

\noindent
\emph{Indeed}, the ``if'' part is clear (Lemma~\ref{L:bbprimv0}(a) implies 
that $x_1 \mid v_0$ hence $F_3$ vanishes on $L_2$).  
For the ``only if'' part, assume that $\text{Im}\, \psi_2 = 
\sco_{L_1}(-l-m-2)$. Using the exact sequence$\, :$ 
\[
0 \lra \sci_{L_1^{(2)}\cup L_2} \lra \sci_{W\cup L_2} 
\overset{\displaystyle \psi_1}{\lra} 2\sco_{L_1}(-l-3) \lra 0 
\] 
and Lemma~\ref{L:l1(2)cupl2} one gets that 
$\tH^1(\sci_{W \cup L_2}(l+m+2)) = 0$. One deduces that there exists an 
element $f \in \tH^0(\sci_{Y \cup L_2}(l+m+2))$ such that $\psi_2(f) = 1 \in 
\tH^0(\sco_{L_1})$. Since $\psi_2(f) = \eta_2(f)$ it follows that $f$ must 
have the form$\, :$ 
\[
f = F_3 + f_0x_2F_2 + f_1x_3F_2 + g_0x_2^3 + g_1x_2^2x_3 + g_2x_2x_3^2 + 
g_3x_3^3
\]
with $f_0,\, f_1 \in S_{m-1}$ and $g_0, \ldots ,g_3 \in S_{l+m-1}$. Taking into 
account that $F_2 \vb L_2 = 0$ (because $x_1 \mid b$), one deduces that$\, :$ 
\[
0 = f \vb L_2 = v_0(x_0,0)x_2^2 + g_0(x_0,0,x_2,0)x_2^3\, . 
\]
It follows that $v_0(x_0,0) = 0$ hence $x_1 \mid v_0$ hence, by 
Lemma~\ref{L:bbprimv0}(a), $x_1 \mid b^\prim$. 

\vskip2mm 

(a) It follows from Claim 2 that one has an exact sequence$\, :$ 
\[
0 \lra I(W \cup L_2) \lra I(Y \cup L_2) 
\xra{\displaystyle \tH^0_\ast(\psi_2)} S(L_1)(-l-m-2) \lra 0\, .
\] 

(b) It follows from Claim 2 that $\psi_2$ factorizes as$\, :$ 
\[
\sci_{Y \cup L_2} \xra{\displaystyle \psi_2^\prim} \sco_{L_1}(-l-m-3) 
\overset{\displaystyle x_1}{\lra} \sco_{L_1}(-l-m-2) 
\]
with $\psi_2^\prim(x_1F_3) = 1 \in \tH^0(\sco_{L_1})$ and that one has an 
exact sequence$\, :$ 
\[
0 \lra I(W \cup L_2) \lra I(Y \cup L_2) 
\xra{\displaystyle \tH^0_\ast(\psi_2^\prim)} S(L_1)(-l-m-3) \lra 0\, .
\]

(c) Since $x_1 \mid p$ one has $m\geq 1$. Since $x_1 \nmid b$, 
Lemma~\ref{L:bbprimv0}(c) implies that one can assume that $x_1 \mid v_0$.  
It follows that $F_3 \vb L_2 = 0$ hence $F_3 \in I(Y\cup L_2)$. One deduces 
that $\text{Im}\, \psi_2 = \sco_{L_1}(-l-m-2)$ and that one has an exact 
sequence$\, :$ 
\[
0 \lra I(W \cup L_2) \lra I(Y \cup L_2) 
\xra{\displaystyle \tH^0_\ast(\psi_2)} S(L_1)(-l-m-2) \lra 0\, .
\] 

(d) Any element $f \in I(Y)$ can be written as$\, :$ 
\[
f = f_0F_3 + f_1x_2F_2 + f_2x_3F_2 + f_3x_2^3 + f_4x_2^2x_3 + f_5x_2x_3^2 + 
f_6x_3^3 
\] 
with $f_0,\, f_1,\, f_2 \in k[x_0,x_1]$, $f_3 \in k[x_0,x_1,x_2]$ and 
$f_4,\, f_5,\, f_6 \in S$. One has$\, :$ 
\begin{gather*}
f \vb L_2 = -f_0(x_0,0)p(x_0,0)b(x_0,0)x_2 + f_0(x_0,0)v_0(x_0,0)x_2^2 -\\ 
-f_1(x_0,0)b(x_0,0)x_2^2 + f_3(x_0,0,x_2)x_2^3\, . 
\end{gather*} 
Since $x_1 \nmid b$ and $x_1 \nmid p$ it follows that $b(x_0,0) \neq 0$ and 
$p(x_0,0) \neq 0$. One deduces that if $f \in I(Y \cup L_2)$ (i.e., if 
$f \vb L_2 = 0$) then, firstly, $f_0(x_0,0) = 0$ then $f_1(x_0,0) = 0$ and, 
finally, $f_3(x_0,0,x_2) = 0$. This means that $x_1 \mid f_0$, $x_1 \mid f_1$ 
and $x_1 \mid f_3$. One derives that $I(Y \cup L_2)$ is generated by the 
polynomials indicated in the statement. Moreover, it follows that $\psi_2$ 
factorizes as$\, :$ 
\[
\sci_{Y \cup L_2} \xra{\displaystyle \psi_2^\prim} \sco_{L_1}(-l-m-3) 
\overset{\displaystyle x_1}{\lra} \sco_{L_1}(-l-m-2) 
\]
with $\psi_2^\prim(x_1F_3) = 1 \in \tH^0(\sco_{L_1})$ and that one has an 
exact sequence$\, :$ 
\[
0 \lra I(W \cup L_2) \lra I(Y \cup L_2) 
\xra{\displaystyle \tH^0_\ast(\psi_2^\prim)} S(L_1)(-l-m-3) \lra 0\, .
\] 

Notice that the exact sequences appearing in the above proof can be used to 
get (easily) concrete graded free resolutions of $I(W \cup L_2)$ and of 
$I(Y \cup L_2)$. 
\end{proof}

\begin{prop}\label{P:reshoycupl2} 
With the notation recalled at the beginning of this subsection$\, :$

\emph{(a)} If $x_1 \mid b$ and $x_1 \mid b^\prim$, i.e., if $b = x_1b_1$ and   
$b^\prim = x_1b_1^\prim$, then ${\fam0 H}^0_\ast(\sco_{Y \cup L_2})$ admits 
the following graded free resolution$\, :$
\[
0 \lra \begin{matrix} S(-3)\\ \oplus\\ S(l-3)\\ \oplus\\ S(2l+m-3) 
\end{matrix} \overset{\displaystyle \delta_2}{\lra} 
\begin{matrix} S(-1)\\ \oplus\\ S(-2)\\ \oplus\\ 2S(l-2)\\ \oplus\\ 
2S(2l+m-2) \end{matrix} \overset{\displaystyle \delta_1}{\lra} 
\begin{matrix} S\\ \oplus\\ S(l-1)\\ \oplus\\ S(2l+m-1) \end{matrix} 
\overset{\displaystyle \delta_0}{\lra} {\fam0 H}^0_\ast(\sco_{Y\cup L_2}) \lra 0
\] 
with $\delta_0 = (1\, ,\, x_1e_1\, ,\, x_1e_2)$ and with $\delta_1$ and 
$\delta_2$ defined by the matrices$\, :$ 
\[
\begin{pmatrix} 
x_3 & x_1x_2 & 0 & 0 & 0 & 0\\
-b_1 & -a & x_2 & x_3 & 0 & 0\\
-b_1^\prim & -a^\prim & -pa & -pb & x_2 & x_3
\end{pmatrix}\, ,\  
\begin{pmatrix}
-x_1x_2 & 0 & 0\\
x_3 & 0 & 0\\
-b & -x_3 & 0\\
a & x_2 & 0\\
-b^\prim & pb & -x_3\\
a^\prim & -pa & x_2
\end{pmatrix}\, .
\]

\emph{(b)} If $x_1 \mid b$ and $x_1 \nmid b^\prim$ then 
${\fam0 H}^0_\ast(\sco_{Y \cup L_2})$ admits the following graded free 
resolution$\, :$ 
\[
0 \lra \begin{matrix} S(-3)\\ \oplus\\ S(l-3)\\ \oplus\\ S(2l+m-2) 
\end{matrix} \overset{\displaystyle \delta_2}{\lra} 
\begin{matrix} S(-1)\\ \oplus\\ S(-2)\\ \oplus\\ 2S(l-2)\\ \oplus\\ 
2S(2l+m-1) \end{matrix} \overset{\displaystyle \delta_1}{\lra} 
\begin{matrix} S\\ \oplus\\ S(l-1)\\ \oplus\\ S(2l+m) \end{matrix} 
\overset{\displaystyle \delta_0}{\lra} {\fam0 H}^0_\ast(\sco_{Y\cup L_2}) \lra 0
\] 
with $\delta_0 = (1\, ,\, x_1e_1\, ,\, e_2)$ and with $\delta_1$ and $\delta_2$ 
defined by the matrices$\, :$ 
\[
\begin{pmatrix} 
x_3 & x_1x_2 & 0 & 0 & 0 & 0\\
-b_1 & -a & x_2 & x_3 & 0 & 0\\
-b^\prim & -x_1a^\prim & -x_1pa & -x_1pb & x_2 & x_3
\end{pmatrix}\, ,\  
\begin{pmatrix}
-x_1x_2 & 0 & 0\\
x_3 & 0 & 0\\
-b & -x_3 & 0\\
a & x_2 & 0\\
-x_1b^\prim & x_1pb & -x_3\\
x_1a^\prim & -x_1pa & x_2
\end{pmatrix}\, .
\]

\emph{(c)} If $x_1 \nmid b$ and $x_1 \mid p$ then 
${\fam0 H}^0_\ast(\sco_{Y \cup L_2})$ admits the following graded free 
resolution$\, :$ 
\[
0 \lra \begin{matrix} S(-3)\\ \oplus\\ S(l-2)\\ \oplus\\ S(2l+m-3) 
\end{matrix} \overset{\displaystyle \delta_2}{\lra} 
\begin{matrix} S(-1)\\ \oplus\\ S(-2)\\ \oplus\\ 2S(l-1)\\ \oplus\\ 
2S(2l+m-2) \end{matrix} \overset{\displaystyle \delta_1}{\lra} 
\begin{matrix} S\\ \oplus\\ S(l)\\ \oplus\\ S(2l+m-1) \end{matrix} 
\overset{\displaystyle \delta_0}{\lra} {\fam0 H}^0_\ast(\sco_{Y\cup L_2}) \lra 0
\] 
with $\delta_0 = (1\, ,\, e_1\, ,\, x_1e_2)$ and with $\delta_1$ and $\delta_2$ 
defined by the matrices$\, :$ 
\[
\begin{pmatrix} 
x_3 & x_1x_2 & 0 & 0 & 0 & 0\\
-b & -x_1a & x_2 & x_3 & 0 & 0\\
-b_1^\prim & -a^\prim & -p_1a & -p_1b & x_2 & x_3
\end{pmatrix}\, ,\  
\begin{pmatrix}
-x_1x_2 & 0 & 0\\
x_3 & 0 & 0\\
-x_1b & -x_3 & 0\\
x_1a & x_2 & 0\\
-b^\prim & p_1b & -x_3\\
a^\prim & -p_1a & x_2
\end{pmatrix}\, .
\]

\emph{(d)} If $x_1 \nmid b$ and $x_1 \nmid p$ then 
${\fam0 H}^0_\ast(\sco_{Y \cup L_2})$ admits the following graded free 
resolution$\, :$ 
\[
0 \lra \begin{matrix} S(-3)\\ \oplus\\ S(l-2)\\ \oplus\\ S(2l+m-2) 
\end{matrix} \overset{\displaystyle \delta_2}{\lra} 
\begin{matrix} S(-1)\\ \oplus\\ S(-2)\\ \oplus\\ 2S(l-1)\\ \oplus\\ 
2S(2l+m-1) \end{matrix} \overset{\displaystyle \delta_1}{\lra} 
\begin{matrix} S\\ \oplus\\ S(l)\\ \oplus\\ S(2l+m) \end{matrix} 
\overset{\displaystyle \delta_0}{\lra} {\fam0 H}^0_\ast(\sco_{Y\cup L_2}) \lra 0
\] 
with $\delta_0 = (1\, ,\, e_1\, ,\, e_2)$ and with $\delta_1$ and $\delta_2$ 
defined by the matrices$\, :$ 
\[
\begin{pmatrix} 
x_3 & x_1x_2 & 0 & 0 & 0 & 0\\
-b & -x_1a & x_2 & x_3 & 0 & 0\\
-b^\prim & -x_1a^\prim & -pa & -pb & x_2 & x_3
\end{pmatrix}\, ,\  
\begin{pmatrix}
-x_1x_2 & 0 & 0\\
x_3 & 0 & 0\\
-x_1b & -x_3 & 0\\
x_1a & x_2 & 0\\
-x_1b^\prim & pb & -x_3\\
x_1a^\prim & -pa & x_2
\end{pmatrix}\, .
\]
\end{prop}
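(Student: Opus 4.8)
The plan is to derive the resolution of $\tH^0_\ast(\sco_{Y\cup L_2})$ from the two exact sequences set up at the beginning of this subsection, in exactly the manner used for $\tH^0_\ast(\sco_{X\cup L})$ in Prop.~\ref{P:reshoxcupl}. Recall that $L_2$ is the line $\ell = x_3 = 0$ with $\ell = x_1$, i.e.\ the case $c = 0$ of Subsection~\ref{SS:doublecupaline}; hence Prop.~\ref{P:reshoxcupl} already provides a graded free resolution of $\tH^0_\ast(\sco_{X\cup L_2})$, whose shape is governed by whether $x_1 \mid b$, the module being generated by $1,\, x_1 e_1$ in that case and by $1,\, e_1$ otherwise.

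First I would exploit the short exact sequence
\[
0 \lra \text{Im}\,\phi \times \{0\} \lra \sco_{Y\cup L_2} \lra \sco_{X\cup L_2} \lra 0 ,
\]
where $\phi$ is the composite $\sci_{X\cup L_2}\to\sci_X\xra{\displaystyle \rho}\sco_{L_1}(2l+m)$. Since $\Cok\phi$ is an $\sco_{L_1\cap L_2}=\sco_{P_0}$-module and $P_0$ is the point $x_1 = 0$ of $L_1 \cong \pj$, the subsheaf $\text{Im}\,\phi$ of the invertible sheaf $\sco_{L_1}(2l+m)$ equals $x_1^{\, t}\sco_{L_1}(2l+m)\cong \sco_{L_1}(2l+m-t)$ for a unique $t\geq 0$. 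Computing the images under $\rho$ of the generators of $I(X\cup L_2)$ — using $\rho(F_2)=\Delta_1$ and $\rho(x_2^2,\, x_2x_3,\, x_3^2)=(pa^2,\, pab,\, pb^2)$ together with Lemma~\ref{L:bbprimv0} — one finds $t = 1$ in cases (a) and (c), where $x_1$ divides $\Delta_1$ (because $x_1\mid b,\, b^\prim$), respectively $p$, and $t = 0$ in cases (b) and (d). Thus $\tH^0_\ast(\text{Im}\,\phi)\cong S(L_1)(2l+m-t)$ is cyclic, generated by $x_1 e_2$ when $t=1$ and by $e_2$ when $t=0$; and since $1$ and $e_1$ (resp.\ $x_1e_1$) lift to $\sco_{Y\cup L_2}$, one obtains a short exact sequence of graded $S$-modules
\[
0 \lra S(L_1)(2l+m-t) \lra \tH^0_\ast(\sco_{Y\cup L_2}) \lra \tH^0_\ast(\sco_{X\cup L_2}) \lra 0 .
\]

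Next I would splice the standard Koszul resolution of $S(L_1)(2l+m-t)$,
\[
0 \lra S(2l+m-t-2) \xra{\begin{pmatrix} -x_3\\ x_2 \end{pmatrix}} 2S(2l+m-t-1) \xra{\displaystyle (x_2,\, x_3)} S(2l+m-t) \lra S(L_1)(2l+m-t) \lra 0 ,
\]
with the resolution of $\tH^0_\ast(\sco_{X\cup L_2})$ from Prop.~\ref{P:reshoxcupl} by means of the horseshoe lemma. A rank count shows that the resulting complex has exactly the three terms displayed in the statement, in each of the four cases. The diagonal blocks of $\delta_1,\, \delta_2$ are the differentials of these two resolutions, while the off-diagonal blocks are the lifting maps, which I would read off from the multiplicative structure: since $x_2 = a e_1 + a^\prim e_2$ and $x_3 = b e_1 + b^\prim e_2$ act through $e_1^2 = p e_2$, $e_1 e_2 = e_2^2 = 0$, one gets $x_2\cdot e_1 = pa\,e_2$, $x_3\cdot e_1 = pb\,e_2$, and the expressions of $x_2\cdot 1,\, x_3\cdot 1$ in the chosen generators; these produce precisely the entries $-pa,\, -pb,\, -a^\prim,\, -b^\prim$ (with the appropriate $x_1$-factors absorbed in each case, e.g.\ $-b_1^\prim$ in case (a)) occurring in the two matrices.

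The routine but delicate part is the bookkeeping of Step two: pinning down the twist $t$ and the precise generator ($e_2$ versus $x_1e_2$) in each of the four cases, and then verifying $\delta_1\delta_2 = 0$ for the assembled matrices. I expect this matching — coordinating the dichotomy $x_1\mid b$ (which fixes $e_1$ versus $x_1e_1$) with the dichotomies $x_1\mid\Delta_1$ and $x_1\mid p$ (which fix $e_2$ versus $x_1e_2$) against the four cases of Prop.~\ref{P:geniycupl2} — to be the main obstacle, though it is entirely mechanical once $\text{Im}\,\phi$ has been computed.
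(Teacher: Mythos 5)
Your proposal is correct and follows essentially the same route as the paper: the paper also starts from the exact sequence $0 \to \text{Im}\,\phi \times \{0\} \to \sco_{Y\cup L_2} \to \sco_{X\cup L_2} \to 0$, determines $\text{Im}\,\phi$ to be $x_1\sco_{L_1}(2l+m-1)$ in cases (a), (c) and $\sco_{L_1}(2l+m)$ in cases (b), (d) by evaluating $\rho$ on the generators of $I(X\cup L_2)$, and then splices the resulting extension of $\tH^0_\ast(\sco_{X\cup L_2})$ (resolved by Prop.~\ref{P:reshoxcupl}) by $S(L_1)(2l+m-t)$. The only point you leave implicit is the $\tH^1$-vanishing of $\text{Im}\,\phi$ in the relevant twists needed to lift the generators $1$ and $e_1$ (resp.\ $x_1e_1$) — in case (c) this uses $m\geq 1$, which follows from $x_1\mid p$ — but this is exactly the routine verification you flag.
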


\begin{proof}
Recall, from the beginning of this subsection, that one has an exact 
sequence$\, :$ 
\[
0 \lra \text{Im}\, \phi \times \{0\} \lra \sco_{Y \cup L_2} \lra \sco_{X \cup L_2} 
\lra 0 
\]
with $\phi$ is the composite morphism $\sci_{X \cup L_2} \ra \sci_X 
\overset{\rho}{\lra} \sco_{L_1}(2l+m)$, where the epimorphism $\rho : \sci_X \ra 
\sco_{L_1}(2l+m)$ is characterized by$\, :$ 
\[
\rho(F_2) = \Delta_1,\  \rho(x_2^2) = pa^2,\  \rho(x_2x_3) = pab,\  
\rho(x_3^2) = pb^2\, . 
\] 
Moreover, $\Cok \phi$ is an $\sco_{L_1 \cap L_2} = \sco_{\{P_0\}}$-module which 
implies that $x_1\sco_{L_1}(2l+m-1) \subseteq \text{Im}\, \phi \subseteq 
\sco_{L_1}(2l+m)$. 

It follows, from Prop.~\ref{P:genixcupl}, that if $x_1 \mid b$ then 
$I(X \cup L_2) = (F_2,\, x_2x_3,\, x_3^2,\, x_1x_2^2)$ hence $\text{Im}\, \phi$ 
coincides with the image of the morphism$\, :$ 
\[
(\Delta_1,\, pab,\, pb^2,\, px_1a^2) : 
\sco_{L_1}(-l-2) \oplus 2\sco_{L_1}(-2) \oplus \sco_{L_1}(-3) \lra 
\sco_{L_1}(2l+m) 
\]
and if $x_1 \nmid b$ then $I(X \cup L_2) = (x_1F_2,\, x_2x_3,\, x_3^2,\, 
x_1x_2^2)$ hence $\text{Im}\, \phi$ coincides with the image of the 
morphism$\, :$ 
\[
(x_1\Delta_1,\, pab,\, pb^2,\, px_1a^2) : 
\sco_{L_1}(-l-3) \oplus 2\sco_{L_1}(-2) \oplus \sco_{L_1}(-3) \lra 
\sco_{L_1}(2l+m)\, . 
\]

Recall, also, from Prop.~\ref{P:reshoxcupl}, the graded free resolution of 
$\tH^0_\ast(\sco_{X \cup L_2})$. 

\vskip2mm

(a) In this case $x_1 \mid \Delta_1$ and $x_1 \mid b$ hence $\text{Im}\, \phi 
= x_1\sco_{L_1}(2l+m-1)$. According to Prop.~\ref{P:reshoxcupl}(a), the graded 
$S$-module $\tH^0_\ast(\sco_{X \cup L_2})$ is generated by $1 \in 
\tH^0(\sco_{X \cup L_2})$ and by $x_1e_1 \in \tH^0(\sco_{X \cup L_2}(-l+1))$. Since 
$\tH^1((\text{Im}\, \phi)(-l+1)) = 0$ it follows that the morphism  
$\tH^0_\ast(\sco_{Y \cup L_2}) \ra \tH^0_\ast(\sco_{X \cup L_2})$ is surjective. 
One deduces an exact sequence$\, :$ 
\[
0 \lra S(L_1)(2l+m-1) \lra \tH^0_\ast(\sco_{Y \cup L_2}) \lra 
\tH^0_\ast(\sco_{X \cup L_2}) \lra 0
\] 
where the left morphism maps $1 \in S(L_1)$ to the element of 
$\tH^0(\sco_{Y \cup L_2}(-2l-m+1))$ whose image into $\tH^0(\sco_Y(-2l-m+1)) 
\oplus \tH^0(\sco_{L_2}(-2l-m+1))$ is $(x_1e_2,\, 0)$. 

(b) In this case, since $x_1 \nmid \Delta_1$ it follows that $\text{Im}\, 
\phi = \sco_{L_1}(2l+m)$. One derives, as in case (a), an exact sequence$\, :$ 
\[
0 \lra S(L_1)(2l+m) \lra \tH^0_\ast(\sco_{Y \cup L_2}) \lra 
\tH^0_\ast(\sco_{X \cup L_2}) \lra 0
\] 
where the left morphism maps $1 \in S(L_1)$ to the element of 
$\tH^0(\sco_{Y \cup L_2}(-2l-m))$ whose image into $\tH^0(\sco_Y(-2l-m)) 
\oplus \tH^0(\sco_{L_2}(-2l-m))$ is $(e_2,\, 0)$. 

(c) In this case $x_1 \mid p$ hence $\text{Im}\, \phi = 
x_1\sco_{L_1}(2l+m-1)$. By Prop.~\ref{P:reshoxcupl}(a), the graded 
$S$-module $\tH^0_\ast(\sco_{X \cup L_2})$ is generated by $1 \in 
\tH^0(\sco_{X \cup L_2})$ and by $e_1 \in \tH^0(\sco_{X \cup L_2}(-l))$. 
Since $x_1 \mid p$ it follows that $m \geq 1$ which implies that 
$\tH^1((\text{Im}\, \phi)(-l)) = 0$. It follows that the morphism  
$\tH^0_\ast(\sco_{Y \cup L_2}) \ra \tH^0_\ast(\sco_{X \cup L_2})$ is surjective. 
One deduces an exact sequence$\, :$ 
\[
0 \lra S(L_1)(2l+m-1) \lra \tH^0_\ast(\sco_{Y \cup L_2}) \lra 
\tH^0_\ast(\sco_{X \cup L_2}) \lra 0
\] 
where the left morphism maps $1 \in S(L_1)$ to the element of 
$\tH^0(\sco_{Y \cup L_2}(-2l-m+1))$ whose image into $\tH^0(\sco_Y(-2l-m+1)) 
\oplus \tH^0(\sco_{L_2}(-2l-m+1))$ is $(x_1e_2,\, 0)$. 

(d) In this case, since $x_1 \nmid pb^2$ it follows that $\text{Im}\, 
\phi = \sco_{L_1}(2l+m)$. One derives an exact sequence$\, :$ 
\[
0 \lra S(L_1)(2l+m) \lra \tH^0_\ast(\sco_{Y \cup L_2}) \lra 
\tH^0_\ast(\sco_{X \cup L_2}) \lra 0
\] 
where the left morphism maps $1 \in S(L_1)$ to the element of 
$\tH^0(\sco_{Y \cup L_2}(-2l-m))$ whose image into $\tH^0(\sco_Y(-2l-m)) 
\oplus \tH^0(\sco_{L_2}(-2l-m))$ is $(e_2,\, 0)$.  
\end{proof}

\subsection{A double line union two lines}\label{SS:doublecuptwolines}

Let $X$ be a double structure on a line in $\piii$ and $\Lambda_1$, $\Lambda_2$ 
two other lines in $\piii$. We want to describe the homogeneous ideal and 
the graded structural algebra of the union $X \cup \Lambda_1 \cup \Lambda_2$. 
One has to consider the following configurations$\, :$ 
\begin{enumerate} 
\item $\Lambda_1 \cap X \neq \emptyset$, $\Lambda_2 \cap X \neq \emptyset$, 
$\Lambda_1 \cap \Lambda_2 \neq \emptyset$, 
$\Lambda_1 \cap \Lambda_2 \cap X = \emptyset$$\, ;$
\item $\Lambda_1 \cap X \neq \emptyset$, $\Lambda_2 \cap X \neq \emptyset$, 
$\Lambda_1 \cap \Lambda_2 = \emptyset$$\, ;$ 
\item $\Lambda_1 \cap \Lambda_2 \cap X \neq \emptyset$$\, ;$ 
\item $\Lambda_1 \cap X \neq \emptyset$, $\Lambda_2 \cap X = \emptyset$, 
$\Lambda_1 \cap \Lambda_2 \neq \emptyset$$\, ;$ 
\item $\Lambda_1 \cap X \neq \emptyset$, $\Lambda_2 \cap X = \emptyset$, 
$\Lambda_1 \cap \Lambda_2 = \emptyset$$\, ;$ 
\item $\Lambda_1 \cap X = \emptyset$, $\Lambda_2 \cap X = \emptyset$, 
$\Lambda_1 \cap \Lambda_2 \neq \emptyset$$\, ;$ 
\item $\Lambda_1 \cap X = \emptyset$, $\Lambda_2 \cap X = \emptyset$, 
$\Lambda_1 \cap \Lambda_2 = \emptyset$$\, ;$    
\end{enumerate} 

Up to a linear change of coordinates in $\piii$ one can assume that $X$ is a 
double structure on the line $L_1$ and it suffices to treat only the 
following cases$\, :$
\begin{enumerate}
\item[(i)] $X \cup L_2 \cup L_3^\prim$$\, ;$
\item[(ii)] $X \cup L_2 \cup L_2^\prim$$\, ;$ 
\item[(iii)] $X \cup L_2 \cup L_3$$\, ;$ 
\item[(iv)] $X \cup L_2 \cup L_1^\prim$$\, ;$ 
\item[(v)] $X \cup L \cup L_1^\prim$ where $L$ is the line of equations 
$x_1 + cx_2 = x_3 = 0$, $c \in k\setminus \{0\}$$\, ;$ 
\item[(vi)] $X \cup L_1^\prim \cup L^\prim$, where $L^\prim$ is the line of 
equations $x_0 + cx_2 = x_1 = 0$, $c \in k \setminus \{0\}$$\, ;$ 
\item[(vii)] $X \cup L_1^\prim \cup L_1^\secund$ where $L_1^\secund$ is the line of 
equations $x_0 - x_2 = x_1 - x_3 = 0$$\, ;$ $L_1$, $L_1^\prim$, $L_1^\secund$ are 
mutually disjoint lines contained in the quadric surface $Q \subset \piii$ of 
equation $x_0x_3 - x_1x_2 = 0$. 
\end{enumerate}
For the double structure $X$ on $L_1$ we use the notation from the beginning 
of Subsection~\ref{SS:doublecupaline}.

\vskip2mm 

In case (i), just make $c = 0$ in the statements and proofs of 
Lemma~\ref{L:l1(1)cupc4}, Prop.~\ref{P:genixcupc4} and 
Prop.~\ref{P:reshoxcupc4} from Subsection~\ref{SS:doublecupconic} below. 
The next four cases (ii)--(v) can be treated in an unitary manner as 
follows$\, :$ notice, fistly, that if one takes $c = 0$ in the first equation 
of $L$ one gets $L_2$. Let us denote $x_1 + cx_2$, $c \in k$, by $\ell$. 
One has $I(L_1 \cup L) = (x_3,\, \ell x_2)$. Let us denote by $\Lambda$ any of 
the lines $L_1^\prim$, $L_2^\prim$ and $L_3$.  
We have to consider two subcases. 

\vskip2mm

(I) If $x_1 \mid b$, i.e., if $b = x_1b_1$ then, according to the proof of 
Prop.~\ref{P:genixcupl}(a) and of Prop.~\ref{P:reshoxcupl}(a), 
$I(X \cup L) = (F_2 - cb_1x_2^2,\, x_2x_3,\, x_3^2,\, \ell x_2^2)$ and one has 
exact sequences$\, :$ 
\begin{gather*}
0 \lra \sci_{L_1^{(1)} \cup L} \lra \sci_{X \cup L} 
\overset{\displaystyle \psi}{\lra} \sco_{L_1}(-l-2) \lra 0\\
0 \lra \sci_{X \cup L} \lra \sci_{L_1 \cup L} 
\overset{\displaystyle \phi^\prim}{\lra} \sco_{L_1}(l-1) \lra 0  
\end{gather*}
with $\psi(F_2 - cb_1x_2^2) = 1 \in \tH^0(\sco_{L_1})$, and with 
$\phi^\prim (x_3) = b_1$, $\phi^\prim(\ell x_2) = a$. Let us denote by $\psi_1$ 
and $\phi^\prim_1$ the composite morphisms$\, :$ 
\[
\sci_{X \cup L \cup \Lambda} \lra \sci_{X \cup L} 
\overset{\displaystyle \psi}{\lra} \sco_{L_1}(-l-2) \  \text{and} \  
\sci_{L_1 \cup L \cup \Lambda} \lra \sci_{L_1 \cup L} 
\overset{\displaystyle \phi^\prim}{\lra} \sco_{L_1}(l-1)\, . 
\]
Then, by Lemma~\ref{L:ycupt}, one has exact sequences$\, :$ 
\begin{gather*}
0 \lra \sci_{L_1^{(1)} \cup L \cup \Lambda} \lra \sci_{X \cup L \cup \Lambda} 
\overset{\displaystyle \psi_1}{\lra} \sco_{L_1}(-l-2)\, ,\\
0 \lra \text{Im}\, \phi^\prim_1 \times \{0\} \lra \sco_{X \cup L \cup \Lambda} 
\lra \sco_{L_1 \cup L \cup \Lambda} \lra 0\, .  
\end{gather*}
Moreover, $\Cok \psi_1$ and $\Cok \phi^\prim_1$ are 
$\sco_{L_1 \cap \Lambda}$-modules. 

\vskip2mm

(II) If $x_1 \nmid b$ then, according to the proof of 
Prop.~\ref{P:genixcupl}(b) and of Prop.~\ref{P:reshoxcupl}(b), 
$I(X \cup L) = (\ell F_2,\, x_2x_3,\, x_3^2,\, \ell x_2^2)$ and one has exact 
sequences$\, :$ 
\begin{gather*}
0 \lra \sci_{L_1^{(1)} \cup L} \lra \sci_{X \cup L} 
\overset{\displaystyle \psi^\prim}{\lra} \sco_{L_1}(-l-3) \lra 0\\
0 \lra \sci_{X \cup L} \lra \sci_{L_1 \cup L} 
\overset{\displaystyle \phi}{\lra} \sco_{L_1}(l) \lra 0  
\end{gather*}
with $\psi^\prim (\ell F_2) = 1 \in \tH^0(\sco_{L_1})$, and with 
$\phi(x_3) = b$, $\phi(\ell x_2) = x_1a$. Let us denote by $\psi_1^\prim$ 
and $\phi_1$ the composite morphisms$\, :$ 
\[
\sci_{X \cup L \cup \Lambda} \lra \sci_{X \cup L} 
\overset{\displaystyle \psi^\prim}{\lra} \sco_{L_1}(-l-3) \  \text{and} \  
\sci_{L_1 \cup L \cup \Lambda} \lra \sci_{L_1 \cup L} 
\overset{\displaystyle \phi}{\lra} \sco_{L_1}(l)\, . 
\]
Then, by Lemma~\ref{L:ycupt}, one has exact sequences$\, :$ 
\begin{gather*}
0 \lra \sci_{L_1^{(1)} \cup L \cup \Lambda} \lra \sci_{X \cup L \cup \Lambda} 
\overset{\displaystyle \psi_1^\prim}{\lra} \sco_{L_1}(-l-3)\, ,\\
0 \lra \text{Im}\, \phi_1 \times \{0\} \lra \sco_{X \cup L \cup \Lambda} \lra 
\sco_{L_1 \cup L \cup \Lambda} \lra 0\, .  
\end{gather*}
Moreover, $\Cok \psi_1^\prim$ and $\Cok \phi_1$ are 
$\sco_{L_1 \cap \Lambda}$-modules.   

\begin{lemma}\label{L:l1(1)cupl2cupl2prim} 
The homogeneous ideal $I(W) \subset S$ of $W = L_1^{(1)} \cup L_2 \cup 
L_2^\prim$ is generated by $x_2x_3$, $x_0x_3^2$, $x_1x_2^2$ and admits the 
following graded free resolution$\, :$ 
\[
0 \lra 2S(-4) \xra{\begin{pmatrix} -x_0x_3 & -x_1x_2\\ x_2 & 0\\ 0 & x_3 
\end{pmatrix}} S(-2) \oplus 2S(-3) \lra I(W) \lra 0\, .
\]
\end{lemma}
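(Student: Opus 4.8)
The plan is to reduce everything to a computation with monomial ideals and then invoke the Hilbert--Burch theorem. First I would record that, since $W$ is the scheme-theoretic union of its three components, its saturated homogeneous ideal is the intersection
\[
I(W) = I(L_1^{(1)}) \cap I(L_2) \cap I(L_2^\prim) = (x_2^2,\, x_2x_3,\, x_3^2) \cap (x_1,\, x_3) \cap (x_0,\, x_2),
\]
all three factors being monomial and already saturated (intersection of saturated ideals is saturated). Applying Remark~\ref{R:monomial}(a) twice---first to intersect $I(L_2)$ with $I(L_2^\prim)$, then to intersect the outcome with $I(L_1^{(1)})$---produces a list of least common multiples; discarding those divisible by another one leaves exactly the three minimal generators $x_2x_3$, $x_0x_3^2$ and $x_1x_2^2$. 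This identifies $I(W)$ with the ideal in the statement.

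Next I would recognise the displayed complex as a Hilbert--Burch complex. Writing $M$ for the $3\times 2$ matrix and $d_0$ for the map $S(-2)\oplus 2S(-3) \to I(W)$ sending the basis to the three generators, a direct check shows that $x_2x_3$, $-x_0x_3^2$, $x_1x_2^2$ are, up to sign, the maximal minors of $M$ (the minor obtained by deleting the $i$-th row), and that the two columns of $M$ are syzygies among the generators, i.e. $d_0 \circ M = 0$; for instance the first column gives $-x_0x_3\cdot x_2x_3 + x_2\cdot x_0x_3^2 = 0$. Moreover $M$ is injective, since the $2\times 2$ minor $x_2x_3$ is non-zero.

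Finally, because $W$ is a curve in $\piii$, the ideal $I(W)$ has height $2$, hence grade $2$ in the Cohen--Macaulay ring $S$. The converse direction of the Hilbert--Burch theorem then guarantees that the complex $0 \to 2S(-4) \xrightarrow{M} S(-2)\oplus 2S(-3) \to I(W) \to 0$, whose last map is given by the maximal minors of $M$, is exact; up to the evident sign changes on the basis of $S(-2)\oplus 2S(-3)$ this is precisely the displayed resolution. The one point requiring care---that the module of syzygies of the three generators contains no relation beyond the two columns of $M$---is exactly what Hilbert--Burch supplies. I expect this verification of exactness to be the only genuine step, the ideal computation being mechanical; should one wish to avoid Hilbert--Burch, the same conclusion follows by hand, noting that the three pairwise Taylor (Koszul) syzygies satisfy one linear relation, so that the pairwise syzygy of $x_0x_3^2$ and $x_1x_2^2$ is an $S$-combination of the other two and may be dropped.
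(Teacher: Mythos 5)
Your proposal is correct. The first half of your argument (the identification of the generators) is exactly what the paper does: it computes
$(x_2^2,x_2x_3,x_3^2)\cap(x_1,x_3)\cap(x_0,x_2)$ in two stages via Remark~\ref{R:monomial}(a) and arrives at $(x_2x_3,\,x_0x_3^2,\,x_1x_2^2)$. Where you diverge is in establishing the resolution. The paper links $W$ by the complete intersection $(x_0x_3^2,\,x_1x_2^2)$ to the curve $W^\prim$ with $I(W^\prim)=(x_0x_3,\,x_1x_2)$ (the union of the four lines $L_1,L_1^\prim,L_2,L_2^\prim$, itself a complete intersection), and then reads off the resolution of $I(W)$ from Ferrand's mapping-cone construction applied to the Koszul resolution of $I(W^\prim)$; this is the mechanism the paper uses systematically throughout the appendices, so the resolution comes out already in the normalized form displayed in the statement. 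You instead observe that the displayed $3\times 2$ matrix has the three generators as its signed maximal minors, that its columns are syzygies, and that $\operatorname{grade} I(W)=2$, and then invoke the converse direction of Hilbert--Burch to conclude exactness. Both routes are sound; yours is more self-contained (no liaison input, no need to identify the linked curve), while the paper's has the advantage of uniformity with the dozens of analogous computations elsewhere in the appendices and of producing the twists of the resolution without having to guess the syzygy matrix in advance. Your closing aside --- that the third Taylor syzygy $x_1x_2^2e_2-x_0x_3^2e_3$ is an $S$-combination of the two columns --- is also correct (it equals $-x_1x_2$ times the first column plus $x_0x_3$ times the second, up to sign), though as you note the clean way to see that the two columns generate \emph{all} syzygies is precisely Hilbert--Burch.
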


\begin{proof} 
$I(W)$ is, by definition$\, :$ 
\[
(x_2^2,\, x_2x_3,\, x_3^2) \cap (x_1,\, x_3) \cap (x_0,\, x_2) = 
(x_2x_3,\, x_3^2,\, x_1x_2^2) \cap (x_0,\, x_2) = 
(x_2x_3,\, x_0x_3^2,\, x_1x_2^2)\, .  
\]
If $W^\prim$ is the curve directly linked to $W$ by the complete intersection 
defined by $x_0x_3^2$ and $x_1x_2^2$ then $I(W^\prim) = (x_0x_3,\, x_1x_2)$. 
One can apply, now, Ferrand's result about liaison. 
\end{proof}

\begin{prop}\label{P:genixcupl2cupl2prim}
Let $X$ be the double structure on the line $L_1$ considered at the beginning 
of Subsection~\ref{SS:doublecupaline}. 

\emph{(a)} If $x_1 \mid b$ and $x_0 \mid a$ then $I(X \cup L_2 \cup L_2^\prim) 
= SF_2 + I(L_1^{(1)} \cup L_2 \cup L_2^\prim)$. 

\emph{(b)} If $x_1 \mid b$ and $x_0 \nmid a$ then $I(X \cup L_2 \cup L_2^\prim) 
= Sx_0F_2 + I(L_1^{(1)} \cup L_2 \cup L_2^\prim)$. 

\emph{(c)} If $x_1 \nmid b$ and $x_0 \mid a$ then $I(X \cup L_2 \cup L_2^\prim) 
= Sx_1F_2 + I(L_1^{(1)} \cup L_2 \cup L_2^\prim)$. 

\emph{(d)} If $x_1 \nmid b$ and $x_0 \nmid a$ then $I(X \cup L_2 \cup L_2^\prim) 
= Sx_0x_1F_2 + I(L_1^{(1)} \cup L_2 \cup L_2^\prim)$.      
\end{prop}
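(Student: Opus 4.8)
The statement to prove is Proposition~\ref{P:genixcupl2cupl2prim}, which describes the homogeneous ideal of $X \cup L_2 \cup L_2^\prim$ in the four subcases according to the divisibility properties of $a$ and $b$. The plan is to exploit the exact sequences already set up in the discussion preceding the proposition, which relate $\sci_{X \cup L \cup \Lambda}$ to $\sci_{L_1^{(1)} \cup L \cup \Lambda}$ via a morphism $\psi_1$ (or $\psi_1^\prime$), together with the base ideal computed in Lemma~\ref{L:l1(1)cupl2cupl2prim}, namely $I(L_1^{(1)} \cup L_2 \cup L_2^\prime) = (x_2x_3,\, x_0x_3^2,\, x_1x_2^2)$. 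Here $\Lambda = L_2^\prime$ and $L = L_2$ (so $c = 0$, $\ell = x_1$). The strategy in every subcase is the same: identify the correct generator of $\sci_{X\cup L_2\cup L_2^\prime}$ lifting $1 \in \tH^0(\sco_{L_1})$ under the relevant map $\eta$ (equivalently $\psi$), show that this element actually vanishes on $L_2^\prime$ so that it lies in the ideal, and then show that $\psi_1$ is an epimorphism, whence the short exact sequence $0 \to I(L_1^{(1)} \cup L_2 \cup L_2^\prime) \to I(X \cup L_2 \cup L_2^\prime) \to S(L_1)(-l-2) \to 0$ gives the asserted generators.

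First I would treat the two cases $x_1 \mid b$ (subcases (a),(b)) using the map $\psi$ of type~(I) from the preamble, and the two cases $x_1 \nmid b$ (subcases (c),(d)) using the map $\psi^\prime$ of type~(II). In type~(I), since $b = x_1 b_1$, the element $F_2 = -bx_2 + ax_3$ already satisfies $F_2 \vb L_2 = -x_1 b_1 x_2|_{L_2} + a x_3|_{L_2}$; restricting to $L_2 = \{x_1 = x_3 = 0\}$ kills both terms, so $F_2 \in I(X \cup L_2)$ and $\psi(F_2) = \eta(F_2) = 1$. The remaining question is whether $F_2$ vanishes on $L_2^\prime = \{x_0 = x_2 = 0\}$. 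On $L_2^\prime$ one has $x_2 = 0$, so $F_2 \vb L_2^\prime = a(0,x_1) x_3$, which vanishes identically precisely when $a(0,x_1) = 0$, i.e. when $x_0 \mid a$. This dichotomy produces subcases (a) and (b): if $x_0 \mid a$ then $F_2$ itself lies in $I(X \cup L_2 \cup L_2^\prime)$ and $\psi_1$ is surjective with $\psi_1(F_2) = 1$; if $x_0 \nmid a$, then $F_2$ does not vanish on $L_2^\prime$ but $x_0 F_2$ does, and $\psi_1$ factors through multiplication by $x_0$, giving the generator $x_0 F_2$.

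Next I would carry out the parallel analysis in type~(II), where $x_1 \nmid b$. Here the relevant section lifting $1$ under $\eta$ is $x_1 F_2$ (as in Prop.~\ref{P:genixcupl}(b)), since $\psi^\prime(\ell F_2) = \psi^\prime(x_1 F_2) = 1$. Restricting $x_1 F_2$ to $L_2^\prime = \{x_0 = x_2 = 0\}$ gives $x_1 F_2 \vb L_2^\prime = x_1 a(0,x_1) x_3$, which vanishes iff $x_0 \mid a$; this yields subcase (c) with generator $x_1 F_2$ and subcase (d) where one must further multiply by $x_0$, giving $x_0 x_1 F_2$. In each of these four computations the verification that $\psi_1$ (respectively $\psi_1^\prime$) is genuinely an epimorphism — not merely that its image contains the one-dimensional piece spanned by the lift — follows from the vanishing of the relevant $\tH^1$ of the kernel sheaf, which is guaranteed because $\sci_{L_1^{(1)} \cup L_2 \cup L_2^\prime}$ is arithmetically Cohen-Macaulay by Lemma~\ref{L:l1(1)cupl2cupl2prim}, so $\tH^1_\ast$ of its ideal sheaf vanishes. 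Applying $\tH^0_\ast(-)$ then turns the sheaf sequence into a short exact sequence of graded modules, and the generators of $I(X \cup L_2 \cup L_2^\prime)$ are exactly those of $I(L_1^{(1)} \cup L_2 \cup L_2^\prime)$ together with the chosen lift.

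The main obstacle I anticipate is bookkeeping rather than conceptual: one must be careful that the chosen lift genuinely lies in $I(X \cup L_2 \cup L_2^\prime)$ and not merely in $I(X \cup L_2)$, which is why the explicit restriction to $L_2^\prime$ and the resulting divisibility condition on $a(0,x_1)$ is the crux distinguishing the four subcases. A secondary point requiring attention is confirming that $\Cok \psi_1$ (an $\sco_{L_1 \cap L_2^\prime}$-module by Lemma~\ref{L:ycupt}(c)) is supported at a single point so that the image of $\psi_1$ is either all of $\sco_{L_1}(-l-2)$ or exactly $x_0 \sco_{L_1}(-l-3)$; since $L_1 \cap L_2^\prime = \{P_1\} = \{(0:1:0:0)\}$ and the only linear form among $x_0, x_1$ vanishing there is $x_0$, the cokernel is forced to be a quotient of $\sco_{\{P_1\}}$, pinning down the two possibilities and matching the factor $x_0$ appearing in subcases (b) and (d). The final free resolution in each case is then assembled by splicing the resolution from Lemma~\ref{L:l1(1)cupl2cupl2prim} with the resolution of $S(L_1)$, exactly as in the analogous computations of Prop.~\ref{P:genixcupl} and Prop.~\ref{P:genixcupl1prim}.
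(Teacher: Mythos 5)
Your proposal follows the paper's proof essentially step for step: the same exact sequences from the preamble of Subsection~\ref{SS:doublecuptwolines}, the same base ideal from Lemma~\ref{L:l1(1)cupl2cupl2prim}, the same restriction of $F_2$ (resp.\ $x_1F_2$) to $L_2^\prim$ producing the dichotomy on $a(0,x_1)$, and the same use of $\tH^1_\ast(\sci_{L_1^{(1)}\cup L_2\cup L_2^\prim})=0$ together with the fact that $\Cok \psi_1$ is an $\sco_{\{P_1\}}$-module to reduce to exactly two possible images. The one step you should tighten is the negative direction in cases (b) and (d): the observation that $F_2$ (resp.\ $x_1F_2$) does not vanish on $L_2^\prim$ shows only that this particular element fails to lie in the ideal, whereas to conclude $\text{Im}\, \psi_1 = x_0\sco_{L_1}(-l-3)$ you must rule out that \emph{some} section of $\sci_{X\cup L_2\cup L_2^\prim}(l+2)$ maps to $1$. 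The paper's Claims 1 and 2 close this by noting that any such section, being in $I(X\cup L_2)=(F_2,\,x_2x_3,\,x_3^2,\,x_1x_2^2)$ with $\eta$-image $1$, has the form $F_2+f_0x_2x_3+f_1x_3^2+f_2x_1x_2^2$, and its restriction to $L_2^\prim$ is $a(0,x_1)x_3+f_1(0,x_1,0,x_3)x_3^2$; the correction terms cannot affect the coefficient of $x_3$, so vanishing on $L_2^\prim$ forces $a(0,x_1)=0$ no matter which lift is chosen. Since you already identified this restriction computation as the crux, the fix is a one-line addition, but as written the implication ``$F_2\vb L_2^\prim\neq 0$ implies the image is the smaller subsheaf'' is not yet justified.
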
 

\begin{proof}
We use the exact sequences defined at the beginning of this subsection. 
Since $L_1 \cap L_2^\prim = \{P_1\}$ one deduces that$\, :$ 
\begin{gather*}
x_0\sco_{L_1}(-l-3) \subseteq \text{Im}\, \psi_1 \subseteq \sco_{L_1}(-l-2)\, ,\\
x_0\sco_{L_1}(-l-4) \subseteq \text{Im}\, \psi_1^\prim \subseteq \sco_{L_1}(-l-3) 
\, .
\end{gather*}

\noindent 
{\bf Claim 1.}\quad \emph{If} $x_1 \mid b$ \emph{then} $\text{Im}\, \psi_1 
= \sco_{L_1}(-l-2)$ \emph{if and only if} $x_0 \mid a$. 

\vskip2mm

\noindent
\emph{Indeed}, the ``if'' part is clear because if $x_0 \mid a$ then 
$F_2 \vb L_2^\prim = 0$ hence $F_2 \in I(X \cup L_2 \cup L_2^\prim)$. 

For the ``only if'' part, assume that $\text{Im}\, \psi_1 = \sco_{L_1}(-l-2)$. 
Since, by Lemma~\ref{L:l1(1)cupl2cupl2prim}, 
$\tH^1_\ast(\sci_{L_1^{(1)} \cup L_2 \cup L_2^\prim}) = 0$, it follows that there 
exists $f \in \tH^0(\sci_{X \cup L_2 \cup L_2^\prim}(l+2))$ such that $\psi_1(f) = 
1 \in \tH^0(\sco_{L_1})$. Since $\psi_1(f) = \psi(f)$ one deduces that $f$ 
must have the form$\, :$ 
\[
f = F_2 + f_0x_2x_3 + f_1 x_3^2 + f_2x_1x_2^2\, .
\] 
But $0 = f \vb L_2^\prim = a(0,x_1)x_3 + f_1(0,x_1,0,x_3)x_3^2$ implies that 
$a(0,x_1) = 0$ hence $x_0 \mid a$. 

\vskip2mm

\noindent
{\bf Claim 2.}\quad \emph{If} $x_1 \nmid b$ \emph{then} 
$\text{Im}\, \psi_1^\prim  = \sco_{L_1}(-l-3)$ \emph{if and only if} 
$x_0 \mid a$. 

\vskip2mm

\noindent
\emph{Indeed}, the ``if'' part is clear because if $x_0 \mid a$ then 
$F_2 \vb L_2^\prim = 0$ hence $x_1F_2 \in I(X \cup L_2 \cup L_2^\prim)$. 

For the ``only if'' part, assume that $\text{Im}\, \psi_1^\prim  
= \sco_{L_1}(-l-3)$. Since, by Lemma~\ref{L:l1(1)cupl2cupl2prim}, 
$\tH^1_\ast(\sci_{L_1^{(1)} \cup L_2 \cup L_2^\prim}) = 0$, it follows that there 
exists $f \in \tH^0(\sci_{X \cup L_2 \cup L_2^\prim}(l+3))$ such that 
$\psi_1^\prim(f) = 1 \in \tH^0(\sco_{L_1})$. 
Since $\psi_1^\prim(f) = \psi^\prim(f)$ one deduces that $f$ must have the 
form$\, :$ 
\[
f = x_1F_2 + f_0x_2x_3 + f_1 x_3^2 + f_2x_1x_2^2\, .
\] 
But $0 = f \vb L_2^\prim = a(0,x_1)x_1x_3 + f_1(0,x_1,0,x_3)x_3^2$ implies that 
$a(0,x_1) = 0$ hence $x_0 \mid a$. 

\vskip2mm

(a) One deduces, from Claim 1, the existence of an exact sequence$\, :$ 
\[
0 \lra I(L_1^{(1)} \cup L_2 \cup L_2^\prim) \lra I(X \cup L_2 \cup L_2^\prim) 
\xra{\displaystyle \tH^0_\ast(\psi_1)} S(L_1)(-l-2) \lra 0\, .
\]

(b) Claim 1 implies that $\psi_1$ factorizes as$\, :$ 
\[
\sci_{X \cup L_2 \cup L_2^\prim} \overset{\displaystyle \psi_2}{\lra} 
\sco_{L_1}(-l-3) \overset{\displaystyle x_0}{\lra} \sco_{L_1}(-l-2)  
\]
with $\psi_2(x_0F_2) = 1 \in \tH^0(\sco_{L_1})$. One deduces the existence of 
an exact sequence$\, :$ 
\[
0 \lra I(L_1^{(1)} \cup L_2 \cup L_2^\prim) \lra I(X \cup L_2 \cup L_2^\prim) 
\xra{\displaystyle \tH^0_\ast(\psi_2)} S(L_1)(-l-3) \lra 0\, .
\] 

(c) One deduces, from Claim 2, the existence of an exact sequence$\, :$ 
\[
0 \lra I(L_1^{(1)} \cup L_2 \cup L_2^\prim) \lra I(X \cup L_2 \cup L_2^\prim) 
\xra{\displaystyle \tH^0_\ast(\psi_1^\prim)} S(L_1)(-l-3) \lra 0\, .
\]

(d) Claim 2 implies that $\psi_1^\prim$ factorizes as$\, :$ 
\[
\sci_{X \cup L_2 \cup L_2^\prim} \overset{\displaystyle \psi_2^\prim}{\lra} 
\sco_{L_1}(-l-4) \overset{\displaystyle x_0}{\lra} \sco_{L_1}(-l-3)  
\]
with $\psi_2^\prim(x_0x_1F_2) = 1 \in \tH^0(\sco_{L_1})$. One deduces the 
existence of an exact sequence$\, :$ 
\[
0 \lra I(L_1^{(1)} \cup L_2 \cup L_2^\prim) \lra I(X \cup L_2 \cup L_2^\prim) 
\xra{\displaystyle \tH^0_\ast(\psi_2^\prim)} S(L_1)(-l-4) \lra 0\, .
\] 

Notice that the exact sequences appearing in the above proof can be used to 
get a concrete graded free resolution of $I(X \cup L_2 \cup L_2^\prim)$. 
\end{proof}

\begin{prop}\label{P:reshoxcupl2cupl2prim}
Let $X$ be the double structure on the line $L_1$ considered at the beginning 
of Subsection~\ref{SS:doublecupaline}. 

\emph{(a)} If $x_1 \mid b$ and $x_0 \mid a$, i.e., if $b = x_1b_1$ and 
$a = x_0a_0$ then the graded $S$-module 
${\fam0 H}^0_\ast(\sco_{X \cup L_2 \cup L_2^\prim})$ admits the following graded free 
resolution$\, :$ 
\[
0 \lra \begin{matrix} 2S(-3)\\ \oplus\\ S(l-4) \end{matrix} 
\overset{\displaystyle \delta_2}{\lra} 
\begin{matrix} 3S(-2)\\ \oplus\\ 2S(l-3) \end{matrix} 
\overset{\displaystyle \delta_1}{\lra} 
\begin{matrix} S\\ \oplus\\ S(l-2) \end{matrix} 
\overset{\displaystyle \delta_0}{\lra} 
{\fam0 H}^0_\ast(\sco_{X \cup L_2 \cup L_2^\prim}) \lra 0  
\]
with $\delta_0 = (1\, ,\, x_0x_1e_1)$ and with $\delta_1$ and $\delta_2$ defined 
by the matrices$\, :$ 
\[
\begin{pmatrix} x_0x_3 & x_1x_2 & x_2x_3 & 0 & 0\\
-b_1 & -a_0 & 0 & x_2 & x_3 \end{pmatrix}\, ,\  
\begin{pmatrix} -x_2 & 0 & 0\\
0 & -x_3 & 0\\
x_0 & x_1 & 0\\
-b_1 & 0 & -x_3\\
0 & -a_0 & x_2 \end{pmatrix}\, .
\]

\emph{(b)} If $x_1 \mid b$ and $x_0 \nmid a$ then the graded $S$-module 
${\fam0 H}^0_\ast(\sco_{X \cup L_2 \cup L_2^\prim})$ admits the following graded free 
resolution$\, :$ 
\[
0 \lra \begin{matrix} 2S(-3)\\ \oplus\\ S(l-3) \end{matrix} 
\overset{\displaystyle \delta_2}{\lra} 
\begin{matrix} 3S(-2)\\ \oplus\\ 2S(l-2) \end{matrix} 
\overset{\displaystyle \delta_1}{\lra} 
\begin{matrix} S\\ \oplus\\ S(l-1) \end{matrix} 
\overset{\displaystyle \delta_0}{\lra} 
{\fam0 H}^0_\ast(\sco_{X \cup L_2 \cup L_2^\prim}) \lra 0  
\]
with $\delta_0 = (1\, ,\, x_1e_1)$ and with $\delta_1$ and $\delta_2$ defined 
by the matrices$\, :$ 
\[
\begin{pmatrix} x_0x_3 & x_1x_2 & x_2x_3 & 0 & 0\\
-x_0b_1 & -a & 0 & x_2 & x_3 \end{pmatrix}\, ,\  
\begin{pmatrix} -x_2 & 0 & 0\\
0 & -x_3 & 0\\
x_0 & x_1 & 0\\
-x_0b_1 & 0 & -x_3\\
0 & -a & x_2 \end{pmatrix}\, .
\]

\emph{(c)} If $x_1 \nmid b$ and $x_0 \mid a$ then the graded $S$-module 
${\fam0 H}^0_\ast(\sco_{X \cup L_2 \cup L_2^\prim})$ admits the following graded free 
resolution$\, :$ 
\[
0 \lra \begin{matrix} 2S(-3)\\ \oplus\\ S(l-3) \end{matrix} 
\overset{\displaystyle \delta_2}{\lra} 
\begin{matrix} 3S(-2)\\ \oplus\\ 2S(l-2) \end{matrix} 
\overset{\displaystyle \delta_1}{\lra} 
\begin{matrix} S\\ \oplus\\ S(l-1) \end{matrix} 
\overset{\displaystyle \delta_0}{\lra} 
{\fam0 H}^0_\ast(\sco_{X \cup L_2 \cup L_2^\prim}) \lra 0  
\]
with $\delta_0 = (1\, ,\, x_0e_1)$ and with $\delta_1$ and $\delta_2$ defined 
by the matrices$\, :$ 
\[
\begin{pmatrix} x_0x_3 & x_1x_2 & x_2x_3 & 0 & 0\\
-b & -x_1a_0 & 0 & x_2 & x_3 \end{pmatrix}\, ,\  
\begin{pmatrix} -x_2 & 0 & 0\\
0 & -x_3 & 0\\
x_0 & x_1 & 0\\
-b & 0 & -x_3\\
0 & -x_1a_0 & x_2 \end{pmatrix}\, .
\]

\emph{(d)} If $x_1 \nmid b$ and $x_0 \nmid a$ then the graded $S$-module 
${\fam0 H}^0_\ast(\sco_{X \cup L_2 \cup L_2^\prim})$ admits the following graded free 
resolution$\, :$ 
\[
0 \lra \begin{matrix} 2S(-3)\\ \oplus\\ S(l-2) \end{matrix} 
\overset{\displaystyle \delta_2}{\lra} 
\begin{matrix} 3S(-2)\\ \oplus\\ 2S(l-1) \end{matrix} 
\overset{\displaystyle \delta_1}{\lra} 
\begin{matrix} S\\ \oplus\\ S(l) \end{matrix} 
\overset{\displaystyle \delta_0}{\lra} 
{\fam0 H}^0_\ast(\sco_{X \cup L_2 \cup L_2^\prim}) \lra 0  
\]
with $\delta_0 = (1\, ,\, e_1)$ and with $\delta_1$ and $\delta_2$ defined 
by the matrices$\, :$ 
\[
\begin{pmatrix} x_0x_3 & x_1x_2 & x_2x_3 & 0 & 0\\
-x_0b & -x_1a & 0 & x_2 & x_3 \end{pmatrix}\, ,\  
\begin{pmatrix} -x_2 & 0 & 0\\
0 & -x_3 & 0\\
x_0 & x_1 & 0\\
-x_0b & 0 & -x_3\\
0 & -x_1a & x_2 \end{pmatrix}\, .
\]
\end{prop}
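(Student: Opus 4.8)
The plan is to follow exactly the strategy that the paper has already set up for the companion cases in Subsection~\ref{SS:doublecuptwolines}, namely to work simultaneously in all four parity subcases (a)--(d) by exploiting the two exact sequences attached to the configuration $X \cup L_2 \cup L_2^\prim$. First I would recall, from the general discussion at the beginning of the subsection and from Prop.~\ref{P:genixcupl2cupl2prim}, the two key short exact sequences: the ideal-sheaf sequence
\[
0 \lra \sci_{L_1^{(1)} \cup L_2 \cup L_2^\prim} \lra \sci_{X \cup L_2 \cup L_2^\prim} \overset{\psi_1}{\lra} \sco_{L_1}(-l-2)
\]
(or its factored variant through $\sco_{L_1}(-l-3)$ or $\sco_{L_1}(-l-4)$, depending on the subcase) and the structure-sheaf sequence
\[
0 \lra \text{Im}\, \phi \times \{0\} \lra \sco_{X \cup L_2 \cup L_2^\prim} \lra \sco_{L_1 \cup L_2 \cup L_2^\prim} \lra 0\, .
\]
The resolution of $\tH^0_\ast(\sco_{X \cup L_2 \cup L_2^\prim})$ is to be assembled from the resolution of $\tH^0_\ast(\sco_{L_1 \cup L_2 \cup L_2^\prim})$ together with the image-of-$\phi$ term, in direct analogy with the proof of Prop.~\ref{P:reshoxcupl}.

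The main computation I would carry out is the determination of $\text{Im}\, \phi$ in each of the four subcases, which controls the twist of the $S(L_1)$-summand that gets glued on. Here $\phi$ is the composite $\sci_{L_1 \cup L_2 \cup L_2^\prim} \to \sci_{L_1} \overset{\pi}{\to} \sco_{L_1}(l)$, and since $\Cok \phi$ is supported on the finite set $L_1 \cap (L_2 \cup L_2^\prim)$, one has $x_0x_1\sco_{L_1}(l-1) \subseteq \text{Im}\, \phi \subseteq \sco_{L_1}(l)$. The precise image is dictated by the divisibility of $a$ by $x_0$ and of $b$ by $x_1$: in subcase (a), where $a = x_0a_0$ and $b = x_1b_1$, the image is exactly $x_0x_1\sco_{L_1}(l-1)$, so the glued generator lives in degree $2l-m$ shifted appropriately and one obtains the generator $x_0x_1e_1$ and the summand $S(l-2)$; in subcase (d), where neither divisibility holds, the image is all of $\sco_{L_1}(l)$, the generator is $e_1$ and the summand is $S(l)$; the two mixed subcases (b) and (c) interpolate between these, giving generators $x_1e_1$, $x_0e_1$ and summands $S(l-1)$. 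Once the twists are fixed, the differentials $\delta_1$ and $\delta_2$ are forced: their linear (Koszul-type) parts come from the standard resolution of $S(L_1) = S/I(L_1)$, and the remaining entries are the matrix of relations among the generators $1$ and $x_0x_1e_1$ (respectively $x_1e_1$, $x_0e_1$, $e_1$), read off from the $\sco_{L_1}$-algebra structure $x_2 \mapsto ae_1$, $x_3 \mapsto be_1$, $e_1^2 = 0$ and from the three monomial generators $x_2x_3, x_0x_3^2, x_1x_2^2$ of $I(L_1^{(1)} \cup L_2 \cup L_2^\prim)$ supplied by Lemma~\ref{L:l1(1)cupl2cupl2prim}.

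The hard part will be bookkeeping rather than conceptual: one must check that the guessed matrices for $\delta_1$ and $\delta_2$ actually satisfy $\delta_1 \delta_2 = 0$ and resolve the module, and in particular that each of the four $a,b$-divisibility patterns produces precisely the stated twists. I would verify exactness by confirming that the sequence is filtered, with successive quotients the standard resolution of the appropriate shift of $S(L_1)$ and the resolution of $\tH^0_\ast(\sco_{L_1 \cup L_2 \cup L_2^\prim})$, so that the alternating sum of ranks and the Hilbert-polynomial count match; the relation $\delta_1\delta_2 = 0$ then reduces to the same kind of determinantal identities ($a = x_0a_0$, $b = x_1b_1$, and the coprimality of $a,b$) that already appeared in Remark~\ref{R:etaeta1} and in the proof of Prop.~\ref{P:reshoxcupl}. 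Since every ingredient---the two exact sequences, the image of $\phi$, and the resolutions being glued---has an immediate analogue in the already-proved propositions, the proof is essentially a transcription of that earlier argument with $L_3^\prim$ replaced by $L_2^\prim$ and with the extra divisibility condition on $a$ by $x_0$ recording the incidence $L_1 \cap L_2^\prim = \{P_1\}$.
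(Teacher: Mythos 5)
Your proposal follows essentially the same route as the paper: glue the standard resolution of the appropriate twist of $S(L_1)$ (determined by $\text{Im}\,\phi$ in each divisibility case) onto the resolution of $\tH^0_\ast(\sco_{L_1\cup L_2\cup L_2^\prim})$, the latter obtained from the liaison of the three reduced lines to $L_1^\prim$. Two small corrections: the a priori containment is $x_0x_1\sco_{L_1}(l-2)\subseteq \text{Im}\,\phi$ (not $l-1$), since $\sci_{L_1\cap(L_2\cup L_2^\prim)}$ is generated by the degree-two form $x_0x_1$; and the top rows of $\delta_1$ come from the ideal $(x_0x_3,\,x_1x_2,\,x_2x_3)$ of the reduced union $L_1\cup L_2\cup L_2^\prim$, not from $I(L_1^{(1)}\cup L_2\cup L_2^\prim)$, which is the input for the generators of $I(X\cup L_2\cup L_2^\prim)$ rather than for the structure-sheaf resolution.
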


\begin{proof}
$I(L_1 \cup L_2 \cup L_2^\prim) = (x_2,\, x_3) \cap (x_1,\, x_3) \cap 
(x_0,\, x_2) = (x_0x_3,\, x_1x_2,\, x_2x_3)$. 
$L_1 \cup L_2 \cup L_2^\prim$ is directly linked by the complete intersection 
defined by $x_0x_3$ and $x_1x_2$ to the curve whose homogeneous ideal is 
$(x_0,\, x_1)$, i.e., to the line $L_1^\prim$. Using Ferrand's result about 
liaison one gets a minimal graded free resolution$\, :$
\[
0 \lra 2S(-3) \xra{\begin{pmatrix} -x_2 & 0\\ 0 & -x_3\\ x_0 & x_1 
\end{pmatrix}} 3S(-2) \lra I(L_1 \cup L_2 \cup L_2^\prim) \lra 0\, .
\] 

If $x_1 \mid b$ we use the exact sequence (defined at the beginning of 
this subsection)$\, :$ 
\[
0 \lra \text{Im}\, \phi_1^\prim \times \{0\} \lra \sco_{X \cup L_2 \cup L_2^\prim} 
\lra \sco_{L_1 \cup L_2 \cup L_2^\prim} \lra 0 
\] 
with $\phi_1^\prim$ the composite morphism $\sci_{L_1 \cup L_2 \cup L_2^\prim} 
\ra \sci_{L_1 \cup L_2} \overset{\phi^\prim}{\lra} \sco_{L_1}(l-1)$. Recall, from 
the proof of Prop.~\ref{P:reshoxcupl}, that $\phi^\prim(x_3) = b_1$ and 
$\phi^\prim(x_1x_2) = a$ hence $\, :$ 
\[
\phi_1^\prim(x_0x_3) = x_0b_1,\  \phi_1^\prim(x_1x_2) = a,\  
\phi_1^\prim(x_2x_3) =0\, .
\]

If $x_1 \nmid b$ we use the exact sequence $\, :$ 
\[
0 \lra \text{Im}\, \phi_1 \times \{0\} \lra \sco_{X \cup L_2 \cup L_2^\prim} 
\lra \sco_{L_1 \cup L_2 \cup L_2^\prim} \lra 0 
\] 
with $\phi_1$ the composite morphism $\sci_{L_1 \cup L_2 \cup L_2^\prim} 
\ra \sci_{L_1 \cup L_2} \overset{\phi}{\lra} \sco_{L_1}(l)$. Recall, from the 
proof of Prop.~\ref{P:reshoxcupl}, that $\phi(x_3) = b$ and 
$\phi(x_1x_2) = x_1a$ hence$\, :$ 
\[
\phi_1(x_0x_3) = x_0b,\  \phi_1(x_1x_2) = x_1a,\  
\phi_1(x_2x_3) =0\, .
\]

(a) In this case $\text{Im}\, \phi_1^\prim = x_0\sco_{L_1}(l-2)$. Since 
$L_1 \cup L_2 \cup L_2^\prim$ is arithmetically CM, the graded $S$-module 
$\tH^0_\ast(\sco_{L_1 \cup L_2 \cup L_2^\prim})$ is generated by $1 \in 
\tH^0(\sco_{L_1 \cup L_2 \cup L_2^\prim})$. One deduces an exact sequence$\, :$ 
\[
0 \lra S(L_1)(l-2) \lra \tH^0_\ast(\sco_{X \cup L_2 \cup L_2^\prim}) \lra 
\tH^0_\ast(\sco_{L_1 \cup L_2 \cup L_2^\prim}) \lra 0
\]
where the left morphism maps $1 \in S(L_1)$ to the element of 
$\tH^0(\sco_{X \cup L_2 \cup L_2^\prim}(-l+2))$ whose image into 
$\tH^0(\sco_X(-l+2)) \oplus \tH^0(\sco_{L_2}(-l+2)) \oplus 
\tH^0(\sco_{L_2^\prim}(-l+2))$ is $(x_0x_1e_1,\, 0,\, 0)$. 

(b) In this case $\text{Im}\, \phi_1^\prim = \sco_{L_1}(l-1)$. One deduces an 
exact sequence$\, :$ 
\[
0 \lra S(L_1)(l-1) \lra \tH^0_\ast(\sco_{X \cup L_2 \cup L_2^\prim}) \lra 
\tH^0_\ast(\sco_{L_1 \cup L_2 \cup L_2^\prim}) \lra 0
\]
where the left morphism maps $1 \in S(L_1)$ to the element of 
$\tH^0(\sco_{X \cup L_2 \cup L_2^\prim}(-l+1))$ whose image into 
$\tH^0(\sco_X(-l+1)) \oplus \tH^0(\sco_{L_2}(-l+1)) \oplus 
\tH^0(\sco_{L_2^\prim}(-l+1))$ is $(x_1e_1,\, 0,\, 0)$. 

(c) In this case $\text{Im}\, \phi_1 = x_0\sco_{L_1}(l-1)$. One deduces an 
exact sequence$\, :$ 
\[
0 \lra S(L_1)(l-1) \lra \tH^0_\ast(\sco_{X \cup L_2 \cup L_2^\prim}) \lra 
\tH^0_\ast(\sco_{L_1 \cup L_2 \cup L_2^\prim}) \lra 0
\]
where the left morphism maps $1 \in S(L_1)$ to the element of 
$\tH^0(\sco_{X \cup L_2 \cup L_2^\prim}(-l+1))$ whose image into 
$\tH^0(\sco_X(-l+1)) \oplus \tH^0(\sco_{L_2}(-l+1)) \oplus 
\tH^0(\sco_{L_2^\prim}(-l+1))$ is $(x_0e_1,\, 0,\, 0)$. 

(d) In this case $\text{Im}\, \phi_1 = \sco_{L_1}(l)$. One deduces an 
exact sequence$\, :$ 
\[
0 \lra S(L_1)(l) \lra \tH^0_\ast(\sco_{X \cup L_2 \cup L_2^\prim}) \lra 
\tH^0_\ast(\sco_{L_1 \cup L_2 \cup L_2^\prim}) \lra 0
\]
where the left morphism maps $1 \in S(L_1)$ to the element of 
$\tH^0(\sco_{X \cup L_2 \cup L_2^\prim}(-l))$ whose image into 
$\tH^0(\sco_X(-l)) \oplus \tH^0(\sco_{L_2}(-l)) \oplus 
\tH^0(\sco_{L_2^\prim}(-l))$ is $(e_1,\, 0,\, 0)$. 
\end{proof}

\begin{lemma}\label{L:l1(1)cupl2cupl3} 
The homogeneous ideal $I(W) \subset S$ of $W = L_1^{(1)} \cup L_2 \cup 
L_3$ is generated by $x_2x_3$, $x_1x_2^2$, $x_1x_3^2$ and admits the 
following graded free resolution$\, :$ 
\[
0 \lra 2S(-4) \xra{\begin{pmatrix} -x_1x_2 & -x_1x_3\\ x_3 & 0\\ 0 & x_2 
\end{pmatrix}} S(-2) \oplus 2S(-3) \lra I(W) \lra 0\, .
\]
\end{lemma}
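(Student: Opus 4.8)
The plan is to determine $I(W)$ explicitly as an intersection of monomial ideals and then to recognize the asserted resolution as the Hilbert--Burch resolution attached to the displayed $3\times 2$ matrix. First I would compute $I(W) = I(L_1^{(1)}) \cap I(L_2) \cap I(L_3)$, where $I(L_1^{(1)}) = \sci_{L_1}^2 = (x_2^2, x_2x_3, x_3^2)$, $I(L_2) = (x_1, x_3)$ and $I(L_3) = (x_1, x_2)$. Using Remark~\ref{R:monomial}(a) twice, one first gets $I(L_2) \cap I(L_3) = (x_1,\, x_1x_2,\, x_1x_3,\, x_2x_3) = (x_1,\, x_2x_3) = I(L_2 \cup L_3)$, and then
\[
(x_2^2, x_2x_3, x_3^2) \cap (x_1, x_2x_3) = (x_2x_3,\, x_1x_2^2,\, x_2^2x_3,\, x_1x_2x_3,\, x_1x_3^2,\, x_2x_3^2)\, ,
\]
which, after discarding the three generators that are multiples of $x_2x_3$, equals $(x_2x_3,\, x_1x_2^2,\, x_1x_3^2)$. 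This already yields the system of generators claimed in the statement.

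Next I would observe that, up to sign, these three generators are exactly the maximal ($2 \times 2$) minors of the $3 \times 2$ syzygy matrix displayed in the statement: the minor on the last two rows is $x_2x_3$, the minor on the first and third rows is $-x_1x_2^2$, and the minor on the first two rows is $x_1x_3^2$. The zero locus of these minors is, set-theoretically, $L_1 \cup L_2 \cup L_3$ (if $x_2 = 0$ then $x_1x_3^2 = 0$ forces $L_3$ or $L_1$, and if $x_3 = 0$ then $x_1x_2^2 = 0$ forces $L_2$ or $L_1$), hence a curve in $\piii$, so the ideal generated by the minors has codimension $2$. The Hilbert--Burch theorem then applies and shows that the complex displayed in the statement is exact, i.e.\ a graded free resolution of $I(W)$; in particular $W$ is arithmetically Cohen--Macaulay. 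The resolution is automatically minimal, all entries of the matrix lying in the irrelevant maximal ideal.

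The point where some care is needed is precisely the choice of method. In the preceding lemmas of this subsection (for instance Lemma~\ref{L:l1(1)cupl2cupl2prim}) the resolution was obtained by liaison, applying Ferrand's result after exhibiting a complete intersection among two of the generators. Here that route is unavailable: each of the three pairs of generators shares a common linear factor ($x_2$, $x_3$ and $x_1$, respectively), so no two of them form a complete intersection. This is what forces one to read off the resolution directly from the Hilbert--Burch structure rather than from liaison. Consequently the only nontrivial verification is the codimension of the ideal of minors, and this is immediate from the set-theoretic description of $W$ obtained in the first step; the rest is a routine check that the two columns of the matrix are indeed syzygies of the generators.
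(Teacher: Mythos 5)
Your computation of the generators is correct and matches the paper's (you intersect $I(L_2)\cap I(L_3)$ first, the paper intersects $I(L_1^{(1)})\cap I(L_2)$ first; either order gives $(x_2x_3,\, x_1x_2^2,\, x_1x_3^2)$ via Remark~\ref{R:monomial}(a)). For the resolution, however, you take a genuinely different route. You recognize the three generators as the signed maximal minors of the displayed $3\times 2$ matrix, verify that the two columns are syzygies, and invoke Hilbert--Burch, the grade-$2$ hypothesis being supplied by the fact that the common zero locus $L_1\cup L_2\cup L_3$ is a curve. This is a clean, self-contained argument, and the technique is already in the paper's toolkit (it appears in Remark~\ref{R:vivij}). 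The paper instead proceeds by liaison: it links $W$ to $L_1$ via the complete intersection defined by $x_2x_3$ and $x_1(x_2^2+x_3^2)$ (checking $I(W^\prim)=(x_2,x_3)$ using the relation $x_2\cdot x_1x_2^2 = x_2\cdot x_1(x_2^2+x_3^2) - x_1x_3\cdot x_2x_3$) and then applies Ferrand's mapping-cone construction, which produces a resolution of the same numerical shape whose differentials one then guesses. Your approach buys a direct identification of the differential with no guessing; the paper's stays uniform with the surrounding lemmas and additionally records the liaison class of $W$.

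One side remark of yours is inaccurate: you assert that liaison is \emph{unavailable} here because no two of the three listed generators form a complete intersection. That observation about the pairs is true, but it does not block liaison --- one simply takes a suitable linear combination of generators, exactly as the paper does with $x_1x_2^2 + x_1x_3^2 = x_1(x_2^2+x_3^2)$, which together with $x_2x_3$ does define a complete intersection. This does not affect the validity of your proof, but the stated motivation for preferring Hilbert--Burch is mistaken.
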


\begin{proof} 
$I(W)$ is, by definition$\, :$ 
\[
(x_2^2,\, x_2x_3,\, x_3^2) \cap (x_1,\, x_3) \cap (x_1,\, x_2) = 
(x_2x_3,\, x_3^2,\, x_1x_2^2) \cap (x_1,\, x_2) = 
(x_2x_3,\, x_1x_2^2,\, x_1x_3^2)\, .  
\]
If $W^\prim$ is the curve directly linked to $W$ by the complete intersection 
defined by $x_2x_3$ and $x_1(x_2^2 + x_3^2)$ then 
$I(W^\prim) = (x_2,\, x_3)$ (because $x_2 \cdot x_1x_2^2 = x_2 \cdot x_1(x_2^2 + 
x_3^2) - x_1x_3 \cdot x_2x_3$), i.e., $W^\prim = L_1$.  
One can apply, now, Ferrand's result about liaison. 
\end{proof}

\begin{prop}\label{P:genixcupl2cupl3}
Let $X$ be the double structure on the line $L_1$ considered at the beginning 
of Subsection~\ref{SS:doublecupaline}. Then $I(X \cup L_2 \cup L_3) = 
Sx_1F_2 + I(L_1^{(1)} \cup L_2 \cup L_3)$. 
\end{prop}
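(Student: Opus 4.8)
The plan is to realize $I(X\cup L_2\cup L_3)$ as an extension of $I(L_1^{(1)}\cup L_2\cup L_3)$ — whose generators $x_2x_3,\, x_1x_2^2,\, x_1x_3^2$ are supplied by Lemma~\ref{L:l1(1)cupl2cupl3} — by a single new generator, which will turn out to be $x_1F_2$. Recall from Subsection~\ref{SS:doublecupaline} (via Remark~\ref{R:etaeta1}) the exact sequence $0\to\sci_{L_1}^2\to\sci_X\xrightarrow{\eta}\sco_{L_1}(-l-2)\to 0$ with $\eta(F_2)=1$, where $F_2=-bx_2+ax_3$ and $a,\,b\in\tH^0(\sco_{L_1}(l+1))$ are coprime. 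I would let $\Psi$ be the restriction of $\eta$ along the inclusion $\sci_{X\cup L_2\cup L_3}\hookrightarrow\sci_X$. Since $\sci_{L_1}^2=\sci_{L_1^{(1)}}\subseteq\sci_X$, its kernel is $\sci_X\cap\sci_{L_1}^2\cap\sci_{L_2}\cap\sci_{L_3}=\sci_{L_1^{(1)}\cup L_2\cup L_3}$, so the whole problem reduces to identifying $\text{Im}\,\Psi$ inside the line bundle $\sco_{L_1}(-l-2)$ on $L_1\cong\pj$.

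First I would record that $x_1F_2\in I(X\cup L_2\cup L_3)$: it vanishes on $X$ because $F_2\in I(X)$, and on $L_2$ and $L_3$ because $x_1$ does; moreover $\Psi(x_1F_2)=x_1\eta(F_2)=x_1$, so $x_1\in\text{Im}\,\Psi$ and hence $x_1\sco_{L_1}(-l-3)\subseteq\text{Im}\,\Psi$. Next, since $L_2$ and $L_3$ meet $L_1$ only at $P_0$ (the point $x_1=0$ of $L_1$), near every other point of $L_1$ the sheaf $\sci_{X\cup L_2\cup L_3}$ coincides with $\sci_X$, where $\eta$ is already surjective; therefore $\Cok\Psi$ is supported at $P_0$, so $\text{Im}\,\Psi=x_1^{\,k}\sco_{L_1}(-l-2-k)$ for some $k\ge 0$, and $x_1\in\text{Im}\,\Psi$ forces $k\le 1$. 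It then remains only to rule out $k=0$, i.e.\ to show that $\Psi$ is not surjective.

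The heart of the argument — and the step I expect to be the main obstacle — is this non-surjectivity, and the point is that it holds \emph{uniformly}, which is why the proposition needs no case distinction. Suppose $f\in\tH^0(\sci_{X\cup L_2\cup L_3}(l+2))$ satisfied $\eta(f)=1$. Then $f\in\tH^0(\sci_X(l+2))$ with $\eta(f)=1$ forces $f=F_2+g_0x_2^2+g_1x_2x_3+g_2x_3^2$ with $g_i\in S_l$. Restricting to $L_3=\{x_1=x_2=0\}$ leaves $f\vert L_3=a(x_0,0)x_3+g_2(x_0,0,0,x_3)x_3^2$, whose $x_3$-linear part is $a(x_0,0)x_3$, so $a(x_0,0)=0$, i.e.\ $x_1\mid a$. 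Restricting instead to $L_2=\{x_1=x_3=0\}$ gives $f\vert L_2=-b(x_0,0)x_2+g_0(x_0,0,x_2,0)x_2^2$, whose $x_2$-linear part is $-b(x_0,0)x_2$, so $x_1\mid b$. This contradicts the coprimality of $a$ and $b$. Hence $k=1$ and $\text{Im}\,\Psi=x_1\sco_{L_1}(-l-3)$, so $\Psi$ factors as $x_1\circ\psi$ with $\psi:\sci_{X\cup L_2\cup L_3}\twoheadrightarrow\sco_{L_1}(-l-3)$, $\psi(x_1F_2)=1$, and $\Ker\psi=\Ker\Psi=\sci_{L_1^{(1)}\cup L_2\cup L_3}$.

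Finally I would pass to global sections. The resolution in Lemma~\ref{L:l1(1)cupl2cupl3} exhibits $L_1^{(1)}\cup L_2\cup L_3$ as arithmetically CM, so $\tH^1_\ast(\sci_{L_1^{(1)}\cup L_2\cup L_3})=0$, and applying $\tH^0_\ast$ to $0\to\sci_{L_1^{(1)}\cup L_2\cup L_3}\to\sci_{X\cup L_2\cup L_3}\xrightarrow{\psi}\sco_{L_1}(-l-3)\to 0$ yields the short exact sequence $0\to I(L_1^{(1)}\cup L_2\cup L_3)\to I(X\cup L_2\cup L_3)\xrightarrow{\tH^0_\ast(\psi)} S(L_1)(-l-3)\to 0$, in which the generator $1\in S(L_1)$ lifts to $x_1F_2$. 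This gives $I(X\cup L_2\cup L_3)=Sx_1F_2+I(L_1^{(1)}\cup L_2\cup L_3)$, as claimed; the same exact sequence also delivers, as a byproduct, a concrete graded free resolution of $I(X\cup L_2\cup L_3)$.
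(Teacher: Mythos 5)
Your proof is correct and follows essentially the same route as the paper: the short exact sequence $0\to\sci_{L_1^{(1)}\cup L_2\cup L_3}\to\sci_{X\cup L_2\cup L_3}\to \text{Im}\,\Psi\to 0$, the restriction argument to identify the image as $x_1\sco_{L_1}(-l-3)$, and the vanishing of $\tH^1_\ast(\sci_{L_1^{(1)}\cup L_2\cup L_3})$ from Lemma~\ref{L:l1(1)cupl2cupl3} to pass to homogeneous ideals. The only difference is organizational: the paper splits into the cases $x_1\mid b$ and $x_1\nmid b$ by routing through $\sci_{X\cup L_2}$ and the maps of Prop.~\ref{P:genixcupl}, whereas you work directly with $\eta$ and extract $x_1\mid a$ and $x_1\mid b$ simultaneously from the restrictions to $L_3$ and $L_2$, contradicting coprimality in one stroke --- a modest but genuine streamlining that eliminates the case distinction.
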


\begin{proof}
If $x_1 \mid b$ then one has an exact sequence (look at the beginning of 
this subsection)$\, :$ 
\[
0 \lra \sci_{L_1^{(1)} \cup L_2 \cup L_3} \lra \sci_{X \cup L_2 \cup L_3} 
\overset{\displaystyle \psi_1}{\lra} \sco_{L_1}(-l-2) 
\]
where $\psi_1$ is the composite morphism $\sci_{X \cup L_2 \cup L_3} \ra  
\sci_{X \cup L_2} \overset{\psi}{\lra} \sco_{L_1}(-l-2)$. Since $L_1 \cap L_3 = 
\{P_0\}$, $\Cok \psi_1$ is an $\sco_{\{P_0\}}$-module hence 
$x_1\sco_{L_1}(-l-3) \subseteq \text{Im}\, \psi_1 \subseteq \sco_{L_1}(-l-2)$. 

\vskip2mm 

\noindent
{\bf Claim.}\quad $\text{Im}\, \psi_1 = x_1\sco_{L_1}(-l-3)$. 

\vskip2mm

\noindent
\emph{Indeed}, assume that $\text{Im}\, \psi_1 = \sco_{L_1}(-l-2)$. Since 
$\tH^1_\ast(\sci_{L_1^{(1)} \cup L_2 \cup L_3}) = 0$ (by 
Lemma~\ref{L:l1(1)cupl2cupl3}) it follows that there exists $f \in 
\tH^0(\sci_{X \cup L_2 \cup L_3}(l+2))$ such that $\psi_1(f) = 1 \in 
\tH^0(\sco_{L_1})$. But $\psi_1(f) = \psi(f)$ hence, by the proof of 
Prop.~\ref{P:genixcupl}, $f$ must have the form$\, :$ 
\[
f = F_2 + f_0x_2x_3 + f_1x_3^2 + f_2x_1x_2^2\, .
\]
Since $0 = f \vb L_3 = a(x_0,0)x_3 + f_1(x_0,0,0,x_3)x_3^2$ it follows that 
$a(x_0,0) = 0$ hence $x_1 \mid a$. But this \emph{contradicts} the fact that 
$a$ and $b$ are coprime. 

\vskip2mm

\noindent
It follows from the Claim that $\psi_1$ factorizes as 
$\sci_{X \cup L_2 \cup L_3} \overset{\psi_2}{\lra} \sco_{L_1}(-l-3) 
\overset{x_1}{\lra} \sco_{L_1}(-l-2)$ with $\psi_2$ mapping $x_1F_2 \in 
\tH^0(\sci_{X \cup L_2 \cup L_3}(l+3))$ to $1 \in \tH^0(\sco_{L_1})$. One deduces  
an exact sequence$\, :$ 
\[
0 \lra I(L_1^{(1)} \cup L_2 \cup L_3) \lra I(X \cup L_2 \cup L_3) 
\xra{\displaystyle \tH^0_\ast(\psi_2)} S(L_1)(-l-3) \lra 0\, .
\]

\noindent 
$\bullet$\quad If $x_1 \nmid b$ then one has an exact sequence$\, :$ 
\[
0 \lra \sci_{L_1^{(1)} \cup L_2 \cup L_3} \lra \sci_{X \cup L_2 \cup L_3} 
\overset{\displaystyle \psi_1^\prim}{\lra} \sco_{L_1}(-l-3) 
\]
where $\psi_1^\prim$ is the composite morphism $\sci_{X \cup L_2 \cup L_3} \ra  
\sci_{X \cup L_2} \overset{\psi^\prim}{\lra} \sco_{L_1}(-l-3)$. Since $x_1F_2 
\vb L_3 = 0$ it follows that $x_1F_2 \in I(X \cup L_2 \cup L_3)$. But 
$\psi_1^\prim(x_1F_2) = \psi^\prim(x_1F_2) = 1 \in \tH^0(\sco_{L_1})$. One deduces 
that $\psi_1^\prim$ is an epimorphism and that, moreover, one has an exact 
sequence$\, :$ 
\[
0 \lra I(L_1^{(1)} \cup L_2 \cup L_3) \lra I(X \cup L_2 \cup L_3) 
\xra{\displaystyle \tH^0_\ast(\psi_1^\prim)} S(L_1)(-l-3) \lra 0\, .
\]

Notice that one can use this exact sequence (and the similar one for the case 
$x_1 \mid b$) to get a concrete graded free resolution of 
$I(X \cup L_2 \cup L_3)$. 
\end{proof}

\begin{prop}\label{P:reshoxcupl2cupl3}
Let $X$ be the double structure on the line $L_1$ considered at the beginning 
of Subsection~\ref{SS:doublecupaline}. Then the graded $S$-module 
${\fam0 H}^0_\ast(\sco_{X \cup L_2 \cup L_3})$ admits the following graded free 
resolution$\, :$ 
\[
0 \lra \begin{matrix} 2S(-3)\\ \oplus\\ S(l-3) \end{matrix} 
\overset{\displaystyle \delta_2}{\lra} 
\begin{matrix} 3S(-2)\\ \oplus\\ 2S(l-2) \end{matrix} 
\overset{\displaystyle \delta_1}{\lra} 
\begin{matrix} S\\ \oplus\\ S(l-1) \end{matrix} 
\overset{\displaystyle \delta_0}{\lra} 
{\fam0 H}^0_\ast(\sco_{X \cup L_2 \cup L_3}) \lra 0  
\]
with $\delta_0 = (1\, ,\, x_1e_1)$ and with $\delta_1$ and $\delta_2$ defined 
by the matrices$\, :$ 
\[
\begin{pmatrix} x_1x_2 & x_1x_3 & x_2x_3 & 0 & 0\\
-a & -b & 0 & x_2 & x_3 \end{pmatrix}\, ,\  
\begin{pmatrix} 
-x_3 & 0 & 0\\
x_2 & -x_2 & 0\\
0 & x_1 & 0\\
b & -b & -x_3\\ 
-a & 0 & x_2 
\end{pmatrix}\, .
\]
\end{prop}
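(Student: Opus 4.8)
The plan is to realize $\tH^0_\ast(\sco_{X \cup L_2 \cup L_3})$ as an extension of the structural algebra of the reduced configuration $L_1 \cup L_2 \cup L_3$ by a rank-one free module over the line $L_1$, and then to resolve the middle term by the horseshoe lemma, exactly in the spirit of the preceding propositions of this subsection.

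\emph{First step: the reduced configuration.} By Remark~\ref{R:monomial} one gets $I(L_1 \cup L_2 \cup L_3) = (x_2,x_3) \cap (x_1,x_3) \cap (x_1,x_2) = (x_1x_2,\, x_1x_3,\, x_2x_3)$, the ideal of three concurrent, non-coplanar lines through $P_0$. This ideal is arithmetically CM: it is directly linked to the line $L_1$ by the complete intersection of $x_1(x_2+x_3)$ and $x_2x_3$ (one checks $x_2,\, x_3 \in ((x_1(x_2+x_3),\, x_2x_3) : I)$ and compares degrees), so Ferrand's result gives $\tH^0_\ast(\sco_{L_1\cup L_2\cup L_3}) = S/I$ with the Hilbert--Burch resolution $0 \to 2S(-3) \to 3S(-2) \to S \to S/I \to 0$, whose differentials are the top blocks of the matrices $\delta_1$, $\delta_2$ in the statement.

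\emph{Second step: the short exact sequence.} Use the exact sequence from the beginning of the subsection, $0 \to \mathrm{Im}\,\phi_1 \times \{0\} \to \sco_{X \cup L_2 \cup L_3} \to \sco_{L_1 \cup L_2 \cup L_3} \to 0$, where $\phi_1$ is the restriction to $\sci_{L_1\cup L_2\cup L_3}$ of $\pi : \sci_{L_1} \to \sci_{L_1}/\sci_X = \sco_{L_1}(l)$, $\pi(x_2)=a$, $\pi(x_3)=b$. The generators map as $\phi_1(x_1x_2)=x_1a$, $\phi_1(x_1x_3)=x_1b$ and $\phi_1(x_2x_3)=0$ (since $x_2,x_3$ vanish on $L_1$). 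As $a,b$ are coprime, the submodule they generate is all of $\sco_{L_1}(l)$, so $\mathrm{Im}\,\phi_1 = x_1\sco_{L_1}(l) \cong \sco_{L_1}(l-1)$, the cokernel being the length-one sheaf $\sco_{\{P_0\}}$. This is precisely why, unlike in Prop.~\ref{P:reshoxcupl2cupl2prim}, no case distinction is needed: both $L_2$ and $L_3$ meet $L_1$ only at $P_0$, where $x_1$ vanishes, so the single factor $x_1$ always appears and the finer structure of $X$ is irrelevant. Since $1 \in \tH^0_\ast(\sco_{L_1\cup L_2\cup L_3})$ lifts to $1$, the induced map on $\tH^0_\ast$ is surjective, and one obtains $0 \to S(L_1)(l-1) \to \tH^0_\ast(\sco_{X\cup L_2\cup L_3}) \to S/I \to 0$, the generator of $S(L_1)(l-1)$ mapping to the element whose $\sco_X$-component is $x_1e_1$; this yields $\delta_0 = (1,\, x_1e_1)$.

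\emph{Third step: resolve.} Apply the horseshoe lemma to this sequence, using the Koszul resolution $0 \to S(l-3) \to 2S(l-2) \xrightarrow{(x_2,x_3)} S(l-1) \to S(L_1)(l-1) \to 0$ for the subobject and the Hilbert--Burch resolution of $S/I$ for the quotient. The outcome has exactly the graded Betti numbers of the statement, and the connecting entries of $\delta_1$, $\delta_2$ are forced by $\pi(x_2)=a$, $\pi(x_3)=b$ (lifting the generators $x_1x_2,\, x_1x_3$ of $I$ introduces $-a$, $-b$ in the $S(l-1)$-row of $\delta_1$ and the corresponding block in $\delta_2$). The hard part is really the second step: one must argue at the sheaf level that $\Cok\phi_1$ is $0$-dimensional of length one, and pin down the generator as $x_1e_1$, so that the horseshoe connecting maps come out as the stated $a,b$. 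Once this is in place, the verification that $\delta_1\delta_2 = 0$ is a routine sign computation.
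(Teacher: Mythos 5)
Your proof is correct and follows essentially the same route as the paper: the short exact sequence $0 \to \text{Im}\,\phi_1\times\{0\}\to\sco_{X\cup L_2\cup L_3}\to\sco_{L_1\cup L_2\cup L_3}\to 0$ from Lemma~\ref{L:ycupt}, the evaluation of $\phi_1$ on the generators $x_1x_2$, $x_1x_3$, $x_2x_3$ showing $\text{Im}\,\phi_1 = x_1\sco_{L_1}(l-1)$ with generator corresponding to $x_1e_1$, and the assembly of the Koszul resolution of $S(L_1)(l-1)$ with the Hilbert--Burch resolution of $S/I(L_1\cup L_2\cup L_3)$. The only (harmless) deviation is that you work directly with the restriction of $\pi$, so no case split is needed, whereas the paper routes through the maps $\phi$ and $\phi^\prim$ of Prop.~\ref{P:reshoxcupl} and therefore treats $x_1\mid b$ and $x_1\nmid b$ separately --- both branches land on the same image $x_1\sco_{L_1}(l-1)$, exactly as you observe.
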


\begin{proof}
$I(L_1 \cup L_2 \cup L_3) = (x_2,\, x_3) \cap (x_1,\, x_3) \cap (x_1,\, x_2) 
= (x_1x_2,\, x_1x_3,\, x_2x_3)$. One deduces a minimal graded free 
resolution$\, :$ 
\[
0 \lra 2S(-3) \xra{\begin{pmatrix} -x_3 & 0\\ x_2 & -x_2\\ 0 & x_1 
\end{pmatrix}} 3S(-2) \lra I(L_1 \cup L_2 \cup L_3) \lra 0\, . 
\]

\noindent 
$\bullet$\quad If $x_1 \mid b$, i.e., if $b = x_1b_1$ we use the exact sequence 
(defined at the beginning of this subsection)$\, :$ 
\[
0 \lra \text{Im}\, \phi_1^\prim \times \{0\} \lra \sco_{X \cup L_2 \cup L_3} 
\lra \sco_{L_1 \cup L_2 \cup L_3} \lra 0 
\] 
with $\phi_1^\prim$ the composite morphism $\sci_{L_1 \cup L_2 \cup L_3} 
\ra \sci_{L_1 \cup L_2} \overset{\phi^\prim}{\lra} \sco_{L_1}(l-1)$. Recall, from 
the proof of Prop.~\ref{P:reshoxcupl}, that $\phi^\prim(x_3) = b_1$ and 
$\phi^\prim(x_1x_2) = a$ hence$\, :$ 
\[
\phi_1^\prim(x_1x_2) = a,\  \phi_1^\prim(x_1x_3) = x_1b_1 = b,\  
\phi_1^\prim(x_2x_3) = 0\, . 
\]
It follows that $\phi_1^\prim$ is an epimorphism. Moreover, since $L_1 \cup 
L_2 \cup L_3$ is arithmetically CM the graded $S$-module 
$\tH^0_\ast(\sco_{L_1 \cup L_2 \cup L_3})$ is generated by $1 \in 
\tH^0(\sco_{L_1 \cup L_2 \cup L_3})$. One deduces an exact sequence$\, :$ 
\[
0 \lra S(L_1)(l-1) \lra \tH^0_\ast(\sco_{X \cup L_2 \cup L_3}) \lra 
\tH^0_\ast(\sco_{L_1 \cup L_2 \cup L_3}) \lra 0
\] 
where the left morphism maps $1 \in S(L_1)$ to the element of 
$\tH^0(\sco_{X \cup L_2 \cup L_3}(-l+1))$ whose image into $\tH^0(\sco_X(-l+1)) 
\oplus \tH^0(\sco_{L_2}(-l+1)) \oplus \tH^0(\sco_{L_3}(-l+1))$ is 
$(x_1e_1,\, 0,\, 0)$. 

\vskip2mm

\noindent 
$\bullet$\quad If $x_1 \nmid b$ we use the exact sequence $\, :$ 
\[
0 \lra \text{Im}\, \phi_1 \times \{0\} \lra \sco_{X \cup L_2 \cup L_3} \lra 
\sco_{L_1 \cup L_2 \cup L_3} \lra 0 
\] 
with $\phi_1$ the composite morphism $\sci_{L_1 \cup L_2 \cup L_3} 
\ra \sci_{L_1 \cup L_2} \overset{\phi}{\lra} \sco_{L_1}(l)$. Recall, from 
the proof of Prop.~\ref{P:reshoxcupl}, that $\phi(x_3) = b$ and 
$\phi(x_1x_2) = x_1a$ hence$\, :$ 
\[
\phi_1(x_1x_2) = x_1a,\  \phi_1(x_1x_3) = x_1b,\  
\phi_1(x_2x_3) = 0\, . 
\] 
One deduces that $\text{Im}\, \phi_1 = x_1\sco_{L_1}(l-1)$ and that one has an 
exact sequence$\, :$ 
\[
0 \lra S(L_1)(l-1) \lra \tH^0_\ast(\sco_{X \cup L_2 \cup L_3}) \lra 
\tH^0_\ast(\sco_{L_1 \cup L_2 \cup L_3}) \lra 0
\] 
where the left morphism maps $1 \in S(L_1)$ to the element of 
$\tH^0(\sco_{X \cup L_2 \cup L_3}(-l+1))$ whose image into $\tH^0(\sco_X(-l+1)) 
\oplus \tH^0(\sco_{L_2}(-l+1)) \oplus \tH^0(\sco_{L_3}(-l+1))$ is 
$(x_1e_1,\, 0,\, 0)$.  
\end{proof}

\begin{lemma}\label{L:l1(1)cupl2cupl1prim}
The homogeneous ideal $I(W) \subset S$ of $W = L_1^{(1)} \cup L_2 \cup 
L_1^\prim$ is generated by $x_0x_2x_3$, $x_0x_3^2$, $x_1x_2^2$, $x_1x_2x_3$, 
$x_1x_3^2$ and admits the following graded free resolution$\, :$ 
\[
0 \lra S(-5) \overset{\displaystyle d_2}{\lra} 5S(-4) 
\overset{\displaystyle d_1}{\lra} 5S(-3) \overset{\displaystyle d_0}{\lra} 
I(W) \lra 0 
\]
where $d_1$ and $d_2$ are defined by the matrices$\, :$ 
\[
\begin{pmatrix} 
-x_3 & 0 & 0 & -x_1 & 0\\
x_2 & 0 & 0 & 0 & -x_1\\
0 & -x_3 & 0 & 0 & 0\\
0 & x_2 & -x_3 & x_0 & 0\\
0 & 0 & x_2 & 0 & x_0
\end{pmatrix} \, ,\  
\begin{pmatrix}
-x_1\\ 0\\ x_0\\ x_3\\ -x_2
\end{pmatrix}\, .
\]
\end{lemma}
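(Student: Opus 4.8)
The plan is to first pin down the five generators by a direct computation of the intersection $I(W)=\sci_{L_1^{(1)}}\cap\sci_{L_2}\cap\sci_{L_1^\prime}$ of monomial ideals, and then to recognize $I(W)$ as a basic double link so that its resolution falls out of Remark~\ref{R:basicdoublelinkage}. Concretely, $\sci_{L_1^{(1)}}=(x_2^2,x_2x_3,x_3^2)$, $\sci_{L_2}=(x_1,x_3)$ and $\sci_{L_1^\prime}=(x_0,x_1)$, so by Remark~\ref{R:monomial}(a) I would compute first $\sci_{L_1^{(1)}}\cap\sci_{L_2}=(x_2x_3,x_3^2,x_1x_2^2)$ (this is also $I(L_1^{(1)}\cup L_2)$, cf. Lemma~\ref{L:l1(1)cupl} with $c=0$), and then intersect with $(x_0,x_1)$, the least common multiples giving, after discarding the redundant $x_0x_1x_2^2$, exactly the five monomials $x_0x_2x_3,\,x_0x_3^2,\,x_1x_2^2,\,x_1x_2x_3,\,x_1x_3^2$ listed in the statement.

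The decisive observation is that these generators reorganize as
\[
I(W)=S\,x_1x_2^2+x_3\,(x_0x_2,x_0x_3,x_1x_2,x_1x_3)=S\,x_1x_2^2+x_3\,I(L_1\cup L_1^\prime),
\]
since $(x_0,x_1)(x_2,x_3)=I(L_1)I(L_1^\prime)=I(L_1\cup L_1^\prime)$ (the two lines are disjoint, so Lemma~\ref{L:zcupw}(b) applies). Setting $Y=L_1\cup L_1^\prime$, $f=x_1x_2^2$ (degree $a=3$) and $h=x_3$ (degree $c=1$), one has $f=x_2\cdot(x_1x_2)\in I(Y)$ and $\gcd(f,h)=1$, so $I(W)=Sf+I(Y)h$ is precisely the basic double link of $Y$ studied in Remark~\ref{R:basicdoublelinkage}; in particular $W$ is locally CM. Now $Y$ is arithmetically CM, and tensoring the two Koszul resolutions of $(x_2,x_3)$ and $(x_0,x_1)$ gives, by Lemma~\ref{L:zcupw}(a), the minimal resolution $0\to S(-4)\to 4S(-3)\to 4S(-2)\to I(Y)\to 0$. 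Feeding $K_0=4S(-2)$, $K_1=4S(-3)$, $K_2=S(-4)$ with the shifts $a=3$, $c=1$ into the resolution of $Sf+I(Y)h$ from Remark~\ref{R:basicdoublelinkage} yields
\[
0\lra S(-5)\lra S(-4)\oplus 4S(-4)\lra S(-3)\oplus 4S(-3)\lra I(W)\lra 0,
\]
i.e. exactly the shape $0\to S(-5)\to 5S(-4)\to 5S(-3)\to I(W)\to 0$ claimed.

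It remains to match the displayed differentials. I would choose an explicit lift $\phi\colon S(-3)\to 4S(-2)$ of the map $S(-3)\to I(Y)$, $1\mapsto f=x_2\cdot(x_1x_2)$, and write out the two block matrices $\binom{0}{d_2^Y(-1)}$ and $\left(\begin{smallmatrix}-h&0\\ \phi(-1)&d_1^Y(-1)\end{smallmatrix}\right)$ of Remark~\ref{R:basicdoublelinkage}; after ordering the generators as $x_0x_2x_3,\,x_0x_3^2,\,x_1x_2^2,\,x_1x_2x_3,\,x_1x_3^2$ these become the matrices of $d_1$ and $d_2$ in the statement. The substantive check then reduces to verifying that the columns of $d_1$ are the evident syzygies among the five monomials (for instance $x_2\cdot x_0x_3^2=x_3\cdot x_0x_2x_3$, $x_0\cdot x_1x_2x_3=x_1\cdot x_0x_2x_3$, $x_2\cdot x_1x_2x_3=x_3\cdot x_1x_2^2$) and that $d_2=(-x_1,0,x_0,x_3,-x_2)^{\mathrm t}$ is their relation; exactness is then guaranteed by Remark~\ref{R:basicdoublelinkage}, or, independently, by the Buchsbaum--Eisenbud acyclicity criterion, since $I_1(d_2)=(x_0,x_1,x_2,x_3)$ has grade $4$ and the maximal minors of $d_1$ include $x_0x_2x_3^2\neq 0$, whence grade $\geq 1$; minimality is clear as all entries lie in the maximal ideal. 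The one genuinely fiddly point is this last bookkeeping --- reconciling the raw basic-double-linkage matrices with the normalized form displayed --- whereas the conceptual heart, namely spotting the basic double link $I(W)=S\,x_1x_2^2+x_3\,I(L_1\cup L_1^\prime)$, is what makes the resolution transparent.
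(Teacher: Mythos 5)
Your proposal is correct and follows essentially the same route as the paper: compute $I(W)$ as an intersection of monomial ideals to get the five generators, recognize $I(W)=Sx_1x_2^2+x_3\,I(L_1\cup L_1^\prime)$ as a basic double link, and feed the Koszul-tensor resolution of $I(L_1\cup L_1^\prime)$ from Lemma~\ref{L:zcupw} into Remark~\ref{R:basicdoublelinkage}. The paper additionally notes (but does not need) that $L_1\cup L_1^\prime$ can be recovered from $W$ by two explicit direct linkages; your Buchsbaum--Eisenbud cross-check is likewise optional but sound.
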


\begin{proof} 
By definition$\, :$ 
\begin{gather*}
I(W) = (x_2^2,\, x_2x_3,\, x_3^2) \cap (x_1,\, x_3) \cap (x_0,\, x_1) =\\  
(x_2x_3,\, x_3^2,\, x_1x_2^2) \cap (x_0,\, x_1) = 
(x_0x_2x_3,\, x_0x_3^2,\, x_1x_2^2,\, x_1x_2x_3,\, x_1x_3^2)\, . 
\end{gather*}
Since $I(L_1\cup L_1^\prim) = (x_0x_2,\, x_0x_3,\, x_1x_2,\, x_1x_3)$, it 
follows that$\, :$ 
\[
I(W) = Sx_1x_2^2 + I(L_1\cup L_1^\prim)x_3\, .
\]
Moreover, by Lemma~\ref{L:zcupw}, $I(L_1\cup L_1^\prim)$ admits a graded free 
resolution of the form$\, :$ 
\[
0 \lra S(-4) \lra 4S(-3) \lra 4S(-2) \lra I(L_1\cup L_1^\prim) \lra 0\, .
\]
One can apply, now, Remark~\ref{R:basicdoublelinkage}. Actually, by the 
main property of the basic double linkage, $L_1 \cup L_1^\prim$ can be 
obtained from $W$ by two direct linkages. Concretely, these two linkages are 
the following ones$\, :$ if $W^\prim$ is the curve directly linked to $W$ by 
the complete intersection defined by $x_0x_3^2$ and $x_1x_2^2$ then, by 
Remark~\ref{R:monomial}(b), 
\[
I(W^\prim) = (x_3,\, x_1x_2) \cap (x_2,\, x_0x_3) \cap (x_0,\, x_2^2) = 
(x_0x_3,\, x_0x_1x_2,\, x_1x_2^2,\, x_2^2x_3)\, . 
\] 
If $W^\secund$ is the curve directly linked to $W^\prim$ by the complete 
intersection defined by $x_0x_3$ and $x_1x_2^2$ then, by the same remark, 
$I(W^\secund) = (x_2,\, x_3) \cap (x_0,\, x_1)$ hence $W^\secund = L_1 \cup 
L_1^\prim$. 
\end{proof}

\begin{prop}\label{P:genixcupl2cupl1prim} 
Let $X$ be the double structure on the line $L_1$ considered at the beginning 
of Subsection~\ref{SS:doublecupaline}. 

\emph{(a)} If $l = -1$ and $b = 0$ then $I(X \cup L_2 \cup L_1^\prim) 
= (x_0x_3,\, x_1x_3,\, x_1x_2^2)$. 

\emph{(b)} If $l \geq 0$ and $x_1 \mid b$ then $I(X\cup L_2 \cup L_1^\prim) 
= SF_2 + I(L_1^{(1)}\cup L_2 \cup L_1^\prim)$. 

\emph{(c)} If $x_1 \nmid b$ then $I(X\cup L_2 \cup L_1^\prim) = Sx_1F_2 + 
I(L_1^{(1)}\cup L_2 \cup L_1^\prim)$.
\end{prop}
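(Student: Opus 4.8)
The plan is to dispatch the three cases by two different mechanisms: the degenerate case (a) by a direct monomial-ideal computation, and the generic cases (b), (c) by feeding the exact sequences already recorded at the beginning of Subsection~\ref{SS:doublecuptwolines} into the functor $\tH^0_\ast(-)$.

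For case (a), where $l = -1$ and $b = 0$, I would first note that $a$ is then a nonzero constant, so $F_2 = ax_3$ and $I(X) = SF_2 + I(L_1)^2 = (x_3,\, x_2^2)$ (the double line $2L_1$ inside the plane $x_3 = 0$; compare Lemma~\ref{L:l=-1deg2}). Since $X$, $L_2 = \{x_1 = x_3 = 0\}$ and $L_1^\prim = \{x_0 = x_1 = 0\}$ all have monomial ideals, $I(X \cup L_2 \cup L_1^\prim)$ is the intersection $(x_3,\, x_2^2) \cap (x_1,\, x_3) \cap (x_0,\, x_1)$, which I would compute in two steps via Remark~\ref{R:monomial}(a): the first intersection gives $(x_3,\, x_1x_2^2)$ and intersecting with $(x_0,\, x_1)$ gives $(x_0x_3,\, x_1x_3,\, x_1x_2^2)$, which is the assertion.

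For cases (b) and (c) I take $L = L_2$ (so $c = 0$, $\ell = x_1$) and $\Lambda = L_1^\prim$ in the unitary discussion preceding Lemma~\ref{L:l1(1)cupl2cupl2prim}. The decisive observation is that $L_1$ and $L_1^\prim$ are \emph{disjoint}, so the cokernels of the morphisms $\psi_1 : \sci_{X \cup L_2 \cup L_1^\prim} \to \sco_{L_1}(-l-2)$ (case $x_1 \mid b$) and $\psi_1^\prim : \sci_{X \cup L_2 \cup L_1^\prim} \to \sco_{L_1}(-l-3)$ (case $x_1 \nmid b$), being $\sco_{L_1 \cap L_1^\prim} = \sco_\emptyset$-modules, \emph{vanish}; thus each is an epimorphism of sheaves with kernel $\sci_{L_1^{(1)} \cup L_2 \cup L_1^\prim}$. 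In case (b), since $l \geq 0$ the forms $a,\, b \in k[x_0,x_1]_{l+1}$ vanish at $x_0 = x_1 = 0$, so $F_2 = -bx_2 + ax_3$ vanishes on $L_1^\prim$ and hence lies in $\sci_{X \cup L_2} \cap \sci_{L_1^\prim} = \sci_{X \cup L_2 \cup L_1^\prim}$, with $\psi_1(F_2) = \eta(F_2) = 1$; in case (c) the same role is played by $x_1F_2$, which vanishes on $L_1^\prim$ through its explicit factor $x_1$ and satisfies $\psi_1^\prim(x_1F_2) = 1$.

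Applying $\tH^0_\ast(-)$ and using its left-exactness gives $\ker \tH^0_\ast(\psi_1) = I(L_1^{(1)} \cup L_2 \cup L_1^\prim)$; since $1$ generates $S(L_1)(-l-2)$ as an $S$-module and already lies in the image (via $F_2$), the map $\tH^0_\ast(\psi_1)$ is surjective, whence $I(X \cup L_2 \cup L_1^\prim) = SF_2 + I(L_1^{(1)} \cup L_2 \cup L_1^\prim)$, and symmetrically with $x_1F_2$ in case (c). The point I expect to require care with — and the only place where this configuration differs from the neighbouring propositions — is that $W := L_1^{(1)} \cup L_2 \cup L_1^\prim$ is \emph{disconnected}, hence not arithmetically Cohen--Macaulay and $\tH^1_\ast(\sci_W) \neq 0$ (indeed Lemma~\ref{L:l1(1)cupl2cupl1prim} gives a length-two resolution); so, unlike in Prop.~\ref{P:genixcupl2cupl3}, I cannot obtain surjectivity of $\tH^0_\ast(\psi_1)$ for free from a vanishing $\tH^1_\ast$, but must read it off from the explicit lift $F_2$ (resp. $x_1F_2$) of the $S$-module generator of $S(L_1)(-l-2)$ (resp. $S(L_1)(-l-3)$). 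As in the companion statements, these exact sequences together with the resolution of $I(L_1^{(1)} \cup L_2 \cup L_1^\prim)$ from Lemma~\ref{L:l1(1)cupl2cupl1prim} also yield a graded free resolution of $I(X \cup L_2 \cup L_1^\prim)$.
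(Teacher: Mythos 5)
Your proposal is correct and follows essentially the same route as the paper: case (a) by the two-step monomial intersection $(x_3,\,x_2^2)\cap(x_1,\,x_3)\cap(x_0,\,x_1)$, and cases (b), (c) via the exact sequences from the beginning of Subsection~\ref{SS:doublecuptwolines}, with $\psi_1$ (resp. $\psi_1^\prim$) an epimorphism because $L_1\cap L_1^\prim=\emptyset$ and with surjectivity of $\tH^0_\ast(\psi_1)$ read off from the explicit element $F_2$ (resp. $x_1F_2$) mapping to $1$. Your added remark that one cannot invoke vanishing of $\tH^1_\ast(\sci_{L_1^{(1)}\cup L_2\cup L_1^\prim})$ is a fair observation but does not change the argument, which is exactly the paper's.
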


\begin{proof} 
(a) In this case $F_2 = x_3$ hence $I(X) = (x_3,\, x_2^2)$ hence 
$I(X \cup L_2 \cup L_1^\prim) = (x_3,\, x_2^2) \cap (x_1,\, x_3) \cap 
(x_0,\, x_1) = (x_0x_3,\, x_1x_3,\, x_1x_2^2)$. 

(b) We use the exact sequence$\, :$ 
\[
0 \lra \sci_{L_1^{(1)} \cup L_2 \cup L_1^\prim} \lra 
\sci_{X \cup L_2 \cup L_1^\prim} \overset{\displaystyle \psi_1}{\lra} 
\sco_{L_1}(-l-2) 
\]
defined at the beginning of this subsection. Since $L_1 \cap L_1^\prim = 
\emptyset$ it follows that $\psi_1$ is an epimorphism. Moreover, since 
$l \geq 0$ it follows that $F_2 \vb L_1^\prim = 0$ hence $F_2 \in 
I(X \cup L_2 \cup L_1^\prim)$. Since $\psi_1(F_2) = \psi(F_2) = 1 \in 
\tH^0(\sco_{L_1})$ one deduces an exact sequence$\, :$ 
\[
0 \lra I(L_1^{(1)} \cup L_2 \cup L_1^\prim) \lra 
I(X \cup L_2 \cup L_1^\prim) 
\xra{\displaystyle \tH^0_\ast(\psi_1)} S(L_1)(-l-2) \lra 0\, .
\] 

(c) We use the exact sequence$\, :$ 
\[
0 \lra \sci_{L_1^{(1)} \cup L_2 \cup L_1^\prim} \lra 
\sci_{X \cup L_2 \cup L_1^\prim} \overset{\displaystyle \psi_1^\prim}{\lra} 
\sco_{L_1}(-l-3) 
\]
defined at the beginning of this subsection. Since $L_1 \cap L_1^\prim = 
\emptyset$ it follows that $\psi_1^\prim$ is an epimorphism. Moreover, 
since $x_1F_2 \vb L_1^\prim = 0$ it follows that $x_1F_2 \in 
I(X \cup L_2 \cup L_1^\prim)$. Since $\psi_1^\prim(x_1F_2) = \psi^\prim(x_1F_2) 
= 1 \in \tH^0(\sco_{L_1})$ one deduces an exact sequence$\, :$ 
\[
0 \lra I(L_1^{(1)} \cup L_2 \cup L_1^\prim) \lra 
I(X \cup L_2 \cup L_1^\prim) 
\xra{\displaystyle \tH^0_\ast(\psi_1^\prim)} S(L_1)(-l-3) \lra 0\, .
\] 

Notice that the exact sequences deduced in the above proof can be used to get 
a concrete graded free resolution of $I(X \cup L_2 \cup L_1^\prim)$. 
\end{proof}

\begin{prop}\label{P:reshoxcupl2cupl1prim} 
Let $X$ be the double structure on the line $L_1$ considered at the beginning 
of Subsection~\ref{SS:doublecupaline}. 

\emph{(a)} If $x_1 \mid b$, i.e., if $b = x_1b_1$ then the graded $S$-module 
${\fam0 H}^0_\ast(\sco_{X\cup L_2\cup L_1^\prim})$ admits the following free 
resolution$\, :$ 
\[
0 \lra \begin{matrix} 2S(-3)\\ \oplus\\ S(l-3) \end{matrix} 
\overset{\displaystyle \delta_2}{\lra} 
\begin{matrix} 3S(-2)\\ \oplus\\ 2S(l-2) \end{matrix} 
\overset{\displaystyle \delta_1}{\lra} 
\begin{matrix} S\\ \oplus\\ S(l-1) \end{matrix} 
\overset{\displaystyle \delta_0}{\lra} 
{\fam0 H}^0_\ast(\sco_{X\cup L_2\cup L_1^\prim}) \lra 0 
\]
with $\delta_0 = (1\, ,\, x_1e_1)$ and $\delta_1$, $\delta_2$ defined by the 
matrices$\, :$
\[
\begin{pmatrix} 
x_0x_3 & x_1x_2 & x_1x_3 & 0 & 0\\
-x_0b_1 & -a & -b & x_2 & x_3
\end{pmatrix}\, ,\  
\begin{pmatrix} 
-x_1 & 0 & 0\\
0 & -x_3 & 0\\
x_0 & x_2 & 0\\
0 & b & -x_3\\
0 & -a & x_2
\end{pmatrix}\, .
\]

\emph{(b)} If $x_1 \nmid b$ then the graded $S$-module 
${\fam0 H}^0_\ast(\sco_{X\cup L_2\cup L_1^\prim})$ admits the following free 
resolution$\, :$ 
\[
0 \lra \begin{matrix} 2S(-3)\\ \oplus\\ S(l-2) \end{matrix} 
\overset{\displaystyle \delta_2}{\lra} 
\begin{matrix} 3S(-2)\\ \oplus\\ 2S(l-1) \end{matrix} 
\overset{\displaystyle \delta_1}{\lra} 
\begin{matrix} S\\ \oplus\\ S(l) \end{matrix} 
\overset{\displaystyle \delta_0}{\lra} 
{\fam0 H}^0_\ast(\sco_{X\cup L_2\cup L_1^\prim}) \lra 0 
\]
with $\delta_0 = (1\, ,\, e_1)$ and $\delta_1$, $\delta_2$ defined by the 
matrices$\, :$
\[
\begin{pmatrix} 
x_0x_3 & x_1x_2 & x_1x_3 & 0 & 0\\
-x_0b & -x_1a & -x_1b & x_2 & x_3
\end{pmatrix}\, ,\  
\begin{pmatrix} 
-x_1 & 0 & 0\\
0 & -x_3 & 0\\
x_0 & x_2 & 0\\
0 & x_1b & -x_3\\
0 & -x_1a & x_2
\end{pmatrix}\, .
\]
\end{prop}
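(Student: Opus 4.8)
The plan is to follow the same pattern as in the proofs of Prop.~\ref{P:reshoxcupl2cupl3} and Prop.~\ref{P:reshoxcupl2cupl2prim}: relate the graded structural algebra of $X \cup L_2 \cup L_1^\prim$ to that of the reduced chain of lines $L_1 \cup L_2 \cup L_1^\prim$ through the canonical surjection $\sco_{X \cup L_2 \cup L_1^\prim} \twoheadrightarrow \sco_{L_1 \cup L_2 \cup L_1^\prim}$, and then to assemble the asserted free resolution by a horseshoe (mapping cone) argument from the resolutions of the two end terms. First I would record the homogeneous ideal of the reduced curve: using Remark~\ref{R:monomial} one computes
\[
I(L_1 \cup L_2 \cup L_1^\prim) = (x_2,x_3) \cap (x_1,x_3) \cap (x_0,x_1) = (x_0x_3,\, x_1x_2,\, x_1x_3)\, ,
\]
and by the Hilbert--Burch theorem (or a direct linkage as in Lemma~\ref{L:l1(1)cupl2cupl1prim}) this ideal has the minimal graded free resolution $0 \to 2S(-3) \to 3S(-2) \to I(L_1 \cup L_2 \cup L_1^\prim) \to 0$, whose syzygy matrix is the upper $3\times 2$ block of the matrices defining $\delta_2$ in the statement. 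Since $L_1 \cup L_2 \cup L_1^\prim$ is a connected chain of lines, it is arithmetically CM, so $\tH^0_\ast(\sco_{L_1 \cup L_2 \cup L_1^\prim}) \simeq S/I(L_1 \cup L_2 \cup L_1^\prim)$ is generated by $1$ and its resolution is the one above, augmented by $S \twoheadrightarrow S/I$.

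Next I would exploit the exact sequences set up at the beginning of Subsection~\ref{SS:doublecuptwolines} (with $L = L_2$, i.e. $c = 0$ and $\ell = x_1$, and $\Lambda = L_1^\prim$). In the case $x_1 \mid b$ one has
\[
0 \lra \text{Im}\, \phi_1^\prim \times \{0\} \lra \sco_{X \cup L_2 \cup L_1^\prim} \lra \sco_{L_1 \cup L_2 \cup L_1^\prim} \lra 0\, ,
\]
and the decisive observation is that $L_1 \cap L_1^\prim = \emptyset$, so by Lemma~\ref{L:ycupt}(c) the module $\Cok \phi_1^\prim$, being supported on $L_1 \cap L_1^\prim$, vanishes; hence $\phi_1^\prim$ is an epimorphism onto $\sco_{L_1}(l-1)$ and the kernel of the surjection $\tH^0_\ast(\sco_{X \cup L_2 \cup L_1^\prim}) \to \tH^0_\ast(\sco_{L_1 \cup L_2 \cup L_1^\prim})$ is $S(L_1)(l-1)$, generated by the section with $X$-component $x_1e_1$. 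To pin down the connecting maps I would evaluate $\phi_1^\prim$ on the three generators of $I(L_1 \cup L_2 \cup L_1^\prim)$: recalling from the proof of Prop.~\ref{P:reshoxcupl}(a) that $\phi^\prim(x_3) = b_1$ and $\phi^\prim(x_1x_2) = a$, one gets $\phi_1^\prim(x_0x_3) = x_0b_1$, $\phi_1^\prim(x_1x_2) = a$ and $\phi_1^\prim(x_1x_3) = b$. The horseshoe lemma applied to the Koszul resolution of $S(L_1)(l-1)$ and to the resolution of $S/I$ from the first step then produces a resolution of the stated numerical shape, in which the bottom row $(-x_0b_1,\, -a,\, -b,\, x_2,\, x_3)$ of $\delta_1$ and the connecting column of $\delta_2$ are exactly these images (up to sign); the identity $\delta_1\delta_2 = 0$ is checked directly, using $e_1^2 = 0$ and the coprimality of $a$ and $b$.

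Finally, the case $x_1 \nmid b$ is entirely parallel, using instead the sequence with $\phi_1$: here $\Cok\phi_1$ is again supported on $L_1 \cap L_1^\prim = \emptyset$, so $\phi_1$ is onto $\sco_{L_1}(l)$, the kernel is $S(L_1)(l)$ with generator $e_1$, and from the proof of Prop.~\ref{P:reshoxcupl}(b) one has $\phi(x_3) = b$ and $\phi(x_1x_2) = x_1a$, giving $\phi_1(x_0x_3) = x_0b$, $\phi_1(x_1x_2) = x_1a$ and $\phi_1(x_1x_3) = x_1b$; these are the connecting entries appearing in the matrices of case (b). The only genuine work is the bookkeeping in the horseshoe construction---choosing the lifts so that the connecting blocks come out with the displayed signs and verifying that consecutive differentials compose to zero---which is the step I expect to be the main, though routine, obstacle.
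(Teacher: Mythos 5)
Your proposal is correct and follows essentially the same route as the paper: compute $I(L_1\cup L_2\cup L_1^\prim)=(x_0x_3,\,x_1x_2,\,x_1x_3)$ with its length-one resolution (the paper gets it by linking to $L_2^\prim$ via $x_0x_3,\,x_1x_2$, which amounts to the same Hilbert--Burch data), observe that $L_1\cap L_1^\prim=\emptyset$ forces $\phi_1^\prim$ (resp.\ $\phi_1$) to be surjective so that the kernel of $\tH^0_\ast(\sco_{X\cup L_2\cup L_1^\prim})\to\tH^0_\ast(\sco_{L_1\cup L_2\cup L_1^\prim})$ is $S(L_1)(l-1)$ generated by $x_1e_1$ (resp.\ $S(L_1)(l)$ generated by $e_1$), and splice the two resolutions; your evaluations $\phi_1^\prim(x_0x_3)=x_0b_1$, $\phi_1^\prim(x_1x_2)=a$, $\phi_1^\prim(x_1x_3)=b$ (and the analogues for $\phi_1$) reproduce exactly the connecting entries of the displayed matrices.
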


\begin{proof} 
The homogeneous ideal of $L_1\cup L_2\cup L_1^\prim$ is$\, :$ 
\[
(x_2,\, x_3)\cap (x_1,\, x_3)\cap (x_0,\, x_1) = 
(x_0x_3,\, x_1x_2,\, x_1x_3)\, .
\]
One deduces that $L_1\cup L_2\cup L_1^\prim$ is directly linked by the complete 
intersection defined by $x_0x_3$ and $x_1x_2$ to the line $L_2^\prim$ of 
equations $x_0 = x_2 = 0$. Using Ferrand's result about liaison, one gets 
the following graded free resolution of $I(L_1\cup L_2\cup L_1^\prim)$$\, :$ 
\[
0 \lra 2S(-3) \xra{\begin{pmatrix} -x_1 & 0\\ 0 & -x_3\\ x_0 & x_2 
\end{pmatrix}} 3S(-2) \lra I(L_1\cup L_2\cup L_1^\prim) \lra 0\, .
\]
It results that $L_1\cup L_2\cup L_1^\prim$ is arithmetically CM, hence the 
graded $S$-module $\tH^0_\ast(\sco_{L_1\cup L_2\cup L_1^\prim})$ is generated by 
$1 \in \tH^0(\sco_{L_1\cup L_2\cup L_1^\prim})$. 

\vskip2mm

(a) We use the exact sequence (look at the beginning of 
this subsection)$\, :$ 
\[
0 \lra \text{Im}\, \phi_1^\prim \times \{0\} \lra \sco_{X \cup L_2 \cup L_1^\prim} 
\lra \sco_{L_1 \cup L_2 \cup L_1^\prim} \lra 0 
\] 
with $\phi_1^\prim$ the composite morphism $\sci_{L_1 \cup L_2 \cup L_1^\prim} 
\ra \sci_{L_1 \cup L_2} \overset{\phi^\prim}{\lra} \sco_{L_1}(l-1)$. Since 
$L_1 \cap L_1^\prim = \emptyset$, $\phi_1^\prim$ is an epimorphism. One deduces 
an exact sequence$\, :$ 
\[
0 \lra S(L_1)(l-1) \lra \tH^0_\ast(\sco_{X \cup L_2 \cup L_1^\prim}) \lra 
\tH^0_\ast(\sco_{L_1 \cup L_2 \cup L_1^\prim}) \lra 0 
\]
where the left morphism maps $1 \in S(L_1)$ to the element of 
$\tH^0(\sco_{X \cup L_2 \cup L_1^\prim}(-l+1))$ whose image into 
$\tH^0(\sco_X(-l+1)) \oplus \tH^0(\sco_{L_2}(-l+1)) \oplus 
\tH^0(\sco_{L_1^\prim}(-l+1))$ is $(x_1e_1,\, 0,\, 0)$. 

(b) We use the exact sequence $\, :$ 
\[
0 \lra \text{Im}\, \phi_1 \times \{0\} \lra \sco_{X \cup L_2 \cup L_1^\prim} \lra 
\sco_{L_1 \cup L_2 \cup L_1^\prim} \lra 0 
\] 
with $\phi_1$ the composite morphism $\sci_{L_1 \cup L_2 \cup L_1^\prim} 
\ra \sci_{L_1 \cup L_2} \overset{\phi}{\lra} \sco_{L_1}(l)$.  Since 
$L_1 \cap L_1^\prim = \emptyset$, $\phi_1$ is an epimorphism. One deduces 
an exact sequence$\, :$ 
\[
0 \lra S(L_1)(l) \lra \tH^0_\ast(\sco_{X \cup L_2 \cup L_1^\prim}) \lra 
\tH^0_\ast(\sco_{L_1 \cup L_2 \cup L_1^\prim}) \lra 0 
\]
where the left morphism maps $1 \in S(L_1)$ to the element of 
$\tH^0(\sco_{X \cup L_2 \cup L_1^\prim}(-l))$ whose image into 
$\tH^0(\sco_X(-l)) \oplus \tH^0(\sco_{L_2}(-l)) \oplus 
\tH^0(\sco_{L_1^\prim}(-l))$ is $(e_1,\, 0,\, 0)$. 
\end{proof}  

\begin{lemma}\label{L:l1(1)cuplcupl1prim} 
Let $L\subset \piii$ be the line of equations $\ell = x_3 = 0$, where 
$\ell = x_1 + cx_2$, $c \neq 0$. Then the homogeneous ideal 
$I(W) \subset S$ of $W = L_1^{(1)} \cup L\cup L_1^\prim$ is generated by 
$x_0x_2x_3$, $x_0x_3^2$, $x_1x_2x_3$, $x_1x_3^2$, $x_0\ell x_2^2$, $x_1\ell x_2^2$ 
and admits the following graded free resolution$\, :$  
\[
0 \lra S(-5) \oplus S(-6) \overset{\displaystyle d_2}{\lra} 
4S(-4) \oplus 3S(-5) \overset{\displaystyle d_1}{\lra} 
4S(-3) \oplus 2S(-4) \overset{\displaystyle d_0}{\lra} I(W) \lra 0 
\]
with $d_1$ and $d_2$ defined by the matrices$\, :$ 
\[
\begin{pmatrix} 
-x_1 & 0 & -x_3 & 0 & 0 & -\ell x_2 & 0\\ 
0 & -x_1 & x_2 & 0 & 0 & 0 & 0\\
x_0 & 0 & 0 & -x_3 & 0 & 0 & -\ell x_2\\
0 & x_0 & 0 & x_2 & 0 & 0 & 0\\
0 & 0 & 0 & 0 & -x_1 & x_3 & 0\\
0 & 0 & 0 & 0 & x_0 & 0 & x_3 
\end{pmatrix}\  ,\  
\begin{pmatrix} 
-x_3 & -\ell x_2\\
x_2 & 0\\
x_1 & 0\\
-x_0 & 0\\
0 & x_3\\
0 & x_1\\
0 & -x_0 
\end{pmatrix}\, . 
\]
\end{lemma}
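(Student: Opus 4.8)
The plan is to exhibit $W$ as a disjoint union of two arithmetically Cohen--Macaulay curves of pure codimension $2$ and then to apply Lemma~\ref{L:zcupw}. Set $Z := L_1^{(1)} \cup L$. By Lemma~\ref{L:l1(1)cupl} one has $I(Z) = (x_2x_3\, ,\, x_3^2\, ,\, \ell x_2^2)$, together with the graded free resolution
\[
0 \lra S(-3) \oplus S(-4) \lra 2S(-2) \oplus S(-3)
\xra{\displaystyle (x_2x_3\, ,\, x_3^2\, ,\, \ell x_2^2)} I(Z) \lra 0\, .
\]
Since $I(Z)$ has projective dimension $1$ over $S$, the quotient $S/I(Z)$ is Cohen--Macaulay, so $Z$ is arithmetically CM of pure codimension $2$. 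The line $L_1^\prim$ is arithmetically CM of pure codimension $2$ as well, with Koszul resolution
\[
0 \lra S(-2) \lra 2S(-1) \xra{\displaystyle (x_0\, ,\, x_1)} I(L_1^\prim) \lra 0\, ,
\]
whose left-hand map is $(-x_1\, ,\, x_0)^{\mathrm{t}}$.

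First I would verify that $Z$ and $L_1^\prim$ are disjoint. The support of $Z$ is $L_1 \cup L$; the line $L_1 = \{x_2 = x_3 = 0\}$ clearly misses $L_1^\prim = \{x_0 = x_1 = 0\}$, and $L$, lying in $\{x_3 = 0\}$ with $x_1 = -cx_2$, meets $\{x_0 = x_1 = 0\}$ only where all coordinates vanish, because $c \neq 0$. Hence $Z \cap L_1^\prim = \emptyset$, and Lemma~\ref{L:zcupw}(b) gives $I(W) = I(Z)\cdot I(L_1^\prim)$. Multiplying the generating sets $\{x_2x_3,\, x_3^2,\, \ell x_2^2\}$ and $\{x_0,\, x_1\}$ produces exactly the six generators
\[
x_0x_2x_3\, ,\  x_0x_3^2\, ,\  x_1x_2x_3\, ,\  x_1x_3^2\, ,\  x_0\ell x_2^2\, ,\  x_1\ell x_2^2
\]
listed in the statement, the first four of degree $3$ and the last two of degree $4$.

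For the resolution I would invoke Lemma~\ref{L:zcupw}(a): the tensor product over $S$ of the two resolutions above is a minimal graded free resolution of $I(W)$. Counting graded pieces term by term reproduces the asserted shape. In homological degree $0$ one obtains $2S(-1) \otimes (2S(-2) \oplus S(-3)) = 4S(-3) \oplus 2S(-4)$; in degree $1$ one obtains
\[
\bigl(2S(-1) \otimes (S(-3)\oplus S(-4))\bigr) \oplus \bigl(S(-2) \otimes (2S(-2)\oplus S(-3))\bigr) = 4S(-4) \oplus 3S(-5)\, ;
\]
and in degree $2$ one obtains $S(-2) \otimes (S(-3)\oplus S(-4)) = S(-5) \oplus S(-6)$. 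The differentials $d_1$ and $d_2$ are the total differentials of the tensor-product complex, determined by the two factor differentials through the Koszul sign rule $d(a \otimes b) = da \otimes b + (-1)^{\deg a}\, a \otimes db$; ordering the tensor basis as dictated by the generator list above turns these into the two displayed matrices.

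The argument is essentially mechanical and I anticipate no conceptual difficulty; exactness is guaranteed by Lemma~\ref{L:zcupw}(a). The only point needing care is the explicit bookkeeping of $d_1$ and $d_2$: one must fix a single coherent ordering of the tensor-product basis and absorb the Koszul signs (equivalently, replace the two top generators $f \otimes h_1$ and $f \otimes h_2$ by their negatives) so that the matrices come out precisely as stated, after which the identities $d_1 d_2 = 0$ serve as a final consistency check.
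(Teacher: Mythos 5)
Your proposal is correct and follows essentially the same route as the paper: the paper's proof likewise observes that $L_1^{(1)}\cup L$ is arithmetically CM by Lemma~\ref{L:l1(1)cupl} and then applies Lemma~\ref{L:zcupw} to conclude that $I(W) = I(L_1^{(1)}\cup L)\,I(L_1^\prim)$ and that the tensor product of the two minimal resolutions resolves $I(W)$. Your additional checks (disjointness of $Z$ and $L_1^\prim$, the term-by-term count of the graded pieces, and the sign bookkeeping for the differentials) only make explicit what the paper leaves implicit.
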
 

\begin{proof} 
According to Lemma~\ref{L:l1(1)cupl}, $L_1^{(1)}\cup L$ is arithmetically CM. 
It follows, now, from Lemma~\ref{L:zcupw}, that 
$I(W) = I(L_1^{(1)}\cup L)I(L_1^\prim)$ and that the 
tensor product of the minimal graded free resolutions of $I(L_1^{(1)}\cup L)$ 
and of $I(L_1^\prim)$ is a minimal graded free resolution of $I(W)$.   
\end{proof}

\begin{prop}\label{P:genixcuplcupl1prim} 
Let $X$ be the double structure on the line $L_1$ considered at the beginning 
of Subsection~\ref{SS:doublecupaline} and let $L\subset \piii$ be the line of 
equations $\ell = x_3 = 0$, where $\ell = x_1 + cx_2$, $c \neq 0$. 

\emph{(a)} If $x_1 \mid b$ and $l = -1$ then $I(X\cup L\cup L_1^\prim) = 
(x_0,\, x_1)(x_3,\, \ell x_2^2)$. 

\emph{(b)} If $x_1 \mid b$ and $l = 0$ then $I(X\cup L\cup L_1^\prim) = 
(x_0,\, x_1)(-\ell x_2 + ax_3,\, x_2x_3,\, x_3^2)$. 

\emph{(c)} If $x_1 \mid b$, i.e., if $b = x_1b_1$, and $l \geq 1$ then 
$I(X\cup L\cup L_1^\prim) = S(F_2-cb_1x_2^2) + I(L_1^{(1)}\cup L\cup L_1^\prim)$. 

\emph{(d)} If $x_1 \nmid b$ and $l = -1$ then $I(X\cup L\cup L_1^\prim) = 
(x_0,\, x_1)(\ell F_2,\, x_2x_3,\, x_3^2)$. 

\emph{(e)} If $x_1 \nmid b$ and $l \geq 0$ then 
$I(X\cup L\cup L_1^\prim) = S\ell F_2 + I(L_1^{(1)}\cup L\cup L_1^\prim)$.   
\end{prop}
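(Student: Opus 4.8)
The plan is to treat the five cases separately, exploiting throughout the single structural feature that distinguishes the present configuration from the others handled in this subsection: the lines $L_1$ (of equations $x_2 = x_3 = 0$) and $L_1^\prim$ (of equations $x_0 = x_1 = 0$) are \emph{skew}, so $L_1 \cap L_1^\prim = \emptyset$. Since Lemma~\ref{L:ycupt} tells us that the cokernels $\Cok \psi_1$, $\Cok \psi_1^\prim$ appearing in the exact sequences recalled at the beginning of this subsection are supported on $L_1 \cap L_1^\prim$, these cokernels vanish, i.e.\ the morphisms $\psi_1$ and $\psi_1^\prim$ are \emph{epimorphisms}. This is exactly what makes the computation clean, and it is the point where the geometry enters.

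For the two ``generic'' cases I would argue uniformly. In case (c) ($x_1 \mid b$, $l \geq 1$) I would use the short exact sequence
\[
0 \lra \sci_{L_1^{(1)} \cup L \cup L_1^\prim} \lra \sci_{X \cup L \cup L_1^\prim} \overset{\psi_1}{\lra} \sco_{L_1}(-l-2) \lra 0\, ,
\]
whose right exactness comes from the disjointness above. First I would check that $F_2 - cb_1x_2^2$, which already lies in $I(X \cup L)$ by Prop.~\ref{P:genixcupl}(a), vanishes on $L_1^\prim$: restricting to $x_0 = x_1 = 0$ kills the term $-b_1x_1x_2$ outright, while $a$ and $b_1$ are forms of positive degree when $l \geq 1$ and so vanish at $x_0 = x_1 = 0$, killing $ax_3$ and $-cb_1x_2^2$. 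Hence $F_2 - cb_1x_2^2 \in I(X \cup L \cup L_1^\prim)$, and since $\psi_1(F_2 - cb_1x_2^2) = 1$, it maps to a generator of the cyclic $S$-module $S(L_1)(-l-2)$; this forces $\tH^0_\ast(\psi_1)$ to be surjective. Applying $\tH^0_\ast(-)$ then yields
\[
0 \lra I(L_1^{(1)} \cup L \cup L_1^\prim) \lra I(X \cup L \cup L_1^\prim) \xra{\tH^0_\ast(\psi_1)} S(L_1)(-l-2) \lra 0\, ,
\]
which gives the stated equality. Case (e) ($x_1 \nmid b$, $l \geq 0$) is identical with $\psi_1^\prim : \sci_{X\cup L \cup L_1^\prim} \to \sco_{L_1}(-l-3)$ and the generator $\ell F_2$ in place of $\psi_1$ and $F_2 - cb_1x_2^2$; here the vanishing of $\ell F_2$ on $L_1^\prim$ only requires $l \geq 0$, since $\ell$ restricts to $cx_2$ and both $a, b$ already vanish at $x_0 = x_1 = 0$.

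For the three degenerate low-degree cases (a), (b), (d) I would instead compute directly via Lemma~\ref{L:zcupw}(b): once $X \cup L$ is known to be arithmetically CM and disjoint from $L_1^\prim$, that lemma gives $I(X \cup L \cup L_1^\prim) = I(X \cup L)\cdot I(L_1^\prim) = I(X \cup L)\cdot(x_0, x_1)$. In each case I would first collapse the four generators of $I(X \cup L)$ from Prop.~\ref{P:genixcupl} to the stated ones: in (a), where $b = 0$ and $F_2 = ax_3$, one gets $I(X \cup L) = (x_3, \ell x_2^2)$, a plane complete intersection of type $(1,3)$; in (b), after normalising $b = x_1$ one has $F_2 - cx_2^2 = -\ell x_2 + ax_3$ and $\ell x_2^2 \in (-\ell x_2 + ax_3,\, x_2x_3)$, so $I(X \cup L) = (-\ell x_2 + ax_3,\, x_2x_3,\, x_3^2)$; in (d), where $b \neq 0$, one eliminates $\ell x_2^2$ against $\ell F_2$ and $x_2x_3$ to obtain $I(X \cup L) = (\ell F_2,\, x_2x_3,\, x_3^2)$. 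In (b) and (d) the curve $X \cup L$ is directly linked to a line by a complete intersection of type $(2,2)$ (with second form $x_3^2$), hence arithmetically CM; in (a) it is a plane complete intersection, hence also arithmetically CM. Disjointness from $L_1^\prim$ follows because the support $L_1 \cup L$ lies in the plane $x_3 = 0$, which meets $L_1^\prim$ only at $P_2 \notin L_1 \cup L$.

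The step I expect to demand the most care is the simplification of the generating sets in cases (b) and (d) together with the claim that $X \cup L$ is arithmetically CM there. One must verify that $\ell x_2^2$ is genuinely redundant (this uses $b \neq 0$) and that the chosen pair of quadrics really is a complete intersection, i.e.\ that $x_3^2$ shares no factor with $-\ell x_2 + ax_3$ (resp.\ $\ell F_2$); the entire reduction to Lemma~\ref{L:zcupw}(b) rests on the ACM property. By contrast, cases (c) and (e) are essentially automatic once the disjointness $L_1 \cap L_1^\prim = \emptyset$ has been invoked.
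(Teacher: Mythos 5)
Your proposal is correct and follows essentially the same route as the paper: cases (a), (b), (d) are handled by reducing $I(X\cup L)$ to the stated complete-intersection or linked-to-a-line form (hence arithmetically CM) and invoking Lemma~\ref{L:zcupw}(b) via disjointness from $L_1^\prim$, while cases (c), (e) use the exact sequences with $\psi_1$, $\psi_1^\prim$, which are epimorphisms because $L_1\cap L_1^\prim=\emptyset$, together with the observation that $F_2-cb_1x_2^2$ (resp. $\ell F_2$) vanishes on $L_1^\prim$ and maps to the generator of $S(L_1)(-l-2)$ (resp. $S(L_1)(-l-3)$). Your explicit verifications of the redundancy of $\ell x_2^2$ and of the disjointness of $X\cup L$ from $L_1^\prim$ are exactly the points the paper leaves implicit, and they are carried out correctly.
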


\begin{proof} 
(a) If $l = -1$ the condition $x_1 \mid b$ means that $b = 0$, hence 
$F_2 = x_3$, hence $I(X) = (x_3,\, x_2^2)$. One deduces that $I(X\cup L) = 
(x_3,\, \ell x_2^2)$ and one can apply, now, Lemma~\ref{L:zcupw}. 

(b) If $l = 0$ and $x_1 \mid b$ then one can assume that $b = x_1$, 
hence $F_2 = -x_1x_2 + ax_3$. It follows, from Prop.~\ref{P:genixcupl},  
that$\, :$ 
\[
I(X\cup L) = (-x_1x_2 + ax_3 - cx_2^2,\, x_2x_3,\, x_3^2,\, \ell x_2^2) = 
(-\ell x_2 + ax_3,\, x_2x_3,\, x_3^2)\, . 
\] 
One deduces that $X\cup L$ is directly linked by the complete intersection 
defined by $-\ell x_2 + ax_3$ and $x_3^2$ to the line $L$, hence it is 
arithmetically CM. One can apply, now, Lemma~\ref{L:zcupw}. 

(c) We use the exact sequence$\, :$ 
\[
0 \lra \sci_{L_1^{(1)}\cup L\cup L_1^\prim} \lra \sci_{X\cup L\cup L_1^\prim} 
\overset{\displaystyle \psi_1}{\lra} \sco_{L_1}(-l-2) 
\]
defined at the beginning of this subsection. $L_1 \cap L_1^\prim = \emptyset$ 
implies that $\psi_1$ is an epimorphism.  
Since $l \geq 1$, the element $F_2 - cb_1x_2^2$ of $I(X\cup L)$ belongs to 
$I(X\cup L\cup L_1^\prim)$ and this implies that the sequence$\, :$ 
\[
0 \lra I(L_1^{(1)}\cup L\cup L_1^\prim) \lra I(X\cup L\cup L_1^\prim) 
\xra{\displaystyle \tH^0_\ast(\psi_1)} S(L_1)(-l-2) \lra 0 
\] 
is exact. 

(d) If $l = -1$ the condition $x_1 \nmid b$ is equivalent to $b\neq 0$. 
One can assume that $b = -1$, hence $F_2 = x_2 + ax_3$, hence 
$I(X) = (x_2 + ax_3,\, x_3^2)$. It follows that $I(X\cup L) = 
((x_1+cx_2)(x_2+ax_3),\, x_2x_3,\, x_3^2)$. One deduces that $X\cup L$ is 
directly linked by the complete intersection defined by 
$(x_1+cx_2)(x_2+ax_3)$ and $x_3^2$ to $L$, hence it is arithmetically CM. 
One can apply, now, Lemma~\ref{L:zcupw}. 

(e) If $x_1 \nmid b$ one gets, as in the proof of (c), an exact 
sequence$\, :$
\[
0 \lra \sci_{L_1^{(1)}\cup L\cup L_1^\prim} \lra \sci_{X\cup L\cup L_1^\prim} 
\overset{\displaystyle \psi_1^\prim}{\lra} \sco_{L_1}(-l-3) \lra 0\, . 
\]  
Since $l \geq 0$, the generator $\ell F_2$ of $I(X\cup L)$ (see 
Prop.~\ref{P:genixcupl}) belongs to $I(X\cup L\cup L_1^\prim)$ and this implies 
that the sequence$\, :$ 
\[
0 \lra I(L_1^{(1)}\cup L\cup L_1^\prim) \lra I(X\cup L\cup L_1^\prim) 
\xra{\displaystyle \tH^0_\ast(\psi_1^\prim)} S(L_1)(-l-3) \lra 0 
\] 
is exact.  

Notice that using the exact sequences from the above proof one can get a 
concrete graded free resolution of $I(X \cup L \cup L_1^\prim)$. 
\end{proof}

The next lemma follows immediately from Lemma~\ref{L:zcupw}. 

\begin{lemma}\label{L:l1(1)cupl1primcuplprim}
Let $L^\prim \subset \piii$ be the line of equations $\ell_0 = x_1 = 0$, 
where $\ell_0 = x_0 + cx_2$, $c \neq 0$.   
Then the homogeneous ideal $I(W) \subset S$ of  
$W = L_1^{(1)} \cup L_1^\prim \cup L^\prim$ is generated by  
$x_1x_2^2$, $x_1x_2x_3$, $x_1x_3^2$, $x_0\ell_0x_2^2$, $x_0\ell_0x_2x_3$, 
$x_0\ell_0x_3^2$ 
and admits the following minimal graded free resolution$\, :$ 
\[
0 \lra 2S(-6) \overset{\displaystyle d_2}{\lra} 
2S(-4) \oplus 5S(-5)  
\overset{\displaystyle d_1}{\lra} 
3S(-3) \oplus 3S(-4)  
\overset{\displaystyle d_0}{\lra} 
I(W) \lra 0 
\] 
with $d_1$ and $d_2$ defined by the matrices$\, :$ 
\[
\begin{pmatrix} 
-x_3 & 0 & 0 & 0 & -x_0\ell_0 & 0 & 0\\
x_2 & -x_3 & 0 & 0 & 0 & -x_0\ell_0 & 0\\
0 & x_2 & 0 & 0 & 0 & 0 & -x_0\ell_0\\
0 & 0 & -x_3 & 0 & x_1 & 0 & 0\\
0 & 0 & x_2 & -x_3 & 0 & x_1 & 0\\
0 & 0 & 0 & x_2 & 0 & 0 & x_1
\end{pmatrix}\, ,\  
\begin{pmatrix}
x_0\ell_0 & 0\\
0 & x_0\ell_0\\
-x_1 & 0\\
0 & -x_1\\
-x_3 & 0\\
x_2 & -x_3\\
0 & x_2
\end{pmatrix}\, . 
\]
\end{lemma}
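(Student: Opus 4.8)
The plan is to reduce the computation of $I(W)$ for $W = L_1^{(1)} \cup L_1^\prim \cup L^\prim$ to an application of Lemma~\ref{L:zcupw}, in the same spirit as the proofs of Lemma~\ref{L:l1(1)cupl2cupl1prim} and Lemma~\ref{L:l1(1)cuplcupl1prim}. First I would observe that the two lines $L_1^\prim$ (of equations $x_0 = x_1 = 0$) and $L^\prim$ (of equations $\ell_0 = x_1 = 0$, with $\ell_0 = x_0 + cx_2$, $c \neq 0$) both lie in the plane $x_1 = 0$ and meet at the point $(0:0:0:1)$, so their union $L_1^\prim \cup L^\prim$ is a plane conic, namely the complete intersection of type $(1,2)$ defined by $x_1$ and $x_0\ell_0 = x_0(x_0 + cx_2)$. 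In particular $L_1^\prim \cup L^\prim$ is arithmetically CM of pure codimension $2$, with homogeneous ideal $(x_1,\, x_0\ell_0)$ and the Koszul resolution
\[
0 \lra S(-3) \xra{\begin{pmatrix} -x_0\ell_0\\ x_1 \end{pmatrix}} S(-1) \oplus S(-2) \xra{\displaystyle (x_1\, ,\, x_0\ell_0)} I(L_1^\prim \cup L^\prim) \lra 0\, .
\]

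Next I would check that $L_1^{(1)}$ is disjoint from $L_1^\prim \cup L^\prim$. Since $L_1^{(1)}$ is supported on $L_1 = \{x_2 = x_3 = 0\}$ and the conic $L_1^\prim \cup L^\prim$ is supported on $\{x_1 = 0\} \cap \{x_0\ell_0 = 0\}$, whose points have $x_2 = x_3$ free but $x_0 = x_1 = 0$ or $\ell_0 = x_1 = 0$, the two supports do not intersect (the only candidate common point would require $x_2 = x_3 = 0$ together with $x_0 = x_1 = 0$, which is impossible). With $Z := L_1^{(1)}$ arithmetically CM of pure codimension $2$ (its ideal $\sci_{L_1}^2$ has the standard Koszul-type resolution $0 \to \sco_\p(-4) \to 4\sco_\p(-3) \to 3\sco_\p(-2) \to \sci_{L_1^{(1)}} \to 0$) and $W^\prime := L_1^\prim \cup L^\prim$ also arithmetically CM of pure codimension $2$, Lemma~\ref{L:zcupw}(b) applies and gives $I(W) = I(L_1^{(1)}) \cdot I(L_1^\prim \cup L^\prim)$, while Lemma~\ref{L:zcupw}(a) gives that the tensor product of the two minimal resolutions is a minimal graded free resolution of $I(W)$.

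From here I would simply read off the generators and the resolution. The product of the generators $(x_2^2,\, x_2x_3,\, x_3^2)$ of $I(L_1^{(1)})$ (the degree $2$ part; the single cubic generator of $\sci_{L_1}^2$ is actually redundant here) with the generators $(x_1,\, x_0\ell_0)$ yields exactly $x_1x_2^2$, $x_1x_2x_3$, $x_1x_3^2$, $x_0\ell_0x_2^2$, $x_0\ell_0x_2x_3$, $x_0\ell_0x_3^2$, matching the statement. Tensoring the resolution of $I(L_1^{(1)})$ (with terms $3S(-2),\, 4S(-3),\, S(-4)$) against the Koszul resolution of $I(L_1^\prim \cup L^\prim)$ (with terms $S(-1)\oplus S(-2),\, S(-3)$) and cancelling the redundant summand produced by the cubic Koszul generator of $\sci_{L_1}^2$ against the linear Koszul syzygy (via Remark~\ref{R:cancellation}) produces the asserted complex with terms $3S(-3)\oplus 3S(-4)$, $2S(-4)\oplus 5S(-5)$, $2S(-6)$; the displayed matrices $d_1$ and $d_2$ are then obtained directly from the tensor-product differentials. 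The only point requiring genuine care is the bookkeeping of which summands cancel and verifying that the resulting matrices are exactly those in the statement; since Lemma~\ref{L:zcupw} guarantees minimality once one starts from minimal resolutions of the two CM factors, the main obstacle is merely the clerical task of transcribing the tensor-product matrices correctly rather than any conceptual difficulty.
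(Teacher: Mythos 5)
Your overall strategy is exactly the paper's: the paper disposes of this lemma with the single remark that it follows immediately from Lemma~\ref{L:zcupw}, and your decomposition of $W$ as the disjoint union of $L_1^{(1)}$ and the plane conic $L_1^\prim\cup L^\prim$ (the complete intersection with ideal $(x_1,\,x_0\ell_0)$, hence arithmetically CM of pure codimension $2$) is the intended reduction. The disjointness check and the identification $I(L_1^\prim\cup L^\prim)=(x_1,\,x_0\ell_0)$ with its Koszul resolution are fine.

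There is, however, a concrete error that derails your final bookkeeping: the minimal graded free resolution of $\sci_{L_1^{(1)}}=\sci_{L_1}^2=(x_2^2,\,x_2x_3,\,x_3^2)$ is \emph{not} $0\to\sco_\p(-4)\to 4\sco_\p(-3)\to 3\sco_\p(-2)\to\sci_{L_1^{(1)}}\to 0$ (that complex even has alternating sum of ranks $3-4+1=0$ rather than $1$); it is the Hilbert--Burch resolution $0\to 2S(-3)\to 3S(-2)\to\sci_{L_1}^2\to 0$ of the $2\times 2$ minors of a $2\times 3$ matrix of linear forms, with no cubic generator and no second syzygies. Your subsequent account of ``cancelling the redundant summand produced by the cubic Koszul generator'' is therefore both unnecessary and internally inconsistent: Lemma~\ref{L:zcupw}(a) asserts that the tensor product of the two \emph{minimal} resolutions is already minimal, so no cancellation can occur; and with your incorrect input the tensor product would have middle term $4S(-4)\oplus 7S(-5)$ together with spurious summands $S(-5)\oplus 5S(-6)$ and $S(-7)$ in higher homological degrees, which no cancellation against adjacent terms can reduce to the stated $2S(-4)\oplus 5S(-5)$ and $2S(-6)$. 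With the correct length-one resolutions the tensor product has terms $3S(-3)\oplus 3S(-4)$, then $2S(-4)\oplus 2S(-5)\oplus 3S(-5)=2S(-4)\oplus 5S(-5)$, then $2S(-6)$ --- exactly the displayed complex --- and the products $(x_2^2,\,x_2x_3,\,x_3^2)\cdot(x_1,\,x_0\ell_0)$ are the six listed generators; once you replace the resolution of $\sci_{L_1}^2$, the proof closes with no further work.
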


\begin{prop}\label{P:genixcupl1primcuplprim} 
Let $X$ be the double structure on the line $L_1$ considered at the beginning 
of Subsection~\ref{SS:doublecupaline} and let $L^\prim \subset \piii$ be the 
line of equations $\ell_0 = x_1 = 0$, where $\ell_0 = x_0 + cx_2$, $c \neq 0$. 

\emph{(a)} If $l = -1$ then $I(X \cup L_1^\prim \cup L^\prim) = I(X)I(L_1^\prim 
\cup L^\prim)$. 

\emph{(b)} If $l = 0$ then $I(X \cup L_1^\prim \cup L^\prim) = S\ell_0F_2 + 
Sx_1F_2 + I(L_1^{(1)} \cup L_1^\prim \cup L^\prim)$. 

\emph{(c)} If $l \geq 1$ then, writting $F_2 = x_0^2F_2^\prim + x_1F_2^\secund$, 
one has 
\[
I(X \cup L_1^\prim \cup L^\prim) = S(F_2 + cx_0x_2F_2^\prim) + 
I(L_1^{(1)} \cup L_1^\prim \cup L^\prim)\, . 
\]  
\end{prop}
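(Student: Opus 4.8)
The plan is to run the same $\eta$-comparison used throughout this subsection, measuring $X \cup L_1^\prime \cup L^\prime$ against $L_1^{(1)} \cup L_1^\prim \cup L^\prim$. First I would record the geometry: $L_1^\prim$ and $L^\prim$ both lie in the plane $x_1 = 0$ and meet at $P_3$, while each is disjoint from $L_1$, hence from $X$ and from $L_1^{(1)}$. Since $\sci_X \supseteq \sci_{L_1}^2$ one has $X \subseteq L_1^{(1)}$, so I would apply Lemma~\ref{L:ycupt} with $Y = X$, $Z = L_1^{(1)}$ (so that $\sci_Y/\sci_Z \simeq \sco_{L_1}(-l-2)$ via the map $\eta$ from Subsection~\ref{SS:doublecupaline}) and $T = L_1^\prim \cup L^\prim$. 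Because $L_1^{(1)} \cap T = \emptyset$, part (b) of that lemma makes the resulting composite $\psi : \sci_{X \cup L_1^\prim \cup L^\prim} \ra \sco_{L_1}(-l-2)$ an \emph{epimorphism of sheaves}, with kernel $\sci_{L_1^{(1)} \cup L_1^\prim \cup L^\prim}$. As $I(L_1^{(1)} \cup L_1^\prim \cup L^\prim)$ is already computed in Lemma~\ref{L:l1(1)cupl1primcuplprim}, the whole problem reduces to identifying the image of $\tH^0_\ast(\psi)$ and lifting its generators.

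Case (a), $l = -1$, is the exception and I would treat it directly. Here $X$ is a complete intersection of type $(1,2)$ (Lemma~\ref{L:l=-1deg2}), hence arithmetically CM of pure codimension $2$, and $L_1^\prim \cup L^\prim$ is a plane conic with $I(L_1^\prim \cup L^\prim) = (x_1,\, x_0\ell_0)$, likewise ACM of pure codimension $2$. Since the two curves are disjoint, Lemma~\ref{L:zcupw}(b) gives $I(X \cup L_1^\prim \cup L^\prim) = I(X)I(L_1^\prim \cup L^\prim)$ at once.

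For (b) and (c), $l \geq 0$, I would work with the exact sequence above, the decisive input being the restriction of $F_2 = -bx_2 + ax_3$ to $L^\prim$, where $x_1 = 0$ and $x_0 = -cx_2$. Writing $a = \alpha x_0^{l+1} + x_1a^\secund$ and $b = \beta x_0^{l+1} + x_1 b^\secund$, the $x_1$-parts die on $L^\prim$, so $F_2 \vb L^\prim = (x_0^2F_2^\prim) \vb L^\prim = c^2x_2^2\,(F_2^\prim \vb L^\prim)$. In case (c), $l \geq 1$, this dictates the correction: put $g = F_2 + cx_0x_2F_2^\prim$. Because $x_2F_2^\prim \in I(L_1)^2 = \tH^0_\ast(\Ker \eta)$, one gets $g \in I(X)$ with $\eta(g) = 1$; moreover $(cx_0x_2F_2^\prim) \vb L^\prim = -c^2x_2^2\,(F_2^\prim \vb L^\prim)$ cancels $F_2 \vb L^\prim$, and $g$ visibly vanishes on $L_1^\prim$. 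Hence $g \in I(X \cup L_1^\prim \cup L^\prim)$ lifts the generator $1$ of $S(L_1)(-l-2)$, $\tH^0_\ast(\psi)$ is onto, and the sequence yields $I(X \cup L_1^\prim \cup L^\prim) = Sg + I(L_1^{(1)} \cup L_1^\prim \cup L^\prim)$. In case (b), $l = 0$, the term $x_0^2F_2^\prim$ is unavailable for degree reasons, so $F_2$ cannot be repaired to vanish on $L^\prim$ within degree $l+2 = 2$. Instead I would check that $\ell_0F_2$ and $x_1F_2$ lie in the ideal (as $\ell_0,\, x_1$ vanish on $L^\prim$ and $F_2$ vanishes on $L_1^\prim$) and that $\psi$ carries them to $x_0,\, x_1$, the generators of $S(L_1)_+(-l-2)$; to prove the image is no larger I would take a hypothetical member $\lambda F_2 + \mu_0x_2^2 + \mu_1x_2x_3 + \mu_2x_3^2$ of degree $2$, restrict it to $L^\prim$ and then to $L_1^\prim$, and use the coprimality of $a$ and $b$ to force $\lambda = 0$. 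Thus $\operatorname{Im}\tH^0_\ast(\psi) = S(L_1)_+(-l-2)$, giving the stated generators $\ell_0F_2,\, x_1F_2$. In every case the same exact sequence also supplies a graded free resolution, exactly as in the companion propositions.

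The main obstacle is pinning down the image of $\tH^0_\ast(\psi)$ in cases (b) and (c): choosing the decomposition $F_2 = x_0^2F_2^\prim + x_1F_2^\secund$ so that $cx_0x_2F_2^\prim$ has precisely the coefficient needed for the $L^\prim$-restriction to cancel (case c), and excluding a spurious degree-$2$ lift through the coprimality of $a$ and $b$ (case b). The remaining work — the reduction via Lemma~\ref{L:ycupt}, the product formula of Lemma~\ref{L:zcupw}, and reading off generators and resolutions — is the routine liaison and exact-sequence bookkeeping already used repeatedly in this subsection.
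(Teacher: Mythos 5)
Your proposal is correct and follows essentially the same route as the paper's proof: the exact sequence $0 \to \sci_{L_1^{(1)} \cup L_1^\prim \cup L^\prim} \to \sci_{X \cup L_1^\prim \cup L^\prim} \xra{\psi} \sco_{L_1}(-l-2) \to 0$, Lemma~\ref{L:zcupw} for $l = -1$, the vanishing of $\tH^0(\sci_{X \cup L_1^\prim \cup L^\prim}(2))$ plus the lifts $\ell_0F_2,\, x_1F_2$ of $x_0,\, x_1$ for $l = 0$, and the corrected generator $F_2 + cx_0x_2F_2^\prim$ for $l \geq 1$. Your verification of that generator by restricting to $L^\prim$ is just the restriction-level form of the paper's identity $F_2 + cx_0x_2F_2^\prim = x_0\ell_0F_2^\prim + x_1F_2^\secund$, so the two arguments coincide in substance.
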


\begin{proof}
Tensorizing by $\sci_{L_1^\prim \cup L^\prim}$ the exact sequence$\, :$ 
\[
0 \lra \sci_{L_1^{(1)}} \lra \sci_X \overset{\displaystyle \eta}{\lra} 
\sco_{L_1}(-l-2) \lra 0 
\]
one gets an exact sequence$\, :$ 
\[
0 \lra \sci_{L_1^{(1)} \cup L_1^\prim \cup L^\prim} \lra 
\sci_{X \cup L_1^\prim \cup L^\prim} 
\overset{\displaystyle \psi}{\lra} \sco_{L_1}(-l-2) \lra 0\, . 
\]
It follows from Lemma~\ref{L:l1(1)cupl1primcuplprim} that 
$\tH^1(\sci_{L_1^{(1)} \cup L_1^\prim \cup L^\prim}(i)) = 0$ for $i \geq 3$ hence 
$\tH^0(\psi(i))$ is surjective for $i \geq 3$. 

Recall, also, from Prop.~\ref{P:genixcupl1prim}, that if $l \geq 0$ then 
$I(X \cup L_1^\prim) = SF_2 + (x_0,\, x_1)(x_2,\, x_3)^2$.  

(a) In this case, by Lemma~\ref{L:l=-1deg2}, $X$ is the divisor $2L_1$ on 
the plane $H\subset \piii$ of equation $F_2 = 0$ and the result follows from 
Lemma~\ref{L:zcupw}. By the same lemma, one can get a graded free resolution 
of $I(X \cup L_1^\prim \cup L^\prim)$.  

(b) In this case $\tH^0(\sci_{X \cup L_1^\prim \cup L^\prim}(2)) = 0$. \emph{Indeed}, 
$F_2 = -bx_2 + ax_3$ (with $a,\, b \in k[x_0,x_1]_1$) does not vanish 
identically on the plane $\{x_1 = 0\}$ because $a$ and $b$ are coprime.  
On the other hand, $F_2$ vanishes on $L_1^\prim \subset \{x_1 = 0\}$ and in 
$P_0 \in \{x_1 = 0\}\setminus L^\prim$ hence it cannot vanish on $L^\prim$. 

One deduces, now, an exact sequence$\, :$ 
\[
0 \lra I(L_1^{(1)} \cup L_1^\prim \cup L^\prim) \lra 
I(X \cup L_1^\prim \cup L^\prim)  
\xra{\displaystyle \tH^0_\ast(\psi)} S(L_1)_+(-2) \lra 0\, . 
\]
It remains to notice that $\ell_0F_2$ and $x_1F_2$ belong 
to $\tH^0(\sci_{X \cup L_1^\prim \cup L^\prim}(3))$ and that they are mapped by 
$\psi$ to $x_0 \in \tH^0(\sco_{L_1}(1))$ and $x_1 \in \tH^0(\sco_{L_1}(1))$, 
respectively. 
Notice, also, that using the above exact sequence and the resolution of 
$S(L_1)_+$ that can be found in the discussion following 
Prop.~\ref{P:genizprim}, one can get a graded free resolution of 
$I(X \cup L_1^\prim \cup L^\prim)$. 

(c) In this case one has an exact sequence$\, :$
\[
0 \lra I(L_1^{(1)} \cup L_1^\prim \cup L^\prim) \lra 
I(X \cup L_1^\prim \cup L^\prim) 
\xra{\displaystyle \tH^0_\ast(\psi)} S(L_1)(-l-2) \lra 0\, . 
\] 
It remains to notice that, by Prop.~\ref{P:genixcupl1prim},  
$F_2 + cx_0x_2F_2^\prim = x_0\ell_0F_2^\prim + x_1F_2^\secund$ 
belongs to $\tH^0(\sci_{X \cup L_1^\prim \cup L^\prim}(l+2))$ and 
that it is mapped by $\psi$ to $1 \in \tH^0(\sco_{L_1})$. Notice, also, that 
using the above exact sequence one can get a graded free resolution of 
$I(X \cup L_1^\prim \cup L^\prim)$. 
\end{proof}

\begin{lemma}\label{L:l1cupl1primcupl1secund}
Let $L_1^\secund \subset \piii$ be the line of equations $\ell_0 = \ell_1 = 0$, 
where $\ell_0 = x_0 - x_2$ and $\ell_1 = x_1 - x_3$. $L_1$, $L_1^\prim$ and 
$L_1^\secund$ are mutually disjoint and are contained in the quadric 
$Q\subset \piii$ of equation $x_0x_3 - x_1x_2 = 0$. Then the homogeneous ideal 
$I(Y)$ of $Y = L_1\cup L_1^\prim \cup L_1^\secund$ is generated by 
$x_0x_3 - x_1x_2$, $x_0\ell_0x_2$, $x_0\ell_0x_3$, $x_1\ell_1x_2$ and 
$x_1\ell_1x_3$ and admits the following graded free resolution$\, :$ 
\[
0 \lra 2S(-5) \overset{\displaystyle d_2}{\lra} 6S(-4) 
\overset{\displaystyle d_1}{\lra} S(-2) \oplus 4S(-3) \lra I(Y) \lra 0
\]
where $d_1$ and $d_2$ are defined by the matrices$\, :$ 
\[
\begin{pmatrix} 
-x_0\ell_0 & x_0x_1 & -x_1\ell_1 & 0 & \ell_0x_3 + x_1x_2 & 0\\
-x_1 & 0 & 0 & -x_3 & 0 & 0\\
x_0 & -x_1 & 0 & x_2 & -x_3 & 0\\
0 & x_0 & -x_1 & 0 & x_2 & -x_3\\
0 & 0 & x_0 & 0 & 0 & x_2
\end{pmatrix}\, ,\  
\begin{pmatrix} 
-x_3 & 0\\
x_2 & -x_3\\
0 & x_2\\
x_1 & 0\\
-x_0 & x_1\\
0 & -x_0
\end{pmatrix}\, .
\]
\end{lemma}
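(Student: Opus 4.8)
The plan is to exploit the fact that the three mutually disjoint lines lie on the smooth quadric $Q = \{x_0x_3 - x_1x_2 = 0\}$ and to realise $Y = L_1 \cup L_1^\prim \cup L_1^\secund$ as a divisor in one of the two rulings of $Q$. Writing $Q \simeq \pj \times \pj$ via the Segre parametrisation $([s:t],[u:v]) \mapsto [su:sv:tu:tv]$, one checks directly that $L_1$, $L_1^\prim$, $L_1^\secund$ are the three fibres of the first projection lying over $[1:0]$, $[0:1]$, $[1:1]$; in particular they are pairwise disjoint and $Y$ is the effective divisor of type $(3,0)$ on $Q$. Hence $\sci_{Y,Q} \simeq \sco_Q(-3,0)$ (with the convention $\sco_Q(n) = \sco_Q(n,n)$), and from the exact sequence $0 \to \sci_Q \to \sci_Y \to \sci_{Y,Q} \to 0$ on $\piii$, together with $\tH^1_\ast(\sci_Q) = 0$ (the quadric is arithmetically CM, cf. Lemma~\ref{L:zcupw}), I would obtain a short exact sequence of graded $S$-modules
\[
0 \lra S(-2) \overset{q}{\lra} I(Y) \lra M \lra 0, \qquad M := \tH^0_\ast(\sco_Q(-3,0)),
\]
where $q = x_0x_3 - x_1x_2$ and $M$ is the twisted Rao-type module of $Y$.

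Next I would compute a minimal graded free resolution of $M$ over $S$. A Hilbert-function count gives $\dim_k M_n = (n-2)(n+1)$ for $n \geq 3$ and $M_n = 0$ for $n \leq 2$, so $M$ is minimally generated by four elements in degree $3$; lifting these along $I(Y) \twoheadrightarrow M$ produces the cubics $g_1 = x_0\ell_0x_2$, $g_2 = x_0\ell_0x_3$, $g_3 = x_1\ell_1x_2$, $g_4 = x_1\ell_1x_3$. Two of the linear syzygies are the Koszul relations on $(x_2,x_3)$ with coefficients $x_0\ell_0$ and $x_1\ell_1$, while the others encode the identities $-x_1g_1 + x_0g_2 = x_0\ell_0\,q$, $-x_1g_3 + x_0g_4 = x_1\ell_1\,q$ (and the two further combinations appearing in columns $2$ and $5$ of the stated $d_1$); reducing modulo $q$ these become the syzygies of $M$. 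Carrying this through, I expect the resolution
\[
0 \lra 2S(-5) \overset{\delta_2}{\lra} 6S(-4) \overset{\delta_1}{\lra} 4S(-3) \lra M \lra 0,
\]
in which $\delta_2$ is the linear matrix displayed as $d_2$ in the statement and $\delta_1$ is the $4 \times 6$ block formed by rows $2$–$5$ of the displayed $d_1$. The numerology is consistent, since $4-6+2 = 0$ (a torsion module) and the alternating Hilbert sum reproduces $(n-2)(n+1)$.

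Finally, because $S(-2)$ is already free, the horseshoe lemma applied to the displayed sequence $0 \to S(-2) \to I(Y) \to M \to 0$ splices the trivial resolution of $S(-2)$ onto the resolution of $M$ and yields
\[
0 \lra 2S(-5) \lra 6S(-4) \lra S(-2) \oplus 4S(-3) \lra I(Y) \lra 0.
\]
Here $F_1$ and $F_2$ come entirely from the resolution of $M$ (so the map $2S(-5) \to 6S(-4)$ is $\delta_2 = d_2$ unchanged), the summand $S(-2)$ in $F_0$ accounts for the generator $q$, and the extra first row of $d_1$ is precisely the correction term (the coefficient of $q$) produced when the syzygies of $M$ are lifted across $q$ in the horseshoe construction. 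Reading off the augmentation then gives the five asserted generators $q,g_1,g_2,g_3,g_4$.

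I expect the main obstacle to be the middle step: pinning down the minimal free resolution of the \emph{non}-arithmetically-CM module $M = \tH^0_\ast(\sco_Q(-3,0))$, and, above all, organising the generator- and syzygy-lifts so that the horseshoe differentials coincide on the nose with the stated matrices (the bookkeeping of the $q$-entries in the first row of $d_1$). As a safeguard, once the five generators and the two matrices are written down, exactness of the displayed complex can instead be confirmed directly from the Buchsbaum--Eisenbud acyclicity criterion --- verifying $d_0 d_1 = 0$, $d_1 d_2 = 0$, the rank equalities $\text{rank}\, d_2 = 2$, $\text{rank}\, d_1 = 4$, and the grade bounds $\text{depth}\, I_2(d_2) \geq 2$, $\text{depth}\, I_4(d_1) \geq 1$ --- after which saturatedness of the cokernel, hence its equality with $I(Y)$, follows by comparing Hilbert functions with that of three skew lines.
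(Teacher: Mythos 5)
Your argument is correct, and it shares its geometric kernel with the paper's proof --- both exploit that $Y$ is the divisor of type $(3,0)$ on the smooth quadric $Q$, giving the extension $0 \ra \sco_\p(-2) \xra{q} \sci_Y \ra \sco_Q(-3,0) \ra 0$ --- but the execution differs at both ends. For the generators, the paper instead tensors the Koszul resolution of $\sco_{L_1^\secund}$ by $\sci_{L_1\cup L_1^\prim}$, deduces $I(Y) = Sq + I(L_1\cup L_1^\prim)(\ell_0,\ell_1)$, and prunes that eight-element set down to five by the explicit identities built on $q=\ell_0x_3-\ell_1x_2$; you recover the same five generators from the Hilbert function of $M = \tH^0_\ast(\sco_Q(-3,0))$ and the surjection $I(Y)\twoheadrightarrow M$, which is cleaner but obliges you to note that the multiplication maps $S_1\otimes M_n \ra M_{n+1}$ are surjective for $n\geq 3$ (immediate from surjectivity of $\tH^0(\sco_Q(1,1))\otimes\tH^0(\sco_Q(a,b))\ra\tH^0(\sco_Q(a+1,b+1))$, but without it the count ``four elements in degree $3$'' only bounds the number of generators from below), and to check that the images of $x_0\ell_0x_2,\dots,x_1\ell_1x_3$ in $M_3$ are the basis $u^3,u^2v,uv^2,v^3$. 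For the resolution, the paper does not resolve $M$ at all: it observes that $I(L_1\cup X^\prim)$, with $X^\prim$ the divisor $2L_1^\prim$ on $Q$, sits in the same kind of extension by $\sco_Q(-3,0)$, imports its resolution from Prop.~\ref{P:genixcupl1prim}, and transcribes the matrices by analogy; you resolve $M$ directly ($4$ cubic generators, $6$ linear syzygies, $2$ quadratic second syzygies, consistent with $\text{depth}\, M = 2$ and hence projective dimension $2$) and splice via the horseshoe lemma, correctly noting that the map $2S(-5)\ra 6S(-4)$ acquires no correction term while the first row of $d_1$ records the $q$-coefficients of the lifted syzygies. Your route is more self-contained, and the closing Buchsbaum--Eisenbud verification makes rigorous exactly the step that the paper leaves as ``one can easily guess''; the paper's version buys economy by re-using its double-line computations.
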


\begin{proof}
Tensorizing by $\sci_{L_1\cup L_1^\prim}$ the exact sequence$\, :$ 
\[
0 \lra \sco_\p(-2) \xra{\begin{pmatrix} -\ell_1\\ \ell_0 \end{pmatrix}} 
2\sco_\p(-1) \xra{\displaystyle (\ell_0\, ,\, \ell_1)} \sco_\p \lra 
\sco_{L_1^\secund} \lra 0 
\]
one gets an exact sequence$\, :$ 
\[
0 \lra \sci_{L_1\cup L_1^\prim}(-2) \lra 2\sci_{L_1\cup L_1^\prim}(-1) \lra 
\sci_Y \lra 0\, .
\]
Since $\tH^1(\sci_{L_1\cup L_1^\prim}(i)) = 0$ for $i\geq 1$ it follows that 
$I(Y)$ coincides with $I(L_1\cup L_1^\prim)I(L_1^\secund)$ in degrees $\geq 3$. 
On the other hand, it is well known that $I(Y)_2 = k(x_0x_3 - x_1x_2)$. 
Consequenly$\, :$ 
\[
I(Y) = S(x_0x_3 - x_1x_2) + (x_0x_2,\, x_0x_3,\, x_1x_2,\, x_1x_3)(\ell_0,\, 
\ell_1)\, .
\]
Noticing that $q := x_0x_3 - x_1x_2 = \ell_0x_3 - \ell_1x_2$ one gets$\, :$ 
\begin{gather*} 
\ell_0x_1x_2 = x_0\ell_0x_3 - \ell_0q\, ,\  \ell_0x_1x_3 = x_1\ell_1x_2 + 
x_1q\, ,\\
x_0\ell_1x_2 = x_0\ell_0x_3 - x_0q\, ,\  x_0\ell_1x_3 = x_1\ell_1x_2 + 
\ell_1q\, . 
\end{gather*} 
It follows that $I(Y)$ is (minimally) generated by the elements from the 
statement. 

Let, now, $X^\prim$ be the divisor $2L_1^\prim$ on $Q$. Using the Segre 
isomorphism $\p^1\times \p^1 \Izo Q$ one gets exact sequences$\, :$ 
\begin{gather*} 
0 \lra \sco_\p(-2) \overset{\displaystyle q}{\lra} \sci_Y \lra 
\sco_Q(-3,0) \lra 0\, ,\\
0 \lra \sco_\p(-2) \overset{\displaystyle q}{\lra} \sci_{L_1\cup X^\prim} 
\lra \sco_Q(-3,0) \lra 0\, .
\end{gather*}
One deduces that $I(Y)$ and $I(L_1\cup X^\prim)$ admit graded free resolutions 
of the same numerical shape. Using Prop.~\ref{P:genixcupl1prim}, the last 
exact sequence from its proof, and Remark~\ref{R:cancellation}, it follows 
that $I(L_1\cup X^\prim)$ is generated by $x_0x_3 - x_1x_2$, $x_0^2x_2$, 
$x_0^2x_3$, $x_1^2x_2$, $x_1^2x_3$ and admits the following graded free 
resolution$\, :$ 
\[
0 \lra 2S(-5) \overset{\displaystyle d_2^{\, \prim}}{\lra} 6S(-4) 
\overset{\displaystyle d_1^{\, \prim}}{\lra} S(-2) \oplus 4S(-3) 
\lra I(L_1\cup X^\prim) \lra 0
\]
where $d_1^{\, \prim}$ and $d_2^{\, \prim}$ are defined by the matrices$\, :$ 
\[
\begin{pmatrix} 
-x_0^2 & x_0x_1 & -x_1^2 & 0 & x_0x_3 + x_1x_2 & 0\\
-x_1 & 0 & 0 & -x_3 & 0 & 0\\
x_0 & -x_1 & 0 & x_2 & -x_3 & 0\\
0 & x_0 & -x_1 & 0 & x_2 & -x_3\\
0 & 0 & x_0 & 0 & 0 & x_2
\end{pmatrix}\, ,\  
\begin{pmatrix} 
-x_3 & 0\\
x_2 & -x_3\\
0 & x_2\\
x_1 & 0\\
-x_0 & x_1\\
0 & -x_0
\end{pmatrix}\, .
\]
One can easily guess, now, a similar graded free resolution of $I(Y)$, 
which is the one from the statement. 
\end{proof}

\begin{lemma}\label{L:l1(1)cupl1primcupl1secund}
Using the notation from the statement of Lemma~\ref{L:l1cupl1primcupl1secund}, 
the homogeneous ideal $I(W) \subset S$ of $W = L_1^{(1)}\cup L_1^\prim \cup 
L_1^\secund$ is generated by$\, :$ 
\[
x_2(x_0x_3 - x_1x_2)\, ,\, x_3(x_0x_3 - x_1x_2)\, ,\, x_0\ell_0x_2^2\, ,\, 
x_0\ell_0x_2x_3\, ,\, x_0\ell_0x_3^2\, ,\, x_1\ell_1x_2x_3\, ,\, x_1\ell_1x_3^2  
\]
and admits the following graded free resolution$\, :$ 
\[
0 \lra 3S(-6) \overset{\displaystyle d_2}{\lra} S(-4)\oplus 8S(-5) 
\overset{\displaystyle d_1}{\lra} 2S(-3)\oplus 5S(-4) \lra I(W) \lra 0 
\]
where $d_1$ and $d_2$ are defined by the matrices$\, :$ 
\[
\begin{pmatrix} 
\text{--}x_3 & 0 & 0 & 0 & 0 & \text{--}x_0\ell_0 & 0 & 0 & 0\\
x_2 & 0 & 0 & \ell_0x_3 + x_1x_2 & 0 & 0 & \text{--}x_0\ell_0 & x_0x_1 & 
\text{--}x_1\ell_1\\
0 & \text{--}x_3 & 0 & 0 & 0 & \text{--}x_1 & 0 & 0 & 0\\
0 & x_2 & \text{--}x_3 & 0 & 0 & x_0 & \text{--}x_1 & 0 & 0\\
0 & 0 & x_2 & \text{--}x_3 & 0 & 0 & x_0 & \text{--}x_1 & 0\\
0 & 0 & 0 & x_2 & \text{--}x_3 & 0 & 0 & x_0 & \text{--}x_1\\
0 & 0 & 0 & 0 & x_2 & 0 & 0 & 0 & x_0 
\end{pmatrix}\, ,\, 
\begin{pmatrix} 
x_0\ell_0 & 0 & 0\\
x_1 & 0 & 0\\
-x_0 & x_1 & 0\\
0 & -x_0 & x_1\\
0 & 0 & -x_0\\
-x_3 & 0 & 0\\
x_2 & -x_3 & 0\\
0 & x_2 & -x_3\\
0 & 0 & x_2
\end{pmatrix}\, .
\]
\end{lemma}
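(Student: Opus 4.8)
The plan is to follow the same two-step strategy used throughout Appendix~\ref{A:multicomponent}: first identify a generating set for $I(W)$ via intersection of monomial/determinantal ideals, then produce the resolution by exhibiting $W$ (or a suitable intermediate scheme) as built from known arithmetically CM curves through a tensor-product resolution (Lemma~\ref{L:zcupw}) combined with the exact sequence coming from a hyperplane-type section. Concretely, I would begin by computing $I(W)$ directly. Since $L_1^{(1)}$ is defined by $\sci_{L_1}^2=(x_2,x_3)^2$ and $L_1^\prim\cup L_1^\secund$ lies on the quadric $Q:\{x_0x_3-x_1x_2=0\}$, the first move is to use $I(W)=\sci_{L_1}^2\cap I(L_1^\prim\cup L_1^\secund)$. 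Here $I(L_1^\prim)=(x_0,x_1)$ and $I(L_1^\secund)=(\ell_0,\ell_1)$, and $L_1^\prim\cup L_1^\secund$ is itself arithmetically CM lying on $Q$ (its ideal was described implicitly in Lemma~\ref{L:l1cupl1primcupl1secund}).

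The cleanest route to the generators is to tensor the Koszul-type resolution of $\sco_{L_1^{(1)}}$ (equivalently of $\sci_{L_1}^2$) against the structure of $L_1^\prim\cup L_1^\secund$, or else to mimic the argument of Lemma~\ref{L:l1(1)cupl1primcuplprim}: since $L_1^{(1)}\cup L_1^\prim$ is arithmetically CM (by Lemma~\ref{L:zcupw}\,(b), as $\sci_{L_1}^2$ and $(x_0,x_1)$ define disjoint-support arithmetically CM schemes), one can write $W=(L_1^{(1)}\cup L_1^\prim)\cup L_1^\secund$ and tensor the minimal resolution of $I(L_1^{(1)}\cup L_1^\prim)$ (from Lemma~\ref{L:l1(1)cupl1prim}) with the length-one Koszul resolution $0\to\sco_\p(-2)\xrightarrow{(-\ell_1,\ell_0)^{\mathrm t}}2\sco_\p(-1)\xrightarrow{(\ell_0,\ell_1)}\sco_\p\to\sco_{L_1^\secund}\to0$. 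Exactly as in the proof of Lemma~\ref{L:l1cupl1primcupl1secund}, the resulting sequence $0\to\sci_{L_1^{(1)}\cup L_1^\prim}(-2)\to 2\sci_{L_1^{(1)}\cup L_1^\prim}(-1)\to\sci_W\to0$ shows that $I(W)$ agrees with the product ideal $I(L_1^{(1)}\cup L_1^\prim)\cdot I(L_1^\secund)$ in high degrees, and the low-degree correction is governed by the single quadric $q=x_0x_3-x_1x_2=\ell_0x_3-\ell_1x_2$. Using the relations $q=\ell_0x_3-\ell_1x_2$ one rewrites the mixed products ($\ell_0$ times $L_1$-generators, etc.) to extract the minimal generators $x_2q,\,x_3q,\,x_0\ell_0x_2^2,\,x_0\ell_0x_2x_3,\,x_0\ell_0x_3^2,\,x_1\ell_1x_2x_3,\,x_1\ell_1x_3^2$ listed in the statement. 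This rewriting, identical in spirit to the computation $\ell_0x_1x_2=x_0\ell_0x_3-\ell_0q$ in Lemma~\ref{L:l1cupl1primcupl1secund}, is where the two generators of degree $3$ (the $x_iq$) and the five of degree $4$ come from.

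For the resolution itself, the numerical shape $0\to3S(-6)\to S(-4)\oplus8S(-5)\to2S(-3)\oplus5S(-4)\to I(W)\to0$ is forced by the tensor-product construction: the betti numbers add those of the $\sci_{L_1}^2$-resolution and those of $L_1^\prim\cup L_1^\secund$, after cancelling redundant summands via Remark~\ref{R:cancellation}. I would write down the two matrices $d_1,d_2$ by assembling the known linear Koszul blocks (the $x_2,x_3$ and $x_0,x_1$ and $\ell_0,\ell_1$ columns) and then determining the remaining nonlinear entries as the relations forced by the generators, precisely as is done for $d_1$ in Lemma~\ref{L:l1(1)cupl2cupl1prim} and Lemma~\ref{L:l1(1)cupl1primcuplprim}. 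The verification $d_1d_2=0$ reduces to the single quadratic identity $q=\ell_0x_3-\ell_1x_2$ together with $\ell_0=x_0-x_2$, $\ell_1=x_1-x_3$.

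The main obstacle is organizational rather than conceptual: because $W$ has a mixed degree structure (two cubic generators alongside five quartics, with a $3\times9$ middle matrix after cancellation), one must be careful that the cancellation steps of Remark~\ref{R:cancellation} are applied consistently so that the matrices $d_1,d_2$ exhibited are genuinely the minimal ones and that the columns of $d_1$ really are a complete set of relations among the seven chosen generators. The delicate point is tracking how the quadric $q$ couples the $L_1^{(1)}$-part to the $L_1^\secund$-part, i.e.\ correctly expressing the entry $\ell_0x_3+x_1x_2$ appearing in $d_1$ (which equals $q+2x_1x_2$ under our conventions) and checking that the resulting complex is exact — for which invoking the acyclicity of the tensor product in Lemma~\ref{L:zcupw}, applied to the arithmetically CM scheme $L_1^{(1)}\cup L_1^\prim$, is the cleanest justification.
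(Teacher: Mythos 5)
Your decomposition $W=(L_1^{(1)}\cup L_1^\prim)\cup L_1^\secund$ rests on the claim that $L_1^{(1)}\cup L_1^\prim$ is arithmetically CM, and that claim is false: Lemma~\ref{L:l1(1)cupl1prim} gives its ideal a free resolution of length two, $0\to 2S(-5)\to 7S(-4)\to 6S(-3)\to I(L_1^{(1)}\cup L_1^\prim)\to 0$, so $S/I(L_1^{(1)}\cup L_1^\prim)$ has depth $1$ — as it must, since the union of two disjoint curves has $\text{h}^0(\sco)=2$ and hence $\tH^1(\sci)\neq 0$. Lemma~\ref{L:zcupw} asserts only that the \emph{ideal} of such a union is the product ideal, not that the union is ACM, so part (b) of that lemma cannot be applied again with $L_1^{(1)}\cup L_1^\prim$ in the role of $Z$; and indeed its conclusion fails here: $I(L_1^{(1)}\cup L_1^\prim)\cdot I(L_1^\secund)$ is generated in degree $4$, whereas $I(W)$ contains the two cubics $x_2(x_0x_3-x_1x_2)$ and $x_3(x_0x_3-x_1x_2)$. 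Your fallback sequence $0\to\sci_{L_1^{(1)}\cup L_1^\prim}(-2)\to 2\sci_{L_1^{(1)}\cup L_1^\prim}(-1)\to\sci_W\to 0$ does not repair this: on global sections in degree $d$ the cokernel of $2I(L_1^{(1)}\cup L_1^\prim)_{d-1}\to I(W)_d$ maps into $\tH^1(\sci_{L_1^{(1)}\cup L_1^\prim}(d-2))$, which in the critical degree $d=3$ equals $2\tH^3(\sco_\piii(-4))\cong k^2$ and is \emph{not} zero. So the entire degree-$3$ part of $I(W)$ — exactly the part that makes the lemma nontrivial — escapes your construction and would have to be produced, and shown to be complete, by a separate argument you have not given. (A symptom of the same looseness: $\ell_0x_3+x_1x_2=x_0x_3+x_1x_2-x_2x_3$, which is not $q+2x_1x_2$; and the Betti numbers of your tensor product, $12S(-4)\leftarrow 20S(-5)\leftarrow\cdots$, do not match the stated shape.)

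The paper splits off $L_1$ instead of $L_1^\secund$: it tensorizes the presentation $0\to\sco_\piii(-2)\to 2\sci_{L_1}(-1)\to\sco_\piii\to\sco_{L_1^{(1)}}\to 0$ with $\sci_{L_1^\prim\cup L_1^\secund}$, obtaining $0\to\sci_{L_1^\prim\cup L_1^\secund}(-2)\to 2\sci_{L_1\cup L_1^\prim\cup L_1^\secund}(-1)\to\sci_W\to 0$. There the obstruction module is $\tH^1(\sci_{L_1^\prim\cup L_1^\secund}(d-2))$, which vanishes for $d\geq 3$; combined with $\tH^0(\sci_W(2))=0$ this yields $I(W)=I(L_1)\cdot I(L_1\cup L_1^\prim\cup L_1^\secund)$ exactly, and the quadric $q$ (hence the two cubic generators) enters through the resolution of $I(L_1\cup L_1^\prim\cup L_1^\secund)$ already computed in Lemma~\ref{L:l1cupl1primcupl1secund}; the syzygy $x_1\ell_1x_2^2=x_0\ell_0x_3^2-x_1\cdot x_2q-\ell_0\cdot x_3q$ then trims the generating set to the seven listed elements. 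The stated resolution is the mapping cone of the induced morphism between the resolutions of $I(L_1^\prim\cup L_1^\secund)(-2)$ and of $2I(L_1\cup L_1^\prim\cup L_1^\secund)(-1)$, minimized via Remark~\ref{R:cancellation}, and this count does produce $2S(-3)\oplus 5S(-4)\leftarrow S(-4)\oplus 8S(-5)\leftarrow 3S(-6)$. If you wish to keep your decomposition you must first prove directly that $I(W)_3$ is spanned by $x_2q$ and $x_3q$ and adjoin these generators by hand, at which point you will essentially be redoing the paper's computation.
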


\begin{proof} 
Tensorizing by $\sci_{L_1^\prim \cup L_1^\secund}$ the exact sequence$\, :$ 
\[
0 \lra \sco_\p(-2) \xra{\begin{pmatrix} -x_3\\ x_2 \end{pmatrix}} 
2\sci_{L_1}(-1) \xra{\displaystyle (x_2\, ,\, x_3)} \sco_\p \lra 
\sco_{L_1^{(1)}} \lra 0 
\]
one gets an exact sequence$\, :$ 
\[
0 \lra \sci_{L_1^\prim \cup L_1^\secund}(-2) 
\xra{\begin{pmatrix} -x_3\\ x_2 \end{pmatrix}}  
2\sci_{L_1 \cup L_1^\prim \cup L_1^\secund}(-1) 
\xra{\displaystyle (x_2\, ,\, x_3)} 
\sci_W \lra 0\, .
\]
Since $\tH^1(\sci_{L_1^\prim \cup L_1^\secund}(i)) = 0$ for $i\geq 1$ 
and since $\tH^0(\sci_W(2)) = 0$ it follows that$\, :$  
\[
I(W) = I(L_1)I(L_1\cup L_1^\prim \cup L_1^\secund)\, . 
\] 
Recall that $x_0x_3 - x_1x_2 = \ell_0x_3 - \ell_1x_2$. Using the relation$\, :$ 
\[
x_1\ell_1x_2^2 = x_0\ell_0x_3^2 - x_1\cdot x_2(\ell_0x_3 - \ell_1x_2) - 
\ell_0\cdot x_3(x_0x_3 - x_1x_2)  
\]
one derives that $I(W)$ is (minimally) generated by the elements from the 
statement.  

On the other hand, one deduces an exact sequence$\, :$ 
\[
0 \lra I(L_1^\prim \cup L_1^\secund)(-2) 
\xra{\begin{pmatrix} -x_3\\ x_2 \end{pmatrix}} 
2I(L_1\cup L_1^\prim \cup L_1^\secund)(-1) 
\xra{\displaystyle (x_2\, ,\, x_3)} 
I(W) \lra 0\, .  
\]
By Lemma~\ref{L:zcupw}, $I(L_1^\prim \cup L_1^\secund)$ is generated by 
$x_0\ell_0$, $x_0\ell_1$, $x_1\ell_0$, $x_1\ell_1$ and admits the following 
graded free resolution$\, :$ 
\[
0 \lra S(-4) \overset{\displaystyle d_2^{\, \prim}}{\lra} 4S(-3) 
\overset{\displaystyle d_1^{\, \prim}}{\lra} 4S(-2) \lra 
I(L_1^\prim \cup L_1^\secund) \lra 0 
\]
with $d_1^{\, \prim}$ and $d_2^{\, \prim}$ defined by the matrices$\, :$ 
\[
\begin{pmatrix}
-x_1 & 0 & -\ell_1 & 0\\
0 & -x_1 & \ell_0 & 0\\
x_0 & 0 & 0 & -\ell_1\\
0 & x_0 & 0 & \ell_0
\end{pmatrix}\, ,\  
\begin{pmatrix}
-\ell_1\\ \ell_0\\ x_1\\ -x_0
\end{pmatrix}\, .
\]
Using the resolution of $I(L_1\cup L_1^\prim \cup L_1^\secund)$ from 
Lemma~\ref{L:l1cupl1primcupl1secund}, the morphism$\, :$ 
\[
I(L_1^\prim \cup L_1^\secund)(-2) 
\xra{\begin{pmatrix} -x_3\\ x_2 \end{pmatrix}} 
2I(L_1\cup L_1^\prim \cup L_1^\secund)(-1) 
\]
induces, in an obvious manner, a morphism between the resolutions of its 
source and of its target. The mapping cone of this morphism between 
resolutions is a resolution of $I(W)$. This resolution is not minimal, but 
using Remark~\ref{R:cancellation} one can get from it the minimal free 
resolution from the statement.    
\end{proof}

\begin{lemma}\label{L:s(l1)geq2} 
The graded $S$-submodule $S(L_1)_{\geq 2} := 
\bigoplus_{i \geq 2}S(L_1)_i$ of $S(L_1) := 
{\fam0 H}^0_\ast(\sco_{L_1})$ admits the following minimal graded free 
resolution$\, :$ 
\[
0 \ra 2S(-5) \overset{\displaystyle d_3}{\lra} 7S(-4) 
\overset{\displaystyle d_2}{\lra} 8S(-3) 
\overset{\displaystyle d_1}{\lra} 3S(-2) 
\overset{\displaystyle d_0}{\lra} S(L_1)_{\geq 2} \ra 0 
\]
with $d_0$, $d_1$, $d_2$, $d_3$ defined by the matrices$\, :$ 
\begin{gather*}
(x_0^2\, ,\, x_0x_1\, ,\, x_1^2)\, ,\  
\begin{pmatrix} 
-x_1 & 0 & x_2 & x_3 & 0 & 0 & 0 & 0\\
x_0 & -x_1 & 0 & 0 & x_2 & x_3 & 0 & 0\\
0 & x_0 & 0 & 0 & 0 & 0 & x_2 & x_3 
\end{pmatrix}\, ,\\
\begin{pmatrix}
x_2 & x_3 & 0 & 0 & 0 & 0 & 0\\
0 & 0 & x_2 & x_3 & 0 & 0 & 0\\
x_1 & 0 & 0 & 0 & -x_3 & 0 & 0\\
0 & x_1 & 0 & 0 & x_2 & 0 & 0\\
-x_0 & 0 & x_1 & 0 & 0 & -x_3 & 0\\
0 & -x_0 & 0 & x_1 & 0 & x_2 & 0\\
0 & 0 & -x_0 & 0 & 0 & 0 & -x_3\\
0 & 0 & 0 & -x_0 & 0 & 0 & x_2
\end{pmatrix}\, ,\  
\begin{pmatrix}
-x_3 & 0\\
x_2 & 0\\
0 & -x_3\\
0 & x_2\\
-x_1 & 0\\
x_0 & -x_1\\
0 & x_0
\end{pmatrix}\, .
\end{gather*}
\end{lemma}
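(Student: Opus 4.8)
The key observation is that $S(L_1)_{\geq 2}$ is an \emph{extended} module, which lets me build its resolution as a tensor product. Since $L_1$ has equations $x_2 = x_3 = 0$, one has $S(L_1) = S/(x_2,x_3)$, which I regard as $k[x_0,x_1]$ made into a graded $S$-module through the projection $S \twoheadrightarrow k[x_0,x_1]$. Writing $S = k[x_0,x_1]\otimes_k k[x_2,x_3]$, this says $S(L_1) = k[x_0,x_1]\otimes_k \bigl(k[x_2,x_3]/(x_2,x_3)\bigr)$ as a module over the tensor product ring; taking degree-$\geq 2$ parts and using that the $k[x_2,x_3]$-factor acts through $k[x_2,x_3] \to k$, I get that $S(L_1)_{\geq 2}$ is the external tensor product, over $k$, of the $k[x_0,x_1]$-module $(x_0,x_1)^2$ and the $k[x_2,x_3]$-module $k = k[x_2,x_3]/(x_2,x_3)$. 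In particular its minimal generators are $x_0^2,\, x_0x_1,\, x_1^2$, which gives $d_0$.

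For such an external tensor product over $k[x_0,x_1]\otimes_k k[x_2,x_3]$ the minimal free resolution is the tensor product of the minimal free resolutions of the two factors (a K\"unneth-type statement; minimality is preserved because the two complexes involve disjoint sets of variables). The factor $(x_0,x_1)^2 \subset k[x_0,x_1]$ has the Hilbert--Burch resolution $0 \to 2k[x_0,x_1](-3) \to 3k[x_0,x_1](-2) \to (x_0,x_1)^2 \to 0$, with first-syzygy matrix $\left(\begin{smallmatrix} x_1 & 0\\ -x_0 & x_1\\ 0 & -x_0\end{smallmatrix}\right)$; the factor $k = k[x_2,x_3]/(x_2,x_3)$ is resolved by the Koszul complex $0 \to k[x_2,x_3](-2)\to 2k[x_2,x_3](-1)\to k[x_2,x_3]$ on $x_2,\, x_3$. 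So I would form the tensor product of $P_\bullet\colon 0\to 2S(-3)\to 3S(-2)$ with $K_\bullet\colon 0\to S(-2)\to 2S(-1)\to S$; counting $\bigoplus_{i+j=n}P_i\otimes_S K_j$ degree by degree yields exactly
\[
0 \lra 2S(-5) \lra 7S(-4) \lra 8S(-3) \lra 3S(-2) \lra S(L_1)_{\geq 2} \lra 0,
\]
with the announced ranks and twists, and it is automatically minimal. As a sanity check one can match this against the Hilbert series $(1-t)^{-2}-1-2t = t^2(3-2t)(1-t)^{-2}$ of $S(L_1)_{\geq 2}$: multiplying the numerator by $(1-t)^2$ gives $3t^2-8t^3+7t^4-2t^5$, and since the resolution is pure (the $i$-th syzygy module sits in the single internal degree $i+2$) these coefficients are forced to be the minimal graded Betti numbers.

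The differentials $d_1,\, d_2,\, d_3$ of the tensor product complex are the standard ones: each is assembled in blocks from the Koszul maps of $(x_2,x_3)$ and the Hilbert--Burch maps of $(x_0,x_1)^2$, with the usual sign convention of a total complex. The only remaining task --- and the one that is genuinely laborious rather than conceptual --- is to check that these block matrices coincide, after a fixed ordering of the tensor-factor bases, with the explicit $3\times 8$, $8\times 7$ and $7\times 2$ matrices displayed in the statement. I would do this by verifying directly that the displayed complex is a complex ($d_id_{i+1}=0$, routine products of matrices of linear forms) and is minimal (all entries lie in $\fm$, which is visible by inspection); combined with the Betti numbers already pinned down by the tensor-product construction and the Hilbert series, this identifies the displayed complex as the minimal free resolution. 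The main obstacle is thus purely bookkeeping: keeping the basis orderings and signs of the total complex consistent so that its differentials reproduce the stated matrices.
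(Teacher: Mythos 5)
Your proposal is correct and follows essentially the same route as the paper: the paper also resolves $S(L_1)_{\geq 2}$ over $S(L_1)=k[x_0,x_1]$ by the Hilbert--Burch presentation $0 \to 2S(L_1)(-3) \to 3S(L_1)(-2) \to S(L_1)_{\geq 2} \to 0$ and then takes the tensor product with the Koszul complex of $x_2,\, x_3$ to obtain the displayed minimal resolution over $S$. Your extra framing via external tensor products, the K\"unneth argument for minimality, and the Hilbert-series sanity check are all consistent with (and slightly more detailed than) the paper's one-line justification; the only discrepancy is an immaterial overall sign in your Hilbert--Burch matrix.
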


\begin{proof}
Using the exact sequence$\, :$ 
\[
0 \ra 2S(L_1)(-3) 
\xra{\begin{pmatrix} -x_1 & 0\\ x_0 & -x_1\\ 0 & x_0 \end{pmatrix}} 
3S(L_1)(-2) \xra{\displaystyle (x_0^2\, ,\, x_0x_1\, ,\, x_1^2)} 
S(L_1)_{\geq 2} \ra 0\, . 
\]
one sees that the tensor product of the complexes$\, :$
\[
2S(-3) 
\xra{\begin{pmatrix} -x_1 & 0\\ x_0 & -x_1\\ 0 & x_0 \end{pmatrix}} 
3S(-2)\, ,\  
S(-2) \xra{\begin{pmatrix} -x_3\\ x_2 \end{pmatrix}} 2S(-1) 
\xra{\displaystyle (x_2\, ,\, x_3)} S 
\]
is a minimal graded free resolution of $S(L_1)_{\geq 2}$ over $S$.  
\end{proof}

\begin{prop}\label{P:genixcupl1primcupl1secund} 
Let $X$ be the double structure on the line $L_1$ considered at the beginning 
of Subsection~\ref{SS:doublecupaline}.  
Let $L_1^\secund \subset \piii$ be the line of equations $\ell_0 = \ell_1 = 0$, 
where $\ell_0 = x_0 - x_2$ and $\ell_1 = x_1 - x_3$. $L_1$, $L_1^\prim$ and 
$L_1^\secund$ are mutually disjoint and are contained in the quadric 
$Q\subset \piii$ of equation $x_0x_3 - x_1x_2 = 0$.  

\emph{(a)} If $l = -1$ then$\, :$ 
\[
I(X \cup L_1^\prim \cup L_1^\secund) = Sx_0\ell_0F_2 + Sx_1\ell_0F_2 +  
Sx_1\ell_1F_2 + I(L_1^{(1)} \cup L_1^\prim \cup L_1^\secund)\, . 
\]   

\emph{(b)} If $l = 0$ and $\{F_2 = 0\} \neq Q$ then$\, :$ 
\[
I(X \cup L_1^\prim \cup L_1^\secund) = S\ell_0F_2 + S\ell_1F_2 +  
I(L_1^{(1)} \cup L_1^\prim \cup L_1^\secund)\, . 
\] 

\emph{(c)} If $l = 0$ and $\{F_2 = 0\} = Q$ or if $l \geq 1$ then, 
writting $F_2 = x_0F_2^\prim + x_1F_2^\secund$, one has$\, :$ 
\[
I(X \cup L_1^\prim \cup L_1^\secund) = S(F_2 - x_2F_2^\prim - x_3F_2^\secund)  
+ I(L_1^{(1)} \cup L_1^\prim \cup L_1^\secund)\, . 
\]
\end{prop}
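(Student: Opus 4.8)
The plan is to imitate the method used for Prop.~\ref{P:genixcupl1primcuplprim}: tensorize the fundamental sequence of the double line by the ideal sheaf of the two residual lines, reduce everything to a computation on $L_1$, and read off the missing generators from the image of the induced map. First I would tensorize the exact sequence recalled at the beginning of Subsection~\ref{SS:doublecupaline},
\[
0 \lra \sci_{L_1}^2 \lra \sci_X \xra{\eta} \sco_{L_1}(-l-2) \lra 0,\quad \eta(F_2)=1,
\]
by $\sci_{L_1^\prim \cup L_1^\secund}$. Since $L_1,\, L_1^\prim,\, L_1^\secund$ are mutually disjoint, the support of $\sco_{L_1}$ meets neither residual line, so the right-hand term is unchanged and one obtains
\[
0 \lra \sci_{L_1^{(1)} \cup L_1^\prim \cup L_1^\secund} \lra
\sci_{X \cup L_1^\prim \cup L_1^\secund} \xra{\psi} \sco_{L_1}(-l-2) \lra 0,
\]
with $\psi$ the restriction of $\eta$. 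Writing $W = L_1^{(1)} \cup L_1^\prim \cup L_1^\secund$, the resolution of $I(W)$ in Lemma~\ref{L:l1(1)cupl1primcupl1secund} shows that $\sci_W$ is $4$-regular, hence $\tH^1(\sci_W(i)) = 0$ for $i \geq 3$ and $\tH^0(\psi(i))$ is surjective for $i \geq 3$. Applying $\tH^0_\ast(-)$ then yields a short exact sequence of graded $S$-modules
\[
0 \lra I(W) \lra I(X \cup L_1^\prim \cup L_1^\secund) \xra{\tH^0_\ast(\psi)} M \lra 0,
\]
where $M \subseteq S(L_1)(-l-2)$ is the image; consequently $I(X \cup L_1^\prim \cup L_1^\secund)$ is generated by $I(W)$ together with lifts of a minimal generating set of $M$. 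Because $\tH^0_\ast(\psi)$ sends $gF_2$ to $\bar g := g\vert_{L_1}$, and $\ell_0 \equiv x_0$, $\ell_1 \equiv x_1$ on $L_1$, the whole problem reduces to determining $M$ in the degrees $<3$ where surjectivity is not yet guaranteed.

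The decisive object in that low-degree analysis is the quadric $Q = x_0x_3 - x_1x_2$, the unique quadric through the three disjoint lines. For $l = -1$ I would show $\tH^0(\sci_{X \cup L_1^\prim \cup L_1^\secund}(i)) = 0$ for $i \leq 2$: there is no linear form through the configuration, and $Q$ does not vanish on $X$ because, by Lemma~\ref{L:l=-1deg2}, $X = 2L_1$ lies on a plane $H \supset L_1$ whose intersection with the smooth quadric $Q$ is a reduced conic, never $2L_1$. Hence $M = (S(L_1)_{\geq 2})(-1)$, minimally generated in degree $3$ by $x_0^2,\, x_0x_1,\, x_1^2$, which lift to $x_0\ell_0 F_2,\, x_1\ell_0 F_2,\, x_1\ell_1 F_2$ (each lies in the ideal since the factor $x_0$ or $x_1$ kills $L_1^\prim$, the factor $\ell_0$ or $\ell_1$ kills $L_1^\secund$, and $F_2$ kills $X$); this is case (a). For $l = 0$ the only issue is whether the degree-$2$ slice $S(L_1)_0 = k$ is hit, i.e.\ whether a quadric through $X$ survives on $L_1^\prim \cup L_1^\secund$; since the only quadric through the three lines is $Q$, this happens exactly when $Q \in I(X)$, and a direct check (writing $F_2 = -bx_2 + ax_3$ with $a,b \in k[x_0,x_1]_1$) gives $Q \in I(X)$ if and only if $(a,b)$ is proportional to $(x_0,x_1)$, that is, $\{F_2 = 0\} = Q$. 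When $\{F_2 = 0\} \neq Q$ (case (b)) this forces $M_2 = 0$, so $M = (S(L_1)_+)(-2)$, generated by $x_0,x_1$ with lifts $\ell_0 F_2,\, \ell_1 F_2$.

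For case (c) — either $l \geq 1$, or $l = 0$ with $\{F_2 = 0\} = Q$ — the map $\psi$ is already surjective onto the cyclic module $S(L_1)(-l-2)$, so I only need one lift of $1$. Decomposing $a = x_0a_0 + x_1a_1$ and $b = x_0b_0 + x_1b_1$ gives $F_2 = x_0F_2^\prim + x_1F_2^\secund$ with $F_2^\prim = -b_0x_2 + a_0x_3$ and $F_2^\secund = -b_1x_2 + a_1x_3$, both in $(x_2,x_3)$. Therefore
\[
g := F_2 - x_2F_2^\prim - x_3F_2^\secund = \ell_0F_2^\prim + \ell_1F_2^\secund
\]
differs from $F_2$ by an element of $\sci_{L_1}^2$, so $\psi(g) = \eta(g) = 1$; moreover $g$ vanishes on $L_1^\secund$ automatically, and on $L_1^\prim$ precisely because the coefficients $a_i,b_i$ vanish there when $l \geq 1$ (while $g = Q$ outright when $\{F_2 = 0\} = Q$). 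Thus $g$ is the required extra generator, proving (c). The hard part of the argument is this low-degree bookkeeping for $M$: getting the surjectivity threshold right from the regularity of $\sci_W$ and, above all, pinning down cleanly the $l = 0$ dichotomy through the membership $Q \in I(X)$. Once $M$ is identified in each case, exhibiting the lifts and verifying they lie in $I(X \cup L_1^\prim \cup L_1^\secund)$ is routine, and the displayed short exact sequences furnish, exactly as in the neighbouring propositions (using Lemma~\ref{L:s(l1)geq2} for case (a) and the resolution of $S(L_1)_+$ recorded after Prop.~\ref{P:genizprim} for case (b)), an explicit graded free resolution of $I(X \cup L_1^\prim \cup L_1^\secund)$.
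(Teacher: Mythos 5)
Your proposal is correct and follows essentially the same route as the paper's proof: the same exact sequence obtained by tensorizing with $\sci_{L_1^\prim\cup L_1^\secund}$, the same surjectivity threshold from Lemma~\ref{L:l1(1)cupl1primcupl1secund}, the same degree-$2$ analysis via the quadric $Q$ (the paper phrases the $l=0$ dichotomy through $I(X\cup L_1^\prim)=SF_2+(x_0,x_1)(x_2,x_3)^2$ rather than your direct test of $Q\in I(X)$, but these are equivalent), and the same explicit lifts in each case.
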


\begin{proof}
Tensorizing by $\sci_{L_1^\prim \cup L_1^\secund}$ the exact sequence$\, :$ 
\[
0 \lra \sci_{L_1^{(1)}} \lra \sci_X \overset{\displaystyle \eta}{\lra} 
\sco_{L_1}(-l-2) \lra 0\, , 
\] 
with $\eta$ mapping $F_2$ to $1 \in \tH^0(\sco_{L_1})$ and $x_2^2,\, x_2x_3,\, 
x_3^2$ to 0, one gets an exact sequence$\, :$ 
\[
0 \lra \sci_{L_1^{(1)} \cup L_1^\prim \cup L_1^\secund} \lra 
\sci_{X \cup L_1^\prim \cup L_1^\secund} 
\overset{\displaystyle \psi}{\lra} \sco_{L_1}(-l-2) \lra 0\, .
\]
Since, by Lemma~\ref{L:l1(1)cupl1primcupl1secund}, 
$\tH^1(\sci_{L_1^{(1)} \cup L_1^\prim \cup L_1^\secund}(i)) = 0$ for $i \geq 3$ it 
follows that $\tH^0(\psi(i))$ is surjective for $i \geq 3$. On the other 
hand$\, :$ 
\[
\tH^0(\sci_{X \cup L_1^\prim \cup L_1^\secund}(2)) \subseteq 
\tH^0(\sci_{L_1 \cup L_1^\prim \cup L_1^\secund}(2)) = k(x_0x_3 - x_1x_2)\, . 
\]

(a) In this case, by Lemma~\ref{L:l=-1deg2}, $X$ is the divisor $2L_1$ in 
the plane $H$ of equation $F_2 = 0$. Since $H \cap Q$ is the union of $L_1$ 
and of a different line intersecting it, it follows that $X$ is not contained 
in $Q$ hence $\tH^0(\sci_{X \cup L_1^\prim \cup L_1^\secund}(2)) = 0$. One deduces 
an exact sequence$\, :$ 
\[
0 \lra I(L_1^{(1)} \cup L_1^\prim \cup L_1^\secund) \lra 
I(X \cup L_1^\prim \cup L_1^\secund) 
\xra{\displaystyle \tH^0_\ast(\psi)} S(L_1)_{\geq 2}(-1) \lra 0\, .
\] 
Since $x_0\ell_0F_2,\, x_1\ell_0F_2,\, x_1\ell_1F_2$ belong to 
$I(X \cup L_1^\prim \cup L_1^\secund)_3$ and are mapped by $\psi$ to 
the elements $x_0^2,\, x_0x_1,\, x_1^2$ of $S(L_1)_2$,  
one gets the assertion from the statement. 

Notice that by using the last exact sequence and Lemma~\ref{L:s(l1)geq2} one 
can get a non-minimal graded free resolution of 
$I(X \cup L_1^\prim \cup L_1^\secund)$. This resolution has, actually, excessive 
length 3 but using Remark~\ref{R:cancellation} one can get from it a minimal 
free resolution (of length 2). 

(b) Since, by Prop.~\ref{P:genixcupl1prim}, $I(X \cup L_1^\prim) = SF_2 + 
(x_0,\, x_1)(x_2,\, x_3)^2$, our hypothesis implies that one has  
$\tH^0(\sci_{X \cup L_1^\prim \cup L_1^\secund}(2)) = 0$. One deduces an exact 
sequence$\, :$ 
\[
0 \lra I(L_1^{(1)} \cup L_1^\prim \cup L_1^\secund) \lra 
I(X \cup L_1^\prim \cup L_1^\secund) 
\xra{\displaystyle \tH^0_\ast(\psi)} S(L_1)_+(-2) \lra 0\, .
\] 
It remains to notice that $\ell_0F_2$ and $\ell_1F_2$ belong to 
$\tH^0(\sci_{X \cup L_1^\prim \cup L_1^\secund}(3))$ and that $\psi(\ell_iF_2) = 
\eta(\ell_iF_2) = x_i \in \tH^0(\sco_{L_1}(1))$, $i = 0,\, 1$. 

Notice, also, that using the last exact sequence and the resolution of 
$S(L_1)_+$ from the discussion following Prop.~\ref{P:genizprim} one can get a 
non-minimal graded free resolution of $I(X \cup L_1^\prim \cup L_1^\secund)$ of 
length 3. Using Remark~\ref{R:cancellation} one can get from it a minimal 
free resolution (of length 2). 

(c) In this case one gets an exact sequence$\, :$  
\[
0 \lra I(L_1^{(1)} \cup L_1^\prim \cup L_1^\secund) \lra 
I(X \cup L_1^\prim \cup L_1^\secund) 
\xra{\displaystyle \tH^0_\ast(\psi)} S(L_1)(-l-2) \lra 0 
\]
and it suffices to notice that 
$F_2 - x_2F_2^\prim - x_3F_2^\secund = \ell_0F_2^\prim + \ell_1F_2^\secund$ 
belongs to $\tH^0(\sci_{X \cup L_1^\prim \cup L_1^\secund}(l+2))$ and that 
$\psi(F_2 - x_2F_2^\prim - x_3F_2^\secund)  = 
\eta(F_2 - x_2F_2^\prim - x_3F_2^\secund) = 1 \in \tH^0(\sco_{L_1})$. 
Notice, also, that using the last exact sequence one can get a graded free 
resolution of $I(X \cup L_1^\prim \cup L_1^\secund)$. 
\end{proof}

\subsection{A double line union a conic}\label{SS:doublecupconic} 
Let $X$ be the double structure on the line $L_1$ considered at the beginning 
of Subsection~\ref{SS:doublecupaline} and let $C$ be a (nonsingular) conic 
in $\piii$. One has to consider the following four possibilities$\, :$ 
\begin{enumerate}
\item[(i)] $L_1 \cap C = \emptyset$$\, ;$ 
\item[(ii)] $L_1 \cap C \neq \emptyset$ and $L_1$ is not contained in the 
plane of $C$$\, ;$ 
\item[(iii)] $L_1$ tangent to $C$$\, ;$ 
\item[(iv)] $L_1 \cap C$ consists of two points. 
\end{enumerate}

We recall the following elementary$\, :$ 

\begin{lemma}\label{L:eqofaconic} 
Let $Q_0,\, Q_1,\, Q_2$ be the coordinate points $(1:0:0)$, $(0:1:0)$, 
$(0:0:1)$ of $\pii$. Consider a $($nonsingular$)$ conic $C \subset \pii$, 
containing $Q_0$ and $Q_1$ and such that ${\fam0 T}_{Q_0}C = 
\overline{Q_0Q_2} = \{x_1 = 0\}$ and ${\fam0 T}_{Q_1}C = \overline{Q_1Q_2} 
= \{x_0 = 0\}$. Then the equation of $C$ is of the form $c_0x_0x_1 + 
c_1x_2^2 = 0$ with $c_0,\, c_1 \in k\setminus \{0\}$. 
\qed 
\end{lemma}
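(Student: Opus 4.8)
The plan is to write down the general equation of a plane conic and impose the four hypotheses one at a time, each of which annihilates a coefficient. First I would start from the general ternary quadratic form
$$F = a_{00}x_0^2 + a_{11}x_1^2 + a_{22}x_2^2 + a_{01}x_0x_1 + a_{02}x_0x_2 + a_{12}x_1x_2,$$
so that $C = \{F = 0\}$. The two incidence conditions are immediate: evaluating at $Q_0 = (1:0:0)$ and $Q_1 = (0:1:0)$ gives $F(Q_0) = a_{00}$ and $F(Q_1) = a_{11}$, so $Q_0, Q_1 \in C$ forces $a_{00} = a_{11} = 0$, leaving $F = a_{22}x_2^2 + a_{01}x_0x_1 + a_{02}x_0x_2 + a_{12}x_1x_2$.

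Next I would exploit the tangency conditions. At a smooth point $P$ of $C$ the tangent line is the polar line $\sum_i (\partial F/\partial x_i)(P)\,x_i = 0$. Computing the gradient of the reduced $F$ and evaluating at $Q_0$ gives the tangent line $a_{01}x_1 + a_{02}x_2 = 0$; requiring this to equal $\{x_1 = 0\} = \overline{Q_0Q_2}$ forces $a_{02} = 0$ together with $a_{01} \neq 0$. Symmetrically, the tangent line at $Q_1$ comes out to be $a_{01}x_0 + a_{12}x_2 = 0$, and requiring it to be $\{x_0 = 0\} = \overline{Q_1Q_2}$ forces $a_{12} = 0$. After these two reductions only $F = a_{01}x_0x_1 + a_{22}x_2^2$ survives, which is exactly the asserted shape with $c_0 = a_{01}$ and $c_1 = a_{22}$.

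Finally I would record why both coefficients are nonzero. One already reads off $c_0 = a_{01} \neq 0$ from either tangency condition (otherwise the polar line at $Q_0$ would be undefined, i.e.\ $Q_0$ would be a singular point). For $c_1$ I would invoke the nonsingularity of $C$: if $c_1 = 0$ then $F = c_0x_0x_1$ factors as a product of two distinct linear forms, so $C$ would be a pair of lines meeting in $Q_2$ and hence singular, contrary to hypothesis. Thus $c_1 \neq 0$ as well, completing the argument.

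There is no real obstacle here; the single point to keep in view is the justification of the tangent-line formula $\sum_i (\partial F/\partial x_i)(P)\,x_i = 0$. This is legitimate because we work over an algebraically closed field of characteristic $0$ and, $C$ being assumed nonsingular, $Q_0$ and $Q_1$ are smooth points, so the Euler relation guarantees that the polar line coincides with the classical projective tangent. Everything else is the routine coefficient bookkeeping sketched above.
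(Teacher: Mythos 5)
Your proof is correct: imposing the two incidence conditions kills $a_{00}$ and $a_{11}$, the polar-line computation at $Q_0$ and $Q_1$ kills $a_{02}$ and $a_{12}$ while forcing $a_{01}\neq 0$, and nonsingularity rules out $a_{22}=0$ since $c_0x_0x_1$ is a pair of lines. The paper states this lemma with no proof (it is flagged as elementary), and your coefficient-by-coefficient computation is exactly the standard argument the authors leave to the reader; the appeal to the Euler relation in characteristic $0$ to identify the polar line with the tangent is the only point worth making explicit, and you do.
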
 

In case (i) one can assume, up to a linear change of coordinates in $\piii$, 
that $C$ is contained in the plane $\{x_1 = 0\}$ spanned by $P_0,\, P_2,\, 
P_3$, that $P_2,\, P_3 \in C$ and that $\text{T}_{P_2}C = \overline{P_0P_2} = 
L_2$, $\text{T}_{P_3}C = \overline{P_0P_3} = L_3$. Lemma~\ref{L:eqofaconic} 
implies that $C$ is defined by equations of the form 
$x_1 = x_0^2 + cx_2x_3 = 0$ with $c \in k\setminus \{0\}$. 

\vskip2mm

In case (ii) one can assume, up to a linear change of coordinates in $\piii$, 
that $C$ is contained in the plane $\{x_1 = 0\}$ spanned by $P_0,\, P_2,\, 
P_3$, that $P_0,\, P_3 \in C$ and that $\text{T}_{P_0}C = \overline{P_0P_2} = 
L_2$, $\text{T}_{P_3}C = \overline{P_2P_3} = L_1^\prim$. Lemma~\ref{L:eqofaconic} 
implies that $C$ is defined by equations of the form 
$x_1 = x_0x_3 + cx_2^2 = 0$ with $c \in k\setminus \{0\}$. Notice that if 
one takes $c = 0$ then $C$ becomes $L_2 \cup L_1^\prim$.  

\vskip2mm

In case (iii) one can assume, up to a linear change of coordinates in $\piii$, 
that $C$ is contained in the plane $\{x_3 = 0\}$ spanned by $P_0,\, P_1,\, 
P_2$, that $P_0,\, P_2 \in C$ and that $\text{T}_{P_0}C = \overline{P_0P_1} = 
L_1$, $\text{T}_{P_2}C = \overline{P_1P_2} = L_3^\prim$. Lemma~\ref{L:eqofaconic} 
implies that $C$ is defined by equations of the form 
$x_3 = x_1^2 + cx_0x_2 = 0$ with $c \in k\setminus \{0\}$. 

\vskip2mm

In case (iv) one can assume, up to a linear change of coordinates in $\piii$, 
that $C$ is contained in the plane $\{x_3 = 0\}$ spanned by $P_0,\, P_1,\, 
P_2$, that $P_0,\, P_1 \in C$ and that $\text{T}_{P_0}C = \overline{P_0P_2} = 
L_2$, $\text{T}_{P_1}C = \overline{P_1P_2} = L_3^\prim$. Lemma~\ref{L:eqofaconic} 
implies that $C$ is defined by equations of the form 
$x_3 = x_0x_1 + cx_2^2 = 0$ with $c \in k\setminus \{0\}$.   

\vskip2mm 
 
The following lemma follows immediately from Lemma~\ref{L:zcupw}. 

\begin{lemma}\label{L:l1(1)cupc1}
Let $C \subset \piii$ be the conic of equations $x_1 = x_0^2 + cx_2x_3 = 0$, 
$c \in k\setminus \{0\}$. Put $q_1 := x_0^2 + cx_2x_3$. Then  
$I(L_1^{(1)} \cup C)$ is generated by  
$x_1x_2^2$, $x_1x_2x_3$, $x_1x_3^2$, $x_2^2q_1$, $x_2x_3q_1$, $x_3^2q_1$ 
and admits the following minimal graded free resolution$\, :$ 
\[
0 \lra 2S(-6) \overset{\displaystyle d_2}{\lra} 
2S(-4) \oplus 5S(-5)  
\overset{\displaystyle d_1}{\lra} 
3S(-3) \oplus 3S(-4)  
\overset{\displaystyle d_0}{\lra} 
I(L_1^{(1)} \cup C) \lra 0 
\] 
with $d_1$ and $d_2$ defined by the matrices$\, :$ 
\[
\begin{pmatrix} 
-x_3 & 0 & 0 & 0 & -q_1 & 0 & 0\\
x_2 & -x_3 & 0 & 0 & 0 & -q_1 & 0\\
0 & x_2 & 0 & 0 & 0 & 0 & -q_1\\
0 & 0 & -x_3 & 0 & x_1 & 0 & 0\\
0 & 0 & x_2 & -x_3 & 0 & x_1 & 0\\
0 & 0 & 0 & x_2 & 0 & 0 & x_1
\end{pmatrix}\, ,\  
\begin{pmatrix}
q_1 & 0\\
0 & q_1\\
-x_1 & 0\\
0 & -x_1\\
-x_3 & 0\\
x_2 & -x_3\\
0 & x_2
\end{pmatrix}\, . 
\]
\end{lemma}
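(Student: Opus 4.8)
The statement to prove is Lemma~\ref{L:l1(1)cupc1}, which describes the homogeneous ideal and a minimal graded free resolution of $L_1^{(1)} \cup C$, where $C$ is the conic $x_1 = x_0^2 + cx_2x_3 = 0$. The paper explicitly signals the route: \emph{``The following lemma follows immediately from Lemma~\ref{L:zcupw}.''} So my plan is to invoke the tensor-product machinery of Lemma~\ref{L:zcupw}(a), which asserts that if $Z$ is arithmetically CM of pure codimension $2$ and $Z \cap W = \emptyset$, then $A_\bullet \otimes_S B_\bullet$ is a minimal graded free resolution of $I(Z)I(W)$, where $A_\bullet$, $B_\bullet$ are minimal resolutions of $I(Z)$, $I(W)$.

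First I would set $Z = C$ (the conic) and $W = L_1^{(1)}$, the first infinitesimal neighbourhood of $L_1 = \{x_2 = x_3 = 0\}$. I need to verify the hypotheses: that $C$ is arithmetically CM of pure codimension $2$, and that $C \cap L_1^{(1)} = \emptyset$. Being a nonsingular conic, $C$ is a plane complete intersection of type $(1,2)$, hence arithmetically CM of pure codimension $2$, with minimal resolution
\[
0 \lra S(-3) \xra{\begin{pmatrix} -q_1\\ x_1 \end{pmatrix}} S(-1) \oplus S(-2)
\xra{\displaystyle (x_1\, ,\, q_1)} I(C) \lra 0\, ,
\]
where $q_1 := x_0^2 + cx_2x_3$. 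The disjointness $C \cap L_1^{(1)} = \emptyset$ follows because $L_1^{(1)}$ is supported on $L_1$, while $C$ meets $L_1$ only where $x_1 = x_2 = x_3 = 0$ and $x_0^2 = 0$, i.e.\ nowhere (the point $(1:0:0:0)$ has $q_1 = 1 \neq 0$). Then by Lemma~\ref{L:zcupw}(a), $I(L_1^{(1)} \cup C) = I(C)\, I(L_1^{(1)})$ and the tensor product of the two minimal resolutions resolves it minimally.

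Next I would recall the minimal resolution of $I(L_1^{(1)}) = \sci_{L_1}^2 = (x_2^2, x_2x_3, x_3^2)$, namely the Eagon--Northcott / Hilbert--Burch resolution
\[
0 \lra 2S(-3) \xra{\begin{pmatrix} -x_3 & 0\\ x_2 & -x_3\\ 0 & x_2 \end{pmatrix}}
3S(-2) \xra{\displaystyle (x_2^2,\, x_2x_3,\, x_3^2)} I(L_1^{(1)}) \lra 0\, ,
\]
and then form the tensor product of this two-term complex with the three-term complex for $I(C)$. Tensoring the generators $x_1, q_1$ of $I(C)$ against $x_2^2, x_2x_3, x_3^2$ of $I(L_1^{(1)})$ produces exactly the six generators $x_1x_2^2, x_1x_2x_3, x_1x_3^2, x_2^2q_1, x_2x_3q_1, x_3^2q_1$ listed in the statement; the terms of the total complex have ranks $3S(-3)\oplus 3S(-4)$ in degree $0$, $2S(-4)\oplus 5S(-5)$ in the middle, and $2S(-6)$ at the top, matching the stated numerical shape. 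The differentials $d_0, d_1, d_2$ are the induced maps, which I would read off by assembling the two given matrices into the standard total-complex differentials.

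The only genuine work is bookkeeping: confirming that the tensor-product differentials, once written in the explicit bases, coincide with the displayed matrices for $d_1$ and $d_2$ (up to reordering the summands to match the chosen generator ordering). This is the step most likely to contain sign or transposition subtleties, since the total complex of $A_\bullet \otimes B_\bullet$ mixes $d^A \otimes \mathrm{id}$ and $\pm\,\mathrm{id}\otimes d^B$, and the block structure $3S(-3)\oplus 3S(-4)$ must be ordered so that the $-x_3, x_2$ blocks from $d^{L_1^{(1)}}$ and the $-x_1, q_1$ blocks from $d^C$ land where the stated matrices place them. I expect no conceptual obstacle — Lemma~\ref{L:zcupw} does all the heavy lifting — so the proof reduces to citing that lemma, checking the two hypotheses (CM and disjointness), and noting that the explicit matrices are obtained by tensoring the two standard resolutions.
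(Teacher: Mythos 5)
Your proposal is correct and is exactly the paper's route: the paper gives no proof beyond the remark that the lemma ``follows immediately from Lemma~\ref{L:zcupw}'', and you have correctly supplied the details — checking that $C$ and $L_1^{(1)}$ are disjoint arithmetically CM subschemes of pure codimension $2$, writing down the Koszul resolution of $I(C)=(x_1,q_1)$ and the Hilbert–Burch resolution of $I(L_1^{(1)})=(x_2,x_3)^2$, and observing that their tensor product has the stated numerical shape and differentials. Nothing further is needed.
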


\begin{prop}\label{P:genixcupc1}
Let $X$ be the double structure on the line $L_1$ considered at the beginning 
of Subsection~\ref{SS:doublecupaline} and let $C$ be the conic from 
Lemma~\ref{L:l1(1)cupc1}. 

\emph{(a)} If $l = -1$ then $I(X \cup C) = I(X)I(C)$. 

\emph{(b)} If $l = 0$ then, writting $F_2 = x_0F_2^\prim + x_1F_2^\secund$, 
one has $I(X \cup C) = S(q_1F_2^\prim + x_0x_1F_2^\secund) + Sx_1F_2 + 
I(L_1^{(1)} \cup C)$. 

\emph{(c)} If $l \geq 1$ then, writting $F_2 = x_0^2F_2^\prim + x_1F_2^\secund$, 
one has $I(X \cup C) = S(q_1F_2^\prim + x_1F_2^\secund) + I(L_1^{(1)} \cup C)$.   
\end{prop}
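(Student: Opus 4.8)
The plan is to deduce all three formulas from the fundamental exact sequence of the double structure $X$. Recall from Remark~\ref{R:etaeta1}(ii) that one has
\[
0 \lra \sci_{L_1}^2 \lra \sci_X \overset{\displaystyle \eta}{\lra} \sco_{L_1}(-l-2) \lra 0,
\]
with $\eta(F_2) = 1$ and $\eta(x_2^2) = \eta(x_2x_3) = \eta(x_3^2) = 0$. Since $L_1 \cap C = \emptyset$, both $X$ and $L_1^{(1)}$ are disjoint from $C$, so tensorizing by $\sci_C$ (exactly as in the proof of Prop.~\ref{P:genixcupl1primcuplprim}, using comaximality to identify $\sci_X \otimes \sci_C \simeq \sci_{X\cup C}$ and $\sci_{L_1}^2 \otimes \sci_C \simeq \sci_{L_1^{(1)}\cup C}$, and the disjointness to kill $\mathcal{T}or$ and to identify $\sco_{L_1}(-l-2)\otimes\sci_C \simeq \sco_{L_1}(-l-2)$) yields an exact sequence
\[
0 \lra \sci_{L_1^{(1)} \cup C} \lra \sci_{X \cup C} \overset{\displaystyle \psi}{\lra} \sco_{L_1}(-l-2) \lra 0,
\]
with $\psi$ induced by $\eta$ and surjective as a sheaf morphism because $\Cok\psi$ would be supported on $L_1 \cap C = \emptyset$. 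Passing to $\tH^0_\ast$, everything reduces to identifying the image of $\tH^0_\ast(\psi) : I(X\cup C) \ra S(L_1)(-l-2)$ and exhibiting explicit elements realizing its generators, the kernel being $I(L_1^{(1)} \cup C)$.

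Case (a) is immediate: when $l=-1$ the forms $a,b$ are constants and, by Lemma~\ref{L:l=-1deg2}, $X$ is the divisor $2L_1$ on the plane $F_2=0$, hence a complete intersection of type $(1,2)$. As $C$ is likewise a complete intersection of type $(1,2)$ and $X\cap C = \emptyset$, Lemma~\ref{L:zcupw}(b) gives at once $I(X\cup C) = I(X)I(C)$ (and part (a) of that lemma even produces a resolution).

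For cases (b) and (c) the key device is the identity $q_1 - x_0^2 = cx_2x_3 \in \sci_{L_1}^2$, which lets $\eta$ absorb the difference between $q_1$ and $x_0^2$. In case (c), writing $F_2 = x_0^2 F_2^\prim + x_1 F_2^\secund$ (legitimate since $a,b \in (x_0^2,x_1)$ when $l\geq 1$), one computes
\[
q_1 F_2^\prim + x_1 F_2^\secund = F_2 + cx_2x_3 F_2^\prim,
\]
which lies in $I(X)$ (both summands do) and in $I(C) = (x_1,q_1)$, hence in $I(X\cup C)$, and which $\psi$ sends to $\eta(F_2) = 1$. Since $S(L_1)(-l-2)$ is generated over $S$ by $1$, the map $\tH^0_\ast(\psi)$ is surjective and the sequence gives $I(X\cup C) = S(q_1 F_2^\prim + x_1 F_2^\secund) + I(L_1^{(1)}\cup C)$. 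In case (b), writing $F_2 = x_0 F_2^\prim + x_1 F_2^\secund$, the elements $q_1 F_2^\prim + x_0x_1 F_2^\secund = x_0 F_2 + cx_2x_3 F_2^\prim$ and $x_1 F_2$ both lie in $I(X\cup C)$ and are sent by $\psi$ to $x_0$ and $x_1$, which generate $S(L_1)_+(-2)$ over $S$.

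The main obstacle is to confirm, in case (b), that the image of $\tH^0_\ast(\psi)$ is exactly $S(L_1)_+(-2)$ rather than all of $S(L_1)(-2)$; equivalently that $1 \notin \text{Im}$, i.e. that $\tH^0(\sci_{X\cup C}(2)) = 0$. I would prove this by the direct computation $I(X)_2 \cap I(C)_2 = 0$: an element $\alpha F_2 + \beta x_2^2 + \gamma x_2x_3 + \delta x_3^2$ lying in $(x_1,q_1)$ is, reduced modulo $x_1$, forced to be a scalar multiple of $q_1 = x_0^2 + cx_2x_3$, and matching coefficients (the coefficient of $x_0^2$ being $0$) first kills $\beta,\gamma,\delta$ and then $\alpha$, the final step using that the $x_0$-coefficients $(a_0,b_0)$ of $(a,b)$ are not both zero, which holds because $a$ and $b$ are coprime. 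Once $\tH^0(\sci_{X\cup C}(2))=0$ is in hand, $\text{Im}(\tH^0_\ast(\psi))$ contains $S(L_1)_+(-2)$ and meets the degree-$2$ part trivially, hence equals $S(L_1)_+(-2)$, which closes case (b). In each case the exact sequences produced along the way also furnish graded free resolutions of $I(X\cup C)$ via the resolution of $I(L_1^{(1)}\cup C)$ in Lemma~\ref{L:l1(1)cupc1}, as in the companion propositions.
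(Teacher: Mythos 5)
Your proposal is correct and follows essentially the same route as the paper: the same exact sequence obtained by tensorizing $0 \to \sci_{L_1^{(1)}} \to \sci_X \to \sco_{L_1}(-l-2) \to 0$ with $\sci_C$, the same appeal to Lemma~\ref{L:zcupw} in case (a), and the same explicit elements $q_1F_2^\prim + x_0x_1F_2^\secund = x_0F_2 + cx_2x_3F_2^\prim$, $x_1F_2$ and $q_1F_2^\prim + x_1F_2^\secund = F_2 + cx_2x_3F_2^\prim$ hitting $x_0$, $x_1$ and $1$ respectively. The only (immaterial) divergence is in case (b), where the paper shows $\tH^0(\sci_{X\cup C}(2)) = 0$ by the geometric observation that a quadric through $X$ cannot vanish identically on the plane $x_1 = 0$ yet vanishes at $P_0 \notin C$, while you verify $I(X)_2 \cap I(C)_2 = 0$ by reducing modulo $x_1$ and matching coefficients; both arguments are valid and rest on the coprimality of $a$ and $b$.
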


\begin{proof}
Tensorizing by $\sci_C$ the exact sequence$\, :$ 
\[
0 \lra \sci_{L_1^{(1)}} \lra \sci_X \overset{\displaystyle \eta}{\lra} 
\sco_{L_1}(-l-2) \lra 0 
\]
one gets an exact sequence$\, :$ 
\[
0 \lra \sci_{L_1^{(1)} \cup C} \lra \sci_{X \cup C} 
\overset{\displaystyle \psi}{\lra} \sco_{L_1}(-l-2) \lra 0\, . 
\]
It follows from Lemma~\ref{L:l1(1)cupc1} that 
$\tH^1(\sci_{L_1^{(1)} \cup C}(i)) = 0$ for $i \geq 3$ hence $\tH^0(\psi(i))$ is 
surjective for $i \geq 3$. 

(a) In this case, by Lemma~\ref{L:l=-1deg2}, $X$ is the divisor $2L_1$ on 
the plane $H\subset \piii$ of equation $F_2 = 0$ and the result follows from 
Lemma~\ref{L:zcupw}. By the same lemma, one can get a graded free resolution 
of $I(X \cup C)$.  

(b) In this case $\tH^0(\sci_{X \cup C}(2)) = 0$. \emph{Indeed}, assume that 
there exists a non-zero $f \in \tH^0(\sci_{X \cup C}(2))$. 
Since $\tH^0(\sci_X(1)) = 0$,  
$f$ cannot vanish identically on the plane $\{x_1 = 0\}$. On the other 
hand, $f$ vanishes in $P_0 \in \{x_1 = 0\}\setminus C$ hence it cannot vanish 
on $C$ which is a \emph{contradiction}. 

One deduces, now, an exact sequence$\, :$ 
\[
0 \lra I(L_1^{(1)} \cup C) \lra I(X \cup C) 
\xra{\displaystyle \tH^0_\ast(\psi)} S(L_1)_+(-2) \lra 0\, . 
\]
It remains to notice that $q_1F_2^\prim + x_0x_1F_2^\secund = x_0F_2 + 
cx_2x_3F_2^\prim$ and $x_1F_2$ belong 
to $\tH^0(\sci_{X \cup C}(3))$ and that they are mapped by $\psi$ to 
$x_0 \in \tH^0(\sco_{L_1}(1))$ and $x_1 \in \tH^0(\sco_{L_1}(1))$, respectively. 
Notice, also, that using the above exact sequence and the resolution of 
$S(L_1)_+$ that can be found in the discussion following 
Prop.~\ref{P:genizprim}, one can get a graded free resolution of $I(X \cup C)$. 

(c) In this case one has an exact sequence$\, :$
\[
0 \lra I(L_1^{(1)} \cup C) \lra I(X \cup C) 
\xra{\displaystyle \tH^0_\ast(\psi)} S(L_1)(-l-2) \lra 0\, . 
\] 
It remains to notice that $q_1F_2^\prim + x_1F_2^\secund = F_2 + cx_2x_3F_2^\prim$  
belongs to $\tH^0(\sci_{X \cup C}(l+2))$ and that it is mapped by $\psi$ to 
$1 \in \tH^0(\sco_{L_1})$. Notice, also, that using the above exact sequence 
one can get a graded free resolution of $I(X \cup C)$. 
\end{proof}

\begin{lemma}\label{L:l1(1)cupc2} 
Let $C\subset \piii$ be the conic of equations $x_1 = x_0x_3 + cx_2^2 = 0$, 
$c \in k \setminus \{0\}$. Put $q_2 := x_0x_3 + cx_2^2$. 
Then $I(L_1^{(1)} \cup C)$ is generated by 
$x_2q_2$, $x_3q_2$, $x_1x_2^2$, $x_1x_2x_3$, 
$x_1x_3^2$ and admits the following minimal graded free resolution$\, :$ 
\[
0 \lra S(-5) \overset{\displaystyle d_2}{\lra} 5S(-4) 
\overset{\displaystyle d_1}{\lra} 5S(-3) \overset{\displaystyle d_0}{\lra} 
I(L_1^{(1)} \cup C) \lra 0 
\]
where $d_1$ and $d_2$ are defined by the matrices$\, :$ 
\[
\begin{pmatrix} 
-x_3 & 0 & 0 & -x_1 & 0\\
x_2 & 0 & 0 & 0 & -x_1\\
0 & -x_3 & 0 & cx_2 & 0\\
0 & x_2 & -x_3 & x_0 & cx_2\\
0 & 0 & x_2 & 0 & x_0
\end{pmatrix} \, ,\  
\begin{pmatrix}
-x_1\\ cx_2\\ x_0\\ x_3\\ -x_2
\end{pmatrix}\, .
\] 
\end{lemma}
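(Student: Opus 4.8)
The plan is to realize $\sci_{L_1^{(1)}\cup C}$ as the cokernel of an explicit morphism assembled from the known resolutions of $\sci_{L_1}$ and $\sci_{L_1^{(1)}}$, and then to read off the stated resolution as a mapping cone. Writing $q_2=x_0x_3+cx_2^2$, the algebraic identity I would first establish is
\[
I(L_1^{(1)}\cup C)=x_1\,I(L_1^{(1)})+q_2\,I(L_1)\,,
\]
that is, $I(L_1^{(1)}\cup C)=x_1(x_2,x_3)^2+q_2(x_2,x_3)$. The inclusion $\supseteq$ is routine: $x_1(x_2,x_3)^2\subseteq(x_1)\subseteq I(C)$ and $x_1(x_2,x_3)^2\subseteq I(L_1^{(1)})$, while $q_2(x_2,x_3)\subseteq(q_2)\subseteq I(C)$, and since $x_3(x_2,x_3)\subseteq(x_2,x_3)^2$ and $x_2^2(x_2,x_3)\subseteq(x_2,x_3)^2$ one also gets $q_2(x_2,x_3)\subseteq(x_2,x_3)^2=I(L_1^{(1)})$. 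In particular $q_2\sci_{L_1}\subseteq\sci_{L_1^{(1)}}$, a fact I will reuse below.

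Granting the identity, I would set up the surjection $\varphi:\sci_{L_1^{(1)}}(-1)\oplus\sci_{L_1}(-2)\lra\sci_{L_1^{(1)}\cup C}$, $(u,v)\mapsto x_1u+q_2v$, which is well defined by the containments just noted. To compute $\Ker\varphi$, note that $x_1u+q_2v=0$ with $x_1,q_2$ coprime forces $u=q_2w$, $v=-x_1w$; the constraints $q_2w\in(x_2,x_3)^2$ and $x_1w\in(x_2,x_3)$ are both equivalent, via coprimality of $x_1$ (resp. $q_2$) with $(x_2,x_3)$ and the relation $x_3(x_2,x_3)\subseteq(x_2,x_3)^2$, to $w\in(x_2,x_3)$. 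Hence $w\mapsto(q_2w,-x_1w)$ gives $\Ker\varphi\izo\sci_{L_1}(-3)$, and I obtain the short exact sequence
\[
0\lra\sci_{L_1}(-3)\lra\sci_{L_1^{(1)}}(-1)\oplus\sci_{L_1}(-2)\overset{\displaystyle\varphi}{\lra}\sci_{L_1^{(1)}\cup C}\lra 0\,.
\]
Feeding in the standard resolution $0\to S(-2)\to 2S(-1)\to I(L_1)\to 0$ and the Hilbert--Burch resolution $0\to 2S(-3)\to 3S(-2)\to I(L_1^{(1)})\to 0$ (both recalled in Subsection~\ref{SS:p2}), the middle term has resolution $0\to 3S(-4)\to 5S(-3)\to(\cdot)\to 0$ and the sub has resolution $0\to S(-5)\to 2S(-4)\to\sci_{L_1}(-3)\to 0$; the mapping cone of the induced comparison map then yields exactly $0\to S(-5)\to 5S(-4)\to 5S(-3)\to I(L_1^{(1)}\cup C)\to 0$. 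Since the connecting entries $q_2,-x_1$ and all resolution differentials lie in the maximal ideal, this cone is automatically minimal, so its Betti numbers $(5,5,1)$ match the statement and no application of Remark~\ref{R:cancellation} is needed; the two displayed matrices are obtained by writing out $\varphi$ and the two lifted differentials explicitly.

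The main obstacle is the surjectivity half of the ideal identity, i.e. proving $I(L_1^{(1)}\cup C)\subseteq x_1 I(L_1^{(1)})+q_2 I(L_1)$, since $L_1^{(1)}$ and $C$ are \emph{not} disjoint (they meet at $P_0$), so the product formula of Lemma~\ref{L:zcupw} is unavailable here. I would handle this by taking $f\in I(L_1^{(1)})\cap I(C)$, writing $f=x_1a+q_2b$ from $f\in I(C)=(x_1,q_2)$, and analysing the condition $f\in(x_2,x_3)^2$ with respect to the $(x_2,x_3)$-adic filtration: reducing modulo $x_1$ shows the degree-one part $x_0x_3\,\overline b_0$ of $q_2b$ must vanish, forcing $b\in(x_1,x_2,x_3)$; substituting $b=x_1b'+b''$ with $b''\in(x_2,x_3)$ gives $q_2b''\in q_2 I(L_1)$ and reduces the problem to $x_1(a+q_2b')\in(x_2,x_3)^2$, whence $a+q_2b'\in I(L_1^{(1)})$ because $x_1$ is a nonzerodivisor modulo $(x_2,x_3)^2$. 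This is the only step requiring genuine computation; everything else is formal. As an alternative I could instead link $L_1^{(1)}\cup C$ (a degree-$5$ curve, contained in no quadric) to a degree-$4$ residual by a complete intersection of type $(3,3)$ and invoke Ferrand's result, but the mapping-cone route above produces the minimal resolution directly and stays closest to the method used for Lemma~\ref{L:l1cupl1primcupl1secund} and Lemma~\ref{L:l1(1)cupl1primcupl1secund}.
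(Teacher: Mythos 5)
Your proposal is correct, and it reaches the resolution by a genuinely different route from the paper's. For the generators, the two arguments are close in spirit: the paper writes $f = f_1 + g$ with $g \in x_1 I(L_1^{(1)})$ and $f_1 \in R = k[x_0,x_2,x_3]$, then computes $Rq_2 \cap (Rx_2^2 + Rx_2x_3 + Rx_3^2) = (Rx_2 + Rx_3)q_2$ — which is precisely your identity $I(L_1^{(1)}\cup C) = x_1 I(L_1^{(1)}) + q_2 I(L_1)$ — and your proof of the hard inclusion is sound (the two key points, namely that the $(x_2,x_3)$-degree-one term $x_0x_3\overline{b}_0$ forces $b \in (x_1,x_2,x_3)$, and that $x_1$ is a nonzerodivisor modulo the $(x_2,x_3)$-primary ideal $(x_2,x_3)^2$, both check out). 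Where you genuinely diverge is the resolution. The paper proceeds by double liaison: it links $L_1^{(1)}\cup C$ by the complete intersection $(x_3q_2,\, x_1x_2^2)$ of type $(3,3)$ to a degree-$4$ curve $W_1$, identifies $W_1$ with the curve $W^\prim$ appearing in the proof of Lemma~\ref{L:l1(1)cupl2cupl1prim} by a containment-plus-degree argument, links once more to $L_1 \cup L_1^\prim$, and applies Ferrand's theorem twice to obtain the numerical shape before writing down the differentials. Your mapping cone on $0 \to \sci_{L_1}(-3) \to \sci_{L_1^{(1)}}(-1)\oplus\sci_{L_1}(-2) \to \sci_{L_1^{(1)}\cup C} \to 0$ yields the same Betti numbers and, after lifting the comparison map $w \mapsto (q_2w,\, -x_1w)$ on generators, exactly the displayed matrices (the last two columns of $d_1$ are these lifts, the first three are the Koszul and Hilbert--Burch syzygies of the middle term). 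Your route is more self-contained — no residual curve to identify, no appeal to Ferrand — and minimality is indeed automatic, since every entry of every differential has positive degree; what the liaison route buys the authors is uniformity with the neighbouring lemmas, whose residual curves had already been computed. One small imprecision: the ``connecting entries'' of the lifted chain map are not $q_2$ and $-x_1$ themselves but the coefficients $cx_2,\, x_0,\, -x_1$ obtained by expressing $q_2x_2$ and $q_2x_3$ in the generators $x_2^2, x_2x_3, x_3^2$; this does not affect the minimality claim.
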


\begin{proof} 
A homogeneous polynomial $f \in S$ belongs to $I(L_1^{(1)} \cup C)$ if and only 
if it belongs to $I(L_1^{(1)}) = (x_2,\, x_3)^2$ and vanishes on $C$. Consider 
the subalgebra $R = k[x_0,x_2,x_3]$ of $S$. For $f \in I(L_1^{(1)})$, one can 
write$\, :$ 
\[
f = f_1 + g \  \text{with} \  f_1 \in Rx_2^2 + Rx_2x_3 + Rx_3^2 \  
\text{and} \  g \in x_1I(L_1^{(1)})\, .   
\]
$f_1$ can be identified with the restriction of $f$ to the plane $\{x_1 = 0\} 
\supset C$. $f$ vanishes on $C$ if and only if $f_1$ belongs to$\, :$ 
\[
Rq_2 \cap (Rx_2^2 + Rx_2x_3 + Rx_3^2) = (Rx_2 + Rx_3)q_2\, . 
\] 
One deduces that $I(L_1^{(1)} \cup C)$ is generated by the elements from the 
statement. 
 
Now, let $W_1$ be the curve directly linked to $L_1^{(1)} \cup C$ by the 
complete intersection defined by $x_3q_2$ and $x_1x_2^2$. We assert that$\, :$ 
\[
I(W_1) = (x_0x_3,\, x_0x_1x_2,\, x_1x_2^2,\, x_2^2x_3)\, .
\] 
\emph{Indeed}, let $J$ denote the ideal from the right hand side of the 
equality we want to prove. It is easy to check that$\, :$ 
\[
J \cdot I(L_1^{(1)} \cup C) \subseteq (x_3q_2,\, x_1x_2^2)  
\]
hence $J \subseteq I(W_1)$. On the other hand, as we noticed in the last part 
of the proof of Lemma~\ref{L:l1(1)cupl2cupl1prim}, $J = I(W^\prim)$ where 
$W^\prim$ is the curve directly linked to $L_1^{(1)} \cup L_2 \cup L_1^\prim$ by 
the complete intersection defined by $x_0x_3^2$ and $x_1x_2^2$. Since 
$\text{deg}\, W_1 = 4 = \text{deg}\, W^\prim$ and since $W_1 \subseteq W^\prim$ 
it follows that $W_1 = W^\prim$ hence $I(W_1) = J$.  

As we saw in the last part of the proof of Lemma~\ref{L:l1(1)cupl2cupl1prim},  
$W^\prim$ (hence $W_1$) is directly linked to $L_1 \cup L_1^\prim$ by the 
complete intersection defined by $x_0x_3$ and $x_1x_2^2$.  
Since $\sci_{L_1\cup L_1^\prim}$ admits a resolution of the form$\, :$ 
\[
0 \lra \sco_\p(-4) \lra 4\sco_\p(-3) \lra 4\sco_\p(-2) \lra 
\sci_{L_1\cup L_1^\prim} \lra 0
\]
one deduces, from Ferrand's result about liaison, that $\sci_{W^\prim}$ has a 
monad of the form$\, :$ 
\[
0 \lra 3\sco_\p(-3) \lra 5\sco_\p(-2) \lra \sco_\p(-1) \lra 0
\]
and, then, that $\sci_W$ admits a resolution of the form$\, :$
\[
0 \lra \sco_\p(-5) \lra 5\sco_\p(-4) \lra 5\sco_\p(-3) \lra \sci_W \lra 0\, .
\]
One can easily determine the differentials of this resolution. 
\end{proof}  

\begin{prop}\label{P:genixcupc2} 
Let $X$ be the double structure on the line $L_1$ considered at the beginning 
of Subsection~\ref{SS:doublecupaline} and let $C\subset \piii$ be the conic 
from Lemma~\ref{L:l1(1)cupc2}. 

\emph{(a)} If $x_1 \mid b$ and $l = -1$ then $I(X\cup C) = 
(x_0x_3 + cx_2^2,\, x_1x_3)$. 

\emph{(b)} If $x_1 \mid b$ and $l \geq 0$ then $I(X\cup C) = 
S\left(F_2 + c\frac{a(x_0,0)}{x_0}x_2^2\right) + I(L_1^{(1)}\cup C)$. 

\emph{(c)} If $x_1 \nmid b$ then $I(X\cup C) = Sx_1F_2 + I(L_1^{(1)}\cup C)$.  
\end{prop}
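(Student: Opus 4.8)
The plan is to compare $X\cup C$ with the thick curve $L_1^{(1)}\cup C$, whose ideal and (length $2$, hence arithmetically CM) resolution are recorded in Lemma~\ref{L:l1(1)cupc2}, exactly as in the proof of the neighbouring Prop.~\ref{P:genixcupc1}. First I would feed the flag $L_1\subseteq X\subseteq L_1^{(1)}$ and $T=C$ into Lemma~\ref{L:ycupt}: since $\sci_{L_1^{(1)}}=\sci_{L_1}^2\subseteq\sci_X$ and $\sci_{L_1}\sci_X\subseteq\sci_{L_1}^2$, part (a) yields an exact sequence $0\to\sci_{L_1^{(1)}\cup C}\to\sci_{X\cup C}\xrightarrow{\psi}\sco_{L_1}(-l-2)$, where $\psi$ is the inclusion followed by the map $\eta$ of Remark~\ref{R:etaeta1}(ii) (so $\psi(F_2)=1$ and $\psi$ kills $\sci_{L_1}^2$), and part (c) shows $\Cok\psi$ is an $\sco_{L_1\cap C}$-module. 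A direct check gives $\sci_{L_1\cap C}=(x_1,x_2,x_3)$, i.e. $L_1\cap C=\{P_0\}$ reduced, so $\Cok\psi$ is supported at $P_0$ and annihilated by $x_1$; hence $x_1\sco_{L_1}(-l-3)\subseteq\text{Im}\,\psi\subseteq\sco_{L_1}(-l-2)$. Since $L_1^{(1)}\cup C$ is arithmetically CM by Lemma~\ref{L:l1(1)cupc2}, one has $\tH^1_\ast(\sci_{L_1^{(1)}\cup C})=0$, so $\tH^0_\ast(\psi)$ will be surjective onto $\text{Im}\,\psi$ in every degree and the problem reduces to computing $\text{Im}\,\psi$ in each case.

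For case (b), with $x_1\mid b$ and $l\geq 0$, I would exhibit a preimage of the generator $1$. Writing $a(x_0,0)=\alpha x_0^{l+1}$, the element $g:=F_2+c\frac{a(x_0,0)}{x_0}x_2^2=F_2+c\,\alpha x_0^{l}x_2^2$ lies in $I(X)$ (as $x_2^2\in\sci_{L_1}^2$), and since $b(x_0,0)=0$ one computes $g|_{x_1=0}=\alpha x_0^{l}q_2\in(q_2)$, so $g\in I(C)$ as well; thus $g\in I(X\cup C)$ and $\eta(g)=\eta(F_2)=1$. Hence $\psi$ is surjective, the sequence of the first paragraph completes to $0\to I(L_1^{(1)}\cup C)\to I(X\cup C)\to S(L_1)(-l-2)\to 0$, and taking $g$ as the preimage of $1$ gives $I(X\cup C)=Sg+I(L_1^{(1)}\cup C)$, as stated.

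Case (c), $x_1\nmid b$, is the crux. Here $x_1F_2\in I(X\cup C)$ with $\psi(x_1F_2)=x_1$, so I must prove the reverse bound $\text{Im}\,\psi\subseteq x_1\sco_{L_1}(-l-3)$, i.e. that $\psi(f)$ is divisible by $x_1$ for every $f\in I(X\cup C)$. Writing $f=pF_2+f_2$ with $f_2\in\sci_{L_1}^2$ gives $\psi(f)=\bar p$, the image of $p$ in $k[x_0,x_1]$, so I need $\bar p(x_0,0)=0$, where $\bar p(x_0,0)=p(x_0,0,0,0)$. Restricting the condition $f\in(x_1,q_2)$ to $x_1=0$ and comparing the part of $(x_2,x_3)$-degree $1$: the contribution of $f_2$ has $(x_2,x_3)$-degree $\geq 2$, any multiple of $q_2$ contributes no pure $x_2$ monomial of $(x_2,x_3)$-degree $1$, while $p(x_0,0,0,0)\cdot F_2|_{x_1=0}$ supplies the term $-\,p(x_0,0,0,0)\,b(x_0,0)\,x_2$. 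Because $x_1\nmid b$ forces $b(x_0,0)\neq 0$, vanishing of the $x_2$-coefficient forces $p(x_0,0,0,0)=\bar p(x_0,0)=0$. This yields $\text{Im}\,\psi=x_1\sco_{L_1}(-l-3)$, and after dividing the target by $x_1$ the sequence becomes $0\to I(L_1^{(1)}\cup C)\to I(X\cup C)\to S(L_1)(-l-3)\to 0$ with $x_1F_2\mapsto 1$, whence $I(X\cup C)=Sx_1F_2+I(L_1^{(1)}\cup C)$.

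Finally, case (a) ($l=-1$, $x_1\mid b$) must be handled separately, since the expression $c\,a(x_0,0)/x_0$ of case (b) is not a polynomial when $a$ is constant; here $b=0$ and, after scaling $a$, $F_2=x_3$, so $I(X)=(x_3,x_2^2)$ and I would compute $(x_3,x_2^2)\cap(x_1,q_2)$ by hand. Reducing modulo $x_1$ and using that $x_3$ and $x_2^2$ are coprime in $k[x_0,x_2,x_3]$, the intersection is generated by $q_2$ together with $x_1\cdot(x_3,x_2^2)$, and the identity $x_1x_2^2=c^{-1}\bigl(x_1q_2-x_0\,(x_1x_3)\bigr)$ shows this equals $(q_2,x_1x_3)=(x_0x_3+cx_2^2,\,x_1x_3)$, as claimed. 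I expect the genuine obstacle to be case (c): one must check that $\text{Im}\,\psi$ is \emph{exactly} $x_1\sco_{L_1}(-l-3)$ rather than some larger submodule, and the $(x_2,x_3)$-degree-$1$ bookkeeping, together with the equivalence $x_1\nmid b\Leftrightarrow b(x_0,0)\neq 0$, is precisely what excludes the intermediate possibilities. Case (a) shows this danger is real, since there the analogous image is strictly larger and the extra generator $q_2$ appears.
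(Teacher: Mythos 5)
Your proposal is correct and follows essentially the same route as the paper: the same exact sequence $0\to\sci_{L_1^{(1)}\cup C}\to\sci_{X\cup C}\xrightarrow{\psi}\sco_{L_1}(-l-2)$ from Lemma~\ref{L:ycupt}, the same explicit generator $F_2+c\frac{a(x_0,0)}{x_0}x_2^2$ in case (b), the same "look at the coefficient of $x_2$ after restricting away from $x_1$" argument to pin down $\text{Im}\,\psi=x_1\sco_{L_1}(-l-3)$ in case (c) (the paper restricts to $L_2$ rather than to the plane $x_1=0$, an immaterial difference), and a direct ideal computation in case (a). Your identification of case (c) as the crux, and of the sandwich $x_1\sco_{L_1}(-l-3)\subseteq\text{Im}\,\psi\subseteq\sco_{L_1}(-l-2)$ as the mechanism, matches the paper exactly.
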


\begin{proof} 
Using Lemma~\ref{L:ycupt}, one deduces from the exact sequence$\, :$ 
\[
0 \lra \sci_{L_1^{(1)}} \lra \sci_X \overset{\displaystyle \eta}{\lra} 
\sco_{L_1}(-l-2) \lra 0 
\] 
an exact sequence$\, :$ 
\[
0 \lra \sci_{L_1^{(1)} \cup C} \lra \sci_{X \cup C} 
\overset{\displaystyle \psi}{\lra} \sco_{L_1}(-l-2) 
\]
where $\psi$ is the composite morphism$\, :$ 
\[
\sci_{X \cup C} \lra \sci_X  
\overset{\displaystyle \eta}{\lra} \sco_{L_1}(-l-2)\, .
\]
Moreover, $\Cok \psi$ is an $\sco_{L_1 \cap C} = \sco_{\{P_0\}}$-module hence 
$x_1\sco_{L_1}(-l-3) \subseteq \text{Im}\, \psi \subseteq \sco_{L_1}(-l-2)$.

(a) If $l = -1$ then the condition $x_1 \mid b$ means that $b = 0$, hence 
$F_2 = x_3$, hence $I(X) = (x_3,\, x_2^2)$. It follows that$\, :$ 
\begin{gather*}
I(X\cup C) = (x_3,\, x_2^2) \cap (x_1,\, x_0x_3 + cx_2^2) =\\   
(x_3,\, x_0x_3 + cx_2^2) \cap (x_1,\, x_0x_3 + cx_2^2)
= (x_0x_3 + cx_2^2,\, x_1x_3)\, .
\end{gather*}

(b) If $x_1 \mid b$ and $l \geq 0$ then $F_2 + c\frac{a(x_0,0)}{x_0}x_2^2 \in 
I(X\cup C)$ (because $a(x_0,0)x_3 + c\frac{a(x_0,0)}{x_0}x_2^2 = 
\frac{a(x_0,0)}{x_0}q_2$).  
Since $\psi \left(F_2 + c\frac{a(x_0,0)}{x_0}x_2^2\right) 
= \eta \left(F_2 + c\frac{a(x_0,0)}{x_0}x_2^2\right) = 1 \in 
\tH^0(\sco_{L_1})$, one deduces that $\psi$ is an epimorphism and that 
the sequence$\, :$ 
\[
0 \lra I(L_1^{(1)}\cup C) \lra I(X\cup C) 
\xra{\displaystyle \tH^0_\ast(\psi)} S(L_1)(-l-2) \lra 0
\]
is exact. Notice that using this exact sequence one can get a graded free 
resolution of $I(X \cup C)$.  

(c) We show, firstly, that$\, :$ 

\vskip2mm

\noindent
{\bf Claim.}\quad $\text{Im}\, \psi = x_1\sco_{L_1}(-l-3)$. 

\vskip2mm

\noindent 
\emph{Indeed}, assume that $\text{Im}\, \psi = \sco_{L_1}(-l-2)$. Then, 
for $m >> 0$, there exists $f \in \tH^0(\sci_{X \cup C}(l + 2 + m))$ such that 
$\psi(f) = x_0^m \in \tH^0(\sco_{L_1}(m))$. Since $\psi(f) = \eta(f)$ it 
follows that$\, :$ 
\[
f = x_0^mF_2 + f_0x_2^2 + f_1x_2x_3 + f_2x_3^2 
\] 
with $f_0,\, f_1,\, f_2 \in S_{l+m}$. On the other hand, since $f \in 
\tH^0(\sci_C(l + 2 +m))$ it follows that$\, :$ 
\[
f = g_1x_1 + g_2(x_0x_3 + cx_2^2) 
\]
with $g_1 \in S_{l+m+1}$ and $g_2 \in S_{l+m}$. Restricting the two different 
formulae of $f$ to the line $L_2$ of equations $x_1 = x_3 = 0$ one 
gets$\, :$ 
\[
-x_0^mb(x_0,0)x_2 + (f_0 \vb L_2)x_2^2 = c(g_2 \vb L_2)x_2^2\, .
\]
Since $x_1 \nmid b$ one has $b(x_0,0) \neq 0$ and this leads to a 
\emph{contradiction}. 

\vskip2mm

\noindent
It remains that $\psi$ factorizes as$\, :$ 
\[
\sci_{X \cup C} \overset{\displaystyle \psi^\prim}{\lra} \sco_{L_1}(-l-3) 
\overset{\displaystyle x_1}{\lra} \sco_{L_1}(-l-2) 
\]
with $\psi^\prim$ an epimorphism. Since $\psi^\prim(x_1F_2) = 1 \in 
\tH^0(\sco_{L_1})$ one deduces that the sequence$\, :$ 
\[
0 \lra I(L_1^{(1)} \cup C) \lra I(X \cup C) 
\xra{\displaystyle \tH^0_\ast(\psi^\prim)} S(L_1)(-l-3) \lra 0
\]
is exact from which the descrition of $I(X \cup C)$ from the statement 
follows. Notice, also, that using this exact sequence one can get a graded 
free resolution of $I(X \cup C)$.  
\end{proof}

\begin{prop}\label{P:reshoxcupc2} 
Under the hypothesis of Prop.~\ref{P:genixcupc2}$\, :$ 

\emph{(a)} If $x_1 \mid b$, i.e., if $b = x_1b_1$ then the graded $S$-module 
${\fam0 H}^0_\ast(\sco_{X\cup C})$ admits the following free resolution$\, :$ 
\[
0 \lra \begin{matrix} 2S(-3)\\ \oplus\\ S(l-3) \end{matrix} 
\overset{\displaystyle \delta_2}{\lra} 
\begin{matrix} 3S(-2)\\ \oplus\\ 2S(l-2) \end{matrix} 
\overset{\displaystyle \delta_1}{\lra} 
\begin{matrix} S\\ \oplus\\ S(l-1) \end{matrix} 
\overset{\displaystyle \delta_0}{\lra} 
{\fam0 H}^0_\ast(\sco_{X\cup C}) \lra 0 
\]
where $\delta_0 = (1\, ,\, x_1e_1)$ and $\delta_1$, $\delta_2$ are defined by 
the matrices$\, :$ 
\[
\begin{pmatrix} 
x_0x_3 + cx_2^2 & x_1x_2 & x_1x_3 & 0 & 0\\
-x_0b_1 & -a & -b & x_2 & x_3 
\end{pmatrix}\, ,\  
\begin{pmatrix} 
-x_1 & 0 & 0\\
cx_2 & -x_3 & 0\\
x_0 & x_2 & 0\\
ca & b & -x_3\\
0 & -a & x_2
\end{pmatrix}\, . 
\]

\emph{(b)} If $x_1 \nmid b$ then the graded $S$-module 
${\fam0 H}^0_\ast(\sco_{X\cup C})$ admits the following free resolution$\, :$ 
\[
0 \lra \begin{matrix} 2S(-3)\\ \oplus\\ S(l-2) \end{matrix} 
\overset{\displaystyle \delta_2}{\lra} 
\begin{matrix} 3S(-2)\\ \oplus\\ 2S(l-1) \end{matrix} 
\overset{\displaystyle \delta_1}{\lra} 
\begin{matrix} S\\ \oplus\\ S(l) \end{matrix} 
\overset{\displaystyle \delta_0}{\lra} 
{\fam0 H}^0_\ast(\sco_{X\cup C}) \lra 0 
\]
where $\delta_0 = (1\, ,\, e_1)$ and $\delta_1$, $\delta_2$ are defined by 
the matrices$\, :$ 
\[
\begin{pmatrix} 
x_0x_3 + cx_2^2 & x_1x_2 & x_1x_3 & 0 & 0\\
-x_0b & -x_1a & -x_1b & x_2 & x_3 
\end{pmatrix}\, ,\  
\begin{pmatrix} 
-x_1 & 0 & 0\\
cx_2 & -x_3 & 0\\
x_0 & x_2 & 0\\
cx_1a & x_1b & -x_3\\
0 & -x_1a & x_2
\end{pmatrix}\, . 
\]
\end{prop}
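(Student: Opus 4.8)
The plan is to follow the template of the proofs of Prop.~\ref{P:reshoxcupl} and Prop.~\ref{P:reshoxcupl2cupl1prim}: to realize $\tH^0_\ast(\sco_{X\cup C})$ as an extension of the structural algebra of the reduced curve $L_1\cup C$ by a rank-one $S(L_1)$-module carried by the generator $e_1$ of $\sco_X$, and then to assemble the two resolutions. First I would treat $L_1\cup C$. Since $L_1=\{x_2=x_3=0\}$ and $C=\{x_1=q_2=0\}$ with $q_2=x_0x_3+cx_2^2$, one has $I(L_1\cup C)=(q_2,\,x_1x_2,\,x_1x_3)$ (each of these three quadrics lies in $\sci_{L_1}\cap\sci_C$), and I would check, exactly as in Lemma~\ref{L:l1(1)cupc2}, that $L_1\cup C$ is arithmetically CM, for instance by exhibiting a direct linkage by a complete intersection of two of these quadrics and invoking Ferrand's result. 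This produces a resolution $0\to 2S(-3)\to 3S(-2)\to S\to\tH^0_\ast(\sco_{L_1\cup C})\to 0$, whose matrices are precisely the upper-left blocks of the $\delta_1,\delta_2$ in the statement, and in particular shows that $\tH^0_\ast(\sco_{L_1\cup C})$ is generated by $1$.

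Next I would apply Lemma~\ref{L:ycupt}(a) with $Y=L_1$, $Z=X$ and $T=C$ to obtain the exact sequence
\[
0 \lra \text{Im}\, \phi \times \{0\} \lra \sco_{X\cup C} \lra \sco_{L_1\cup C} \lra 0,
\]
where $\phi$ is the composite $\sci_{L_1\cup C}\hookrightarrow\sci_{L_1}\overset{\pi}{\lra}\sco_{L_1}(l)$ and $\pi$ sends $x_2,x_3$ to $a,b$ (Remark~\ref{R:etaeta1}). The heart of the argument is computing $\text{Im}\,\phi$, which by Lemma~\ref{L:ycupt}(c) (applicable since $\sci_{L_1}^2\subseteq\sci_X$) differs from $\sco_{L_1}(l)$ only at $P_0=L_1\cap C$. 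Using $\sco_\p$-linearity of $\pi$ and the fact that $x_2$ restricts to $0$ on $L_1$, I would compute the images of the three generators of $\sci_{L_1\cup C}$ to be $\phi(q_2)=x_0b$, $\phi(x_1x_2)=x_1a$, $\phi(x_1x_3)=x_1b$ in $\sco_{L_1}(l+1)$. The decisive term is $\phi(q_2)=x_0b$: if $x_1\mid b$ (say $b=x_1b_1$) then all three images are divisible by $x_1$, and since $a(P_0)\neq 0$ (coprimality of $a,b$ forbids $x_1\mid a$) one gets $\text{Im}\,\phi=x_1\sco_{L_1}(l-1)$; whereas if $x_1\nmid b$ then $x_0b$ is a unit at $P_0$, so $\text{Im}\,\phi=\sco_{L_1}(l)$. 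Correspondingly one obtains an exact sequence
\[
0 \lra S(L_1)(l-1) \lra \tH^0_\ast(\sco_{X\cup C}) \lra \tH^0_\ast(\sco_{L_1\cup C}) \lra 0
\]
in case (a), with the left-hand generator mapping to $x_1e_1$, and the same with $S(L_1)(l)$ and generator $e_1$ in case (b).

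Finally, combining the standard resolution of $S(L_1)(l-1)$, namely $0\to S(l-3)\to 2S(l-2)\to S(l-1)\to S(L_1)(l-1)\to 0$ (resp.~the analogue for $S(L_1)(l)$), with the resolution of $\tH^0_\ast(\sco_{L_1\cup C})$ found above, via a horseshoe/mapping-cone construction, yields a resolution of the stated numerical shape; the off-diagonal blocks of $\delta_1,\delta_2$ (the entries involving $a,b,b_1,cx_2,ca$) record the lift of the generator map $1\mapsto x_1e_1$ (resp.~$e_1$) into the quotient's resolution, and I would read them off and verify $\delta_1\delta_2=0$ using $\pi(x_2)=a$, $\pi(x_3)=b$ and $q_2\equiv x_0x_3\pmod{x_2}$. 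The main obstacle I anticipate is the image computation and its case split, in particular getting $\phi(q_2)=x_0b$ correct (the $cx_2^2$ contribution vanishing on $L_1$) and thereby isolating exactly why divisibility of $b$ by $x_1$ governs whether the cokernel at $P_0$ is a length-one skyscraper or zero; the subsequent assembly of the matrices is routine once these modules and their generators are pinned down.
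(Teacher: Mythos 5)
Your proposal follows the paper's own proof essentially verbatim: establish $I(L_1\cup C)=(x_0x_3+cx_2^2,\,x_1x_2,\,x_1x_3)$ and its length-one resolution via liaison (the paper links to $L_2^\prim$ by $q_2$ and $x_1x_2$), apply Lemma~\ref{L:ycupt} to get $0\to\text{Im}\,\phi\times\{0\}\to\sco_{X\cup C}\to\sco_{L_1\cup C}\to 0$, compute $\phi(q_2)=x_0b$, $\phi(x_1x_2)=x_1a$, $\phi(x_1x_3)=x_1b$, and split into the two cases $\text{Im}\,\phi=x_1\sco_{L_1}(l-1)$ versus $\sco_{L_1}(l)$ according to whether $x_1\mid b$, before assembling the resolutions by a horseshoe argument. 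The key computation and the case split are exactly those of the paper, so the proposal is correct and takes the same route.
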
 

\begin{proof} 
One can easily show, 
as at the beginning of the proof of Lemma~\ref{L:l1(1)cupc2},  
that $I(L_1\cup C) =  (x_0x_3 + cx_2^2,\, x_1x_2,\, x_1x_3)$. 
$L_1\cup C$ is directly linked by the 
complete intersection defined by $x_0x_3 + cx_2^2$ and $x_1x_2$ to the line 
whose homogeneous ideal is $(x_0,\, x_2)$, i.e., to the line $L_2^\prim$. 
Using Ferrand's result about liaison, 
one gets the following free resolution$\, :$ 
\[
0 \lra 2S(-3) \xra{\begin{pmatrix} -x_1 & 0\\ cx_2 & -x_3\\ x_0 & x_2 
\end{pmatrix}} 3S(-2) 
\xra{\displaystyle (x_0x_3 + cx_2^2\, ,\, x_1x_2\, ,\, x_1x_3)} 
I(L_1\cup C) \lra 0\, .
\]
In particular, $L_1 \cup C$ is arithmetically CM hence the graded $S$-module 
$\tH^0_\ast(\sco_{L_1 \cup C})$ is generated by $1 \in \tH^0(\sco_{L_1 \cup C})$. 

Now, by Lemma~\ref{L:ycupt}, one has an exact sequence$\, :$  
\[
0 \lra \text{Im}\, \phi \times \{0\} \lra \sco_{X\cup C} \lra \sco_{L_1\cup C} 
\lra 0 
\]
where $\phi$ is the composite morphism$\, :$ 
\[
\sci_{L_1 \cup C} \lra \sci_{L_1} \lra \sci_{L_1}/\sci_{L_1}^2 \simeq 
2\sco_{L_1}(-1) \xra{\displaystyle (a\, ,\, b)} \sco_{L_1}(l)\, . 
\] 
Moreover, $\Cok \phi$ is an $\sco_{L_1 \cap C} = \sco_{\{P_0\}}$-module hence 
$x_1\sco_{L_1}(l-1) \subseteq \text{Im}\, \phi \subseteq \sco_{L_1}(l)$. 
Notice that$\, :$ 
\[
\phi(x_0x_3 + cx_2^2) = x_0b\, ,\  \phi(x_1x_2) = x_1a\, ,\  
\phi(x_1x_3) = x_1b\, .
\]

(a) In this case $\text{Im}\, \phi = x_1\sco_{L_1}(l-1)$ and one has an exact 
sequence$\, :$ 
\[
0 \lra S(L_1)(l-1) \lra \tH^0_\ast(\sco_{X \cup C}) \lra 
\tH^0_\ast(\sco_{L_1 \cup C}) \lra 0
\] 
where the left morphism maps $1 \in S(L_1)$ to the element of 
$\tH^0(\sco_{X \cup C}(-l+1))$ whose image into $\tH^0(\sco_X(-l+1)) \oplus 
\tH^0(\sco_C(-l+1))$ is $(x_1e_1,\, 0)$. 

(b) In this case $\text{Im}\, \phi = \sco_{L_1}(l)$ and one has an exact 
sequence$\, :$ 
\[
0 \lra S(L_1)(l) \lra \tH^0_\ast(\sco_{X \cup C}) \lra 
\tH^0_\ast(\sco_{L_1 \cup C}) \lra 0
\] 
where the left morphism maps $1 \in S(L_1)$ to the element of 
$\tH^0(\sco_{X \cup C}(-l))$ whose image into $\tH^0(\sco_X(-l)) \oplus 
\tH^0(\sco_C(-l))$ is $(e_1,\, 0)$. 
\end{proof}

\begin{lemma}\label{L:l1(1)cupc3}
Let $C \subset \piii$ be the conic of equations $x_3 = x_1^2 + cx_0x_2 = 0$, 
$c \in k\setminus \{0\}$. Put $q_3 := x_1^2 + cx_0x_2$. Then  
$I(L_1^{(1)} \cup C) = (x_2x_3,\, x_3^2,\, x_2^2q_3)$  
and admits the following minimal graded free resolution$\, :$ 
\[
0 \lra \begin{matrix} S(-3)\\ \oplus\\ S(-5) \end{matrix} 
\xra{\begin{pmatrix} -x_3 & -x_2q_3\\ x_2 & 0\\ 0 & x_3 \end{pmatrix}} 
\begin{matrix} 2S(-2)\\ \oplus\\ S(-4) \end{matrix} 
\lra I(L_1^{(1)} \cup C) \lra 0\, .
\]
\end{lemma}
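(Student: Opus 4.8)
The plan is to compute $I(L_1^{(1)} \cup C) = I(L_1^{(1)}) \cap I(C)$ directly and then to recognize the displayed $3\times 2$ matrix as a Hilbert--Burch matrix for the answer. Here $I(L_1^{(1)}) = (x_2,x_3)^2 = (x_2^2,\, x_2x_3,\, x_3^2)$, and since $C$ is the plane conic $\{x_3 = q_3 = 0\}$ we have $I(C) = (x_3,\, q_3)$. Because $C$ lies in the plane $\{x_3 = 0\}$, I would mimic the restriction-to-the-plane argument used in the proof of Lemma~\ref{L:l1(1)cupc2}: a homogeneous $f$ lies in $I(L_1^{(1)} \cup C)$ exactly when $f \in (x_2,x_3)^2$ and $f$ vanishes on $C$.

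For the generators, I would first note that the restriction $f|_{x_3=0}$ of any $f \in (x_2,x_3)^2$ lies in $(x_2^2) \subset k[x_0,x_1,x_2]$, say $f|_{x_3=0} = x_2^2 h_0$; then $f = x_2^2 h_0 + x_3 w$ for some $w \in (x_2,x_3)$. Since the summand $x_3 w$ vanishes on $C$, the condition that $f$ vanish on $C$ reduces to $x_2^2 h_0 \in I(C)$, i.e., intersecting with $k[x_0,x_1,x_2]$ where $I(C) \cap k[x_0,x_1,x_2] = (q_3)$, to $q_3 \mid x_2^2 h_0$. As $q_3 = x_1^2 + cx_0x_2$ is irreducible (a smooth conic, $c \neq 0$) and coprime to $x_2$, this forces $q_3 \mid h_0$, whence $f \in (x_2^2 q_3,\, x_2x_3,\, x_3^2)$. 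The reverse inclusion is immediate, since each of $x_2x_3$, $x_3^2$, $x_2^2 q_3$ lies in $(x_2,x_3)^2$ and vanishes on $C$; hence $I(L_1^{(1)} \cup C) = (x_2x_3,\, x_3^2,\, x_2^2 q_3)$.

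For the resolution I would invoke the Hilbert--Burch theorem. The three signed maximal minors of the displayed matrix $d_2$ are, up to sign, exactly $x_2^2 q_3$, $x_3^2$ and $x_2x_3$, that is, the three generators just found, and the ideal they generate has height $2$ since its zero locus is the curve $L_1^{(1)} \cup C$. The Buchsbaum--Eisenbud acyclicity criterion, the relevant grade condition being $\text{grade}\, I_2(d_2) = \text{ht}\, I(L_1^{(1)} \cup C) = 2$, then shows the complex is exact, hence a graded free resolution of $I(L_1^{(1)} \cup C)$; it is minimal because every entry of $d_2$ lies in the irrelevant ideal. The one place demanding care is the generator computation, specifically the two elementary-but-essential facts that $I(C) \cap k[x_0,x_1,x_2] = (q_3)$ and that $q_3$ is irreducible and prime to $x_2$; once these are secured, both the description of the ideal and, via Hilbert--Burch, the resolution follow formally.
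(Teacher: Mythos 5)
Your proof is correct. The first half (the computation of the generators) is essentially the argument of the paper: you both reduce, via the decomposition $f = f_0x_2^2 + f_1x_2x_3 + f_2x_3^2$ and the fact that $x_3$ vanishes on $C$, to the condition that the coefficient of $x_2^2$ vanish on $C$, and your two ``elementary-but-essential facts'' ($I(C)\cap k[x_0,x_1,x_2] = (q_3)$ and $q_3$ irreducible, prime to $x_2$) are exactly what is needed to conclude. Where you genuinely diverge is the resolution: the paper links $L_1^{(1)}\cup C$ to $L_1\cup C$ by the complete intersection $(x_3^2,\, x_2^2q_3)$ and reads off the resolution from Ferrand's liaison result (the uniform method of these appendices), whereas you verify that the displayed matrix is a Hilbert--Burch matrix for the ideal just computed and invoke the grade-$2$ acyclicity criterion. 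Both are sound; your route is more self-contained, requiring only the identification of the signed $2\times 2$ minors with the three generators and the observation that their common zero locus has codimension $2$, while the paper's route has the side benefit of exhibiting the residual curve $L_1\cup C$ explicitly, a piece of geometric information it reuses in neighbouring lemmas. Minor bookkeeping you have implicitly done correctly: the signed maximal minors of $d_2$ are $x_2x_3$, $x_3^2$ and $x_2^2q_3$ up to sign, the twists $S(-3)\oplus S(-5)\to 2S(-2)\oplus S(-4)$ match the degrees of the entries, and minimality is immediate since every entry of $d_2$ lies in $S_+$.
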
 

\begin{proof}
Let $f = f_0x_2^2 + f_1x_2x_3 + f_2x_3^2$ be a homogeneous element of 
$I(L_1^{(1)})$. Since $x_3$ vanishes on $C$ it follows that if $f$ vanishes on 
$C$ then $f_0x_2^2$ vanishes on $C$ hence $f_0$ vanishes on $C$ hence 
$f_0 \in (x_3,\, q_3)$. One deduces that $I(L_1^{(1)} \cup C)$ is generated by 
the elements from the statement. 

Now, if $W$ is the curve directly linked to $L_1^{(1)} \cup C$ by the complete 
intersection defined by $x_3^2$ and $x_2^2q_3$ then $I(W) = (x_3,\, x_2q_3)$ 
hence $W = L_1 \cup C$. Using Ferrand's result about liaison one gets the 
graded free resolution from the statement.    
\end{proof}

\begin{prop}\label{P:genixcupc3} 
Let $X$ be the double structure on the line $L_1$ considered at the beginning 
of Subsection~\ref{SS:doublecupaline} and let $C\subset \piii$ be the conic 
from Lemma~\ref{L:l1(1)cupc3}. 

\emph{(a)} If $x_1^2 \mid b$, i.e., if $b = x_1^2b_2$ then$\, :$ 
\[
I(X \cup C) = S(F_2 - cx_0b_2x_2^2) + I(L_1^{(1)} \cup C)\, . 
\]

\emph{(b)} If $x_1 \mid b$ but $x_1^2 \nmid b$, i.e., if $b = x_1b_1$ 
with $x_1 \nmid b_1$ then$\, :$ 
\[
I(X \cup C) = S(x_1F_2 - cx_0b_1x_2^2) + I(L_1^{(1)} \cup C)\, . 
\]

\emph{(c)} If $x_1 \nmid b$ then $I(X \cup C) = Sq_3F_2 + I(L_1^{(1)} \cup C)$. 
\end{prop}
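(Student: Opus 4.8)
The plan is to follow the method of the preceding propositions in this subsection, the only genuinely new feature being the tangency of $C$ to $L_1$, which forces a three-fold case distinction governed by the divisibility of $b$ by powers of $x_1$. First I would tensor the fundamental exact sequence of Remark~\ref{R:etaeta1}(ii),
\[
0 \lra \sci_{L_1}^2 \lra \sci_X \overset{\displaystyle \eta}{\lra} \sco_{L_1}(-l-2) \lra 0,
\]
in which $\eta(F_2) = 1$ and $\eta(x_2^2) = \eta(x_2x_3) = \eta(x_3^2) = 0$, by $\sci_C$ and apply Lemma~\ref{L:ycupt}. Taking the lemma's chain $X \subseteq Y \subseteq Z$ to be $L_1 \subseteq X \subseteq L_1^{(1)}$ and its fourth subscheme $T$ to be $C$, one obtains an exact sequence $0 \lra \sci_{L_1^{(1)} \cup C} \lra \sci_{X \cup C} \overset{\psi}{\lra} \sco_{L_1}(-l-2)$, where $\psi$ is the composite $\sci_{X \cup C} \to \sci_X \overset{\eta}{\to} \sco_{L_1}(-l-2)$ and, by part~(c) of the lemma (using $\sci_{L_1}\sci_X \subseteq \sci_{L_1}^2$), $\Cok \psi$ is an $\sco_{L_1 \cap C}$-module. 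Here the scheme-theoretic intersection has ideal $\sci_{L_1}+\sci_C = (x_2, x_3, q_3) = (x_2, x_3, x_1^2)$, so $\Cok \psi$ is annihilated by $x_1^2$; this gives the two-sided bound $x_1^2\sco_{L_1}(-l-4) \subseteq \text{Im}\, \psi \subseteq \sco_{L_1}(-l-2)$, the left-hand term denoting the image of multiplication by $x_1^2$.

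Next I would locate $\text{Im}\, \psi$ precisely inside this range by exhibiting, in each case, the stated generator and computing its image under $\psi = \eta$. In case (a), writing $b = x_1^2b_2$, one has $F_2 - cx_0b_2x_2^2 = ax_3 - b_2x_2q_3 \in \tH^0(\sci_{X \cup C}(l+2))$, sent to $1$, so $\psi$ is surjective; in case (b), writing $b = x_1b_1$, one has $x_1F_2 - cx_0b_1x_2^2 = x_1ax_3 - b_1x_2q_3$, sent to $x_1$; and in case (c), $q_3F_2 = x_1^2F_2 - cx_0bx_2^2 + cx_0ax_2x_3$, sent to $x_1^2$. Thus $\text{Im}\, \psi$ respectively contains $\sco_{L_1}(-l-2)$, the image of multiplication by $x_1$, and the image of multiplication by $x_1^2$.

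The crux is the matching reverse inclusion, i.e.\ showing that $\eta(f)$ is divisible by the asserted power of $x_1$ for every $f \in \tH^0(\sci_{X \cup C}(l+2+m))$; this is where the tangency enters. I would write $f = hF_2 + f_0x_2^2 + f_1x_2x_3 + f_2x_3^2$ with $h = \eta(f) \in k[x_0,x_1]_m$, and use that $f$ vanishes on $C$. Restricting to the plane $\{x_3 = 0\} \supset C$ gives $-hbx_2 + \overline{f_0}x_2^2 \in (q_3)$ in $k[x_0,x_1,x_2]$; since $q_3$ is irreducible and $x_2 \notin (q_3)$, this forces $-hb + \overline{f_0}x_2 \in (q_3)$, and restricting further to $\{x_2 = 0\}$, where $q_3$ specializes to $x_1^2$, yields $x_1^2 \mid hb$. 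The divisibility hypotheses then finish it: $x_1 \nmid b$ gives $x_1^2 \mid h$ (case (c)), while $b = x_1b_1$ with $x_1 \nmid b_1$ gives $x_1 \mid h$ (case (b)). I expect this irreducibility-and-restriction step to be the main obstacle, since it is the only place where the tangential (as opposed to transverse or empty) intersection genuinely intervenes.

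Finally, knowing $\text{Im}\, \psi$ exactly, I would factor $\psi$ through the injection given by multiplication by the relevant power of $x_1$ and invoke $\tH^1_\ast(\sci_{L_1^{(1)} \cup C}) = 0$ (visible from the length-one resolution of Lemma~\ref{L:l1(1)cupc3}, which shows $L_1^{(1)} \cup C$ is arithmetically CM) to obtain a short exact sequence
\[
0 \lra I(L_1^{(1)} \cup C) \lra I(X \cup C) \lra S(L_1)(-l-2), \ S(L_1)(-l-3), \ \text{or}\ S(L_1)(-l-4) \lra 0
\]
according to the case. Lifting the generator $1$ of the quotient to the three elements computed above, and adjoining the generators of $I(L_1^{(1)} \cup C)$ from Lemma~\ref{L:l1(1)cupc3}, produces exactly the three descriptions of $I(X \cup C)$ in the statement (and, as in the earlier proofs, a concrete graded free resolution if one wishes).
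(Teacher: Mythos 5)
Your proposal is correct and follows essentially the same route as the paper: the exact sequence $0 \to \sci_{L_1^{(1)}\cup C} \to \sci_{X\cup C} \xrightarrow{\psi} \sco_{L_1}(-l-2)$ from Lemma~\ref{L:ycupt}, the bound $x_1^2\sco_{L_1}(-l-4)\subseteq \operatorname{Im}\psi$ coming from $I(L_1\cap C)=(x_2,x_3,x_1^2)$, the same explicit generators with their images under $\eta$, and the same restriction to $\{x_3=0\}$ and then to $L_1$ to extract the divisibility of $\eta(f)b$ by $x_1^2$. The only cosmetic difference is that you prove the divisibility uniformly for all sections $f$, whereas the paper states the two claims contrapositively for a single lifted section; the underlying computation is identical.
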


\begin{proof}
According to Lemma~\ref{L:ycupt} there is an exact sequence$\, :$ 
\[
0 \lra \sci_{L_1^{(1)} \cup C} \lra \sci_{X \cup C} 
\overset{\displaystyle \psi}{\lra} \sco_{L_1}(-l-2) 
\]
where $\psi$ is the composite morphism$\, :$ 
\[
\sci_{X \cup C} \lra \sci_X \overset{\displaystyle \eta}{\lra} 
\sco_{L_1}(-l-2)\, . 
\]
Moreover, $\Cok \psi$ is an $\sco_{L_1 \cap C}$-module, hence 
$x_1^2\sco_{L_1}(-l-4) \subseteq \text{Im}\, \psi \subseteq \sco_{L_1}(-l-2)$. 

\vskip2mm

\noindent
{\bf Claim 1.}\quad \emph{If} $\text{Im}\, \psi = \sco_{L_1}(-l-2)$ 
\emph{then} $x_1^2 \mid b$. 

\vskip2mm

\noindent
\emph{Indeed}, since $\tH^1_\ast(\sci_{L_1^{(1)} \cup C}) = 0$ (by 
Lemma~\ref{L:l1(1)cupc3}) there exists $f \in \tH^0(\sci_{X \cup C}(l+2))$ such 
that $\psi(f) = 1 \in \tH^0(\sco_{L_1})$. Since $\psi(f) = \eta(f)$ it follows 
that$\, :$ 
\[
f = F_2 + f_0x_2^2 + f_1x_2x_3 + f_2x_3^2\, . 
\]
On the other hand, $f \in \tH^0(\sci_C(l+2))$ hence$\, :$ 
\[
f = g_1x_3 + g_2(x_1^2 + cx_0x_2)\, . 
\]
Restricting the two different expressions of $f$ to the plane $H$ of 
equation $x_3 = 0$ one gets$\, :$ 
\[
-bx_2 + (f_0 \vb H)x_2^2 = (g_2 \vb H)(x_1^2 + cx_0x_2)\, . 
\]
One deduces that $g_2 \vb H = x_2g_2^\prim$ with $g_2^\prim \in k[x_0,x_1,x_2]$ 
hence$\, :$ 
\[
-b + (f_0 \vb H)x_2 = g_2^\prim(x_1^2 + cx_0x_2)\, .
\]
Restricting this relation to the line $L_1$ of equations $x_2 = x_3 = 0$ one 
gets $-b = (g_2^\prim \vb L_1)x_1^2$ whence the Claim. 

\vskip2mm

\noindent
{\bf Claim 2.}\quad \emph{If} $x_1\sco_{L_1}(-l-3) \subseteq \text{Im}\, \psi$ 
\emph{then} $x_1 \mid b$. 

\vskip2mm

\noindent
\emph{Indeed}, since $\tH^1_\ast(\sci_{L_1^{(1)} \cup C}) = 0$ (by 
Lemma~\ref{L:l1(1)cupc3}) there exists $f \in \tH^0(\sci_{X \cup C}(l+3))$ such 
that $\psi(f) = x_1 \in \tH^0(\sco_{L_1}(1))$. Since $\psi(f) = \eta(f)$ it 
follows that$\, :$ 
\[
f = x_1F_2 + f_0x_2^2 + f_1x_2x_3 + f_2x_3^2\, . 
\]
On the other hand, $f \in \tH^0(\sci_C(l+2))$ hence$\, :$ 
\[
f = g_1x_3 + g_2(x_1^2 + cx_0x_2)\, . 
\]
Restricting the two different expressions of $f$ to the plane $H$ of 
equation $x_3 = 0$ one gets$\, :$ 
\[
-bx_1x_2 + (f_0 \vb H)x_2^2 = (g_2 \vb H)(x_1^2 + cx_0x_2)\, . 
\]
One deduces that $g_2 \vb H = x_2g_2^\prim$ with $g_2^\prim \in k[x_0,x_1,x_2]$ 
hence$\, :$ 
\[
-bx_1 + (f_0 \vb H)x_2 = g_2^\prim(x_1^2 + cx_0x_2)\, .
\]
Restricting this relation to the line $L_1$ of equations $x_2 = x_3 = 0$ one 
gets $-b = (g_2^\prim \vb L_1)x_1$ whence the Claim. 

\vskip2mm

(a) In this case, $F_2 - cx_0b_2x_2^2 = -q_3b_2x_2 + ax_3$ belongs to 
$\tH^0(\sci_{X \cup C}(l+2))$ and $\psi(F_2 - cx_0b_2x_2^2) = 
\eta(F_2 - cx_0b_2x_2^2) = 1 \in \tH^0(\sco_{L_1})$. It follows that $\psi$ is 
an epimorphism and that one has an exact sequence$\, :$ 
\[
0 \lra I(L_1^{(1)} \cup C) \lra I(X \cup C) 
\xra{\displaystyle \tH^0_\ast(\psi)} S(L_1)(-l-2) \lra 0
\]   
from which one gets the generators of $I(X \cup C)$ from the statement. 
Using this exact sequence one can also get a graded free resolution of 
$I(X \cup C)$. 

(b) In this case, by Claim 1, $\text{Im}\, \psi \subseteq 
x_1\sco_{L_1}(-l-3)$. On the other hand, 
$x_1F_2 - cx_0b_1x_2^2 = -q_3b_1x_2 + ax_1x_3$ 
belongs to $\tH^0(\sci_{X \cup C}(l+3))$ 
and $\psi(x_1F_2 - cx_0b_1x_2^2) = \eta(x_1F_2 - cx_0b_1x_2^2) = x_1 \in 
\tH^0(\sco_{L_1}(1))$. It follows that $\psi$ factorizes as$\, :$  
\[
\sci_{X \cup C} \overset{\displaystyle \psi^\prim}{\lra} \sco_{L_1}(-l-3) 
\overset{\displaystyle x_1}{\lra} \sco_{L_1}(-l-2)
\]
with $\psi^\prim$ an epimorphism and that one has an exact sequence$\, :$ 
\[
0 \lra I(L_1^{(1)} \cup C) \lra I(X \cup C) 
\xra{\displaystyle \tH^0_\ast(\psi^\prim)} S(L_1)(-l-3) \lra 0
\]   
from which one gets the generators of $I(X \cup C)$ from the statement. 
Using this exact sequence one can also get a graded free resolution of 
$I(X \cup C)$.  

(c) In this case, by Claim 2, $\text{Im}\, \psi = x_1^2\sco_{L_1}(-l-4)$. 
$q_3F_2$ belongs to $\tH^0(\sci_{X \cup C}(l+4))$ and $\psi(q_3F_2) = \eta(q_3F_2) 
= x_1^2 \in \tH^0(\sco_{L_1}(2))$. It follows that $\psi$ factorizes as$\, :$  
\[
\sci_{X \cup C} \overset{\displaystyle \psi^\secund}{\lra} \sco_{L_1}(-l-4) 
\overset{\displaystyle x_1^2}{\lra} \sco_{L_1}(-l-2)
\]
with $\psi^\secund$ an epimorphism and that one has an exact sequence$\, :$ 
\[
0 \lra I(L_1^{(1)} \cup C) \lra I(X \cup C) 
\xra{\displaystyle \tH^0_\ast(\psi^\secund)} S(L_1)(-l-4) \lra 0
\]   
from which one gets the generators of $I(X \cup C)$ from the statement. 
Using this exact sequence one can also get a graded free resolution of 
$I(X \cup C)$.  
\end{proof}

\begin{prop}\label{P:reshoxcupc3} 
Under the hypothesis of Prop.~\ref{P:genixcupc3}$\, :$ 

\emph{(a)} If $x_1^2 \mid b$, i.e., if $b = x_1^2b_2$ then the graded $S$-module 
${\fam0 H}^0_\ast(\sco_{X\cup C})$ admits the following free resolution$\, :$ 
\[
0 \lra \begin{matrix} S(-4)\\ \oplus\\ S(l-4) \end{matrix} 
\overset{\displaystyle \delta_2}{\lra} 
\begin{matrix} S(-1)\\ \oplus\\ S(-3)\\ \oplus\\ 2S(l-3) \end{matrix} 
\overset{\displaystyle \delta_1}{\lra} 
\begin{matrix} S\\ \oplus\\ S(l-2) \end{matrix} 
\overset{\displaystyle \delta_0}{\lra} 
{\fam0 H}^0_\ast(\sco_{X \cup C}) \lra 0 
\]
with $\delta_0 = (1\, ,\, x_1^2e_1)$ and with $\delta_1$ and $\delta_2$ defined 
by the matrices$\, :$ 
\[
\begin{pmatrix} 
x_3 & x_2q_3 & 0 & 0\\
-b_2 & -a & x_2 & x_3 
\end{pmatrix}\, ,\  
\begin{pmatrix}
-x_2q_3 & 0\\
x_3 & 0\\
-q_3b_2 & -x_3\\
a & x_2
\end{pmatrix}\, .
\]

\emph{(b)} If $x_1 \mid b$ but $x_1^2 \nmid b$, i.e., if $b = x_1b_1$ 
with $x_1 \nmid b_1$ then the graded $S$-module 
${\fam0 H}^0_\ast(\sco_{X\cup C})$ admits the following free resolution$\, :$ 
\[
0 \lra \begin{matrix} S(-4)\\ \oplus\\ S(l-3) \end{matrix} 
\overset{\displaystyle \delta_2}{\lra} 
\begin{matrix} S(-1)\\ \oplus\\ S(-3)\\ \oplus\\ 2S(l-2) \end{matrix} 
\overset{\displaystyle \delta_1}{\lra} 
\begin{matrix} S\\ \oplus\\ S(l-1) \end{matrix} 
\overset{\displaystyle \delta_0}{\lra} 
{\fam0 H}^0_\ast(\sco_{X \cup C}) \lra 0 
\]
with $\delta_0 = (1\, ,\, x_1e_1)$ and with $\delta_1$ and $\delta_2$ defined 
by the matrices$\, :$ 
\[
\begin{pmatrix} 
x_3 & x_2q_3 & 0 & 0\\
-b_1 & -x_1a & x_2 & x_3 
\end{pmatrix}\, ,\  
\begin{pmatrix}
-x_2q_3 & 0\\
x_3 & 0\\
-q_3b_1 & -x_3\\
x_1a & x_2
\end{pmatrix}\, .
\]

\emph{(c)} If $x_1 \nmid b$ then the graded $S$-module 
${\fam0 H}^0_\ast(\sco_{X\cup C})$ admits the following free resolution$\, :$ 
\[
0 \lra \begin{matrix} S(-4)\\ \oplus\\ S(l-2) \end{matrix} 
\overset{\displaystyle \delta_2}{\lra} 
\begin{matrix} S(-1)\\ \oplus\\ S(-3)\\ \oplus\\ 2S(l-1) \end{matrix} 
\overset{\displaystyle \delta_1}{\lra} 
\begin{matrix} S\\ \oplus\\ S(l) \end{matrix} 
\overset{\displaystyle \delta_0}{\lra} 
{\fam0 H}^0_\ast(\sco_{X \cup C}) \lra 0 
\]
with $\delta_0 = (1\, ,\, e_1)$ and with $\delta_1$ and $\delta_2$ defined 
by the matrices$\, :$ 
\[
\begin{pmatrix} 
x_3 & x_2q_3 & 0 & 0\\
-b & -x_1^2a & x_2 & x_3 
\end{pmatrix}\, ,\  
\begin{pmatrix}
-x_2q_3 & 0\\
x_3 & 0\\
-q_3b & -x_3\\
x_1^2a & x_2
\end{pmatrix}\, . 
\]
\end{prop}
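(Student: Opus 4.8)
The plan is to follow the pattern of the proof of Prop.~\ref{P:reshoxcupc2}, adapting it from the transversal conic to the tangent one. First I would compute the homogeneous ideal of $L_1 \cup C$. Since $I(L_1) = (x_2, x_3)$ and $I(C) = (x_3, q_3)$, and since the reduction of $q_3$ modulo $x_3$ is $x_1^2 + cx_0x_2$, which is coprime to $x_2$ in $k[x_0,x_1,x_2]$, I expect $I(L_1 \cup C) = (x_3, x_2 q_3)$, a complete intersection of type $(1,3)$ cut out inside the plane $\{x_3 = 0\}$. Consequently $L_1 \cup C$ is arithmetically CM, and the Koszul complex of $x_3, x_2 q_3$ yields the free resolution $0 \lra S(-4) \lra S(-1) \oplus S(-3) \lra S \lra \tH^0_\ast(\sco_{L_1 \cup C}) \lra 0$; in particular $\tH^0_\ast(\sco_{L_1 \cup C})$ is generated by $1$.

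Next I would invoke Lemma~\ref{L:ycupt} exactly as in the proof of Prop.~\ref{P:reshoxcupc2} (with $L_1 \subseteq L_1 \subseteq X$ and $T = C$, noting $\sci_{L_1}^2 \subseteq \sci_X$), obtaining the exact sequence $0 \lra \text{Im}\, \phi \times \{0\} \lra \sco_{X \cup C} \lra \sco_{L_1 \cup C} \lra 0$, where $\phi$ is the composite $\sci_{L_1 \cup C} \lra \sci_{L_1} \lra \sci_{L_1}/\sci_{L_1}^2 \simeq 2\sco_{L_1}(-1) \xra{(a,b)} \sco_{L_1}(l)$, and $\Cok \phi$ is a module supported at $P_0$. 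The crucial computation is the image: evaluating $\phi$ on the two generators of $I(L_1 \cup C)$ gives $\phi(x_3) = b$ and, using $x_2 q_3 = x_1^2 x_2 + c x_0 x_2^2 \equiv x_1^2 x_2 \pmod{\sci_{L_1}^2}$, $\phi(x_2 q_3) = x_1^2 a$. Hence $\text{Im}\, \phi$ is the subsheaf of $\sco_{L_1}(l)$ generated by $b$ and $x_1^2 a$. Since $a$ and $b$ are coprime, their only possible common zeros with $x_1^2$ are concentrated at $P_0$ (where $x_1 = 0$), with multiplicity equal to the $x_1$-adic valuation of $b$, capped at $2$. This produces exactly the three cases: $\text{Im}\, \phi = x_1^2 \sco_{L_1}(l-2)$ when $x_1^2 \mid b$, $\text{Im}\, \phi = x_1 \sco_{L_1}(l-1)$ when $x_1 \mid b$ but $x_1^2 \nmid b$, and $\text{Im}\, \phi = \sco_{L_1}(l)$ when $x_1 \nmid b$.

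With this in hand, in each case surjectivity of $\tH^0_\ast(\sco_{X \cup C}) \lra \tH^0_\ast(\sco_{L_1 \cup C})$ is immediate, since the target is generated by $1$, which lifts to $1 \in \tH^0(\sco_{X \cup C})$. I would thus extract a short exact sequence of graded modules $0 \lra S(L_1)(l-j) \lra \tH^0_\ast(\sco_{X \cup C}) \lra \tH^0_\ast(\sco_{L_1 \cup C}) \lra 0$ with $j = 2,1,0$ respectively, the left-hand generator being $x_1^2 e_1$, $x_1 e_1$, $e_1$ (matching $\delta_0$). Then I would apply the horseshoe lemma, combining the Koszul resolution of $S(L_1)(l-j)$ coming from $x_2, x_3$ with the resolution of $\tH^0_\ast(\sco_{L_1 \cup C})$ found above. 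A direct term count shows these assemble into resolutions of precisely the stated ranks, already minimal (all entries lie in the irrelevant ideal), so no appeal to Remark~\ref{R:cancellation} is needed. It remains to pin down $\delta_1, \delta_2$: I would read the first syzygy matrix off the multiplicative structure of $\sco_X$, namely $x_2 \mapsto a e_1$, $x_3 \mapsto b e_1$, $e_1^2 = 0$, which gives the relations $x_3 \cdot 1 = (b/x_1^j)(x_1^j e_1)$, $x_2 q_3 \cdot 1 = a(x_1^j e_1)$, and $x_2 \cdot (x_1^j e_1) = x_3 \cdot (x_1^j e_1) = 0$; the second syzygies are then guessed and checked against these.

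The main obstacle I anticipate is the image computation of the preceding paragraph: everything hinges on the reduction $x_2 q_3 \equiv x_1^2 x_2$ modulo $\sci_{L_1}^2$ and on the careful bookkeeping of the $x_1$-adic valuation of $b$ against the fixed double multiplicity $x_1^2$ forced by tangency. Once $\text{Im}\, \phi$ is correctly identified in each case, the rest is the standard horseshoe routine already exercised repeatedly throughout this appendix.
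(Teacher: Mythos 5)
Your proposal follows the paper's proof essentially verbatim: compute $I(L_1\cup C)=(x_3,\,x_2q_3)$, apply Lemma~\ref{L:ycupt} to get $0 \to \text{Im}\,\phi\times\{0\} \to \sco_{X\cup C}\to\sco_{L_1\cup C}\to 0$, evaluate $\phi(x_3)=b$ and $\phi(x_2q_3)=x_1^2a$, and split into the three cases according to the $x_1$-adic valuation of $b$ (capped at $2$ since $\Cok\phi$ is supported at $P_0$). The only quibble is the displayed general relation ``$x_2q_3\cdot 1 = a(x_1^je_1)$'', whose coefficient should be $x_1^{2-j}a$ for $j=0,1$; this is harmless since it follows directly from $\phi(x_2q_3)=x_1^2a$, which you computed correctly.
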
 

\begin{proof}
One has $I(L_1 \cup C) = (x_3,\, x_2q_3)$ whence an exact sequence$\, :$ 
\[
0 \lra S(-4) \xra{\begin{pmatrix} -x_2q_3\\ x_3 \end{pmatrix}} 
\begin{matrix} S(-1)\\ \oplus\\ S(-3) \end{matrix} 
\xra{\displaystyle (x_3\, ,\, x_2q_3)} S \lra 
\tH^0_\ast(\sco_{L_1 \cup C}) \lra 0\, .
\]
Now, by Lemma~\ref{L:ycupt}, one has an exact sequence$\, :$  
\[
0 \lra \text{Im}\, \phi \times \{0\} \lra \sco_{X\cup C} \lra \sco_{L_1\cup C} 
\lra 0 
\]
where $\phi$ is the composite morphism$\, :$ 
\[
\sci_{L_1 \cup C} \lra \sci_{L_1} \lra \sci_{L_1}/\sci_{L_1}^2 \simeq 
2\sco_{L_1}(-1) \xra{\displaystyle (a\, ,\, b)} \sco_{L_1}(l)\, . 
\] 
Moreover, $\Cok \phi$ is an $\sco_{L_1 \cap C}$-module hence 
$x_1^2\sco_{L_1}(l-2) \subseteq \text{Im}\, \phi \subseteq \sco_{L_1}(l)$. 
Notice that$\, :$ 
\[
\phi(x_3) = b\, ,\  \phi(x_2q_3) = x_1^2a\, .
\]

(a) In this case $\text{Im}\, \phi = x_1^2\sco_{L_1}(l-2)$ and one has an 
exact sequence$\, :$ 
\[
0 \lra S(L_1)(l-2) \lra \tH^0_\ast(\sco_{X \cup C}) \lra 
\tH^0_\ast(\sco_{L_1 \cup C}) \lra 0 
\]
where the left morphism maps $1 \in S(L_1)$ to the element of 
$\tH^0(\sco_{X \cup C}(-l+2))$ whose image into $\tH^0(\sco_X(-l+2)) \oplus 
\tH^0(\sco_C(-l+2))$ is $(x_1^2e_1,\, 0)$. 

(b) In this case $\text{Im}\, \phi = x_1\sco_{L_1}(l-1)$ and one has an 
exact sequence$\, :$ 
\[
0 \lra S(L_1)(l-1) \lra \tH^0_\ast(\sco_{X \cup C}) \lra 
\tH^0_\ast(\sco_{L_1 \cup C}) \lra 0 
\]
where the left morphism maps $1 \in S(L_1)$ to the element of 
$\tH^0(\sco_{X \cup C}(-l+1))$ whose image into $\tH^0(\sco_X(-l+1)) \oplus 
\tH^0(\sco_C(-l+1))$ is $(x_1e_1,\, 0)$. 

(c) In this case $\text{Im}\, \phi = \sco_{L_1}(l)$ and one has an 
exact sequence$\, :$ 
\[
0 \lra S(L_1)(l) \lra \tH^0_\ast(\sco_{X \cup C}) \lra 
\tH^0_\ast(\sco_{L_1 \cup C}) \lra 0 
\]
where the left morphism maps $1 \in S(L_1)$ to the element of 
$\tH^0(\sco_{X \cup C}(-l))$ whose image into $\tH^0(\sco_X(-l)) \oplus 
\tH^0(\sco_C(-l))$ is $(e_1,\, 0)$.   
\end{proof}

\begin{lemma}\label{L:l1(1)cupc4}
Let $C \subset \piii$ be the conic of equations $x_3 = x_0x_1 + cx_2^2 = 0$, 
$c \in k\setminus \{0\}$. Put $q_4 := x_0x_1 + cx_2^2$. Then  
$I(L_1^{(1)} \cup C) = (x_2x_3,\, x_3^2,\, x_2^2q_4)$  
and admits the following minimal graded free resolution$\, :$ 
\[
0 \lra \begin{matrix} S(-3)\\ \oplus\\ S(-5) \end{matrix} 
\xra{\begin{pmatrix} -x_3 & -x_2q_4\\ x_2 & 0\\ 0 & x_3 \end{pmatrix}} 
\begin{matrix} 2S(-2)\\ \oplus\\ S(-4) \end{matrix} 
\lra I(L_1^{(1)} \cup C) \lra 0\, .
\]
\end{lemma}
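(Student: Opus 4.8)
The plan is to mirror, nearly verbatim, the two-step argument used for Lemma~\ref{L:l1(1)cupc3}, with the polynomial $q_3$ replaced throughout by $q_4 = x_0x_1 + cx_2^2$. \emph{First} I would pin down the generators of $I(L_1^{(1)} \cup C) = I(L_1^{(1)}) \cap I(C)$. A homogeneous $f \in I(L_1^{(1)}) = (x_2^2,\, x_2x_3,\, x_3^2)$ may be written $f = f_0x_2^2 + f_1x_2x_3 + f_2x_3^2$; since $x_3 \in I(C)$, the last two terms automatically vanish on $C$, so $f$ vanishes on $C$ if and only if $f_0x_2^2$ does. The conic $C$ is reduced and irreducible, and $x_2$ vanishes on it only at the two points $P_0,\, P_1$, so $x_2^2 \notin I(C)$; hence $f$ vanishes on $C$ exactly when $f_0 \in I(C) = (x_3,\, q_4)$. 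Using $x_2^2x_3 = x_2 \cdot x_2x_3$, this yields $I(L_1^{(1)} \cup C) = (x_2x_3,\, x_3^2,\, x_2^2q_4)$, the generating set in the statement.

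\emph{Second}, I would obtain the resolution by liaison. The two polynomials $x_3^2$ and $x_2^2q_4$ form a regular sequence, defining a complete intersection $X$ of type $(2,4)$; since $L_1^{(1)} \cup C$ is locally CM of pure codimension $2$, it is directly linked by $X$ to the residual curve $W$ with $I(W) = (I(X) : I(L_1^{(1)} \cup C))$. Because $x_3^2$ and $x_2^2q_4$ already lie in $I(X)$, this colon ideal equals $(I(X) : x_2x_3)$, which I claim is $(x_3,\, x_2q_4)$. Thus $W = L_1 \cup C$ is the plane cubic cut out on $\{x_3 = 0\}$ by $x_2q_4 = 0$, i.e. a complete intersection of type $(1,3)$ with the Koszul resolution
\[
0 \lra S(-4) \xra{\begin{pmatrix} -x_2q_4\\ x_3 \end{pmatrix}}
S(-1) \oplus S(-3) \xra{\displaystyle (x_3\, ,\, x_2q_4)} I(W) \lra 0\, .
\]
Feeding this into Ferrand's liaison result (recalled in the Introduction, with $a = 2$, $b = 4$ and the evident lift $\psi = \mathrm{diag}(x_3,\, x_2)$ of the complete intersection), and using that $W$ is arithmetically CM so that the third term of the monad vanishes and the monad degenerates to a resolution, produces, after a twist by $-6$, a resolution of the numerical shape $0 \to S(-3) \oplus S(-5) \to 2S(-2) \oplus S(-4) \to I(L_1^{(1)} \cup C) \to 0$. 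The precise differential is then determined by matching it against the generators and their first syzygies, giving the matrix in the statement.

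\emph{The main obstacle} is the colon computation $((x_3^2,\, x_2^2q_4) : x_2x_3) = (x_3,\, x_2q_4)$. The inclusion $\supseteq$ is immediate from $x_3 \cdot x_2x_3 = x_2 \cdot x_3^2$ and $x_2q_4 \cdot x_2x_3 = x_3 \cdot x_2^2q_4$. For $\subseteq$, I would take $g$ with $gx_2x_3 = \alpha x_3^2 + \beta x_2^2q_4$, reduce modulo $x_3$ in the domain $S/(x_3)$ to force $x_3 \mid \beta$, cancel $x_3$, then reduce modulo $x_2$ in the domain $S/(x_2)$ to force $x_2 \mid \alpha$, and cancel $x_2$, landing $g \in (x_3,\, x_2q_4)$. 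The only geometric input is that $C$ is a reduced irreducible conic lying in $\{x_3 = 0\}$ (its nonsingularity), which is exactly what makes $L_1 \cup C$ a genuine type-$(1,3)$ complete intersection and keeps the residual this simple.
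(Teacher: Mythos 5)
Your proposal is correct and follows essentially the same route as the paper: the same reduction $f = f_0x_2^2 + f_1x_2x_3 + f_2x_3^2$ with $f_0 \in I(C)$ to identify the generators, followed by linking $L_1^{(1)}\cup C$ via the complete intersection $(x_3^2,\, x_2^2q_4)$ to the residual $W = L_1 \cup C$ and applying Ferrand's liaison result to its Koszul resolution. The only difference is that you spell out the colon-ideal computation $((x_3^2,\,x_2^2q_4):x_2x_3)=(x_3,\,x_2q_4)$, which the paper merely asserts; your verification of it is sound.
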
 

\begin{proof}
Let $f = f_0x_2^2 + f_1x_2x_3 + f_2x_3^2$ be a homogeneous element of 
$I(L_1^{(1)})$. Since $x_3$ vanishes on $C$ it follows that if $f$ vanishes on 
$C$ then $f_0x_2^2$ vanishes on $C$ hence $f_0$ vanishes on $C$ hence 
$f_0 \in (x_3,\, q_4)$. One deduces that $I(L_1^{(1)} \cup C)$ is generated by 
the elements from the statement. 

Now, if $W$ is the curve directly linked to $L_1^{(1)} \cup C$ by the complete 
intersection defined by $x_3^2$ and $x_2^2q_4$ then $I(W) = (x_3,\, x_2q_4)$ 
hence $W = L_1 \cup C$. Using Ferrand's result about liaison one gets the 
graded free resolution from the statement.    
\end{proof} 

\begin{prop}\label{P:genixcupc4} 
Let $X$ be the double structure on the line $L_1$ considered at the beginning 
of Subsection~\ref{SS:doublecupaline} and let $C\subset \piii$ be the conic 
from Lemma~\ref{L:l1(1)cupc4}. 

\emph{(a)} If $x_0x_1 \mid b$, i.e., if $b = x_0x_1b_2$ then$\, :$ 
\[
I(X \cup C) = S(F_2 - cb_2x_2^3) + I(L_1^{(1)} \cup C)\, . 
\]

\emph{(b)} If $x_0 \mid b$ but $x_1 \nmid b$, i.e., if $b = x_0b_0$  
with $x_1 \nmid b_0$ then$\, :$ 
\[
I(X \cup C) = S(x_1F_2 - cb_0x_2^3) + I(L_1^{(1)} \cup C)\, . 
\]

\emph{(c)} If $x_1 \mid b$ but $x_0 \nmid b$, i.e., if $b = x_1b_1$  
with $x_0 \nmid b_1$ then$\, :$ 
\[
I(X \cup C) = S(x_0F_2 - cb_1x_2^3) + I(L_1^{(1)} \cup C)\, . 
\]

\emph{(d)} If $x_0 \nmid b$ and $x_1 \nmid b$ then 
$I(X \cup C) = Sq_4F_2 + I(L_1^{(1)} \cup C)$. 
\end{prop}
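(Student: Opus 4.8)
The plan is to follow closely the pattern of the proof of Prop.~\ref{P:genixcupc3}, replacing the tangency point used there by the two transverse intersection points $P_0 = (1:0:0:0)$ and $P_1 = (0:1:0:0)$, which together form $L_1 \cap C$ (on $L_1$ one has $q_4 = x_0x_1$). First I would tensor the defining sequence $0 \lra \sci_{L_1^{(1)}} \lra \sci_X \overset{\displaystyle \eta}{\lra} \sco_{L_1}(-l-2) \lra 0$ of $X$ by $\sci_C$ and invoke Lemma~\ref{L:ycupt} to obtain an exact sequence
\[
0 \lra \sci_{L_1^{(1)} \cup C} \lra \sci_{X \cup C} \overset{\displaystyle \psi}{\lra} \sco_{L_1}(-l-2),
\]
where $\psi$ is the composite $\sci_{X \cup C} \lra \sci_X \overset{\displaystyle \eta}{\lra} \sco_{L_1}(-l-2)$. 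Since $\sci_{L_1}\sci_X \subseteq \sci_{L_1}^2 = \sci_{L_1^{(1)}}$, Lemma~\ref{L:ycupt}(c) shows that $\Cok \psi$ is an $\sco_{L_1 \cap C}$-module supported at $\{P_0, P_1\}$. Hence $x_0x_1\sco_{L_1}(-l-4) \subseteq \text{Im}\, \psi \subseteq \sco_{L_1}(-l-2)$, so $\text{Im}\, \psi$ is one of $\sco_{L_1}(-l-2)$, $x_0\sco_{L_1}(-l-3)$, $x_1\sco_{L_1}(-l-3)$ or $x_0x_1\sco_{L_1}(-l-4)$, and the four cases of the statement must correspond to these four possibilities.

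The heart of the argument is a divisibility criterion that I would isolate as a claim: if $f \in \tH^0(\sci_{X \cup C}(l+2+e))$ satisfies $\eta(f) = g$ for some $g \in k[x_0,x_1]_e$, then $gb \in (x_0x_1)$ in $k[x_0,x_1]$. To prove it, write $f = gF_2 + f_0x_2^2 + f_1x_2x_3 + f_2x_3^2$ using $\eta(F_2) = 1$ and the arithmetic Cohen-Macaulayness of $L_1^{(1)}$, and also $f = h_1 x_3 + h_2 q_4$ since $f$ vanishes on $C$. Restricting both expressions to the plane $H = \{x_3 = 0\}$, where $F_2|_H = -bx_2$, yields in $k[x_0,x_1,x_2]$ the identity $-gbx_2 + \overline{f_0}\,x_2^2 = \overline{h_2}(x_0x_1 + cx_2^2)$; since the left-hand side is divisible by $x_2$ while $x_2 \nmid q_4|_H$, one gets $x_2 \mid \overline{h_2}$, and after cancelling $x_2$ and then setting $x_2 = 0$ one obtains $-gb = (\overline{h_2}/x_2)|_{L_1}\,x_0x_1$. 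This restriction-then-cancellation computation, combined with the coprimality of $x_0$ and $x_1$, is the step I expect to be the main obstacle; everything downstream is bookkeeping. From the claim: if $x_0 \nmid b$ then $x_0 \mid g$, and if $x_1 \nmid b$ then $x_1 \mid g$. Thus in case (d) one forces $x_0x_1 \mid g$, whence $\text{Im}\, \psi = x_0x_1\sco_{L_1}(-l-4)$; in case (b) one forces $x_1 \mid g$, whence $\text{Im}\, \psi \subseteq x_1\sco_{L_1}(-l-3)$; case (c) is symmetric under interchanging $x_0$ and $x_1$; and in case (a) the claim with $g = 1$ shows that surjectivity of $\psi$ forces $x_0x_1 \mid b$.

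To pin down $\text{Im}\, \psi$ from below and to finish, in each case I would exhibit one explicit global section realizing the top generator of the conjectured image and check its $\eta$-value. In case (a), with $b = x_0x_1b_2$, the element $F_2 - cb_2x_2^3 = -b_2x_2q_4 + ax_3$ lies in $I(X \cup C)$ and has $\eta(F_2 - cb_2x_2^3) = 1$, so $\psi$ is surjective; since $I(L_1^{(1)} \cup C)$ is arithmetically Cohen-Macaulay by Lemma~\ref{L:l1(1)cupc4}, one has $\tH^1_\ast(\sci_{L_1^{(1)} \cup C}) = 0$ and hence an exact sequence $0 \lra I(L_1^{(1)} \cup C) \lra I(X \cup C) \overset{\displaystyle \tH^0_\ast(\psi)}{\lra} S(L_1)(-l-2) \lra 0$, giving $I(X \cup C) = S(F_2 - cb_2x_2^3) + I(L_1^{(1)} \cup C)$. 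In case (b), with $b = x_0b_0$, the element $x_1F_2 - cb_0x_2^3 = -b_0x_2q_4 + x_1ax_3$ maps to $x_1$ under $\eta$, so $\psi$ factors through multiplication by $x_1$ after a surjection $\psi' : \sci_{X \cup C} \lra \sco_{L_1}(-l-3)$, and the same exactness argument yields $I(X \cup C) = S(x_1F_2 - cb_0x_2^3) + I(L_1^{(1)} \cup C)$; case (c) follows by symmetry. In case (d), the element $q_4F_2$ lies in $I(X \cup C)$ with $\eta(q_4F_2) = x_0x_1$, exhibiting the surjection onto $x_0x_1\sco_{L_1}(-l-4)$ and giving $I(X \cup C) = Sq_4F_2 + I(L_1^{(1)} \cup C)$. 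Each short exact sequence moreover furnishes, by splicing the resolution of Lemma~\ref{L:l1(1)cupc4} with the standard resolution of $S(L_1)$, a concrete graded free resolution of $I(X \cup C)$.
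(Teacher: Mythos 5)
Your proof is correct and follows essentially the same route as the paper's: the same exact sequence from Lemma~\ref{L:ycupt}, the same bracketing $x_0x_1\sco_{L_1}(-l-4) \subseteq \text{Im}\, \psi \subseteq \sco_{L_1}(-l-2)$, the same restriction-to-$\{x_3=0\}$-then-to-$L_1$ computation, and the same explicit elements $-b_2x_2q_4+ax_3$, $-b_0x_2q_4+ax_1x_3$, $q_4F_2$, etc. The only (harmless) difference is that you package the paper's three separate Claims as one divisibility statement $x_0x_1 \mid gb$ for an arbitrary $g = \eta(f)$.
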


\begin{proof}
According to Lemma~\ref{L:ycupt} there is an exact sequence$\, :$ 
\[
0 \lra \sci_{L_1^{(1)} \cup C} \lra \sci_{X \cup C} 
\overset{\displaystyle \psi}{\lra} \sco_{L_1}(-l-2) 
\]
where $\psi$ is the composite morphism$\, :$ 
\[
\sci_{X \cup C} \lra \sci_X \overset{\displaystyle \eta}{\lra} 
\sco_{L_1}(-l-2)\, . 
\]
Moreover, $\Cok \psi$ is an $\sco_{L_1 \cap C} = \sco_{\{P_0,P_1\}}$-module, hence 
$x_0x_1\sco_{L_1}(-l-4) \subseteq \text{Im}\, \psi \subseteq \sco_{L_1}(-l-2)$. 

\vskip2mm

\noindent
{\bf Claim 1.}\quad \emph{If} $\text{Im}\, \psi = \sco_{L_1}(-l-2)$ 
\emph{then} $x_0x_1 \mid b$. 

\vskip2mm

\noindent
\emph{Indeed}, since $\tH^1_\ast(\sci_{L_1^{(1)} \cup C}) = 0$ (by 
Lemma~\ref{L:l1(1)cupc4}) there exists $f \in \tH^0(\sci_{X \cup C}(l+2))$ such 
that $\psi(f) = 1 \in \tH^0(\sco_{L_1})$. Since $\psi(f) = \eta(f)$ it follows 
that$\, :$ 
\[
f = F_2 + f_0x_2^2 + f_1x_2x_3 + f_2x_3^2\, . 
\]
On the other hand, $f \in \tH^0(\sci_C(l+2))$ hence$\, :$ 
\[
f = g_1x_3 + g_2(x_0x_1 + cx_2^2)\, . 
\]
Restricting the two different expressions of $f$ to the plane $H$ of 
equation $x_3 = 0$ one gets$\, :$ 
\[
-bx_2 + (f_0 \vb H)x_2^2 = (g_2 \vb H)(x_0x_1 + cx_2^2)\, . 
\]
One deduces that $g_2 \vb H = x_2g_2^\prim$ with $g_2^\prim \in k[x_0,x_1,x_2]$ 
hence$\, :$ 
\[
-b + (f_0 \vb H)x_2 = g_2^\prim(x_0x_1 + cx_2^2)\, .
\]
Restricting this relation to the line $L_1$ of equations $x_2 = x_3 = 0$ one 
gets $-b = (g_2^\prim \vb L_1)x_0x_1$ whence the Claim. 

\vskip2mm

\noindent
{\bf Claim 2.}\quad \emph{If} $x_1\sco_{L_1}(-l-3) \subseteq \text{Im}\, \psi$ 
\emph{then} $x_0 \mid b$. 

\vskip2mm

\noindent
\emph{Indeed}, since $\tH^1_\ast(\sci_{L_1^{(1)} \cup C}) = 0$ (by 
Lemma~\ref{L:l1(1)cupc4}) there exists $f \in \tH^0(\sci_{X \cup C}(l+3))$ such 
that $\psi(f) = x_1 \in \tH^0(\sco_{L_1}(1))$. Since $\psi(f) = \eta(f)$ it 
follows that$\, :$ 
\[
f = x_1F_2 + f_0x_2^2 + f_1x_2x_3 + f_2x_3^2\, . 
\]
On the other hand, $f \in \tH^0(\sci_C(l+2))$ hence$\, :$ 
\[
f = g_1x_3 + g_2(x_0x_1 + cx_2^2)\, . 
\]
Restricting the two different expressions of $f$ to the plane $H$ of 
equation $x_3 = 0$ one gets$\, :$ 
\[
-bx_1x_2 + (f_0 \vb H)x_2^2 = (g_2 \vb H)(x_0x_1 + cx_2^2)\, . 
\]
One deduces that $g_2 \vb H = x_2g_2^\prim$ with $g_2^\prim \in k[x_0,x_1,x_2]$ 
hence$\, :$ 
\[
-bx_1 + (f_0 \vb H)x_2 = g_2^\prim(x_0x_1 + cx_2^2)\, .
\]
Restricting this relation to the line $L_1$ of equations $x_2 = x_3 = 0$ one 
gets $-b = (g_2^\prim \vb L_1)x_0$ whence the Claim. Similarly, one has$\, :$  

\vskip2mm

\noindent
{\bf Claim 3.}\quad \emph{If} $x_0\sco_{L_1}(-l-3) \subseteq \text{Im}\, \psi$ 
\emph{then} $x_1 \mid b$. 

\vskip2mm

(a) In this case, $F_2 - cb_2x_2^3 = -q_4b_2x_2 + ax_3$   
belongs to $\tH^0(\sci_{X \cup C}(l+2))$ 
and $\psi(F_2 - cb_2x_2^3) = \eta(F_2 - cb_2x_2^3) = 1 \in 
\tH^0(\sco_{L_1})$. It follows that $\psi$ is an epimorphism and that one has 
an exact sequence$\, :$ 
\[
0 \lra I(L_1^{(1)} \cup C) \lra I(X \cup C) 
\xra{\displaystyle \tH^0_\ast(\psi)} S(L_1)(-l-2) \lra 0
\]   
from which one gets the generators of $I(X \cup C)$ from the statement. 
Using this exact sequence one can also get a graded free resolution of 
$I(X \cup C)$.

(b) In this case, by Claim 1, $\text{Im}\, \psi \neq 
\sco_{L_1}(-l-2)$. On the other hand, $x_1F_2 - cb_0x_2^3 = -q_4b_0x_2 + ax_1x_3$  
belongs to $\tH^0(\sci_{X \cup C}(l+3))$ 
and $\psi(x_1F_2 - cb_0x_2^3) = \eta(x_1F_2 - cb_0x_2^3) = x_1 \in 
\tH^0(\sco_{L_1}(1))$. It follows that $\psi$ factorizes as$\, :$  
\[
\sci_{X \cup C} \overset{\displaystyle \psi_1^\prim}{\lra} \sco_{L_1}(-l-3) 
\overset{\displaystyle x_1}{\lra} \sco_{L_1}(-l-2)
\]
with $\psi_1^\prim$ an epimorphism and that one has an exact sequence$\, :$ 
\[
0 \lra I(L_1^{(1)} \cup C) \lra I(X \cup C) 
\xra{\displaystyle \tH^0_\ast(\psi_1^\prim)} S(L_1)(-l-3) \lra 0
\]   
from which one gets the generators of $I(X \cup C)$ from the statement. 
Using this exact sequence one can also get a graded free resolution of 
$I(X \cup C)$.

(c) In this case, by Claim 1, $\text{Im}\, \psi \neq 
\sco_{L_1}(-l-2)$. On the other hand, $x_0F_2 - cb_1x_2^3 = -q_4b_1x_2 + ax_0x_3$  
belongs to $\tH^0(\sci_{X \cup C}(l+3))$ 
and $\psi(x_0F_2 - cb_1x_2^3) = \eta(x_0F_2 - cb_1x_2^3) = x_0 \in 
\tH^0(\sco_{L_1}(1))$. It follows that $\psi$ factorizes as$\, :$  
\[
\sci_{X \cup C} \overset{\displaystyle \psi_0^\prim}{\lra} \sco_{L_1}(-l-3) 
\overset{\displaystyle x_0}{\lra} \sco_{L_1}(-l-2)
\]
with $\psi_0^\prim$ an epimorphism and that one has an exact sequence$\, :$ 
\[
0 \lra I(L_1^{(1)} \cup C) \lra I(X \cup C) 
\xra{\displaystyle \tH^0_\ast(\psi_0^\prim)} S(L_1)(-l-3) \lra 0
\]   
from which one gets the generators of $I(X \cup C)$ from the statement. 
Using this exact sequence one can also get a graded free resolution of 
$I(X \cup C)$.

(d) In this case, by Claims 2 and 3, $\text{Im}\, \psi = 
x_0x_1\sco_{L_1}(-l-4)$. $q_4F_2$ belongs to $\tH^0(\sci_{X \cup C}(l+4))$ 
and $\psi(q_4F_2) = \eta(q_4F_2) = x_0x_1 \in \tH^0(\sco_{L_1}(2))$. 
It follows that $\psi$ factorizes as$\, :$  
\[
\sci_{X \cup C} \overset{\displaystyle \psi^\secund}{\lra} \sco_{L_1}(-l-4) 
\xra{\displaystyle x_0x_1} \sco_{L_1}(-l-2)
\]
with $\psi^\secund$ an epimorphism and that one has an exact sequence$\, :$ 
\[
0 \lra I(L_1^{(1)} \cup C) \lra I(X \cup C) 
\xra{\displaystyle \tH^0_\ast(\psi^\secund)} S(L_1)(-l-4) \lra 0
\]   
from which one gets the generators of $I(X \cup C)$ from the statement. 
Using this exact sequence one can also get a graded free resolution of 
$I(X \cup C)$.  
\end{proof} 

\begin{prop}\label{P:reshoxcupc4} 
Under the hypothesis of Prop.~\ref{P:genixcupc4}$\, :$ 

\emph{(a)} If $x_0x_1 \mid b$, i.e., if $b = x_0x_1b_2$ then the graded 
$S$-module ${\fam0 H}^0_\ast(\sco_{X\cup C})$ admits the following 
free resolution$\, :$ 
\[
0 \lra \begin{matrix} S(-4)\\ \oplus\\ S(l-4) \end{matrix} 
\overset{\displaystyle \delta_2}{\lra} 
\begin{matrix} S(-1)\\ \oplus\\ S(-3)\\ \oplus\\ 2S(l-3) \end{matrix} 
\overset{\displaystyle \delta_1}{\lra} 
\begin{matrix} S\\ \oplus\\ S(l-2) \end{matrix} 
\overset{\displaystyle \delta_0}{\lra} 
{\fam0 H}^0_\ast(\sco_{X \cup C}) \lra 0 
\]
with $\delta_0 = (1\, ,\, x_0x_1e_1)$ and with $\delta_1$ and $\delta_2$ defined 
by the matrices$\, :$ 
\[
\begin{pmatrix} 
x_3 & x_2q_4 & 0 & 0\\
-b_2 & -a & x_2 & x_3 
\end{pmatrix}\, ,\  
\begin{pmatrix}
-x_2q_4 & 0\\
x_3 & 0\\
-q_4b_2 & -x_3\\
a & x_2
\end{pmatrix}\, .
\]

\emph{(b)} If $x_0 \mid b$ but $x_1 \nmid b$, i.e., if $b = x_0b_0$ 
with $x_1 \nmid b_0$ then the graded $S$-module 
${\fam0 H}^0_\ast(\sco_{X\cup C})$ admits the following free resolution$\, :$ 
\[
0 \lra \begin{matrix} S(-4)\\ \oplus\\ S(l-3) \end{matrix} 
\overset{\displaystyle \delta_2}{\lra} 
\begin{matrix} S(-1)\\ \oplus\\ S(-3)\\ \oplus\\ 2S(l-2) \end{matrix} 
\overset{\displaystyle \delta_1}{\lra} 
\begin{matrix} S\\ \oplus\\ S(l-1) \end{matrix} 
\overset{\displaystyle \delta_0}{\lra} 
{\fam0 H}^0_\ast(\sco_{X \cup C}) \lra 0 
\]
with $\delta_0 = (1\, ,\, x_0e_1)$ and with $\delta_1$ and $\delta_2$ defined 
by the matrices$\, :$ 
\[
\begin{pmatrix} 
x_3 & x_2q_4 & 0 & 0\\
-b_0 & -x_1a & x_2 & x_3 
\end{pmatrix}\, ,\  
\begin{pmatrix}
-x_2q_4 & 0\\
x_3 & 0\\
-q_4b_0 & -x_3\\
x_1a & x_2
\end{pmatrix}\, .
\]

\emph{(c)} If $x_1 \mid b$ but $x_0 \nmid b$, i.e., if $b = x_1b_1$ 
with $x_0 \nmid b_1$ then the graded $S$-module 
${\fam0 H}^0_\ast(\sco_{X\cup C})$ admits the following free resolution$\, :$ 
\[
0 \lra \begin{matrix} S(-4)\\ \oplus\\ S(l-3) \end{matrix} 
\overset{\displaystyle \delta_2}{\lra} 
\begin{matrix} S(-1)\\ \oplus\\ S(-3)\\ \oplus\\ 2S(l-2) \end{matrix} 
\overset{\displaystyle \delta_1}{\lra} 
\begin{matrix} S\\ \oplus\\ S(l-1) \end{matrix} 
\overset{\displaystyle \delta_0}{\lra} 
{\fam0 H}^0_\ast(\sco_{X \cup C}) \lra 0 
\]
with $\delta_0 = (1\, ,\, x_1e_1)$ and with $\delta_1$ and $\delta_2$ defined 
by the matrices$\, :$ 
\[
\begin{pmatrix} 
x_3 & x_2q_4 & 0 & 0\\
-b_1 & -x_0a & x_2 & x_3 
\end{pmatrix}\, ,\  
\begin{pmatrix}
-x_2q_4 & 0\\
x_3 & 0\\
-q_4b_1 & -x_3\\
x_0a & x_2
\end{pmatrix}\, .
\]

\emph{(d)} If $x_0 \nmid b$ and $x_1 \nmid b$ then the graded $S$-module 
${\fam0 H}^0_\ast(\sco_{X\cup C})$ admits the following free resolution$\, :$ 
\[
0 \lra \begin{matrix} S(-4)\\ \oplus\\ S(l-2) \end{matrix} 
\overset{\displaystyle \delta_2}{\lra} 
\begin{matrix} S(-1)\\ \oplus\\ S(-3)\\ \oplus\\ 2S(l-1) \end{matrix} 
\overset{\displaystyle \delta_1}{\lra} 
\begin{matrix} S\\ \oplus\\ S(l) \end{matrix} 
\overset{\displaystyle \delta_0}{\lra} 
{\fam0 H}^0_\ast(\sco_{X \cup C}) \lra 0 
\]
with $\delta_0 = (1\, ,\, e_1)$ and with $\delta_1$ and $\delta_2$ defined 
by the matrices$\, :$ 
\[
\begin{pmatrix} 
x_3 & x_2q_4 & 0 & 0\\
-b & -x_0x_1a & x_2 & x_3 
\end{pmatrix}\, ,\  
\begin{pmatrix}
-x_2q_4 & 0\\
x_3 & 0\\
-q_4b & -x_3\\
x_0x_1a & x_2
\end{pmatrix}\, . 
\]
\end{prop}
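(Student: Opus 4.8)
The plan is to imitate the proofs of Prop.~\ref{P:reshoxcupc2} and Prop.~\ref{P:reshoxcupc3}: realize $\sco_{X\cup C}$ as an extension of $\sco_{L_1\cup C}$ by a rank-one $\sco_{L_1}$-module and then resolve $\tH^0_\ast(\sco_{X\cup C})$ by a horseshoe. First I would record the structure of $L_1\cup C$. Since $C\subset\{x_3=0\}$ and $q_4=x_0x_1+cx_2^2$ is not divisible by $x_2$, working modulo $x_3$ in $k[x_0,x_1,x_2]$ gives $(x_2)\cap(q_4)=(x_2q_4)$, so $I(L_1\cup C)=(x_3,\,x_2q_4)$ is a complete intersection of type $(1,3)$. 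In particular $L_1\cup C$ is arithmetically CM and $\tH^0_\ast(\sco_{L_1\cup C})=S/(x_3,x_2q_4)$ has the Koszul resolution
\[
0\lra S(-4)\xra{\begin{pmatrix}-x_2q_4\\ x_3\end{pmatrix}} S(-1)\oplus S(-3)\xra{(x_3,\,x_2q_4)} S\lra \tH^0_\ast(\sco_{L_1\cup C})\lra 0\,.
\]

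Next I would apply Lemma~\ref{L:ycupt} with $Z=X$, $Y=L_1$ (note $L_1\subseteq X$ since $X$ is a double structure on $L_1$) and $T=C$. As $\sci_{L_1}^2\subseteq\sci_X$, part~(c) shows the cokernel of the composite
\[
\phi:\sci_{L_1\cup C}\hookrightarrow\sci_{L_1}\xra{\pi}\sci_{L_1}/\sci_X\simeq\sco_{L_1}(l)
\]
is supported on $L_1\cap C=\{P_0,P_1\}$, and part~(a) yields $0\to\operatorname{Im}\phi\times\{0\}\to\sco_{X\cup C}\to\sco_{L_1\cup C}\to 0$. Recalling from Subsection~\ref{SS:p2} that $\pi$ sends $x_2\mapsto a$, $x_3\mapsto b$ and kills $\sci_{L_1}^2$, and using $x_2q_4=x_0x_1x_2+cx_2^3$, I obtain $\phi(x_3)=b$ and $\phi(x_2q_4)=x_0x_1a$.

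The crux is to identify $\operatorname{Im}\phi$ as a subsheaf of the invertible sheaf $\sco_{L_1}(l)$ on $L_1\cong\p^1$ (coordinates $x_0,x_1$, with $P_0=(1:0)$, $P_1=(0:1)$): it is $\sco_{L_1}(l)$ twisted down by the common divisor of the sections $b$ and $x_0x_1a$. Since $a,b$ are coprime, $\gcd(b,x_0x_1a)=\gcd(b,x_0x_1)$, and after removing this factor the two remaining sections have no common zero, hence generate the full line bundle. This yields $\operatorname{Im}\phi=x_0x_1\sco_{L_1}(l-2),\,x_0\sco_{L_1}(l-1),\,x_1\sco_{L_1}(l-1),\,\sco_{L_1}(l)$ in cases (a)--(d), exactly matching the divisibility hypotheses on $b$. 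Consequently, as in the analogous propositions, one gets a short exact sequence
\[
0\lra S(L_1)(l-\mu)\lra \tH^0_\ast(\sco_{X\cup C})\lra \tH^0_\ast(\sco_{L_1\cup C})\lra 0
\]
with $\mu=2,1,1,0$, the left map sending $1\in S(L_1)$ to the element of $\tH^0_\ast(\sco_{X\cup C})$ whose only nonzero component lies in $\tH^0_\ast(\sco_X)$ and equals the image of the appropriate monomial multiple of $e_1$, namely $x_0x_1e_1$, $x_0e_1$, $x_1e_1$, $e_1$.

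Finally I would splice the $(l-\mu)$-twist of the Koszul resolution $0\to S(-2)\to 2S(-1)\to S\to S(L_1)\to 0$ against the resolution of $\tH^0_\ast(\sco_{L_1\cup C})$ via the horseshoe lemma. In case (a) this gives
\[
0\lra S(-4)\oplus S(l-4)\xra{\delta_2} S(-1)\oplus S(-3)\oplus 2S(l-3)\xra{\delta_1} S\oplus S(l-2)\xra{\delta_0}\tH^0_\ast(\sco_{X\cup C})\lra 0\,,
\]
with the twists in (b),(c),(d) shifting by $\mu$ as in the statement. Because every nonzero entry produced by the horseshoe lies in $S_+$ (the connecting entries being built from $x_0,x_1,x_2,x_3$, together with $a$ and the relevant cofactor of $b$), the resolution is already minimal, so Remark~\ref{R:cancellation} is not needed. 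The only genuinely computational step is transcribing the connecting maps into the explicit matrices $\delta_1,\delta_2$; I expect the determination of $\operatorname{Im}\phi$ in the four cases to be the single point demanding care, precisely as in Prop.~\ref{P:reshoxcupc2} and Prop.~\ref{P:reshoxcupc3}.
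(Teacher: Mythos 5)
Your proposal is correct and follows essentially the same route as the paper's proof: the identification $I(L_1\cup C)=(x_3,\,x_2q_4)$ with its Koszul resolution, the extension $0\to \text{Im}\,\phi\times\{0\}\to\sco_{X\cup C}\to\sco_{L_1\cup C}\to 0$ from Lemma~\ref{L:ycupt} with $\phi(x_3)=b$ and $\phi(x_2q_4)=x_0x_1a$, the case-by-case determination of $\text{Im}\,\phi$ from the common factor of $b$ and $x_0x_1$ (the paper sandwiches $x_0x_1\sco_{L_1}(l-2)\subseteq\text{Im}\,\phi\subseteq\sco_{L_1}(l)$ and then argues each case, which amounts to your gcd observation), and the horseshoe splice. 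Only your closing claim that the resulting resolution is automatically minimal is inessential and can fail in low-degree cases (e.g.\ $b_2$ is a nonzero constant when $l=1$ in case (a)), but the statement only asserts a free resolution, so this does not affect the argument.
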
 

\begin{proof}
One has $I(L_1 \cup C) = (x_3,\, x_2q_4)$ whence an exact sequence$\, :$ 
\[
0 \lra S(-4) \xra{\begin{pmatrix} -x_2q_4\\ x_3 \end{pmatrix}} 
\begin{matrix} S(-1)\\ \oplus\\ S(-3) \end{matrix} 
\xra{\displaystyle (x_3\, ,\, x_2q_4)} S \lra 
\tH^0_\ast(\sco_{L_1 \cup C}) \lra 0\, .
\]
Now, by Lemma~\ref{L:ycupt}, one has an exact sequence$\, :$  
\[
0 \lra \text{Im}\, \phi \times \{0\} \lra \sco_{X\cup C} \lra \sco_{L_1\cup C} 
\lra 0 
\]
where $\phi$ is the composite morphism$\, :$ 
\[
\sci_{L_1 \cup C} \lra \sci_{L_1} \lra \sci_{L_1}/\sci_{L_1}^2 \simeq 
2\sco_{L_1}(-1) \xra{\displaystyle (a\, ,\, b)} \sco_{L_1}(l)\, . 
\] 
Moreover, $\Cok \phi$ is an $\sco_{L_1 \cap C}= \sco_{\{P_0,P_1\}}$-module hence 
$x_0x_1\sco_{L_1}(l-2) \subseteq \text{Im}\, \phi \subseteq \sco_{L_1}(l)$. 
Notice that$\, :$ 
\[
\phi(x_3) = b\, ,\  \phi(x_2q_4) = x_0x_1a\, .
\]

(a) In this case $\text{Im}\, \phi = x_0x_1\sco_{L_1}(l-2)$ and one has an 
exact sequence$\, :$ 
\[
0 \lra S(L_1)(l-2) \lra \tH^0_\ast(\sco_{X \cup C}) \lra 
\tH^0_\ast(\sco_{L_1 \cup C}) \lra 0 
\]
where the left morphism maps $1 \in S(L_1)$ to the element of 
$\tH^0(\sco_{X \cup C}(-l+2))$ whose image into $\tH^0(\sco_X(-l+2)) \oplus 
\tH^0(\sco_C(-l+2))$ is $(x_0x_1e_1,\, 0)$. 

(b) In this case $\text{Im}\, \phi = x_0\sco_{L_1}(l-1)$ and one has an 
exact sequence$\, :$ 
\[
0 \lra S(L_1)(l-1) \lra \tH^0_\ast(\sco_{X \cup C}) \lra 
\tH^0_\ast(\sco_{L_1 \cup C}) \lra 0 
\]
where the left morphism maps $1 \in S(L_1)$ to the element of 
$\tH^0(\sco_{X \cup C}(-l+1))$ whose image into $\tH^0(\sco_X(-l+1)) \oplus 
\tH^0(\sco_C(-l+1))$ is $(x_0e_1,\, 0)$. 

(c) In this case $\text{Im}\, \phi = x_1\sco_{L_1}(l-1)$ and one has an 
exact sequence$\, :$ 
\[
0 \lra S(L_1)(l-1) \lra \tH^0_\ast(\sco_{X \cup C}) \lra 
\tH^0_\ast(\sco_{L_1 \cup C}) \lra 0 
\]
where the left morphism maps $1 \in S(L_1)$ to the element of 
$\tH^0(\sco_{X \cup C}(-l+1))$ whose image into $\tH^0(\sco_X(-l+1)) \oplus 
\tH^0(\sco_C(-l+1))$ is $(x_1e_1,\, 0)$.

(d) In this case $\text{Im}\, \phi = \sco_{L_1}(l)$ and one has an 
exact sequence$\, :$ 
\[
0 \lra S(L_1)(l) \lra \tH^0_\ast(\sco_{X \cup C}) \lra 
\tH^0_\ast(\sco_{L_1 \cup C}) \lra 0 
\]
where the left morphism maps $1 \in S(L_1)$ to the element of 
$\tH^0(\sco_{X \cup C}(-l))$ whose image into $\tH^0(\sco_X(-l)) \oplus 
\tH^0(\sco_C(-l))$ is $(e_1,\, 0)$.   
\end{proof}

\subsection{Union of two double lines}\label{SS:twodoublelines} 
Let $X$, $X^\prim$ and $X^\secund$ be double structures on the lines $L_1$, 
$L_1^\prim$ and $L_2$, respectively, defined by exact sequences$\, :$ 
\begin{gather*}
0 \lra \sci_X \lra \sci_{L_1} \overset{\displaystyle \pi}{\lra} 
\sco_{L_1}(l) \lra 0\, ,\\
0 \lra \sci_{X^\prim} \lra \sci_{L_1^\prim} 
\overset{\displaystyle \pi^\prim}{\lra} 
\sco_{L_1^\prim}(l^\prim) \lra 0\, ,\\
0 \lra \sci_{X^\secund} \lra \sci_{L_2} 
\overset{\displaystyle \pi^\secund}{\lra} 
\sco_{L_2}(l^\secund) \lra 0\, ,
\end{gather*} 
where $\pi$, $\pi^\prim$, $\pi^\secund$ are composite morphisms$\, :$ 
\begin{gather*}
\sci_{L_1} \lra \sci_{L_1}/\sci_{L_1}^2 \simeq 2\sco_{L_1}(-1) 
\xra{\displaystyle (a\, ,\, b)} \sco_{L_1}(l)\, ,\\
\sci_{L_1^\prim} \lra \sci_{L_1^\prim}/\sci_{L_1^\prim}^2 \simeq 2\sco_{L_1^\prim}(-1) 
\xra{\displaystyle (a^\prim,\, b^\prim)} \sco_{L_1^\prim}(l^\prim)\, ,\\
\sci_{L_2} \lra \sci_{L_2}/\sci_{L_2}^2 \simeq 2\sco_{L_2}(-1) 
\xra{\displaystyle (a^\secund,\, b^\secund)} \sco_{L_2}(l^\secund)\, . 
\end{gather*}
Putting $F_2 := -b(x_0,x_1)x_2 + a(x_0,x_1)x_3$, $F_2^\prim := 
-b^\prim(x_2,x_3)x_0 + a^\prim(x_2,x_3)x_1$ and $F_2^\secund := 
-b^\secund(x_0,x_2)x_1 + a^\secund(x_0,x_2)x_3$ one has$\, :$ 
\begin{gather*}
I(X) = (F_2,\, x_2^2,\, x_2x_2,\, x_3^2)\, ,\  
I(X^\prim) = (F_2^\prim,\, x_0^2,\, x_0x_1,\, x_1^2)\, ,\\
I(X^\secund) = (F_2^\secund,\, x_1^2,\, x_1x_3,\, x_3^2)\, .
\end{gather*} 
Recall, also, from the beginning of Subsection~\ref{SS:doublecupaline}, the 
exact sequences$\, :$ 
\begin{gather*}
0 \lra \sci_{L_1^{(1)}} \lra \sci_X \overset{\displaystyle \eta}{\lra} 
\sco_{L_1}(-l-2) \lra 0\, ,\\
0 \lra \sci_{L_1^{\prim (1)}} \lra \sci_{X^\prim} 
\overset{\displaystyle \eta^\prim}{\lra} 
\sco_{L_1^\prim}(-l^\prim -2) \lra 0\, ,\\
0 \lra \sci_{L_2^{(1)}} \lra \sci_{X^\secund}  
\overset{\displaystyle \eta^\secund}{\lra} 
\sco_{L_2}(-l^\secund -2) \lra 0\, . 
\end{gather*}
The following result is an immediate consequence of Lemma~\ref{L:zcupw}$\, :$ 

\begin{lemma}\label{L:l1(1)cupl1prim(1)} 
$I(L_1^{(1)} \cup L_1^{\prim (1)}) = (x_0^2,\, x_0x_1,\, x_1^2)(x_2^2,\, x_2x_3,\, 
x_3^2)$ and the tensor product of the complexes$\, :$ 
\[
2S(-3) \xra{\begin{pmatrix} -x_3 & 0\\ x_2 & -x_3\\ 0 & x_2 \end{pmatrix}} 
3S(-2)\, ,\  
2S(-3) \xra{\begin{pmatrix} -x_1 & 0\\ x_0 & -x_1\\ 0 & x_0 \end{pmatrix}} 
3S(-2) 
\]
is a minimal graded free resolution of this ideal. 
\qed
\end{lemma}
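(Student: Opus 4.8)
The plan is to obtain this lemma as a direct application of Lemma~\ref{L:zcupw}, taking $Z = L_1^{(1)}$ and $W = L_1^{\prim (1)}$. First I would record the two defining ideals: $L_1^{(1)}$ is cut out by $\sci_{L_1}^2 = (x_2,x_3)^2 = (x_2^2,\, x_2x_3,\, x_3^2)$ and $L_1^{\prim (1)}$ by $\sci_{L_1^\prim}^2 = (x_0,x_1)^2 = (x_0^2,\, x_0x_1,\, x_1^2)$. Since $L_1 = \{x_2 = x_3 = 0\}$ and $L_1^\prim = \{x_0 = x_1 = 0\}$ meet in the empty set, and the two thickenings are supported on $L_1$ and $L_1^\prim$ respectively, the schemes $Z$ and $W$ are disjoint, so the basic hypothesis $Z \cap W = \emptyset$ of Lemma~\ref{L:zcupw} holds.

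Next I would verify the remaining hypotheses, namely that $Z$ and $W$ are arithmetically CM of pure codimension $2$. Each is supported on a line, hence has pure codimension $2$; and the square of the ideal of a line is resolved by a Hilbert--Burch (Eagon--Northcott) complex $0 \lra 2S(-3) \lra 3S(-2) \lra \sci_{L_1}^2 \lra 0$, whose differential is precisely one of the two matrices displayed in the statement and whose signed maximal minors recover the three quadratic generators. As this resolution has length equal to the codimension, the Auslander--Buchsbaum formula gives $\operatorname{depth} S/\sci_{L_1}^2 = 4 - 2 = 2 = \dim S/\sci_{L_1}^2$, so $Z$ (and, symmetrically, $W$) is arithmetically CM. With both schemes ACM of pure codimension $2$, part (b) of Lemma~\ref{L:zcupw} yields $I(L_1^{(1)} \cup L_1^{\prim (1)}) = I(L_1^{(1)})\, I(L_1^{\prim (1)}) = (x_0^2,\, x_0x_1,\, x_1^2)(x_2^2,\, x_2x_3,\, x_3^2)$, and part (a) gives that the tensor product of the two Hilbert--Burch complexes is a minimal graded free resolution of this product. (Since $W$ is ACM, its resolution has $B_2 = 0$, so both inputs are two-term complexes and the tensor product has length $2$, as it must.)

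As a sanity check I would confirm the numerics of the tensor complex: its terms are $C_0 \otimes D_0 = 9S(-4)$, $(C_1 \otimes D_0) \oplus (C_0 \otimes D_1) = 12S(-5)$ and $C_1 \otimes D_1 = 4S(-6)$, the nine generators in degree $4$ matching the nine products of the three quadratic generators of each factor. There is no genuine obstacle here: the only points requiring attention are confirming that the two listed matrices really are minimal resolutions of $(x_2,x_3)^2$ and $(x_0,x_1)^2$ (immediate from Hilbert--Burch, with minimality automatic as all entries are linear) and the purely formal bookkeeping of the total complex of a tensor product; everything substantive is already packaged in Lemma~\ref{L:zcupw}.
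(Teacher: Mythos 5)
Your proof is correct and follows exactly the route the paper takes: the paper presents this lemma as an immediate consequence of Lemma~\ref{L:zcupw}, applied with $Z = L_1^{(1)}$ and $W = L_1^{\prim(1)}$, both disjoint and arithmetically CM of pure codimension $2$ with the displayed Hilbert--Burch resolutions. Your verification of the hypotheses (disjointness, the two-term minimal resolutions of $(x_2,x_3)^2$ and $(x_0,x_1)^2$, and ACM-ness via Auslander--Buchsbaum) and the numerical check of the tensor complex are all accurate.
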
 

\begin{lemma}\label{L:genixcupl1prim(1)} 
With the notation introduced at the beginning of this subsection$\, :$ 

\emph{(a)} If $l = -1$ then $I(X \cup L_1^{\prim (1)}) = Sx_0^2F_2 + Sx_0x_1F_2 
+ Sx_1^2F_2 + I(L_1^{(1)} \cup L_1^{\prim (1)})$. 

\emph{(b)} If $l = 0$ then $I(X \cup L_1^{\prim (1)}) = Sx_0F_2 + Sx_1F_2 + 
I(L_1^{(1)} \cup L_1^{\prim (1)})$. 

\emph{(c)} If $l \geq 1$ then $I(X \cup L_1^{\prim (1)}) = SF_2 +  
I(L_1^{(1)} \cup L_1^{\prim (1)})$. 
\end{lemma}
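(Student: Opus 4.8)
The plan is to follow the method used repeatedly in this appendix (compare Prop.~\ref{P:genixcupl1prim} and Prop.~\ref{P:genixcupl1primcuplprim}): tensor the defining sequence of $X$ by $\sci_{L_1^{\prime (1)}}$ and then apply $\tH^0_\ast(-)$. Concretely, I would start from the exact sequence recalled at the beginning of Subsection~\ref{SS:doublecupaline},
\[
0 \lra \sci_{L_1^{(1)}} \lra \sci_X \overset{\displaystyle \eta}{\lra} \sco_{L_1}(-l-2) \lra 0,
\]
with $\eta(F_2) = 1 \in \tH^0(\sco_{L_1})$, and tensor it over $\sco_\p$ by $\sci_{L_1^{\prime (1)}}$. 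Since $L_1 \cap L_1^\prim = \emptyset$, the sheaf $\sci_{L_1^{\prime (1)}}$ is free of rank $1$ in a neighbourhood of $L_1 = \text{Supp}\, \sco_{L_1}(-l-2)$, so the higher Tor sheaves of $\sco_{L_1}(-l-2)$ against $\sci_{L_1^{\prime (1)}}$ are supported on the empty intersection and vanish. Thus the tensored sequence stays short exact and, identifying $\sci_X \otimes \sci_{L_1^{\prime (1)}}$ with $\sci_X \cap \sci_{L_1^{\prime (1)}} = \sci_{X \cup L_1^{\prime (1)}}$ and $\sci_{L_1^{(1)}} \otimes \sci_{L_1^{\prime (1)}}$ with $\sci_{L_1^{(1)} \cup L_1^{\prime (1)}}$ (Lemma~\ref{L:l1(1)cupl1prim(1)}), I obtain
\[
0 \lra \sci_{L_1^{(1)} \cup L_1^{\prime (1)}} \lra \sci_{X \cup L_1^{\prime (1)}} \overset{\displaystyle \psi}{\lra} \sco_{L_1}(-l-2) \lra 0,
\]
where $\psi$ is the composite $\sci_{X \cup L_1^{\prime (1)}} \hookrightarrow \sci_X \overset{\eta}{\lra} \sco_{L_1}(-l-2)$.

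Applying $\tH^0_\ast(-)$ gives the left exact sequence
\[
0 \lra I(L_1^{(1)} \cup L_1^{\prime (1)}) \lra I(X \cup L_1^{\prime (1)}) \overset{\displaystyle \tH^0_\ast(\psi)}{\lra} S(L_1)(-l-2),
\]
and the whole proof reduces to identifying the image $M$ of $\tH^0_\ast(\psi)$: once $M$ is known, $I(X \cup L_1^{\prime (1)})$ is generated by $I(L_1^{(1)} \cup L_1^{\prime (1)})$ together with any lifts of a generating set of $M$. The behaviour of $M$ is governed entirely by $\deg a = \deg b = l+1$, through the question of which multiples $gF_2$ of $F_2 = -b(x_0,x_1)x_2 + a(x_0,x_1)x_3$ lie in $\sci_{L_1^{\prime (1)}} = (x_0,x_1)^2$. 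When $l \geq 1$ one has $a,\, b \in (x_0,x_1)^2$, so $F_2$ itself lies in $(x_0,x_1)^2$, hence $\psi(F_2) = 1$ and $M = S(L_1)(-l-2)$; lifting the generator $1$ by $F_2$ yields (c). When $l = 0$ one has $F_2 \in (x_0,x_1) \setminus (x_0,x_1)^2$ while $x_0F_2,\, x_1F_2 \in (x_0,x_1)^2$, so $M \supseteq S(L_1)_+(-2)$, with $x_0,\, x_1$ lifted by $x_0F_2,\, x_1F_2$; and when $l = -1$ one has $F_2 \in (x_2,x_3)$ and only $x_0^2F_2,\, x_0x_1F_2,\, x_1^2F_2 \in (x_0,x_1)^2$, giving $M \supseteq S(L_1)_{\geq 2}(-1)$, with the three quadrics lifted by $x_0^2F_2,\, x_0x_1F_2,\, x_1^2F_2$.

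The one point requiring genuine verification — and the main obstacle — is the reverse inclusion $M \subseteq S(L_1)_+(-2)$ in case (b) and $M \subseteq S(L_1)_{\geq 2}(-1)$ in case (a), i.e.\ that $\psi$ carries nothing into the lower degrees. For this I would compute the low-degree pieces of $I(X \cup L_1^{\prime (1)}) = I(X) \cap I(L_1^{\prime (1)})$ directly: up to degree $l+2$ the generators of $I(X)$ all involve $x_2$ or $x_3$, whereas $I(L_1^{\prime (1)})$ is spanned by monomials in $x_0,\, x_1$ alone, so $I(X)_j \cap I(L_1^{\prime (1)})_j = 0$ in the relevant small degrees (namely $j = 1,\, 2$ when $l = -1$ and $j = 2$ when $l = 0$). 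This forces $\tH^0_\ast(\psi)$ to vanish there, pins $M$ down to exactly the claimed submodule, and finishes the proof; the explicit generating sets of $S(L_1)_+$ and $S(L_1)_{\geq 2}$ recorded after Prop.~\ref{P:genizprim} and in Lemma~\ref{L:s(l1)geq2} then make the generator lists in (a) and (b) transparent, and also provide, via the resulting exact sequences, graded free resolutions of $I(X \cup L_1^{\prime (1)})$ in each case.
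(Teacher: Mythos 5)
Your proposal is correct and follows essentially the same route as the paper: tensor the sequence $0 \ra \sci_{L_1^{(1)}} \ra \sci_X \ra \sco_{L_1}(-l-2) \ra 0$ by $\sci_{L_1^{\prim (1)}}$ (exactness being automatic since $L_1 \cap L_1^\prim = \emptyset$), and then pin down the image of $\tH^0_\ast(\psi)$ by combining the vanishing $\tH^0(\sci_{X \cup L_1^{\prim (1)}}(2)) = 0$ in low degree with the explicit lifts $F_2$, $x_iF_2$, $x_ix_jF_2$ of the module generators of $S(L_1)$, $S(L_1)_+$, $S(L_1)_{\geq 2}$. The only divergence is in case (a), where the paper instead invokes Lemma~\ref{L:zcupw} directly (for $l = -1$ the curve $X$ is a complete intersection, so $I(X \cup L_1^{\prim (1)}) = I(X)I(L_1^{\prim (1)})$), while you treat all three cases uniformly; both work.
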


\begin{proof}
Tensorizing by $\sci_{L_1^{\prim (1)}}$ the exact sequence$\, :$ 
\[
0 \lra \sci_{L_1^{(1)}} \lra \sci_X \overset{\displaystyle \eta}{\lra} 
\sco_{L_1}(-l-2) \lra 0 
\]
one gets an exact sequence$\, :$ 
\[
0 \lra \sci_{L_1^{(1)} \cup L_1^{\prim (1)}} \lra \sci_{X \cup L_1^{\prim (1)}} 
\overset{\displaystyle \psi}{\lra} \sco_{L_1}(-l-2) \lra 0\, . 
\]
Lemma~\ref{L:l1(1)cupl1prim(1)} implies that 
$\tH^1(\sci_{L_1^{(1)} \cup L_1^{\prim (1)}}(i)) = 0$ for $i \geq 3$. On the other 
hand, one has $\tH^0(\sci_{L_1 \cup L_1^{\prim (1)}}(2)) = 0$ (by 
Lemma~\ref{L:zcupw}) hence $\tH^0(\sci_{X \cup L_1^{\prim (1)}}(2)) = 0$. 
One deduces that$\, :$ 
\[
\text{Im}\, \tH^0_\ast(\psi) = {\textstyle \bigoplus_{i \geq 3}} 
\tH^0(\sco_{L_1}(-l-2+i))\, . 
\] 

(a) In this case $X$ is the divisor $2L_1$ on the plane $H \supset L_1$ of 
equation $F_2 = 0$ hence a complete intersection. The assertion from the 
statement follows, now, from Lemma~\ref{L:zcupw}. Notice that, using the same 
lemma, one can get a minimal graded free resolution of 
$I(X \cup L_1^{\prim (1)})$. 

(b) In this case one has an exact sequence$\, :$ 
\[
0 \lra I(L_1^{(1)} \cup L_1^{\prim (1)}) \lra I(X \cup L_1^{\prim (1)})  
\xra{\displaystyle \tH^0_\ast(\psi)} S(L_1)_+(-l-2) \lra 0\, .
\]  
The assertion from the statement follows noticing that $x_0F_2$ and $x_1F_2$ 
belong to $I(X \cup L_1^{\prim (1)})$. Notice, also, that using the above exact 
sequence one can get a graded free resolution of $I(X \cup L_1^{\prim (1)})$ 
(a minimal free resolution of the graded $S$-module $S(L_1)_+$ can be found 
in the discussion following Prop.~\ref{P:genizprim}).   

(c) In this case one has an exact sequence$\, :$ 
\[
0 \lra I(L_1^{(1)} \cup L_1^{\prim (1)}) \lra I(X \cup L_1^{\prim (1)})  
\xra{\displaystyle \tH^0_\ast(\psi)} S(L_1)(-l-2) \lra 0\, .
\]  
The assertion from the statement follows noticing that $F_2$  
belongs to $I(X \cup L_1^{\prim (1)})$. Notice, also, that using the above exact 
sequence one can get a graded free resolution of $I(X \cup L_1^{\prim (1)})$.
\end{proof}

\begin{prop}\label{P:genixcupxprim}
With the notation from the beginning of this subsection, assume that 
$l \geq l^\prim$. 

\emph{(a)} If $l = -1$ $($hence $l^\prim = -1$$)$ then $I(X \cup X^\prim) = 
Sx_0^2F_2 + Sx_0x_1F_2 + Sx_1^2F_2 + I(L_1^{(1)} \cup X^\prim)$. 

\emph{(b)} If $l = 0$ and $l^\prim = -1$ then $I(X \cup X^\prim) = 
Sx_0F_2 + Sx_1F_2 + I(L_1^{(1)} \cup X^\prim)$. 

\emph{(c)} If $l = 0$, $l^\prim = 0$ and $kF_2 \neq kF_2^\prim$ then 
$I(X \cup X^\prim) = Sx_0F_2 + Sx_1F_2 + I(L_1^{(1)} \cup X^\prim)$. 

\emph{(d)} If $l = 0$, $l^\prim = 0$ and $kF_2 = kF_2^\prim$ then 
$I(X \cup X^\prim) = SF_2 + I(L_1^{(1)} \cup L_1^{\prim (1)})$, i.e., 
$X \cup X^\prim$ is the divisor $2L_1 + 2L_1^\prim$ on the quadric surface 
$Q \subset \piii$ of equation $F_2 = 0$. 

\emph{(e)} If $l \geq 1$ then $I(X \cup X^\prim) = SF_2 + I(L_1^{(1)} \cup 
X^\prim)$.     
\end{prop}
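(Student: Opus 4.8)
The plan is to tensor, by $\sci_{X^\prim}$, the exact sequence
$$0 \lra \sci_{L_1^{(1)}} \lra \sci_X \overset{\eta}{\lra} \sco_{L_1}(-l-2) \lra 0$$
recalled at the beginning of this subsection, where $\eta$ sends $F_2$ to $1 \in \tH^0(\sco_{L_1})$ and $x_2^2, x_2x_3, x_3^2$ to $0$. Since $L_1 \subseteq X \subseteq L_1^{(1)}$ and $\sci_{L_1}\sci_X \subseteq \sci_{L_1}^2 = \sci_{L_1^{(1)}}$, Lemma~\ref{L:ycupt} applies with $T = X^\prim$ (equivalently, this is the result of the tensoring just described) and yields an exact sequence
$$0 \lra \sci_{L_1^{(1)} \cup X^\prim} \lra \sci_{X \cup X^\prim} \overset{\psi}{\lra} \sco_{L_1}(-l-2)\, ,$$
where $\psi$ agrees with $\eta$ on elements of $\sci_{X\cup X^\prim}$. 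As $L_1$ and $L_1^\prim$ are disjoint and $X^\prim$ is supported on $L_1^\prim$, one has $L_1^{(1)} \cap X^\prim = \emptyset$, so $\psi$ is in fact an epimorphism of sheaves by Lemma~\ref{L:ycupt}(b).

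Applying $\tH^0_\ast(-)$ then gives a short exact sequence of graded $S$-modules
$$0 \lra I(L_1^{(1)} \cup X^\prim) \lra I(X \cup X^\prim) \xra{\tH^0_\ast(\psi)} M \lra 0\, ,$$
where $M := \text{Im}\,\tH^0_\ast(\psi)$ is a graded submodule of $S(L_1)(-l-2)$. The tail $I(L_1^{(1)} \cup X^\prim)$ is already known: under the coordinate interchange $x_0 \leftrightarrow x_2$, $x_1 \leftrightarrow x_3$, which swaps $L_1$ and $L_1^\prim$, the scheme $L_1^{(1)} \cup X^\prim$ corresponds to the union of a double line on $L_1$ with $L_1^{\prim(1)}$, so its generators and a graded free resolution are furnished by Lemma~\ref{L:genixcupl1prim(1)} applied with the parameter $l^\prim$. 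It therefore remains only to determine $M$ in each case and to lift a minimal generating set of $M$ to explicit elements of $I(X \cup X^\prim)$.

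The determination of $M$ reduces to deciding which of $F_2$ and its multiples by $x_0, x_1$ lie in $I(X^\prim)$, since $\psi(F_2) = 1$, $\psi(x_0F_2) = x_0$, and so on. The coefficients $a, b$ of $F_2 = -bx_2 + ax_3$ are homogeneous of degree $l+1$ in $x_0, x_1$, hence lie in $(x_0,x_1)^{l+1}$; recalling $(x_0,x_1)^2 = (x_0^2, x_0x_1, x_1^2) \subseteq I(X^\prim)$, this at once places $F_2 \in I(X^\prim)$ when $l \geq 1$ (case (e), $M = S(L_1)(-l-2)$), places $x_0F_2, x_1F_2 \in I(X^\prim)$ when $l = 0$ (so $M \supseteq S(L_1)_+(-l-2)$), and places $x_0^2F_2, x_0x_1F_2, x_1^2F_2 \in I(X^\prim)$ when $l = -1$, which forces $l^\prim = -1$ (so $M \supseteq S(L_1)_{\geq 2}(-l-2)$, matching case (a)). The main obstacle is the borderline lowest-degree membership when $l = l^\prim = 0$: writing $I(X^\prim)_2 = kF_2^\prim + \langle x_0^2, x_0x_1, x_1^2\rangle$ and observing that every monomial of both $F_2$ and $F_2^\prim$ is of mixed type $x_ix_j$ with $i \in \{0,1\}$, $j \in \{2,3\}$, one sees that $F_2 \in I(X^\prim)$ precisely when $F_2 - cF_2^\prim \in \langle x_0^2, x_0x_1, x_1^2\rangle$ for some $c$, i.e.\ exactly when $kF_2 = kF_2^\prim$. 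This separates case (d) ($F_2 \in I(X \cup X^\prim)$, $M = S(L_1)(-l-2)$) from case (c) ($M = S(L_1)_+(-l-2)$); the analogous, slightly easier, check with $F_2^\prim$ linear settles case (b).

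Finally, in each case the generators of $I(X \cup X^\prim)$ are read off as the chosen lifts of the generators of $M$ together with the known generators of $I(L_1^{(1)} \cup X^\prim)$, and a graded free resolution is obtained by splicing the resolution of $I(L_1^{(1)} \cup X^\prim)$ with the resolution of $M$ (the standard resolution of $S(L_1)$, the resolution of $S(L_1)_+$ recorded after Prop.~\ref{P:genizprim}, or that of $S(L_1)_{\geq 2}$ from Lemma~\ref{L:s(l1)geq2}). Case (d) needs one extra simplification: there $F_2 = cF_2^\prim$, and since $l^\prim = 0$ the tail equals $Sx_2F_2^\prim + Sx_3F_2^\prim + I(L_1^{(1)} \cup L_1^{\prim(1)})$ by Lemma~\ref{L:genixcupl1prim(1)} (via the symmetry above), whose $F_2^\prim$-multiples are absorbed into $SF_2$; hence $I(X\cup X^\prim) = SF_2 + I(L_1^{(1)} \cup L_1^{\prim(1)})$, and recognizing each double line as a divisor on $Q = \{F_2 = 0\}$ by Lemma~\ref{L:l=-1deg2} identifies $X \cup X^\prim$ with $2L_1 + 2L_1^\prim$ on $Q$.
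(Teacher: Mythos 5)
Your overall framework --- tensorize by $\sci_{X^\prim}$, reduce to identifying the image $M$ of $\tH^0_\ast(\psi)$ inside $S(L_1)(-l-2)$, and decide the borderline degree by a monomial-type computation in $I(X^\prim)$ --- is the same as the paper's strategy for cases (b)--(e); the paper settles the borderline degree instead by computing $\tH^0(\sci_{X\cup X^\prim}(2))$ (geometrically in (b), via Prop.~\ref{P:genixcupl1prim} in (c) and (d)), and your direct degree-$2$ analysis is an acceptable substitute there. One point you should make explicit in (b) and (c): you prove only the inclusion $M \supseteq S(L_1)_+(-l-2)$ together with the vanishing of the single borderline graded piece; this pins down $M$ only because $S(L_1)(-l-2)/S(L_1)_+(-l-2)$ is one-dimensional and concentrated in that degree. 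Said once, the argument is complete for (b), (c), (d), (e).

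The genuine gap is in case (a). There $S(L_1)(-1)/S(L_1)_{\geq 2}(-1)$ is three-dimensional, and the inclusion $M \supseteq S(L_1)_{\geq 2}(-1)$ that you establish is in fact \emph{strict}: since $l = l^\prim = -1$, both $F_2$ and $F_2^\prim$ are linear forms, $X \cup X^\prim$ lies on the reducible quadric $F_2F_2^\prim = 0$, so $F_2F_2^\prim \in I(X)\cap I(X^\prim)$ and $\psi(F_2F_2^\prim) = \eta(F_2F_2^\prim) = \overline{F_2^\prim}$ is a non-zero element of $S(L_1)_1$. Hence $M$ contains a degree-$2$ generator that your list does not account for, and the parenthetical ``matching case (a)'' is unjustified as written. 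The paper's own proof of (a) abandons this approach entirely: for $l = l^\prim = -1$ both $X$ and $X^\prim$ are complete intersections with disjoint supports, so Lemma~\ref{L:zcupw}(b) gives $I(X \cup X^\prim) = I(X)I(X^\prim)$ (and the resolution as a tensor product). Note that $I(X)I(X^\prim)$ visibly contains $F_2F_2^\prim$, whereas the ideal displayed in item (a) of the statement has no element of degree $2$; so the displayed generator list must in any case be supplemented by $F_2F_2^\prim$, and your computation, once carried out to completion ($M_1 = 0$ but $M_2 = k\overline{F_2^\prim}$), actually detects this. For case (a) you should either compute $M$ honestly in degrees $1$ and $2$ or switch, as the paper does, to the product-of-ideals argument.
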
 

\noindent
Notice that, by Lemma~\ref{L:genixcupl1prim(1)}, 
\[
I(L_1^{(1)} \cup X^\prim) =  
\begin{cases}
Sx_2^2F_2^\prim + Sx_2x_3F_2^\prim + Sx_3^2F_2^\prim + 
I(L_1^{(1)} \cup L_1^{\prim (1)})\, , &
\text{if $l^\prim = -1\, ;$}\\ 
Sx_2F_2^\prim + Sx_3F_2^\prim + I(L_1^{(1)} \cup L_1^{\prim (1)})\, , &  
\text{if $l^\prim = 0\, ;$}\\ 
SF_2^\prim + I(L_1^{(1)} \cup L_1^{\prim (1)})\, , & 
\text{if $l^\prim \geq 1\, .$}
\end{cases} 
\]

\begin{proof}
Tensorizing by $\sci_{X^\prim}$ the exact sequence$\, :$ 
\[
0 \lra \sci_{L_1^{(1)}} \lra \sci_X \overset{\displaystyle \eta}{\lra} 
\sco_{L_1}(-l-2) \lra 0 
\] 
one gets an exact sequence$\, :$ 
\[
0 \lra \sci_{L_1^{(1)} \cup X^\prim} \lra \sci_{X \cup X^\prim} 
\overset{\displaystyle {\widetilde \psi}}{\lra} \sco_{L_1}(-l-2) \lra 0\, .
\]
Recall, from the proof of Lemma~\ref{L:genixcupl1prim(1)}, that one has an 
exact sequence$\, :$ 
\[
0 \lra \sci_{L_1^{(1)} \cup L_1^{\prim (1)}} \lra \sci_{L_1^{(1)} \cup X^\prim} 
\overset{\displaystyle \psi^\prim}{\lra} \sco_{L_1^\prim}(-l^\prim -2) \lra 0\, . 
\]
Using Lemma~\ref{L:l1(1)cupl1prim(1)} one deduces that 
$\tH^1(\sci_{L_1^{(1)} \cup X^\prim}(i)) = 0$ for $i \geq \text{max}(3,\, l^\prim + 
1)$. 

\vskip2mm

(a) In this case $X$ and $X^\prim$ are complete intersections hence, by 
Lemma~\ref{L:zcupw}, $I(X \cup X^\prim) = I(X)I(X^\prim)$. From the same lemma 
one can get a minimal free resolution of $I(X \cup X^\prim)$. 

(b) In this case one has an exact sequence$\, :$ 
\[
0 \lra \sci_{L_1^{(1)} \cup X^\prim} \lra \sci_{X \cup X^\prim} 
\overset{\displaystyle {\widetilde \psi}}{\lra} \sco_{L_1}(-2) \lra 0 
\] 
and $\tH^1(\sci_{L_1^{(1)} \cup X^\prim}(i)) = 0$ for $i \geq 3$. We assert that 
$\tH^0(\sci_{X \cup X^\prim}(2)) = 0$. 

\emph{Indeed}, all the quadric surfaces containing $X$ are nonsingular.  
$X^\prim$ is the divisor $2L_1^\prim$ on the plane $H \supset L_1^\prim$ of 
equation $F_2^\prim = 0$. The intersection of $H$ with a nonsingular quadric 
containing $L_1^\prim$ is the union of $L_1^\prim$ and of another (different) 
line. It follows that no quadric surface containing $X$ can contain $X^\prim$. 

One deduces, now, that one has an exact sequence$\, :$ 
\[
0 \lra I(L_1^{(1)} \cup X^\prim) \lra I(X \cup X^\prim)  
\xra{\displaystyle \tH^0_\ast({\widetilde \psi})} S(L_1)_+(-2) \lra 0\, .
\]    
The assertion from the statement follows noticing that $x_0F_2$ and $x_1F_2$ 
belong to $I(X \cup X^\prim)$. Notice, also, that using the above exact 
sequence one can get a graded free resolution of $I(X \cup X^\prim)$ 
(a minimal free resolution of the graded $S$-module $S(L_1)_+$ can be found 
in the discussion following Prop.~\ref{P:genizprim}). 

(c) It follows, from Prop.~\ref{P:genixcupl1prim}, that 
$\tH^0(\sci_{X \cup L_1^\prim}(2)) = kF_2$ and that 
$\tH^0(\sci_{L_1 \cup X^\prim}(2)) = kF_2^\prim$. One deduces, from the hypothesis, 
that $\tH^0(\sci_{X \cup X^\prim}(2)) = 0$. One can use, now, the same argument 
as in case (b). 

(d) The argument used in case (c) shows that 
$\tH^0(\sci_{X \cup X^\prim}(2)) = kF_2 = kF_2^\prim$. One deduces the existence of 
an exact sequence$\, :$ 
\[
0 \lra I(L_1^{(1)} \cup X^\prim) \lra I(X \cup X^\prim)  
\xra{\displaystyle \tH^0_\ast({\widetilde \psi})} S(L_1)(-2) \lra 0 
\]  
from which the assertion from the statement follows. Notice, also, that using 
this exact sequence one can get a graded free resolution of 
$I(X \cup X^\prim)$. 

(e) In this case one has $\tH^1(\sci_{L_1^{(1)} \cup X^\prim}(l+2)) = 0$ (one takes 
into account that $l \geq l^\prim$). One deduces the existence of 
an exact sequence$\, :$ 
\[
0 \lra I(L_1^{(1)} \cup X^\prim) \lra I(X \cup X^\prim)  
\xra{\displaystyle \tH^0_\ast({\widetilde \psi})} S(L_1)(-l-2) \lra 0 
\]  
from which the assertion from the statement follows. Notice, also, that using 
this exact sequence one can get a graded free resolution of 
$I(X \cup X^\prim)$. 
\end{proof} 

\begin{lemma}\label{L:l1(1)cupl2(1)}
$I(L_1^{(1)} \cup L_2^{(1)}) = (x_3,\, x_1x_2)^2 = (x_3^2,\, x_1x_2x_3,\, 
x_1^2x_2^2)$ and admits the following minimal graded free resolution$\, :$ 
\[
0 \lra \begin{matrix} S(-4)\\ \oplus\\ S(-5) \end{matrix} 
\xra{\begin{pmatrix} -x_1x_2 & 0\\ x_3 & -x_1x_2\\ 0 & x_3 \end{pmatrix}} 
\begin{matrix} S(-2)\\ \oplus\\ S(-3)\\ \oplus\\ S(-4) \end{matrix} 
\lra I(L_1^{(1)} \cup L_2^{(1)}) \lra 0\, . 
\]
\end{lemma}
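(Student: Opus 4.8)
The plan is to identify the ideal by a monomial computation and then recognise the displayed complex as the standard resolution of the square of a complete intersection.

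First I would use that, by definition of the scheme-theoretic union, $I(L_1^{(1)} \cup L_2^{(1)}) = \sci_{L_1}^2 \cap \sci_{L_2}^2$ as homogeneous ideals, i.e. the intersection of the monomial ideals $(x_2^2, x_2x_3, x_3^2)$ and $(x_1^2, x_1x_3, x_3^2)$. By Remark~\ref{R:monomial}(a) this intersection is generated by the least common multiples of the nine pairs of generators; every one of them is a multiple of $x_3^2$, of $x_1x_2x_3$, or of $x_1^2x_2^2$, so after discarding redundancies the minimal generators are exactly $x_3^2$, $x_1x_2x_3$, $x_1^2x_2^2$. Expanding $(x_3, x_1x_2)^2 = (x_3^2, x_1x_2x_3, x_1^2x_2^2)$ then yields the asserted equality $I(L_1^{(1)} \cup L_2^{(1)}) = (x_3, x_1x_2)^2$.

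For the resolution I would observe that $x_3$ and $x_1x_2$ form a regular sequence in the factorial domain $S$ (they are coprime), so that $(x_3, x_1x_2)$ is the ideal of the complete intersection $L_1 \cup L_2$ of type $(1,2)$. The second power of a two-generator complete intersection ideal carries a standard length-one free resolution: writing $f = x_3$ and $g = x_1x_2$, the three generators $f^2, fg, g^2$ of degrees $2, 3, 4$ give the middle term $S(-2)\oplus S(-3)\oplus S(-4)$, while the two relations obtained by multiplying the Koszul relation $gf - fg = 0$ by $f$ and by $g$, namely $(g, -f, 0)^{\text{t}}$ and $(0, g, -f)^{\text{t}}$ (in degrees $4$ and $5$), give the term $S(-4)\oplus S(-5)$. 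Substituting back reproduces the displayed differential up to a global sign on the free generators, and minimality is immediate because every entry of both differentials lies in the irrelevant maximal ideal $(x_0, x_1, x_2, x_3)$.

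The hard part — such as it is — is the exactness in the middle: that these two columns generate the entire first syzygy module and that there are no higher syzygies. This is precisely where the regular-sequence hypothesis enters, since for a length-two regular sequence the relevant complex is acyclic, forcing $(x_3, x_1x_2)^2$ to have projective dimension one with exactly this resolution; the relation $d_1 \circ d_2 = 0$ and the injectivity of the left-hand map (the first row forces the first coefficient to vanish, the last row the second) are then one-line checks. I would emphasise that Lemma~\ref{L:zcupw} is unavailable here, because $L_1$ and $L_2$ meet at $P_0$ and so $L_1^{(1)}$ and $L_2^{(1)}$ are not disjoint; this is exactly the feature distinguishing the present lemma from Lemma~\ref{L:l1(1)cupl1prim(1)}.
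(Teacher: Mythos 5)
Your proposal is correct, and its first half coincides with the paper's: the authors likewise compute $I(L_1^{(1)}\cup L_2^{(1)})$ as the intersection of the monomial ideals $(x_2,\,x_3)^2$ and $(x_1,\,x_3)^2$ and read off the three generators $x_3^2$, $x_1x_2x_3$, $x_1^2x_2^2$. Where you genuinely diverge is in producing the resolution. The paper observes that $L_1^{(1)}\cup L_2^{(1)}$ (degree $6$) is directly linked, by the complete intersection of type $(2,4)$ defined by $x_3^2$ and $x_1^2x_2^2$, to the degree-$2$ curve $L_1\cup L_2$ with ideal $(x_3,\,x_1x_2)$, and then invokes Ferrand's liaison result to convert the Koszul resolution of $(x_3,\,x_1x_2)$ into the displayed resolution. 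You instead recognise the ideal as the square of the two-generator complete-intersection ideal $(x_3,\,x_1x_2)$ and write down the standard length-one resolution of such a square --- in effect the Hilbert--Burch resolution attached to the $3\times 2$ matrix whose signed maximal minors are $x_3^2$, $x_1x_2x_3$, $x_1^2x_2^2$; exactness holds because that minor ideal has grade $2$ (it contains the regular sequence $x_3^2,\,x_1^2x_2^2$). The two routes are close cousins --- the curve to which the paper links is precisely the complete intersection whose ideal you square --- but yours avoids liaison machinery altogether and is self-contained, at the modest price of having to justify the middle exactness; the phrase ``the relevant complex is acyclic'' would be airtight if you cite Hilbert--Burch (or the known resolution of powers of a complete-intersection ideal) explicitly. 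Your closing remark that Lemma~\ref{L:zcupw} is unavailable here because $L_1\cap L_2=\{P_0\}$ is also exactly right, and is indeed what distinguishes this lemma from Lemma~\ref{L:l1(1)cupl1prim(1)}.
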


\begin{proof}
$I(L_1^{(1)} \cup L_2^{(1)}) = (x_2^2,\, x_2x_2,\, x_3^2) \cap 
(x_1^2,\, x_1x_3,\, x_3^2) = (x_3^2,\, x_1x_2x_3,\, x_1^2x_2^2)$. 
$L_1^{(1)} \cup L_2^{(1)}$ is directly linked to $L_1 \cup L_2$ by the complete 
intersection defined by $x_3^2$ and $x_1^2x_2^2$. Applying Ferrand's result 
about liaison one gets the minimal free resolution from the statement. 
\end{proof}

\begin{lemma}\label{L:genixcupl2(1)} 
With the notation introduced at the beginning of this subsection$\, :$  

\emph{(a)} If $x_1 \mid b$ then $I(X \cup L_2^{(1)}) = Sx_1F_2 + 
I(L_1^{(1)} \cup L_2^{(1)})$. 

\emph{(b)} If $x_1 \nmid b$ then $I(X \cup L_2^{(1)}) = Sx_1^2F_2 + 
I(L_1^{(1)} \cup L_2^{(1)})$.
\end{lemma}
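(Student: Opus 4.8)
The plan is to reuse, almost verbatim, the liaison-free mechanism based on Lemma~\ref{L:ycupt} that governs the double line union a conic (Prop.~\ref{P:genixcupc2}--\ref{P:genixcupc4}), now with the neighbourhood $L_1^{(1)}$ in the role of the larger scheme and $L_2^{(1)}$ in the role of $T$. First I would apply Lemma~\ref{L:ycupt} to the chain $L_1 \subseteq X \subseteq L_1^{(1)}$ with $T = L_2^{(1)}$. Since $\sci_{L_1}\sci_X \subseteq \sci_{L_1}^2 = \sci_{L_1^{(1)}}$, part (c) of that lemma, combined with the morphism $\eta$ from the beginning of Subsection~\ref{SS:doublecupaline}, produces an exact sequence
\[
0 \lra \sci_{L_1^{(1)} \cup L_2^{(1)}} \lra \sci_{X \cup L_2^{(1)}}
\overset{\displaystyle \psi}{\lra} \sco_{L_1}(-l-2)
\]
in which $\psi$ is the composite $\sci_{X\cup L_2^{(1)}} \ra \sci_X \overset{\eta}{\ra} \sco_{L_1}(-l-2)$ and $\Cok\psi$ is an $\sco_{L_1\cap L_2^{(1)}}$-module. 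The scheme $L_1\cap L_2^{(1)}$ is cut out on $L_1$ (coordinates $x_0,x_1$) by the restriction of $\sci_{L_2^{(1)}} = (x_1,x_3)^2$, i.e. by $(x_1^2)$; hence $\Cok\psi$ is annihilated by $x_1^2$ and supported at $P_0$, so $\text{Im}\,\psi = x_1^j\sco_{L_1}(-l-2-j)$ for some $j\in\{0,1,2\}$.

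The whole content of the lemma is the determination of $j$, and this is where the two cases arise. I would use that $\eta(gF_2)=g(x_0,x_1,0,0)$ while $\eta$ kills $\sci_{L_1^{(1)}}$, and that $x_2F_2,x_3F_2\in\sci_{L_1^{(1)}}$; consequently the lift of a prospective section may always be taken to be $x_1^iF_2$, and the question ``does $x_1^iF_2$ lift to $\sci_{X\cup L_2^{(1)}}$?'' reduces to whether $x_1^iF_2\in\sci_{L_1^{(1)}}+\sci_{L_2^{(1)}}$. A one-line computation gives $x_1^2F_2=-bx_1^2x_2+ax_1^2x_3\in(x_1^2,x_1x_3)\subseteq\sci_{L_2^{(1)}}$ unconditionally, so $x_1^2\in\text{Im}\,\psi$ always (consistent with $j\le 2$), and $\psi(x_1^2F_2)=x_1^2$.

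The remaining memberships I would settle by restricting a putative lift $x_1^iF_2+h$ (with $h\in\sci_{L_1^{(1)}}$) to the coordinate planes. Restricting to $\{x_3=0\}$ and comparing the coefficient of $x_2$ shows that $F_2$ can lift only if $x_1^2\mid b$ and that $x_1F_2$ can lift only if $x_1\mid b$; restricting to $\{x_2=0\}$ and reducing modulo $x_3^2$ shows that $F_2$ can lift only if $x_1\mid a$. Because $a$ and $b$ are coprime, these necessary conditions resolve both cases. If $x_1\mid b$, then $x_1\nmid a$, so $F_2$ does not lift while $x_1F_2\in\sci_{L_2^{(1)}}$ (directly, with $h=0$) and $\psi(x_1F_2)=x_1$; hence $j=1$. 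If $x_1\nmid b$, then neither $x_1^2\mid b$ nor $x_1\mid b$ holds, so neither $F_2$ nor $x_1F_2$ lifts, forcing $j=2$.

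Finally, with $j$ fixed I would factor $\psi = \iota\circ\psi'$ through the inclusion $\iota:x_1^j\sco_{L_1}(-l-2-j)\hookrightarrow\sco_{L_1}(-l-2)$, with $\psi'$ an epimorphism onto $\sco_{L_1}(-l-2-j)$ sending $x_1^jF_2$ to $1$. Lemma~\ref{L:l1(1)cupl2(1)} exhibits $L_1^{(1)}\cup L_2^{(1)}$ as arithmetically Cohen--Macaulay, so $\tH^1_\ast(\sci_{L_1^{(1)}\cup L_2^{(1)}})=0$, and applying $\tH^0_\ast(-)$ to
\[
0 \lra \sci_{L_1^{(1)}\cup L_2^{(1)}} \lra \sci_{X\cup L_2^{(1)}}
\overset{\displaystyle \psi'}{\lra} \sco_{L_1}(-l-2-j) \lra 0
\]
gives an exact sequence $0\ra I(L_1^{(1)}\cup L_2^{(1)})\ra I(X\cup L_2^{(1)})\xra{\tH^0_\ast(\psi')} S(L_1)(-l-2-j)\ra 0$ in which $x_1^jF_2\mapsto 1$. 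This yields $I(X\cup L_2^{(1)})=Sx_1^jF_2 + I(L_1^{(1)}\cup L_2^{(1)})$, which is assertion (a) for $j=1$ and assertion (b) for $j=2$; the same sequence, fed the resolution of Lemma~\ref{L:l1(1)cupl2(1)}, also produces a graded free resolution of $I(X\cup L_2^{(1)})$. The one delicate point is the coefficient bookkeeping in the restriction arguments of the third paragraph, but it runs exactly parallel to the Claims proved in Prop.~\ref{P:genixcupc3} and Prop.~\ref{P:genixcupc4}.
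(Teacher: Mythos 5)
Your proposal is correct and follows essentially the same route as the paper: the exact sequence $0 \ra \sci_{L_1^{(1)}\cup L_2^{(1)}} \ra \sci_{X\cup L_2^{(1)}} \xra{\psi} \sco_{L_1}(-l-2)$ from Lemma~\ref{L:ycupt}, the observation that $\Cok\psi$ is killed by $x_1^2$ so that $\text{Im}\,\psi = x_1^j\sco_{L_1}(-l-2-j)$, the same two membership tests (the paper phrases them as Claims~1 and~2, comparing the two expressions of a putative lift monomial by monomial rather than restricting to the planes $\{x_3=0\}$ and $\{x_2=0\}$, but the computation is identical and likewise hinges on the coprimality of $a$ and $b$), and the same concluding application of $\tH^0_\ast(-)$ using $\tH^1_\ast(\sci_{L_1^{(1)}\cup L_2^{(1)}})=0$. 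No gaps.
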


\begin{proof}
According to Lemma~\ref{L:ycupt}, one has an exact sequence$\, :$ 
\[
0 \lra \sci_{L_1^{(1)} \cup L_2^{(1)}} \lra \sci_{X \cup L_2^{(1)}} 
\overset{\displaystyle \psi}{\lra} \sco_{L_1}(-l-2)
\]
where $\psi$ is the composite morphism$\, :$ 
\[
\sci_{X \cup L_2^{(1)}} \lra \sci_X \overset{\displaystyle \eta}{\lra}  
\sco_{L_1}(-l-2)\, .
\]
Moreover, $\Cok \psi$ is an $\sco_{L_1 \cap L_2^{(1)}}$-module. Since 
$I(L_1 \cap L_2^{(1)}) = (x_1^2,\, x_2,\, x_3)$ it follows that 
$x_1^2\sco_{L_1}(-l-4) \subseteq \text{Im}\, \psi \subseteq \sco_{L_1}(-l-2)$. 

\vskip2mm

\noindent
{\bf Claim 1.}\quad $\text{Im}\, \psi \subseteq x_1\sco_{L_1}(-l-3)$. 

\vskip2mm

\noindent
\emph{Indeed}, assume that $\text{Im}\, \psi = \sco_{L_1}(-l-2)$. Since, 
by Lemma~\ref{L:l1(1)cupl2(1)}, $\tH^1_\ast(\sci_{L_1^{(1)} \cup L_2^{(1)}}) = 0$ 
there exists $f \in \tH^0(\sci_{X \cup L_2^{(1)}}(l+2))$ such that 
$\psi(f) = 1 \in \tH^0(\sco_{L_1})$. Since $\psi(f) = \eta(f)$ it follows 
that$\, :$ 
\[
f = F_2 + f_0x_2^2 + f_1x_2x_3 + f_2x_3^2\, .
\] 
On the other hand, since $f \in I(L_2^{(1)})$ one has$\, :$ 
\[
f = g_0x_1^2 + g_1x_1x_3 + g_2x_3^2\, .
\]
Comparing the two different expressions of $f$ one deduces that each 
monomial appearing in $F_2 := -b(x_0,x_1)x_2 + a(x_0,x_1)x_3$ must be divisible 
by one of the monomials $x_1^2$, $x_1x_3$, $x_2^2$, $x_2x_3$, $x_3^2$. It 
follows that $x_1^2 \mid b$ and $x_1 \mid a$ which is \emph{not possible} 
because $a$ and $b$ are coprime. 

\vskip2mm

(a) In this case $b = x_1b_1$ hence $x_1F_2 = -b_1x_1^2x_2 + ax_1x_3$ belongs 
to $I(X \cup L_2^{(1)})$. Since $\psi(x_1F_2) = \eta(x_1F_2) = x_1 \in 
\tH^0(\sco_{L_1}(1))$ one deduces, taking into account Claim 1, that 
$\psi$ factorizes as$\, :$ 
\[
\sci_{X \cup L_2^{(1)}} \overset{\displaystyle \psi^\prim}{\lra} 
\sco_{L_1}(-l-3) \overset{\displaystyle x_1}{\lra} \sco_{L_1}(-l-2) 
\]
with $\psi^\prim$ and epimorphism and that one has an exact sequence$\, :$ 
\[
0 \lra I(L_1^{(1)} \cup L_2^{(1)}) \lra I(X \cup L_2^{(1)}) 
\xra{\displaystyle \tH^0_\ast(\psi^\prim)} S(L_1)(-l-3) \lra 0\, .
\]
Using this exact sequence one sees that $I(X \cup L_2^{(1)})$ is generated 
by the elements from the statement. One can also get, from this sequence, 
a graded free resolution of this ideal. 

(b) Using the same kind of argument as in the proof of Claim 1 one shows 
that$\, :$ 

\vskip2mm

\noindent
{\bf Claim 2.}\quad \emph{If} $x_1 \nmid b$ \emph{then} 
$\text{Im}\, \psi = x_1^2\sco_{L_1}(-l-4)$ (\emph{that is, it is not equal to} 
$x_1\sco_{L_1}(-l-3)$). 

\vskip2mm

\noindent
Since $x_1^2F_2 \in I(X \cup L_2^{(1)})$ and $\psi(x_1^2F_2) = \eta(x_1^2F_2) = 
x_1^2 \in \tH^0(\sco_{L_1}(2))$ one deduces, taking into account Claim 2, that 
$\psi$ factorizes as$\, :$ 
\[
\sci_{X \cup L_2^{(1)}} \overset{\displaystyle \psi^\secund}{\lra} 
\sco_{L_1}(-l-4) \overset{\displaystyle x_1^2}{\lra} \sco_{L_1}(-l-2) 
\]
with $\psi^\secund$ and epimorphism and that one has an exact sequence$\, :$ 
\[
0 \lra I(L_1^{(1)} \cup L_2^{(1)}) \lra I(X \cup L_2^{(1)}) 
\xra{\displaystyle \tH^0_\ast(\psi^\secund)} S(L_1)(-l-4) \lra 0\, .
\]
Using this exact sequence one sees that $I(X \cup L_2^{(1)})$ is generated 
by the elements from the statement. One can also get, from this sequence, 
a graded free resolution of this ideal.   
\end{proof}

\begin{prop}\label{P:genixcupxsecund}
With the notation introduced at the beginning of this subsection, assume that 
$l \geq l^\secund$. 

\emph{(a)} If $x_1 \mid b$ $($i.e., $b = x_1b_1$$)$, $x_2 \mid b^\secund$ (i.e., 
$b^\secund = x_2b_1^\secund$) and $\frac{b_1(x_0,0)}{a(x_0,0)} = 
\frac{b_1^\secund(x_0,0)}{a^\secund(x_0,0)}$ then$\, :$ 
\[
I(X \cup X^\secund) = S\left(F_2 + b_1(x_0,0)x_1x_2 - a(x_0,0)x_3 + 
{\textstyle \frac{a(x_0,0)}{a^\secund(x_0,0)}}F_2^\secund \right) + 
Sx_2F_2^\secund + I(L_1^{(1)} \cup L_2^{(1)})\, .
\]

\emph{(b)} If $x_1 \mid b$ $($i.e., $b = x_1b_1$$)$, $x_2 \mid b^\secund$ (i.e., 
$b^\secund = x_2b_1^\secund$) and $\frac{b_1(x_0,0)}{a(x_0,0)} \neq  
\frac{b_1^\secund(x_0,0)}{a^\secund(x_0,0)}$ then$\, :$ 
\[
I(X \cup X^\secund) = Sx_1F_2 + Sx_2F_2^\secund + I(L_1^{(1)} \cup L_2^{(1)})\, .
\]

\emph{(c)} If $x_1 \mid b$ and $x_2 \nmid b^\secund$ then$\, :$ 
\[
I(X \cup X^\secund) = Sx_1F_2 + Sx_2^2F_2^\secund + I(L_1^{(1)} \cup L_2^{(1)})\, .
\]

\emph{(d)} If $x_1 \nmid b$ and $x_2 \mid b^\secund$ then$\, :$ 
\[
I(X \cup X^\secund) = Sx_1^2F_2 + Sx_2F_2^\secund + I(L_1^{(1)} \cup L_2^{(1)})\, .
\]

\emph{(e)} If $x_1 \nmid b$ and $x_2 \nmid b^\secund$ then$\, :$ 
\[
I(X \cup X^\secund) = S\left(x_1F_2 + b(x_0,0)x_1x_2 + 
{\textstyle \frac{b(x_0,0)}{b^\secund(x_0,0)}}x_2F_2^\secund \right) + 
Sx_2^2F_2^\secund + I(L_1^{(1)} \cup L_2^{(1)})\, .  
\]
\end{prop}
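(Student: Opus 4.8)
The plan is to imitate the proof of Prop.~\ref{P:genixcupxprim}, the only structural difference being that here $L_1$ and $L_2$ meet at the point $P_0$, so the relevant cokernels are supported at $P_0$ rather than vanishing. First I would tensor by $\sci_{X^\secund}$ the fundamental exact sequence of the double line $X$,
\[
0 \lra \sci_{L_1^{(1)}} \lra \sci_X \overset{\displaystyle \eta}{\lra}
\sco_{L_1}(-l-2) \lra 0\, ,
\]
in which $\eta(F_2) = 1$ and $\eta$ annihilates $x_2^2,\, x_2x_3,\, x_3^2$, and apply Lemma~\ref{L:ycupt} to obtain an exact sequence
\[
0 \lra \sci_{L_1^{(1)} \cup X^\secund} \lra \sci_{X \cup X^\secund}
\overset{\displaystyle \psi}{\lra} \sco_{L_1}(-l-2)\, ,
\]
where $\psi$ is the composite of the inclusion $\sci_{X \cup X^\secund} \ra \sci_X$ with $\eta$, and where, by Lemma~\ref{L:ycupt}(c), $\Cok \psi$ is an $\sco_{L_1 \cap X^\secund}$-module. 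At this stage I would record the ideal $I(L_1^{(1)} \cup X^\secund)$, furnished by Lemma~\ref{L:genixcupl2(1)} after interchanging the roles of $x_1$ and $x_2$ (so that $b,\, F_2$ are replaced by $b^\secund,\, F_2^\secund$): it equals $Sx_2F_2^\secund + I(L_1^{(1)} \cup L_2^{(1)})$ when $x_2 \mid b^\secund$ and $Sx_2^2F_2^\secund + I(L_1^{(1)} \cup L_2^{(1)})$ when $x_2 \nmid b^\secund$. Since $L_1^{(1)} \cup L_2^{(1)}$ is arithmetically CM by Lemma~\ref{L:l1(1)cupl2(1)}, the module $\tH^1_\ast(\sci_{L_1^{(1)} \cup X^\secund})$ vanishes in the degrees that matter, which is what makes the passage to global sections below exact.

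The whole proposition then reduces to identifying $\text{Im}\, \psi$ in each of the five cases. Restricting $I(X^\secund) = (F_2^\secund,\, x_1^2,\, x_1x_3,\, x_3^2)$ to $L_1$ (that is, setting $x_2 = x_3 = 0$) yields the ideal $(x_1^2,\, b^\secund(x_0,0)x_1)$ of $\sco_{L_1}$; hence $\Cok \psi$, being supported at $P_0$, is annihilated by $x_1^2$ when $x_2 \mid b^\secund$ and by $x_1$ when $x_2 \nmid b^\secund$ (as $x_0$ is invertible at $P_0$). This gives the a priori bounds $x_1^2\sco_{L_1}(-l-4) \subseteq \text{Im}\, \psi$, resp. $x_1\sco_{L_1}(-l-3) \subseteq \text{Im}\, \psi$, together with $\text{Im}\, \psi \subseteq \sco_{L_1}(-l-2)$. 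In cases (c), (d) and (e) these bounds pin down the image once I exhibit a single explicit element: $x_1F_2$ (mapping to $x_1$) in case (c), $x_1^2F_2$ (mapping to $x_1^2$) in case (d), and $x_1F_2 + b(x_0,0)x_1x_2 + \frac{b(x_0,0)}{b^\secund(x_0,0)}x_2F_2^\secund$ (mapping to $x_1$) in case (e). For each I would verify membership in $I(X)$ through the decomposition $\sci_X = SF_2 + \sci_{L_1^{(1)}}$ and membership in $I(X^\secund)$ by restricting to the plane $\{x_3 = 0\}$ and using the explicit shape of $F_2^\secund$; the standing hypothesis $l \geq l^\secund$ guarantees that the ratios occurring are genuine polynomials in $x_0$.

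The delicate point, and the one I expect to be the main obstacle, is the dichotomy between cases (a) and (b), where $x_1 \mid b$ and $x_2 \mid b^\secund$ both hold, so that the a priori lower bound is only $x_1^2\sco_{L_1}(-l-4)$. Here I must decide whether $\psi$ is surjective (case (a)) or has image exactly $x_1\sco_{L_1}(-l-3)$ (case (b)). This amounts to asking whether $I(X \cup X^\secund)$ contains an element of degree $l+2$ mapping to $1$ under $\eta$; such an element is necessarily of the form $F_2$ plus a quadratic expression in $x_2,\, x_3$ with coefficients of degree $l$, and the requirement that it also vanish on $X^\secund$ becomes, after restriction to $\{x_3 = 0\}$, a linear condition whose solvability is governed precisely by the compatibility $\frac{b_1(x_0,0)}{a(x_0,0)} = \frac{b_1^\secund(x_0,0)}{a^\secund(x_0,0)}$. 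When it holds I would produce the degree $l+2$ generator $F_2 + b_1(x_0,0)x_1x_2 - a(x_0,0)x_3 + \frac{a(x_0,0)}{a^\secund(x_0,0)}F_2^\secund$ and conclude surjectivity; when it fails, a short restriction-and-coprimality argument in the style of the Claims in Prop.~\ref{P:genixcupl2cupl2prim} rules out any such element, forcing the image down to $x_1\sco_{L_1}(-l-3)$ with generator $x_1F_2$.

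In every case, once $\text{Im}\, \psi$ is known I would factor $\psi$ through the appropriate power of $x_1$, pass to $\tH^0_\ast$, and use the vanishing of $\tH^1_\ast(\sci_{L_1^{(1)} \cup X^\secund})$ together with the fact that $S(L_1)$ is generated by $1$ to read off the stated generators of $I(X \cup X^\secund)$ over $I(L_1^{(1)} \cup X^\secund)$; combining these with the generators of $I(L_1^{(1)} \cup X^\secund)$ recorded in the first step gives the lists in the statement, and the accompanying free resolutions drop out of the same short exact sequences.
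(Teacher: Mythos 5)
Your setup matches the paper's: the exact sequence $0 \ra \sci_{L_1^{(1)}\cup X^\secund} \ra \sci_{X\cup X^\secund} \xra{\psi} \sco_{L_1}(-l-2)$, the identification of $I(L_1^{(1)}\cup X^\secund)$ via Lemma~\ref{L:genixcupl2(1)}, the explicit degree-$(l+2)$ and degree-$(l+3)$ elements, and the Claim-2-style dichotomy between (a) and (b) are all correct. The gap is in your assertion that in cases (c), (d), (e) ``these bounds pin down the image once I exhibit a single explicit element.'' The fact that $\Cok\psi$ is an $\sco_{L_1\cap X^\secund}$-module yields only a \emph{lower} bound on $\text{Im}\,\psi$ (namely $x_1^2\sco_{L_1}(-l-4)$, resp.\ $x_1\sco_{L_1}(-l-3)$, is contained in the image), and exhibiting $x_1F_2$, $x_1^2F_2$ or your element $\Psi$ merely re-establishes that same lower bound. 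Since $\sco_{L_1}(-l-2)/x_1^2\sco_{L_1}(-l-4)$ is a length-two skyscraper at $P_0$, the image is still one of two or three intermediate subsheaves, and you must separately prove the matching \emph{upper} bounds: that $\psi$ is not surjective in (c) and (e), and that in (d) the image is not even $x_1\sco_{L_1}(-l-3)$. Your restriction-and-coprimality argument, which you deploy only for (b), would in fact also dispose of (c) (surjectivity forces $x_2\mid b^\secund$, contradicting the hypothesis of (c)), but you never invoke it there, and it says nothing about (d) or (e).

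The ingredient the paper uses for these upper bounds, and which your proposal lacks, is a second factorization: $\psi$ factors through $\sci_{X\cup L_2}$, whose generators are known from Prop.~\ref{P:genixcupl}, and one also has the epimorphism $\phi^\secund:\sci_{X\cup L_2}\ra\sco_{L_2}(l^\secund)$ coming from the double structure $X^\secund$ on $L_2$, with $\sci_{X\cup X^\secund}=\Ker\phi^\secund$. When $x_1\nmid b$ the proof of Prop.~\ref{P:genixcupl}(b) already gives $\text{Im}\,\psi\subseteq x_1\sco_{L_1}(-l-3)$ on the larger ideal $\sci_{X\cup L_2}$, which settles (e); and for (d) one must further show that no element of the form $x_0^mx_1F_2+f_0x_2x_3+f_1x_3^2+f_2x_1x_2^2$ is killed by $\phi^\secund$ when $x_2\mid b^\secund$. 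That last step (the paper's Claim~3) genuinely requires working modulo $\sci_{X^\secund}$ rather than modulo $\sci_{L_2}$ --- restriction to the reduced line $L_2$ gives no information since $x_1$ vanishes there --- so it cannot be recovered from the tools you have set up. Until you introduce $\phi^\secund$ (or an equivalent device) and carry out these exclusions, cases (c), (d) and (e) are not proved.
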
 

\begin{proof}
From Lemma~\ref{L:ycupt}, one has an exact sequence$\, :$ 
\[
0 \lra \sci_{L_1^{(1)} \cup X^\secund} \lra \sci_{X \cup X^\secund} 
\overset{\displaystyle {\widetilde \psi}}{\lra} \sco_{L_1}(-l-2) 
\]
with $\widetilde \psi$ the composite morphism$\, :$ 
\[
\sci_{X \cup X^\secund} \lra \sci_X \overset{\displaystyle \eta}{\lra} 
\sco_{L_1}(-l-2)\, . 
\]
By Lemma~\ref{L:genixcupl2(1)}$\, :$ 
\[
I(L_1^{(1)} \cup X^\secund) = 
\begin{cases}
Sx_2F_2^\secund + I(L_1^{(1)} \cup L_2^{(1)})\, , & 
\text{if $x_2 \mid b^\secund \, ;$}\\
Sx_2^2F_2^\secund + I(L_1^{(1)} \cup L_2^{(1)})\, , & 
\text{if $x_2 \nmid b^\secund \, .$}
\end{cases}
\]
Now, $\widetilde \psi$ can be also written as a composite morphism$\, :$ 
\[
\sci_{X \cup X^\secund} \lra \sci_{X \cup L_2} 
\overset{\displaystyle \overline{\psi}}{\lra} 
\sco_{L_1}(-l-2)
\]
with $\overline{\psi}$ the composite morphism $\sci_{X \cup L_2} \ra \sci_X 
\overset{\eta}{\lra} \sco_{L_1}(-l-2)$. By Lemma~\ref{L:ycupt} again, one has 
an exact sequence$\, :$ 
\[
0 \lra \sci_{X \cup X^\secund} \lra \sci_{X \cup L_2} 
\overset{\displaystyle \phi^\secund}{\lra} \sco_{L_2}(l^\secund) 
\]
where $\phi^\secund$ is the composite morphism$\, :$ 
\[
\sci_{X \cup L_2} \lra \sci_{L_2} \overset{\displaystyle \pi^\secund}{\lra} 
\sco_{L_2}(l^\secund)\, . 
\]
By Prop.~\ref{P:genixcupl} one has$\, :$ 
\[
I(X \cup L_2) = 
\begin{cases}
(F_2,\, x_2x_3,\, x_3^2,\, x_1x_2^2)\, , & \text{if $x_1 \mid b \, ;$}\\ 
(x_1F_2,\, x_2x_3,\, x_3^2,\, x_1x_2^2)\, , & \text{if $x_1 \nmid b \, .$}
\end{cases}
\]
Recalling, from the beginning of this subsection, the defintion of 
$\pi^\secund$ one gets$\, :$ 
\begin{gather*}
\phi^\secund (F_2) = -b_1(x_0,0)x_2a^\secund + a(x_0,0)b^\secund\, ,\  \  
\text{if $b = x_1b_1 \, ,$}\\
\phi^\secund (x_1F_2) = -b(x_0,0)x_2a^\secund\, ,\  \  
\text{if $x_1 \nmid b \, ,$}\\ 
\phi^\secund(x_2x_3) = x_2b^\secund \, ,\  \phi^\secund (x_3^2) = 0\, ,\  
\phi^\secund (x_1x_2^2) = x_2^2a^\secund \, .
\end{gather*} 

\vskip2mm

\noindent
{\bf Claim 1.}\quad (i) \emph{If} $x_1 \mid b$ \emph{then} 
$x_1\sco_{L_1}(-l-3) \subseteq \text{Im}\, {\widetilde \psi} \subseteq 
\sco_{L_1}(-l-2)$. 

(ii) \emph{If} $x_1 \nmid b$ \emph{then} 
$x_1^2\sco_{L_1}(-l-4) \subseteq \text{Im}\, {\widetilde \psi} \subseteq 
x_1\sco_{L_1}(-l-3)$. 

\vskip2mm

\noindent
\emph{Indeed}, in case (i) $b = x_1b_1$ hence $x_1F_2$ belongs to 
$I(X \cup L_2^{(1)}) \subseteq I(X \cup X^\secund)$ and 
${\widetilde \psi}(x_1F_2) = 
\eta(x_1F_2) = x_1 \in \tH^0(\sco_{L_1}(1))$. 

In case (ii), it follows from the proof of Prop.~\ref{P:genixcupl} that 
$\text{Im}\, \overline{\psi} = x_1 \sco_{L_1}(-l-3)$ hence 
$\text{Im}\, {\widetilde \psi} \subseteq x_1 \sco_{L_1}(-l-3)$. On the other 
hand, by Lemma~\ref{L:ycupt}, $\Cok {\widetilde \psi}$ is an 
$\sco_{L_1 \cap X^\secund}$-module. 
Since $I(X^\secund) = (F_2^\secund ,\, x_1^2,\, x_1x_3,\, x_3^2)$ one deduces that 
$x_1^2\sco_{L_1}(-l-4) \subseteq \text{Im}\, {\widetilde \psi}$. 

\vskip2mm

\noindent
{\bf Claim 2.}\quad \emph{If} $x_1 \mid b$ (\emph{i.e., if} $b = x_1b_1$) 
\emph{and} $\text{Im}\, {\widetilde \psi} = \sco_{L_1}(-l-2)$ \emph{then} 
$x_2 \mid b^\secund$ (\emph{i.e.,} $b^\secund = x_2b_1^\secund$) \emph{and} 
$\frac{b_1(x_0,0)}{a(x_0,0)} = \frac{b_1^\secund(x_0,0)}{a^\secund(x_0,0)}$. 

\vskip2mm

\noindent
\emph{Indeed}, if $\text{Im}\, {\widetilde \psi} = \sco_{L_1}(-l-2)$ then, for 
$m >> 0$, there exists $f \in \tH^0(\sci_{X \cup X^\secund}(l+2+m))$ such that 
${\widetilde \psi}(f) = x_0^m \in \tH^0(\sco_{L_1}(m))$. Since 
$f \in \tH^0(\sci_{X \cup L_2}(l+2+m))$ and ${\widetilde \psi}(f) = 
\overline{\psi}(f)$ it follows that$\, :$ 
\[
f = x_0^mF_2 + f_0x_2x_3 + f_1x_3^2 + f_2x_1x_2^2\, .
\]  
On the other hand, $\phi^\secund(f) = 0$ hence$\, :$ 
\[
-x_0^mb_1(x_0,0)x_2a^\secund + x_0^ma(x_0,0)b^\secund + (f_0 \vb L_2)x_2b^\secund + 
(f_2 \vb L_2)x_2^2a^\secund = 0\, .  
\]
Since $x_1 \mid b$ and $a$ and $b$ are coprime it follows that $x_1 \nmid a$ 
hence $a(x_0,0) \neq 0$. One deduces, now, from the last relation, that 
$x_2 \mid b^\secund$, i.e., $b^\secund = x_2b_1^\secund$ and then that$\, :$ 
\[
(x_0^ma(x_0,0) + (f_0 \vb L_2)x_2)b_1^\secund = 
(x_0^mb_1(x_0,0) - (f_2 \vb L_2)x_2)a^\secund \, .
\] 
Since $a^\secund$ and $b_1^\secund$ are coprime it follows that there exists 
$g \in k[x_0,x_2]$ such that$\, :$ 
\begin{gather*}
x_0^ma(x_0,0) + (f_0 \vb L_2)x_2 = ga^\secund \, ,\\ 
x_0^mb_1(x_0,0) - (f_2 \vb L_2)x_2 = gb_1^\secund \, .
\end{gather*}
Restricting these two relations to the line $L_1$ of equations $x_2 = x_3 = 0$ 
and then dividing the second relation by the first one one gets 
$\frac{b_1(x_0,0)}{a(x_0,0)} = \frac{b_1^\secund(x_0,0)}{a^\secund(x_0,0)}$. 

\vskip2mm

\noindent
{\bf Claim 3.}\quad \emph{If} $x_1 \nmid b$ \emph{and} 
$\text{Im}\, {\widetilde \psi} = x_1\sco_{L_1}(-l-3)$ \emph{then} $x_2 \nmid 
b^\secund$. 

\vskip2mm

\noindent
\emph{Indeed}, if $\text{Im}\, {\widetilde \psi} = x_1\sco_{L_1}(-l-3)$ 
then, for $m >> 0$, 
there exists $f \in \tH^0(\sci_{X \cup X^\secund}(l+3+m))$ such that 
${\widetilde \psi}(f) = x_0^mx_1 \in \tH^0(\sco_{L_1}(m+1))$. Since 
$f \in \tH^0(\sci_{X \cup L_2}(l+3+m))$ and 
${\widetilde \psi}(f) = \overline{\psi}(f)$ it follows that$\, :$ 
\[
f = x_0^mx_1F_2 + f_0x_2x_3 + f_1x_3^2 + f_2x_1x_2^2\, . 
\]
On the other hand, $\phi^\secund(f) = 0$ hence$\, :$ 
\[
-x_0^mb(x_0,0)x_2a^\secund + (f_0 \vb L_2)x_2b^\secund + 
(f_2 \vb L_2)x_2^2a^\secund = 0   
\] 
from which one deduces that$\, :$
\[
(f_0 \vb L_2)b^\secund = (x_0^mb(x_0,0) - (f_2 \vb L_2)x_2)a^\secund \, .
\]
Restricting the last relation to the line $L_1$ of equations $x_2 = x_3 = 0$ 
one gets$\, :$ 
\[
x_0^mb(x_0,0)a^\secund(x_0,0) = f_0(x_0,0,0,0)b^\secund(x_0,0)\, .
\]
Since $x_1 \nmid b$ one has $b(x_0,0) \neq 0$.  
If one would have $x_2 \mid b^\secund$, which 
is equivalent to $b^\secund(x_0,0) = 0$, it would follow that $a^\secund(x_0,0) 
= 0$, i.e., that $x_2 \mid a^\secund$ which would \emph{contradict} the fact 
that $a^\secund$ and $b^\secund$ are coprime. It thus remains that $x_2 \nmid 
b^\secund$. 

\vskip2mm

(a) Let us put$\, :$ 
\[
\Phi := F_2 + b_1(x_0,0)x_1x_2 - a(x_0,0)x_3 + 
{\textstyle \frac{a(x_0,0)}{a^\secund(x_0,0)}}F_2^\secund \, . 
\]  
One has$\, :$ 
\begin{gather*}
F_2 + b_1(x_0,0)x_1x_2 - a(x_0,0)x_3 \in (x_1x_3,\, x_1^2x_2) \subset 
I(L_1 \cup L_2^{(1)})\, ,\\
b_1(x_0,0)x_1x_2 - a(x_0,0)x_3 + 
{\textstyle \frac{a(x_0,0)}{a^\secund(x_0,0)}}F_2^\secund =\\
= {\textstyle \frac{a(x_0,0)}{a^\secund(x_0,0)}}(b_1^\secund(x_0,0)x_1x_2 - 
a^\secund(x_0,0)x_3 + F_2^\secund ) \in (x_2x_3,\, x_1x_2^2) \subset 
I(L_1^{(1)} \cup L_2)\, . 
\end{gather*}
It follows that $\Phi \in I(X \cup X^\secund)$ and that 
${\widetilde \psi}(\Phi) = 
\eta(\Phi) = \eta(F_2) = 1 \in \tH^0(\sco_{L_1})$. 
One deduces the exactness of the sequence$\, :$ 
\[
0 \lra I(L_1^{(1)} \cup X^\secund) \lra I(X \cup X^\secund) 
\xra{\displaystyle \tH^0_\ast({\widetilde \psi})} S(L_1)(-l-2) \lra 0\, . 
\]
This implies that $I(X \cup X^\secund)$ is generated by the elements from the 
statement. Using this exact sequence one can also get a graded free resolution 
of $I(X \cup X^\secund)$. 

Notice, also, that, as a consequence of the above relations$\, :$ 
\[
x_1\left(b_1(x_0,0)x_1x_2 - a(x_0,0)x_3 + 
{\textstyle \frac{a(x_0,0)}{a^\secund(x_0,0)}}F_2^\secund \right) \in 
(x_1x_2x_3,\, x_1^2x_2^2) \subset I(L_1^{(1)} \cup L_2^{(1)}) 
\]  
hence $x_1F_2 \equiv x_1\Phi \pmod{I(L_1^{(1)} \cup L_2^{(1)})}$. 

(b) It follows, from Claim 1(i) and Claim 2, that 
$\text{Im}\, {\widetilde \psi} = x_1\sco_{L_1}(-l-3)$
hence $\widetilde \psi$ factorizes as$\, :$ 
\[
\sci_{X \cup X^\secund} 
\overset{\displaystyle {\widetilde \psi}^{\, \prim}}{\lra} 
\sco_{L_1}(-l-3) \overset{\displaystyle x_1}{\lra} \sco_{L_1}(-l-2)
\] 
with ${\widetilde \psi}^{\, \prim}$ an epimorphism. Since$\, :$ 
\[
x_1F_2 = -b_1x_1^2x_2 + ax_1x_3 \in I(X \cup L_2^{(1)}) \subseteq 
I(X \cup X^\secund) 
\] 
and since ${\widetilde \psi}(x_1F_2) = \eta(x_1F_2) = x_1 \in 
\tH^0(\sco_{L_1}(1))$ it follows that ${\widetilde \psi}^{\, \prim}(x_1F_2) = 1 
\in \tH^0(\sco_{L_1})$ hence the sequence$\, :$ 
\[
0 \lra I(L_1^{(1)} \cup X^\secund) \lra I(X \cup X^\secund) 
\xra{\displaystyle \tH^0_\ast({\widetilde \psi}^{\, \prim})} 
S(L_1)(-l-3) \lra 0\, . 
\] 
is exact. One deduces that $I(X \cup X^\secund)$ is generated by the elements 
from the statement. One can also get, using this exact sequence, a graded free 
resolution of $I(X \cup X^\secund)$. 

(c) The argument for case (b) works verbatim for case (c), too. 

(d) It follows, from Claim 1(ii) and Claim 3, that 
$\text{Im}\, {\widetilde \psi} = x_1^2\sco_{L_1}(-l-4)$ 
hence $\widetilde \psi$ factorizes as$\, :$ 
\[
\sci_{X \cup X^\secund} 
\overset{\displaystyle {\widetilde \psi}^{\, \secund}}{\lra} 
\sco_{L_1}(-l-4) \overset{\displaystyle x_1^2}{\lra} \sco_{L_1}(-l-2)
\] 
with ${\widetilde \psi}^{\, \secund}$ an epimorphism. Since  
$x_1^2F_2 \in I(X)I(L_2^{(1)}) \subseteq I(X \cup X^\secund)$ 
and since ${\widetilde \psi}(x_1^2F_2) = \eta(x_1^2F_2) = x_1^2 \in 
\tH^0(\sco_{L_1}(2))$ it follows that ${\widetilde \psi}^{\, \secund}(x_1^2F_2) = 
1 \in \tH^0(\sco_{L_1})$ hence the sequence$\, :$ 
\[
0 \lra I(L_1^{(1)} \cup X^\secund) \lra I(X \cup X^\secund) 
\xra{\displaystyle \tH^0_\ast({\widetilde \psi}^{\, \secund})} 
S(L_1)(-l-4) \lra 0\, . 
\] 
is exact. One deduces that $I(X \cup X^\secund)$ is generated by the elements 
from the statement. One can also get, using this exact sequence, a graded free 
resolution of $I(X \cup X^\secund)$. 

(e) Put $\Psi := x_1F_2 + b(x_0,0)x_1x_2 + 
\frac{b(x_0,0)}{b^\secund(x_0,0)}x_2F_2^\secund$. One has$\, :$ 
\begin{gather*}
x_1F_2 + b(x_0,0)x_1x_2 \in (x_1x_3,\, x_1^2x_2) \subset 
I(L_1 \cup L_2^{(1)})\, ,\\ 
b(x_0,0)x_1x_2 + 
{\textstyle \frac{b(x_0,0)}{b^\secund(x_0,0)}}x_2F_2^\secund =\\
= {\textstyle \frac{b(x_0,0)}{b^\secund(x_0,0)}}(b^\secund(x_0,0)x_1x_2 + 
x_2F_2^\secund) \in (x_2x_3,\, x_1x_2^2) \subset 
I(L_1^{(1)} \cup L_2)\, . 
\end{gather*} 
It follows that $\Psi \in I(X \cup X^\secund)$ and that 
${\widetilde \psi}(\Psi) = 
\eta(\Psi) = \eta(x_1F_2) = x_1 \in \tH^0(\sco_{L_1}(1))$. Taking into account 
Claim 1(ii), one deduces that $\widetilde \psi$ factorizes as$\, :$ 
\[
\sci_{X \cup X^\secund} 
\overset{\displaystyle {\widetilde \psi}^{\, \prim}}{\lra} 
\sco_{L_1}(-l-3) \overset{\displaystyle x_1}{\lra} \sco_{L_1}(-l-2)
\] 
with ${\widetilde \psi}^{\, \prim}$ an epimorphism and that the 
sequence$\, :$     
\[
0 \lra I(L_1^{(1)} \cup X^\secund) \lra I(X \cup X^\secund) 
\xra{\displaystyle \tH^0_\ast({\widetilde \psi}^{\, \prim})} 
S(L_1)(-l-3) \lra 0\, . 
\] 
is exact. It follows that $I(X \cup X^\secund)$ is generated by the elements 
from the statement. One can also get, using this exact sequence, a graded free 
resolution of $I(X \cup X^\secund)$. 

Notice, also, that, as a consequence of the above relations$\, :$ 
\[
x_1\left(b(x_0,0)x_1x_2 + 
{\textstyle \frac{b(x_0,0)}{b^\secund(x_0,0)}}x_2F_2^\secund \right) \in 
(x_1x_2x_3,\, x_1^2x_2^2) \subset I(L_1^{(1)} \cup L_2^{(1)}) 
\]  
hence $x_1^2F_2 \equiv x_1\Psi \pmod{I(L_1^{(1)} \cup L_2^{(1)})}$. 
\end{proof}

\begin{cor}\label{C:genixcupxsecund} 
Under the hypothesis of Prop.~\ref{P:genixcupxsecund}, $X \cup X^\secund$ is 
locally complete intersection in the cases \emph{(a)} and \emph{(e)} and it 
is not locally complete intersection in the other cases. 
\end{cor}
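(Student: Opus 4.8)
The plan is to reduce the assertion to a purely local question at the single point $P_0 = L_1 \cap L_2$. Since being l.c.i. is local, it suffices to test $Z := X \cup X^\secund$ at each point of its support $L_1 \cup L_2$. At a point of $L_1 \setminus \{P_0\}$ the sheaf $\sci_{X^\secund}$ is the whole structure sheaf, so $\sci_Z$ agrees locally with $\sci_X$; and $X$, being a primitive double structure on a line, is l.c.i. at every point (as recalled in Subsection~\ref{SS:p2}, locally $\sci_X = (F_2,\, x_2^2)$ where $a \neq 0$ and $\sci_X = (F_2,\, x_3^2)$ where $b \neq 0$, while $a,\, b$ have no common zero). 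Symmetrically $Z$ is l.c.i. along $L_2 \setminus \{P_0\}$. Moreover $Z$ is locally CM at $P_0$: from $0 \to \sco_Z \to \sco_X \oplus \sco_{X^\secund} \to \sco_{X \cap X^\secund} \to 0$, with $X \cap X^\secund$ supported at $P_0$ (hence of dimension $0$) and $\sco_X,\, \sco_{X^\secund}$ of depth $1$, one gets $\text{depth}\, \sco_{Z,P_0} \geq 1$, so $\sco_{Z,P_0}$ is CM. Everything thus comes down to deciding whether $\sci_{Z,P_0}$ is a complete intersection in $\sco_{\piii,P_0}$.

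For this I would use the standard criterion for a CM curve of codimension $2$: $Z$ is l.c.i. at $P_0$ if and only if $\sci_{Z,P_0}$ is minimally generated by exactly two elements, i.e. $\mu_{P_0}(\sci_Z) := \dim_k(\sci_{Z,P_0}/\fm_{P_0}\sci_{Z,P_0}) = 2$. Concretely, taking the graded free resolution $0 \to G_1 \xra{M} G_0 \to \sci_Z \to 0$ that Prop.~\ref{P:genixcupxsecund} produces in each of the cases (a)--(e) (built from the exact sequences in its proof together with Lemma~\ref{L:l1(1)cupl2(1)}), one has $\mu_{P_0}(\sci_Z) = \dim_k \Cok M(P_0)$, where $M(P_0)$ is the constant matrix obtained by dehomogenising at $x_0 = 1$ and then setting $x_1 = x_2 = x_3 = 0$. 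Thus $Z$ is l.c.i. at $P_0$ exactly when $M(P_0)$ has corank $2$, and fails to be l.c.i. when the corank is $\geq 3$. Working in $\sco_{\piii,P_0}$ one notes that $F_2 = -b_0x_2 + a_0x_3 + (\text{higher order})$ with $(a_0,\, b_0) = (a(1,0),\, b(1,0)) \neq (0,0)$, and similarly for $F_2^\secund$, so the conditions $x_1 \mid b$, $x_2 \mid b^\secund$ translate into $b_0 = 0$, $b_0^\secund = 0$, i.e. into whether the smooth surfaces $\{F_2 = 0\}$, $\{F_2^\secund = 0\}$ are tangent at $P_0$ to the common plane $\{x_3 = 0\}$ of the two lines.

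In the cases (a) and (e) I expect to exhibit two elements generating $\sci_{Z,P_0}$ directly: in (a) the single quadric $\Phi := F_2 + b_1(x_0,0)x_1x_2 - a(x_0,0)x_3 + \tfrac{a(x_0,0)}{a^\secund(x_0,0)}F_2^\secund$, whose linear part at $P_0$ is $a_0 x_3 \neq 0$, together with $x_2F_2^\secund$; and in (e) the quadric $\Psi := x_1F_2 + b(x_0,0)x_1x_2 + \tfrac{b(x_0,0)}{b^\secund(x_0,0)}x_2F_2^\secund$ together with $x_2^2F_2^\secund$. In each case one checks that the three generators of $I(L_1^{(1)} \cup L_2^{(1)}) = (x_3^2,\, x_1x_2x_3,\, x_1^2x_2^2)$ already lie in the ideal generated locally by these two, so $\mu_{P_0} = 2$ and $Z$ is l.c.i. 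The congruences $x_1F_2 \equiv x_1\Phi$ and $x_1^2F_2 \equiv x_1\Psi \pmod{I(L_1^{(1)} \cup L_2^{(1)})}$ recorded at the end of the proof of Prop.~\ref{P:genixcupxsecund} are precisely the inputs that collapse the two ``$F_2$'' generators into one. In the cases (b), (c), (d), by contrast, the special generators come in as two \emph{separate} forms ($x_1F_2$ and $x_2F_2^\secund$, resp. $x_1F_2$ and $x_2^2F_2^\secund$, resp. $x_1^2F_2$ and $x_2F_2^\secund$), none of which has a linear part, and I would show that their initial forms at $P_0$, together with one generator of $I(L_1^{(1)} \cup L_2^{(1)})$, remain linearly independent modulo the relevant subideal, forcing $\mu_{P_0} \geq 3$ and hence non-l.c.i.

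The delicate point --- and the main obstacle --- is the boundary between (a) and (b), where both surfaces are tangent to $\{x_3 = 0\}$ at $P_0$, so that the first-order data do not separate the two cases. Here the decision rests entirely on the second-order matching condition $\tfrac{b_1(x_0,0)}{a(x_0,0)} = \tfrac{b_1^\secund(x_0,0)}{a^\secund(x_0,0)}$: exactly when it holds does $\Phi$ actually lie in $\sci_Z$ (this is how the proposition's proof uses it), providing an \emph{efficient} generator with nonzero linear part and dropping $\mu_{P_0}$ from $3$ to $2$; when it fails one is stuck with the two inefficient degree-two generators $x_1F_2,\, x_2F_2^\secund$. Thus the heart of the argument is tracking how the $1$-jets and $2$-jets of $a,\, b,\, a^\secund,\, b^\secund$ at $P_0$ control the corank of $M(P_0)$. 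The genuinely non-monotone pattern --- (a) both tangent but matching, and (e) both transverse, are l.c.i., whereas the asymmetric configurations (c), (d) (one tangent, one transverse) are not --- is what makes the case-by-case local computation unavoidable, and I would present it as the core content of the proof.
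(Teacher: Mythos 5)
Your overall strategy is legitimate and genuinely different from the paper's. You reduce everything to the single point $P_0$ and test the criterion $\mu_{P_0}(\sci_Z)=2$ (which is indeed equivalent to l.c.i.\ for a curve that is CM of codimension $2$ at $P_0$), reading off $\mu_{P_0}$ from initial forms or from the presentation matrix. The paper instead cuts $Z = X \cup X^\secund$ with the plane $H = \{x_1 = x_2\}$, obtaining a length-$4$ subscheme of $H$ concentrated at $P_0$, and settles each case by plane-curve geometry: in case (a) the restriction of $\Phi$ has non-zero linear part, so the length-$4$ scheme sits on a smooth plane curve and is automatically l.c.i.; in case (e) the restrictions of $\Psi$ and $x_3^2$ are two curves of multiplicity $2$ at $P_0$ with no common tangent direction, so their intersection has length exactly $4$ and must coincide with $H\cap Z$; in (b)--(d) all generators restrict to elements of $(x_1,x_3)^2$ whose quadratic parts share the factor $x_3$, and the intersection-multiplicity inequality of \cite[Chap.~I, Ex.~5.4(a)]{hag} rules out a complete intersection. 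Since $x_1-x_2$ is a non-zerodivisor on $\sco_{Z,P_0}$, one has $\mu_{P_0}(\sci_Z)=\mu_{P_0}(\sci_{H\cap Z,H})$ by Nakayama, so the two frameworks are equivalent; yours avoids the hyperplane section, the paper's avoids having to control the order-$3$ part of $\fm_{P_0}\sci_Z$ (which you do need in cases (c) and (d), where only two of the five generators have order $2$ and the independence of a third generator must be checked against order-$3$ elements of $\fm_{P_0}\sci_Z$, all of which are divisible by $x_3$ while the initial form of $x_2^2F_2^\secund$, resp.\ $x_1^2F_2$, is not).

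The concrete gap is in the positive cases, where your choice of the two local generators is wrong. In case (e) the pair $(\Psi,\,x_2^2F_2^\secund)$ cannot generate $\sci_{Z,P_0}$: the initial form of $\Psi$ at $P_0$ is $-b(1,0)x_1x_2 + a(1,0)x_1x_3 + \frac{b(1,0)a^\secund(1,0)}{b^\secund(1,0)}x_2x_3$ with $b(1,0)\neq 0$, and $x_2^2F_2^\secund$ has order $3$, so every element of $(\Psi,\,x_2^2F_2^\secund)$ has order-$2$ part proportional to that initial form; but $x_3^2$ lies in $\sci_{Z,P_0}$ and its order-$2$ part is not proportional to it (the $x_1x_2$-coefficient is non-zero). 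The correct pair is $(\Psi,\,x_3^2)$, whose leading quadrics have no common factor -- this is exactly what the paper's multiplicity count exploits. In case (a) the pair $(\Phi,\,x_2F_2^\secund)$ is at best unverified: working on the smooth surface $V(\Phi)$ (coordinates $x_1,x_2$ at $P_0$, with $x_3 \equiv \frac{b_1^\secund(x_0,0)}{a^\secund(x_0,0)}x_1x_2 + \cdots$), the ideal $\sci_{Z,P_0}/(\Phi)$ is principal, generated by a unit times $x_1^2x_2^2$, whereas the class of $x_2F_2^\secund$ acquires an $x_1x_2^3$-term with coefficient $-b_2^\secund(1,0)$ that has no reason to vanish; the safe second generator is $x_1^2x_2^2$ -- or, better, no second generator is needed at all, since $\Phi$ has non-vanishing linear part at $P_0$ and a curve that is CM of pure dimension $1$ on a smooth surface is a Cartier divisor there, hence l.c.i. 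With these corrections your computation of $\mu_{P_0}$ goes through and reproduces the statement.
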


\begin{proof}
We cut $X \cup X^\secund$ with the plane $H \subset \piii$ of equation 
$x_1 - x_2 = 0$. $H \cap (X \cup X^\secund)$ is a 0-dimensional subscheme of 
length 4 of $H$, concentrated at $P_0$.   
The composite map $k[x_0,x_1,x_3] \hookrightarrow S 
\twoheadrightarrow S(H)$ is bijective and allows one to identify $S(H)$ with 
$k[x_0,x_1,x_3]$. Under this identification, the restrictions to $H$ of the 
generators of $I(L_1^{(1)} \cup L_2^{(1)})$ are$\, :$ 
\[
x_3^2 \vb H = x_3^2\, ,\  x_1x_2x_3 \vb H = x_1^2x_3\, ,\  
x_1^2x_2^2 \vb H = x_1^4\, . 
\]

(a) Using the notation from the proof of Prop.~\ref{P:genixcupxsecund}(a), one 
has $\Phi \vb H \equiv F_2 \vb H \equiv a(x_0,0)x_3 \pmod{(x_1,\, x_3)^2}$. It 
follows that the 1-dimensional scheme $H \cap \{\Phi = 0\}$ is nonsingular at 
$P_0$. Since $H \cap (X \cup X^\secund)$ is a subscheme of 
$H \cap \{\Phi = 0\}$ one deduces that it is locally complete intersection in 
$H$ hence $X \cup X^\secund$ is locally complete intersection at $P_0$. 

(b) One has $x_1F_2 \vb H \equiv a(x_0,0)x_1x_3 \pmod{(x_1,\, x_3)^3}$ and 
$x_2F_2^\secund \vb H \equiv a^\secund(x_0,0)x_1x_3 \pmod{(x_1,\, x_3)^3}$. 
Moreover, as we saw above, $x_3^2 \vb H = x_3^2$ and $x_1x_2x_3 \vb H$ and 
$x_1^2x_2^2 \vb H$ belong to $(x_1,\, x_3)^3$. 
Using \cite[Chap.~I,~Ex.~5.4(a)]{hag} (in that exercise one has 
equality if and only if $Y$ and $Z$ have no common tangent direction at 
$P$) one deduces that $H \cap (X \cup X^\secund)$ is not locally complete 
intersection in $H$.  

(c) $x_1F_2 \vb H \equiv a(x_0,0)x_1x_3 \pmod{(x_1,\, x_3)^3}$ and 
$x_2^2F_2^\secund \vb H \in (x_1,\, x_3)^3$. 

(d) $x_1^2F_2 \in (x_1,\, x_3)^3$ and 
$x_2F_2^\secund \vb H \equiv a^\secund(x_0,0)x_1x_3 \pmod{(x_1,\, x_3)^3}$. 

(e) Using the notation from the proof of Prop.~\ref{P:genixcupxsecund}(e),  
one has$\, :$ 
\[
\Psi \vb H \equiv -b(x_0,0)\left(x_1^2 - 
\left({\textstyle \frac{a(x_0,0)}{b(x_0,0)}} + 
{\textstyle \frac{a^\secund (x_0,0)}{b^\secund (x_0,0)}}\right)x_1x_3\right) 
\pmod{(x_1,\, x_3)^3}\, .
\]    
If $Y \subset H$ (resp., $Z \subset H$) is the effective divisor of equation 
$(\Psi \vb H) = 0$ (resp., $x_3^2 = 0$) it follows, from the above mentioned 
exercise from Hartshorne's book, that the intersection multiplicity of 
$Y$ and $Z$ at $P_0$ is 4. Since $H \cap (X \cup X^\secund)$ is a subscheme of 
$Y \cap Z$ of degree 4 concentrated at $P_0$  
one deduces that $H \cap (X \cup X^\secund)$ is locally complete 
intersection in $H$ hence $X \cup X^\secund$ is locally complete intersection 
at $P_0$. 
\end{proof}

\begin{remark}\label{R:genixcupxsecund} 
Under the hypothesis of Prop.~\ref{P:genixcupxsecund}, recall the exact 
sequences$\, :$ 
\[
0 \lra \sci_{X \cup L_2} \lra \sci_{L_1 \cup L_2} 
\overset{\displaystyle \phi}{\lra} \sco_{L_1}(l)\, ,\  \  
0 \lra \sci_{L_1 \cup X^\secund} \lra \sci_{L_1 \cup L_2} 
\overset{\displaystyle \phi^\secund}{\lra} \sco_{L_2}(l^\secund)\, , 
\]
where $\phi$ and $\phi^\secund$ are the composite morphisms$\, :$ 
\[
\sci_{L_1 \cup L_2} \lra \sci_{L_1} \overset{\displaystyle \pi}{\lra} 
\sco_{L_1}(l)\, ,\  
\sci_{L_1 \cup L_2} \lra \sci_{L_2} \overset{\displaystyle \pi^\secund}{\lra} 
\sco_{L_2}(l^\secund)\, .
\]
One deduces an exact sequence$\, :$ 
\[
0 \lra \sci_{X \cup X^\secund} \lra \sci_{L_1 \cup L_2} 
\xra{\displaystyle (\phi \, ,\, \phi^\secund )} \sco_{L_1}(l) \oplus 
\sco_{L_2}(l^\secund )\, .
\]
Moreover, since $I(L_1 \cup L_2) = (x_3,\, x_1x_2)$ and since$\, :$ 
\[
\phi(x_3) = b\, ,\  \phi(x_1x_2) = x_1a\, ,\  
\phi^\secund(x_3) = b^\secund \, ,\  \phi^\secund(x_1x_2) = x_2a^\secund \, , 
\]
it follows that if $x_1 \mid b$ (resp., $x_2 \mid b^\secund$) then $\phi$ 
(resp., $\phi^\secund$) factorizes as$\, :$ 
\[
\sci_{L_1 \cup L_2} \overset{\displaystyle \phi_1}{\lra} \sco_{L_1}(l-1) 
\overset{\displaystyle x_1}{\lra} \sco_{L_1}(l)\  \  
(\text{resp.,}\  
\sci_{L_1 \cup L_2} \overset{\displaystyle \phi_1^\secund}{\lra} 
\sco_{L_2}(l^\secund -1) 
\overset{\displaystyle x_2}{\lra} \sco_{L_2}(l^\secund)\, . 
\] 

(a) If $x_1 \mid b$ (i.e., $b = x_1b_1$), $x_2 \mid b^\secund$ (i.e., 
$b^\secund = x_2b_1^\secund$) and $\frac{b_1(x_0,0)}{a(x_0,0)} = 
\frac{b_1^\secund(x_0,0)}{a^\secund(x_0,0)}$ then one has an exact sequence$\, :$ 
\[
0 \ra \sci_{X \cup X^\secund} \lra \sci_{L_1 \cup L_2} 
\xra{\begin{pmatrix} \phi_1\\ \phi_1^\secund \end{pmatrix}} 
\begin{matrix} \sco_{L_1}(l-1)\\ \oplus\\ \sco_{L_2}(l^\secund - 1) \end{matrix} 
\xra{\displaystyle (a^\secund(x_0,0)\, ,\, -a(x_0,0))} 
\sco_{\{P_0\}}(l + l^\secund ) \ra 0\, . 
\]

(b) If $x_1 \mid b$ (i.e., $b = x_1b_1$), $x_2 \mid b^\secund$ (i.e., 
$b^\secund = x_2b_1^\secund$) and $\frac{b_1(x_0,0)}{a(x_0,0)} \neq  
\frac{b_1^\secund(x_0,0)}{a^\secund(x_0,0)}$ then one has an exact sequence$\, :$ 
\[
0 \lra \sci_{X \cup X^\secund} \lra \sci_{L_1 \cup L_2} 
\xra{\begin{pmatrix} \phi_1\\ \phi_1^\secund \end{pmatrix}} 
\begin{matrix} \sco_{L_1}(l-1)\\ \oplus\\ \sco_{L_2}(l^\secund - 1) \end{matrix} 
\lra 0\, . 
\] 

(c) If $x_1 \mid b$ and $x_2 \nmid b^\secund$ then one has an exact 
sequence$\, :$ 
\[
0 \lra \sci_{X \cup X^\secund} \lra \sci_{L_1 \cup L_2} 
\xra{\begin{pmatrix} \phi_1\\ \phi^\secund \end{pmatrix}} 
\begin{matrix} \sco_{L_1}(l-1)\\ \oplus\\ \sco_{L_2}(l^\secund ) \end{matrix} 
\lra 0\, . 
\] 

(d) If $x_1 \nmid b$ and $x_2 \mid b^\secund$ then one has an exact 
sequence$\, :$ 
\[
0 \lra \sci_{X \cup X^\secund} \lra \sci_{L_1 \cup L_2} 
\xra{\begin{pmatrix} \phi\\ \phi_1^\secund \end{pmatrix}} 
\begin{matrix} \sco_{L_1}(l)\\ \oplus\\ \sco_{L_2}(l^\secund - 1) \end{matrix} 
\lra 0\, . 
\] 

(e) If $x_1 \nmid b$ and $x_2 \nmid b^\secund$ then one has an exact 
sequence$\, :$ 
\[
0 \ra \sci_{X \cup X^\secund} \lra \sci_{L_1 \cup L_2} 
\xra{\begin{pmatrix} \phi\\ \phi^\secund \end{pmatrix}} 
\begin{matrix} \sco_{L_1}(l)\\ \oplus\\ \sco_{L_2}(l^\secund ) \end{matrix} 
\xra{\displaystyle (b^\secund(x_0,0)\, ,\, -b(x_0,0))} 
\sco_{\{P_0\}}(l + l^\secund + 1) \ra 0\, . 
\] 

\vskip2mm 

\noindent
\emph{Indeed}, in case (a) let $H \subset \piii$ be the plane of equation 
$x_3 = 0$. $\Cok (\phi_1\, ,\, \phi_1^\secund )$ is concentrated in $P_0$ and 
coincides with the cokernel of the morphism$\, :$ 
\[
\sco_H(-1) \oplus \sco_H(-2) 
\xra{\begin{pmatrix} b_1 & a\\ b_1^\secund & a^\secund \end{pmatrix}} 
\sco_{L_1}(l-1) \oplus \sco_{L_2}(l^\secund -1)  
\]
hence with the cokernel of the morphism$\, :$ 
\[
\sco_H(-1) \oplus \sco_H(-2) \oplus \sco_H(l-2) \oplus \sco_H(l^\secund - 2)  
\xra{\begin{pmatrix} b_1 & a & x_2 & 0\\ 
b_1^\secund & a^\secund & 0 & x_1\end{pmatrix}} 
\sco_H(l-1) \oplus \sco_H(l^\secund -1)  
\]
The cokernel of the last morphism is annihilated by $x_1b_1$, $x_1a$, 
$x_2b_1^\secund$, $x_2a^\secund$. Since $b_1$ and $a$ (resp., $b_1^\secund$ and 
$a^\secund$) are coprime one deduces that $\Cok (\phi_1\, ,\, \phi_1^\secund )$ 
is annihilated by $x_1$ and $x_2$ hence it is an $\sco_{\{P_0\}}$-module. It 
follows that the sequence$\, :$ 
\[
\begin{matrix} \sco_{\{P_0\}}(-1)\\ \oplus\\ \sco_{\{P_0\}}(-2) \end{matrix}  
\xra{\begin{pmatrix} b_1(x_0,0) & a(x_0,0)\\ 
b_1^\secund(x_0,0) & a^\secund(x_0,0) \end{pmatrix}} 
\begin{matrix} \sco_{\{P_0\}}(l-1)\\ \oplus\\ \sco_{\{P_0\}}(l^\secund -1) 
\end{matrix}  
\lra \Cok (\phi_1\, ,\, \phi_1^\secund ) \lra 0
\]
is exact and assertion (a) follows. 

The assertions (b)-(e) can be proven similarly. 
\end{remark}

Recall, now, that the graded $S$-module $\tH^0_\ast(\sco_X)$ 
(resp., $\tH^0_\ast(\sco_{X^\secund})$) 
is generated by $1$ and by an element $e_1 \in \tH^0(\sco_X(-l))$ (resp., 
$e_1^\secund \in \tH^0(\sco_{X^\secund}(-l^\secund))$). 

\begin{prop}\label{P:reshoxcupxsecund}
Using the notation introduced at the beginning of this subsection, assume that 
$l \leq l^\secund$. Define $a_1$, $b_1$, $b_2$, $a_1^\secund$, $b_1^\secund$ and 
$b_2^\secund$ by the relations$\, :$ 
\begin{gather*}
a = a(x_0,0) + a_1x_1\, ,\  b = b(x_0,0) + b_1x_1\, ,\  
b_1 = b_1(x_0,0) + b_2x_1\, ,\\
a^\secund = a^\secund(x_0,0) + a_1^\secund x_2\, ,\  
b^\secund = b^\secund(x_0,0) + b_1^\secund x_2\, ,\  
b_1^\secund = b_1^\secund(x_0,0) + b_2^\secund x_2\, . 
\end{gather*}

\emph{(a)} If $x_1 \mid b$, $x_2 \mid b^\secund$ and 
$\frac{b_1(x_0,0)}{a(x_0,0)} = \frac{b_1^\secund(x_0,0)}{a^\secund(x_0,0)}$ 
then the graded $S$-module ${\fam0 H}^0_\ast(\sco_{X \cup X^\secund})$ 
admits a free resolution of the form$\, :$ 
\[
0 \ra \begin{matrix} S(-3)\\ \oplus\\ S(l-3)\\ \oplus\\ S(l^\secund - 4) 
\end{matrix} \overset{\displaystyle \delta_2}{\lra} 
\begin{matrix} S(-1)\\ \oplus\\ S(-2)\\ \oplus\\ 2S(l-2)\\ \oplus\\ 
2S(l^\secund - 3) \end{matrix} \overset{\displaystyle \delta_1}{\lra} 
\begin{matrix} S\\ \oplus\\ S(l-1)\\ \oplus\\ S(l^\secund - 2) 
\end{matrix} \overset{\displaystyle \delta_0}{\lra} 
{\fam0 H}^0_\ast(\sco_{X \cup X^\secund}) \ra 0  
\]
with $\delta_0 = (1\, ,\, x_1e_1 + 
\frac{a^\secund (x_0,0)}{a(x_0,0)}x_2e_1^\secund \, ,\, x_2^2e_1^\secund)$ 
and with $\delta_1$ and $\delta_2$ defined by the 
matrices$\, :$ 
\[
\begin{pmatrix} 
x_3 & x_1x_2 & 0 & 0 & 0 & 0\\
-b_1 & -a & x_2 & x_3 & 0 & 0\\
-b_2^\secund & -a_1^\secund & -{\textstyle \frac{a^\secund (x_0,0)}{a(x_0,0)}} & 
0 & x_1 & x_3
\end{pmatrix}\, ,\  
\begin{pmatrix}
-x_1x_2 & 0 & 0\\
x_3 & 0 & 0\\
-x_1b_1 & -x_3 & 0\\
a & x_2 & 0\\
-x_2b_2^\secund - {\textstyle \frac{a^\secund (x_0,0)}{a(x_0,0)}}b_1 & 
0 & -x_3\\
a_1^\secund & -{\textstyle \frac{a^\secund (x_0,0)}{a(x_0,0)}} & x_1
\end{pmatrix}\, .
\]

\emph{(b)} If $x_1 \mid b$, $x_2 \mid b^\secund$ and 
$\frac{b_1(x_0,0)}{a(x_0,0)} \neq \frac{b_1^\secund(x_0,0)}{a^\secund(x_0,0)}$ 
then the graded $S$-module ${\fam0 H}^0_\ast(\sco_{X \cup X^\secund})$ 
admits a free resolution of the form$\, :$ 
\[
0 \ra \begin{matrix} S(-3)\\ \oplus\\ S(l-3)\\ \oplus\\ S(l^\secund - 3) 
\end{matrix} \overset{\displaystyle \delta_2}{\lra} 
\begin{matrix} S(-1)\\ \oplus\\ S(-2)\\ \oplus\\ 2S(l-2)\\ \oplus\\ 
2S(l^\secund - 2) \end{matrix} \overset{\displaystyle \delta_1}{\lra} 
\begin{matrix} S\\ \oplus\\ S(l-1)\\ \oplus\\ S(l^\secund - 1) 
\end{matrix} \overset{\displaystyle \delta_0}{\lra} 
{\fam0 H}^0_\ast(\sco_{X \cup X^\secund}) \ra 0  
\]
with $\delta_0 = (1\, ,\, x_1e_1\, ,\, x_2e_1^\secund)$ and with $\delta_1$ and 
$\delta_2$ defined by the matrices$\, :$ 
\[
\begin{pmatrix}
x_3 & x_1x_2 & 0 & 0 & 0 & 0\\
-b_1 & -a & x_2 & x_3 & 0 & 0\\
-b_1^\secund & -a^\secund & 0 & 0 & x_1 & x_3 
\end{pmatrix}\, ,\  
\begin{pmatrix}
-x_1x_2 & 0 & 0\\
x_3 & 0 & 0\\
-x_1b_1 & -x_3 & 0\\
a & x_2 & 0\\
-x_2b_1^\secund & 0 & -x_3\\
a^\secund & 0 & x_1
\end{pmatrix}\, . 
\]

\emph{(c)} If $x_1 \mid b$ and $x_2 \nmid b^\secund$ 
then the graded $S$-module ${\fam0 H}^0_\ast(\sco_{X \cup X^\secund})$ 
admits a free resolution of the form$\, :$ 
\[
0 \ra \begin{matrix} S(-3)\\ \oplus\\ S(l-3)\\ \oplus\\ S(l^\secund - 2) 
\end{matrix} \overset{\displaystyle \delta_2}{\lra} 
\begin{matrix} S(-1)\\ \oplus\\ S(-2)\\ \oplus\\ 2S(l-2)\\ \oplus\\ 
2S(l^\secund - 1) \end{matrix} \overset{\displaystyle \delta_1}{\lra} 
\begin{matrix} S\\ \oplus\\ S(l-1)\\ \oplus\\ S(l^\secund ) 
\end{matrix} \overset{\displaystyle \delta_0}{\lra} 
{\fam0 H}^0_\ast(\sco_{X \cup X^\secund}) \ra 0  
\]
with $\delta_0 = (1\, ,\, x_1e_1\, ,\, e_1^\secund)$ and with $\delta_1$ and 
$\delta_2$ defined by the matrices$\, :$ 
\[
\begin{pmatrix}
x_3 & x_1x_2 & 0 & 0 & 0 & 0\\
-b_1 & -a & x_2 & x_3 & 0 & 0\\
-b^\secund & -x_2a^\secund & 0 & 0 & x_1 & x_3 
\end{pmatrix}\, ,\  
\begin{pmatrix}
-x_1x_2 & 0 & 0\\
x_3 & 0 & 0\\
-x_1b_1 & -x_3 & 0\\
a & x_2 & 0\\
-x_2b^\secund & 0 & -x_3\\
x_2a^\secund & 0 & x_1
\end{pmatrix}\, . 
\]

\emph{(d)} If $x_1 \nmid b$ then the graded $S$-module 
${\fam0 H}^0_\ast(\sco_{X \cup X^\secund})$ admits a free resolution of the 
form$\, :$ 
\[
0 \ra \begin{matrix} S(-3)\\ \oplus\\ S(l-2)\\ \oplus\\ S(l^\secund - 3) 
\end{matrix} \overset{\displaystyle \delta_2}{\lra} 
\begin{matrix} S(-1)\\ \oplus\\ S(-2)\\ \oplus\\ 2S(l-1)\\ \oplus\\ 
2S(l^\secund - 2) \end{matrix} \overset{\displaystyle \delta_1}{\lra} 
\begin{matrix} S\\ \oplus\\ S(l)\\ \oplus\\ S(l^\secund - 1) 
\end{matrix} \overset{\displaystyle \delta_0}{\lra} 
{\fam0 H}^0_\ast(\sco_{X \cup X^\secund}) \ra 0  
\]
with $\delta_0 = 
(1\, ,\, e_1 + \frac{b^\secund (x_0,0)}{b(x_0,0)}e_1^\secund \, ,\, x_2e_1^\secund)$ 
and with $\delta_1$ and $\delta_2$ defined by the matrices$\, :$ 
\[
\begin{pmatrix} 
x_3 & x_1x_2 & 0 & 0 & 0 & 0\\
-b & -x_1a & x_2 & x_3 & 0 & 0\\
-b_1^\secund & -a^\secund & -{\textstyle \frac{b^\secund (x_0,0)}{b(x_0,0)}} & 
0 & x_1 & x_3
\end{pmatrix}\, ,\  
\begin{pmatrix}
-x_1x_2 & 0 & 0\\
x_3 & 0 & 0\\
-x_1b & -x_3 & 0\\
x_1a & x_2 & 0\\
-x_2b_1^\secund - {\textstyle \frac{b^\secund (x_0,0)}{b(x_0,0)}}b & 
0 & -x_3\\
a^\secund & -{\textstyle \frac{b^\secund (x_0,0)}{b(x_0,0)}} & x_1
\end{pmatrix}\, .
\]
\end{prop}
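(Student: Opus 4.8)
The plan is to obtain the resolution exactly as the other ``\texttt{resho}'' statements of this subsection are obtained: peel off the double structure $X$ on $L_1$ by means of Lemma~\ref{L:ycupt}, reducing to the simpler scheme $L_1 \cup X^\secund$ whose module resolution is already at our disposal. Concretely I would apply Lemma~\ref{L:ycupt} with inner scheme $L_1$, with $Z = X$ and with $T = X^\secund$ (note $\sci_{L_1}^2 \subseteq \sci_X$, so part (c) applies and $\Cok$ of the resulting map is supported on $L_1 \cap X^\secund$). Since $\sci_{L_1}/\sci_X \simeq \sco_{L_1}(l)$, this gives an exact sequence
\[
0 \lra \text{Im}\, \phi \times \{0\} \lra \sco_{X \cup X^\secund} \lra \sco_{L_1 \cup X^\secund} \lra 0 ,
\]
where $\phi : \sci_{L_1 \cup X^\secund} \to \sco_{L_1}(l)$ is the composite factoring through $\sci_{L_1}/\sci_{L_1}^2 = 2\sco_{L_1}(-1) \xra{(a,b)} \sco_{L_1}(l)$, and $\Cok \phi$ is an $\sco_{L_1 \cap X^\secund}$-module, hence concentrated at $P_0$. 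As $x_1$ is the local parameter of $L_1 \simeq \p^1$ at $P_0$, this forces $\text{Im}\, \phi = x_1^{k}\sco_{L_1}(l-k)$ for a $k \in \{0,1\}$ determined by the case.

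For the term $\tH^0_\ast(\sco_{L_1 \cup X^\secund})$ I would borrow the resolution of Prop.~\ref{P:reshoxcupl}: after the coordinate swap $x_1 \leftrightarrow x_2$ interchanging $L_1$ and $L_2$ (and carrying $(a^\secund,b^\secund)$ into the role of $(a,b)$, and the condition $x_2 \mid b^\secund$ into $x_1 \mid b$), that proposition with $c = 0$ describes $\tH^0_\ast(\sco_{L_1 \cup X^\secund})$ as generated by $1$ together with $x_2 e_1^\secund$ (when $x_2 \mid b^\secund$) or $e_1^\secund$ (when $x_2 \nmid b^\secund$), with an explicit length-two resolution. The base object $L_1 \cup L_2$ is the complete intersection of type $(1,2)$ with $I(L_1 \cup L_2) = (x_3,\, x_1x_2)$, and its structural module (generated by $1$) has the standard resolution coming from that of $S(L_1) = S/(x_2,x_3)$.

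Next I would pin down $\text{Im}\, \phi$ and the lift of the new generator in each case. Evaluating $\phi$ on the generators of $I(L_1 \cup X^\secund)$ yields values built from $a,b$ and from $a^\secund,b^\secund$ restricted to $P_0$; the resulting split into (a)--(d) is precisely the one established in the Claims of the proof of Prop.~\ref{P:genixcupxsecund}, which I would invoke. When $x_1 \mid b$ one finds $\text{Im}\, \phi = x_1\sco_{L_1}(l-1)$, so the new module generator sits in twist $S(l-1)$; when $x_1 \nmid b$ one finds $\text{Im}\, \phi = \sco_{L_1}(l)$, giving twist $S(l)$. Applying $\tH^0_\ast(-)$ (which stays right-exact, as $\tH^1_\ast$ of a twisted ideal sheaf on $\p^1$ vanishes) produces
\[
0 \lra S(L_1)(\ast) \lra \tH^0_\ast(\sco_{X \cup X^\secund}) \lra \tH^0_\ast(\sco_{L_1 \cup X^\secund}) \lra 0 ,
\]
and the decisive point is to exhibit the element lifting $1 \in S(L_1)(\ast)$. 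Here the hypothesis $l \leq l^\secund$ enters: the quotient $a^\secund(x_0,0)/a(x_0,0)$ (respectively $b^\secund(x_0,0)/b(x_0,0)$ in case (d)) is a genuine polynomial of degree $l^\secund - l \geq 0$, and the lift is the \emph{mixed} element $x_1 e_1 + \tfrac{a^\secund(x_0,0)}{a(x_0,0)} x_2 e_1^\secund$ in case (a) and $e_1 + \tfrac{b^\secund(x_0,0)}{b(x_0,0)} e_1^\secund$ in case (d), degenerating to the pure generators $x_1 e_1$ (cases (b), (c)) or $e_1$ (case (d) with $x_2 \mid b^\secund$) when the relevant value at $P_0$ vanishes. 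The necessity of this mixing is exactly the $\Phi$/$\Psi$ construction of Prop.~\ref{P:genixcupxsecund}(a),(e): the lift must vanish on $X^\secund$ to the correct order at $P_0$. I would then assemble the free resolution by the horseshoe construction, splicing the twisted Koszul resolution of $S(L_1)$ against the length-two resolution of $\tH^0_\ast(\sco_{L_1 \cup X^\secund})$, and prune the one redundant constant entry using Remark~\ref{R:cancellation}; the entries of $\delta_1$ are read off as the relations obtained by multiplying each generator by $x_2$ and $x_3$ and re-expressing the outcome, which is where the auxiliary coefficients $a_1, b_1, b_2$ and $a_1^\secund, b_1^\secund, b_2^\secund$ appear (one must split $a = a(x_0,0)+a_1x_1$, $b_1 = b_1(x_0,0)+b_2x_1$, etc., to carry out the reduction at $P_0$), with $\delta_2$ recording the second syzygies and $\delta_1\delta_2 = 0$ checked directly.

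I expect the genuine obstacle to be precisely the mixed-generator bookkeeping in cases (a) and (d). One must verify that the combination $x_1 e_1 + \tfrac{a^\secund(x_0,0)}{a(x_0,0)} x_2 e_1^\secund$ (resp. its case (d) analogue) is a well-defined section lifting the generator, that in case (a) it is the multiple $x_2^2 e_1^\secund$ (not $x_2 e_1^\secund$) that survives as the third minimal generator once the mixing is taken into account, and that the two products of each generator by $x_2,x_3$ close up into the stated columns of $\delta_1$ without spurious terms. This forces careful use of the coprimality of $a,b$ and of $a^\secund,b^\secund$, of the defining relations at $P_0$, and of the inequality $l \leq l^\secund$; by contrast, once $\text{Im}\, \phi$ has been identified, cases (b) and (c) are routine.
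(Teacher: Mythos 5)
Your overall strategy (peel off one of the two double structures via Lemma~\ref{L:ycupt} and splice the resulting resolutions) is the paper's strategy, but you have peeled off the wrong one, and under the normalization $l \leq l^{\secund}$ this is fatal. The paper applies Lemma~\ref{L:ycupt} with $Y = L_2$, $Z = X^{\secund}$, $T = X$, so the quotient is $\sco_{X \cup L_2}$ and the kernel is $\text{Im}\, \phi^{\secund} \subseteq \sco_{L_2}(l^{\secund})$; you instead take the quotient to be $\sco_{L_1 \cup X^{\secund}}$ with kernel $\text{Im}\, \phi \subseteq \sco_{L_1}(l)$. The obstruction is the right-exactness of $\tH^0_\ast$: the generator of $\tH^0_\ast(\sco_{L_1 \cup X^{\secund}})$ other than $1$ sits in degree $-l^{\secund}+1$ (or $-l^{\secund}$), and lifting it through your sequence needs $\tH^1\bigl((\text{Im}\, \phi)(-l^{\secund}+1)\bigr) \simeq \tH^1\bigl(\sco_{L_1}(l-k-l^{\secund}+1)\bigr) = 0$, i.e.\ $l - l^{\secund} \geq k-2$ --- the wrong way round. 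Moreover in case (a) one computes $\text{Im}\, \phi = x_1^2\sco_{L_1}(l-2)$, so $k = 2$ there, contradicting your claim that $k \in \{0,1\}$: the compatibility $b_1(x_0,0)/a(x_0,0) = b_1^{\secund}(x_0,0)/a^{\secund}(x_0,0)$ forces $\phi(F_2^{\secund}) = x_1\bigl(-b_1^{\secund}(x_0,0)a + a^{\secund}(x_0,0)b_1\bigr)$ to vanish to order $2$ at $P_0$. Concretely, for $l = 0$, $l^{\secund} = 2$ in case (a) your generating set would contain a lift of $x_2e_1^{\secund}$ in degree $-1$, whereas the module $\tH^0_\ast(\sco_{X \cup X^{\secund}})$ is generated in degrees $0,1,0$ and therefore vanishes in degree $-1$: no such lift exists. (The natural candidate would be $\frac{a(x_0,0)}{a^{\secund}(x_0,0)}x_1e_1 + x_2e_1^{\secund}$, whose coefficient has degree $l - l^{\secund} < 0$ and is not a polynomial.) Your parenthetical justification that ``$\tH^1_\ast$ of a twisted ideal sheaf on $\p^1$ vanishes'' is also false here: the kernel is a line bundle on $L_1 \simeq \pj$, whose $\tH^1_\ast$ is nonzero. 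Finally, your identification of $\text{Im}\, \phi$ by whether $x_1 \mid b$ is off: it is governed by $b^{\secund}$ and by the compatibility of the two ramification data at $P_0$ (e.g.\ when $x_1 \nmid b$ and $x_2 \nmid b^{\secund}$ one still gets $\text{Im}\, \phi = x_1\sco_{L_1}(l-1)$, not $\sco_{L_1}(l)$).

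The repair is to exchange the roles of the two double lines, as the paper does: quotient onto $\sco_{X \cup L_2}$, whose module is resolved by Prop.~\ref{P:reshoxcupl} directly, with kernel $\text{Im}\, \phi^{\secund}$ equal to $x_2^2\sco_{L_2}(l^{\secund}-2)$, $x_2\sco_{L_2}(l^{\secund}-1)$ or $\sco_{L_2}(l^{\secund})$ according to the case. Then the generator $x_1e_1$ (resp.\ $e_1$) of the quotient sits in degree $-l+1$ (resp.\ $-l$), the obstruction $\tH^1(\sco_{L_2}(l^{\secund}-l-1))$ vanishes precisely because $l \leq l^{\secund}$, and the mixed lifts $x_1e_1 + \frac{a^{\secund}(x_0,0)}{a(x_0,0)}x_2e_1^{\secund}$ and $e_1 + \frac{b^{\secund}(x_0,0)}{b(x_0,0)}e_1^{\secund}$ make sense with polynomial coefficients of degree $l^{\secund}-l \geq 0$. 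With that reversal, the rest of your plan (the case split from the Claims in the proof of Prop.~\ref{P:genixcupxsecund}, reading off $\delta_1$ by multiplying each generator by $x_2$ and $x_3$, and checking $\delta_1\delta_2 = 0$) goes through as you describe.
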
 

\begin{proof}
By Lemma~\ref{L:ycupt} one has an exact sequence$\, :$ 
\[
0 \lra \{0\} \times \text{Im}\, \phi^\secund \lra \sco_{X \cup X^\secund} \lra 
\sco_{X \cup L_2} \lra 0 
\]
where $\phi^\secund$ is the composite morphism$\, :$ 
\[
\sci_{X \cup L_2} \lra \sci_{L_2} \overset{\displaystyle \pi^\secund}{\lra} 
\sco_{L_2}(l^\secund)\, . 
\]
According to Prop.~\ref{P:genixcupl} one has$\, :$ 
\[
I(X \cup L_2) = 
\begin{cases}
(F_2,\, x_2x_3,\, x_3^2,\, x_1x_2^2)\, , & \text{if $x_1 \mid b\, ;$}\\
(x_1F_2,\, x_2x_3,\, x_3^2,\, x_1x_2^2)\, , & \text{if $x_1 \nmid b\, .$}
\end{cases}
\]
Taking into account the definition of $\pi^\secund$ one gets$\, :$ 
\begin{gather*}
\phi^\secund(F_2) = -b_1(x_0,0)x_2a^\secund + a(x_0,0)b^\secund =\\
= a(x_0,0)b^\secund(x_0,0) + \left(-b_1(x_0,0)a^\secund(x_0,0) + 
a(x_0,0)b_1^\secund(x_0,0)\right)x_2 +\\
+ \left(-b_1(x_0,0)a_1^\secund + a(x_0,0)b_2^\secund \right)x_2^2 \, , \  
\text{if}\  x_1 \mid b\, ,\\
\phi^\secund(x_1F_2) = -b(x_0,0)x_2a^\secund = -b(x_0,0)a^\secund(x_0,0)x_2 - 
b(x_0,0)a_1^\secund x_2^2 \, ,\  \text{if}\  x_1 \nmid b\, ,\\
\phi^\secund(x_2x_3) = x_2b^\secund \, ,\  \phi^\secund(x_3^2) = 0\, ,\  
\phi^\secund(x_1x_2^2) = x_2^2a^\secund\, .
\end{gather*}
We shall identify the element $e_1$ of $\tH^0_\ast(\sco_X)$ (resp., 
$e_1^\secund$ of $\tH^0_\ast(\sco_{X^\secund})$) to the element 
$(e_1,0)$ (resp., $(0,e_1^\secund)$) of 
$\tH^0_\ast(\sco_X) \oplus \tH^0_\ast(\sco_{X^\secund})$. 
Using the exact sequence$\, :$ 
\[
0 \lra \tH^0_\ast(\sco_{X \cup X^\secund}) \lra \tH^0_\ast(\sco_X) \oplus 
\tH^0_\ast(\sco_{X^\secund}) \lra \tH^0_\ast(\sco_{X \cap X^\secund})
\]   
and taking into account that $X \cap X^\secund$ is concentrated at $P_0$ one 
deduces that if an element $\xi$ of $\tH^0_\ast(\sco_X) \oplus 
\tH^0_\ast(\sco_{X^\secund})$ has the property that $x_0^m\xi$ belongs to 
$\tH^0_\ast(\sco_{X \cup X^\secund})$ for some $m \geq 0$ then $\xi$ itself 
belongs to $\tH^0_\ast(\sco_{X \cup X^\secund})$. 

Finally, let us recall that Prop.~\ref{P:reshoxcupl} provides a graded free 
resolution for $\tH^0_\ast(\sco_{X \cup L_2})$. 

\vskip2mm 

(a) In this case $\text{Im}\, \phi^\secund = x_2^2\sco_{L_2}(l^\secund - 2)$.  
By Prop.~\ref{P:reshoxcupl}(a), the graded $S$-module 
$\tH^0_\ast(\sco_{X \cup L_2})$ is generated by $1$ and by $x_1e_1 \in 
\tH^0(\sco_{X \cup L_2}(-l+1))$. Since $l \leq l^\secund$ it follows that   
$\tH^1((\text{Im}\, \phi^\secund)(-l+1)) \simeq 
\tH^1(\sco_{L_2}(l^\secund -l -1)) = 0$. One deduces an exact sequence$\, :$ 
\begin{equation}\label{E:hoxcupxsecunda}
0 \lra S(L_2)(l^\secund -2) \lra \tH^0_\ast(\sco_{X \cup X^\secund}) \lra 
\tH^0_\ast(\sco_{X \cup L_2}) \lra 0 
\end{equation}
where the left morphism maps $1 \in S(L_2)$ to 
$x_2^2e_1^\secund \in \tH^0(\sco_{X \cup X^\secund}(-l^\secund + 2))$ (identified, 
as we assumed above, to the element $(0,x_2^2e_1^\secund)$ of $\tH^0_\ast(\sco_X) 
\oplus \tH^0_\ast(\sco_{X^\secund})$). 

\vskip2mm

\noindent
{\bf Claim 1.}\quad $x_1^2e_1$ \emph{belongs to} 
$\tH^0_\ast(\sco_{X \cup X^\secund})$. 

\vskip2mm

\noindent
\emph{Indeed}, since $x_1e_1$ belongs to $\tH^0_\ast(\sco_{X \cup L_2})$, there 
exists an element of $\tH^0_\ast(\sco_{X \cup X^\secund})$ of the form 
$x_1e_1 + fe_1^\secund$. Since $x_1$ annihilates $e_1^\secund$ the claim 
follows. 

\vskip2mm

\noindent
{\bf Claim 2.}\quad $x_1e_1 + \frac{a^\secund (x_0,0)}{a(x_0,0)}x_2e_1^\secund$ 
\emph{belongs to} $\tH^0_\ast(\sco_{X \cup X^\secund})$. 

\vskip2mm

\noindent
\emph{Indeed}, one has in $\tH^0_\ast(\sco_X) \oplus 
\tH^0_\ast(\sco_{X^\secund})$$\, :$ 
\[
x_1x_2 \cdot 1 = ax_1e_1 + a^\secund x_2e_1^\secund = 
a(x_0,0)x_1e_1 + a_1x_1^2e_1 + a^\secund(x_0,0)x_2e_1^\secund +  
a_1^\secund x_2^2e_1^\secund \, . 
\]
Since $x_1^2e_1$ and $x_2^2e_1^\secund$ belong to 
$\tH^0_\ast(\sco_{X \cup X^\secund})$ it follows that $a(x_0,0)x_1e_1 + 
a^\secund(x_0,0)x_2e_1^\secund$ belongs to $\tH^0_\ast(\sco_{X \cup X^\secund})$ 
hence$\, :$ 
\[
a(x_0,0)\left(x_1e_1 + 
{\textstyle \frac{a^\secund (x_0,0)}{a(x_0,0)}}x_2e_1^\secund \right) \in 
\tH^0_\ast(\sco_{X \cup X^\secund})\, .
\]
The claim follows, now, using an observation from the beginning of the proof. 

\vskip2mm

\noindent
The assertion from case (a) of the statement is, now, a consequence of the 
exact sequence \eqref{E:hoxcupxsecunda}, of Prop.~\ref{P:reshoxcupl}(a) and 
of the following relations in 
$\tH^0_\ast(\sco_{X \cup X^\secund}) \subset \tH^0_\ast(\sco_X) \oplus 
\tH^0_\ast(\sco_{X^\secund})$$\, :$ 
\begin{gather*}
x_3 \cdot 1 = be_1 + b^\secund e_1^\secund = 
b_1x_1e_1 + b_1^\secund x_1e_1^\secund = 
b_1\left(x_1e_1 + 
{\textstyle \frac{a^\secund (x_0,0)}{a(x_0,0)}}x_2e_1^\secund \right) - 
b_1{\textstyle \frac{a^\secund (x_0,0)}{a(x_0,0)}}x_2e_1^\secund + 
b_1^\secund x_2e_1^\secund\\ 
= b_1\left(x_1e_1 + 
{\textstyle \frac{a^\secund (x_0,0)}{a(x_0,0)}}x_2e_1^\secund \right) - 
b_1(x_0,0){\textstyle \frac{a^\secund (x_0,0)}{a(x_0,0)}}x_2e_1^\secund + 
b_1^\secund x_2e_1^\secund =\\ 
b_1\left(x_1e_1 + 
{\textstyle \frac{a^\secund (x_0,0)}{a(x_0,0)}}x_2e_1^\secund \right) - 
b_1^\secund(x_0,0)x_2e_1^\secund + b_1^\secund x_2e_1^\secund =  
b_1\left(x_1e_1 + 
{\textstyle \frac{a^\secund (x_0,0)}{a(x_0,0)}}x_2e_1^\secund \right) + 
b_2^\secund x_2^2e_1^\secund \, ;\\ 
x_1x_2 \cdot 1 = ax_1e_1 + a^\secund x_2e_1^\secund = 
a\left(x_1e_1 + 
{\textstyle \frac{a^\secund (x_0,0)}{a(x_0,0)}}x_2e_1^\secund \right) - 
a{\textstyle \frac{a^\secund (x_0,0)}{a(x_0,0)}}x_2e_1^\secund + 
a^\secund x_2e_1^\secund =\\
a\left(x_1e_1 + 
{\textstyle \frac{a^\secund (x_0,0)}{a(x_0,0)}}x_2e_1^\secund \right) - 
a(x_0,0){\textstyle \frac{a^\secund (x_0,0)}{a(x_0,0)}}x_2e_1^\secund + 
a^\secund x_2e_1^\secund 
= a\left(x_1e_1 + 
{\textstyle \frac{a^\secund (x_0,0)}{a(x_0,0)}}x_2e_1^\secund \right) 
+ a_1^\secund x_2^2e_1^\secund \, ;\\
x_2 \cdot \left(x_1e_1 + 
{\textstyle \frac{a^\secund (x_0,0)}{a(x_0,0)}}x_2e_1^\secund \right) = 
{\textstyle \frac{a^\secund (x_0,0)}{a(x_0,0)}}x_2^2e_1^\secund \, ;\ \  
x_3 \cdot \left(x_1e_1 + 
{\textstyle \frac{a^\secund (x_0,0)}{a(x_0,0)}}x_2e_1^\secund \right) = 0\, .   
\end{gather*} 

(b) In this case $\text{Im}\, \phi^\secund = x_2\sco_{L_2}(l^\secund - 1)$. 
By Prop.~\ref{P:reshoxcupl}(a) the graded $S$-module 
$\tH^0_\ast(\sco_{X \cup L_2})$ is generated by 
$1 \in \tH^0(\sco_{X \cup L_2})$ and by $x_1e_1 \in \tH^0(\sco_{X \cup L_2}(-l+1))$. 
One deduces, as in case (a), the existence of an exact sequence$\, :$ 
\begin{equation}\label{E:hoxcupxsecundb}
0 \lra S(L_2)(l^\secund -1) \lra \tH^0_\ast(\sco_{X \cup X^\secund}) \lra 
\tH^0_\ast(\sco_{X \cup L_2}) \lra 0 
\end{equation}
where the left morphism maps $1 \in S(L_2)$ to $x_2e_1^\secund \in 
\tH^0(\sco_{X \cup X^\secund}(-l^\secund + 1))$. One shows, as in Claim 1, 
that $x_1^2e_1$ belongs to $\tH^0_\ast(\sco_{X \cup X^\secund})$ 
and then, as in Claim 2, that $x_1e_1$ belongs to 
$\tH^0_\ast(\sco_{X \cup X^\secund})$. The assertion from case (b) of the statement 
is, now, a consequence of the exact sequence \eqref{E:hoxcupxsecundb} and 
of Prop.~\ref{P:reshoxcupl}(a).  

(c) In this case $\text{Im}\, \phi^\secund = \sco_{L_2}(l^\secund )$. One deduces, 
as in the previous cases, the existence of an exact sequence$\, :$ 
\begin{equation}\label{E:hoxcupxsecundc}
0 \lra S(L_2)(l^\secund ) \lra \tH^0_\ast(\sco_{X \cup X^\secund}) \lra 
\tH^0_\ast(\sco_{X \cup L_2}) \lra 0\, , 
\end{equation} 
where the left morphism maps $1 \in S(L_2)$ to $e_1^\secund \in 
\tH^0(\sco_{X \cup X^\secund}(-l^\secund ))$, and the fact that $x_1e_1$ belongs to 
$\tH^0_\ast(\sco_{X \cup X^\secund})$. The assertion from case (c) of the statement 
is, now, a consequence of the exact sequence \eqref{E:hoxcupxsecundc} and 
of Prop.~\ref{P:reshoxcupl}(a).  

(d) In this case $\text{Im}\, \phi^\secund = x_2\sco_{L_2}(l^\secund - 1)$. 
By Prop.~\ref{P:reshoxcupl}(b) the graded $S$-module 
$\tH^0_\ast(\sco_{X \cup L_2})$ is generated by 
$1 \in \tH^0(\sco_{X \cup L_2})$ and by $e_1 \in \tH^0(\sco_{X \cup L_2}(-l))$. 
One deduces, as in case (a), the existence of an exact sequence$\, :$ 
\begin{equation}\label{E:hoxcupxsecundd}
0 \lra S(L_2)(l^\secund -1) \lra \tH^0_\ast(\sco_{X \cup X^\secund}) \lra 
\tH^0_\ast(\sco_{X \cup L_2}) \lra 0 
\end{equation}
where the left morphism maps $1 \in S(L_2)$ to $x_2e_1^\secund \in 
\tH^0(\sco_{X \cup X^\secund}(-l^\secund + 1))$. One shows, as in Claim 1, 
that $x_1e_1$ belongs to $\tH^0_\ast(\sco_{X \cup X^\secund})$. 

\vskip2mm

\noindent
{\bf Claim 3.}\quad $e_1 + \frac{b^\secund (x_0,0)}{b(x_0,0)}e_1^\secund$ 
\emph{belongs to} $\tH^0_\ast(\sco_{X \cup X^\secund})$. 

\vskip2mm

\noindent
\emph{Indeed}, one has in $\tH^0_\ast(\sco_X) \oplus 
\tH^0_\ast(\sco_{X^\secund})$$\, :$ 
\[
x_3 \cdot 1 = be_1 + b^\secund e_1^\secund = b(x_0,0)e_1 + b_1x_1e_1 + 
b^\secund(x_0,0)e_1^\secund + b_1^\secund x_2e_1^\secund \, . 
\] 
Since $x_1e_1$ and $x_2e_1^\secund$ belong to $\tH^0_\ast(\sco_{X \cup X^\secund})$ 
it follows that $b(x_0,0)e_1 + b^\secund(x_0,0)e_1^\secund$ belongs to 
$\tH^0_\ast(\sco_{X \cup X^\secund})$ hence$\, :$ 
\[
b(x_0,0)\left(e_1 + 
{\textstyle \frac{b^\secund (x_0,0)}{b(x_0,0)}}e_1^\secund \right) \in 
\tH^0_\ast(\sco_{X \cup X^\secund})\, .
\]
The claim follows, now, using an observation from the beginning of the proof. 

\vskip2mm 

\noindent
The assertion from case (d) of the statement is, now, a consequence of the 
exact sequence \eqref{E:hoxcupxsecundd}, of Prop.~\ref{P:reshoxcupl}(b) and 
of the following relations in 
$\tH^0_\ast(\sco_{X \cup X^\secund}) \subset \tH^0_\ast(\sco_X) \oplus 
\tH^0_\ast(\sco_{X^\secund})$$\, :$  
\begin{gather*}
x_3 \cdot 1 = be_1 + b^\secund e_1^\secund = 
b\left(e_1 + 
{\textstyle \frac{b^\secund (x_0,0)}{b(x_0,0)}}e_1^\secund \right) - 
b{\textstyle \frac{b^\secund (x_0,0)}{b(x_0,0)}}e_1^\secund + 
b^\secund e_1^\secund =\\
b\left(e_1 + 
{\textstyle \frac{b^\secund (x_0,0)}{b(x_0,0)}}e_1^\secund \right) - 
b(x_0,0){\textstyle \frac{b^\secund (x_0,0)}{b(x_0,0)}}e_1^\secund + 
b^\secund e_1^\secund = b\left(e_1 + 
{\textstyle \frac{b^\secund (x_0,0)}{b(x_0,0)}}e_1^\secund \right) + 
b_1^\secund x_2e_1^\secund \, ;\\
x_1x_2 \cdot 1 = x_1ae_1 + a^\secund x_2e_1^\secund = 
x_1a\left(e_1 + 
{\textstyle \frac{b^\secund (x_0,0)}{b(x_0,0)}}e_1^\secund \right) + 
a^\secund x_2e_1^\secund \, ;\\
x_2 \cdot \left(e_1 + 
{\textstyle \frac{b^\secund (x_0,0)}{b(x_0,0)}}e_1^\secund \right) = 
{\textstyle \frac{b^\secund (x_0,0)}{b(x_0,0)}}x_2e_1^\secund \, ;\ \  
x_3 \cdot \left(e_1 + 
{\textstyle \frac{b^\secund (x_0,0)}{b(x_0,0)}}e_1^\secund \right) = 0\, .
\qedhere
\end{gather*}
\end{proof}

\section{Double structures on a conic}\label{A:doubleconic}

Let $C \subset \piii$ be the conic of equations $x_3 = x_0x_2 - x_1^2 = 0$. 
Put $q := x_0x_2 - x_1^2$. Let $D$ be a double structure on $C$, i.e., a 
locally CM subscheme of $\piii$ of degree 4 with $D_{\text{red}} = C$. 
According to Ferrand \cite{f} there exists an exact sequence$\, :$ 
\[
0 \lra \sci_D \lra \sci_C \overset{\displaystyle \pi}{\lra} \scl \lra 0  
\] 
where $\scl$ is a line bundle on $C$ and where $\pi$ is a composite 
morphism$\, :$ 
\[
\sci_C \lra \sci_C/\sci_C^2 \simeq \sco_C(-1) \oplus \sco_C(-2) 
\xra{\displaystyle (\alpha \, ,\, \beta)} \scl 
\]
with $\alpha \in \tH^0(\scl(1))$ and $\beta \in \tH^0(\scl(2))$. Using the 
exact sequence$\, :$ 
\[
0 \lra \scl^{-1}(-3) \xra{\begin{pmatrix} -\beta\\ \alpha \end{pmatrix}} 
\sco_C(-1) \oplus \sco_C(-2) \xra{\displaystyle (\alpha \, ,\, \beta)} \scl 
\lra 0 
\]
one deduces an exact sequence$\, :$ 
\[
0 \lra \sci_{C^{(1)}} \lra \sci_D \overset{\displaystyle \eta}{\lra} 
\scl^{-1}(-3) \lra 0\, .
\]
 
Now, $C$ is the image of the embedding $\nu : \pj \ra \piii$, 
$\nu(t_0:t_1) = (t_0^2:t_0t_1:t_1^2:0)$. One has $\sco_C(1) \simeq 
\nu_\ast\sco_\pj(2)$ and if $t_0,\, t_1$ is the canonical basis of 
$\tH^0(\sco_\pj(1))$ then$\, :$ 
\[
x_0 \vb C = t_0^2\, ,\  x_1 \vb C = t_0t_1\, ,\  x_2 \vb C = t_1^2\, ,\  
x_3 \vb C = 0\, .
\]
One has to consider two cases$\, :$ 

\vskip2mm

(I) $\scl \simeq \sco_C(l)$ with $l \geq -2$$\, ;$ 

(II) $\scl \simeq \sco_C(l) \otimes \nu_\ast\sco_\pj(1)$ with 
$l \geq -1$. 
 
\vskip2mm

\noindent 
In case (I) one has an exact sequence$\, :$ 
\[
0 \lra \sco_C(l) \lra \sco_D \lra \sco_C \lra 0 
\]
and one denotes by $e \in \tH^0(\sco_D(-l))$ the image of $1 \in 
\tH^0(\sco_C)$, while in case (II) one has an exact sequence$\, :$ 
\[
0 \lra \sco_C(l) \otimes \nu_\ast\sco_\pj(1) \lra \sco_D \lra \sco_C \lra 0 
\]
and one denotes by $e_i \in \tH^0(\sco_D(-l))$ the image of $t_i \in 
\tH^0(\nu_\ast\sco_\pj(1)))$, $i = 0,\, 1$. 

\begin{lemma}\label{L:resic(1)} 
$I(C^{(1)})$ admits the following minimal graded free resolution$\, :$ 
\[
0 \ra \begin{matrix} S(-4)\\ \oplus\\ S(-5) \end{matrix} 
\xra{\begin{pmatrix} -q & 0\\ x_3 & -q\\ 0 & x_3 \end{pmatrix}} 
\begin{matrix} S(-2)\\ \oplus\\ S(-3)\\ \oplus\\ S(-4) \end{matrix} 
\xra{\displaystyle (x_3^2\, ,\, x_3q\, ,\, q^2)} I(C^{(1)}) \ra 0\, . 
\]
\qed 
\end{lemma}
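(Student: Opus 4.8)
The plan is to exploit the fact that $C$ is a complete intersection of type $(1,2)$: its homogeneous ideal is $I(C)=(x_3,q)$, and $x_3,\, q$ is a regular sequence in $S$ (indeed $q=x_0x_2-x_1^2$ is irreducible, so $S/(q)$ is a domain in which $\overline{x_3}$ is a nonzerodivisor). By definition $C^{(1)}$ is cut out by $\sci_C^2$, so $\tH^0_\ast(\sci_{C^{(1)}})$ is the saturation of $(x_3,q)^2=(x_3^2,\, x_3q,\, q^2)$. The first goal is therefore to show that $(x_3^2,\, x_3q,\, q^2)$ is already saturated and to resolve it.

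I would first write down the proposed complex and check it is a complex, i.e. that the two columns of the middle matrix are syzygies of the generators $x_3^2,\, x_3q,\, q^2$: one computes $-q\cdot x_3^2+x_3\cdot x_3q=0$ and $-q\cdot x_3q+x_3\cdot q^2=0$. To prove exactness at the middle, I would use the elementary regular-sequence argument: given $uf^2+vfg+wg^2=0$ with $f=x_3,\ g=q$, the relation $f(uf+vg)=-wg^2$ forces $f\mid w$ (as $f,g$ is a regular sequence), say $w=fw'$; then $uf=-(v+w'g)g$ forces $g\mid u$, say $u=gu'$, and one reads off $v=-u'f-w'g$, so $(u,v,w)=u'(-g,f,0)^{\mathrm t}+w'(0,-g,f)^{\mathrm t}$ lies in the span of the two columns. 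Injectivity of the left map is immediate since $S$ is a domain and $f,g\neq0$. Alternatively, and more in the idiom of the appendix, one may note that $C^{(1)}$ is directly linked to $C$ by the complete intersection of type $(2,4)$ defined by $x_3^2$ and $q^2$ — here the key identity is $(x_3^2,q^2):(x_3,q)=(x_3,q^2)\cap(x_3^2,q)=(x_3^2,x_3q,q^2)$ — and then apply Ferrand's result about liaison to the Koszul resolution
\[
0 \lra \sco_\p(-3) \xra{\begin{pmatrix} -q\\ x_3 \end{pmatrix}} \sco_\p(-1)\oplus\sco_\p(-2) \xra{\displaystyle (x_3\, ,\, q)} \sci_C \lra 0,
\]
which dualizes and twists by $-6$ to exactly the complex in the statement.

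Once exactness is in hand, the resolution has length $2$, so $\operatorname{pd}_S\bigl(S/(x_3^2,x_3q,q^2)\bigr)=2$; by Auslander--Buchsbaum its depth equals $4-2=2$, which coincides with its Krull dimension, so $S/(x_3^2,x_3q,q^2)$ is Cohen--Macaulay, hence unmixed. This is what guarantees that $(x_3^2,x_3q,q^2)$ is saturated and therefore equals $I(C^{(1)})=\tH^0_\ast(\sci_{C^{(1)}})$, closing the one gap between the square of the ideal and the actual homogeneous ideal. Minimality of the resolution is then clear, since every entry of both matrices ($x_3$ of degree $1$ and $q$ of degree $2$) lies in the irrelevant maximal ideal.

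The computations here are all routine; the only point deserving genuine care is precisely this passage from $\sci_C^2$ to the saturated ideal $I(C^{(1)})$, i.e. verifying that no higher-degree elements sneak into the saturation. The direct regular-sequence exactness check together with the Auslander--Buchsbaum/unmixedness argument disposes of this cleanly, and I would flag it as the step most worth stating explicitly rather than taking for granted.
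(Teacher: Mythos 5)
Your proof is correct and complete. The paper states this lemma with no proof at all (it is the standard resolution of the square of the complete-intersection ideal $(x_3,q)$), so there is nothing to compare against; both of your routes work. The direct regular-sequence syzygy computation establishes exactness of the complex on the ideal $(x_3^2,x_3q,q^2)$, and your Auslander--Buchsbaum/unmixedness argument correctly handles the one point genuinely worth making explicit, namely that $(x_3,q)^2$ is already saturated and hence equal to $I(C^{(1)})$. Your alternative via liaison --- $C^{(1)}$ linked to $C$ by the complete intersection $(x_3^2,q^2)$, followed by Ferrand's result applied to the Koszul resolution of $\sci_C$ --- is precisely the idiom the paper uses for the neighbouring lemmas in the appendices (e.g.\ Lemmas~\ref{L:l1(1)cupl}, \ref{L:l1(1)cupl2} and \ref{L:l1(1)cupc3}), and it has the added benefit of producing the saturated ideal and the resolution in one stroke.
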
 

\begin{lemma}\label{L:resnuo(1)} 
$\nu_\ast\sco_\pj(1)$ admits the following resolution$\, :$ 
\[
0 \ra 2\sco_\piii(-2) \xra{\begin{pmatrix} 
\text{--}x_3 & 0\\ 0 & \text{--}x_3\\ \text{--}x_2 & \text{--}x_1\\ 
x_1 & x_0 \end{pmatrix}} 4\sco_\piii(-1) 
\xra{\begin{pmatrix} \text{--}x_2 & \text{--}x_1 & x_3 & 0\\ x_1 & x_0 & 0 & x_3 
\end{pmatrix}} 2\sco_\piii \xra{\displaystyle (t_0,t_1)} 
\nu_\ast\sco_\pj(1) \ra 0 
\]
\end{lemma}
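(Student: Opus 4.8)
The plan is to recognize $\nu_\ast\sco_\pj(1)$ as the sheafification of an explicit graded $S$-module and to build the resolution by a change-of-rings argument. Since $\nu_\ast\sco_\pj(1)\otimes\sco_\piii(m)\simeq\nu_\ast\sco_\pj(1+2m)$, the graded module $M:=\tH^0_\ast(\nu_\ast\sco_\pj(1))$ is the odd-degree part of $k[t_0,t_1]$, regarded as a module over $S=k[x_0,x_1,x_2,x_3]$ through the substitutions $x_0\mapsto t_0^2$, $x_1\mapsto t_0t_1$, $x_2\mapsto t_1^2$, $x_3\mapsto 0$ (these are $x_i\vb C$). The basis vectors $t_0,t_1$ of $\tH^0(\sco_\pj(1))$ give the evaluation map $(t_0,t_1):2\sco_\piii\to\nu_\ast\sco_\pj(1)$, and surjectivity is immediate because its image contains $t_0,t_1$ and is stable under multiplication by $t_0^2,t_0t_1,t_1^2$, which span all even-degree monomials.

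First I would work on the plane $\{x_3=0\}$, i.e.\ over $S':=k[x_0,x_1,x_2]$, where $M$ is the odd part $R^{\mathrm{odd}}$ of $R=k[t_0,t_1]$ over the even subring $R^{\mathrm{ev}}\simeq S'/(q)$, $q=x_0x_2-x_1^2$. This is the maximal Cohen--Macaulay module attached to the matrix factorization $\bigl(\begin{smallmatrix} x_0 & x_1\\ x_1 & x_2\end{smallmatrix}\bigr)\bigl(\begin{smallmatrix} x_2 & -x_1\\ -x_1 & x_0\end{smallmatrix}\bigr)=q\,I_2$; the two relations $x_1t_0-x_0t_1=0$ and $x_2t_0-x_1t_1=0$ assemble into a map $\psi:2S'(-1)\to 2S'$ with $\det\psi=q$. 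As $q$ is a nonzerodivisor, $\psi$ is injective, so $0\to 2S'(-1)\xrightarrow{\psi}2S'\to\operatorname{coker}\psi\to 0$ is exact, and comparing Hilbert functions ($\dim(\operatorname{coker}\psi)_m=2(m+1)=\dim R_{2m+1}=\dim M_m$) upgrades the natural epimorphism $\operatorname{coker}\psi\to M$ to an isomorphism; in particular the two displayed relations exhaust the syzygies of $(t_0,t_1)$ over $S'$.

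Next I would pass from $S'$ to $S$. Lifting $\psi$ to the matrix $\tilde\psi$ with the same entries (they involve no $x_3$), the complex $\tilde P:\,0\to 2S(-1)\xrightarrow{\tilde\psi}2S\to 0$ is acyclic over $S$ (again $\det\tilde\psi=q$) and resolves $N:=\operatorname{coker}\tilde\psi$, a maximal Cohen--Macaulay, hence torsion-free, module over the quadric $A=S/(q)$. Because $q\notin(x_3)$, the form $x_3$ is a nonzerodivisor on $A$ and therefore on $N$, so $\ker(x_3:N\to N)=0$ and the mapping cone of $x_3:\tilde P\to\tilde P$ (equivalently the total complex of $\tilde P\otimes_S K_\bullet(x_3)$) is a resolution of $N/x_3N=\operatorname{coker}\psi=M$. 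Unwinding the total complex yields exactly the length-two complex in the statement: the degree-one term is $2S(-1)\oplus 2S(-1)=4\sco_\piii(-1)$ with differential $(\tilde\psi\mid x_3 I_2)$, and the degree-two term is $2S(-2)=2\sco_\piii(-2)$ with differential $\bigl(\begin{smallmatrix} -x_3I_2\\ \tilde\psi\end{smallmatrix}\bigr)$, which agree with $d_1$ and $d_2$ after an innocuous reordering and sign change of bases (the ``conic'' columns of $d_1$ are the column-swap of $-\tilde\psi$).

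The routine part is verifying directly that the displayed matrices compose to zero and match $(\tilde\psi\mid x_3I_2)$ and its companion, namely the identities $-x_2t_0+x_1t_1=-t_0t_1^2+t_0t_1^2=0$, $-x_1t_0+x_0t_1=0$, and $x_3t_0=x_3t_1=0$. The hard part will be the exactness, i.e.\ ruling out further syzygies; this is precisely what the structural input handles, since the Hilbert-function identification shows the conic relations are all the first syzygies over $S'$, while the regularity of $x_3$ on $N$ guarantees that adjoining the Koszul relations for $x_3$ introduces no extra homology. As a safeguard I would be prepared to confirm acyclicity instead through the Buchsbaum--Eisenbud criterion, checking that the ideal of $2\times 2$ minors of $d_2$ has depth $\geq 2$ and that of $d_1$ has depth $\geq 1$.
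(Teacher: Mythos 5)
Your proof is correct and follows essentially the same route as the paper: establish the two-term resolution of $\nu_\ast\sco_\pj(1)$ over the plane $\{x_3=0\}$ via the $2\times 2$ matrix with determinant $\pm q$, then tensor with the Koszul complex on $x_3$ to lift it to $\piii$. You merely supply the justifications (injectivity from $\det=\pm q$, the Hilbert-function count for exactness over the plane, and the $x_3$-regularity making the tensor product acyclic) that the paper leaves implicit.
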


\begin{proof} 
Let $H \subset \piii$ be the plane of equation $x_3 = 0$. One has an exact 
sequence$\, :$ 
\[
0 \ra 2\sco_H(-1) \xra{\begin{pmatrix} -x_2 & -x_1\\ x_1 & x_0 \end{pmatrix}} 
2\sco_H \xra{\displaystyle (t_0\, ,\, t_1)} \nu_\ast\sco_\pj(1) \ra 0\, .
\]
One deduces that the tensor product of the complexes$\, :$ 
\[
2\sco_\piii(-1) \xra{\begin{pmatrix} -x_2 & -x_1\\ x_1 & x_0 \end{pmatrix}} 
2\sco_\piii \, ,\  \  \sco_\piii(-1) \overset{\displaystyle x_3}{\lra} 
\sco_\piii
\]
is a resolution of $\nu_\ast\sco_\pj(1)$. 
\end{proof}

We treat, firstly, case (I). In this case, there exist $a \in 
k[x_0,x_1,x_2]_{l+1}$, $b \in k[x_0,x_1,x_2]_{l+2}$ such that $\alpha = 
a \vb C$ and $\beta = b\vb C$. 

\begin{lemma}\label{L:piepi1}
The morphism $\pi : \sci_C \ra \sco_C(l)$ defined by $\alpha = a \vb C$ 
and $\beta = b\vb C$ is an epimorphism if and only if $a$ and $b$ have no 
common zero on $C$. 
\qed
\end{lemma}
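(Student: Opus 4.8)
The plan is to reduce the surjectivity of $\pi$ to a pointwise nondegeneracy condition on the smooth curve $C$. First I would factor $\pi$ as the composite
\[
\sci_C \overset{\displaystyle p}{\lra} \sci_C/\sci_C^2 \simeq \sco_C(-1) \oplus \sco_C(-2) \xra{\displaystyle (\alpha\, ,\, \beta)} \sco_C(l)\, ,
\]
where $p$ is the canonical surjection onto the conormal sheaf. Since $p$ is an epimorphism one has $\text{Im}\, \pi = \text{Im}\, (\alpha\, ,\, \beta)$, so $\pi$ is an epimorphism if and only if the morphism $\overline{\pi} := (\alpha\, ,\, \beta) : \sco_C(-1) \oplus \sco_C(-2) \ra \sco_C(l)$ is one. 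Here I use that $C$ is the complete intersection of $x_3$ and $q$, so that $\sci_C/\sci_C^2 \simeq \sco_C(-1) \oplus \sco_C(-2)$ with the two free generators given by the classes of $x_3$ and of $q$; under this identification $\overline{\pi}$ sends these generators to $\alpha$ and $\beta$, respectively.

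Next I would test the surjectivity of $\overline{\pi}$ fibrewise. As $\sco_C(l)$ is coherent, $\overline{\pi}$ is an epimorphism if and only if $\Cok \overline{\pi} = 0$, i.e. if and only if $(\Cok \overline{\pi})_P = 0$ for every closed point $P \in C$. By Nakayama's lemma applied to the finitely generated $\sco_{C,P}$-module $(\Cok \overline{\pi})_P$, this holds if and only if the map induced on reduced stalks $\overline{\pi}(P) : \sco_C(-1)(P) \oplus \sco_C(-2)(P) \ra \sco_C(l)(P)$ is surjective. Since $C \simeq \pj$ is a smooth curve, the three sheaves involved are line bundles, and after trivialising them near $P$ the map $\overline{\pi}(P)$ becomes the linear map $(\alpha(P)\, ,\, \beta(P)) : k \oplus k \ra k$. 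This is surjective precisely when $\alpha$ and $\beta$ do not vanish simultaneously at $P$.

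Finally I would translate the condition back to $a$ and $b$. By construction $\alpha = a \vb C$ and $\beta = b \vb C$, hence $\alpha(P) = 0$ if and only if $a(P) = 0$ and $\beta(P) = 0$ if and only if $b(P) = 0$. Thus $\overline{\pi}$ fails to be surjective at $P$ exactly when $a(P) = b(P) = 0$, and so $\overline{\pi}$ --- equivalently $\pi$ --- is an epimorphism if and only if $a$ and $b$ have no common zero on $C$, which is the assertion of the lemma.

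I do not expect a genuine obstacle here, since the statement is exactly the expected fibrewise nondegeneracy criterion; the only points requiring care are the reduction through the surjection onto the conormal sheaf (so that one may work with $\overline{\pi}$ instead of $\pi$) and the correct use of Nakayama's lemma to pass to reduced stalks. I would be careful to phrase the fibrewise condition as the \emph{surjectivity} of a map \emph{onto} a line bundle, and to note explicitly that the smoothness of $C$ is what makes $\sco_C(-1) \oplus \sco_C(-2)$ and $\sco_C(l)$ locally free, so that the reduced-stalk computation is the naive one.
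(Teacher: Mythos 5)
Your proof is correct, and it fills in exactly the standard argument the authors had in mind: the paper states this lemma with no proof at all (it is marked as immediate), and the intended justification is precisely your reduction to the conormal sheaf followed by the fibrewise Nakayama/reduced-stalk criterion on the smooth conic $C$. Nothing is missing and nothing differs in substance from what the paper implicitly relies on.
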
 

\begin{prop}\label{P:genid1} 
Keeping the previously introduced notation, the homogeneous ideal $I(D)$ of 
the closed subscheme $D$ of $\piii$ defined by the kernel of an epimorphism 
$\pi$ as in Lemma~\ref{L:piepi1} is generated by$\, :$ 
\[
F := \begin{vmatrix} a & b\\ x_3 & q \end{vmatrix}\, ,\  
x_3^2\, ,\  x_3q\, ,\  q^2 
\]
and admits the following graded free resolution$\, :$ 
\[
0 \lra S(-l-6) \overset{\displaystyle d_2}{\lra} 
\begin{matrix} S(-l-4)\\ \oplus\\ S(-l-5)\\ \oplus\\ S(-4)\\ \oplus\\ S(-5) 
\end{matrix} \overset{\displaystyle d_1}{\lra} 
\begin{matrix} S(-l-3)\\ \oplus\\ S(-2)\\ \oplus\\ S(-3)\\ \oplus\\ S(-4) 
\end{matrix} \overset{\displaystyle d_0}{\lra} I(D) \lra 0
\]
with $d_1$ and $d_2$ defined by the matrices$\, :$ 
\[
\begin{pmatrix}
x_3 & q & 0 & 0\\
b & 0 & -q & 0\\
-a & b & x_3 & -q\\
0 & -a & 0 & x_3 
\end{pmatrix}\, ,\  
\begin{pmatrix} 
-q\\ x_3\\ -b\\ a 
\end{pmatrix}\, .
\]
This resolution is minimal for $l \geq 0$ but not for $l = -2$ where  
$I(D) = (x_3,\, q^2)$ and for $l = -1$ where $I(D) = (q-bx_3,\, x_3^2)$. 
\end{prop}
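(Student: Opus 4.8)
The plan is to run everything off the exact sequence
\[
0 \lra \sci_{C^{(1)}} \lra \sci_D \overset{\displaystyle \eta}{\lra} \scl^{-1}(-3) \lra 0
\]
established just before the statement, in which, in case (I), $\scl^{-1}(-3) \simeq \sco_C(-l-3)$ and $\eta$ sends the polynomial $F = aq - bx_3 \in \tH^0(\sci_D(l+3))$ to the generator $1 \in \tH^0(\sco_C)$ (this is precisely how $\eta$ arose from the presentation $(\alpha,\beta)$ of $\pi$). First I would apply $\tH^0_\ast(-)$. Since the conic $C$ is arithmetically Cohen--Macaulay, $\tH^0_\ast(\sco_C) = S/I(C) =: S(C)$, so $\tH^0_\ast(\scl^{-1}(-3)) = S(C)(-l-3)$, which is a cyclic $S$-module generated in degree $l+3$. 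Because $F \in I(D)$ maps onto this generator, the image of $\tH^0_\ast(\eta)$ is a submodule of $S(C)(-l-3)$ containing the generator, hence is all of it; thus $\tH^0_\ast(\eta)$ is surjective and one obtains a short exact sequence of graded $S$-modules
\[
0 \lra I(C^{(1)}) \lra I(D) \xra{\displaystyle \tH^0_\ast(\eta)} S(C)(-l-3) \lra 0 .
\]

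Next I would resolve the two outer terms. The ideal $I(C^{(1)}) = (x_3^2, x_3 q, q^2)$ has the minimal resolution of Lemma~\ref{L:resic(1)}, while $S(C)(-l-3)$ is resolved by the Koszul complex on the regular sequence $x_3, q$ (recall $I(C) = (x_3, q)$ is a complete intersection of type $(1,2)$), shifted by $-l-3$:
\[
0 \lra S(-l-6) \xra{\begin{pmatrix} -q\\ x_3 \end{pmatrix}} S(-l-4)\oplus S(-l-5) \xra{\displaystyle (x_3\, ,\, q)} S(-l-3) \lra S(C)(-l-3) \lra 0 .
\]
Feeding these two resolutions into the horseshoe lemma applied to the displayed short exact sequence produces a length-two free resolution of $I(D)$ whose terms are exactly $S(-l-3)\oplus S(-2)\oplus S(-3)\oplus S(-4)$, then $S(-l-4)\oplus S(-l-5)\oplus S(-4)\oplus S(-5)$, then $S(-l-6)$, i.e. the numerical shape in the statement.

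The remaining and most fiddly step is to pin down the differentials. The augmentation $d_0$ is forced to be $(F\, ,\, x_3^2\, ,\, x_3 q\, ,\, q^2)$, the four generators of $I(D)$. In the horseshoe construction the only off-diagonal blocks of $d_1$ and $d_2$ are the connecting maps $\tau_1\colon Q_1 \to P_0$ and $\tau_2\colon Q_2 \to P_1$, obtained by lifting the syzygies $x_3\cdot 1 = 0$ and $q\cdot 1 = 0$ of $S(C)(-l-3)$: one simply expresses $x_3 F = -b\,x_3^2 + a\,x_3 q$ and $q F = -b\,x_3 q + a\,q^2$ in the generators of $I(C^{(1)})$, giving the syzygy columns $\tau_1 = \begin{pmatrix} b & 0\\ -a & b\\ 0 & -a\end{pmatrix}$, and then, from the compatibility $\tau_1\binom{-q}{x_3} + d_1^{P}\,\tau_2 = 0$ with $d_1^{P}$ the differential of Lemma~\ref{L:resic(1)}, $\tau_2 = \binom{-b}{a}$. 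Assembling the diagonal Koszul and Lemma~\ref{L:resic(1)} blocks with these connecting maps gives precisely the two matrices in the statement; a direct check that $d_0 d_1 = 0$ and $d_1 d_2 = 0$ (four short identities each, in $a,b,x_3,q$) confirms it. I expect this sign-bookkeeping to be the only real obstacle, and it is mechanical once the module sequence is in place.

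Finally, for the minimality claim I would argue as follows. When $l \geq 0$ the forms $a,b$ have positive degrees $l+1,l+2$, so every entry of $d_1$ and $d_2$ lies in $\fm = (x_0,\dots,x_3)$ and the resolution is already minimal. When $l = -2$ one has $a = 0$ and $b$ a nonzero constant, so $F \sim x_3$; when $l = -1$ one has $a$ a nonzero constant (a linear form cannot be nowhere-zero on a conic, so $a\neq 0$, and one normalizes $a=1$), whence $F = q - bx_3$. In each of these two cases a unit entry of $d_1$ permits the cancellation of Remark~\ref{R:cancellation}, which shortens the resolution and leaves the generators $(x_3, q^2)$ for $l=-2$ and $(q - bx_3, x_3^2)$ for $l=-1$, as claimed.
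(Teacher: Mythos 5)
Your proposal is correct and follows essentially the same route as the paper's (very terse) proof: the paper likewise observes that $F$ maps to $(-\beta,\alpha)$ in $\sci_C/\sci_C^2$, hence lies in $I(D)$ with $\eta(F)=1$, deduces the exact sequence $0 \ra I(C^{(1)}) \ra I(D) \ra S(C)(-l-3) \ra 0$, and assembles the resolution from Lemma~\ref{L:resic(1)} and the Koszul resolution of $S(C)$; your syzygy computations $x_3F = -bx_3^2+ax_3q$, $qF = -bx_3q+aq^2$ and the minimality/degeneration analysis for $l=-2,-1$ fill in exactly the details the paper leaves implicit.
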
 

\begin{proof}
The image of $F\in \tH^0(\sci_C(l+3))$ by the map$\, :$ 
\[
\sci_C(l+3) \lra (\sci_C/\sci_C^2)(l+3) \simeq \sco_C(l+2) \oplus 
\sco_C(l+1) 
\]
is $(-\beta,\, \alpha)$. It follows that $F \in \tH^0(\sci_D(l+3))$ and that 
$\eta(F) = 1 \in \tH^0(\sco_C)$. One deduces the exactness of the 
sequence$\, :$ 
\[
0 \lra I(C^{(1)}) \lra I(D) \xra{\displaystyle \tH^0_\ast(\eta)} 
S(C)(-l-3) \lra 0 
\]
and then one uses Lemma~\ref{L:resic(1)}.   
\end{proof} 

\begin{prop}\label{P:reshod1} 
Keeping the previously introduced notation, let $D$ be  
the closed subscheme of $\piii$ defined by the kernel of an epimorphism 
$\pi$ as in Lemma~\ref{L:piepi1}. Then the graded $S$-module 
${\fam0 H}^0_\ast(\sco_D)$ admits the following free resolution$\, :$ 
\[
0 \lra \begin{matrix} S(-3)\\ \oplus\\ S(l-3) \end{matrix} 
\overset{\displaystyle \delta_2}{\lra} 
\begin{matrix} S(-1)\\ \oplus\\ S(-2)\\ \oplus\\ S(l-1)\\ \oplus\\ 
S(l-2) \end{matrix} \overset{\displaystyle \delta_1}{\lra} 
\begin{matrix} S\\ \oplus\\ S(l) \end{matrix} 
\overset{\displaystyle \delta_0}{\lra} {\fam0 H}^0_\ast(\sco_D) \lra 0 
\]
with $\delta_0 = (1\, ,\, e)$ and with $\delta_1$ and $\delta_2$ defined by the 
matrices$\, :$ 
\[
\begin{pmatrix} 
x_3 & q & 0 & 0\\ 
-a & -b & x_3 & q 
\end{pmatrix}\, ,\  
\begin{pmatrix}
-q & 0\\ x_3 & 0\\ b & -q\\ -a & x_3 
\end{pmatrix}\, .
\]
This resolution is minimal for $l \geq 0$ but not for the special cases 
$l = -2$ and $l = -1$ where $D$ is a complete intersection. 
\end{prop}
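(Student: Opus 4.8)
The plan is to deduce the resolution from the defining extension of $D$, exactly in the spirit of Propositions~\ref{P:reshoz}, \ref{P:reshow} and \ref{P:reshoxcupl}. Recall that, in case (I), $D$ sits in the exact sequence of sheaves
\[
0 \lra \sco_C(l) \lra \sco_D \lra \sco_C \lra 0
\]
recalled just before Lemma~\ref{L:resic(1)}, with $e \in \tH^0(\sco_D(-l))$ the image of $1 \in \tH^0(\sco_C)$. Since $C$ is the complete intersection defined by $x_3$ and $q$, it is arithmetically Cohen--Macaulay, so $\tH^1_\ast(\sco_C) = 0$ and hence $\tH^1_\ast(\sco_C(l)) = \tH^1_\ast(\sco_C)(l) = 0$. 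Applying $\tH^0_\ast(-)$ to the sequence above therefore yields a short exact sequence of graded $S$-modules
\[
0 \lra S(C)(l) \lra \tH^0_\ast(\sco_D) \lra S(C) \lra 0\, ,
\]
where $S(C) = S/(x_3, q)$.

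First I would write down the Koszul resolution of the complete intersection $S(C)$, namely
\[
0 \lra S(-3) \xra{\begin{pmatrix} -q\\ x_3 \end{pmatrix}} S(-1)\oplus S(-2) \xra{\displaystyle (x_3\, ,\, q)} S \lra S(C) \lra 0\, ,
\]
together with its twist by $l$ for $S(C)(l)$. Feeding these two resolutions into the horseshoe lemma applied to the short exact sequence above produces a (not necessarily minimal) graded free resolution of $\tH^0_\ast(\sco_D)$ of exactly the numerical shape asserted in the statement, with diagonal blocks the two Koszul differentials and the off-diagonal blocks recording the extension class.

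It then remains to pin down the entries, which, as elsewhere in these appendices, is done by computing the module of relations among the two generators $1$ and $e$ of $\tH^0_\ast(\sco_D)$, so that $\delta_0 = (1\, ,\, e)$. Multiplying the generators by the two generators $x_3, q$ of $I(C)$ and using that $\pi$ is defined by $\alpha = a\vb C$ and $\beta = b\vb C$ gives the relations $x_3\cdot 1 = a\, e$ and $q\cdot 1 = b\, e$, while $x_3\cdot e = 0$ and $q\cdot e = 0$ because $e$ generates the $\sco_C$-module summand $\sco_C(l)$, which is annihilated by $I(C)$. These four relations are the columns of $\delta_1$, and $\delta_2$ is the matrix of second syzygies; one checks directly that $\delta_0\delta_1 = 0$ and $\delta_1\delta_2 = 0$, the only nontrivial cancellations being the Koszul identity $x_3 q = q x_3$ together with $aq - bx_3 + x_3 b - qa = 0$. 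Exactness is not a separate issue, since the guessed complex is precisely the horseshoe resolution built from the two (exact) Koszul complexes.

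The only genuine subtlety is the behaviour in the boundary degrees, and I expect this to be the part worth spelling out. By degree count $a \in k[x_0,x_1,x_2]_{l+1}$ and $b \in k[x_0,x_1,x_2]_{l+2}$, so for $l = -1$ the form $a$ is a nonzero constant (it cannot vanish, since a linear $b$ would then vanish on $C$, contradicting that $a$ and $b$ have no common zero on $C$), and for $l = -2$ one has $a = 0$ while $b$ is a nonzero constant. In each case a unit appears in $\delta_1$, so Remark~\ref{R:cancellation} lets one cancel a pair of summands; the resolution then collapses to the Koszul resolution of a complete intersection, in agreement with the descriptions $I(D) = (q - bx_3,\, x_3^2)$ for $l = -1$ and $I(D) = (x_3,\, q^2)$ for $l = -2$ recorded in Proposition~\ref{P:genid1}. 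This explains why the resolution is minimal precisely for $l \geq 0$.
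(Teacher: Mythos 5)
Your proof is correct and follows essentially the same route as the paper's: the short exact sequence $0 \ra S(C)(l) \ra \tH^0_\ast(\sco_D) \ra \tH^0_\ast(\sco_C) \ra 0$, the Koszul resolution of $S(C) = S/(x_3,\, q)$ and its twist, and the relations $x_3\cdot 1 = ae$, $q\cdot 1 = be$, $x_3\cdot e = q\cdot e = 0$ determining the differentials; your discussion of the collapse for $l = -1$ and $l = -2$ is also correct. One justification should be repaired, though: it is \emph{not} true that $\tH^1_\ast(\sco_C) = 0$ (for instance $\tH^1(\sco_C(-1)) \simeq \tH^1(\sco_{\pj}(-2)) \neq 0$); the arithmetically Cohen--Macaulay property of the complete intersection $C$ gives $\tH^1_\ast(\sci_C) = 0$, i.e. $S(C) \izo \tH^0_\ast(\sco_C)$, but says nothing about $\tH^1_\ast(\sco_C)$. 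The surjectivity of $\tH^0_\ast(\sco_D) \ra \tH^0_\ast(\sco_C)$ follows instead, as in the paper, from the fact that the generator $1$ of the $S$-module $\tH^0_\ast(\sco_C)$ lifts to $1 \in \tH^0(\sco_D)$, so the $S$-linear connecting homomorphisms into $\tH^1_\ast(\sco_C(l))$ kill a generating set and hence vanish. With this one-line fix the argument is complete.
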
  

\begin{proof}
Since the graded $S$-module $\tH^0_\ast(\sco_C)$ is generated by 
$1 \in \tH^0(\sco_C)$ one deduces the exactness of the sequence$\, :$ 
\[
0 \lra S(C)(l) \lra \tH^0_\ast(\sco_D) \lra \tH^0_\ast(\sco_C) \lra 0\, .
\]
It suffices, now, to use the following relations in $\tH^0_\ast(\sco_D)$$\, :$ 
\[
x_3 \cdot 1 = \pi(x_3) = \alpha = ae\, ,\  
q \cdot 1 = \pi(q) = \beta = be\, . 
\qedhere
\]
\end{proof} 

We treat, finally, case (II). In this case there exist $a_0,\, a_1 \in 
k[x_0,x_1,x_2]_{l+1}$, $b_0,\, b_1 \in k[x_0,x_1,x_2]_{l+2}$ such that$\, :$ 
\[
\alpha = (a_0 \vb C)t_0 + (a_1 \vb C)t_1\, ,\  
\beta = (b_0 \vb C)t_0 + (b_1 \vb C)t_1\, .
\] 

\begin{lemma}\label{L:piepi2}
The morphism $\pi : \sci_C \ra \sco_C(l)\otimes \nu_\ast\sco_\pj(1)$ 
defined by $\alpha = (a_0 \vb C)t_0 + (a_1 \vb C)t_1$  
and $\beta = (b_0 \vb C)t_0 + (b_1 \vb C)t_1$ is an epimorphism if and only if 
\[
a_0x_0 + a_1x_1\, ,\  a_0x_1 + a_1x_2\, ,\  b_0x_0 + b_1x_1\, ,\  
b_0x_1 + b_1x_2 
\]
have no common zero on $C$. 
\end{lemma}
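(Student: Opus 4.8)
The plan is to imitate the proof of Lemma~\ref{L:piepi1} and then transport everything to $\pj$ via the parametrization $\nu$. First I would observe that, since $\pi$ factors as the composite of the canonical epimorphism $\sci_C \twoheadrightarrow \sci_C/\sci_C^2 \simeq \sco_C(-1) \oplus \sco_C(-2)$ with $(\alpha\, ,\, \beta)$, the morphism $\pi$ is surjective if and only if $(\alpha\, ,\, \beta) : \sco_C(-1) \oplus \sco_C(-2) \ra \scl$ is surjective. As $\scl$ is a line bundle on the integral curve $C$, the latter holds exactly when $\alpha$ and $\beta$ have no common zero on $C$; this is the direct analogue of the criterion in Lemma~\ref{L:piepi1}.

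Next I would pull everything back along the isomorphism $\nu : \pj \izo C$. Recall that $\sco_C(1) \simeq \nu_\ast \sco_\pj(2)$ and $x_0 \vb C = t_0^2$, $x_1 \vb C = t_0t_1$, $x_2 \vb C = t_1^2$, so that for $a_i \in k[x_0,x_1,x_2]_{l+1}$ the substitution $x_0 = t_0^2$, $x_1 = t_0t_1$, $x_2 = t_1^2$ turns $a_i$ into a binary form $\nu^\ast a_i$ of degree $2l+2$. Consequently $\nu^\ast\alpha = (\nu^\ast a_0)\, t_0 + (\nu^\ast a_1)\, t_1$ and $\nu^\ast\beta = (\nu^\ast b_0)\, t_0 + (\nu^\ast b_1)\, t_1$ are binary forms on $\pj$ of degrees $2l+3$ and $2l+4$, respectively. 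Under this translation, the condition ``$\alpha$ and $\beta$ have no common zero on $C$'' becomes ``the binary forms $\nu^\ast\alpha$ and $\nu^\ast\beta$ have no common root on $\pj$''.

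The heart of the argument is then a short pullback computation. Using $x_0 \vb C = t_0^2$, $x_1 \vb C = t_0t_1$, $x_2 \vb C = t_1^2$ one obtains the identities
\[
\nu^\ast(a_0x_0 + a_1x_1) = t_0\, \nu^\ast\alpha\, ,\quad \nu^\ast(a_0x_1 + a_1x_2) = t_1\, \nu^\ast\alpha\, ,
\]
\[
\nu^\ast(b_0x_0 + b_1x_1) = t_0\, \nu^\ast\beta\, ,\quad \nu^\ast(b_0x_1 + b_1x_2) = t_1\, \nu^\ast\beta\, .
\]
Since $t_0$ and $t_1$ have no common zero on $\pj$, a point of $\pj$ kills all four right-hand sides if and only if it kills both $\nu^\ast\alpha$ and $\nu^\ast\beta$; hence the four displayed polynomials have a common zero on $C$ precisely when $\nu^\ast\alpha$ and $\nu^\ast\beta$ have a common root on $\pj$. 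Combining this with the first two paragraphs yields the stated equivalence.

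I expect no deep obstacle here: the only points requiring care are the degree and twist bookkeeping in identifying $\scl \simeq \sco_C(l) \otimes \nu_\ast\sco_\pj(1)$ with $\nu_\ast\sco_\pj(2l+1)$, and the verification that surjectivity of $(\alpha\, ,\, \beta)$ is genuinely a fibrewise non-vanishing condition, exactly as in Lemma~\ref{L:piepi1}. The reason the \emph{four} rather than two polynomials appear is precisely that $\alpha$ and $\beta$ are not pullbacks of polynomials on $\piii$, so one must multiply by the monomials $x_0, x_1, x_2$ (whose restrictions $t_0^2, t_0t_1, t_1^2$ span the degree-$2$ part of $\tH^0_\ast(\sco_C)$) in order to descend the vanishing locus to honest elements of $S$.
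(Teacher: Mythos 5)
Your argument is correct and is essentially the paper's own proof: reduce surjectivity of $\pi$ to the fibrewise non-vanishing of $(\alpha\, ,\, \beta)$, then observe that $\alpha t_0$, $\alpha t_1$, $\beta t_0$, $\beta t_1$ are precisely the restrictions to $C$ of the four displayed polynomials and that, since $t_0$, $t_1$ have no common zero, these four sections have a common zero exactly where $\alpha$ and $\beta$ do. (The only slip is immaterial bookkeeping: $\nu^\ast\beta$ has degree $2l+5$, not $2l+4$, since $b_i \in k[x_0,x_1,x_2]_{l+2}$.)
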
 

\begin{proof}
One uses the fact that $\alpha$ and $\beta$ have no common zero on $C$ is and 
only if $\alpha t_0$, $\alpha t_1$, $\beta t_0$, $\beta t_1$ have no common 
zero on $C$. 
\end{proof}

\begin{prop}\label{P:genid2} 
Keeping the previously introduced notation, the homogeneous ideal $I(D)$ of 
the closed subscheme $D$ of $\piii$ defined by the kernel of an epimorphism 
$\pi$ as in Lemma~\ref{L:piepi2} is generated by$\, :$ 
\[
F_0 := \begin{vmatrix} a_0x_0 + a_1x_1 & b_0x_0 + b_1x_1\\ x_3 & q 
\end{vmatrix}\, ,\ 
F_1 := \begin{vmatrix} a_0x_1 + a_1x_2 & b_0x_1 + b_1x_2\\ x_3 & q 
\end{vmatrix}\, ,\   
x_3^2\, ,\  x_3q\, ,\  q^2 
\]
and admits the following graded free resolution$\, :$ 
\[
0 \lra 2S(-l-6) \overset{\displaystyle d_2}{\lra} 
\begin{matrix} 4S(-l-5)\\ \oplus\\ S(-4)\\ \oplus\\ S(-5) 
\end{matrix} \overset{\displaystyle d_1}{\lra} 
\begin{matrix} 2S(-l-4)\\ \oplus\\ S(-2)\\ \oplus\\ S(-3)\\ \oplus\\ S(-4) 
\end{matrix} \overset{\displaystyle d_0}{\lra} I(D) \lra 0
\]
with $d_1$ and $d_2$ defined by the matrices$\, :$ 
\[
\begin{pmatrix}
-x_2 & -x_1 & x_3 & 0 & 0 & 0\\
x_1 & x_0 & 0 & x_3 & 0 & 0\\
0 & 0 & b_0x_0 + b_1x_1 & b_0x_1 + b_1x_2 & -q & 0\\
-b_0 & b_1 & -a_0x_0 - a_1x_1 & -a_0x_1 -a_1x_2 & x_3 & -q\\
a_0 & -a_1 & 0 & 0 & 0 & x_3
\end{pmatrix}\, ,\  
\begin{pmatrix}
-x_3 & 0\\ 0 & -x_3\\ -x_2 & -x_1\\ x_1 & x_0\\ -b_0 & b_1\\ a_0 & -a_1 
\end{pmatrix}\, .
\] 
This resolution is minimal for $l \geq 0$ but not for $l = -1$ where the  
minimal resolution has the form 
$0 \ra S(-5) \lra 4S(-4) \lra S(-2) \oplus 3S(-3) \lra I(D) \ra 0$.  
\end{prop}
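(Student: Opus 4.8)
The plan is to mirror the proof of Prop.~\ref{P:genid1}, the only change being that the rank-one quotient $S(C)(-l-3)$ appearing there is now replaced by the graded module $M := \tH^0_\ast(\scl^{-1}(-3))$, which carries two minimal generators. I would start from the exact sequence $0 \to \sci_{C^{(1)}} \to \sci_D \to \scl^{-1}(-3) \to 0$ recalled at the beginning of this appendix, whose right-hand map is denoted $\eta$. Since $\scl \simeq \sco_C(l) \otimes \nu_\ast\sco_\pj(1)$ and $\sco_C(1) \simeq \nu_\ast\sco_\pj(2)$, a short computation gives $\scl^{-1}(-3) \simeq \sco_C(-l-4) \otimes \nu_\ast\sco_\pj(1)$, so that as a graded $S$-module $M \simeq N(-l-4)$, where $N := \tH^0_\ast(\nu_\ast\sco_\pj(1))$. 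Applying $\tH^0_\ast(-)$ to the locally free resolution of Lemma~\ref{L:resnuo(1)} (the needed vanishing $\tH^1_\ast$ of the syzygy sheaves holds because they are the kernels occurring in that resolution) yields $0 \to 2S(-2) \to 4S(-1) \to 2S \to N \to 0$; twisting by $-l-4$ produces a resolution of $M$ with terms $2S(-l-6) \to 4S(-l-5) \to 2S(-l-4)$.

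Next I would compute $\eta$ on the two determinants $F_0,\, F_1$. Using that $\sci_C/\sci_C^2 \simeq \sco_C(-1)\oplus\sco_C(-2)$ is generated by the images of $x_3$ and $q$, that $\scl^{-1}(-3)$ is embedded into it as the image of the morphism with matrix $(-\beta,\, \alpha)^{\text{t}}$, and that $x_0\vb C = t_0^2$, $x_1\vb C = t_0t_1$, $x_2\vb C = t_1^2$, one gets $(a_0x_0+a_1x_1)\vb C = t_0\alpha$ and $(b_0x_0+b_1x_1)\vb C = t_0\beta$, whence $F_0$ maps to $t_0\cdot(-\beta,\, \alpha)$; likewise $F_1 \mapsto t_1\cdot(-\beta,\, \alpha)$. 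Thus $F_0,\, F_1 \in \tH^0(\sci_D(l+4))$ and $\eta(F_0),\, \eta(F_1)$ are carried to the basis $t_0,\, t_1$ of $M_{l+4} \simeq \tH^0(\sco_\pj(1))$, i.e.\ to the minimal generators of $M$. In particular $\pi$ is an epimorphism (which re-proves Lemma~\ref{L:piepi2}), and since $\tH^1_\ast(\sci_{C^{(1)}}) = 0$ by Lemma~\ref{L:resic(1)}, the map $\tH^0_\ast(\eta)$ is surjective, giving an exact sequence of graded $S$-modules $0 \to I(C^{(1)}) \to I(D) \to M \to 0$.

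From this point the generators of $I(D)$ are $F_0,\, F_1$ (lifting $t_0,\, t_1$) together with $x_3^2,\, x_3q,\, q^2$ (the generators of $I(C^{(1)})$ from Lemma~\ref{L:resic(1)}), which is exactly the asserted list. A graded free resolution of $I(D)$ is then obtained by the horseshoe/mapping-cone construction applied to this short exact sequence and to the resolutions of $M$ and of $I(C^{(1)})$: the terms add up precisely to $0 \to 2S(-l-6) \to 4S(-l-5)\oplus S(-4)\oplus S(-5) \to 2S(-l-4)\oplus S(-2)\oplus S(-3)\oplus S(-4) \to I(D) \to 0$. The blocks in $d_1,\, d_2$ built from $x_0,\, x_1,\, x_2,\, x_3$ come from Lemma~\ref{L:resnuo(1)}, the blocks containing $-q,\, x_3$ come from Lemma~\ref{L:resic(1)}, and the off-diagonal entries $(-b_0,\, b_1)$, $(a_0,\, -a_1)$, $(b_0x_0+b_1x_1,\, \ldots)$ are the homotopies lifting the syzygies of $M$ into the resolution of $I(C^{(1)})$.

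The main obstacle I expect is twofold. The conceptual heart is the computation of $\eta(F_0),\, \eta(F_1)$ via the restrictions $x_i\vb C = t_0^{2-i}t_1^{i}$, which is what forces the determinantal shape of $F_0,\, F_1$ and what guarantees surjectivity of $\tH^0_\ast(\eta)$; everything else is the (routine but lengthy) mapping-cone bookkeeping, including the explicit determination of the homotopy blocks and the verification that $d_1d_2 = 0$ using the identities among $a_i,\, b_i,\, x_j,\, q$. Finally, the special case $l = -1$ must be treated separately: there $a_0,\, a_1 \in k$, so $d_2$ has a constant entry, and an application of Remark~\ref{R:cancellation} cancels a redundant $S(-4)$--$S(-5)$ pair, yielding the shorter resolution $0 \to S(-5) \to 4S(-4) \to S(-2)\oplus 3S(-3) \to I(D) \to 0$ quoted at the end of the statement.
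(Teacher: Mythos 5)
Your proposal is correct and follows essentially the same route as the paper: the paper's proof computes the image of $F_i$ in $(\sci_C/\sci_C^2)(l+4)$ as $(-\beta t_i,\,\alpha t_i)$, deduces the exact sequence $0 \to I(C^{(1)}) \to I(D) \to \tH^0_\ast(\nu_\ast\sco_\pj(1))(-l-4) \to 0$, and assembles the resolution from Lemmas~\ref{L:resic(1)} and \ref{L:resnuo(1)} together with the four explicit syzygies $-x_2F_0+x_1F_1 = b_0x_3q-a_0q^2$, etc., which are exactly your ``homotopy blocks''. The only (inessential) slip is the parenthetical claim that the computation re-proves Lemma~\ref{L:piepi2}: surjectivity of $\pi$ is a hypothesis here, used to set up the exact sequence defining $\eta$, not a consequence of $\eta(F_i)=t_i$.
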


\begin{proof}
Recall the exact sequence$\, :$ 
\[
0 \ra \sco_C(-l-4) \otimes \nu_\ast\sco_\pj(1)  
\xra{\begin{pmatrix} -\beta\\ \alpha \end{pmatrix}} 
\sco_C(-1) \oplus \sco_C(-2) \xra{\displaystyle (\alpha \, ,\, \beta)} 
\sco_C(l) \otimes \nu_\ast\sco_\pj(1) \ra 0\, . 
\]
The image of $F_i \in \tH^0(\sci_C(l+4))$ by the map$\, :$ 
\[
\sci_C(l+4) \lra (\sci_C/\sci_C^2)(l+4) \simeq \sco_C(l+3) \oplus 
\sco_C(l+2) 
\]
is $(-\beta t_i,\, \alpha t_i)$, $i = 0,\, 1$. It follows that $F_i \in 
\tH^0(\sci_D(l+4))$ and that $\eta(F_i) = t_i \in \tH^0(\nu_\ast\sco_\pj(1))$, 
$i = 0, 1$. One deduces the exactness of the sequence$\, :$ 
\[
0 \lra I(C^{(1)}) \lra I(D) \xra{\displaystyle \tH^0_\ast(\eta)} 
\tH^0_\ast(\nu_\ast\sco_\pj(1))(-l-4) \lra 0 
\]
and one uses Lemma~\ref{L:resic(1)}, Lemma~\ref{L:resnuo(1)} and the 
following relations$\, :$ 
\begin{gather*}
-x_2F_0 + x_1F_1 = b_0x_3q - a_0q^2\, ,\  
-x_1F_0 + x_0F_1 = -b_1x_3q + a_1q^2\, ,\\
x_3F_0 = -(b_0x_0+b_1x_1)x_3^2 + (a_0x_0+a_1x_1)x_3q\, ,\\  
x_3F_1 = -(b_0x_1+b_1x_2)x_3^2 + (a_0x_1+a_1x_2)x_3q\, .  
\qedhere 
\end{gather*} 
\end{proof}

\begin{prop}\label{P:reshod2} 
Keeping the previously introduced notation, let $D$ be  
the closed subscheme of $\piii$ defined by the kernel of an epimorphism 
$\pi$ as in Lemma~\ref{L:piepi2}. Then the graded $S$-module 
${\fam0 H}^0_\ast(\sco_D)$ admits the following free resolution$\, :$ 
\[
0 \lra \begin{matrix} S(-3)\\ \oplus\\ 2S(l-2) \end{matrix} 
\overset{\displaystyle \delta_2}{\lra} 
\begin{matrix} S(-1)\\ \oplus\\ S(-2)\\ \oplus\\ 4S(l-1)  
\end{matrix} \overset{\displaystyle \delta_1}{\lra} 
\begin{matrix} S\\ \oplus\\ 2S(l) \end{matrix} 
\overset{\displaystyle \delta_0}{\lra} {\fam0 H}^0_\ast(\sco_D) \lra 0 
\]
with $\delta_0 = (1\, ,\, e_0\, ,\, e_1)$ and with $\delta_1$ and $\delta_2$ 
defined by the matrices$\, :$ 
\[
\begin{pmatrix}
x_3 & q & 0 & 0 & 0 & 0\\
-a_0 & -b_0 & -x_2 & -x_1 & x_3 & 0\\
-a_1 & -b_1 & x_1 & x_0 & 0 & x_3
\end{pmatrix}\, ,\  
\begin{pmatrix}
-q & 0 & 0\\ x_3 & 0 & 0\\ a_0x_0 + a_1x_1 & -x_3 & 0\\ 
-a_0x_1 -a_1x_2 & 0 & -x_3\\ b_0 & -x_2 & -x_1\\ b_1 & x_1 & x_0 
\end{pmatrix}\, .
\] 
This resolution is minimal for $l \geq 0$ but not for $l = -1$ where the 
minimal free resolution has the form 
$0 \ra 3S(-3) \lra 5S(-2) \lra S \oplus S(-1) \lra {\fam0 H}^0_\ast(\sco_D) 
\ra 0$. 
\end{prop}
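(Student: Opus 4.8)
The plan is to mimic the proof of Prop.~\ref{P:reshod1}, replacing the rank-one module $\sco_C(l)$ by the rank-two module $\sco_C(l) \otimes \nu_\ast\sco_\pj(1)$. Since the graded $S$-module $\tH^0_\ast(\sco_C) = S(C)$ is generated by $1 \in \tH^0(\sco_C)$, which lifts to $1 \in \tH^0(\sco_D)$, applying $\tH^0_\ast(-)$ to the defining exact sequence $0 \to \sco_C(l) \otimes \nu_\ast\sco_\pj(1) \to \sco_D \to \sco_C \to 0$ yields a short exact sequence of graded $S$-modules
\[
0 \lra \tH^0_\ast(\sco_C(l) \otimes \nu_\ast\sco_\pj(1)) \lra \tH^0_\ast(\sco_D) \lra S(C) \lra 0\, ,
\]
where $\tH^0_\ast(\sco_C(l) \otimes \nu_\ast\sco_\pj(1))$ is the graded module $\tH^0_\ast(\nu_\ast\sco_\pj(1))$ shifted by $l$, its degree-$(-l)$ generators being $e_0,\, e_1$ (the images of $t_0,\, t_1$).

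First I would write down resolutions of the two outer modules. For $S(C) = S/(x_3, q)$ I would use the Koszul complex of the regular sequence $x_3,\, q$, namely $0 \to S(-3) \xrightarrow{(-q,\, x_3)^{\text{t}}} S(-1) \oplus S(-2) \xrightarrow{(x_3,\, q)} S \to S(C) \to 0$. For the other module I would twist the resolution of Lemma~\ref{L:resnuo(1)} by $l$, getting $0 \to 2S(l-2) \xrightarrow{M_2} 4S(l-1) \xrightarrow{M_1} 2S(l) \to 0$ onto it. The Horseshoe Lemma applied to the short exact sequence above then produces a (not necessarily minimal) free resolution of $\tH^0_\ast(\sco_D)$ whose terms are exactly the direct sums $S \oplus 2S(l)$, $S(-1) \oplus S(-2) \oplus 4S(l-1)$ and $S(-3) \oplus 2S(l-2)$ occurring in the statement, and whose differentials are block triangular with the diagonal blocks being the differentials of the two resolutions just listed.

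The heart of the matter is the explicit determination of the off-diagonal connecting blocks. These are pinned down by the two multiplicative relations $x_3 \cdot 1 = \pi(x_3) = \alpha = a_0 e_0 + a_1 e_1$ and $q \cdot 1 = \pi(q) = \beta = b_0 e_0 + b_1 e_1$, which force the lower parts of the first two columns of $\delta_1$ to be $(-a_0,\, -a_1)^{\text{t}}$ and $(-b_0,\, -b_1)^{\text{t}}$. Lifting the Koszul second syzygy $(-q,\, x_3)^{\text{t}}$ into the $\nu_\ast\sco_\pj(1)$-part then determines the first column of $\delta_2$; I expect this to be the main obstacle, since one must exhibit entries closing up the syzygy, and the verification that the choice $(a_0 x_0 + a_1 x_1,\, -a_0 x_1 - a_1 x_2,\, b_0,\, b_1)$ works rests precisely on the identity $q = x_0 x_2 - x_1^2$ (so that terms such as $a_0(q - x_0 x_2 + x_1^2)$ vanish). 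The remaining two columns of $\delta_2$ are simply the columns of $M_2$ placed in the lower block, and one checks $\delta_1 \delta_2 = 0$ directly.

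Finally I would address minimality. For $l \geq 0$ every entry of $\delta_1$ and $\delta_2$ lies in the irrelevant maximal ideal (the $a_i$ are forms of degree $l + 1 \geq 1$, the $b_i$ of degree $l + 2 \geq 2$, and all $x_j$ are linear), so the resolution is already minimal. For $l = -1$ the forms $a_0,\, a_1$ become constants, producing non-zero scalar entries in $\delta_1$; since $\pi$ is an epimorphism (Lemma~\ref{L:piepi2}) not all of them vanish, and applying the cancellation procedure of Remark~\ref{R:cancellation} collapses the resolution to the claimed minimal shape $0 \to 3S(-3) \to 5S(-2) \to S \oplus S(-1) \to \tH^0_\ast(\sco_D) \to 0$.
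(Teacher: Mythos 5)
Your proposal is correct and follows essentially the same route as the paper: the same short exact sequence $0 \to \tH^0_\ast(\nu_\ast\sco_\pj(1))(l) \to \tH^0_\ast(\sco_D) \to \tH^0_\ast(\sco_C) \to 0$, the Koszul and Lemma~\ref{L:resnuo(1)} resolutions of the outer terms, the relations $x_3\cdot 1 = a_0e_0 + a_1e_1$, $q\cdot 1 = b_0e_0 + b_1e_1$ for the off-diagonal block of $\delta_1$, and the identity $q = x_0x_2 - x_1^2$ (the paper phrases it as the matrix relation $\bigl(\begin{smallmatrix} -x_2 & -x_1\\ x_1 & x_0 \end{smallmatrix}\bigr)\bigl(\begin{smallmatrix} -x_0 & -x_1\\ x_1 & x_2 \end{smallmatrix}\bigr) = qI$) to close up the first column of $\delta_2$. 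Your minimality discussion, including the cancellation for $l=-1$ via Remark~\ref{R:cancellation}, also matches the intended argument.
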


\begin{proof}
One uses the exact sequence$\, :$ 
\[
0 \lra \tH^0_\ast(\nu_\ast\sco_\pj(1))(l) \lra \tH^0_\ast(\sco_D) \lra 
\tH^0_\ast(\sco_C) \lra 0
\]
and Lemma~\ref{L:resnuo(1)}. In order to determine the remaining entries of 
the matrix of $\delta_1$ one uses the following relations in 
$\tH^0_\ast(\sco_D)$$\, :$ 
\[
x_3 \cdot 1 = \pi(x_3) = \alpha = a_0e_0 + a_1e_1\, ,\  
q \cdot 1 = \pi(q) = \beta = b_0e_0 + b_1e_1\, ,
\]
and in order to determine the remaining entries of $\delta_2$ one uses the 
matrix relations$\, :$ 
\[
\begin{pmatrix} -x_2 & -x_1\\ x_1 & x_0 \end{pmatrix}
\begin{pmatrix} -x_0 & -x_1\\ x_1 & x_2 \end{pmatrix}
\begin{pmatrix} a_0\\ a_1 \end{pmatrix} = 
\begin{pmatrix} q & 0\\ 0 & q \end{pmatrix} 
\begin{pmatrix} a_0\\ a_1 \end{pmatrix} = 
\begin{pmatrix} qa_0\\ qa_1 \end{pmatrix}\, .
\qedhere 
\]
\end{proof}

\end{document}